\definecolor{backgroundcolor}{rgb}{\iftoggle{darkDoc}{0.1,0.1,0}{1,1,1}}
\definecolor{textcolor}{rgb}{\iftoggle{darkDoc}{1,0.9,0.8}{0,0,0}}
\definecolor{highlightcolor}{rgb}{\iftoggle{darkDoc}{0.5,0.7,1}{0,0,0.8}}
\definecolor{commentcolor}{rgb}{\iftoggle{darkDoc}{1,0.5,0.4}{0.8,0,0}}
\definecolor{commenttcolor}{rgb}{\iftoggle{darkDoc}{0.5,1,0.4}{0,0.8,0}}
\tikzset{%
  dot/.style={circle, draw, fill=black, inner sep=0pt, minimum width=4pt},
  ddot/.style={circle, draw, thick, fill=black, double, double distance=1pt, inner sep=0pt, minimum width=5pt},
  sep/.style={inner sep=2.5pt},
  thicker/.style={line width=2pt},
  dub/.style={double,double distance=2pt},
  north/.style={label={[sep]90:#1}},
  northeast/.style={label={[sep]45:#1}},
  east/.style={label={[sep]0:#1}},
  southeast/.style={label={[sep]315:#1}},
  south/.style={label={[sep]270:#1}},
  southwest/.style={label={[sep]225:#1}},
  west/.style={label={[sep]180:#1}},
  northwest/.style={label={[sep]135:#1}},
}
\setlist{nosep}
\titleformat{\section}{\centering\bfseries\scshape}{\thesection.\quad}{0em}{}
\titleformat{\subsection}{\scshape}{\thesubsection.\quad}{0em}{}
\titlespacing*{\section}{0mm}{6mm}{3mm}
\titlespacing*{\subsection}{0mm}{6mm}{3mm}
\declaretheoremstyle[
    headfont=\bfseries, 
    notebraces={--- \makefirstuc}{},
    notefont=\normalfont\itshape,
    bodyfont=\normalfont,
    headpunct={\vspace{0.15\topsep}},
    spaceabove=\topsep,
    spacebelow=0pt,
    postheadspace=\newline,
    qed={$\triangleleft$},
]{mystyle}
\theoremstyle{mystyle}
\declaretheorem[style=mystyle,name=Theorem,numberwithin=section]{thm}
\crefname{thm}{theorem}{theorems}
\Crefname{thm}{Theorem}{Theorems}
\declaretheorem[style=mystyle,name=Definition,sibling=thm]{dfn}
\crefname{dfn}{definition}{definitions}
\Crefname{dfn}{Definition}{Definitions}
\declaretheorem[style=mystyle,name=Lemma,sibling=thm]{lmm}
\crefname{lmm}{lemma}{lemmas}
\Crefname{lmm}{Lemma}{Lemmas}
\declaretheorem[style=mystyle,name=Corollary,sibling=thm]{crl}
\crefname{crl}{corollary}{corollaries}
\Crefname{crl}{Corollary}{Corrolaries}
\crefname{prp}{proposition}{propositions}
\Crefname{prp}{Proposition}{Propositions}
\crefname{exm}{example}{examples}
\Crefname{exm}{Example}{Examples}
\declaretheorem[style=mystyle,name=Remark,sibling=thm]{rmk}
\crefname{rmk}{remark}{remarks}
\Crefname{rmk}{Remark}{Remarks}
\crefname{rmd}{reminder}{reminders}
\Crefname{rmd}{Reminder}{Reminders}
\crefname{pdx}{paradox}{paradoxes}
\Crefname{pdx}{Paradox}{Paradoxes}
\crefname{clm}{claim}{claims}
\Crefname{clm}{Claim}{Claims}
\declaretheorem[style=mystyle,name=Fact,sibling=thm]{fct}
\crefname{fct}{fact}{facts}
\Crefname{fct}{Fact}{Facts}
\crefname{cnj}{conjecture}{conjectures}
\Crefname{cnj}{Conjecture}{Conjectures}
\declaretheorem[style=mystyle,name=Question,sibling=thm]{qst}
\crefname{qst}{question}{questions}
\Crefname{qst}{Question}{Questions}
\crefname{exc}{exercise}{exercises}
\Crefname{exc}{Exercise}{Exercises}
\declaretheorem[style=mystyle,name=Theorem,numbered=no]{thm*}
\declaretheorem[style=mystyle,name=Definition,numbered=no]{dfn*}
\declaretheorem[style=mystyle,name=Lemma,numbered=no]{lmm*}
\declaretheorem[style=mystyle,name=Corollary,numbered=no]{crl*}
\declaretheorem[style=mystyle,name=Proposition,numbered=no]{prp*}
\declaretheorem[style=mystyle,name=Example,numbered=no]{exm*}
\declaretheorem[style=mystyle,name=Remark,numbered=no]{rmk*}
\declaretheorem[style=mystyle,name=Reminder,numbered=no]{rmd*}
\declaretheorem[style=mystyle,name=Paradox,numbered=no]{pdx*}
\declaretheorem[style=mystyle,name=Question,numbered=no]{qst*}
\declaretheorem[style=mystyle,name=Claim,numbered=no]{clm*}
\declaretheorem[style=definition,name={Exercise},sibling=exc,preheadhook={\hspace{-0.4cm}\raisebox{0.25mm}{$*$}\vspace{-\baselineskip}\vspace{-\topsep}}]{exc*}
\renewenvironment{proof}[1][\proofname]{
  \pushQED{\qed}%
  \normalfont
  \topsep0pt \partopsep0pt 
  \trivlist
  \item[\hskip\labelsep
        \itshape
    #1\@addpunct{.}]\ignorespaces
}{%
  \popQED\endtrivlist\@endpefalse
  \addvspace{0pt} 
}
\let\phi\varphi
\let\epsilon\varepsilon
\let\emptyset\varnothing
\let\subset\subseteq
\let\supset\supseteq
\let\bar\overline
\renewcommand{\c}{\mathcal}
\newcommand{\bb}{\mathbb} 
\renewcommand{\b}{\mathbf}
\newcommand{\s}{\mathsf}
\newcommand{\sr}{\mathscr}
\renewcommand{\r}{\mathrm}
\newcommand{\f}{\mathfrak}
\newcommand{\inj}{\mathrel{\textnormal{\guilsinglright}\mathrel{\mkern -9mu}\rightarrow}}		
\newcommand{\srj}{\rightarrow\mathrel{\mkern -14mu}\rightarrow}						
\newcommand{\bij}{\mathrel{\textnormal{\guilsinglright}\mathrel{\mkern -9mu}\rightarrow\mathrel{\mkern -14mu}\rightarrow}}
\newcommand{\xto}[1]{\xrightarrow}												
\newcommand{\dwa}{\downarrow}										
\newcommand{\la}{\land}				
\newcommand{\lo}{\lor}				
\newcommand{\lc}{\bot}				
\newcommand{\emp}{\emptyset}		
\renewcommand{\Cap}{\bigcap}		
\renewcommand{\Cup}{\bigcup}		
\newcommand{\La}{\bigwedge\!} 		
\providecommand*{\Dashv}{\mathrel{\mathpalette\@Dashv\vDash}}
\newcommand*{\@Dashv}[2]{\reflectbox{$\m@th#1#2$}}
\providecommand*{\dashV}{\mathrel{\mathpalette\@dashV\Vdash}}
\newcommand*{\@dashV}[2]{\reflectbox{$\m@th#1#2$}}
\newcommand{\md}{\vDash} 				
\newcommand{\fc}{\Vdash} 				
\newcommand{\nfc}{\mathrel{\cancel\Vdash}} 	
\newcommand{\rb}[1]{\left(#1\right)} 					
\newcommand{\ab}[1]{\left\langle#1\right\rangle} 		
\newcommand{\dsb}[1]{\left\llbracket#1\right\rrbracket} 	
\newcommand{\ap}[1]{\text{``}\,#1\,\text{''}} 		
\newcommand{\card}[1]{\left|#1\right|} 				
\newcommand{\norm}[1]{\left\lVert#1\right\rVert}		
\newcommand{\st}[1]{\left\{#1\right\}} 				
\newcommand{\smid}{\ \middle |\ } 					
\newcommand{\sab}[1]{\langle#1\rangle} 		
\newcommand{\sst}[1]{\{#1\}} 				
\newcommand{\rl}{\mathrel}					
\newcommand{\nrl}[1]{\mathrel{\cancel{#1}}}		
\newcommand{\dom}{\r{dom}}
\newcommand{\ran}{\r{ran}}
\newcommand{\ot}{\r{ot}}
\newcommand{\cf}{\r{cf}}
\newcommand{\id}{\s{id}}
\newcommand{\cov}{\r{cov}}
\newcommand{\non}{\r{non}}
\newcommand{\cof}{\r{cof}}
\newcommand{\add}{\r{add}}
\newcommand{\pow}{\s{pow}}
\newcommand{\Loc}{\r{Loc}}
\newcommand{\Lev}{\r{Lev}}
\newcommand{\Split}{\r{Split}}
\newcommand{\poss}[2]{\r{poss}(#1,{<}#2)}
\newcommand{\possq}[2]{\r{poss}(#1,{\leq}#2)}
\newcommand{\suc}{\r{suc}}
\newcommand{\supp}{\r{supp}}
\newcommand{\gcs}{{}^{\kappa}2}
\newcommand{\gfcs}{{}^{<\kappa}2}
\newcommand{\bs}{{}^{\omega}\omega}
\newcommand{\gbs}{{}^{\kappa}\kappa}
\newcommand{\gfbs}{{}^{<\kappa}\kappa}
\newcommand{\frcS}[1]{{\bb S^{#1}_\kappa}}
\newcommand{\frcQ}[1]{{\bb{Q}^{#1}_\kappa}}
\newcommand{\bstar}[1]{\f b_\kappa^{#1}(\in^*)}
\newcommand{\dstar}[1]{\f d_\kappa^{#1}(\in^*)}
\newcommand{\binf}[1]{\f b_\kappa^{#1}(\nnii)}
\newcommand{\dinf}[1]{\f d_\kappa^{#1}(\nnii)}
\newcommand{\bneq}[1]{\f b_\kappa^{#1}(\neqi)}
\newcommand{\dneq}[1]{\f d_\kappa^{#1}(\neqi)}
\newcommand{\bleq}[1]{\f b_\kappa^{#1}(\leq^*)}
\newcommand{\dleq}[1]{\f d_\kappa^{#1}(\leq^*)}
\newcommand{\AL}{\sr A\!\!\sr L}
\newcommand{\ED}{\sr E\!\sr D}
\newcommand{\AD}{\bb{AD}}
\newcommand{\SN}{\c S\c N}
\newcommand{\sSN}{\sr S\!\!\sr N}
\newcommand{\Locf}{\bb{L}\mathbbm{oc}}
\newcommand{\ALocforc}{\bb{AL}\mathbbm{oc}_\kappa^{b,h}(C)}
\newcommand{\nwdb}{\c N_\kappa^b}
\newcommand{\mgrb}{\c M_\kappa^b}
\newcommand{\ins}{\in^*}
\newcommand{\ini}{\in^\infty}
\newcommand{\nins}{\mathrel{\cancel{\ins}}}
\newcommand{\nini}{\mathrel{\cancel{\in^\infty}}}
\newcommand{\nii}{\ni^\infty}
\newcommand{\nnii}{\mathrel{\cancel{\nii}}}
\newcommand{\eqi}{=^\infty}
\newcommand{\neqi}{\mathrel{\cancel{=^\infty}}}
\newcommand{\ts}{\textstyle}
\newcommand{\ft}{\mathbbm 1}
\renewcommand{\emph}{\textbf}
\title{\sc Cardinal Characteristics on\\Bounded Generalised Baire Spaces}
\author{Tristan van der Vlugt\footnote{Fachbereich Mathematik, Universität Hamburg $-$ email: tristan@tvdvlugt.nl $-$ The author was partially supported by the Fields Institute, Toronto $-$ The author wishes to thank Jörg Brendle for valuable discussions, suggestions and extensive comments. Valuable comments have been given by Tatsuya Goto, Jing Zhang and Hiroshi Sakai.}}
\begin{document}

\maketitle

\begin{abstract}
We will give an overview of four families of cardinal characteristics defined on subspaces $\prod_{\alpha\in\kappa}b(\alpha)$ of the generalised Baire space $\gbs$, where $\kappa$ is strongly inaccessible and $b\in\gbs$. The considered families are bounded versions of the dominating, eventual difference, localisation and antilocalisation numbers, and their dual cardinals. We investigate parameters for which these cardinals are nontrivial and how the cardinals relate to each other and to other cardinals of the generalised Cicho\'n diagram. Finally we prove that different choices of parameters may lead to consistently distinct cardinals.
\end{abstract}

\section{Introduction}

Cardinal characteristics of the continuum are cardinal numbers that consistently lie strictly between $\aleph_1$ and the cardinality of the reals $2^{\aleph_0}$. Many natural questions about the cardinality of sets of real numbers lead to definitions of cardinal characteristics, such as the least size of a set of reals of positive Lebesgue measure, or the least size of an unbounded set of functions from $\omega$ to $\omega$. In studying such cardinal characteristics, one would usually work with perfect Polish spaces, such as the Baire space $\bs$, instead of directly working with the set of reals.

In recent years, there have been significant developments in generalising the theory of cardinal characteristics of the continuum to the context of the generalised Baire space $\gbs$, where $\kappa$ is an uncountable cardinal. In many cases, the situation on $\gbs$ closely resembles the situation on $\bs$, but this is not always the case. For instance, the meagre ideal $\c M$, as defined on $\bs$, generalises without much trouble to a $\kappa$-meagre ideal on $\gbs$, but the ideal $\c N$ of Lebesgue null sets does not; after all, there is no adequate notion of a Lebesgue measure on $\gbs$. Another important difference, is the rich structure of the club filter and of stationary sets on $\kappa$, which is nonexistent on $\omega$.

Due to the lack of a Lebesgue measure, and hence a null ideal, it is unclear what analogue in the generalised Baire space would fit for cardinal characteristics defined in terms of the null ideal. In the classical Baire space $\bs$ there is a characterisation of the cardinal characteristics $\add(\c N)$ and $\cof(\c N)$ in terms of slaloms, that is, functions $\phi:\omega\to\c P(\omega)$ such that $|\phi(n)|=2^n$ for each $n\in\omega$. Specifically, $\cof(\c N)$ is equal to the least size of a set $W$ of slaloms such that every $f\in\bs$ is localised by a slalom in $W$, that is, $f(n)\in \phi(n)$ for all but finitely many $n$. This combinatorially defined cardinal characteristic is usually named the localisation number. 

We will consider a parametrised localisation number, where we consider a set of slaloms that localises all $f\in\bs$, with the additional requirement that each slalom $\phi$ in our witness set $W$ is bound by a function $h\in\bs$, in the sense that $|\phi(n)|\leq h(n)$ for all $n\in\omega$. The effect of this parameter $h$ is rather limited on $\bs$, since the $h$-localisation cardinal is equal to $\cof(\c N)$ for any parameter $h$ that is increasing and cofinal, and thus its cardinality does not depend on the choice of $h$. On the generalised Baire space $\gbs$ for $\kappa$ inaccessible, the situation is very different, as it is consistent that there exist many parametrised localisation numbers of different cardinalities, as shown by Brendle, Brooke-Taylor, Friedman \& Montoya in \cite{BBFM} and improved by the author in \cite{vdV}.

On $\bs$, we gain a significant amount of complexity when we do not consider cardinal characteristics defined on the entire space $\bs$, but on a bounded subspace $\prod_{n\in\omega}b(n)$ for some increasing cofinal $b\in\bs$. This results in $(b,h)$-localisation numbers, defined with $(b,h)$-slaloms $\phi$, where $|\phi(n)|\leq h(n)$ and $\phi(n)\subset b(n)$ for all $n\in\omega$, localising functions $f\in\prod_{n\in\omega} b(n)$. Such cardinals were first studied by Goldstern \& Shelah \cite{GS93}. Later, a connection between these cardinals, Yorioka ideals and the strong measure zero ideal was given by Osuga \& Kamo \cite{OK08,OK14} and it was shown recently by Cardona, Klausner \& Mej\'ia \cite{CKM21} that there can be models with $2^{\aleph_0}$ many $(b,h)$-localisation numbers with mutually distinct cardinality. 

The goal of this article is to study such parametrised localisation numbers in the context of $\gbs$ where $\kappa$ is inaccessible. Alongside, we will also describe antilocalisation cardinals, whose (non-parametrised) cardinality gives a combinatorial definition of the cardinal characteristics $\cov(\c M)$ and $\non(\c M)$, and of parametrised versions of the dominating number and the eventual difference number (which also has non-parametrised versions that happen to be equal to $\cov(\c M)$ and $\non(\c M)$).

After introducing the notation and giving an overview of the situation on the (unbounded) Baire space in section \ref{section: notation}, we will use section \ref{section: triviality} to investigate for which parameters $b$ and $h$ our cardinal numbers are nontrivial, in the sense that their values lie between $\kappa^+$ and $2^\kappa$ without being equal to either bound. Although we can describe sufficient conditions for the cardinals to be nontrivial, there are still several open problems concerning the necessity of these conditions.

In section \ref{section: relations} we will give an overview of how our cardinals relate to each other. We do this by giving Tukey connections between relational systems. We also introduce cardinal characteristics defined by taking the infimum or supremum of (anti)localisation cardinals over all possible parameters.

Finally, in section \ref{section: consistency} we will prove consistency results regarding our cardinals. In particular we will construct a model in which there are $\kappa^+$ many mutually distinct localisation cardinals (using the same type of forcing notion as in \cite{vdV}), and we describe a family of $\kappa$ many parameters such that any finite set of parameters has associated antilocalisation cardinals that can be forced to be mutually distinct (using a generalisation of the forcing notion from \cite{CM19}).

\section{Notation \& Definition of Concepts}\label{section: notation}
Let us establish our notation. Usually we will use the Greek letters $\alpha,\beta,\gamma,\delta,\epsilon,\eta,\xi$ for ordinals, while $\kappa,\lambda,\mu,\nu$ will be used for cardinals. The class of all ordinals is written as $\r{Ord}$. 

We say that a property $P$ holds for \emph{almost all} $\alpha\in\kappa$ if there is a $\beta\in\kappa$ such that $P(\alpha)$ holds for all $\alpha>\beta$, and we abbreviate this as $\forall^\infty\alpha P(\alpha)$. Dually, a property is said to hold for \emph{cofinally many} $\alpha\in\kappa$ if for every $\beta\in\kappa$ there is some $\alpha>\beta$ for which $P(\alpha)$ holds, abbreviated as $\exists^\infty\alpha P(\alpha)$. 

Given two functions $f,f'$ with domain $\kappa$ and $\vartriangleleft$ a relation defined on $\ran(f)\times\ran(f')$, we write 
\begin{itemize}
\item $f\rl \vartriangleleft f'$ as a shorthand for $\forall\alpha\in\kappa(f(\alpha)\rl \vartriangleleft f'(\alpha))$,
\item $f\rl \vartriangleleft^* f'$ as a shorthand for $\forall^\infty\alpha(f(\alpha)\vartriangleleft f'(\alpha))$,
\item $f\rl \vartriangleleft^\infty f'$ as a shorthand for $\exists^\infty\alpha(f(\alpha)\vartriangleleft f'(\alpha))$.
\end{itemize}
The intended meaning of $\cancel{\vartriangleleft^*}$ and $\cancel{\vartriangleleft^\infty}$ are the negations of $\vartriangleleft^*$ and $\vartriangleleft^\infty$ respectively, as should be clear on sight, in contrast to the ambiguously notated $\not\hspace{-.3mm}\vartriangleleft^*$  and $\not\hspace{-.3mm}\vartriangleleft^\infty$. For this reason we will henceforth use the former notation.

Unsurprisingly, we write cardinal exponentiation as $\lambda^\mu$ and use the abbreviation $\lambda^{<\mu}=\Cup_{\alpha\in\mu}\lambda^{|\alpha|}$. If instead we want to discuss the set of functions from $X$ to $Y$, we write this as ${}^XY$. If $\alpha$ is an ordinal, then $^{<\alpha}Y$ denotes the set $\Cup_{\xi\in\alpha}{}^\xi Y$. We define 
\begin{align*}
[Y]^\mu&=\st{X\in\c P(Y)\mid |X|=\mu}\\
[Y]^{<\mu}&=\st{X\in\c P(Y)\mid |X|<\mu}.
\end{align*}
Naturally, ${}^{\leq\alpha}Y$ and $[Y]^{\leq\alpha}$ have the obvious meaning that is implicit from the above.
Given sequences $s\in{}^\alpha X$ and $t\in{}^\beta X$, we write $s^\frown t\in{}^{\alpha+\beta}X$ for the concatenation of $s$ and $t$. If $x\in X$, we write $\ab{x}$ for the sequence of length $1$ containing only $x$. If $s\in{}^\alpha X$ is a sequence, we write $\ot(s)=\alpha$ for the length or order-type of $s$. Similarly, if $A\subset\r{Ord}$ is a set of ordinals, we write $\ot(A)$ for the order-type of $\ab{A,\in}$.

An increasing function $f:\kappa\to\r{Ord}$ is called \emph{continuous} at $\gamma\in\kappa$ if $f(\gamma)=\Cup_{\alpha<\gamma}f(\alpha)$ and \emph{discontinuous} at $\gamma\in \kappa$ if $f(\gamma)>\Cup_{\alpha<\gamma}f(\alpha)$. If $A\subset\kappa$, then $f$ is (dis)continuous on $A$ if $f$ is (dis)continuous at $\gamma$ for every limit ordinal $\gamma\in A$.

If $f,g$ are functions, we interpret arithmetical operators on the functions elementwise, such as $f+g:\alpha\mapsto f(\alpha)+g(\alpha)$ and $2^f:\alpha\mapsto 2^{f(\alpha)}$ and if $\xi$ is an ordinal $f+\xi:\alpha\mapsto f(\alpha)+\xi$. We will often work with functions $b$ where $b(\alpha)$ is a cardinal for each $\alpha$. In such cases, we also establish that $\cf(b):\alpha\mapsto\cf(b(\alpha))$ and $b^+:\alpha\mapsto (b(\alpha))^+$. Finally if $\alpha$ is an ordinal, we write $\bar\alpha$ for the constant function $\kappa\to\st\alpha$.

\subsection{Bounded Generalised Baire Spaces}

As the name suggests, the \emph{generalised Baire space} $\gbs$ is a generalisation of the classical Baire space $\bs$. Instead of functions between the natural numbers, we consider functions between uncountable cardinals. In our case, we are specifically interested in generalised Baire spaces where $\kappa$ is a (strongly) inaccessible cardinal. As such, we will fix the convention for the remainder of this article that $\kappa$ is \emph{inaccessible}. We would like to emphasise that \emph{we leave out the mention of $\kappa$ being inaccessible from our lemmas and theorems}, even if this is a necessary assumption for the theorem to be provable.

We restrict our attention to inaccessible cardinals, because the Cicho\'n diagram on $\gbs$ behaves quite similar to its counterpart on $\bs$. We refer to \cite{BBFM} for an overview of the Cicho\'n diagram on the generalised Baire space and to \cite{Bre22} for the case where $\kappa$ is not inaccessible.

The generalised Baire space $\gbs$ is often studied as a topological space by endowing $\kappa$ with the discrete topology and $\gbs$ with the ${<}\kappa$-box topology. This is the topology generated by basic open sets of the form $[s]=\st{f\in\gbs\mid s\subset f}$, where $s\in{}^\alpha\kappa$ for some $\alpha\in\kappa$. Note that each basic set $[s]$ is in fact clopen.

The main focus of this article will be to study cardinal characteristics in the context of bounded subspaces of the generalised Baire space. Given a function $b\in\gbs$, we consider the product space $\prod b=\prod_{\alpha\in\kappa}b(\alpha)$ where each $b(\alpha)$ has the discrete topology and $\prod b$ once again has the ${<}\kappa$-box topology. We will write 
\begin{align*}
\textstyle\prod_{<\kappa}b=\Cup_{\gamma\in\kappa}\rb{\prod_{\alpha\in\gamma}b(\alpha)},
\end{align*}
then the topology on $\prod b$ is generated by sets $[s]=\st{f\in\prod b\mid s\subset f}$ with $s\in\prod_{<\kappa}b$. As such, we see that $\prod b$ is a closed subspace of $\gbs$. We will refer to spaces of the form $\prod b$ as \emph{bounded generalised Baire spaces}. Generally we will only consider \emph{increasing} functions $b\in\gbs$ such that $b(\alpha)$ is an \emph{infinite cardinal} for every $\alpha\in\kappa$, unless we state otherwise. 

A set $X\subset\prod b$ is \emph{nowhere dense} if for every basic open $[s]\subset\prod b$ there exists a basic open $[t]\subset[s]$ such that $[t]\cap X=\emp$. It is easy to show that the closure of a nowhere dense set is nowhere dense. The complement of a closed nowhere dense set is an \emph{open dense set}. We will write $\nwdb$ for the family of nowhere dense subsets of $\prod b$. A set $X$ is \emph{$\kappa$-meagre} if there exists a family $\st{N_\alpha\mid \alpha\in\kappa}$ of nowhere dense sets such that $X=\Cup_{\alpha\in\kappa}N_\alpha$. Let $\mgrb$ be the set of $\kappa$-meagre subsets of $\prod b$. We will use the notation $\c M_\kappa$ for the family $\kappa$-meagre sets of $\gbs$.

Given a ${\leq}\kappa$-complete ideal $\c I$ on the space $X$, we define the following four cardinal functions:
\begin{align*}
\cov(\c I)&=\min\st{|\c C|\smid \c C\subset\c I\la \ts\Cup\c C=X}\\
\non(\c I)&=\min\st{|N|\smid N\subset X\la N\notin\c I}\\
\add(\c I)&=\min\st{|\c A|\smid \c A\subset\c I\la \ts\Cup\c C\notin\c I}\\
\cof(\c I)&=\min\st{|\c F|\smid \c F\subset\c I\la \forall I\in\c I\exists J\in\c F(I\subset J)}
\end{align*}
It is easy to show that families $\mgrb$ (and $\c M_\kappa)$ are ${\leq}\kappa$-complete proper ideals. Moreover, by the following lemma, the choice of $b\in\gbs$ does not matter for the value of these cardinal functions evaluated over $\mgrb$, and indeed, we could work with the cardinal functions evaluated over $\c M_\kappa$ without loss of generality.

\begin{lmm}
There exists $M\in\mgrb$ such that the subspace $\prod b\setminus M$ of $\prod b$ is homeomorphic to $\gbs$.
\end{lmm}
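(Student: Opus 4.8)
The idea is to carve out a $\kappa$-meagre set whose complement looks like the full generalised Baire space by exploiting the fact that $b$ is increasing and each $b(\alpha)$ is an infinite cardinal, hence $b(\alpha)\geq\omega$ grows unboundedly below $\kappa$. The plan is to fix a club $C\subset\kappa$ of ``critical points'' and, at each $\gamma\in C$, use the block of coordinates of $\prod b$ strictly below the next element of $C$ to encode a single value of an arbitrary function in $\gbs$; the set $M$ of $f\in\prod b$ that fail to do this faithfully will be nowhere dense on each block, hence $\kappa$-meagre overall, and on the complement we read off a homeomorphism onto $\gbs$.

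In more detail: since $\kappa$ is inaccessible, choose a continuous increasing enumeration $\langle\gamma_\xi\mid\xi\in\kappa\rangle$ of a club $C$ with $\gamma_0=0$ and such that for each $\xi$ the product $\prod_{\alpha\in[\gamma_\xi,\gamma_{\xi+1})}b(\alpha)$ has cardinality $\kappa$ — this is possible because the $b(\alpha)$ are infinite cardinals increasing to $\kappa$, so already for a suitably long (but bounded) interval the product of cardinals reaches $\kappa$; by inaccessibility the product stays exactly $\kappa$ and not more. Fix for each $\xi$ a bijection $e_\xi\colon\kappa\to\prod_{\alpha\in[\gamma_\xi,\gamma_{\xi+1})}b(\alpha)$. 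Define $\Phi\colon\gbs\to\prod b$ by letting $\Phi(g)$ on the $\xi$-th block be $e_\xi(g(\xi))$. Then $\Phi$ is injective, and I claim $\Phi$ is a homeomorphism onto its image $R:=\ran(\Phi)$: a basic clopen $[s]$ in $\gbs$ with $s\in{}^\eta\kappa$ maps to $R\cap[t]$ where $t$ is the corresponding initial segment of $\prod b$ of length $\gamma_\eta$, and conversely; continuity in both directions follows because the blocks are bounded below $\kappa$ so the ${<}\kappa$-box topologies match up.

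Finally I must show $M:=\prod b\setminus R$ is $\kappa$-meagre. Write $M=\Cup_{\xi\in\kappa}N_\xi$, where $N_\xi$ is the set of $f\in\prod b$ whose restriction to the $\xi$-th block $[\gamma_\xi,\gamma_{\xi+1})$ is \emph{not} in the range of $e_\xi$ restricted to... — wait, $e_\xi$ is onto the whole block product, so that is empty; the real obstruction is a coherence condition. The cleaner route: instead of demanding $\Phi(g)$ hit a prescribed pattern, reserve one coordinate, say $\alpha_\xi:=\gamma_\xi$, and require $f(\gamma_\xi)=0$ for $f\in R$; then $e_\xi$ is a bijection from $\kappa$ onto $\st{0}\times\prod_{\alpha\in(\gamma_\xi,\gamma_{\xi+1})}b(\alpha)$ (still of size $\kappa$), and $N_\xi:=\st{f\in\prod b\mid f(\gamma_\xi)\neq 0}$ is clopen and nowhere dense since every $[s]$ has an extension forcing $f(\gamma_\xi)=0$. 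Then $M=\Cup_{\xi\in\kappa}N_\xi$ is a union of $\kappa$ many nowhere dense sets, hence $\kappa$-meagre, and on $\prod b\setminus M$ every coordinate $\gamma_\xi$ is $0$, so $\Phi$ is the required homeomorphism. The main technical point to get right is the cardinality bookkeeping on the blocks — ensuring that the intervals $[\gamma_\xi,\gamma_{\xi+1})$ can be chosen so that the block products have size exactly $\kappa$ — which uses inaccessibility (regularity for ``at most $\kappa$'', the strong limit property together with $\sup_\alpha b(\alpha)=\kappa$ for ``at least $\kappa$''); the rest is routine verification of the topological claims.
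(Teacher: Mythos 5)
Your construction breaks down at exactly the point you identify as ``the main technical point to get right'': the cardinality bookkeeping on the blocks is impossible. Since $\kappa$ is inaccessible and $b\in\gbs$, for any bounded interval $I=[\gamma_\xi,\gamma_{\xi+1})$ we have $|I|<\kappa$ and $\sup_{\alpha\in I}b(\alpha)\leq b(\gamma_{\xi+1})<\kappa$, hence $\card{\prod_{\alpha\in I}b(\alpha)}\leq\rb{\sup_{\alpha\in I}b(\alpha)}^{|I|}<\kappa$ by the strong limit property. So no ``suitably long but bounded'' interval exists whose block product reaches $\kappa$; the bijections $e_\xi:\kappa\to\prod_{\alpha\in[\gamma_\xi,\gamma_{\xi+1})}b(\alpha)$ cannot be chosen, and $\Phi$ is undefined. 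This is not a repairable detail: any partition of $\kappa$ into intervals fixed in advance gives a block tree in which every super-node has fewer than $\kappa$ immediate successors, while every node of $\gfbs$ has exactly $\kappa$. There is also a second, independent error in your repaired meagreness argument: $N_\xi=\st{f\in\prod b\mid f(\gamma_\xi)\neq 0}$ is a nonempty proper clopen set, hence not nowhere dense (any $[s]$ with $\gamma_\xi\in\dom(s)$ and $s(\gamma_\xi)\neq 0$ lies entirely inside $N_\xi$). In fact the intended range $R=\st{f\mid \forall\xi\, (f(\gamma_\xi)=0)}$ is itself nowhere dense, so your $M=\prod b\setminus R$ is comeagre rather than meagre; you would be exhibiting a copy of $\gbs$ inside a meagre set, which is the opposite of what the lemma requires.

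The way around the first obstruction -- and the route the paper takes -- is to abandon intervals fixed in advance and instead send the $\kappa$ many successors of each node of $\gfbs$ to the members of a \emph{maximal antichain} of size $\kappa$ in $\prod_{<\kappa}b$ above the current image. Such an antichain exists above every node precisely because its elements are allowed to have unboundedly many different lengths (each fixed length contributes fewer than $\kappa$ nodes, so size $\kappa$ forces the lengths to be cofinal); this is how the small pointwise branching of $\prod b$ is compensated. Maximality of the antichains is then what makes the complement of $\Cup_{t\in{}^\alpha\kappa}[\phi(t)]$ nowhere dense for each level $\alpha$, so that the range of the induced map is comeagre rather than nowhere dense. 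If you want to keep a block-style picture, the blocks must depend on the branch followed so far, which is exactly what the antichain construction encodes.
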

\begin{proof}
First, we recursively define a function $\phi:\gfbs\to\prod_{<\kappa}b$. Let $\phi(\emp)=\emp$. Given $t\in\gfbs$ such that $\phi(t)$ has been defined, let $A=\st{s_\alpha\mid \alpha\in\kappa}\subset\prod_{<\kappa}b$ be an antichain of size $\kappa$ with $\phi(t)\subset s_\alpha$, such that $A$ is maximal with this property, and send $\phi:t^\frown\ab\alpha\mapsto s_\alpha$. If $\gamma=\dom(t)$ is limit and $\phi(t\restriction\alpha)$ is defined for each $\alpha<\gamma$, we have by construction that 
\begin{align*}
\ts\Cup_{\alpha\in\gamma}\phi(t\restriction\alpha)\in\prod_{<\kappa}b.
\end{align*}
Therefore we set $\phi(t)=\Cup_{\alpha\in\gamma}\phi(t\restriction\alpha)$.

Note that $\phi[{}^\alpha\kappa]$ forms a maximal antichain in $\prod_{<\kappa}b$: if $\alpha$ is the least ordinal such that $\phi[{}^\alpha\kappa]$ is not maximal, let $s\notin\phi[{}^\alpha\kappa]$ be such that $\phi[{}^\alpha\kappa]\cup\st s$ is an antichain, then for any $\xi<\alpha$ there is $s_\xi\subset s$ such that $s_\xi\in\phi[{}^\xi\kappa]$, but then $\Cup_{\xi\in\alpha}s_\xi=s'\subset s$ and $s'\in\phi[{}^\alpha\kappa]$.

We define $\Phi:\gbs\to\prod b$ induced by $\phi$ as sending $f\mapsto\Cup_{\alpha\in\kappa}\phi(f\restriction\alpha)$. Note that the range $\Phi[\gbs]$ is $\kappa$-comeagre in $\prod b$, because for each $\alpha\in\kappa$ we can define
\begin{align*} 
F_\alpha=\st{f\in\ts\prod b\mid f\notin \Cup_{t\in{}^\alpha\kappa}[\phi(k)]}.
\end{align*}
Then $F_\alpha$ is nowhere dense, since $\phi[{}^\alpha\kappa]$ is a maximal antichain, and $\Phi[\gbs]=\prod b\setminus \Cup_{\alpha\in\kappa}F_\alpha$, hence $M=\Cup_{\alpha\in\kappa}F_\alpha$ is a $\kappa$-meagre set and we will see that $\Phi:\gbs\bij\prod b\setminus M$ is a homeomorphism.

It is clear that $\Phi$ is bijective. Note that $\Phi$ is open, since for any $t\in\gfbs$ we have $\phi(t)\in\prod_{<\kappa}b$, thus $\Phi[[t]]=[\phi(t)]\setminus M$ is open in $\prod b\setminus M$. To see that $\Phi$ is continuous, let $s\in\prod_{<\kappa}b$ with $\dom(s)=\alpha$. If $t\in{}^\alpha\kappa$, then it follows by construction that $\alpha\subset\dom(\phi(t))$. Let 
\begin{align*}
T=\st{t\in{}^\alpha\kappa\mid s\subset \phi(t)}.
\end{align*}
Note that $f\in\Cup_{t\in T}[t]$ if and only if $s\subset\phi(f\restriction\alpha)$ off $\Phi(f)\in[s]$ if and only if $f\in\Phi^{-1}[[s]]$.
\end{proof}

\begin{crl}
$\cov(\c M_\kappa)= \cov(\mgrb)$,
 $\non(\c M_\kappa)=\non(\mgrb)$,
 $\add(\c M_\kappa)=\add(\mgrb)$,
 $\cof(\c M_\kappa)=\cof(\mgrb)$.
\end{crl}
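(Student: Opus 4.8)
The plan is to feed the previous lemma into a general transfer argument: fix the $\kappa$-meagre set $M\in\mgrb$ and the homeomorphism $\Phi\colon\gbs\to\prod b\setminus M$ it provides, write $G=\prod b\setminus M$, and show that the $\kappa$-meagre ideal of $\prod b$ is, up to the single harmless set $M$, the image under $\Phi$ of the $\kappa$-meagre ideal of $\gbs$. Since $\prod b$ is a $\kappa$-Baire space (no $\kappa$-meagre subset has nonempty interior, by the same category argument that works for $\gbs$), the set $M$ has empty interior, so $G$ is dense and $\kappa$-comeagre in $\prod b$; this will be the only place Baire-ness is used.

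The heart of the proof is the claim: for $A\subset\prod b$, $A\in\mgrb$ if and only if $A\cap G$ is $\kappa$-meagre in the subspace $G$. For the forward direction, write $A=\Cup_{\alpha\in\kappa}N_\alpha$ with each $N_\alpha$ closed nowhere dense in $\prod b$; then $N_\alpha\cap G$ is closed in $G$, and if some nonempty $U\cap G$ (with $U\subset\prod b$ open) were contained in $N_\alpha$, the nonempty open set $U\setminus N_\alpha$ would be disjoint from $G$, hence contained in $M$, contradicting that $M$ has empty interior; so $N_\alpha\cap G$ is nowhere dense in $G$ and $A\cap G=\Cup_\alpha (N_\alpha\cap G)$ is $\kappa$-meagre in $G$. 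For the converse it suffices to check that $N\subset G$ nowhere dense in $G$ implies $\mathrm{cl}_{\prod b}(N)$ nowhere dense in $\prod b$: given nonempty open $U\subset\prod b$, if $U\cap G=\emp$ then $U\cap N=\emp$ and $U$ itself is a witness, while if $U\cap G\neq\emp$ then nowhere density of $N$ in $G$ gives a nonempty open $W\subset U$ in $\prod b$ with $W\cap G\cap N=\emp$, so $W\cap N=\emp$ (as $N\subset G$) and hence $W\cap\mathrm{cl}_{\prod b}(N)=\emp$. Thus any set which is $\kappa$-meagre in $G$ is $\kappa$-meagre in $\prod b$, and since $M\in\mgrb$, $A=(A\cap G)\cup(A\cap M)$ is $\kappa$-meagre in $\prod b$ whenever $A\cap G$ is $\kappa$-meagre in $G$. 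Because $\Phi$ is a homeomorphism it carries the $\kappa$-meagre ideal of $\gbs$ bijectively onto the $\kappa$-meagre ideal of $G$; combining this with the claim, the maps $B\mapsto\Phi[B]$ and $A\mapsto\Phi^{-1}[A\cap G]$ send $\kappa$-meagre subsets of $\gbs$ to $\kappa$-meagre subsets of $\prod b$ contained in $G$, and $\kappa$-meagre subsets of $\prod b$ to $\kappa$-meagre subsets of $\gbs$, respectively, each compatibly with $\subset$, and the second one also with unions.

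With this dictionary the four equalities become bookkeeping. For $\cov$: a cover of $\gbs$ by $\kappa$-meagre sets maps under $\Phi$ to a cover of $G$ by $\kappa$-meagre subsets of $\prod b$, and adjoining $M$ yields a cover of $\prod b$ of the same (infinite) cardinality, so $\cov(\mgrb)\le\cov(\c M_\kappa)$; conversely a cover of $\prod b$ by $\kappa$-meagre sets restricts to a cover of $G$, which maps under $\Phi^{-1}$ to a cover of $\gbs$. For $\non$: if $N\subset\prod b$ is not $\kappa$-meagre then $N\cap G$ is not either (as $N\cap M\in\mgrb$), so $\Phi^{-1}[N\cap G]$ is a non-$\kappa$-meagre subset of $\gbs$ of size $\le|N|$, and symmetrically, giving $\non(\c M_\kappa)=\non(\mgrb)$. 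For $\cof$ one translates a cofinal family back and forth, adjoining $M$ where needed; for $\add$ one translates a family of $\kappa$-meagre sets whose union is non-$\kappa$-meagre, using that $(\Cup\c A)\cap G=\Cup\{A\cap G:A\in\c A\}$ is non-$\kappa$-meagre precisely when $\Cup\c A$ is. Throughout, the extra set $M$ is immaterial since all of these cardinals are at least $\kappa^+$. The one genuinely delicate point is the claim above, and within it the forward direction, i.e.\ controlling how nowhere density transfers when passing between $\prod b$ and its dense $\kappa$-comeagre subspace $G$ — this is exactly where the $\kappa$-Baire property of $\prod b$ enters.
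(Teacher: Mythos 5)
Your proposal is correct and follows the same route the paper intends: the paper states this corollary with no proof, treating it as an immediate consequence of the preceding homeomorphism lemma, and your argument simply supplies the standard transfer details (meagreness in the dense $\kappa$-comeagre subspace $G=\prod b\setminus M$ agrees with meagreness in $\prod b$, and the homeomorphism $\Phi$ then identifies the two ideals up to the single set $M$, which is immaterial since all four cardinals are at least $\kappa^+$). The details, including the use of the $\kappa$-Baire property of $\prod b$ to see that $M$ has empty interior, are correct.
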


Apart from the $\kappa$-meagre ideal, we will also consider the ideal of $\kappa$-strong measure zero sets. Contrary to Lebesgue measure zero sets, strong measure zero sets can be generalised to the context of $\gbs$. We will work in $\gcs$, with basic open sets $[s]=\st{f\in\gcs\mid s\subset f}$ for $s\in\gfcs$. A set $X\subset\gcs$ is called \emph{$\kappa$-strong measure zero} if for every $f\in\gbs$ there exists a sequence $\ab{s_\alpha\in\gfcs\mid \alpha\in\kappa}$ with $\ot(s_\alpha)=f(\alpha)$ such that $X\subset\Cup_{\alpha\in\kappa}[s_\alpha]$. We write $\SN_\kappa$ for the family of $\kappa$-strong measure zero subsets of $\gcs$.

It is again easy to see that $\SN_\kappa$ is a ${\leq}\kappa$-complete proper ideal. We conclude this section by giving an equivalent definition of $\SN_\kappa$.
\begin{lmm}
The following are equivalent for a set $X\subset\gcs$.
\begin{enumerate}[label=(\arabic*)]
\item $X\in\SN_\kappa$
\item For every $f\in\gbs$ there exists a sequence $\bar s=\ab{s_\alpha\mid \alpha\in\kappa}$ with $s_\alpha\in{}^{f(\alpha)}2$ for each $\alpha$ such that $X\subset\Cup_{\alpha\in\kappa}[s_\alpha]$.
\item For every $f\in\gbs$ there exists a sequence $\bar s=\ab{s_\alpha\mid \alpha\in\kappa}$ with $s_\alpha\in{}^{f(\alpha)}2$ for each $\alpha$ such that $X\subset\Cap_{\alpha_0\in\kappa}\Cup_{\beta\in[\alpha_0,\kappa)}[s_\beta]$.\qedhere
\end{enumerate}
\end{lmm}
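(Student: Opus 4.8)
It suffices to prove (1)$\Leftrightarrow$(2), (3)$\Rightarrow$(2) and (2)$\Rightarrow$(3). The first two are essentially free: a sequence $s\in\gfcs={}^{<\kappa}2$ with $\ot(s)=\gamma$ is literally an element of ${}^{\gamma}2$, so condition (2) is just a reformulation of the definition of $\SN_\kappa$; and the $\alpha_0=0$ instance gives $\ts\Cap_{\alpha_0\in\kappa}\Cup_{\beta\in[\alpha_0,\kappa)}[s_\beta]\subset\Cup_{\beta\in\kappa}[s_\beta]$, so any witness for (3) with respect to $f$ is at the same time a witness for (2). Thus the only real work is (2)$\Rightarrow$(3).

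For this the plan is to split the index set $\kappa$ into $\kappa$ blocks of full size $\kappa$ and handle the blocks separately. Fix a bijection $h:\kappa\times\kappa\to\kappa$, put $B_i=h[\st{i}\times\kappa]$, so that $\ab{B_i\mid i\in\kappa}$ is a partition of $\kappa$ into sets of size $\kappa$, and let $\ab{\beta^i_\xi\mid \xi\in\kappa}$ enumerate $B_i$ increasingly. Now assume (2) and let $f\in\gbs$. For each $i\in\kappa$, apply (2) to the function $g_i\in\gbs$ given by $g_i(\xi)=f(\beta^i_\xi)$, obtaining $\ab{t^i_\xi\mid \xi\in\kappa}$ with $t^i_\xi\in{}^{g_i(\xi)}2$ and $X\subset\Cup_{\xi\in\kappa}[t^i_\xi]$. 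Glue these into one sequence $\bar s=\ab{s_\beta\mid \beta\in\kappa}$ by declaring $s_\beta=t^i_\xi$ whenever $\beta=\beta^i_\xi$; this defines $s_\beta$ for all $\beta\in\kappa$ because the $B_i$ partition $\kappa$, and $s_\beta=t^i_\xi\in{}^{f(\beta^i_\xi)}2={}^{f(\beta)}2$, so $\bar s$ is of the form required in (3).

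It remains to check $X\subset\ts\Cap_{\alpha_0\in\kappa}\Cup_{\beta\in[\alpha_0,\kappa)}[s_\beta]$. Fix $x\in X$ and $\alpha_0\in\kappa$; we look for $\beta\geq\alpha_0$ with $x\in[s_\beta]$. Since $\alpha_0$ has cardinality $<\kappa$, it meets fewer than $\kappa$ of the blocks $B_i$, so there is $i\in\kappa$ with $B_i\cap\alpha_0=\emp$, and then $\beta^i_\xi\geq\alpha_0$ for every $\xi\in\kappa$. By the choice of $\ab{t^i_\xi\mid \xi\in\kappa}$ there is a $\xi$ with $x\in[t^i_\xi]=[s_{\beta^i_\xi}]$, so $\beta:=\beta^i_\xi$ is as wanted. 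As $x$ and $\alpha_0$ were arbitrary, this gives (3).

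I do not expect any real obstacle: the block decomposition is the only idea, and everything else is bookkeeping. The step to state with some care is the counting one — that an ordinal $\alpha_0<\kappa$ meets at most $|\alpha_0|<\kappa$ of the $\kappa$ blocks, hence leaves at least one block entirely inside $[\alpha_0,\kappa)$ — which is essentially the only use of the size of $\kappa$ (indeed the argument needs nothing beyond $\kappa$ being an infinite cardinal). A minor secondary point is that the reindexed functions $g_i$ need not be increasing; but (2) quantifies over all $f\in\gbs$, so this is harmless, and in any case $g_i$ is increasing whenever $f$ is, since $\xi\mapsto\beta^i_\xi$ is.
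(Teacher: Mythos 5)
Your proof is correct and follows essentially the same route as the paper's: both use a bijection $\kappa\times\kappa\to\kappa$ to split the index set into $\kappa$ blocks of size $\kappa$, apply (2) to each reindexed function, and glue the resulting covers. The only difference is that you spell out the counting step (an ordinal $\alpha_0<\kappa$ meets fewer than $\kappa$ of the blocks, so some block lies entirely in $[\alpha_0,\kappa)$), which the paper leaves implicit in its ``It follows that\dots''.
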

\begin{proof}
Note that (2) is simply the definition of elements of $\SN_\kappa$. That (3) implies (2) is obvious.

To see that (2) implies (3), let $\pi:\kappa\times\kappa\bij\kappa$ be a bijection and $f\in\gbs$. We define $f_\xi\in\gbs$ by $f_\xi(\alpha)=f(\pi(\xi,\alpha))$ and use (2) to find a sequence $\bar s^\xi=\sab{s_\alpha^\xi\mid \alpha\in\kappa}$ with $s_\alpha^\xi\in{}^{f_\xi(\alpha)}2$ such that $X\subset\Cup_{\alpha\in\kappa}[s^\xi_\alpha]$. Given $\xi,\alpha\in\kappa$, we define $s_{\pi(\xi,\alpha)}=s_\alpha^\xi$ then $s_\beta\in f(\beta)$ for all $\beta\in\kappa$. It follows that $X\subset\Cap_{\alpha_0\in\kappa}\Cup_{\beta\in[\alpha_0,\kappa)}[s_\beta]$.
\end{proof}
If $\bar s=\ab{s_\alpha\mid \alpha\in\kappa}$ is a sequence such that $X\subset\Cup_{\alpha\in\kappa}[s^\xi_\alpha]$, we say $X$ is \emph{covered} by $\bar s$. If also $X\subset\Cap_{\alpha_0\in\kappa}\Cup_{\beta\in[\alpha_0,\kappa)}[s_\beta]$, then we say $X$ is \emph{cofinally covered} by $\bar s$.

\subsection{Cardinal Characteristics \& Relational Systems}

In order to study the relations between our cardinal characteristics, we will make use of relational systems. All of the cardinal characteristics we are interested in, can be expressed as the norm of a relational system, thus we will use the language of relational systems to define our cardinal characteristics. We will only give a concise description of relational systems below, and refer to \cite{Blass} for a detailed description.

A relational system $\sr R=\ab{X,Y, R}$ is a triple of sets $X$ and $Y$ and a relation $R\subset X\times Y$. We define the \emph{norm} of $\sr R$ as 
\begin{align*}
\norm{\sr R}=\min\st{|W|\mid W\subset Y\text{ and }\forall x\in X\exists y\in W (x\rl R y)}.
\end{align*}
We refer to a set $W\subset Y$ such that $\forall x\in X\exists y\in W(x\rl R y)$ as a \emph{witness} for $\norm{\sr R}\leq |W|$. We define a \emph{dual} relation to $R$ by
\begin{align*}
\cancel R^{-1}=\st{(y,x)\in Y\times X\mid (x,y)\notin R}.
\end{align*}
Correspondingly we can define the dual relational system $\sr R^\lc =\ab{Y,X,\cancel R^{-1}}$.

Intuitively, we can see $Y$ as a set of possible responses and $X$ as the set of potential challenges. We want to find a set of responses $Y'\subset Y$ such that every challenge can be met with a response from $Y'$, and the norm expresses the least number of responses necessary to do so. The dual relational system answers the question how many challenges we should gather such that no single response can meet them all.

Given two relational systems $\sr R=\ab{X,Y,R}$ and $\sr R'=\ab{X',Y',R'}$, a \emph{Tukey connection} from $\sr R$ to $\sr R'$ is a pair of functions $\rho_-:X\to X'$ and $\rho_+:Y'\to Y$ such that for any $x\in X$ and $y'\in Y'$ with $(\rho_-(x),y')\in R'$ we also have $(x, \rho_+(y'))\in R$. We let $\sr R\preceq\sr R'$ denote the claim that there exists a Tukey connection from $\sr R$ to $\sr R'$, and we let $\sr R\equiv\sr R'$ abbreviate $\sr R\preceq\sr R'\preceq\sr R$. Note that if $\ab{\rho_-,\rho_+}$ witnesses $\sr R\preceq \sr R'$, then $\ab{\rho_+,\rho_-}$ witnesses $\sr R'^\lc\preceq \sr R^\lc$.

We will use Tukey connections to give an ordering between the norms of relational systems, through the following lemma.

\begin{lmm}\renewcommand{\qed}{\hfill$\square$}
If $\sr R\preceq \sr R'$, then $\norm{\sr R}\leq \norm{\sr R'}$ and $\norm{\sr R'^\lc}\leq\norm{\sr R^\lc}$.
\end{lmm}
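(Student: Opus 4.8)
The plan is to verify the two inequalities directly from the definitions, using the Tukey connection as a translation device between witnesses. Suppose $\sr R = \ab{X,Y,R}$ and $\sr R' = \ab{X',Y',R'}$, and fix a Tukey connection $\ab{\rho_-,\rho_+}$ witnessing $\sr R \preceq \sr R'$, so $\rho_-:X\to X'$ and $\rho_+:Y'\to Y$ with the property that $(\rho_-(x),y')\in R'$ implies $(x,\rho_+(y'))\in R$ for all $x\in X$, $y'\in Y'$.

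For the first inequality, I would take any witness $W'\subset Y'$ for $\norm{\sr R'}$, meaning $\forall x'\in X'\,\exists y'\in W'\,(x'\rl R' y')$, and show that $\rho_+[W']\subset Y$ is a witness for $\norm{\sr R}$. Indeed, given $x\in X$, apply the witnessing property of $W'$ to $\rho_-(x)\in X'$ to obtain $y'\in W'$ with $(\rho_-(x),y')\in R'$; the defining property of the Tukey connection then gives $(x,\rho_+(y'))\in R$, and $\rho_+(y')\in\rho_+[W']$. Since $|\rho_+[W']|\leq |W'|$, taking the minimum over all such $W'$ yields $\norm{\sr R}\leq\norm{\sr R'}$.

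For the second inequality, I would use the remark already made in the excerpt that $\ab{\rho_+,\rho_-}$ witnesses $\sr R'^\lc\preceq\sr R^\lc$. Applying the first inequality (already proven) to this Tukey connection between the dual systems immediately gives $\norm{\sr R'^\lc}\leq\norm{\sr R^\lc}$. Alternatively, one can argue directly: a witness $W\subset X$ for $\norm{\sr R^\lc}$ satisfies $\forall y\in Y\,\exists x\in W\,(x,y)\notin R$; then $\rho_-[W]\subset X'$ works as a witness for $\norm{\sr R'^\lc}$, since for any $y'\in Y'$, picking $x\in W$ with $(x,\rho_+(y'))\notin R$ and contraposing the Tukey property gives $(\rho_-(x),y')\notin R'$.

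There is no real obstacle here; the statement is essentially a bookkeeping exercise unpacking the definitions of norm, dual relational system, and Tukey connection. The only point requiring a moment's care is the direction of the implication in the Tukey condition and making sure it lines up with the quantifier structure of the norm (a $\forall\exists$ statement over challenges and responses), and for the dual, correctly negating that structure — but the cleanest route is simply to prove the first inequality and then invoke the duality remark to get the second for free.
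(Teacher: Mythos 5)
Your proof is correct; the paper states this lemma without proof (it is marked with an open square), and your argument is the standard one it implicitly relies on. Both directions are handled properly: pushing a witness $W'$ forward along $\rho_+$ for the first inequality, and either invoking the duality remark $\ab{\rho_+,\rho_-}\colon\sr R'^\lc\preceq\sr R^\lc$ or arguing directly via contraposition for the second.
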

We will on one occasion require a composition of several relational systems. In particular we need a general form of the categorical product that is defined in \cite{Blass}. We will state the definition and give two lemmas that are useful to compute the norm.

Let $\sr R_\alpha=\ab{X_\alpha,Y_\alpha,R_\alpha}$ be relational systems for each $\alpha\in A$, where $A$ is a set of ordinals (but we will omit mention of $A$ from now on for the sake of brevity). We define the categorical product as the system $\bigotimes_\alpha \sr R_\alpha=\ab{\Cup_\alpha (X_\alpha\times\st\alpha),\prod_\alpha Y_\alpha,Z}$, where $(x,\alpha)\rl Z\bar y$ off $x\rl R_\alpha \bar y(\alpha)$. Dually we have the categorical coproduct $\bigoplus_{\alpha}\sr R_\alpha=(\bigotimes_{\alpha}\sr R_\alpha^\lc)^\lc=\ab{\prod_\alpha X_\alpha,\Cup_\alpha(Y_\alpha\times\st\alpha),N}$, where $\bar x\rl N(y,\alpha)$ iff $\bar x(\alpha)\rl R_\alpha y$.

\begin{lmm}
$\norm{\bigotimes_{\alpha} \sr R_\alpha}=\sup_{\alpha}\norm{\sr R_\alpha}$.
\end{lmm}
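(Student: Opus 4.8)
The plan is to prove the two inequalities $\sup_{\alpha}\norm{\sr R_\alpha}\leq\norm{\bigotimes_{\alpha}\sr R_\alpha}$ and $\norm{\bigotimes_{\alpha}\sr R_\alpha}\leq\sup_{\alpha}\norm{\sr R_\alpha}$ separately, both by direct manipulation of witnesses, using only the definition of the categorical product and of the norm.

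For $\sup_{\alpha}\norm{\sr R_\alpha}\leq\norm{\bigotimes_{\alpha}\sr R_\alpha}$, fix an index $\alpha$ and let $W\subset\prod_{\alpha}Y_\alpha$ be a witness for $\norm{\bigotimes_{\alpha}\sr R_\alpha}\leq|W|$. I would pass to the projection $W(\alpha)=\st{\bar y(\alpha)\mid \bar y\in W}\subset Y_\alpha$, which has size at most $|W|$. For any $x\in X_\alpha$, the pair $(x,\alpha)$ is a challenge in $\bigotimes_{\alpha}\sr R_\alpha$, so some $\bar y\in W$ satisfies $(x,\alpha)\rl Z\bar y$, i.e.\ $x\rl R_\alpha\bar y(\alpha)$ with $\bar y(\alpha)\in W(\alpha)$; hence $W(\alpha)$ witnesses $\norm{\sr R_\alpha}\leq|W|$. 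Since $\alpha$ was arbitrary, the supremum is bounded by $\norm{\bigotimes_{\alpha}\sr R_\alpha}$.

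For the reverse inequality, write $\lambda=\sup_{\alpha}\norm{\sr R_\alpha}$ and, for each $\alpha$, fix a witness $W_\alpha\subset Y_\alpha$ for $\norm{\sr R_\alpha}\leq\lambda$ together with an enumeration $W_\alpha=\st{y^\alpha_\xi\mid \xi<\lambda}$, padding with repetitions whenever $\norm{\sr R_\alpha}<\lambda$ (and handling the degenerate case $\lambda=0$ by the observation that then every $X_\alpha$ is empty, so $\bigotimes_{\alpha}\sr R_\alpha$ has empty challenge side and norm $0$ as well; we may assume each $Y_\alpha\neq\emp$, as otherwise the statement is vacuous under the standard convention that all norms in sight are defined). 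For $\xi<\lambda$ define $\bar y_\xi\in\prod_{\alpha}Y_\alpha$ by $\bar y_\xi(\alpha)=y^\alpha_\xi$, and set $W=\st{\bar y_\xi\mid \xi<\lambda}$, so $|W|\leq\lambda$. Then $W$ is a witness for $\bigotimes_{\alpha}\sr R_\alpha$: given a challenge $(x,\alpha)$, since $W_\alpha$ witnesses $\norm{\sr R_\alpha}$ there is $\xi<\lambda$ with $x\rl R_\alpha y^\alpha_\xi=\bar y_\xi(\alpha)$, i.e.\ $(x,\alpha)\rl Z\bar y_\xi$. This gives $\norm{\bigotimes_{\alpha}\sr R_\alpha}\leq\lambda$, and combining the two inequalities yields the claimed equality.

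I do not expect a genuine obstacle here: the argument is a routine ``coordinatewise'' combination of witnesses. The only point requiring a little care is the bookkeeping around trivial cases — an index $\alpha$ with $X_\alpha=\emp$, or $\lambda$ finite — which is dispatched by padding enumerations and by noting that the categorical product inherits emptiness of its challenge side from all of its factors simultaneously.
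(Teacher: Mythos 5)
Your proof is correct and follows essentially the same route as the paper: the upper bound is obtained by combining enumerations (the paper uses surjections $\sigma_\alpha:\lambda\srj Y'_\alpha$, which is the same as your padded enumerations) into diagonal elements $\bar y_\xi$ of the product, and the lower bound by projecting a witness for the product onto each coordinate. The extra bookkeeping for degenerate cases is fine but not needed beyond what the paper does.
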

\begin{proof}

($\leq$)\quad Let $Y'_\alpha\subset Y_\alpha$ be a witness for $|Y'_\alpha|=\norm{\sr R_\alpha}$. Let $\lambda=\sup_{\alpha}\norm{\sr R_\alpha}$ and let $\sigma_\alpha:\lambda\srj Y'_\alpha$ be surjections. Define $\bar y_\xi:\alpha\mapsto \sigma_\alpha(\xi)$ and let $Y=\st{\bar y_\xi\mid \xi\in\lambda}\subset\prod_{\alpha} Y_\alpha$. If $x\in X_\alpha$, let $y\in Y'_\alpha$ such that $x\rl R_\alpha y$ and find $\xi\in\lambda$ such that $y=\sigma_\alpha(\xi)$, then $(x,\alpha)\rl Z\bar y_\xi$. Hence $Y$ witnesses that $\norm{\bigotimes_\alpha\sr R_\alpha}\leq\lambda$.

($\geq$)\quad Let $Y\subset\prod_{\alpha} Y_\alpha$ be a witness for $|Y|=\norm{\bigotimes_{\alpha}\sr R_\alpha}$, and let $Y'_\alpha=\st{\bar y(\alpha)\mid \bar y\in Y}\subset Y_\alpha$. Then for every $x\in X_\alpha$ there exists $\bar y\in Y$ such that $(x,\alpha)\rl Z\bar y$, hence $\bar y(\alpha)\in Y'_\alpha$ has the property that $x\rl R_\alpha \bar y(\alpha)$, thus $\norm{\sr R_\alpha}\leq |Y'|\leq |Y|=\norm{\bigotimes_{\alpha}\sr R_\alpha}$.
\end{proof}

\begin{lmm}
$\norm{\bigoplus_{\alpha} \sr R_\alpha}=\inf_{\alpha}\norm{\sr R_\alpha}$.
\end{lmm}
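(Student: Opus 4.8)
The plan is to prove the two inequalities $\norm{\bigoplus_\alpha\sr R_\alpha}\leq\inf_\alpha\norm{\sr R_\alpha}$ and $\inf_\alpha\norm{\sr R_\alpha}\leq\norm{\bigoplus_\alpha\sr R_\alpha}$ directly from the definition $\bigoplus_\alpha\sr R_\alpha=\ab{\prod_\alpha X_\alpha,\Cup_\alpha(Y_\alpha\times\st\alpha),N}$, where $\bar x\rl N(y,\alpha)$ iff $\bar x(\alpha)\rl R_\alpha y$. This runs parallel to the proof of the previous lemma, but with the roles of the two directions interchanged. Note that one cannot shortcut the argument using $\bigoplus_\alpha\sr R_\alpha=(\bigotimes_\alpha\sr R_\alpha^\lc)^\lc$ together with the previous lemma, since there is no general relation between $\norm{\sr S}$ and $\norm{\sr S^\lc}$.

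For $(\leq)$, I would use that any nonempty family of cardinals attains its infimum: fix $\beta$ with $\norm{\sr R_\beta}=\inf_\alpha\norm{\sr R_\alpha}=:\lambda$ and let $Y'_\beta\subset Y_\beta$ witness $\norm{\sr R_\beta}\leq|Y'_\beta|=\lambda$. Set $W=Y'_\beta\times\st\beta\subset\Cup_\alpha(Y_\alpha\times\st\alpha)$. Given $\bar x\in\prod_\alpha X_\alpha$, applying the witness property of $Y'_\beta$ at $\bar x(\beta)\in X_\beta$ gives $y\in Y'_\beta$ with $\bar x(\beta)\rl R_\beta y$, i.e.\ $\bar x\rl N(y,\beta)$; so $W$ witnesses $\norm{\bigoplus_\alpha\sr R_\alpha}\leq|W|=\lambda$.

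For $(\geq)$, take a witness $W\subset\Cup_\alpha(Y_\alpha\times\st\alpha)$ with $|W|=\norm{\bigoplus_\alpha\sr R_\alpha}$ and put $W_\alpha=\st{y\mid(y,\alpha)\in W}\subset Y_\alpha$, so $|W_\alpha|\leq|W|$ for each $\alpha$. I claim $W_\alpha$ witnesses $\norm{\sr R_\alpha}\leq|W_\alpha|$ for some $\alpha$, which then yields $\inf_\alpha\norm{\sr R_\alpha}\leq|W_\alpha|\leq|W|$. If not, then for every $\alpha$ there is $x_\alpha\in X_\alpha$ with $x_\alpha\nrl{R_\alpha}y$ for all $y\in W_\alpha$; the function $\bar x$ given by $\bar x(\alpha)=x_\alpha$ then lies in $\prod_\alpha X_\alpha$ and satisfies $(\bar x,(y,\alpha))\notin N$ for every $(y,\alpha)\in W$, contradicting that $W$ is a witness. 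The only points requiring care — and the reason there is no real obstacle here — are that the right-hand infimum is genuinely a minimum (making the surjection trick from the previous lemma unnecessary in the $(\leq)$ direction) and the diagonalisation producing $\bar x$, which is exactly dual to the projection $Y'_\alpha=\st{\bar y(\alpha)\mid\bar y\in Y}$ used there.
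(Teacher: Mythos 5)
Your proof is correct and follows essentially the same route as the paper: the $(\leq)$ direction uses a witness for a single coordinate lifted to $Y'_\beta\times\st\beta$ (the paper proves the bound for an arbitrary $\alpha$ and then takes the infimum, rather than first fixing a minimising $\beta$, but the construction is identical), and the $(\geq)$ direction is the same diagonalisation over the projections $W_\alpha$. No gaps.
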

\begin{proof}
($\leq$)\quad Let $Y\subset Y_\alpha$ be a witness for $|Y|=\norm{\sr R_\alpha}$. Let $Y'=\st{(y,\alpha)\mid y\in Y}$. If $\bar x\in \prod_\alpha X_\alpha$, then there exists $y\in Y$ such that $\bar x(\alpha)\rl R_\alpha y$ and thus $\bar x\rl N(y,\alpha)$. Therefore $\norm{\bigoplus_\alpha \sr R_\alpha}\leq |Y'|=|Y|=\norm{\sr R_\alpha}$.

($\geq$)\quad Let $Y\subset\Cup_\alpha(Y_\alpha\times\st\alpha)$ with $|Y|<\inf_\alpha \norm{\sr R_\alpha}$. Let $Y'_\alpha=\st{y\in Y_\alpha\mid (y,\alpha)\in Y}$. Since $|Y'_\alpha|<\norm{\sr R_\alpha}$ there is $x_\alpha\in X_\alpha$ such that $x_\alpha\nrl R_\alpha y$ for all $y\in Y'_\alpha$. Let $\bar x:\alpha\mapsto x_\alpha$, then $\bar x\nrl N (y,\alpha)$ for every $(y,\alpha)\in Y$. Thus $|Y|<\norm{\bigoplus_\alpha\sr R_\alpha}$.
\end{proof}

Apart from the previously mentioned cardinal functions on ideals, we will define four other families of cardinals. In order to do so, we need to discuss \emph{slaloms}, a concept first formulated by Bartoszy\'nski to give a combinatorial characterisation of the additivity and cofinality of the Lebesgue null ideal in the classical Baire space (see \cite{Bart87,BJ} or the next subsection).

\begin{dfn}\label{slalom definition}
Let $h\in\gbs$ be an increasing function such that $h(\alpha)$ is a nonzero cardinal for all $\alpha\in\kappa$. We will generally assume $h\leq b$. Let $\Loc_\kappa^{b,h}$ be the set of functions $\phi$ with $\dom(\phi)=\kappa$ such that $\phi(\alpha)\subset b(\alpha)$ and $\card{\phi(\alpha)}< h(\alpha)$ for each $\alpha\in\kappa$. If $\phi\in\Loc_\kappa^{b,h}$, then we call $\phi$ a \emph{$(b,h)$-slalom}.\footnote{Note that this definition differs from the usual definition: we define $\phi$ such that $|\phi(\alpha)|<h(\alpha)\leq b(\alpha)$, instead of the traditional $|\phi(\alpha)|\leq h(\alpha)<b(\alpha)$. Our definition is more versatile. For example, if $h(\alpha)$ is a limit cardinal for each $\alpha$, the resulting set of all slaloms $\phi$ with $|\phi(\alpha)|<h(\alpha)$ cannot be expressed using the traditional definition. On the other hand, the set of all traditionally defined $(b,h)$-slaloms is the set of $(b,h^+)$-slaloms under our definition.} 

Given $\phi\in\Loc_\kappa^{b,h}$ and $f\in \prod b$, then we say $\phi$ \emph{localises} $f$ iff $f\in^* \phi$ and $\phi$ \emph{antilocalises} $f$ iff $f\nini\phi$.

Similar to how we fixed the notation $\prod_{<\kappa}b$ for initial segments of functions in $\prod b$, we will fix the notation $\Loc_{<\kappa}^{b,h}$ for initial segments of $(b,h)$-slaloms, that is, $s\in\Loc_{<\kappa}^{b,h}$ if and only if $\dom(s)\in\kappa$ and for all $\alpha\in\dom(s)$ we have $s(\alpha)\in[b(\alpha)]^{<h(\alpha)}$.
\end{dfn}

We now define the following four relational systems and associated cardinals:
\begin{align*}
\sr D_b&=\ab{\textstyle\prod b,\prod b,\leq^*}&
\norm{\sr D_b}&=\dleq{b}&
\norm{{\sr D_b^\lc}}&=\bleq{b}\\
\ED_b&=\ab{\textstyle\prod b,\prod b,\neqi}&
\norm{\ED_b}&=\dneq{b}&
\norm{{\ED_b^\lc}}&=\bneq{b}\\
\sr L_{b,h}&=\ab{\textstyle\prod b,\Loc_\kappa^{b,h},\in^*}&
\norm{\sr L_{b,h}}&=\dstar{b,h}&
\norm{{\sr L_{b,h}^\lc}}&=\bstar{b,h}\\
\AL_{b,h}&=\ab{\Loc_\kappa^{b,h},\textstyle\prod b,\nnii}&
\norm{\AL_{b,h}}&=\dinf{b,h}&
\norm{{\AL_{b,h}^\lc}}&=\binf{b,h}
\end{align*}
We call $\dleq{b}$ the \emph{dominating} number, $\bleq{b}$ the \emph{unbounding} number, $\dneq{b}$ the \emph{eventual difference} number, $\bneq{b}$ the \emph{cofinal equality} number\footnote{In the classical setting, the relation of being cofinally equal is often referred to as being infinitely equal, but in our generalised setting we require more than just an infinite amount of equality.}. Both $\dstar{b,h}$ and $\bstar{b,h}$ are sometimes called \emph{localisation} cardinals, but we believe it will be useful to distinguish between these classes of cardinals with separate terms. We therefore opt to call $\dstar{b,h}$ the \emph{localisation} cardinal, and $\bstar{b,h}$ the \emph{avoidance} cardinal, for the reason that a witness to $\bstar{b,h}$ is a set of functions $F\subset\prod b$ that cannot be localised by a single $(b,h)$-slalom; localisation by any specific slalom is always avoided by at least one member of $F$. Finally this leaves the cardinals $\dinf{b,h}$, which we will call the \emph{anti-avoidance cardinal} and $\binf{b,h}$, which we will call the \emph{antilocalisation} cardinal.

It should be mentioned that (anti)localisation cardinals not only suffer from ambiguous names, they also suffer from nonuniformity of notation. In previous literature, the notations $\f c_{b,h}^\forall$ and $\f v_{b,h}^\forall$ have also frequently been used for $\f d_\omega^{b,h}(\in^*)$ and $\f b_\omega^{b,h}(\in^*)$ respectively, while $\f c_{b,h}^\exists$ and $\f v_{b,h}^\exists$ have been used for $\f b_\omega^{b,h}(\nnii)$ and $\f d_\omega^{b,h}(\nnii)$ respectively. For example, this notation was used in \cite{Kel08,OK14,KM21,CKM21}. Here $\f c$ stands for a {\bf c}over of $\prod b$ with slaloms by the relations $\in^*$ or $\in^\infty$, and $\f v$ stands for a{\bf v}oidance by or e{\bf v}asion by a single slalom. We believe that \emph{avoidance} forms the better antonym to \emph{localisation}, as it prevents confusion with other cardinal characteristics known as evasion cardinals (such as described in the eponymous section of \cite{Blass}). Our choice to use the notation $\dstar{b,h}$ over $\f c_{b,h}^\forall$ has the main benefit that the relevant relational system can be deduced from our notation. Finally, we opted to use the relation $\nnii$ instead of $\in^\infty$ in our relational system for antilocalisation, since we will later see that $\dinf{b,h}$ bears more similarities to $\dstar{b,h}$ than to $\bstar{b,h}$.

\subsection{Classical and Higher Cicho\'n Diagram}

In the next section we will investigate the provable relations between the cardinals we have defined and what influence the parameters $b$ and $h$ have on their cardinality. Before doing so, we would like to discuss what is known about the case where we let $b=\bar\kappa$ be the constant function, in other words, what is known about these cardinals in the (unbounded) generalised Baire space, and how this compares to the same cardinals as defined classically on $\bs$. When discussing the cardinals in the context of the entire space $\gbs$, we simply leave out the parameter $b$ from the notation, e.g. $\f d_\kappa(\leq^*)$ instead of $\dleq{\bar \kappa}$ and $\binf{h}$ instead of $\binf{\bar\kappa,h}$. When discussing the cardinal in context of the classical Baire space, we replace $\kappa$ by $\omega$, e.g. $\f b_\omega^b(\neqi)$.

In the context of the classical Baire space $\bs$, each of the cardinals mentioned so far is an element of the Cicho\'n diagram. Let $\c N$ denote the ideal of sets of reals with negligible Lebesgue measure, and let $\c M$ denote the ($\omega$-)meagre ideal on $\bs$. The Cicho\'n diagram gives an overview of the relations between our cardinals, where $\f x\leq \f y$ is provable in $\s{ZFC}$ if and only if there exists a path from $\f x$ to $\f y$ following the arrows.

\begin{center}
\begin{tikzpicture}[xscale=2.5,yscale=1.3]

\node (a1) at (0.25,0) {$\aleph_1$};
\node (aN) at (1,0) {$\r{add}(\c N)$};
\node (aM) at (2,0) {$\r{add}(\c M)$};
\node (cN) at (1,2) {$\r{cov}(\c N)$};
\node (nM) at (2,2) {$\r{non}(\c M)$};
\node (b) at (2,1) {$\f b_\omega(\leq^*)$};
\node (d) at (3,1) {$\f d_\omega(\leq^*)$};
\node (cM) at (3,0) {$\r{cov}(\c M)$};
\node (nN) at (4,0) {$\r{non}(\c N)$};
\node (fM) at (3,2) {$\r{cof}(\c M)$};
\node (fN) at (4,2) {$\r{cof}(\c N)$};
\node (c) at (4.75,2) {$2^{\aleph_0}$};

\draw (a1) edge[->] (aN);
\draw (aN) edge[->] (cN);
\draw (aN) edge[->] (aM);
\draw (cN) edge[->] (nM);
\draw (aM) edge[->] (b);
\draw (aM) edge[->] (cM);
\draw (b) edge[->] (nM);
\draw (b) edge[->] (d);
\draw (nM) edge[->] (fM);
\draw (cM) edge[->] (d);
\draw (cM) edge[->] (nN);
\draw (d) edge[->] (fM);
\draw (fM) edge[->] (fN);
\draw (nN) edge[->] (fN);
\draw (fN) edge[->] (c);

\end{tikzpicture}
\end{center}
Proofs for these relations and for the following facts can also be found in \cite{BJ} or \cite{Blass}. 
\begin{fct}[\ \!\!\cite{Truss,M81}]
$\add(\c M)=\min\st{\f b_\omega(\leq^*),\cov(\c M)}$ and $\cof(\c M)=\max\st{\f d_\omega(\leq^*),\non(\c M)}$
\end{fct}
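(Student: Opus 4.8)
The four inequalities $\add(\c M)\leq\f b_\omega(\leq^*)$, $\add(\c M)\leq\cov(\c M)$, $\f d_\omega(\leq^*)\leq\cof(\c M)$ and $\non(\c M)\leq\cof(\c M)$ are recorded by arrows of the Cicho\'n diagram above, so the plan is to prove only $\add(\c M)\geq\min\st{\f b_\omega(\leq^*),\cov(\c M)}$ and $\cof(\c M)\leq\max\st{\f d_\omega(\leq^*),\non(\c M)}$. The one nonroutine ingredient is Bartoszy\'nski's combinatorial description of the meagre ideal (we may work in $\cs$, whose meagre ideal has the same four invariants as that of $\bs$, exactly as in the corollary above; see \cite{BJ}): a set $A\subset\cs$ is meagre iff there are an interval partition $\bar I=\ab{I_n\smid n\in\omega}$ of $\omega$ and a $y\in\cs$ with $A\subset M_{\bar I,y}:=\st{x\in\cs\smid\forall^\infty n\,(x\restriction I_n\neq y\restriction I_n)}$, and every $M_{\bar I,y}$ is meagre. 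I will repeatedly use the elementary \emph{inclusion criterion}: if all but finitely many blocks of an interval partition $\bar J$ contain a full block of $\bar I$ on which $z$ agrees with $y$, then $M_{\bar I,y}\subset M_{\bar J,z}$. I will also use the standard fact that the interval partitions of $\omega$, quasi-ordered by ``all but finitely many $\bar J$-blocks contain a full $\bar I$-block'', form a directed set Tukey equivalent to $\ab{\bs,\bs,\leq^*}$; hence there is a family of $\f d_\omega(\leq^*)$ interval partitions cofinal for this order, and fewer than $\f b_\omega(\leq^*)$ interval partitions are simultaneously eventually coarsened by one.

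\textbf{Cofinality.} Fix a non-meagre $W\subset\cs$ with $|W|=\non(\c M)$ and a cofinal family $\st{\bar J^\xi\smid\xi<\f d_\omega(\leq^*)}$ of interval partitions. I claim $\st{M_{\bar J^\xi,z}\smid\xi<\f d_\omega(\leq^*),\ z\in W}$ is cofinal in $\c M$; as it has size at most $\f d_\omega(\leq^*)\cdot\non(\c M)=\max\st{\f d_\omega(\leq^*),\non(\c M)}$ and consists of meagre sets, this gives the bound. Given $A\in\c M$, pick $\bar I,y$ with $A\subset M_{\bar I,y}$. Since $W\not\subset M_{\bar I,y}$, some $z\in W$ has $z\notin M_{\bar I,y}$, i.e.\ $S:=\st{n\smid z\restriction I_n=y\restriction I_n}$ is infinite; writing $S=\st{s_k\smid k\in\omega}$ increasingly, let $\bar I_S$ be the partition whose $k$-th block is $\Cup_{s_k\leq n<s_{k+1}}I_n$, so each block of $\bar I_S$ contains the full $\bar I$-block $I_{s_k}$, on which $z=y$. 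Choose $\xi$ with $\bar J^\xi$ eventually coarsening $\bar I_S$; then all but finitely many $\bar J^\xi$-blocks contain a full block of $\bar I_S$, hence a full $\bar I$-block on which $z$ agrees with $y$, and the inclusion criterion yields $A\subset M_{\bar I,y}\subset M_{\bar J^\xi,z}$.

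\textbf{Additivity (dual argument).} Let $\mu<\min\st{\f b_\omega(\leq^*),\cov(\c M)}$ and let $\st{A_\alpha\smid\alpha<\mu}\subset\c M$; pick $\bar I^\alpha,y^\alpha$ with $A_\alpha\subset M_{\bar I^\alpha,y^\alpha}$. For each $\alpha$ the set $E_\alpha:=\st{z\in\cs\smid\exists^\infty n\,(z\restriction I^\alpha_n=y^\alpha\restriction I^\alpha_n)}$ is comeagre, since for every $N$ the set $\Cup_{n\geq N}\st{z\smid z\restriction I^\alpha_n=y^\alpha\restriction I^\alpha_n}$ is open dense. As $\mu<\cov(\c M)$, fix $z\in\Cap_{\alpha<\mu}E_\alpha$ and put $S_\alpha:=\st{n\smid z\restriction I^\alpha_n=y^\alpha\restriction I^\alpha_n}$ (infinite), with associated sparse partition $\bar I^\alpha_{S_\alpha}$ as above, each of whose blocks contains a full $\bar I^\alpha$-block on which $z=y^\alpha$. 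Since $\mu<\f b_\omega(\leq^*)$, a single interval partition $\bar J$ eventually coarsens all the $\bar I^\alpha_{S_\alpha}$; then for every $\alpha$, almost every $\bar J$-block contains a full block of $\bar I^\alpha_{S_\alpha}$, hence a full $\bar I^\alpha$-block on which $z$ agrees with $y^\alpha$, so the inclusion criterion gives $M_{\bar I^\alpha,y^\alpha}\subset M_{\bar J,z}$ for all $\alpha$. Therefore $\Cup_{\alpha<\mu}A_\alpha\subset M_{\bar J,z}\in\c M$, proving $\add(\c M)\geq\mu^+$ for all such $\mu$.

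\textbf{The main obstacle.} The two arguments are exact duals, and the delicate point is the \emph{order} of the two steps in the additivity argument: one must first choose the common pattern $z$ — exploiting that ``agreeing infinitely often on blocks'' is a \emph{comeagre} condition, so $\cov(\c M)$ applies — and only afterwards absorb the resulting sparse partitions into one $\bar J$ via $\f b_\omega(\leq^*)$. Attempting it in the opposite order (first fix a common partition, then hunt for a pattern) fails, because for a fixed coarsening the set of patterns agreeing with a given $y^\alpha$ on a sub-block of almost every block is meagre, so $\cov(\c M)$ gives nothing. Beyond this bookkeeping with chopped reals, everything rests on Bartoszy\'nski's characterisation and on the Tukey equivalence of interval partitions with $\ab{\bs,\bs,\leq^*}$; one could also package the conclusion as a Tukey decomposition of $\ab{\c M,\c M,\subset}$, but the direct argument above is self-contained.
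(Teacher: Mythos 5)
Your proof is correct and is essentially the standard argument the paper defers to (it cites Truss/Miller and points to [BJ]/[Blass] rather than proving the fact): Bartoszyński's chopped-real characterisation of meagre sets, the Tukey equivalence of interval partitions under eventual coarsening with $\leq^*$, and the two dual cofinality/additivity computations. No gaps; the order-of-quantifiers point you flag in the additivity direction is exactly the crux of the classical proof.
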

\begin{fct}[\ \!\!\cite{Bart87}]
$\cov(\c M)=\f d_\omega(\neqi)=\f d_\omega^h(\nnii)$ and $\non(\c M)=\f b_\omega(\neqi)=\f b_\omega^h(\nnii)$ and $\cof(\c N)=\f d^h_\omega(\ins)$ and $\add(\c M)=\f b^h_\omega(\ins)$ for any increasing cofinal $h\in\bs$.
\end{fct}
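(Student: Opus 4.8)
These are Bartoszy\'nski's characterisations; for a full argument I would cite \cite{Bart87} (see also \cite[Chapter~2]{BJ}) and only indicate the structure here, which splits into a meagre half and a null half.

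\emph{The meagre half.} The starting point would be Bartoszy\'nski's combinatorial description of $\c M$ on $\bs$: a set $X\subseteq\bs$ is meagre if and only if there is a ``chopped real'', that is, an interval partition $\bar I=\ab{I_k\mid k\in\omega}$ of $\omega$ together with $y\in\bs$, such that $x\restriction I_k\neq y\restriction I_k$ for all but finitely many $k$, for every $x\in X$. For one direction one writes a meagre set as an increasing union $\Cup_kF_k$ of closed nowhere dense sets and builds $\bar I,y$ recursively, each time using nowhere density of $F_k$ to find a block on which matching $y$ forces $x$ out of $F_k$; the converse is the remark that $\st{x\mid\forall^\infty k\,(x\restriction I_k\neq y\restriction I_k)}$ is plainly a countable union of nowhere dense sets. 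I would then use that a single, sufficiently fast-growing interval partition already suffices (any meagre set is caught by a chopped real whose partition coarsens the fixed one), which lets everything be transported through block coding: fixing bijections $c_k\colon{}^{I_k}\omega\to\omega$ turns a chopped real into an ordinary real and ``$x$ agrees with $y$ on block $I_k$'' into an instance of $=^\infty$, giving the $\neqi$ form, while bundling up to $h(k)$ admissible patterns per block into a slalom value of size $<h(k)$ gives the $\nnii$ form, the slack coming from $h$ being increasing and cofinal. Reading off the norms of the resulting Tukey equivalences yields $\cov(\c M)=\f d_\omega(\neqi)=\f d_\omega^h(\nnii)$ and dually $\non(\c M)=\f b_\omega(\neqi)=\f b_\omega^h(\nnii)$.

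\emph{The null half.} For the two equalities involving the slalom relation $\ins$ the tool is Bartoszy\'nski's slalom characterisation of $\c N$: every null $G_\delta$ set is of the form $\Cap_m\Cup_n[t^m_n]$ with $t^m_n\in\fcs$ and $\sum_n2^{-\card{t^m_n}}\leq2^{-m}$, and by regrouping $\omega$ into intervals on which all layers $m$ are simultaneously sparse one extracts a single slalom $\phi$ with $\card{\phi(k)}<h(k)$ such that a real, coded block by block, lies outside the given null set precisely when it is localised by $\phi$; conversely, every $(\bar\omega,h)$-slalom gives rise to a null set by a geometric-series estimate. This is a Tukey equivalence between the system $\ab{\c N,\c N,\subseteq}$ (whose norm is $\cof(\c N)$ and whose dual norm is $\add(\c N)$) and $\sr L_{\bar\omega,h}$, and it is insensitive to the particular increasing cofinal $h$ since any two such become interleaved after coarsening, which does not change the cardinal; taking norms gives $\cof(\c N)=\f d^h_\omega(\ins)$ and $\add(\c N)=\f b^h_\omega(\ins)$.

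In both halves the genuine obstacle will be the combinatorial bookkeeping rather than any single difficult idea: one must arrange that the interval partition produced on the responding side can absorb an arbitrary interval partition coming from the challenging side, which is handled by the standard observation that coarsening a partition only enlarges the associated meagre (respectively null) set, so that it suffices to range over one cofinal chain of partitions; and, in the null case, one must keep the measure estimates $\sum_n2^{-\card{t^m_n}}$ intact through the regrouping. Once the relational systems are matched up, the equalities of cardinals are immediate from the lemma relating Tukey connections to norms.
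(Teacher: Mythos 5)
The paper gives no proof of this Fact --- it is quoted from \cite{Bart87}, with the reader referred to \cite{BJ} and \cite{Blass} --- so there is no internal argument to compare against; your outline is precisely the standard one from those sources (chopped reals for the meagre half, the slalom characterisation of null $G_\delta$ sets plus block regrouping for the null half), and its structure is sound. One point worth flagging: you prove $\add(\c N)=\f b^h_\omega(\ins)$, whereas the Fact as printed asserts $\add(\c M)=\f b^h_\omega(\ins)$; the printed version must be a typo, since it would fail in, e.g., the Hechler model, where $\add(\c M)=2^{\aleph_0}$ while $\f b^h_\omega(\ins)=\add(\c N)=\aleph_1$, and your version is the correct dual of $\cof(\c N)=\f d^h_\omega(\ins)$.
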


In the context of the generalised Baire space $\gbs$ (where we assume, as mentioned, that $\kappa$ is inaccessible), we do not have Lebesgue measure, thus the four cardinal invariants related to $\c N$ cannot be generalised. The other cardinals can be generalised, as we have defined in the previous section, and the Cicho\'n diagram looks identical to the classical case if we assume that $\kappa$ is strongly inaccessible\footnote{This assumption is necessary for all of the cardinals of (the middle part of) the Cicho\'n diagram to be nontrivial, but there are some interesting questions for $\kappa$ being accessible, see \cite{Bre22}.} and $h$ is a cofinal increasing function. 

\begin{center}
\begin{tikzpicture}[xscale=2.5,yscale=1.3]

\node (a1) at (0.25,0) {$\kappa^+$};
\node (aN) at (1,0) {$\bstar{h}$};
\node (aM) at (2,0) {$\r{add}(\c M_\kappa)$};
\node (nM) at (2,2) {$\r{non}(\c M_\kappa)$};
\node (b) at (2,1) {$\bleq{}$};
\node (d) at (3,1) {$\dleq{}$};
\node (cM) at (3,0) {$\r{cov}(\c M_\kappa)$};
\node (fM) at (3,2) {$\r{cof}(\c M_\kappa)$};
\node (fN) at (4,2) {$\dstar{h}$};
\node (c) at (4.75,2) {$2^{\kappa}$};

\draw (a1) edge[->] (aN);
\draw (aN) edge[->] (aM);
\draw (aM) edge[->] (b);
\draw (aM) edge[->] (cM);
\draw (b) edge[->] (nM);
\draw (b) edge[->] (d);
\draw (nM) edge[->] (fM);
\draw (cM) edge[->] (d);
\draw (d) edge[->] (fM);
\draw (fM) edge[->] (fN);
\draw (fN) edge[->] (c);

\end{tikzpicture}
\end{center}
Proofs for these relations can be found in \cite{BBFM}.
\begin{fct}[\ \!\!\cite{BBFM}]\label{add cof meagre}
$\add(\c M_\kappa)=\min\st{\bleq{},\cov(\c M_\kappa)}$ and $\cof(\c M_\kappa)=\max\st{\dleq{},\non(\c M_\kappa)}$
\end{fct}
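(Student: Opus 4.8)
The statement to prove is Fact~\ref{add cof meagre}: $\add(\c M_\kappa)=\min\st{\bleq{},\cov(\c M_\kappa)}$ and $\cof(\c M_\kappa)=\max\st{\dleq{},\non(\c M_\kappa)}$. This is the generalised-Baire-space analogue of the classical Miller--Truss theorem, and the plan is to adapt the standard proof, using the characterisation of $\kappa$-meagre sets in $\gbs$ via the homeomorphism with $\prod b$ is not needed — rather, the key structural fact is that a $\kappa$-meagre set is contained in one coded by a pair consisting of an increasing sequence of levels (a function in $\gbs$) together with a function telling which basic clopen pieces to omit.

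\textbf{Setup.} First I would fix the coding: every $\kappa$-meagre set $M\subseteq\gbs$ is contained in a set of the form $M_{f,g}=\bigcap_{\alpha\in\kappa}\bigcup_{\beta\geq\alpha}\{x : x\restriction[f(\beta),f(\beta+1))\neq g(\beta)\}$ (or a cosmetically equivalent variant), where $f\in\gbs$ is increasing continuous and $g$ picks, for each $\beta$, a sequence in ${}^{[f(\beta),f(\beta+1))}\kappa$; here one uses that a nowhere dense set avoids a basic open set below every basic open set, and that $\kappa$ is regular so that a $\kappa$-union of nowhere dense sets can be "interleaved" along a single club of levels. Order these codes by $(f,g)\leq(f',g')$ if eventually the blocks of $f'$ refine those of $f$ and agree with $g$ appropriately, so that $M_{f,g}\subseteq M_{f',g'}$; then $\cof(\c M_\kappa)$ is the cofinality of this order and $\add(\c M_\kappa)$ its additivity (unboundedness-type number). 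This reduces both identities to statements about this two-coordinate order.

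\textbf{The two directions.} For $\cof(\c M_\kappa)\leq\max\{\dleq{},\non(\c M_\kappa)\}$: take a dominating family $D\subseteq\gbs$ of size $\dleq{}$ and a non-$\kappa$-meagre set $N$ of size $\non(\c M_\kappa)$; for each $f\in D$ and each $x\in N$ build the code whose level-function is $f$ and whose $g$-part reads off $x$ on the blocks; the resulting family of $\kappa$-meagre sets is cofinal because any $M_{f',g'}$ is dominated in levels by some $f\in D$, and then some $x\in N$ must fall outside $M_{f,g_x}$ fails... more precisely, one shows cofinality by: given $M'$, pick $f\in D$ dominating its level function, and observe that the meagre sets $M_{f,g}$ as $g$ ranges over the "traces" of elements of $N$ cover... — this is exactly the classical argument and I would transcribe it. The reverse inequalities $\dleq{}\leq\cof(\c M_\kappa)$ and $\non(\c M_\kappa)\leq\cof(\c M_\kappa)$ come from the Tukey/Cicho\'n arrows already displayed (they are in the diagram just above), and the $\add$ statement is obtained dually: $\add(\c M_\kappa)\geq\min\{\bleq{},\cov(\c M_\kappa)\}$ by the dual construction, and $\add(\c M_\kappa)\leq\bleq{}$, $\add(\c M_\kappa)\leq\cov(\c M_\kappa)$ again from the diagram. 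Alternatively, both halves follow from the $\cof$ half by the general duality $\add(\c I)=\min$ of the duals and a Tukey-connection bookkeeping, but I would just do the two symmetric arguments explicitly.

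\textbf{Main obstacle.} The delicate point specific to the generalised setting is the coding lemma: showing that every $\kappa$-meagre set sits inside an $M_{f,g}$ with $f$ \emph{continuous} and with the block structure genuinely indexed by $\kappa$. In the classical case one fuses countably many nowhere dense sets by a diagonal argument over $\omega$; here one has $\kappa$ many nowhere dense sets $N_\alpha$ and must choose an increasing continuous sequence of levels $\langle f(\beta)\mid\beta\in\kappa\rangle$ such that on the $\beta$-th block one can simultaneously dodge all of $\{N_\alpha : \alpha<\beta\}$ — this uses regularity of $\kappa$ and the ${<}\kappa$-box topology (a basic open set is determined by ${<}\kappa$ coordinates, so at stage $\beta$ only ${<}\kappa$ constraints are active and $\kappa$ inaccessible gives enough room). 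Once this lemma is in hand, and the order on codes is set up so that domination of level-functions corresponds to refinement of codes, the rest is the familiar Miller--Truss bookkeeping and should be routine; I expect the write-up to spend most of its length on this coding lemma and on verifying the monotonicity $(f,g)\leq(f',g')\Rightarrow M_{f,g}\subseteq M_{f',g'}$ is set up correctly. (Indeed the authors may simply cite \cite{BBFM} for this, since the displayed Cicho\'n diagram and the attribution "\cite{BBFM}" already do most of the work.)
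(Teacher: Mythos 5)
First, a point of comparison: the paper does not actually prove this statement --- it is recorded as a Fact and attributed to \cite{BBFM}, with no argument given. So your proposal has to be judged on its own terms, as a sketch of the generalised Miller--Truss theorem. Your overall strategy is the standard and correct one: code every $\kappa$-meagre set into a set determined by an increasing continuous level function together with a trace on the resulting blocks, prove the coding lemma using regularity of $\kappa$ and the ${<}\kappa$-box topology (your observation that at stage $\beta$ only the ${<}\kappa$ many constraints from $\st{N_\alpha\mid\alpha<\beta}$ are active, so inaccessibility gives room, is exactly right and is indeed the main new technical point over the classical case), and then run the Miller--Truss bookkeeping. One small slip: the displayed formula $\bigcap_{\alpha}\bigcup_{\beta\geq\alpha}\st{x: x\restriction[f(\beta),f(\beta+1))\neq g(\beta)}$ defines a \emph{comeagre} set (``disagreement on cofinally many blocks''); the meagre set you want is $\bigcup_{\alpha}\bigcap_{\beta\geq\alpha}\st{x: x\restriction[f(\beta),f(\beta+1))\neq g(\beta)}$.

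The genuine gap is precisely the step you elide with ellipses in the cofinality argument. Non-meagreness of $N$ gives you, for a fixed code $(f',g')$, an element $x\in N$ agreeing with $g'$ on \emph{cofinally many} blocks, whereas the inclusion $M_{f',g'}\subset M_{f,x}$ requires that \emph{almost every} block of the coarser partition determined by $f\in D$ contain a block of $f'$ on which $x$ agrees with $g'$. Two cofinal sets of indices need not meet, so ``cofinally often'' cannot simply be upgraded to ``almost always''; this is exactly where the classical proof needs its extra twist --- coarsening the partition, re-coding the block-traces as a single function on a product space, and applying the $\eqi$/$\neqi$ characterisation of $\non(\c M_\kappa)$ (which in this paper is \Cref{evt dif and meagre}) to the re-coded space, or equivalently establishing a genuine Tukey equivalence between the containment order on $\c M_\kappa$ and a product of the dominating system with the $\neqi$-system. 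Without that step the family $\st{M_{f,x}\mid f\in D,\ x\in N}$ has not been shown cofinal, and dually the $\add$ half is not established. The remaining ingredients of your outline (the reverse inequalities from the Cicho\'n diagram and the duality between the two halves) are unproblematic.
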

\begin{fct}[\ \!\!\cite{BHZ}, \cite{L}]\label{evt dif and meagre}\renewcommand{\qed}{\hfill$\square$}
$\non(\c M_\kappa)=\bneq{}$ and $\cov(\c M_\kappa)=\dneq{}$.
\end{fct}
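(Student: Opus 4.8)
The plan is to establish the two Tukey equivalences $\sr{ED}_{\bar\kappa}\equiv\ab{\gbs,\c M_\kappa,\in}$ (the covering relational system for the meagre ideal) in the appropriate orientations, from which both cardinal equalities follow by the Tukey-to-norm lemma, since $\cov(\c M_\kappa)=\norm{\ab{\gbs,\c M_\kappa,\in}}$ and $\non(\c M_\kappa)=\norm{\ab{\gbs,\c M_\kappa,\in}^\lc}$. Concretely, I would prove (a) $\ab{\gbs,\c M_\kappa,\in}\preceq\sr{ED}_{\bar\kappa}$ and (b) $\sr{ED}_{\bar\kappa}\preceq\ab{\gbs,\c M_\kappa,\in}$; combining these with the earlier lemma gives $\cov(\c M_\kappa)\le\dneq{}$ and $\dneq{}\le\cov(\c M_\kappa)$, hence equality, and dually $\non(\c M_\kappa)=\bneq{}$.

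For direction (a), given $g\in\gbs$ I would produce a meagre set $M_g$ such that any $f$ coding a sufficiently generic real "escapes" $M_g$ only if $f\neqi g$ fails somewhere systematically — more precisely, following the classical Bartoszyński argument, one partitions $\kappa$ into $\kappa$-many intervals $\ab{I_\xi\mid\xi\in\kappa}$ each of length growing with $\xi$ (using inaccessibility so that $|{}^{I_\xi}\kappa|<\kappa$), and for each $g$ one defines $M_g=\st{f\in\gbs\mid \exists^\infty\xi\, (f\restriction I_\xi = g\restriction I_\xi)}$, which is $\kappa$-meagre. Then $\rho_-:g\mapsto M_g$ and $\rho_+$ takes a real $f\notin M_g$ and returns a function that is eventually pointwise different from $g$ after suitably re-encoding $f$ along the interval partition. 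The converse direction (b) is the dual construction: given a $\kappa$-meagre set $M=\Cup_\alpha N_\alpha$ with $N_\alpha$ closed nowhere dense and increasing, one builds (again using the interval partition and inaccessibility to enumerate the ${<}\kappa$-many possible restrictions) a function $g_M\in\gbs$ such that any $f\in\gbs$ with $f\neqi g_M$ lands outside $M$; the map $\rho_+$ sends such an $f$ to a witness real, and $\rho_-$ sends $M$ to $g_M$.

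The main obstacle is verifying that the interval-partition encoding genuinely works at $\kappa$: in the classical $\omega$ case one uses that finite products are finite, whereas here one needs the intervals $I_\xi$ to have length $<\kappa$ and the bookkeeping to have length exactly $\kappa$, so one must exploit strong inaccessibility carefully (closure of $\kappa$ under the relevant cardinal arithmetic, and the fact that $\gfbs$ has size $\kappa$) to ensure both that $M_g$ is genuinely $\kappa$-meagre (a $\kappa$-union of nowhere dense sets, not merely a ${<}\kappa^+$-union) and that the escaping condition "$\forall^\infty\xi\,(f\restriction I_\xi\ne g\restriction I_\xi)$" can be converted into genuine pointwise eventual difference on a single coordinate via a re-indexing. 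Since the excerpt attributes this to \cite{BHZ} and \cite{L}, I would cite those for the detailed verification and present the Tukey maps together with the interval-partition setup, deferring the routine genericity estimates. A secondary subtlety is that $\neqi$ (cofinal inequality, i.e. $\exists^\infty\alpha\,f(\alpha)\ne g(\alpha)$) rather than eventual inequality is what appears in $\sr{ED}_{\bar\kappa}$, so one must be attentive to which of $\neqi$ / $\neqs$ matches the meagre covering on each side — this is exactly the asymmetry that makes $\dneq{}=\cov(\c M_\kappa)$ (and $\bneq{}=\non(\c M_\kappa)$) the correct pairing rather than the reverse.
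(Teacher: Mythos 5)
First, a point of order: the paper does not prove this statement — it is recorded as a Fact with citations to \cite{BHZ} and \cite{L} — so there is no in-paper argument to compare yours against, and I can only assess the plan on its own terms. The architecture you propose (a Tukey equivalence between $\ED_{\bar\kappa}$ and the covering system $\ab{\gbs,\c M_\kappa,\in}$, with the hard direction handled by a Bartoszy\'nski-style interval partition whose blocks have length ${<}\kappa$ so that inaccessibility keeps the coding inside $\kappa$) is indeed the standard route taken in the cited sources, and your closing remark that one must track which orientation of the relation pairs with covering versus uniformity identifies exactly the delicate point.

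Unfortunately you have resolved that delicate point the wrong way: in this paper $\neqi$ abbreviates $\cancel{=^\infty}$, the \emph{negation} of ``cofinally equal'', so $f\neqi g$ means $\forall^\infty\alpha\,(f(\alpha)\neq g(\alpha))$ (eventual difference), not $\exists^\infty\alpha\,(f(\alpha)\neq g(\alpha))$ as you state, and the error propagates into concretely false steps. (1) Your set $M_g=\st{f\mid \exists^\infty\xi\,(f\restriction I_\xi=g\restriction I_\xi)}$ is \emph{comeagre}, not meagre: each $\Cup_{\xi>\alpha}\st{f\mid f\restriction I_\xi=g\restriction I_\xi}$ is open dense, since any $s\in\gfbs$ can be extended to agree with $g$ on a later block. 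The meagre set you need is its complement. (2) The easy direction $\cov(\c M_\kappa)\leq\dneq{}$ needs no interval partition at all: $E_g=\st{f\mid f\neqi g}=\Cup_{\beta\in\kappa}\st{f\mid \forall\alpha\geq\beta\,(f(\alpha)\neq g(\alpha))}$ is already a $\kappa$-union of nowhere dense sets, and $g\mapsto E_g$ together with the identity is the Tukey connection. (3) In your direction (b) the implication points the wrong way: the connection $\ED_{\bar\kappa}\preceq\ab{\gbs,\c M_\kappa,\in}$ requires $\rho_-(f)\in M\Rightarrow f\neqi\rho_+(M)$, i.e.\ that cofinal agreement $f\eqi g_M$ forces $\rho_-(f)$ \emph{out} of $M$; your statement ``any $f$ with $f\neqi g_M$ lands outside $M$'' is the converse and yields nothing towards $\dneq{}\leq\cov(\c M_\kappa)$. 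The interval coding belongs only to this hard direction (and there one has $\card{{}^{I_\xi}\kappa}=\kappa$, not ${<}\kappa$; inaccessibility gives $\kappa^{|I_\xi|}=\kappa$, which is what the re-indexing actually uses). With the relation read correctly your outline collapses to the standard proof, but as written these steps do not go through.
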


Contrary to the classical case, the parameter $h$ of $\dstar{h}$ matters in the generalised case. If $\id:\alpha\mapsto |\alpha|$ and $\pow:\alpha\mapsto 2^{|\alpha|}$, then the corresponding cardinals are consistently different:
\begin{fct}[\ \!\!\cite{BBFM}]
It is consistent that $\dstar{\pow}<\dstar{\id}$.
\end{fct}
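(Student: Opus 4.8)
The plan is to separate the two cardinals by forcing. First observe that $\dstar{\pow}\leq\dstar{\id}$ always holds: since $|\alpha|<2^{|\alpha|}$, every $(\bar\kappa,\id)$-slalom is a $(\bar\kappa,\pow)$-slalom, so the identity on $\gbs$ together with the inclusion $\Loc_\kappa^{\bar\kappa,\id}\subseteq\Loc_\kappa^{\bar\kappa,\pow}$ is a Tukey connection $\sr L_{\bar\kappa,\pow}\preceq\sr L_{\bar\kappa,\id}$. Hence it suffices to build a model of $\dstar{\pow}<\dstar{\id}$, and I aim for one in which $\dstar{\pow}=\kappa^+$ while $\dstar{\id}=2^\kappa=\kappa^{++}$.

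Starting from a ground model of $\s{GCH}$, first force with $\r{Add}(\kappa,\kappa^{++})$ (partial functions $\kappa^{++}\times\kappa\to\kappa$ of size ${<}\kappa$), obtaining $V_1$. This forcing is ${<}\kappa$-closed and $\kappa^+$-cc, so it preserves cardinals and cofinalities, keeps $\kappa$ inaccessible, and gives $2^\kappa=\kappa^{++}$; write $\ab{c_\eta\mid\eta<\kappa^{++}}$ for the mutually Cohen-generic functions in $\gbs$ it adds. Each $(\bar\kappa,\id)$-slalom of $V_1$ already appears in $V[\r{Add}(\kappa,S)]$ for some $S\in[\kappa^{++}]^{\leq\kappa}$, and for any $\psi\in\Loc_\kappa^{\bar\kappa,\id}$ the set $\st{g\in\gbs\mid g\ins\psi}$ is nowhere dense (as $\kappa\setminus\psi(\alpha)\neq\emp$ for every $\alpha$); so if $\eta\notin S$ then $c_\eta$ is Cohen over that model and is not localised by any $(\bar\kappa,\id)$-slalom in it. Consequently no ${<}\kappa^{++}$ many $(\bar\kappa,\id)$-slaloms in $V_1$ localise every $c_\eta$, so $V_1\models\dstar{\id}=2^\kappa=\kappa^{++}$, robustly witnessed by $\st{c_\eta\mid\eta<\kappa^{++}}$.

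Next, over $V_1$ I perform a ${<}\kappa$-support iteration $\ab{\bb P_\gamma,\dot{\bb Q}_\gamma\mid\gamma<\kappa^+}$ of length $\kappa^+$, where each $\dot{\bb Q}_\gamma$ names the localisation forcing $\frcLoc{\pow}$ as computed in $V_1[\bb P_\gamma]$: conditions are pairs $(s,F)$ with $s\in\Loc_{<\kappa}^{\bar\kappa,\pow}$ and $F\in[\gbs]^{<\kappa}$ (subject to the harmless proviso that $\dom(s)$ lies beyond the bounded set $\st{\alpha\mid 2^{|\alpha|}\leq|F|}$), ordered by $(s',F')\leq(s,F)$ iff $s\subseteq s'$, $F\subseteq F'$, and $g(\alpha)\in s'(\alpha)$ for all $g\in F$ and $\alpha\in\dom(s')\setminus\dom(s)$. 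Each $\frcLoc{\pow}$ is ${<}\kappa$-closed and $\kappa$-centered: two conditions with the same stem $s$ are compatible since $|F\cup F'|<2^{|\alpha|}$ for all sufficiently large $\alpha$ (as $\kappa$ is strong limit, $2^{|\alpha|}$ is cofinal in $\kappa$), and there are only $|\Loc_{<\kappa}^{\bar\kappa,\pow}|=\kappa$ stems; in particular it is $\kappa^+$-cc. For each $g\in\gbs\cap V_1[\bb P_\gamma]$ the set $\st{(s,F)\mid g\in F}$ is dense, so the generic wide slalom $\dot\phi_\gamma$ localises every function of $V_1[\bb P_\gamma]$. The whole iteration $\bb P_{\kappa^+}$ is then ${<}\kappa$-closed and $\kappa$-centered (centering classes indexed by the ${<}\kappa$-support stem assignments, of which there are $\kappa^{<\kappa}=\kappa$), hence $\kappa^+$-cc; so it preserves cardinals, keeps $2^\kappa=\kappa^{++}$, and — since $\cf(\kappa^+)>\kappa$ — every $f\in\gbs\cap V_2$ lies in some $V_1[\bb P_\gamma]$ with $\gamma<\kappa^+$, where $V_2=V_1[\bb P_{\kappa^+}]$. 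Thus $\st{\dot\phi_\gamma\mid\gamma<\kappa^+}$ localises all of $\gbs\cap V_2$, giving $\dstar{\pow}\leq\kappa^+$; as $\dstar{\pow}\geq\kappa^+$ always (the Cicho\'n diagram still holds for $\pow$ in $V_2$), $\dstar{\pow}=\kappa^+$ in $V_2$.

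The remaining task — and the heart of the argument — is to show that $\dstar{\id}$ is still $\kappa^{++}$ in $V_2$, equivalently (since $\dstar{\id}\leq2^\kappa=\kappa^{++}$) that $\st{c_\eta\mid\eta<\kappa^{++}}$ is not localised by fewer than $\kappa^{++}$ many $(\bar\kappa,\id)$-slaloms in $V_2$. This is a preservation statement, proved in two steps: (i) $\frcLoc{\pow}$ is ``good for $\sr L_{\bar\kappa,\id}$'' — roughly, for every name $\dot\psi$ for a $(\bar\kappa,\id)$-slalom and every condition there are a stronger condition and a ground model $(\bar\kappa,\id)$-slalom $\psi^*$ forcing that every ground model function localised by $\dot\psi$ is already localised by $\psi^*$; intuitively, a generic wide slalom, whose value at $\alpha$ may contain up to (just under) $2^{|\alpha|}$ of the $\kappa$ many candidates, cannot be ``compressed'' into a narrow slalom of width $|\alpha|$. (ii) This goodness is preserved by ${<}\kappa$-support iterations of ${<}\kappa$-closed $\kappa^+$-cc forcings, so $\bb P_{\kappa^+}$ preserves the $\sr L_{\bar\kappa,\id}$-unboundedness of the Cohen family established in $V_1$. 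Step (ii) follows the template of the preservation theory for $\gbs$ in \cite{BBFM} (see also \cite{vdV}); the main obstacle is (i), which is precisely where the assumption that $\kappa$ is strongly inaccessible (so $2^{|\alpha|}<\kappa$ for every $\alpha<\kappa$) and the strict gap between the widths $\id$ and $\pow$ enter, and which explains why the classical phenomenon that all cofinal widths yield the single cardinal $\cof(\c N)$ has no analogue on $\gbs$. Granting (i) and (ii), $\dstar{\pow}=\kappa^+<\kappa^{++}=\dstar{\id}$ holds in $V_2$.
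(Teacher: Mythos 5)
Your reduction of the problem to the two preservation claims (i) and (ii) is accurate, but those two claims are the entire mathematical content of the theorem, and neither is proved; moreover there is good reason to doubt that they can be proved along the lines you sketch. For (i), the natural route through $\kappa$-centredness does not work: within a single centering class (a fixed stem $s$) all conditions that decide $\dot\psi(\alpha)$ must decide it the same way, but there are $\kappa$ many stems, so collecting the possible values of $\dot\psi(\alpha)$ over all centering classes yields a set of size up to $\kappa\cdot|\alpha|=\kappa$ --- nowhere near the required width ${<}|\alpha|$. The forcing $\frcLoc{\pow}$ has no fusion apparatus that would let you thin the set of possibilities for $\dot\psi(\alpha)$ down below $|\alpha|$; indeed it adds dominating $\kappa$-reals, so it fails every Sacks-type property, and the pure-decision argument your ``compression'' intuition requires is not available. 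For (ii), you appeal to ``the template of the preservation theory for $\gbs$'', but the paper is explicit (Section 6) that preservation theorems for ${<}\kappa$- or ${\leq}\kappa$-support iterations of forcings that add $\kappa$-reals are \emph{not known} in the generalised setting; this absence is precisely why nobody has separated the dual cardinals $\bstar{b,h}$ by iterating localisation-type forcings. So the heart of your argument rests on machinery that does not currently exist.

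The actual proof in \cite{BBFM} (described in Section 5.3 of this paper) takes a structurally different route that avoids both problems: one forces with a ${\leq}\kappa$-support product of Kanamori-style perfect-tree (Sacks) forcings $\bb S_\kappa^{h}$, which are $\gbs$-bounding and carry a fusion ordering. There the analogue of your (i) is the $(\bar\kappa,F)$-Sacks property with $F:\xi\mapsto (h(\xi)^{|\xi|})^+$, proved by a fusion argument that bounds the number of possible values of $\dot f(\xi)$ by the number of nodes on the $\xi$-th splitting level; the analogue of your (ii) is handled by generalised fusion for the product rather than by an iteration preservation theorem; and $\dstar{\id}$ is kept large not by preserving a Cohen witness but by a stationary-set argument showing each generic avoids all narrow slaloms from the relevant intermediate model. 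If you want to salvage your approach, you would need to either supply a genuine pure-decision lemma for $\frcLoc{\pow}$ establishing (i) and then prove an iteration preservation theorem from scratch, or switch to the tree-forcing framework. A small additional slip: $\st{g\mid g\ins\psi}$ is $\kappa$-meagre (a union of $\kappa$ many nowhere dense sets), not nowhere dense, though your conclusion about Cohen reals survives this correction.
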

This consistency result can be improved considerably. 
\begin{fct}[\ \!\!\cite{vdV}]
(Consistently) there exists a family $H\subset\gbs$ with $|H|=\kappa^+$ such that for any enumeration $\ab{h_\xi\mid \xi\in\kappa^+}$ of $H$ there exists a forcing extension in which $\dstar{h_\xi}<\dstar{h_{\xi'}}$ for all $\xi<\xi'$.
\end{fct}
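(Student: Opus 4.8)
The plan is to fix, once and for all in a ground model of $\s{GCH}$, a family $H\subseteq\gbs$ of increasing functions with $|H|=\kappa^+$ that is ``symmetric'' enough that, for any way of enumerating it, the localisation cardinals of its members can be forced into that order, and to fix a strictly increasing sequence $\langle\lambda_\xi\mid\xi\in\kappa^+\rangle$ of regular cardinals above $\kappa$. Given an enumeration $\langle h_\xi\mid\xi\in\kappa^+\rangle$ of $H$, the goal is a forcing $\bb P$ --- of the kind used in \cite{vdV} --- with $\dstar{h_\xi}=\lambda_\xi$ in $V^{\bb P}$ for every $\xi$; since the construction of $\bb P$ refers only to the enumeration index (via $\lambda_\xi$) and not to any prior structure on $H$, the same recipe works for all enumerations simultaneously.

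The first task is the construction of $H$. It must at least be a $\le^*$-antichain: if $h_\xi\le^* h_{\xi'}$ then, after a bounded modification, every $h_\xi$-slalom is an $h_{\xi'}$-slalom, so $\dstar{h_{\xi'}}\le\dstar{h_\xi}$ is provable in $\s{ZFC}$, obstructing the enumeration that puts $h_\xi$ before $h_{\xi'}$. I would in fact build $H$ much more robustly: starting from an almost disjoint family $\{A_\xi\mid\xi\in\kappa^+\}$ of unbounded subsets of $\kappa$ (which exists since $2^{<\kappa}=\kappa$), let $h_\xi$ grow rapidly --- towards $\pow$ --- on $A_\xi$ and as slowly as it may --- near $\id$ --- off $A_\xi$, with a minor correction to keep each $h_\xi$ genuinely increasing. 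Then any two members of $H$ diverge in both directions on unbounded sets of coordinates, and this two-directional divergence is precisely what the preservation arguments below will consume.

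For the forcing, $\bb P$ is the $<\kappa$-support product indexed by $\{(\xi,i)\mid\xi\in\kappa^+,\ i\in\lambda_\xi\}$ whose $(\xi,i)$-th factor is a $\kappa$-localisation-type forcing associated to the parameter $h_\xi$. First one records the standard preservation properties: $\bb P$ is $<\kappa$-closed --- so it adds no new $<\kappa$-sequences and preserves cofinalities below $\kappa$ and the inaccessibility of $\kappa$ --- and $\kappa^+$-cc, by a $\Delta$-system argument using $\kappa^{<\kappa}=\kappa$ and $\s{GCH}$ (among $\kappa^+$ conditions, after thinning to a $\Delta$-system one finds $\kappa^+$ with equal root-part, which are then pairwise compatible). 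Hence cardinals and cofinalities are preserved, $2^\kappa$ becomes $\sup_\xi\lambda_\xi$, and every element of $\gbs\cap V^{\bb P}$ already lies in $V[G\restriction J]$ for some $J$ of size $\le\kappa$.

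The crux is $\dstar{h_\xi}=\lambda_\xi$, and it separates into a statement about the $\xi$-block and a preservation statement about the rest of $\bb P$. The $\xi$-block --- the product of the $\lambda_\xi$ mutually generic copies of the $h_\xi$-forcing --- is designed so that in isolation it would force $\dstar{h_\xi}=\lambda_\xi$: one inequality from counting supports of size $\le\kappa$ against $\lambda_\xi$ mutually generic copies, the other from the fact that fewer than $\lambda_\xi$ of the relevant generic objects leave room for a further generic (over the submodel they live in) that is not localised by any of them. It then remains to see that the other blocks do not spoil this, and this has two parts: (a) a factor built from $h_{\xi'}$ with $\xi'\ne\xi$ adds no new $h_\xi$-slalom that could help localise, because on the unbounded set of coordinates where $h_{\xi'}$ outgrows $h_\xi$ a generic $h_{\xi'}$-slalom is simply too wide to be an $h_\xi$-slalom, even after a bounded change; and (b) --- the \emph{preservation lemma} --- a factor built from $h_{\xi'}$ does not $h_\xi$-localise the reals that the $\xi$-block produces to witness $\dstar{h_\xi}\ge\lambda_\xi$. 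Here one isolates a combinatorial property of pairs from $H$ (the two-directional divergence) under which the $h_{\xi'}$-forcing preserves the relevant ``$h_\xi$-unboundedness'', and then a $<\kappa$-support-product preservation theorem propagates it across all of $\bb P$. I expect part (b) --- and, entwined with it, pinning down exactly which property of $H$ makes it work --- to be the main obstacle; the rest is bookkeeping of the kind found in \cite{vdV}.
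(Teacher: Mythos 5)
Your overall architecture (an almost disjoint family controlling where each $h_\xi$ is large, a long product with $\lambda_\xi$ copies of a tree forcing per parameter, mutual genericity for the lower bounds, a preservation lemma for the rest) is the right shape, but three concrete points go wrong. First, the almost disjoint sets $A_\xi$ must be \emph{stationary}, not merely unbounded: the ``escape'' direction (the generic real of a factor is not localised by any slalom from a submodel) works by meeting the club of splitting levels of a condition with the set where $h_\xi$ outgrows the capture width, and an unbounded set need not meet a club. Obtaining $\kappa^+$ many almost disjoint stationary subsets of $\kappa$ is itself not a ZFC theorem — it follows from $\lozenge_\kappa$ or a preparatory forcing — and this is precisely why the statement carries the qualifier ``(Consistently)''. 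Second, a ${\le}\kappa$-support product of Sacks-type tree forcings $\frcS{b,h}$ is \emph{not} $\kappa^+$-c.c.; two conditions with the same stem (``root part'') are generally incompatible, so your $\Delta$-system argument fails. Preservation of $\kappa^+$, and the fact that names for $\kappa$-reals and slaloms are captured by supports of size $\kappa^+$, instead go through fusion and the ${<}(2^\kappa)^+$-c.c.

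The more serious gap is the upper bound $\dstar{h_\xi}\le\lambda_\xi$. Your step (a) argues that other factors ``add no new $h_\xi$-slalom'', but any factor adding a new $\kappa$-real adds new $h_\xi$-slaloms of every width; the relevant danger is not new slaloms but new \emph{reals} that escape the intended $\lambda_\xi$-sized witness family. What is needed is a $(b,h_\xi)$-Sacks property for the whole product restricted away from the $\xi$-block: a fusion argument showing every new real is captured, coordinatewise, by a slalom of width roughly $(H(\alpha)^{|\alpha|})^+$ computed from the splitting functions $H$ of those factors. This forces a two-level choice of parameters — the splitting functions $H_{\xi'}$ governing the forcings are \emph{not} the measured widths $h_\xi$, and one needs $(H_{\xi'}(\alpha)^{|\alpha|})^+\le h_\xi(\alpha)$ off $S_{\xi'}$ while $h_{\xi}\le H_{\xi'}$ on the stationary set where escape must happen. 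Forcing with splitting exactly $h_\xi$ and measuring with width $h_\xi$, as you propose, does not leave room for both directions. Until this capture lemma and the attendant parameter bookkeeping are in place, only the inequality $\dstar{h_\xi}\ge\lambda_\xi$ is supported by your sketch.
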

A dual result to the above would be the consistency of $\bstar{\id}<\bstar{\pow}$, but the forcing techniques used to give the above-mentioned consistency results cannot be dualised, and other forcing methods are hard to use due to a lack of preservation theorems in the context of $\gbs$.

Meanwhile, the parameter $h$ does not influence the cardinality of $\dinf{h}$ or $\binf{h}$, since we have the following result, which we will prove with \Cref{uninteresting antiloc}.
\begin{fct}
$\dinf{h}=\cov(\c M_\kappa)$ and $\binf{h}=\non(\c M_\kappa)$ for any choice of $h\in\gbs$.
\end{fct}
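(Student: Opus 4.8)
The plan is to establish, for any admissible parameter $h$, a chain of Tukey equivalences linking the antilocalisation relational system $\AL_{\bar\kappa,h}$ to the eventual difference relational system $\ED_{\bar\kappa}$, and then invoke \Cref{evt dif and meagre} which already identifies $\norm{\ED_{\bar\kappa}}=\dneq{}=\cov(\c M_\kappa)$ and $\norm{\ED_{\bar\kappa}^\lc}=\bneq{}=\non(\c M_\kappa)$. Since Tukey equivalence passes to both a relational system and its dual (by the observation after the definition of Tukey connections, and the norm-monotonicity lemma), it suffices to show $\AL_{\bar\kappa,h}\equiv\ED_{\bar\kappa}$. Note that this is exactly what the excerpt promises to prove as ``\Cref{uninteresting antiloc}''; so the core content is the construction of two Tukey connections.

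First I would handle the easy direction $\ED_{\bar\kappa}\preceq\AL_{\bar\kappa,h}$: a function $g\in\gbs=\prod\bar\kappa$ can be regarded as the ``width-one slalom'' $\phi_g:\alpha\mapsto\{g(\alpha)\}$, which lies in $\Loc_\kappa^{\bar\kappa,h}$ since $h(\alpha)\geq 2>1$; and for $f,g\in\gbs$ we have $f\nini\phi_g$ (i.e. $\forall^\infty\alpha\, f(\alpha)\notin\{g(\alpha)\}$) precisely when $f\neqi g$ fails to hold in the strong sense — more carefully, $f\neqi g$ means $\exists^\infty\alpha\, f(\alpha)\neq g(\alpha)$, whereas $f\nini\phi_g$ means $\forall^\infty\alpha\,f(\alpha)\neq g(\alpha)$, so one must set up $\rho_-$ and $\rho_+$ to match the relations correctly; the cleanest route is to dualise, showing instead $\AL_{\bar\kappa,h}^\lc\preceq\ED_{\bar\kappa}^\lc$, i.e. $\ab{\prod\bar\kappa,\Loc_\kappa^{\bar\kappa,h},\ini}\preceq\ab{\prod\bar\kappa,\prod\bar\kappa,\eqi}$, via $\rho_-=\id$ and $\rho_+(\phi)=$ some selector through $\phi$; if $g\mathrel{\eqi}\rho_+(\phi)$ pointwise-infinitely, then $g\in^\infty\phi$. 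One picks $\rho_+(\phi)(\alpha)$ to be any element of $\phi(\alpha)$ (say the least), and checks $g(\alpha)=\rho_+(\phi)(\alpha)\Rightarrow g(\alpha)\in\phi(\alpha)$. This gives one inequality for both the cardinal and its dual.

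The harder direction is $\AL_{\bar\kappa,h}\preceq\ED_{\bar\kappa}$, equivalently $\norm{\AL_{\bar\kappa,h}}\leq\norm{\ED_{\bar\kappa}}$ and the dual inequality. The standard classical trick (Bartoszyński) is to split $\kappa$ into $\kappa$-many disjoint cofinal blocks, code a slalom $\phi$ of bounded width by recording, on the $i$-th block, enough function values to ``capture'' all elements of $\phi$ on that block via a single eventual-difference challenge. Concretely, fix a partition $\kappa=\bigsqcup_{n\in\omega}$... — but here $\kappa$ is inaccessible, so instead I would fix a bijection $\pi:\kappa\times\kappa\bij\kappa$ (as already used in the proof of the $\SN_\kappa$ lemma) and, for a slalom $\phi\in\Loc_\kappa^{\bar\kappa,h}$ with $|\phi(\alpha)|<h(\alpha)$, enumerate $\phi(\alpha)=\{e_\alpha^\xi\mid\xi<|\phi(\alpha)|\}$; then for $g\in\gbs$ define $f_\phi\in\gbs$ so that $f_\phi(\pi(\xi,\alpha))$ ``diagonalises against'' the possibility that $g(\pi(\xi,\alpha))=e_\alpha^\xi$. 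The point is: if $g\neqi$-avoids the whole family of $f_\phi$'s corresponding to a witness of size $\dneq{}$, then no slalom in a dual witness can antilocalise $g$. The main obstacle will be bookkeeping the width bound $h$ correctly — in the generalised setting one must ensure the partition is fine enough that on each ``coordinate slice'' the slalom contributes at most one value, which is why the $\kappa\times\kappa$ re-indexing is exactly the right tool, and why $h\geq 2$ (guaranteed by \Cref{slalom definition}'s requirement that $h(\alpha)$ is a nonzero cardinal, hence $\geq\aleph_0$ here since $\kappa$ inaccessible forces $h$ to be eventually large — though actually only $h(\alpha)\geq 1$ is needed, giving $|\phi(\alpha)|\geq 0$; one checks the degenerate widths cause no trouble). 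Once the two Tukey connections are in place, \Cref{evt dif and meagre} and the norm lemmas close the argument, yielding $\dinf{h}=\norm{\AL_{\bar\kappa,h}}=\norm{\ED_{\bar\kappa}}=\dneq{}=\cov(\c M_\kappa)$ and dually $\binf{h}=\non(\c M_\kappa)$.
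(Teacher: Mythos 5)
Your overall route is exactly the paper's: it proves $\ED_{\bar\kappa}\equiv\AL_{\bar\kappa,h}$ (this is \Cref{antiloc below event dif} applied with $b=\bar\kappa$, which is legitimate since $\kappa^{h(\alpha)}=\kappa$ by inaccessibility) and then quotes $\dneq{}=\cov(\c M_\kappa)$, $\bneq{}=\non(\c M_\kappa)$ from \Cref{evt dif and meagre}. Your easy direction $\ED_{\bar\kappa}\preceq\AL_{\bar\kappa,h}$ via width-one slaloms is fine and is what the paper does (\Cref{evt difference and antiloc} plus \Cref{monotonicity aloc}); note there is no real directional puzzle there, since $y\nini\phi_f$ (almost always different) trivially implies $f\neqi y$ (cofinally often different), so $\rho_-(f)=\phi_f$, $\rho_+=\id$ already works without dualising.

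The hard direction $\AL_{\bar\kappa,h}\preceq\ED_{\bar\kappa}$ is where your sketch has a genuine gap. A Tukey connection here needs $\rho_-:\Loc_\kappa^{\bar\kappa,h}\to\gbs$ and $\rho_+:\gbs\to\gbs$ with: $\rho_+(y)\ini\phi$ implies $y\eqi\rho_-(\phi)$. Two problems with your implementation. First, your $f_\phi$ is described as depending on $g$ and as ``diagonalising against'' agreement; a Tukey connection cannot let $\rho_-(\phi)$ see the challenge, and the whole point is to \emph{manufacture} agreement when the slalom catches, not to avoid it. Second, and more fundamentally, the $\pi:\kappa\times\kappa\bij\kappa$ re-indexing gives blocks of size $\kappa$, and then the compression map $\rho_+$ cannot exist: to convert a catch $\rho_+(y)(\alpha)\in\phi(\alpha)$ into an agreement of $y$ with $\rho_-(\phi)$ somewhere in the $\alpha$-th block, the single ordinal $\rho_+(y)(\alpha)<\kappa$ must determine $y$ on the entire block, which is impossible when the block has $\kappa$ coordinates (there are $2^\kappa$ restrictions). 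The paper instead takes an interval partition $\ab{I_\alpha\mid\alpha\in\kappa}$ with $\card{I_\alpha}=h(\alpha)$: this is simultaneously large enough that each of the $<h(\alpha)$ elements $x^\alpha_\xi$ of $\phi(\alpha)$ gets its own coordinate $\iota_\alpha+\xi\in I_\alpha$ (so $\rho_-(\phi)$ can be the diagonal selection $g(\iota_\alpha+\xi)=\pi_\alpha(x^\alpha_\xi)(\iota_\alpha+\xi)$), and small enough that $\kappa^{h(\alpha)}=\kappa$, so a bijection $\pi_\alpha:\kappa\bij{}^{I_\alpha}\kappa$ exists and $\rho_+(y)(\alpha)=\pi_\alpha^{-1}(y\restriction I_\alpha)$ is well defined. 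Then $\rho_+(y)(\alpha)=x^\alpha_\xi$ forces $y\restriction I_\alpha=\pi_\alpha(x^\alpha_\xi)$, hence $y(\iota_\alpha+\xi)=g(\iota_\alpha+\xi)$, which is the required agreement. Replacing your $\kappa\times\kappa$ slices by this $h$-sized interval partition is essential, not bookkeeping.
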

We will see that the parameter $b$ does make a difference, and that there can be consistently different cardinalities of the form $\dinf{b,h}$ for different values of $b,h\in\gbs$.

\section{Trivial Choices of Parameters}\label{section: triviality}

It is perhaps not very surprising that some choices of parameters $b$ and $h$ will result in the cardinal characteristics having trivial values. What we mean precisely with a cardinal characteristic $\f x$ having a `trivial' value, is that $\f x$ is undefined, or that it is provable in $\ap{\s{ZFC}+\kappa\text{ is inaccessible}}$ that $\f x\leq\kappa^+$ or $\f x=2^\kappa$.

It is perhaps not evident that the determination of $b$ and $h$ such that our cardinals are nontrivial, is itself not a trivial task. Indeed, this section contains several nontrivial open questions regarding the triviality of cardinal characteristics.

We will establish a general pattern that it is possible to give a complete characterisation of the cases in which the cardinals $\bleq{b}$, $\bstar{b,h}$, $\binf{b,h}$ and $\bneq{b}$ are trivial. For each of these families of cardinals we are able to formulate a trichotomy of the case where the cardinal is ${<}\kappa$, the case where it is exactly $\kappa$ and the case where the cardinal is ${>}\kappa$. In the last case we can show that the cardinal is nontrivial and we give an independence proof.

For the cardinals $\dleq{b}$, $\dstar{b,h}$, $\dinf{b,h}$ and $\dneq{b}$, the natural conjecture is that these are trivial exactly when their $\f b$-duals are trivial, but this turns out to be hard to prove in each case. We will give partial results and some related problems. On the other hand, we can show that these $\f d$-side cardinals are nontrivial whenever the $\f b$-side cardinals are nontrivial.

\subsection{Domination \& Unboundedness}
Let us start with $\dleq{b}$ and $\bleq{b}$. It is clear that value of these cardinals only depends on the cofinality of $b(\alpha)$ by the following lemma.

\begin{lmm}\label{dominating cofinality lemma}
$\sr D_b\equiv\sr D_{\cf(b)}$.
\end{lmm}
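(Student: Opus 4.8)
The plan is to exhibit an explicit Tukey connection in each direction between $\sr D_b = \langle \prod b, \prod b, \leq^* \rangle$ and $\sr D_{\cf(b)} = \langle \prod \cf(b), \prod \cf(b), \leq^* \rangle$. For each $\alpha \in \kappa$, fix a cofinal increasing map $e_\alpha \colon \cf(b(\alpha)) \to b(\alpha)$ (so $\ran(e_\alpha)$ is cofinal in $b(\alpha)$ and $e_\alpha$ is order-preserving). Using these, I would define a ``pushforward'' $\rho_- \colon \prod b \to \prod \cf(b)$ and a ``pullback'' $\rho_+ \colon \prod \cf(b) \to \prod b$, essentially by $(\rho_+(g))(\alpha) = e_\alpha(g(\alpha))$ and, for $\rho_-$, by sending $f(\alpha)$ to the least $\xi \in \cf(b(\alpha))$ with $e_\alpha(\xi) \geq f(\alpha)$ (such a $\xi$ exists by cofinality of $\ran(e_\alpha)$).

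First I would verify the Tukey connection $\sr D_b \preceq \sr D_{\cf(b)}$: given $f \in \prod b$ and $g \in \prod \cf(b)$ with $\rho_-(f) \leq^* g$, we have for almost all $\alpha$ that $(\rho_-(f))(\alpha) \leq g(\alpha)$, hence by monotonicity of $e_\alpha$ that $f(\alpha) \leq e_\alpha((\rho_-(f))(\alpha)) \leq e_\alpha(g(\alpha)) = (\rho_+(g))(\alpha)$, i.e. $f \leq^* \rho_+(g)$, as required. For the reverse direction $\sr D_{\cf(b)} \preceq \sr D_b$, I would take the ``same'' pair of maps in the opposite roles: given $g \in \prod \cf(b)$, let $\rho'_-(g) = \rho_+(g) \in \prod b$; given $f \in \prod b$, let $\rho'_+(f) = \rho_-(f) \in \prod \cf(b)$. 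If $\rho_+(g) \leq^* f$ in $\prod b$, then for almost all $\alpha$, $e_\alpha(g(\alpha)) \leq f(\alpha)$, so $e_\alpha(g(\alpha)) \leq e_\alpha((\rho_-(f))(\alpha))$ (since $(\rho_-(f))(\alpha)$ was chosen minimal with $e_\alpha$-value $\geq f(\alpha)$), and as $e_\alpha$ is strictly increasing this gives $g(\alpha) \leq (\rho_-(f))(\alpha) = (\rho'_+(f))(\alpha)$, i.e. $g \leq^* \rho'_+(f)$. This establishes $\sr D_b \equiv \sr D_{\cf(b)}$.

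The only genuinely delicate point is the interplay between ``$\leq$'' and ``$\leq^*$'': the Tukey connection condition must be checked with the eventual relation, but since each $e_\alpha$ is defined coordinatewise and monotonicity is a pointwise fact, the ``almost all $\alpha$'' quantifier simply passes through unchanged — there is no issue of finitely many bad coordinates proliferating. One should also note that $\cf(b)$ need not be increasing even though $b$ is, but the definition of $\sr D_{\cf(b)}$ only requires $\cf(b)(\alpha)$ to be an infinite cardinal (it is, being the cofinality of the infinite cardinal $b(\alpha)$), and the argument above nowhere uses monotonicity of the bounding function, so this causes no problem. I do not expect any real obstacle here; the lemma is essentially a bookkeeping exercise once the maps $e_\alpha$ are fixed, and the main thing to be careful about is keeping straight which of $\rho_-, \rho_+$ plays which role in the two directions of the equivalence.
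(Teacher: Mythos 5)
Your proposal is correct and is essentially the paper's own proof: the paper fixes for each $\alpha$ a strictly increasing cofinal sequence $\sab{\beta^\alpha_\xi\mid\xi<\cf(b(\alpha))}$ (your $e_\alpha$) together with $\xi^\alpha_\eta=\min\sst{\xi\mid\eta\leq\beta^\alpha_\xi}$ (your $\rho_-$), and uses the same two pairs of maps in the two directions. The verification you carry out is the routine check the paper omits, and it is done correctly.
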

\begin{proof}
For each $\alpha\in\kappa$ let $\sab{\beta^\alpha_\xi\mid \xi<\cf(b(\alpha))}$ be a strictly increasing sequence of ordinals that is cofinal in $b(\alpha)$ and for any $\eta\in b(\alpha)$ let $\xi^\alpha_\eta=\min\sst{\xi\in \cf(b(\alpha))\mid \eta\leq \beta^\alpha_\xi}$

For $\sr D_b\preceq\sr D_{\cf(b)}$ let $\rho_{-}(f):\alpha\mapsto \xi^\alpha_{f(\alpha)}$ and $\rho_+(g'):\alpha\mapsto \beta^\alpha_{g'(\alpha)}$.

For $\sr D_{\cf(b)}\preceq \sr D_b$ let $\rho_-(f'):\alpha\mapsto \beta^{\alpha}_{f'(\alpha)}$ and $\rho_+(g):\alpha\mapsto \xi^\alpha_{g(\alpha)}$.
\end{proof}

It also becomes apparent quite immediately why we require $b\in\gbs$ to be such that $b(\alpha)$ is an infinite cardinal for (almost) all $\alpha\in\kappa$: 
\begin{lmm}
If $b(\alpha)$ is a successor ordinal for almost all $\alpha$, then $\dleq b=1$ and $\bleq{b}$ is undefined.
\end{lmm}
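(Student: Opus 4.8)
The plan is to unpack the definitions and exploit the fact that a successor ordinal has a maximum element.

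First I would observe that if $b(\alpha) = \gamma_\alpha + 1$ is a successor ordinal, then $\gamma_\alpha \in b(\alpha)$ and $\gamma_\alpha$ is the $\leq$-largest element of $b(\alpha)$, so every $\eta \in b(\alpha)$ satisfies $\eta \leq \gamma_\alpha$. For the claim $\dleq{b} = 1$: let $A = \st{\alpha \in \kappa \mid b(\alpha) \text{ is a successor ordinal}}$, so that $\kappa \setminus A$ is bounded, say by $\beta_0$. Define $g \in \prod b$ by setting $g(\alpha) = \gamma_\alpha$ (the maximum of $b(\alpha)$) for $\alpha \in A$, and $g(\alpha)$ arbitrary (say $0$) for $\alpha \notin A$. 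Then for any $f \in \prod b$ and any $\alpha > \beta_0$ we have $\alpha \in A$, hence $f(\alpha) \leq \gamma_\alpha = g(\alpha)$; thus $f \leq^* g$. So $\st{g}$ is a witness for $\dleq{b} \leq 1$, and since $\prod b \neq \emp$ (each $b(\alpha)$ is nonempty) we cannot have $\dleq{b} = 0$, giving $\dleq{b} = 1$.

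Next, for the claim that $\bleq{b}$ is undefined: recall $\bleq{b} = \norm{\sr D_b^\lc}$ where $\sr D_b^\lc = \ab{\prod b, \prod b, \cancel{(\leq^*)}^{-1}}$. Its norm is $\min\st{|W| \mid W \subset \prod b \text{ and } \forall f \in \prod b \, \exists g \in W \, (g \nleqs f)}$. But by the previous paragraph, the function $g$ constructed there satisfies $f \leq^* g$ for \emph{every} $f \in \prod b$; equivalently, for every $g' \in \prod b$ there is no $f$ with $g' \nleqs f$ failing — more precisely, for the challenge $f := g$ itself (or any $f$ dominating all of $\prod b$), there is no $g' \in \prod b$ with $g' \nleqs f$. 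Hence no set $W$ can be a witness, so the minimum is over the empty set and $\bleq{b}$ is undefined. I should phrase this cleanly: the set $\prod b$ has a $\leq^*$-greatest element $g$, so for $f = g$ no response $g' \in \prod b$ satisfies $g' \nleqs g$, and the witnessing condition is unsatisfiable.

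I do not anticipate a serious obstacle here; this is essentially a definitional unwinding. The one point requiring a little care is the "almost all" quantifier — one must restrict attention to the tail on which $b(\alpha)$ is a successor and note that on a bounded set the values $f(\alpha)$ are irrelevant to the $\leq^*$ relation, which is exactly why the hypothesis is stated with "almost all" rather than "all". (Implicitly this also uses that on the bounded exceptional set $b(\alpha) \neq \emp$ so that $g(\alpha)$ can be chosen at all, which holds since $b \in \gbs$ has nonempty values.)
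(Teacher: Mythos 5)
Your proof is correct and follows the same idea as the paper's (one-line) argument: the function picking the maximum element of each successor $b(\alpha)$ dominates all of $\prod b$, which gives $\dleq{b}=1$ and makes an unbounded family impossible. Your version just spells out the handling of the "almost all" quantifier and the unwinding of $\norm{\sr D_b^\lc}$, which the paper leaves implicit.
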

\begin{proof}
Let $b(\alpha)=f(\alpha)+1$ for almost all $\alpha\in\kappa$, then clearly $f$ dominates all functions in $\prod b$, hence $\dleq b=1$ and $\bleq b$ is undefined.
\end{proof}

The following lemma gives a complete characterisation of the functions $b$ for which $\bleq{b}$ is trivial. Note that the cases (i), (ii) and (iii) form a trichotomy.
\begin{lmm}\label{bounds on bleq}
For each regular cardinal $\lambda<\kappa$ let $D_\lambda=\st{\alpha\in\kappa\mid \cf(b(\alpha))=\lambda}$.
\begin{enumerate}[label=(\roman*)]
\item If there exists a least regular cardinal $\lambda<\kappa$ such that $D_\lambda$ is cofinal in $\kappa$, then $\bleq b=\lambda$.
\item If $D_\lambda$ is bounded for all regular $\lambda<\kappa$ and there exists a stationary set $S$ such that for each $\xi\in S$ there exists $\alpha_\xi\geq \xi$ with $\cf(b(\alpha_\xi))\leq\xi$, then $\bleq b=\kappa$.
\item If $D_\lambda$ is bounded for all regular $\lambda<\kappa$ and there exists a club set $C$ such that for each $\xi\in C$ we have $\cf(b(\alpha))>\xi$ for all $\alpha\geq\xi$, then $\bleq b\geq \kappa^+$.\qedhere
\end{enumerate}
\end{lmm}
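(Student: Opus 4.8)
The plan is to treat the three cases separately, in each case constructing explicit witnesses (for the upper bounds) or explicit challenges defeating small witnesses (for the lower bounds), working directly with the relational system $\sr D_b$. By \Cref{dominating cofinality lemma} I may replace $b$ by $\cf(b)$ throughout, so I will assume $b(\alpha)=\cf(b(\alpha))$ is a regular cardinal for every $\alpha$, and write $D_\lambda=\st{\alpha\mid b(\alpha)=\lambda}$.

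For case (i): let $\lambda$ be least regular with $D_\lambda$ cofinal. First I show $\bleq{b}\leq\lambda$. The idea is to build an unbounded family of size $\lambda$: since $b(\alpha)\geq\lambda$ for almost all $\alpha$ (by minimality of $\lambda$, the set of $\alpha$ with $b(\alpha)<\lambda$ is bounded — one must check this uses that there are only boundedly many regular cardinals below $\lambda$ that could witness cofinality on a cofinal set, contradicting minimality), for each $\xi<\lambda$ define $f_\xi\in\prod b$ by $f_\xi(\alpha)=\xi$ whenever $b(\alpha)>\xi$ and $f_\xi(\alpha)=0$ otherwise. Given any $g\in\prod b$, on the cofinal set $D_\lambda$ we have $g(\alpha)<\lambda$, so $\sup\st{g(\alpha)\mid\alpha\in D_\lambda}$ is some $\xi<\lambda$ (using $\cf(\lambda)=\lambda$ and $|D_\lambda|$ possibly large — so actually I need $\sup$ of a set of size up to $\kappa$ of ordinals below $\lambda$; since $\lambda<\kappa$ is regular and $\kappa$ inaccessible, hmm, this sup could be $\lambda$). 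Let me instead argue: the family $\st{f_\xi}$ is unbounded because if $g$ bounded all $f_\xi$, then $f_\xi\leqs g$ for each $\xi$, but on $D_\lambda$ (cofinal), $g(\alpha)<\lambda=b(\alpha)$, and we can diagonalize — pick $\alpha_\xi\in D_\lambda$ above the threshold where $f_\xi\leq g$ fails... The cleaner route: $\bleq{b}\leq\lambda$ because $\bleq{\bar\lambda}=\lambda$ (the unbounded number of ${}^\kappa\lambda$ is $\cf(\lambda)=\lambda$ when restricted appropriately — actually on $D_\lambda$ cofinal this reduces to the cofinality of $\lambda$ as a relational system), combined with a Tukey connection $\sr D_{b}\preceq\sr D_{b\restriction D_\lambda}\equiv\sr D_{\bar\lambda}$ obtained by projecting to coordinates in $D_\lambda$ and padding. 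Then for $\bleq{b}\geq\lambda$: given any family $F\subset\prod b$ with $|F|<\lambda$, on coordinates where $b(\alpha)\geq\lambda$ (almost all) set $g(\alpha)=\sup\st{f(\alpha)+1\mid f\in F}$, which is $<\lambda\leq b(\alpha)$ since $\lambda$ is regular and $|F|<\lambda$; this $g$ dominates all of $F$. Here the main subtlety is justifying that only boundedly many $\alpha$ have $b(\alpha)<\lambda$, which follows from minimality of $\lambda$ together with the fact that below $\lambda$ there are fewer than $\kappa$ regular cardinals, none of whose $D_\mu$ is cofinal, and there are $<\kappa$ many of them so their union is bounded.

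For case (ii): all $D_\lambda$ bounded, and a stationary $S$ with $\alpha_\xi\geq\xi$, $b(\alpha_\xi)\leq\xi$ for $\xi\in S$. The lower bound $\bleq{b}\geq\kappa$ holds trivially since any $F$ with $|F|<\kappa$ lives on $<\kappa$ coordinates-worth of data and, as all $b(\alpha)$ are regular and unbounded in $\kappa$ (because each $D_\lambda$ is bounded, so $b(\alpha)\to\kappa$), one can define $g(\alpha)=\sup\st{f(\alpha)+1\mid f\in F}<b(\alpha)$ for almost all $\alpha$ — this needs $|F|<b(\alpha)$ eventually, which holds since $b(\alpha)\to\kappa>|F|$. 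For the upper bound $\bleq{b}\leq\kappa$, I use the stationary set: enumerate $S$ and build $\kappa$ many functions that are cofinally large on the $\alpha_\xi$'s. Concretely, for each $\delta<\kappa$ let $f_\delta(\alpha_\xi)=$ (something close to $b(\alpha_\xi)$, e.g. an ordinal coding $\delta$ below $b(\alpha_\xi)$ when $b(\alpha_\xi)>\delta$). Given $g\in\prod b$, on stationarily many $\xi\in S$ we have $g(\alpha_\xi)<b(\alpha_\xi)\leq\xi$; using Fodor / a counting argument find $\delta$ such that $f_\delta(\alpha_\xi)\geq g(\alpha_\xi)$ fails cofinally often, witnessing that $\st{f_\delta}$ is unbounded. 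The hard part here is arranging a single family of size $\kappa$ that works: I expect to press the stationarity of $S$ via a pressing-down argument on the map $\xi\mapsto g(\alpha_\xi)$ (which is regressive on $S$ since $g(\alpha_\xi)<\xi$), getting $g(\alpha_\xi)$ constant $=\delta$ on a stationary subset, and then the function $f_\delta$ chosen to be $0$ on those coordinates is not dominated by $g$. This is the technically most delicate case.

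For case (iii): all $D_\lambda$ bounded, and a club $C$ with $b(\alpha)>\xi$ for all $\xi\in C$, $\alpha\geq\xi$ — i.e. $b$ grows "fast" relative to the identity along a club. I must show $\bleq{b}\geq\kappa^+$, i.e. no family of size $\kappa$ is unbounded. Given $F=\st{f_\delta\mid\delta<\kappa}\subset\prod b$, I build a single $g\in\prod b$ with $f_\delta\leqs g$ for all $\delta$. The natural definition is $g(\alpha)=\sup\st{f_\delta(\alpha)\mid\delta<\alpha}$ (a diagonal/increasing union); I need $g(\alpha)<b(\alpha)$: for $\alpha\in C$, each $f_\delta(\alpha)<b(\alpha)$ and there are $|\alpha|$ many $\delta<\alpha$, and since $\alpha\in C$ implies $b(\alpha)>\xi$ for suitable $\xi$... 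I actually want $b(\alpha)$ regular $>|\alpha|$, which holds for $\alpha\in C$ (take $\xi$ just below $\alpha$ in $C$, get $b(\alpha)>\xi$, and by thinning $C$ to limit points $b(\alpha)\geq\alpha>|\alpha|$, combined with $b(\alpha)$ regular — actually need $b(\alpha)>|\alpha|$ strictly, achievable since $b(\alpha)$ is an infinite regular cardinal and if $b(\alpha)=|\alpha|$ is not regular unless $\alpha$ is a cardinal; a further club-thinning to inaccessible-below points or just to points where $b(\alpha)>|\alpha|$ handles it). Then $\sup$ of $<b(\alpha)$ many ordinals each $<b(\alpha)$ is $<b(\alpha)$ by regularity, so $g\in\prod b$ on a club; fix it off the club arbitrarily. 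For any $\delta$, whenever $\alpha>\delta$ and $\alpha\in C$ we get $f_\delta(\alpha)\leq g(\alpha)$, so $f_\delta\leqs g$. Hence $F$ is bounded, proving $\bleq{b}\geq\kappa^+$. The only obstacle is the bookkeeping to ensure $b(\alpha)>|\alpha|$ on a club, which follows by intersecting $C$ with the club of cardinals and using that the $D_\lambda$ are bounded (so $b$ is eventually $>$ any fixed cardinal, in particular eventually regular-cardinal-valued and growing past the identity).

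Overall the main obstacle is case (ii): turning the stationarity of $S$ into an actual unbounded family of size exactly $\kappa$, which I expect to do via a regressive-function / Fodor argument on the values $g(\alpha_\xi)$ combined with a careful choice of the diagonal family indexed by the "frozen" value $\delta$.
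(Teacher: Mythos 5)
Your overall strategy --- reduce to $b=\cf(b)$ via \Cref{dominating cofinality lemma} and then argue coordinatewise --- is sound, and case (i) together with both lower bounds goes through essentially as in the paper. Two local corrections there: for the upper bound in (i) you need the Tukey connection $\sr D_{b\restriction D_\lambda}\preceq\sr D_b$ (the direction of \Cref{subsequence domination}), not $\sr D_b\preceq\sr D_{b\restriction D_\lambda}$ as written, since it is the norm of the \emph{dual} system that must transfer from $\sr D_{\bar\lambda}$ up to $\sr D_b$; and in the lower bound the sup $\sup\st{f(\alpha)+1\mid f\in F}$ is below $b(\alpha)$ by regularity of $b(\alpha)\geq\lambda>|F|$, not because it is below $\lambda$. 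In case (ii) your key move --- pressing down on the regressive map $\xi\mapsto g(\alpha_\xi)$, legitimate after the reduction because $g(\alpha_\xi)<b(\alpha_\xi)\leq\xi$ --- is a perfectly good variant of the paper's argument (which instead presses down on the least index of a fixed cofinal subset of $b(\alpha_\xi)$ exceeding $g(\alpha_\xi)$, and so needs no reduction). But your concluding sentence is wrong as stated: if $g(\alpha_\xi)=\delta$ on a stationary $S'$, the function that is $0$ on those coordinates is \emph{dominated} by $g$ there. You need the member of your family taking the value $\delta+1$ (legal since $\delta<b(\alpha_\xi)$ and $b(\alpha_\xi)$ is a limit ordinal) at every relevant $\alpha_\xi$, so that $f_\delta(\alpha_\xi)>g(\alpha_\xi)$ on the cofinal set $\st{\alpha_\xi\mid\xi\in S'}$. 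With that one-line fix the family $\st{f_\delta\mid\delta<\kappa}$ works.

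The genuine gap is in case (iii). First, the diagonal $g(\alpha)=\sup\st{f_\delta(\alpha)\mid\delta<\alpha}$ needs $\cf(b(\alpha))>|\alpha|$, but the hypothesis only yields $\cf(b(\alpha))>\sup(C\cap(\alpha+1))$; for $\alpha$ strictly between consecutive points $\alpha_\xi<\alpha_{\xi+1}$ of $C$ the cardinal $|\alpha|$ can far exceed $\alpha_\xi$, and no thinning of $C$ repairs this because the problematic coordinates lie off $C$. Second, and more importantly, defining $g$ only on (a thinning of) $C$ and ``arbitrarily'' elsewhere does not give $f_\delta\leqs g$: the relation $\leqs$ demands domination on a tail, and there are cofinally many coordinates outside any club where your $g$ is uncontrolled, so concluding ``$f_\delta\leqs g$'' from domination at club-many points is a non sequitur. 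The repair is to index the diagonal by the block rather than by the coordinate: enumerate $C$ increasingly as $\ab{\alpha_\xi\mid\xi\in\kappa}$ and set $g(\alpha)=\Cup_{\eta\in\xi}f_\eta(\alpha)$ for \emph{every} $\alpha\in[\alpha_\xi,\alpha_{\xi+1})$. This is a supremum of $|\xi|\leq\alpha_\xi<\cf(b(\alpha))$ many ordinals below $b(\alpha)$ --- the strict inequality being exactly what the club hypothesis supplies for all $\alpha\geq\alpha_\xi$ --- so $g\in\prod b$ on every coordinate, and $f_\eta(\alpha)\leq g(\alpha)$ for all $\alpha\geq\alpha_{\eta+1}$, which is genuine $\leqs$-domination.
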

\begin{proof}
(i)\quad For each $\gamma\in D_\lambda$ let $\ab{\delta_\gamma^\alpha\mid \alpha\in\lambda}$ be an increasing cofinal sequence in $\gamma$. Let $f_\alpha\in\prod b$ be any function such that $f_\alpha(\gamma)=\delta_\gamma^\alpha$ for each $\gamma\in D_\lambda$, then we claim that $B=\st{f_\alpha\mid \alpha\in\lambda}$ is unbounded. Let $g\in\prod b$. By the pigeonhole principle there exists $\alpha\in\lambda$ such that $g(\gamma)<\delta_\gamma^\alpha$ for cofinally many $\gamma\in D_\lambda$, hence we see that $f_\alpha\mathrel{\cancel{\leq^*}}g$.

On the other hand, if $|B|<\lambda$, then let $\alpha_0$ be large enough such that $\cf(b(\alpha))\geq\lambda$ for all $\alpha\geq\alpha_0$, then $|\st{f(\alpha)\mid f\in B}|<\cf(b(\alpha))$ for all $\alpha\geq\alpha_0$, thus we can pick $g\in\prod b$ such that $g(\alpha)=\sup\st{f(\alpha)\mid f\in B}<b(\alpha)$ for each $\alpha\geq\alpha_0$ to see that $f\leq^* g$ for all $f\in B$.

(ii)\quad Since each $D_\lambda$ is bounded, we may assume that $\alpha_\xi\neq \alpha_{\xi'}$ for each $\xi\in S$. For each $\xi\in S$, let $\sst{\beta_\xi^\eta\mid\eta\in\xi}$ be a cofinal subset of $b(\alpha_\xi)$ (not necessarily increasing). Given $\alpha\in\kappa$ we define $f_\alpha$ such that $f_\alpha(\alpha_\xi)=\beta_\xi^\alpha$ for all $\xi\in S$ with $\alpha<\xi$ and arbitrary otherwise. We claim that $B=\st{f_\alpha\mid \alpha\in\kappa}$ is unbounded.

Let $g\in\prod b$, then for each $\xi\in S$ there is a minimal $\eta_\xi<\xi$ such that $g(\alpha_\xi)<\beta_\xi^{\eta_\xi}=f_{\eta_\xi}(\alpha_\xi)$, hence $g':\xi\mapsto \eta_\xi$ is a regressive function on $S$. Therefore there is stationary $S'\subset S$ such that $g'$ is constant on $S'$, say with value $\eta$, then we see that $f_\eta\mathrel{\cancel{\leq^*}}g$.

On the other hand, suppose that $B\subset\prod B$ with $\mu=|B|<\kappa$ and let $\lambda>\mu$ be regular, then $D_\lambda$ is bounded, thus there exists $\alpha_0$ such that $\cf(b(\alpha))>\mu$ for all $\alpha\geq\alpha_0$. Let $g\in\prod b$ be such that $g(\alpha)=\sup\st{f(\alpha)\mid f\in B}$ for each $\alpha\geq\alpha_0$ to see that $f\leq^*g$ for all $f\in B$.

(iii)\quad Let $B\subset\prod b$ with $|B|=\kappa$ and enumerate $B$ as $\st{f_\eta\mid \eta\in\kappa}$. Let $\ab{\alpha_\xi\mid \xi\in\kappa}$ be an increasing enumeration of $C$. We define $g(\alpha)=\Cup_{\eta\in\xi}f_\eta(\alpha)$ for all $\alpha\in[\alpha_\xi,\alpha_{\xi+1})$ and $\xi\in\kappa$, then $g(\alpha)$ is the supremum of a sequence of length $\xi\leq\alpha_\xi<\cf(b(\alpha))$, hence $g\in\prod b$. Clearly $f_\xi\leq^* g$ for each $\xi\in\kappa$, so $B$ is bounded.
\end{proof}

Hence, we see that $\bleq{b}$ is trivial in cases (i) and (ii). We will later prove that $\kappa^+<\bleq{b}$ is consistent in case (iii), but let us take a look at the dual $\dleq{b}$ first.

If $\dleq{b}$ behaves dually to $\bleq{b}$, then we expect $\dleq{b}<2^\kappa$ to be inconsistent in cases (i) and (ii). Remember that we assume that $b$ is increasing, hence by \Cref{dominating cofinality lemma}, we can reduce case (i) to the situation where $b$ is a constant function with a regular cardinal as value. In other words, we have to study the dominating number as defined in the space ${}^\kappa\lambda$ where $\lambda<\kappa$ is a regular cardinal. 

Note that even if we drop the assumption that $b$ is increasing, the following Tukey connection shows that the behaviour on the space ${}^\kappa\lambda$ is essentially the relevant part of the (in)consistency of $\dleq{b}<2^\kappa$.

\begin{lmm}\label{subsequence domination}
If $\ab{\alpha_\xi\mid \xi\in\kappa}\in\gbs$ is a strictly increasing sequence and $b':\xi\mapsto b(\alpha_\xi)$, then $\sr D_{b'}\preceq\sr D_{b}$.
\end{lmm}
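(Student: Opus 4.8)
The plan is to build an explicit Tukey connection $\langle\rho_-,\rho_+\rangle$ witnessing $\sr D_{b'}\preceq\sr D_b$, where $\sr D_{b'}=\langle\prod b',\prod b',\leq^*\rangle$ and $\sr D_b=\langle\prod b,\prod b,\leq^*\rangle$. The idea is straightforward: a function $f'\in\prod b'$ should be "spread out" along the chosen coordinates $\alpha_\xi$ to produce a function in $\prod b$, and conversely a dominating function on $\prod b$ should be "sampled" at the coordinates $\alpha_\xi$ to produce something in $\prod b'$. The one subtlety is that the $\alpha_\xi$ need not be all of $\kappa$, so the map $\prod b\to\prod b'$ only looks at a subset of coordinates, while the map $\prod b'\to\prod b$ has to fill in the remaining coordinates with arbitrary (say, $0$) values; this asymmetry is harmless.

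Concretely, first I would define $\rho_-:\prod b'\to\prod b$ by $\rho_-(f')(\alpha_\xi)=f'(\xi)$ for each $\xi\in\kappa$, and $\rho_-(f')(\alpha)=0$ for $\alpha\notin\{\alpha_\xi\mid\xi\in\kappa\}$; this lies in $\prod b$ since $f'(\xi)<b'(\xi)=b(\alpha_\xi)$ and $0<b(\alpha)$ as $b(\alpha)$ is an infinite cardinal. Second, I would define $\rho_+:\prod b\to\prod b'$ by $\rho_+(g)(\xi)=g(\alpha_\xi)$; this lies in $\prod b'$ since $g(\alpha_\xi)<b(\alpha_\xi)=b'(\xi)$. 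Then I must check the Tukey condition: if $\rho_-(f')\leq^* g$, i.e.\ $\rho_-(f')(\alpha)\leq g(\alpha)$ for all $\alpha$ above some $\beta\in\kappa$, then in particular $f'(\xi)=\rho_-(f')(\alpha_\xi)\leq g(\alpha_\xi)=\rho_+(g)(\xi)$ for all $\xi$ with $\alpha_\xi>\beta$; since $\langle\alpha_\xi\rangle$ is strictly increasing, $\alpha_\xi\geq\xi$, so this holds for all $\xi>\beta$, giving $f'\leq^*\rho_+(g)$ as required.

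I do not expect any real obstacle here — the lemma is essentially a bookkeeping statement that restricting a product space to a subsequence of coordinates is Tukey-below the full product. The only point worth stating carefully is why the "filler" values on the non-selected coordinates cause no trouble (they do not, because $\leq^*$ is insensitive to any bounded set of coordinates, and more importantly we only ever need the inequality to propagate from the $\alpha_\xi$-coordinates back to $\prod b'$). One could also remark that, although the statement as given does not assume $b$ increasing, the hypothesis that each $b(\alpha)$ is an infinite cardinal (so in particular nonzero) is what lets $\rho_-$ be well-defined; if one wanted to drop even that, one could instead let $\rho_-(f')(\alpha)$ be any fixed element of $b(\alpha)$.
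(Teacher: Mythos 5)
Your proof is correct and is essentially the argument the paper has in mind (the paper just says ``Easy''): spread $f'$ out along the coordinates $\alpha_\xi$ with filler values elsewhere, sample $g$ at the $\alpha_\xi$, and use $\alpha_\xi\geq\xi$ to transfer the $\leq^*$ bound. Nothing further is needed.
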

\begin{proof}
Easy.
\end{proof}

Dominating numbers in the space ${}^\kappa\lambda$ have been studied by many in the past. Brendle showed in the last section of \cite{Bre22} (using different notation where the roles of $\kappa$ and $\lambda$ are reversed) that if $\lambda<\kappa$ and $\lambda$ is regular uncountable, then $\dleq{\bar\lambda}<2^\kappa$ is actually consistent. An example for a model where this holds, is the model resulting from adding $\kappa^{++}$ many $\mu$-Cohen reals over $\s{GCH}$, where $\mu<\lambda$. Because this also destroys the inaccessibility of $\kappa$, we cannot use this in our context.

The question whether $\dleq{\bar\lambda}<2^\kappa$ is consistent with $\kappa\geq 2^{<\lambda}$ is mentioned as Question 16 in \cite{Bre22}. Moreover, the special case where $\kappa=\omega_1$ and $\lambda=\omega$ is a famous open problem of Jech and Prikry {\cite{JP79}} that is still unsolved almost half a century later. We will give a partial answer and prove that $\dleq{\bar\lambda}=2^\kappa$ when $\kappa$ is inaccessible. 

\begin{thm}\label{bounds on dleq}
If $\lambda<\kappa$ is regular and $\cf(b(\alpha))=\lambda$ for cofinally many $\alpha\in\kappa$, then $\dleq{b}=2^\kappa$.
\end{thm}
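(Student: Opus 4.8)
The plan is to reduce, via \Cref{dominating cofinality lemma} and \Cref{subsequence domination}, to the case $b = \bar\lambda$, and then to show $\f d_\kappa(\leq^*)(\,\text{on }{}^\kappa\lambda\,) = 2^\kappa$ directly using the inaccessibility of $\kappa$. First I would pick a strictly increasing sequence $\ab{\alpha_\xi\mid\xi\in\kappa}$ enumerating a cofinal set of $\alpha$ with $\cf(b(\alpha_\xi)) = \lambda$, and let $b':\xi\mapsto b(\alpha_\xi)$. By \Cref{subsequence domination} we have $\sr D_{b'}\preceq\sr D_b$, and $\cf(b'(\xi)) = \lambda$ for all $\xi$, so by \Cref{dominating cofinality lemma} $\sr D_{b'}\equiv\sr D_{\bar\lambda}$. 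Hence $\dleq{\bar\lambda} = \norm{\sr D_{\bar\lambda}} = \norm{\sr D_{b'}} \leq \norm{\sr D_b} = \dleq{b} \leq 2^\kappa$, so it suffices to prove $\dleq{\bar\lambda}\geq 2^\kappa$, i.e. that no family $F\subset{}^\kappa\lambda$ of size $<2^\kappa$ is dominating under $\leq^*$.

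The heart of the argument is the following diagonalization. Let $F = \st{f_\eta\mid\eta\in\mu}$ with $\mu < 2^\kappa$. I want to build a single $g\in{}^\kappa\lambda$ with $g\nleqs f_\eta$ for every $\eta$, i.e. such that for each $\eta$ the set $\st{\alpha\mid g(\alpha) > f_\eta(\alpha)}$ is cofinal in $\kappa$. Here is where inaccessibility enters: since $\kappa$ is (strongly) inaccessible, for each $\gamma<\kappa$ the number of functions ${}^\gamma\lambda$ is $\lambda^{|\gamma|} < \kappa$, and more importantly $2^{<\kappa} = \kappa$, while $2^\kappa > \mu \geq \kappa$. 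The idea is to think of $g$ as coded by a branch through the tree $T = {}^{<\kappa}\lambda$; for each $\eta<\mu$ and each node $s\in T$, there are many one-step extensions $s^\frown\ab{j}$ (namely $\lambda\geq\omega$ of them) and at most one of them has $j\leq f_\eta(\dom(s))$, so we can always extend $s$ to "beat" $f_\eta$ at coordinate $\dom(s)$ by choosing $j > f_\eta(\dom(s))$ — possible since $\lambda$ is infinite, hence $f_\eta(\dom(s)) + 1 < \lambda$. The subtlety is that we must beat \emph{all} $\mu$ many $f_\eta$ cofinally often with a single branch.

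The clean way to do this is a counting/cardinality argument rather than an explicit construction: consider the set $B$ of all $g\in{}^\kappa\lambda$ — it has size $\lambda^\kappa = 2^\kappa$ (using $\lambda\geq 2$ and $\lambda\leq\kappa$, so $2^\kappa\leq\lambda^\kappa\leq\kappa^\kappa = 2^\kappa$). For a fixed $f\in{}^\kappa\lambda$, the "small" set $\st{g\in{}^\kappa\lambda\mid g\leqs f}$ has size at most $\kappa\cdot\prod_{\alpha}(\text{number of }g\restriction[\beta,\kappa)\text{ coordinatewise }\leq f) $... actually more carefully: $\st{g\mid g\leqs f} = \Cup_{\beta\in\kappa}\st{g\mid \forall\alpha\geq\beta\, g(\alpha)\leq f(\alpha)}$, and each set in this union has size at most $2^{|\beta|}\cdot\prod_{\alpha\geq\beta}(f(\alpha)+1) \leq \kappa\cdot\lambda^\kappa = 2^\kappa$ — unfortunately this bound is too weak as stated. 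The genuinely correct observation is: if $F$ were dominating, then ${}^\kappa\lambda = \Cup_{\eta\in\mu}\st{g\mid g\leqs f_\eta}$; I claim each $\st{g\mid g\leqs f}$ has size $< 2^\kappa$ is \emph{false} in general, so this direct counting fails. Therefore the real approach must be a tree/fusion construction.

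So the main work, and the main obstacle, is the fusion-style construction of $g$. I would fix a bijection between $\mu$ and a set of "tasks", and more usefully enumerate $\kappa\times\mu$ is impossible since $\mu$ may exceed $\kappa$; instead I enumerate requirements as follows. For each $\eta<\mu$ I need: for cofinally many $\alpha$, $g(\alpha) > f_\eta(\alpha)$. Build $g$ recursively on $\kappa$: having defined $g\restriction\gamma$, at stage $\gamma$ I want to ensure progress. The trick (this is essentially the standard argument that $\f d_\kappa(\leq^*) = 2^\kappa$ for $\kappa$ inaccessible, going back to folklore / Cummings–Shelah style arguments) is: since $\mu < 2^\kappa = \lambda^\kappa$, a pigeonhole/tree argument shows there is a branch $g$ of ${}^{\kappa}\lambda$ not dominated by any $f_\eta$. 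Concretely: suppose toward contradiction every branch is dominated. Assign to each branch $g$ the least pair$\ldots$ — this requires $2^\kappa$ branches to be distributed among $\mu < 2^\kappa$ functions with a "bounding ordinal" $\beta_g<\kappa$, so some $f_\eta$ and some $\beta<\kappa$ have $\geq$... again stuck on the coordinatewise product being large. I expect the honest proof (which the author presumably gives) uses a genuinely different idea — perhaps that $\st{g\mid g\leqs f}$ is \emph{nowhere dense}-like / has a specific structure, or builds $g$ by recursion handling one requirement per "level" using that at limit stages the inaccessibility lets the construction continue, with the cofinality $\lambda < \kappa$ of the relevant coordinates being exactly what makes $\leq^*$ (eventual domination with threshold $<\kappa$) too weak to capture a diagonal branch. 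The hard part is making this diagonalization precise across $\mu\geq\kappa$ many requirements simultaneously, and I would structure it as a recursion of length $\kappa$ where at stage $\gamma$ I diagonalize against the $f_\eta$ for $\eta$ in a set of size $<\kappa$ chosen so that cofinally many stages attend to each fixed $\eta$ — using a surjection $\kappa\to\mu$? No: no such surjection exists when $\mu>\kappa$. This is precisely the obstacle, and resolving it (presumably by exploiting $2^{<\kappa}=\kappa$ together with a clever bookkeeping that partitions $\mu$ rather than enumerating it) is the crux of the theorem.
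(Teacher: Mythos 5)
Your reduction to $b=\bar\lambda$ via \Cref{dominating cofinality lemma,subsequence domination} is exactly the paper's first step, and your two negative observations are both correct: the direct counting of $\st{g\mid g\leq^* f}$ fails (that set can have size $2^\kappa$), and a stage-by-stage diagonalization cannot be organized when the candidate dominating family has size $\mu>\kappa$. But the proposal then stops: you never prove $\dleq{\bar\lambda}\geq 2^\kappa$, and you say so yourself. Since that lower bound is the entire content of the theorem, this is a genuine gap rather than a proof.

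The missing idea is to invert the quantifiers: instead of building one $g$ that defeats all $\mu$ many candidates, exhibit a family of $2^\kappa$ many pairwise \emph{eventually different} functions and show that each single candidate can only take care of at most $\lambda$ of them. The paper routes this through localisation. By \Cref{relation star and leq 1} with $b=h=\bar\lambda$, a function $g\in{}^\kappa\lambda$ gives rise to the slalom $\phi:\alpha\mapsto g(\alpha)$ of cardinality $<\lambda$ which localises everything that $g$ strictly dominates, so $\dstar{\bar\lambda,\bar\lambda}\leq\dleq{\bar\lambda}$. Then \Cref{bounds on dstar} shows $\dstar{b',\bar\lambda}=2^\kappa$ for $b':\alpha\mapsto 2^{|\alpha|}$: coding each $x\in\gcs$ as $f_x:\alpha\mapsto\pi_\alpha(x\restriction\alpha)$ produces $2^\kappa$ pairwise eventually different functions, and a single slalom $\phi$ with $|\phi(\alpha)|<\lambda$ can localise at most $\lambda$ of them (if it localised $\lambda^+$ many, then since $\lambda^+<\kappa$ one can pass a threshold beyond which they are pairwise distinct at every coordinate and obtain $\lambda^+$ distinct members of some $\phi(\alpha)$, a contradiction); hence $\gcs$ would otherwise be covered by fewer than $2^\kappa$ sets of size $\lambda$. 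Finally a Tukey connection $\sr L_{b',\bar\lambda}\preceq\sr L_{\bar\lambda,\bar\lambda}$, obtained by enumerating all functions $2^{|\alpha|}\to\lambda$ along a partition of $\kappa$ into blocks of size $\lambda^{2^{|\alpha|}}<\kappa$ (this is where inaccessibility is really used), transfers the bound back to ${}^\kappa\lambda$. This ``few functions per slalom'' counting is the device that replaces the impossible bookkeeping you were looking for.
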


We will delay the essential part of the proof of this theorem to the next subsection, since it will be  a corollary of \Cref{bounds on dstar}, which states that $\dstar{b,h}=2^\kappa$ if $h(\alpha)=\lambda$ for cofinally many $\alpha\in\kappa$. This is related to $\dleq{b}$ through the following lemma.

\begin{lmm}\label{relation star and leq 1}
If $b=^*h$, then $\sr L_{b,h}\preceq\sr D_b$.
\end{lmm}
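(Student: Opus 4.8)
The goal is a Tukey connection $\sr L_{b,h}\preceq\sr D_b$ under the hypothesis $b=^*h$, i.e. $b(\alpha)=h(\alpha)$ for almost all $\alpha$. Recall $\sr L_{b,h}=\langle\prod b,\Loc_\kappa^{b,h},\in^*\rangle$ and $\sr D_b=\langle\prod b,\prod b,\leq^*\rangle$. So I need maps $\rho_-:\prod b\to\prod b$ and $\rho_+:\prod b\to\Loc_\kappa^{b,h}$ such that whenever $\rho_-(f)\leq^* g$ then $f\in^*\rho_+(g)$.

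The key observation is that when $h(\alpha)=b(\alpha)$, a $(b,h)$-slalom at coordinate $\alpha$ is just a subset of $b(\alpha)$ of size strictly less than $b(\alpha)$; in particular, any bounded initial segment $g(\alpha)\subseteq b(\alpha)$ of the ordinal $b(\alpha)$ is such a set (its size is $|g(\alpha)|<b(\alpha)$ since $b(\alpha)$ is an infinite cardinal). So the plan is: let $\rho_-$ be the identity on $\prod b$ (or, to be safe on the finitely many coordinates where $h(\alpha)<b(\alpha)$, compose with a correction — but since $\leq^*$ and $\in^*$ both ignore finitely many coordinates, the identity works), and let $\rho_+(g)$ be the slalom $\phi_g$ defined by $\phi_g(\alpha)=g(\alpha)=\{\beta:\beta<g(\alpha)\}$ whenever $h(\alpha)=b(\alpha)$, and $\phi_g(\alpha)=\emptyset$ (or any fixed small set) on the finitely many exceptional coordinates. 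Then $\phi_g\in\Loc_\kappa^{b,h}$ since $|\phi_g(\alpha)|=|g(\alpha)|<b(\alpha)=h(\alpha)$ for almost all $\alpha$, and the exceptional coordinates are handled trivially.

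Now the verification: suppose $f\leq^* g$, i.e. $f(\alpha)\leq g(\alpha)$ — actually $f(\alpha)<b(\alpha)$ so I should be careful whether $f(\alpha)\leq g(\alpha)$ gives $f(\alpha)\in\phi_g(\alpha)$; it gives $f(\alpha)\le g(\alpha)$, and $f(\alpha)\in g(\alpha)$ iff $f(\alpha)<g(\alpha)$. To fix this off-by-one, I take $\rho_+(g)(\alpha)=g(\alpha)+1=\{\beta:\beta\le g(\alpha)\}$ when this is still a subset of $b(\alpha)$ (true whenever $g(\alpha)+1<b(\alpha)$, i.e. for all but possibly the coordinates where $g(\alpha)$ is already maximal — but $b(\alpha)$ is a limit ordinal being an infinite cardinal, so $g(\alpha)<b(\alpha)$ implies $g(\alpha)+1<b(\alpha)$, no problem). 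Its size is still $<b(\alpha)=h(\alpha)$. Then $f\leq^* g$ means $f(\alpha)\le g(\alpha)<g(\alpha)+1$ for almost all $\alpha$, hence $f(\alpha)\in\rho_+(g)(\alpha)$ for almost all $\alpha$, i.e. $f\in^*\rho_+(g)$. That is exactly the required implication, so $\langle\rho_-,\rho_+\rangle$ is a Tukey connection.

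**Main obstacle.** There is essentially no hard step here — this is a bookkeeping lemma. The only things to be careful about are: (1) confirming that $|g(\alpha)+1|<b(\alpha)$ really does hold, which uses that $b(\alpha)$ is an infinite cardinal (hence a limit ordinal closed under successor), so a bounded set stays of size $<b(\alpha)$; (2) absorbing the finitely many coordinates where $h(\alpha)\neq b(\alpha)$ into the "almost all" quantifiers of both $\leq^*$ and $\in^*$, defining $\rho_+(g)$ arbitrarily (say as $\emptyset$, which lies in $[b(\alpha)]^{<h(\alpha)}$ since $h(\alpha)\ge 1$) on those coordinates; and (3) noting $\rho_-=\id$ is legitimate since $\prod b$ is the challenge set of both systems. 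So the proof is genuinely one line of definitions plus one line of checking, matching the terse "Easy" style the author used for the neighbouring \Cref{subsequence domination}.
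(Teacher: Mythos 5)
Your proof is correct and is essentially the paper's argument: both turn $g\in\prod b$ into the slalom whose $\alpha$-th entry is the initial segment of $b(\alpha)$ determined by $g(\alpha)$, using $h(\alpha)=b(\alpha)$ to bound its size. The only (cosmetic) difference is where the off-by-one is absorbed — the paper takes $\rho_-(f)=f+1$ and $\phi(\alpha)=g(\alpha)$, while you take $\rho_-=\id$ and $\phi(\alpha)=g(\alpha)+1$.
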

\begin{proof}
We define $\rho_-:\prod b\to \prod b$ and $\rho_+:\prod b\to\Loc_\kappa^{b,h}$ as a Tukey connection for $\sr L_{b,h}\preceq\sr D_b$. 

We will have $\rho_+$ sending $g\in\prod b$ to $\phi\in\Loc_\kappa^{b,h}$ where $\phi(\alpha)=g(\alpha)$ whenever $h(\alpha)=b(\alpha)$ and arbitrary otherwise. Since $h=^*b$ and $g(\alpha)<b(\alpha)$ for all $\alpha\in\kappa$, this is well-defined. Let $\rho_-:f\mapsto f+1$.

If $f,g\in\prod b$, let $\phi=\rho_+(g)$. Then $f+1\leq^* g$ implies $f<^*g$, hence $f\in^*\phi$.
\end{proof}

\begin{proof}[Proof of \Cref{bounds on dleq}]
Let $\lambda<\kappa$ be regular and $\cf(b(\alpha))=\lambda$ for cofinally many $\alpha\in\kappa$. By \Cref{dominating cofinality lemma} and \Cref{subsequence domination} we may assume that $b(\alpha)=\lambda$  for all $\alpha\in\kappa$. Let $h=b$, then the conditions of \Cref{bounds on dstar,relation star and leq 1} are satisfied, thus $2^\kappa=\dstar{b,h}\leq\dleq{b}\leq2^\kappa$.
\end{proof}

We currently do not know whether $\dleq{b}$ also is trivial in case (ii) of \Cref{bounds on bleq}.

\begin{qst}
Let $D_\lambda=\st{\alpha\in\kappa\mid \cf(b(\alpha))=\lambda}$ be bounded for all regular $\lambda<\kappa$ and $S$ be stationary such that for each $\xi\in S$ there exists $\alpha_\xi\geq \xi$ with $\cf(b(\alpha_\xi))\leq\xi$. Is $\dleq{b}<2^\kappa$ consistent?
\end{qst}

To conclude this subsection, we will give an independence result to show that both $\bleq{b}$ and $\dleq{b}$ are nontrivial in case (iii). We prove this with a generalisation of Hechler forcing.

\begin{dfn}\label{Hechler general definition}
Let $C\subset\kappa$. We define \emph{$(b,C)$-Hechler forcing} $\bb D_\kappa^b(C)$ as the forcing notion with conditions $p=(s,f)$ where $s\in\prod_{<\kappa}b$ is such that $\dom(s)\in C$ and $f:\kappa\setminus \dom(s)\to \kappa$ is such that $f(\alpha)\in b(\alpha)$ for all $\alpha\in\dom(f)$, where the ordering is defined as $(t,g)\leq(s,f)$ iff $s\subset t$ and $f(\alpha)\leq g(\alpha)$ for all $\alpha\in\dom(g)$ and $f(\alpha)\leq t(\alpha)$ for all $\alpha\in\dom(t)\setminus\dom(s)$.
\end{dfn}
It is clear from the definition, under the assumptions of case (iii), that $\bb D_\kappa^b(C)$ adds a generic element of $\prod b$ that dominates all functions in $\prod b$ from the ground model. We will show that $\bb D_\kappa^b(C)$ preserves cardinals and cofinalities. Since $\bb D_\kappa^b(C)$ is not ${<}\kappa$-closed in general, we need a slightly weaker variant of closure.

\begin{dfn}
A forcing notion $\bb P$ is \emph{strategically ${<}\kappa$-closed} if White has a winning strategy for $\c G(\bb P,p)$ for all $p\in\bb P$. Here $\c G(\bb P,p)$ is the game of length $\kappa$ as follows: at stage $\alpha\in\kappa$, White chooses a condition $p_\alpha$ such that $p_\alpha\leq p'_\xi$ for all $\xi<\alpha$ (and $p_0\leq p$) and Black subsequently chooses a condition $p'_\alpha\leq p_\alpha$. White wins $\c G(\bb P,p)$ if White can make a move at every stage $\alpha\in\kappa$ of the game.
\end{dfn}

\begin{lmm}
Let $C$ be a club set such that for each $\xi\in C$ we have $\cf(b(\alpha))>\xi$ for all $\alpha\geq\xi$, then $\bb D_\kappa^b(C)$ is strategically ${<}\kappa$-closed and ${<}\kappa^+$-c.c.
\end{lmm}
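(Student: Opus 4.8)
The statement asks to show that, under the hypothesis on $C$ (namely $\cf(b(\alpha)) > \xi$ for all $\alpha \geq \xi$ whenever $\xi \in C$), the forcing $\bb D_\kappa^b(C)$ is strategically ${<}\kappa$-closed and ${<}\kappa^+$-c.c. These are two independent claims, and I would handle them separately.

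\textbf{Strategic ${<}\kappa$-closure.} The plan is to describe an explicit winning strategy for White in $\c G(\bb D_\kappa^b(C), p)$. The natural strategy is: at a stage $\alpha$, White looks at the conditions $p'_\xi = (s_\xi, f_\xi)$ played so far, takes the union $s = \Cup_{\xi<\alpha} s_\xi$ of the stems (which is an increasing union of compatible stems, hence a function in $\prod_{<\kappa} b$) and the pointwise supremum (or a suitable pointwise bound) $f = \sup_{\xi<\alpha} f_\xi$ of the side conditions, and then — crucially — \emph{extends the stem further} to land in $C$, since White must produce a condition whose stem-domain lies in $C$. Here is where the hypothesis on $C$ enters: White should always arrange that after stage $\xi$ the stem has length at least the $\xi$-th element of $C$, so that at a limit stage $\gamma$ the union of stems has length $\geq \sup$ of those elements, which is again in $C$ by closure of $C$, and White can take exactly that as the new stem. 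The one subtlety to check is that at limit stages the pointwise supremum $f(\alpha) = \sup_{\xi < \gamma} f_\xi(\alpha)$ still lies in $b(\alpha)$: this is a supremum of a sequence of length $\leq \gamma$, and if $\gamma$ is below the current stem-length, which is $\geq$ some element of $C$ exceeding $\gamma$, then $\cf(b(\alpha)) > \gamma$ by hypothesis, so the supremum does not reach $b(\alpha)$. One must also verify that the resulting $(s,f)$ genuinely extends every $p'_\xi$ in the forcing order — this is the routine bookkeeping of checking $f_\xi(\alpha) \leq f(\alpha)$ on $\dom(f)$ and $f_\xi(\alpha) \leq s(\alpha)$ on the newly-added part of the stem, which follows from $f$ being the pointwise sup and $s$ extending each $s_\xi$ compatibly.

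\textbf{The ${<}\kappa^+$-c.c.} This is just $\kappa^+$-c.c., i.e.\ every antichain has size $\leq \kappa$; equivalently, $|\bb D_\kappa^b(C)| \leq \kappa$. I would estimate the number of conditions: a condition is a pair $(s,f)$ with $s \in \prod_{<\kappa} b$ and $f$ a function from a co-small subset of $\kappa$ into $\kappa$ bounded by $b$. Since $\kappa$ is inaccessible and $b(\alpha) < \kappa$ for all $\alpha$, we have $|\prod_{<\kappa} b| = \Cup_{\gamma \in \kappa} |\prod_{\alpha \in \gamma} b(\alpha)| \leq \Cup_{\gamma\in\kappa} \kappa^{|\gamma|} = \kappa$ (using inaccessibility: $\kappa^{|\gamma|} = \kappa$ for each $\gamma < \kappa$), and similarly the number of side conditions $f$ is bounded by $\kappa^\kappa$ — wait, that is $2^\kappa$, so this crude count does \emph{not} work and one cannot simply count conditions. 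Instead, for the chain condition I would argue directly: given $\kappa^+$ conditions, by a $\Delta$-system / pigeonhole argument on the stems (there are only $\kappa$ possible stems) find $\kappa^+$ of them sharing a common stem $s$; any two conditions $(s,f)$ and $(s,f')$ with the same stem are then compatible, witnessed by $(s, \max(f,f'))$. This gives the $\kappa^+$-c.c.\ cleanly and is in fact the main content.

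\textbf{Expected main obstacle.} The delicate point is making the closure strategy precise enough that limit stages work: one must interleave the two requirements on White's moves — extending the stem into $C$ fast enough, and keeping the side-condition sup inside $\prod b$ — and verify they are mutually consistent. This is exactly where the specific form of the hypothesis on $C$ (that $\cf(b(\alpha))$ dominates the current stage for all $\alpha$ past a point of $C$) is used, and it should be stated as the key invariant maintained by the strategy: after stage $\xi$, White's condition has stem-length equal to the $\xi$-th element of $C$. Everything else is routine verification of the forcing order.
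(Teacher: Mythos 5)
Your proposal is correct and follows essentially the same route as the paper: White's strategy forces the stem length to grow past the current stage so that at limit stages the union of stems lands in $C$ and the hypothesis $\cf(b(\beta))>\xi$ guarantees the pointwise supremum of the side conditions stays below $b$, while the chain condition follows from the fact that there are only $\kappa$ many stems and any two conditions with the same stem are compatible via the pointwise maximum. The only cosmetic difference is that the paper phrases the invariant as $\dom(s_\alpha)\geq\alpha$ rather than indexing into $C$, which amounts to the same thing.
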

\begin{proof}
For strategic ${<}\kappa$-closure, at stage $\alpha$ of the game $\c G(\bb D_\kappa^b(C),p)$, let $\sab{p_\xi\mid \xi<\alpha}$ and $\sab{p'_\xi\mid \xi<\alpha}$ be the sequences of previous moves by White and Black respectively. The winning strategy for White will be to choose $(s_\alpha,f_\alpha)$ such that $\dom(s_\alpha)\geq\alpha$ in successor stages. Under this strategy, if $\alpha$ is a limit ordinal, then we have $\xi\leq\dom(s_\xi)\in C$ for each $\xi\in\alpha$. It follows that $\alpha\leq\dom(s_\alpha)$ and hence $\cf(b(\beta))>\alpha$ for all $\beta\geq\dom(s_\alpha)$ by the properties of $C$. Therefore $\ab{f_\xi(\beta)\mid \xi\in\alpha}$ is not cofinal in $b(\beta)$, thus we can define $f_\alpha:\beta\mapsto \Cup_{\xi\in\alpha}f_\xi(\beta)$ for each $\beta\in\kappa\setminus\dom(s_\alpha)$.

For ${<}\kappa^+$-c.c., note that for any $(s,f),(s,g)\in\bb D_\kappa^b(C)$ we can choose $h(\alpha)=\max\st{f(\alpha),g(\alpha)}$ for all $\alpha\in\kappa\setminus\dom(s)$ to see that have $(s,h)\leq (s,f)$ and $(s,h)\leq (s,g)$. Thus, if $\c A\subset\bb D_\kappa^b(C)$ is an antichain and $(s,f),(t,g)\in \c A$ are distinct, then we must have $s\neq t$, hence $|\c A|\leq \kappa$.
\end{proof}

\begin{crl}
$\bb D^b_\kappa(C)$ preserves cardinals and cofinality and does not add any elements to $\gfbs$.
\end{crl}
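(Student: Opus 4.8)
The plan is to read off both halves of the statement from the preceding lemma together with the standard forcing preservation machinery: strategic ${<}\kappa$-closure controls everything at or below $\kappa$, and the ${<}\kappa^+$-chain condition controls everything above $\kappa$.

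First I would show that $\bb D^b_\kappa(C)$ adds no new ${<}\kappa$-sequence of ordinals; this immediately gives that it adds no new element of $\gfbs$ (each such element is a sequence of ordinals of length ${<}\kappa$), no new bounded subset of $\kappa$, that no cardinal ${\leq}\kappa$ is collapsed, and that no ordinal of cofinality ${\leq}\kappa$ changes its cofinality (a shorter cofinal sequence into it would itself be a new short sequence of ordinals). To see that no new short sequences appear, fix $q\leq p$ in $\bb D^b_\kappa(C)$, an ordinal $\gamma<\kappa$, and a name $\dot s$ forced by $q$ to be a function $\gamma\to\r{Ord}$. Play the game $\c G(\bb D^b_\kappa(C),q)$ with White following her winning strategy and with Black, at each stage $\alpha<\gamma$, choosing $p'_\alpha\leq p_\alpha$ deciding $\dot s(\alpha)=\check\xi_\alpha$ (and playing arbitrarily after stage $\gamma$). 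Since White wins she has a legal move $p_\gamma$ at stage $\gamma$, and $p_\gamma\leq p'_\alpha$ for every $\alpha<\gamma$, so $p_\gamma\leq q$ and $p_\gamma\Vdash\dot s=\check t$ where $t=\sab{\xi_\alpha\mid\alpha<\gamma}\in V$. As $q$ ranged over arbitrary extensions of $p$, the conditions deciding $\dot s$ completely are dense below $p$, and $p$ was arbitrary, so $\Vdash\dot s\in V$.

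Next comes the chain condition side. Since every antichain of $\bb D^b_\kappa(C)$ has size ${\leq}\kappa$, the forcing is $\kappa^+$-c.c., and by the usual nice-name counting (or $\Delta$-system) argument a $\kappa^+$-c.c. forcing preserves all cofinalities, hence all cardinals, that are ${\geq}\kappa^+$. Combining the two items: every regular cardinal is either ${\leq}\kappa$ or ${\geq}\kappa^+$ (as $\kappa$ is regular), and in each case its role as a cofinality is preserved, so all cofinalities and all cardinals are preserved; moreover $\gfbs$ is unchanged. In particular $\kappa$ itself remains inaccessible. There is no real obstacle here — this is a routine corollary — the only point needing slight care is that the preceding lemma supplies \emph{strategic} ${<}\kappa$-closure rather than genuine ${<}\kappa$-closure, which is precisely why the "no new short sequences" step is run through a play of the game $\c G(\bb D^b_\kappa(C),q)$ instead of by simply taking lower bounds of descending $\kappa$-sequences of conditions.
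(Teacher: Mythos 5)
Your proof is correct and is exactly the standard argument the paper leaves implicit when it states this as a corollary of the strategic ${<}\kappa$-closure and ${<}\kappa^+$-c.c.\ lemma: the game-play argument for "no new ${<}\kappa$-sequences of ordinals" handles everything at or below $\kappa$ (including $\gfbs$), and the chain condition handles everything above. You correctly identify the one point of care, namely that only strategic closure is available, which is why the argument must be routed through a play of $\c G(\bb D^b_\kappa(C),q)$ rather than through lower bounds of arbitrary descending chains.
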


\begin{fct}
Iterations of arbitrary length with ${<}\kappa$-support preserve strategic ${<}\kappa$-closure and ${<}\kappa^+$-c.c., and hence preserve cardinals, cofinality and do not add any elements to $\gfbs$.
\end{fct}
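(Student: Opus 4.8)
The statement to prove is the closure/chain-condition preservation under ${<}\kappa$-support iterations of strategically ${<}\kappa$-closed, ${<}\kappa^+$-c.c.\ forcings. I would prove the two halves separately, by simultaneous induction on the length $\delta$ of the iteration $\bb P_\delta = \langle \bb P_\alpha, \dot{\bb Q}_\alpha \mid \alpha < \delta\rangle$, where each $\dot{\bb Q}_\alpha$ is forced to be strategically ${<}\kappa$-closed and ${<}\kappa^+$-c.c.

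\textbf{Strategic ${<}\kappa$-closure.} The inductive hypothesis gives, at each coordinate, a (name for a) winning strategy for White in $\c G(\dot{\bb Q}_\alpha, p)$. I would describe a winning strategy for White in $\c G(\bb P_\delta, p)$ by having White, at stage $\gamma$, maintain a condition $p_\gamma$ whose support is contained in a fixed set of size ${<}\kappa$ growing with $\gamma$ (bookkeeping: since $\kappa$ is regular and inaccessible, $|\gamma|^{<\kappa} < \kappa$ is not needed, only that $|\bigcup_{\xi<\gamma}\supp(p_\xi)| < \kappa$ when $\gamma < \kappa$), and on each active coordinate $\alpha$ White plays according to the coordinatewise strategy applied to the sequence of previous values $p_\xi(\alpha)$. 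At limit stages $\gamma < \kappa$ one takes the coordinatewise union; here the key point is that on each coordinate the run so far is a legal partial play of $\c G(\dot{\bb Q}_\alpha, \cdot)$ of length ${<}\kappa$, so by the coordinatewise winning strategy White can continue, and the resulting support is still a union of ${<}\kappa$ many sets each of size ${<}\kappa$, hence of size ${<}\kappa$ by inaccessibility. The subtlety is that the coordinatewise strategies must be applied coherently: White must fix, in advance or by recursion along the support, names for these strategies, and use the standard fact (proved by induction) that a ${<}\kappa$-support iteration of strategically ${<}\kappa$-closed forcings does not add ${<}\kappa$-sequences, so that at limit stages the union $p_\gamma = \bigcup_{\xi<\gamma} p_\xi$ genuinely lies in $\bb P_\delta$ rather than merely in some extension.

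\textbf{The ${<}\kappa^+$-c.c.} This is the $\Delta$-system part. Let $\c A \subseteq \bb P_\delta$ have size $\kappa^+$. Each condition has support of size ${<}\kappa$; since $\kappa$ is inaccessible, $\kappa^{<\kappa} = \kappa$ (using $2^{<\kappa}=\kappa$), so the $\Delta$-system lemma for $\kappa^+$-sized families of ${<}\kappa$-sized sets applies and yields $\c A' \subseteq \c A$ of size $\kappa^+$ whose supports form a $\Delta$-system with root $r$, $|r| < \kappa$. One then needs to find two conditions in $\c A'$ that are compatible. Shrinking further: along the root $r$ (a set of ${<}\kappa$ ordinals), recursively thin out $\c A'$ so that the conditions agree, or are forced to be compatible, on each root coordinate — this uses the ${<}\kappa^+$-c.c.\ of each $\dot{\bb Q}_\alpha$ at the root coordinates together with a counting argument (there are only $\kappa$ many possibilities to decide at each of ${<}\kappa$ many coordinates, against $\kappa^+$ many conditions). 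Having matched the conditions on the root, any two of them are compatible because outside the root their supports are disjoint. The main obstacle here is making the recursion along the root precise: one must handle names (the coordinate $\alpha$-part of a condition is a $\bb P_\alpha$-name, not an object of the ground model), which forces the recursion to be done level-by-level, using at stage $\alpha$ the already-established ${<}\kappa^+$-c.c.\ of $\bb P_\alpha$ (inductive hypothesis) to reduce the number of relevant possibilities for $p(\alpha)$ to $\kappa$ by a maximal-antichain / mixing argument.

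\textbf{Consequences.} Once both preservation statements are established, preservation of cardinals and cofinalities is immediate (strategic ${<}\kappa$-closure gives no new ${<}\kappa$-sequences, hence cardinals and cofinalities $\leq\kappa$ are preserved and no bounded subsets of $\kappa$ are added — in particular nothing is added to $\gfbs$ — while ${<}\kappa^+$-c.c.\ preserves cardinals and cofinalities $\geq\kappa^+$). I expect the genuinely delicate step to be the limit-stage argument for strategic closure, since unlike genuine ${<}\kappa$-closure one cannot simply take unions without first knowing (by the very induction being run) that no short sequences have been added; this circularity is resolved by proving "strategically ${<}\kappa$-closed" and "adds no ${<}\kappa$-sequences" together at each stage.
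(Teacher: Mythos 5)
The paper states this as a \emph{Fact} and gives no proof, so there is nothing to compare against; judged on its own terms, your first half is fine but your second half has a real gap. The strategic ${<}\kappa$-closure argument is the standard one and works: White plays coordinatewise by (names for) the coordinate strategies, the support stays of size ${<}\kappa$ at limits by regularity of $\kappa$, and the limit move is not a union but White's strategic response at each active coordinate (your worry about needing ``no new ${<}\kappa$-sequences'' in advance is a non-issue — the sequence of prior moves at coordinate $\alpha$ is a ground-model sequence of $\bb P_\alpha$-names, which canonically names a short run of the coordinate game in $V^{\bb P_\alpha}$; that the iteration adds no ${<}\kappa$-sequences is a \emph{consequence}, not a prerequisite).

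The gap is in the chain-condition half, at the step ``there are only $\kappa$ many possibilities to decide at each of ${<}\kappa$ many root coordinates.'' The hypothesis ${<}\kappa^+$-c.c.\ bounds the size of antichains; it does not bound the number of pairwise-incompatible-in-pairs \emph{classes} of conditions, nor the number of (nice names for) elements of $\dot{\bb Q}_\alpha$, so the pigeonhole against $\kappa^+$ many conditions does not go through. A maximal antichain of size $\leq\kappa$ in $\bb P_\alpha$ lets you decide membership in that antichain in $\kappa$ many ways, but two conditions lying below the same member of it need not have compatible $\alpha$-th coordinates. This is exactly the point where, classically, countable-support iterations of $\aleph_2$-c.c.\ proper forcings can fail to be $\aleph_2$-c.c.\ (whence Shelah's isomorphism condition); the troublesome case is a limit of cofinality ${<}\kappa$, where moreover the $\Delta$-system root need not be bounded below the limit, so you also cannot retreat to an initial segment $\bb P_\gamma$ as in the finite-support ccc argument. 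What actually makes the Fact true for the forcings this paper iterates ($\bb D_\kappa^b(C)$, $\Locf_\kappa^{b,h}(C)$, $\ALocforc$) is not the chain condition but the stem structure: any two conditions with the same stem at a coordinate are compatible there, the stems can be decided to be ground-model objects using the strategic closure, and there are only $\kappa^{<\kappa}=\kappa$ many stem-assignments on a root of size ${<}\kappa$. That is the counting argument you want, and it is the content of the $\kappa$-centredness preservation in \cite{BBFM} (Lemma 55). So either restrict the Fact to such stem-based ($\kappa$-linked/$\kappa$-centred) iterands and run your thinning argument with stems in place of ``possibilities,'' or supply a genuinely different argument; as written, the reduction to $\kappa$ many possibilities per coordinate is unjustified.
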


This implies especially that iteration will not destroy the inaccessibility of $\kappa$.

\begin{thm}\label{Hechler consistency proof}
Let $C$ be a club set such that for each $\xi\in C$ we have $\cf(b(\alpha))>\xi$ for all $\alpha\geq\xi$, then it is consistent that $\bleq b>\kappa^+$ and that $\dleq b<2^\kappa$.
\end{thm}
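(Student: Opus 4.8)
The plan is to realise both inequalities inside a single ${<}\kappa$-support iteration of generalised Hechler forcing over a ground model with a large continuum function at $\kappa$. Fix a model $V$ in which $\kappa$ is inaccessible and $2^\kappa=\kappa^{+++}$; such a $V$ is obtained, for example, by forcing with $\r{Add}(\kappa,\kappa^{+++})$ over a model of $\s{GCH}$, a forcing that is ${<}\kappa$-closed and $\kappa^+$-c.c.\ and hence preserves all cardinals and cofinalities as well as the inaccessibility of $\kappa$ (indeed $2^\mu$ stays $\mu^+$ for $\mu<\kappa$). Let $\bb P=\ab{\bb P_\alpha,\dot{\bb Q}_\alpha\mid\alpha<\kappa^{++}}$ be the ${<}\kappa$-support iteration in which each $\dot{\bb Q}_\alpha$ is a $\bb P_\alpha$-name for $\bb D_\kappa^b(C)$ as computed in $V^{\bb P_\alpha}$, and write $d_\alpha\in\prod b$ for the generic element added by $\dot{\bb Q}_\alpha$. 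By the Fact on ${<}\kappa$-support iterations, $\bb P$ is strategically ${<}\kappa$-closed and ${<}\kappa^+$-c.c., so it preserves cardinals and cofinalities, adds no element of $\gfbs$, and keeps $\kappa$ inaccessible. In particular $b$, $C$ and all of the values $\cf(b(\alpha))$ are computed identically in every $V^{\bb P_\alpha}$, so the hypotheses of case (iii) of \Cref{bounds on bleq} persist at every intermediate stage, and by the remark following \Cref{Hechler general definition} the generic $d_\alpha$ dominates every function in $\prod b\cap V^{\bb P_\alpha}$.

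First I would check that $2^\kappa=\kappa^{+++}$ still holds in $V^{\bb P}$. The inequality $\geq$ is immediate from cardinal preservation. For $\leq$, an induction on $\alpha\leq\kappa^{++}$ establishes $|\bb P_\alpha|\leq\kappa^{+++}$ and $(2^\kappa)^{V^{\bb P_\alpha}}\leq\kappa^{+++}$ simultaneously: at a successor step one bounds the number of nice $\bb P_\alpha$-names for conditions of $\bb D_\kappa^b(C)$ via the ${<}\kappa^+$-c.c.\ (all antichains have size $\leq\kappa$) together with $|\bb D_\kappa^b(C)|\leq\kappa\cdot 2^\kappa$; at a limit step one uses $(\kappa^{++})^{<\kappa}=\kappa^{++}$ and $(\kappa^{+++})^{<\kappa}=\kappa^{+++}$; and $(2^\kappa)^{V^{\bb P_\alpha}}\leq|\bb P_\alpha|^\kappa$ by counting nice names for subsets of $\kappa$. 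It follows that $(2^\kappa)^{V^{\bb P}}\leq|\bb P|^\kappa=(\kappa^{+++})^\kappa=\kappa^{+++}$, the last equation by Hausdorff's formula and $\kappa^\kappa=2^\kappa=\kappa^{+++}$.

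Next, for the nontriviality of $\dleq b$, I would show $\st{d_\alpha\mid\alpha<\kappa^{++}}$ is dominating, so $\dleq b\leq\kappa^{++}<2^\kappa$. Given $f\in\prod b\cap V^{\bb P}$, a nice name for $f$ may be assembled from $\leq\kappa$ maximal antichains, one deciding the value of $f$ at each coordinate, each of size $\leq\kappa$ by the ${<}\kappa^+$-c.c.; the ${<}\kappa$-support conditions occurring in them together mention only $\leq\kappa$ coordinates of the iteration, and since $\cf(\kappa^{++})>\kappa$ this set of coordinates is bounded by some $\alpha<\kappa^{++}$. Hence $f\in V^{\bb P_\alpha}$ and so $f\leq^* d_\alpha$. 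For the nontriviality of $\bleq b$, recall \Cref{bounds on bleq}(iii) gives $\bleq b\geq\kappa^+$; running the same name-analysis on a family $B\subseteq\prod b$ with $|B|\leq\kappa^+$ shows $B$ mentions only $\leq\kappa^+$ coordinates, and $\cf(\kappa^{++})=\kappa^{++}>\kappa^+$ puts $B$ inside some $V^{\bb P_\alpha}$; then $d_\alpha$ dominates every member of $B$, so $B$ is bounded. Thus every $B$ of size $\leq\kappa^+$ is bounded, i.e.\ $\bleq b\geq\kappa^{++}>\kappa^+$. So in $V^{\bb P}$ both $\kappa^+<\bleq b$ and $\dleq b<2^\kappa$.

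I expect the main obstacle to be the cardinal-arithmetic bookkeeping along the iteration: the iterands $\dot{\bb Q}_\alpha$ are genuine names and $\prod b$ really does grow, so one must track $|\bb P_\alpha|$ and $(2^\kappa)^{V^{\bb P_\alpha}}$ together to be sure the final model still satisfies $2^\kappa=\kappa^{+++}$, and one must verify that case (iii) is not destroyed in passing to intermediate models (which comes down to clubs of $\kappa$ and cofinalities below $\kappa$ being absolute, a consequence of strategic ${<}\kappa$-closure). The two \emph{small-support} computations and the domination property of $d_\alpha$ over the intermediate model are then routine. If one is content with the two consistency statements in isolation, shorter iterations suffice: a ${<}\kappa$-support iteration of $\bb D_\kappa^b(C)$ of length $\kappa^{++}$ over a model of $\s{GCH}$ gives $\bleq b=2^\kappa>\kappa^+$, while one of length $\kappa^+$ over a model with $2^\kappa=\kappa^{++}$ gives $\dleq b=\kappa^+<2^\kappa$.
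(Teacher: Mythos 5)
Your proof is correct and uses essentially the same method as the paper: a ${<}\kappa$-support iteration of $\bb D_\kappa^b(C)$, with the strategic ${<}\kappa$-closure and ${<}\kappa^+$-c.c.\ used to capture any ${\leq}\kappa^+$-sized family of functions (resp.\ any single function) in an intermediate extension, where the next Hechler generic dominates it. The only difference is bookkeeping: the paper iterates for length $\kappa^{++}+\kappa^+$ over $\s{GCH}$ and reads off $\bleq b>\kappa^+$ at stage $\kappa^{++}$ and $\dleq b<2^\kappa$ in the final model (two different models), whereas your arrangement over a ground model with $2^\kappa=\kappa^{+++}$ achieves both inequalities simultaneously in a single model, which is a mild strengthening at the cost of the extra cardinal-arithmetic verification you carry out.
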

\begin{proof}
Let $\bb P=\sab{\bb P_\alpha,\dot{\bb Q}_\alpha\mid \alpha\in\kappa^{++}+\kappa^+}$ be a ${<}\kappa$-support iteration with $\bb P_\alpha\fc\ap{\dot{\bb Q}_\alpha=\dot{\bb D}_\kappa^b(C)}$ for each $\alpha$ and let $\b V\md\ap{\s{GCH}}$. If $B\subset(\prod b)^{\b V^{\bb P_{\kappa^{++}}}}$ is such that $|B|\leq \kappa^+$, then $B\in\b V^{\bb P_\alpha}$ for some $\alpha<\kappa^{++}$, since the $\kappa^{++}$-th stage of the iteration does not add any $\kappa$-reals. The generic $\bb D_\kappa^b(C)$-real that is added in the $\alpha+1$-th stage then dominates all elements of $B$, hence $B$ is not unbounded. This shows that $\b V^{\bb P_{\kappa^{++}}}\md\ap{\bleq b>\kappa^+}$.

Next, let $D$ consist of the $\kappa$-reals that are added in the final $\kappa^+$ stages of the iteration, then $D$ clearly forms a dominating family of size $\kappa^+$ in $\b V^{\bb P}$, and it is easy to see that $\b V^{\bb P}\md\ap{2^\kappa=\kappa^{++}}$, showing that $\b V^{\bb P}\md\ap{\dleq b<2^\kappa}$.
\end{proof}

\subsection{Localisation \& Avoidance}

With localisation and avoidance cardinals, we have not only the parameter $b$, but also the parameter $h$ giving the width of our slaloms. As with the dominating and unbounding numbers, there are certain choices of these parameters for which we have trivial cardinal characteristics.

We will motivate our assumption that $h\leq b$ with the following two lemmas:
\begin{lmm}\label{totally trivial loc}
$b<^* h$ if and only if $\dstar{b,h}=1$.\\
If $b<^* h$, then $\bstar{b,h}$ is undefined.
\end{lmm}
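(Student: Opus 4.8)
This statement is almost immediate from the definitions, so my proof plan is short.

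\medskip

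The plan is to unwind Definition~\ref{slalom definition} in both directions. Recall that $\phi\in\Loc_\kappa^{b,h}$ requires $\phi(\alpha)\subset b(\alpha)$ and $|\phi(\alpha)|<h(\alpha)$, and that $\phi$ localises $f\in\prod b$ iff $f\in^*\phi$, i.e.\ $f(\alpha)\in\phi(\alpha)$ for almost all $\alpha$. The key observation is that when $b(\alpha)<h(\alpha)$, the full set $b(\alpha)$ itself is a legal value for a slalom at coordinate $\alpha$, since $|b(\alpha)|=b(\alpha)<h(\alpha)$ (here $b(\alpha)$ is an infinite cardinal, so $|b(\alpha)|=b(\alpha)$).

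\medskip

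For the forward direction of the first biconditional, suppose $b<^* h$, say $b(\alpha)<h(\alpha)$ for all $\alpha\geq\beta$. Define $\phi\in\Loc_\kappa^{b,h}$ by $\phi(\alpha)=b(\alpha)$ for $\alpha\geq\beta$ and $\phi(\alpha)=\emp$ (or anything legal) for $\alpha<\beta$. Then for every $f\in\prod b$ and every $\alpha\geq\beta$ we have $f(\alpha)\in b(\alpha)=\phi(\alpha)$, so $f\in^*\phi$; hence $\{\phi\}$ is a witness for $\sr L_{b,h}$ and $\dstar{b,h}\leq 1$, so $\dstar{b,h}=1$ (it cannot be $0$ since $\prod b\neq\emp$, as each $b(\alpha)$ is a nonzero cardinal). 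Conversely, if $\dstar{b,h}=1$, fix a single slalom $\phi$ localising every $f\in\prod b$; I will argue $b<^*h$ by contradiction. If $b(\alpha)\geq h(\alpha)$ for cofinally many $\alpha$, then for each such $\alpha$ we have $|\phi(\alpha)|<h(\alpha)\leq b(\alpha)$, so $b(\alpha)\setminus\phi(\alpha)\neq\emp$; pick $f(\alpha)\in b(\alpha)\setminus\phi(\alpha)$ for these $\alpha$ and $f(\alpha)$ arbitrary otherwise, giving $f\in\prod b$ with $f(\alpha)\notin\phi(\alpha)$ for cofinally many $\alpha$, contradicting $f\in^*\phi$.

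\medskip

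For the second sentence, $\bstar{b,h}=\norm{\sr L_{b,h}^\lc}$ is by definition the least size of a set $F\subset\prod b$ such that no single $\phi\in\Loc_\kappa^{b,h}$ localises all members of $F$ (equivalently, the dual relation $\cancel{(\in^*)}^{-1}$ has no response meeting all challenges in $F$). But we have just exhibited a single slalom $\phi$ with $\phi(\alpha)=b(\alpha)$ eventually that localises \emph{every} element of $\prod b$; hence no such $F$ exists, and $\norm{\sr L_{b,h}^\lc}$ is undefined (the minimum is over the empty set). This is entirely routine; there is no real obstacle, the only point requiring a word of care is the observation $|b(\alpha)|=b(\alpha)$ which lets $b(\alpha)$ serve as a slalom entry, and the trivial remark that $\dstar{b,h}\neq 0$ because $\prod b$ is nonempty.
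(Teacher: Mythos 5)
Your proof is correct and follows essentially the same route as the paper's: in both arguments the forward direction uses the single slalom with $\phi(\alpha)=b(\alpha)$ wherever $b(\alpha)<h(\alpha)$, and the converse picks, for each of cofinally many $\alpha$ with $h(\alpha)\leq b(\alpha)$, a value in $b(\alpha)\setminus\phi(\alpha)$ to build a function avoiding any given slalom. The only cosmetic difference is that you argue the converse by contradiction where the paper argues the contrapositive directly.
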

\begin{proof}
If $b<^*h$, let $B=\st{\alpha\in\kappa\mid b(\alpha)<h(\alpha)}$, and choose some $\phi\in\Loc_\kappa^{b,h}$ such that $\phi(\alpha)=b(\alpha)$ for all $\alpha\in B$. Since almost all $\alpha\in \kappa$ are in $B$, we see that $f\in\prod b$ implies $f\in^*\phi$. Hence $\phi$ localises the entirety of $\prod b$, making $\dstar{b,h}=1$ and $\bstar{b,h}$ undefined.

Reversely, if there is a strictly increasing sequence $\ab{\alpha_\xi\mid \xi\in\kappa}$ with $h(\alpha_\xi)\leq b(\alpha_\xi)$ for all $\xi$, and $\phi\in\Loc_\kappa^{b,h}$, then there is $\gamma_\xi\in b(\alpha_\xi)\setminus \phi(\alpha_\xi)$ for each $\xi$ since $|\phi(\alpha_\xi)|<h(\alpha_\xi)\leq b(\alpha_\xi)$. Therefore, if $f\in\prod b$ is such that $f(\alpha_\xi)=\gamma_\xi$ for all $\xi\in\kappa$, then $f\nins\phi$.
\end{proof}

We will state two essential properties relating two relational systems of the form $\sr L_{b,h}$ with different parameters to each other. The first deals with cofinal subsets of indices (cf. \Cref{subsequence domination}) and the second shows monotonicity with regards to $\leq^*$. The proofs are elementary.
\begin{lmm}\label{loc cofinal subset}\renewcommand{\qed}{\hfill$\square$}
If $\ab{\alpha_\xi\mid \xi\in\kappa}\in\gbs$ is a strictly increasing sequence and $b':\xi\mapsto b(\alpha_\xi)$ and $h':\xi\mapsto h(\alpha_\xi)$, then $\sr L_{b',h'}\preceq\sr L_{b,h}$.
\end{lmm}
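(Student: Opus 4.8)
The plan is to exhibit an explicit Tukey connection $\ab{\rho_-,\rho_+}$ from $\sr L_{b',h'}$ to $\sr L_{b,h}$, entirely analogous to the one behind \Cref{subsequence domination}. The only structural fact about the index sequence that we need is that a strictly increasing $\kappa\to\kappa$ sequence satisfies $\alpha_\xi\geq\xi$ for all $\xi$, so in particular its range is cofinal in $\kappa$; this is what lets the "eventually" quantifier transfer between the two spaces.

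For $\rho_-:\prod b'\to\prod b$, I would send $f'\in\prod b'$ to the function $f$ given by $f(\alpha_\xi)=f'(\xi)$ for each $\xi\in\kappa$ and $f(\alpha)=0$ otherwise. This lands in $\prod b$ since $f'(\xi)\in b'(\xi)=b(\alpha_\xi)$ and $0\in b(\alpha)$ for every $\alpha$ (as each $b(\alpha)$ is an infinite cardinal). For $\rho_+:\Loc_\kappa^{b,h}\to\Loc_\kappa^{b',h'}$, send $\phi$ to the slalom $\psi$ defined by $\psi(\xi)=\phi(\alpha_\xi)$; then $\psi(\xi)=\phi(\alpha_\xi)\subset b(\alpha_\xi)=b'(\xi)$ and $\card{\psi(\xi)}=\card{\phi(\alpha_\xi)}<h(\alpha_\xi)=h'(\xi)$, so indeed $\psi\in\Loc_\kappa^{b',h'}$.

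It then remains to check the Tukey inequality: fix $f'\in\prod b'$ and $\phi\in\Loc_\kappa^{b,h}$ with $\rho_-(f')\ins\phi$. By definition of $\ins$ there is $\beta\in\kappa$ with $\rho_-(f')(\alpha)\in\phi(\alpha)$ for all $\alpha\geq\beta$. For every $\xi>\beta$ we have $\alpha_\xi\geq\xi>\beta$, hence $f'(\xi)=\rho_-(f')(\alpha_\xi)\in\phi(\alpha_\xi)=\rho_+(\phi)(\xi)$, so $f'\ins\rho_+(\phi)$. This is exactly the Tukey condition for $\sr L_{b',h'}\preceq\sr L_{b,h}$, so we are done. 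There is no real obstacle here; the only point worth spelling out is the cofinality of the range of $\ab{\alpha_\xi\mid\xi\in\kappa}$, which makes the "almost all" clause survive the pullback.
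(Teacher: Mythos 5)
Your proof is correct and is exactly the "elementary" argument the paper has in mind (it mirrors the commented-out proof of the analogous \Cref{subsequence domination}): pull back along $\xi\mapsto\alpha_\xi$, pad with $0$ off the range, and use $\alpha_\xi\geq\xi$ to transfer the $\forall^\infty$ quantifier. Nothing is missing.
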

\begin{lmm}\label{monotonicity loc}\renewcommand{\qed}{\hfill$\square$}
Let $h\leq^* h'$ and $b\geq^*b'$, then there exist Tukey connections ${\sr L_{b',h'}}\preceq {\sr L_{b,h}}$.
\end{lmm}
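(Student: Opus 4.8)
The plan is to write down an explicit Tukey connection $\langle\rho_-,\rho_+\rangle$ from $\sr L_{b',h'}=\ab{\prod b',\Loc_\kappa^{b',h'},\in^*}$ to $\sr L_{b,h}=\ab{\prod b,\Loc_\kappa^{b,h},\in^*}$. Since $b\geq^*b'$ and $h\leq^*h'$, first fix a single $\gamma_0\in\kappa$ such that $b'(\alpha)\leq b(\alpha)$ and $h(\alpha)\leq h'(\alpha)$ for all $\alpha\geq\gamma_0$; everything below $\gamma_0$ will be handled by trivial padding, using that each $b(\alpha)$ is an infinite cardinal (so $0\in b(\alpha)$) and each $h'(\alpha)$ is a nonzero cardinal (so $\emptyset$ is an admissible slalom value).

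For $\rho_-\colon\prod b'\to\prod b$, given $f'\in\prod b'$ put $\rho_-(f')(\alpha)=f'(\alpha)$ whenever $f'(\alpha)<b(\alpha)$, and $\rho_-(f')(\alpha)=0$ otherwise. Since $f'(\alpha)<b'(\alpha)\leq b(\alpha)$ for all $\alpha\geq\gamma_0$, the function $\rho_-(f')$ lies in $\prod b$ and agrees with $f'$ on the final segment $[\gamma_0,\kappa)$. For $\rho_+\colon\Loc_\kappa^{b,h}\to\Loc_\kappa^{b',h'}$, given $\phi\in\Loc_\kappa^{b,h}$ put $\rho_+(\phi)(\alpha)=\phi(\alpha)\cap b'(\alpha)$ for $\alpha\geq\gamma_0$ and $\rho_+(\phi)(\alpha)=\emptyset$ for $\alpha<\gamma_0$. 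Then $\rho_+(\phi)(\alpha)\subset b'(\alpha)$ always, and $\card{\rho_+(\phi)(\alpha)}\leq\card{\phi(\alpha)}<h(\alpha)\leq h'(\alpha)$ for $\alpha\geq\gamma_0$, while $\card{\rho_+(\phi)(\alpha)}=0<h'(\alpha)$ for $\alpha<\gamma_0$; hence $\rho_+(\phi)\in\Loc_\kappa^{b',h'}$.

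To verify the Tukey condition, suppose $f'\in\prod b'$ and $\phi\in\Loc_\kappa^{b,h}$ satisfy $\rho_-(f')\in^*\phi$. Then for almost all $\alpha$ we simultaneously have $\alpha\geq\gamma_0$ (so $f'(\alpha)<b'(\alpha)\leq b(\alpha)$, whence $\rho_-(f')(\alpha)=f'(\alpha)$) and $\rho_-(f')(\alpha)\in\phi(\alpha)$; for such $\alpha$, $f'(\alpha)\in\phi(\alpha)$ and $f'(\alpha)<b'(\alpha)$, so $f'(\alpha)\in\phi(\alpha)\cap b'(\alpha)=\rho_+(\phi)(\alpha)$. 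Thus $f'\in^*\rho_+(\phi)$, as required.

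There is essentially no obstacle here — this is exactly the sort of bookkeeping the paper flags as "elementary." The only point demanding any care is that $b\geq b'$ and $h\leq h'$ are only assumed to hold eventually, not everywhere, so one must absorb the bounded exceptional set into the $\in^*$-slack; the padding by $0$ and $\emptyset$ on $[0,\gamma_0)$ takes care of that without affecting membership of $\rho_-(f')$ in $\prod b$ or of $\rho_+(\phi)$ in $\Loc_\kappa^{b',h'}$.
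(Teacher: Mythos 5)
Your proof is correct, and since the paper omits the argument entirely (flagging it as elementary), your explicit construction of $\rho_-$ by truncation and $\rho_+$ by $\phi(\alpha)\cap b'(\alpha)$, with the bounded exceptional set absorbed into the $\in^*$-slack, is exactly the intended argument. Nothing to add.
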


Mirroring the situation of \Cref{bounds on bleq}, we will give a complete characterisation of the cases in which $\bstar{b,h}$ is trivial.

\begin{lmm}\label{bounds on bstar}
For each $\lambda<\kappa$ define $D_\lambda=\st{\alpha\in\kappa\mid h(\alpha)=\lambda}$.
\begin{enumerate}[label=(\roman*)]
\item If there exists a least cardinal $\lambda<\kappa$ such that $D_\lambda$ is cofinal in $\kappa$, then $\bstar{b,h}=\lambda$. 
\item If $D_\lambda$ is bounded for all $\lambda<\kappa$ and $h$ is continuous on a stationary set, then $\bstar{b,h}=\kappa$, 
\item If $D_\lambda$ is bounded for all $\lambda<\kappa$ and  $h$ is discontinuous on a club set, then $\kappa^+\leq\bstar{b,h}$.\qedhere
\end{enumerate}
\end{lmm}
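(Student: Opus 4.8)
The plan is to treat the three cases in parallel with the structure of \Cref{bounds on bleq}, exploiting \Cref{monotonicity loc} and \Cref{loc cofinal subset} to reduce each case to a cleaner situation before building the relevant witness or bounding argument. For case (i), suppose $\lambda<\kappa$ is least such that $D_\lambda$ is cofinal; by passing to the cofinal index set $D_\lambda$ and using \Cref{loc cofinal subset}, I would reduce to the situation where $h$ is constantly $\lambda$ (and, after possibly shrinking further and using \Cref{monotonicity loc}, where $h\leq b$ is constant and $b(\alpha)\geq\lambda$). To show $\bstar{b,h}\leq\lambda$, I want a family $F\subset\prod b$ of size $\lambda$ that no single $(b,h)$-slalom localises: for each $\eta<\lambda$ take $f_\eta$ with $f_\eta(\alpha)=\eta$ for all $\alpha$. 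Given any $\phi\in\Loc_\kappa^{b,h}$ we have $|\phi(\alpha)|<\lambda$, so for each $\alpha$ at least one value $\eta<\lambda$ lies outside $\phi(\alpha)$; but actually I need a single $f_\eta$ missed on a cofinal set. Here I invoke a pigeonhole/counting argument: since $|\phi(\alpha)|<\lambda$ and there are $\lambda$ many candidates, and $\lambda$ is (essentially) regular after the reduction, some $\eta<\lambda$ satisfies $\eta\notin\phi(\alpha)$ for cofinally many $\alpha$ — i.e., $f_\eta\nins\phi$. (If $\lambda$ is singular one can first shrink the index set along a cofinal sequence of the $\cf(h(\alpha))$-values using \Cref{loc cofinal subset} and a computation of $\cf$, exactly as in the handling of case (i) in \Cref{bounds on bleq}.) For the reverse inequality $\bstar{b,h}\geq\lambda$: if $F\subset\prod b$ with $|F|<\lambda$, pick $\alpha_0$ large enough that $h(\alpha)\geq\lambda>|F|$ for all $\alpha\geq\alpha_0$ (possible since $\lambda$ is least and $D_\mu$ is bounded for every $\mu<\lambda$), and set $\phi(\alpha)=\st{f(\alpha)\mid f\in F}$ for $\alpha\geq\alpha_0$; then $|\phi(\alpha)|\leq|F|<h(\alpha)$, so $\phi\in\Loc_\kappa^{b,h}$ and $\phi$ localises all of $F$.

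For case (ii), $D_\lambda$ is bounded for all $\lambda<\kappa$ and $h$ is continuous on a stationary set $S$. The upper bound $\bstar{b,h}\leq\kappa$ is immediate once we produce an unlocalisable family of size $\kappa$: I would build $F=\st{f_\alpha\mid\alpha\in\kappa}$ using a Fodor-style argument paralleling case (ii) of \Cref{bounds on bleq}. For each limit $\xi\in S$ where $h$ is continuous, $h(\xi)=\Cup_{\alpha<\xi}h(\alpha)$ is a supremum of length $\xi$, and since each $D_\mu$ is bounded the values $h(\alpha)$ for $\alpha<\xi$ grow, so $h(\xi)$ has cofinality $\leq\xi$; one enumerates a cofinal set of $\xi$ many ordinals below $h(\xi)$, picks for each $\alpha<\xi$ a value in $b(\xi)\setminus$(anything of size $<h(\xi)$) indexed so that diagonalising against a given $\phi$ forces a regressive function on $S$, hence constant on a stationary subset, yielding some $f_\eta\nins\phi$. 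For the lower bound $\bstar{b,h}\geq\kappa$: given $F$ with $|F|=\mu<\kappa$, use boundedness of $D_{\mu^+}$ (and the fact that $h\geq^*\overline{\mu^+}$, say) to find $\alpha_0$ with $h(\alpha)>\mu$ for $\alpha\geq\alpha_0$, then localise $F$ by $\phi(\alpha)=\st{f(\alpha)\mid f\in F}$ as before. For case (iii), only the lower bound $\kappa^+\leq\bstar{b,h}$ is asserted, and this should again mirror case (iii) of \Cref{bounds on bleq}: given $F\subset\prod b$ with $|F|=\kappa$, enumerate $F=\st{f_\eta\mid\eta\in\kappa}$ and the club $C$ (on which $h$ is discontinuous) as $\st{\alpha_\xi\mid\xi\in\kappa}$; on the block $[\alpha_\xi,\alpha_{\xi+1})$ set $\phi(\alpha)=\st{f_\eta(\alpha)\mid\eta<\alpha_\xi}$. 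Discontinuity of $h$ at each relevant point of $C$, together with boundedness of the $D_\lambda$'s, should give $|\phi(\alpha)|<h(\alpha)$ on each block (the block only collects $\card{\alpha_\xi}$ many values while $h(\alpha)$ has already jumped past that), so $\phi\in\Loc_\kappa^{b,h}$ and $f_\eta\in^*\phi$ for every $\eta$, whence $F$ is localised and cannot witness $\bstar{b,h}\leq\kappa$.

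The main obstacle I anticipate is the bookkeeping in case (ii): getting the diagonalisation to produce a genuinely regressive function on the stationary set requires carefully arranging, at each $\xi\in S$, that $f_\alpha(\alpha_\xi)$ (or the analogue) depends on $\alpha$ in a way that a localising $\phi$ cannot absorb for all $\alpha<\xi$ simultaneously, and this needs the continuity of $h$ at $\xi$ to guarantee $\cf(h(\xi))\leq\xi$ so that "$\xi$ many cofinal values" is the right count — the interplay between $\cf(h(\xi))$, $b(\xi)$, and the slalom width $<h(\xi)$ must be handled with the same care as in \Cref{bounds on bleq}(ii), and possibly one must first pass to a stationary subset of $S$ on which the relevant cofinalities are uniform. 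The other technical point, shared across cases, is justifying "pick $\alpha_0$ with $h(\alpha)>\mu$ for all $\alpha\geq\alpha_0$" from "$D_\lambda$ bounded for all $\lambda<\kappa$" — this is where one uses that $h$ is increasing together with the boundedness hypothesis, exactly as in the proof of \Cref{bounds on bleq}, so I would phrase it once and reuse it. Everything else (the constructions of $\phi$ as a pointwise union of finitely/boundedly many values, the verification that localisation holds almost everywhere) is routine and can be stated tersely.
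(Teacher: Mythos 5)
Your overall architecture matches the paper's proof — constant-valued diagonalising functions in (i), a Fodor/regressive-function argument on the stationary set in (ii), and a block-by-block slalom collecting initial segments of the enumerated family in (iii) — but two of the key counting steps are wrong as stated. In (ii) you reach for $\cf(h(\xi))\leq\xi$ and ``a cofinal set of $\xi$ many ordinals below $h(\xi)$''; cofinality is the wrong invariant here, since a set of cardinality $<h(\xi)$ can perfectly well be cofinal in $h(\xi)$ and can contain all $\xi$ of your candidates whenever $\xi<h(\xi)$. What the argument needs is cardinality: continuity of $h$ on a stationary set, together with $h$ being increasing and cofinal, gives a \emph{stationary set of fixed points} $h(\xi)=\xi$, and at such $\xi$ a slalom value of size $<h(\xi)=\xi$ must omit one of the $\xi$ many constant functions $f_\eta$ with $\eta<\xi$; the map sending $\xi$ to the least omitted $\eta$ is regressive, and Fodor (or the paper's variant, applying Fodor to $\xi\mapsto|\phi(\xi)|$) finishes. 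Your proposed fix of ``uniformising the cofinalities'' does not address this. In (iii) the error is analogous but sharper: on the block $[\alpha_\xi,\alpha_{\xi+1})$ you collect $\st{f_\eta(\alpha)\mid \eta<\alpha_\xi}$ and justify $|\phi(\alpha)|<h(\alpha)$ by saying that $h$ has already jumped past $|\alpha_\xi|$. Discontinuity of $h$ at $\alpha_\xi$ only gives $h(\alpha_\xi)>\sup_{\alpha<\alpha_\xi}h(\alpha)$, not $h(\alpha_\xi)>|\alpha_\xi|$: for instance $h$ may be constantly $\aleph_0$ below $\alpha_\xi=\omega_{17}$ and jump to $\aleph_1$ there, in which case your $\phi(\alpha)$ has size $\aleph_{17}$ and is not a $(b,h)$-slalom. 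The correct bound is $\eta<\lambda_\xi$ where $\lambda_\xi=\sup_{\alpha<\alpha_\xi}h(\alpha)<h(\alpha_\xi)$, after which one checks that $\ab{\lambda_\xi\mid\xi\in\kappa}$ is cofinal in $\kappa$ (because $h$ is) so that every $f_\eta$ is eventually captured.

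Case (i) and both lower bounds are essentially correct and agree with the paper. Note only that the pigeonhole in (i) runs over the regular cardinal $\kappa$, not over $\lambda$: each of the $\kappa$ many $\alpha\in D_\lambda$ supplies some $\eta_\alpha\in\lambda\setminus\phi(\alpha)$, and since $\lambda<\kappa=\cf(\kappa)$ some single $\eta$ recurs cofinally often; the regularity of $\lambda$, and your parenthetical about handling singular $\lambda$, are irrelevant. Likewise your step ``pick $\alpha_0$ with $h(\alpha)>\mu$ for $\alpha\geq\alpha_0$'' is justified exactly as you suspect: $\st{\alpha\mid h(\alpha)\leq\mu}=\Cup_{\nu\leq\mu}D_\nu$ is a union of fewer than $\kappa$ many bounded sets, hence bounded.
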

\begin{proof}
(i)\quad Assume that $\lambda$ is minimal such that $D_\lambda$ is cofinal. For each $\eta<\lambda$, define $f_\eta\in\prod b$ elementwise by $f_\eta:\alpha\mapsto \eta$ for all $\alpha\in D_\lambda$ and $f_\eta:\alpha\mapsto 0$ otherwise. If $\phi\in\Loc_\kappa^{b,h}$ and $\alpha\in D_\lambda$, then $|\phi(\alpha)|<h(\alpha)=\lambda$, so there exists $\eta<\lambda$ for which $\eta\notin \phi(\alpha)$, hence $f_\eta\nins\phi$. Therefore $\c F=\st{f_\eta\mid \eta<\lambda}$ witnesses that $\bstar{b,h}\leq |\c F|=\lambda$. 

On the other hand, if $\c F\subset \prod b$ and $|\c F|<\lambda$, then by minimality of $\lambda$ we see that $|\c F|<h(\alpha)$ for almost all $\alpha\in\kappa$, thus we can choose $\phi\in\Loc_\kappa^{b,h}$ such that $\phi:\alpha\mapsto \st{f(\alpha)\mid f\in \c F}$ whenever $|\c F|<h(\alpha)$, then we see that $f\in^*\phi$ for all $f\in \c F$, proving that $|\c F|<\bstar{b,h}$.

(ii)\quad Since each $D_\lambda$ is bounded, it follows that $\st{\alpha\in\kappa\mid h(\alpha)\leq\lambda}$ is bounded for every $\lambda<\kappa$. Let $\c F\subset \prod b$ with $|\c F|<\kappa$, and let $\beta\in\kappa$ be such that $h(\alpha)\leq |\c F|$ implies $\alpha<\beta$. If $\phi\in\Loc_\kappa^{b,h}$ and $\xi\geq \beta$, then $|\c F|<h(\xi)$, hence we can define $\phi(\xi)=\st{f(\xi)\mid f\in \c F}$ for all $\xi\geq \beta$. It is clear that $f\in^*\phi$ for all $f\in\c F$, thus $|\c F|<\bstar{b,h}$. Since $\c F$ was arbitrary such that $|\c F|<\kappa$, we see that $\kappa\leq\bstar{b,h}$.

To see $\kappa=\bstar{b,h}$, consider $\c F=\st{f_\eta\mid \eta\in\kappa}$ where $f_\eta:\alpha\mapsto \eta$ when $\eta\in b(\alpha)$, and $f_\eta:\alpha\mapsto 0$ otherwise. Since we assume $h\leq^*b$ and $h$ is cofinal, for every $\eta\in\kappa$ there is $\alpha_0$ such that $b(\alpha)>\eta$ for all $\alpha\geq\alpha_0$, thus $f_\eta(\alpha)=\eta$ for almost all $\alpha\in\kappa$. Since $h$ is continuous on a stationary set $S_0$, the set of fixed-points $S_1=\st{\alpha\in S_0\mid h(\alpha)=\alpha}$ is stationary, implying that $\alpha\mapsto |\phi(\alpha)|$ is regressive on the stationary $S_1$. Consequently Fodor's lemma tells us that there exists a stationary set $S_2\subset S_1$ and $\lambda\in\kappa$ such that $|\phi(\alpha)|<\lambda$ for all $\alpha\in S_2$. If $f_\eta\in^*\phi$ for all $\eta\in\lambda$, then $N_\eta=\st{\alpha\in \kappa\mid \eta\notin \phi(\alpha)}$ is nonstationary for each $\eta\in\lambda$, so $\Cup_{\eta\in\lambda}N_\eta$ is nonstationary, and thus $S_2\setminus \Cup_{\eta\in\lambda}N_\eta$ is stationary. However, for any $\alpha\in S_2\setminus \Cup_{\eta\in\lambda}N_\eta$ we have $\lambda\subset \phi(\alpha)$, which contradicts that $|\phi(\alpha)|<\lambda$ for $\alpha\in S_2$. Therefore $\c F$ forms a witness for $\bstar{b,h}\leq \kappa$.

(iii)\quad Let $\c F\subset \prod b$ with $|\c F|=\kappa$ and enumerate $\c F$  as $\ab{f_\eta\mid \eta\in\kappa}$. Let $C$ be a club set containing no successor ordinals such that $h$ is discontinuous on $C$, and let $\ab{\alpha_\xi\mid \xi\in\kappa}$ be the increasing enumeration of $C\cup \st0$. For each $\xi\in\kappa$ let $\lambda_\xi=\Cup_{\alpha\in\alpha_\xi}h(\alpha)$, where we have the convention that $\Cup\emp=0$, then $\lambda_\xi<h(\alpha_\xi)$ for all $\xi\in\kappa$ by discontinuity. Given $\alpha\in\kappa$, let $\xi$ be such that $\alpha\in[\alpha_\xi,\alpha_{\xi+1})$, which exists by $C$ being club, and let $\phi(\alpha)=\st{f_\eta(\alpha)\mid \eta\in\lambda_\xi}$. Since $h$ is increasing, $|\phi(\alpha)|\leq\lambda_\xi<h(\alpha_\xi)\leq h(\alpha)$, thus $\phi\in\Loc_\kappa^{b,h}$. Finally $h$ is cofinal, thus $\ab{\lambda_\xi\mid \xi\in\kappa}$ is cofinal, hence for every $\eta\in\kappa$ we have $f_\eta\in^*\phi$, showing that $\kappa<\bstar{b,h}$.
\end{proof}
We will once again see that $\bstar{b,h}$ is nontrivial in case (iii), but first let us consider $\dstar{b,h}$.

In case (i) we have $\dstar{b,h}=2^\kappa$. The proof is a generalisation based on Lemmas 1.8, 1.10 and 1.11 from \cite{GS93}, where the analogous theorem is proved for $\bs$.

\begin{lmm}\label{bounds on dstar}
Suppose that there exists some $\lambda<\kappa$ such that $D_\lambda=\st{\alpha\in\kappa\mid h(\alpha)=\lambda}$ is cofinal in $\kappa$. Then $\dstar{b,h}=2^\kappa$.
\end{lmm}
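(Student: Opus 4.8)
The plan is to reduce to the case of a constant width function and then diagonalise against any family of $(b,h)$-slaloms of size $<2^\kappa$. First I would invoke \Cref{loc cofinal subset}: since $D_\lambda$ is cofinal, there is a strictly increasing sequence $\ab{\alpha_\xi\mid\xi\in\kappa}$ enumerating (a cofinal subset of) $D_\lambda$, and with $b':\xi\mapsto b(\alpha_\xi)$, $h':\xi\mapsto\lambda$ (the constant function $\bar\lambda$) we get $\sr L_{b',h'}\preceq\sr L_{b,h}$, hence $\dstar{b',h'}\leq\dstar{b,h}$. So it suffices to show $\dstar{b',\bar\lambda}=2^\kappa$, i.e. to handle slaloms of constant finite-or-infinite width $\lambda<\kappa$ over an arbitrary increasing $b'$. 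Renaming, assume $h=\bar\lambda$ throughout.

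Next I would set up the diagonalisation. Suppose $W\subset\Loc_\kappa^{b,\bar\lambda}$ with $|W|=\mu<2^\kappa$; the goal is to produce $f\in\prod b$ localised by no $\phi\in W$, i.e. for each $\phi\in W$ the set $\st{\alpha\mid f(\alpha)\notin\phi(\alpha)}$ is cofinal in $\kappa$. The key combinatorial point (this is where Lemmas 1.8/1.10/1.11 of \cite{GS93} come in) is a counting/tree argument: on each coordinate $\alpha$ a slalom $\phi(\alpha)$ forbids $<\lambda$ many values out of $b(\alpha)\geq\lambda$ (here one may as well pass to a further cofinal subsequence so that $b(\alpha)\geq\lambda^+$, or just $b(\alpha)>\lambda$, using that $b$ is increasing — actually $b(\alpha)$ is an infinite cardinal $\geq h(\alpha)=\lambda$, and on a cofinal set $b(\alpha)>\lambda$, else $b=^*\bar\lambda$ is constant and the statement still goes through since then $\prod b$ has size $\lambda^\kappa\leq 2^\kappa$ and one argues directly). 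The strategy is to build a perfect (i.e. $2^\kappa$-branching, or at least $(\mu^+)$-branching) tree $T\subset\prod_{<\kappa}b$ of potential initial segments such that along any branch, and for any single $\phi\in\Loc_\kappa^{b,\bar\lambda}$, the branch escapes $\phi$ cofinally often; then $|[T]|\geq\mu^+$ forces some branch to escape every member of $W$, contradiction with $|W|=\mu$. Concretely one recursively chooses, at stages forming a club of coordinates, splitting nodes where one reserves at least two extensions disagreeing with whatever slalom one is diagonalising against at that coordinate; since each $\phi(\alpha)$ misses all but $<\lambda$ values and $\lambda<b(\alpha)$, such splittings always exist and one can keep $\geq 2$ (in fact $\geq b(\alpha)-\lambda$) live extensions.

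The cleanest packaging is probably the direct one: enumerate $W=\st{\phi_\zeta\mid\zeta\in\mu}$, and at coordinate $\alpha_\zeta$ from a fixed increasing enumeration of a club, ensure $f(\alpha_\zeta)\notin\phi_\zeta(\alpha_\zeta)$ — but this only diagonalises $\mu$ slaloms once each, which is not enough since we need \emph{cofinally many} escapes per slalom. So instead I would build $2^\kappa$ many functions simultaneously: construct an injection $e:\cs\to\prod b$ (or from $^\kappa\lambda^+$ if available) such that for every $x\neq x'$, $e(x)$ and $e(x')$ are eventually distinct \emph{and}, more importantly, such that the family $\st{e(x)\mid x\in\cs}$ has the property that no $(b,\bar\lambda)$-slalom localises more than ... hmm — the right statement is: build the $e(x)$ so that for each slalom $\phi$, the set of $x$ with $e(x)\in^*\phi$ has size $\leq\lambda^\kappa<2^\kappa$, by arranging that on a club of coordinates $\alpha$ the values $\st{e(x)(\alpha)\mid x\in\cs}$ range over all of $b(\alpha)$ and distinct $x$'s branch apart; then since $|W|=\mu<2^\kappa$, some $e(x)$ is localised by no member of $W$. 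I expect the bookkeeping to make "$\le\lambda^\kappa$ many $x$ localised by a fixed $\phi$'' precise is the technical heart, mirroring \cite{GS93}.

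**Main obstacle.** The delicate point is not the tree construction itself but verifying the cardinality bound — that a single $(b,\bar\lambda)$-slalom localises at most $\lambda^\kappa$ (hence $<2^\kappa$, using $\lambda<\kappa$ and $\kappa$ inaccessible so $\lambda^\kappa<2^\kappa$... actually $\lambda^\kappa=2^\kappa$ in general, so one must instead count branches of the \emph{constructed} tree, not all of $\prod b$) — i.e. getting the combinatorics of \cite{GS93} Lemmas 1.8–1.11 to transfer to the $\kappa$-setting, where one must take suprema along limit stages and use that $b(\gamma)$ has cofinality $>\gamma$ on a club (via $b$ increasing into an inaccessible) to keep the tree alive at limits. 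This is exactly the kind of limit-stage continuity issue that recurs throughout the paper, and I would handle it by restricting, via \Cref{loc cofinal subset} again, to a cofinal set of coordinates on which $b$ grows fast enough that partial suprema of length $<\kappa$ stay below $b(\alpha)$.
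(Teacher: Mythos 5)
Your reduction to constant width $h=\bar\lambda$ via \Cref{loc cofinal subset} matches the paper, and your counting idea (a family of $2^\kappa$ pairwise eventually distinct functions, on which each slalom of width ${<}\lambda$ localises at most $\lambda$ members, so that the family is not covered by fewer than $2^\kappa$ such sets) is exactly the first half of the paper's argument --- but it only works for a fast-growing bound such as $b'(\alpha)=2^{|\alpha|}$, where the almost disjoint family $\st{f_g\mid g\in\gcs}$ with $f_g(\alpha)=\pi_\alpha(g\restriction\alpha)$ exists. The genuine gap is the case of a slowly growing or bounded $b$, which is in fact the only case that matters (by \Cref{monotonicity loc} one may assume $b=h=\bar\lambda$, the hardest instance, since smaller $b$ gives a smaller localisation number). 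In ${}^\kappa\lambda$ an almost disjoint family has size at most $\lambda$: given $\lambda^+$ pairwise eventually distinct functions, the $\lambda^+<\kappa$ many pairwise thresholds have a supremum $\xi<\kappa$, beyond which the functions take $\lambda^+$ pairwise distinct values in $\lambda$, which is absurd. So no tree or family construction carried out inside $\prod\bar\lambda$ can have the covering property you need, and the step you defer to as ``one argues directly'' in the bounded case is precisely where the plan breaks down; your own unresolved worry that $\lambda^\kappa=2^\kappa$ is a symptom of this.

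The paper bridges this gap with a Tukey connection $\sr L_{b',\bar\lambda}\preceq\sr L_{\bar\lambda,\bar\lambda}$ for $b':\alpha\mapsto 2^{|\alpha|}$: partition $\kappa$ into blocks $W_\alpha$ of size $\lambda^{2^{|\alpha|}}$ (still below $\kappa$ by inaccessibility), enumerate all functions $\Phi_\xi^\alpha:2^{|\alpha|}\to\lambda$ by $\xi\in W_\alpha$, send $f'\in\prod b'$ to the function $f:\xi\mapsto\Phi_\xi^\alpha(f'(\alpha))$, and send a slalom $\phi$ on the small space to $\phi':\alpha\mapsto\st{\eta\mid\forall\xi\in W_\alpha\,(\Phi_\xi^\alpha(\eta)\in\phi(\xi))}$, where a diagonalisation shows $|\phi'(\alpha)|<\lambda$. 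Some device of this kind --- trading a single coordinate of the large space for a whole block of coordinates of the small space --- is unavoidable, and it is absent from your plan.
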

\begin{proof}
We are only interested in finding a lower bound of $\dstar{b,h}$, thus by \Cref{loc cofinal subset} we could restrict our attention to a cofinal subset of $\kappa$. We therefore assume without loss of generality that $D_\lambda=\kappa$, that is, $h(\alpha)=\lambda$ for all $\alpha\in\kappa$. To prove this lemma we assume furthermore without loss of generality that $b=h$. This suffices to prove the lemma, by \Cref{monotonicity loc}.

Let $b'$ be defined by $b'(\alpha)=2^{|\alpha|}$ for all $\alpha\in \kappa$. We start with proving that $\dstar{b',h}= 2^\kappa$. 

Let $\pi_\alpha:{}^\alpha2\inj b'(\alpha)$ be an injection for every $\alpha\in \kappa$. For some arbitrary $g\in\gcs$, define $f_g\in\prod b'$ to be such that $f_g(\alpha)=\pi_\alpha(g\restriction\alpha)$ for all $\alpha\in \kappa$. If $g,g'\in\gcs$ are distinct, then $g\restriction\alpha\neq g'\restriction \alpha$ for almost all $\alpha\in \kappa$, hence $f_g(\alpha)\neq f_{g'}(\alpha)$ for almost all $\alpha\in \kappa$. Therefore, if $\phi\in \Loc_\kappa^{b',h}$, then there are at most $\lambda$ many functions $g\in\gcs$ such that $f_g\in^*\phi$. Since $\gcs$ cannot be the union of less than $2^\kappa$ sets of size $\lambda$, we see that $\dstar{b',h}=2^\kappa$.

We will construct a Tukey connection $\sr{L}_{b',h}\preceq\sr{L}_{b,h}$. Let $\ab{W_\alpha\mid \alpha\in\kappa}$ be a partition of $\kappa$ such that $|W_\alpha|=\lambda^{2^{|\alpha|}}$ and let $\ab{\Phi_\xi^\alpha\mid \xi\in W_\alpha}$ be an enumeration of all functions $2^{|\alpha|}\to \lambda$. We let $\rho_-:\prod b'\to\prod b$ send a function $f'$ to the function $f$ defined as follows: for $\xi\in\kappa$, let $\alpha$ be such that $\xi\in W_\alpha$, then we let $f(\xi)=\Phi_\xi^\alpha(f'(\alpha))$. We let $\rho_+:\Loc_\kappa^{b,h}\to \Loc_\kappa^{b',h}$ send a slalom $\phi$ to the slalom $\phi'$, where $\phi'(\alpha)=\st{\eta\in b'(\alpha)\mid \forall \xi\in W_\alpha(\Phi_\xi^\alpha(\eta)\in \phi(\xi))}$. We show that $\phi'$ is indeed a $(b',h)$-slalom by proving that $|\phi'(\alpha)|<h(\alpha)=\lambda$.

Assume towards contradiction that there exists a sequence $\ab{\eta_\beta\mid \beta\in\lambda}$ of distinct elements of $\phi'(\alpha)$. We define a function $\Phi:b'(\alpha)\to \lambda$ by sending $\eta_\beta\mapsto \beta$ and $\eta\mapsto 0$ if $\eta\neq\eta_\beta$ for all $\beta\in\lambda$. Note that $\dom(\Phi)=b'(\alpha)=2^{|\alpha|}$, hence there is $\xi\in W_\alpha$ such that $\Phi=\Phi_\xi^\alpha$. Since $|\phi(\xi)|<h(\xi)=\lambda$ there is some $\beta\in \lambda\setminus \phi(\xi)$, but then $\Phi(\eta_{\beta})=\Phi_\xi^\alpha(\eta_{\beta})\notin \phi(\xi)$, which implies the contradictory $\eta_\beta\notin \phi'(\alpha)$.

Finally we have to prove that $\rho_-$, $\rho_+$ form a Tukey connection. If we assume that $f\in^*\phi$, then $\Phi_\xi^\alpha(f'(\alpha))\in \phi(\xi)$ for almost all $\xi\in\kappa$, where $\alpha$ is such that $\xi\in W_\alpha$. This means that for almost all $\alpha\in\kappa$ we have $\Phi_\xi^\alpha(f'(\alpha))\in \phi(\xi)$ for all $\xi\in W_\alpha$. Therefore, for almost all $\alpha\in\kappa$ we have $f'(\alpha)\in \phi'(\alpha)$, showing that $f'\in^*\phi'$.
\end{proof}

The situation in case (ii) appears to be more complicated, and currently our best lower bound is given by the maximal size of almost disjoint families of functions. A family $\c A\subset \prod b$ is called \emph{almost disjoint} if $f=^\infty g$ implies that $f=g$ for all $f,g\in\c A$.
\begin{lmm}
If $D_\lambda=\st{\alpha\in\kappa\mid h(\alpha)=\lambda}$ is bounded for all $\lambda\in\kappa$ and $h$ is increasing and continuous on a stationary set and $\c A\subset \prod b$ is an almost disjoint family, then $|\c A|\leq \dstar{b,h}$.
\end{lmm}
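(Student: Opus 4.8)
The plan is to show that every family $W$ of $(b,h)$-slaloms witnessing $\dstar{b,h}$ has $|W|\geq|\c A|$. Writing $\c A_\phi=\st{f\in\c A\mid f\ins\phi}$ for a slalom $\phi$, the fact that $W$ localises all of $\prod b$ gives $\c A=\Cup_{\phi\in W}\c A_\phi$, so it is enough to establish two things: \textbf{(a)} $|\c A_\phi|\leq\kappa$ for every $\phi\in\Loc_\kappa^{b,h}$, and \textbf{(b)} $\dstar{b,h}\geq\kappa$. Together these yield $|\c A|\leq|W|\cdot\kappa=|W|=\dstar{b,h}$.

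The common engine is a pressing-down argument, exactly as in the proof of case (ii) of \Cref{bounds on bstar}: since $h$ is increasing with every $D_\lambda$ bounded, the set $\st{\alpha\mid h(\alpha)\leq\mu}$ is bounded for each $\mu<\kappa$, and combining this with the club of closure points of $h$ and the stationary set $S_0$ of continuity points, the set of fixed points $S_1=\st{\gamma\in S_0\mid h(\gamma)=\gamma}$ is stationary. For any $\phi\in\Loc_\kappa^{b,h}$ the map $\gamma\mapsto\card{\phi(\gamma)}$ is regressive on $S_1$ (as $\card{\phi(\gamma)}<h(\gamma)=\gamma$ there), so by Fodor's lemma there are a stationary $S_\phi\subseteq S_1$ and a cardinal $\lambda_\phi<\kappa$ with $\card{\phi(\gamma)}=\lambda_\phi$ for all $\gamma\in S_\phi$.

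For \textbf{(a)}, suppose $|\c A_\phi|>\kappa$. Each $f\in\c A_\phi$ carries a bound $\beta_f<\kappa$ with $f(\gamma)\in\phi(\gamma)$ for $\gamma\geq\beta_f$, so by regularity of $\kappa$ there is $\c A'_\phi\subseteq\c A_\phi$ of size $>\kappa$ with a single common bound $\beta$. Pick $\lambda_\phi^+$ many distinct functions in $\c A'_\phi$ (possible since $\lambda_\phi^+<\kappa<|\c A'_\phi|$). Because $\c A$ is almost disjoint, any two of them are eventually different; as there are only $\lambda_\phi^+<\kappa$ pairs and $\kappa$ is regular, there is a single $\delta<\kappa$ beyond which all of these functions are pairwise distinct. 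Taking any $\gamma\in S_\phi$ with $\gamma\geq\max\st{\beta,\delta}$ we obtain $\lambda_\phi^+$ many distinct elements of $\phi(\gamma)$, contradicting $\card{\phi(\gamma)}=\lambda_\phi$; hence $|\c A_\phi|\leq\kappa$. For \textbf{(b)}, suppose $W$ is a family of fewer than $\kappa$ slaloms localising $\prod b$. The functions $f_\eta\in\prod b$ ($\eta\in\kappa$) given by $f_\eta(\alpha)=\eta$ when $\eta\in b(\alpha)$ and $f_\eta(\alpha)=0$ otherwise are all localised by members of $W$, so some single $\phi\in W$ localises $f_\eta$ for a set $E$ of size $\kappa$. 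Running the pressing-down argument on this $\phi$ and then repeating the counting of \textbf{(a)} — choosing $\lambda_\phi^+$ many $\eta\in E$, bounding the associated localisation bounds together with the thresholds past which $b(\alpha)>\eta$ below one $\delta<\kappa$, and evaluating at some $\gamma\in S_\phi$ above $\delta$ — forces $\lambda_\phi^+$ distinct ordinals into $\phi(\gamma)$, a contradiction. Thus $\dstar{b,h}\geq\kappa$.

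The real content is part \textbf{(a)} together with the observation that the pressing-down step is essential: without a uniform bound on $\card{\phi(\gamma)}$ along a stationary set, a single slalom can localise an almost disjoint family of size $2^\kappa$ (e.g. one living in $\prod_\alpha\phi(\alpha)$ when $\card{\phi(\alpha)}$ tends to $\kappa$, using that $\kappa$ is a strong limit), so the bound $|\c A_\phi|\leq\kappa$ genuinely uses the hypotheses on $h$ through Fodor. Everything else is bookkeeping with suprema of fewer than $\kappa$ ordinals below $\kappa$.
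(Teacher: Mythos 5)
Your proof is correct and follows essentially the same route as the paper's: press down on $\card{\phi(\gamma)}$ along the stationary set of fixed points of $h$ to get a constant value $\lambda_\phi$ on a stationary $S_\phi$, then use almost disjointness to find a level in $S_\phi$ where $\lambda_\phi^+$ many localised functions take distinct values inside $\phi$. The only (harmless) difference is bookkeeping: the paper extracts the sharper bound $\card{\c A_\phi}\leq\lambda_\phi$ and deduces $\card{\Phi}\geq\kappa$ from the WLOG assumption $\card{\c A}\geq\kappa$, whereas you settle for $\card{\c A_\phi}\leq\kappa$ and supply the lower bound $\dstar{b,h}\geq\kappa$ separately via the constant functions.
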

\begin{proof}
Since we assume $h\leq b$ and $h$ is cofinal, $b$ is also cofinal. Note that there exists an almost disjoint family $\c A\subset\prod b$ with $|\c A|=\kappa$: let $f_\eta\in\prod b$ send $\alpha\mapsto \eta$ if $\eta\in b(\alpha)$ and $\alpha\mapsto 0$ otherwise, then $\c A=\st{f_\eta\mid \eta\in\kappa}$ suffices. We will therefore assume without loss of generality that $\c A$ is an almost disjoint family with $|\c A|\geq \kappa$.

Given $\phi\in\Loc_\kappa^{b,h}$, let $\lambda_\phi$ be minimal such that there is stationary $S_\phi$ with $|\phi(\alpha)|=\lambda_\phi$ for all $\alpha\in S_\phi$. Fix some arbitrary $A\subset \c A$ with $|A|=\lambda_\phi^+$, enumerate $A$ as $\ab{f_\alpha\mid \alpha\in\lambda_\phi^+}$ and define $\xi_{\alpha,\beta}=\min\st{\xi\in\kappa\mid\forall \eta\in[\xi,\kappa)(f_\alpha(\eta)\neq f_\beta(\eta))}$. Since $\xi_{\alpha,\beta}<\kappa$ for all distinct $\alpha,\beta\in\lambda_\phi^+$, and $\lambda_\phi^+<\kappa$ and $\kappa$ is inaccessible, we see that $\xi=\Cup_{\alpha\in\lambda_\phi^+}\Cup_{\beta\in\alpha}\xi_{\alpha,\beta}\in\kappa$. If $\eta\geq \xi$, then $f_\alpha(\eta)$ is distinct for each $\alpha\in\lambda_\phi^+$, thus $\card{\st{f_\alpha(\eta)\mid \alpha\in\lambda_\phi^+}}=\lambda_\phi^+$. For every $\eta\in S_\phi\setminus \xi$ there is $\alpha\in\lambda_\phi^+$ such that $f_\alpha(\eta)\notin\phi(\eta)$, hence by the pigeonhole principle there exists  $\alpha\in\lambda_\phi^+$ such that $f_\alpha\nins\phi$.

Thus, we see that $\c A_\phi=\st{f\in\c A\mid f\in^*\phi}$ has $|\c A_\phi|\leq \lambda_\phi$. If $\Phi\subset\Loc_\kappa^{b,h}$ is of minimal cardinality to witnesses $\dstar{b,h}$, then $\Cup_{\phi\in\Phi} \c A_\phi=\c A$, and thus $\kappa\leq |\c A|\leq|\Phi|\cdot\sup_{\phi\in\Phi}\lambda_\phi=|\Phi|$. For the last equality, note that $\sup_{\phi\in\Phi}\lambda_\phi\leq\kappa$ and that this inequality would be strict if $|\Phi|<\kappa$.
\end{proof}

Note that an almost disjoint family $\c A\subset \prod b$ with $|\c A|=2^\kappa$ exists when there exists a continuous strictly increasing sequence $\ab{\alpha_\xi\mid \xi\in\kappa}$ such that $2^{|\xi|}\leq|b(\alpha_\xi)|$ for all $\xi\in\kappa$. The construction for such a family is done by fixing bijections $\pi_\xi:{}^\xi2\bij b(\alpha_\xi)$ and for any $f\in\gcs$ considering the function $f'\in\prod b$ given by $f':\alpha\mapsto \pi_\xi(f\restriction\xi)$ for each $\alpha\in[\alpha_\xi,\alpha_{\xi+1})$, then $\st{f'\mid f\in\gcs}$ forms an almost disjoint family. 

Even if $b$ does not grow fast enough such that the above construction can be done, it is still possible to add an almost disjoint family of size $2^\kappa$ with forcing. Let us focus on the case where $b=\id:\alpha\mapsto\alpha$ is the identity function, and look at a forcing notion $\AD$ that adds a $\lambda$-sized almost disjoint family of regressive functions, that is, elements of $\prod\id$. 

\begin{dfn}
The forcing notion $\AD$ has the conditions  $p:X_p\times\beta_p\to\kappa$ such that $X_p\in[\lambda]^{<\kappa}$, $|X_p|\leq\beta_p\in\kappa$, and $p(\xi,\alpha)<\alpha$ for all $\xi\in X_p$ and $0<\alpha\in\beta_p$. Given $\xi\in X_p$, we write $p_\xi:\beta_p\to\kappa$ for the (regressive) function $p_\xi:\alpha\mapsto p(\xi,\alpha)$. The ordering on $\AD$ is given by $q\leq p$ iff  $p\subset q$ (implicitly $X_p\subset X_q$ and $\beta_p\leq \beta_q$), and $q_\xi(\alpha)\neq q_{\xi'}(\alpha)$ for any $\alpha\in\beta_q\setminus\beta_p$ and distinct $\xi,\xi'\in X_p$.
\end{dfn}

\begin{lmm}\label{AD: extension}
The set of all $p\in\AD$ such that $(\xi,\alpha)\in \dom(p)$, is dense for any $\xi\in\lambda$ and $\alpha\in\kappa$.
\end{lmm}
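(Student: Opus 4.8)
The plan is a direct density computation. Fix $p\in\AD$, $\xi_0\in\lambda$ and $\alpha_0\in\kappa$. If $\xi_0\in X_p$ and $\alpha_0<\beta_p$ then $(\xi_0,\alpha_0)\in\dom(p)$ and we are done with $q=p$, so assume otherwise; I would build an explicit $q\leq p$ with $(\xi_0,\alpha_0)\in\dom(q)$. Put $X_q=X_p\cup\st{\xi_0}$, which still lies in $[\lambda]^{<\kappa}$ since $|X_p|<\kappa$ and $\kappa$ is a limit cardinal, and choose $\beta_q=\max\st{\beta_p,\alpha_0+1,|X_q|}$. By regularity of $\kappa$ this ordinal is below $\kappa$, and by construction $\beta_p\leq\beta_q$, $\alpha_0<\beta_q$ and $|X_q|\leq\beta_q$, so $\beta_q$ meets the requirements for the ``height'' of a condition.

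Next I would define $q\colon X_q\times\beta_q\to\kappa$ extending $p$. On $X_p\times\beta_p$ set $q=p$. Fix an enumeration $X_p=\st{\eta_i\mid i<|X_p|}$. For each $\alpha$ with $\beta_p\leq\alpha<\beta_q$ and $0<\alpha$, set $q(\eta_i,\alpha)=i$ for $i<|X_p|$ and $q(\xi_0,\alpha)=0$; this is legitimate because $|X_p|\leq\beta_p\leq\alpha$ (the defining clause $|X_p|\leq\beta_p$ of a condition), so each $i<|X_p|$ is a value ${<}\alpha$. On the column $\st{\xi_0}\times\beta_p$, which only needs to be filled when $\xi_0\notin X_p$, set $q(\xi_0,\alpha)=0$ for all $\alpha$, and wherever $0$ is a new domain point set the value to $0$ as well; here we may use arbitrary values ${<}\alpha$ since the ordering imposes no distinctness on $\alpha<\beta_p$.

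It remains to check $q\in\AD$ and $q\leq p$. That $q$ is a condition is immediate: $X_q\in[\lambda]^{<\kappa}$, $|X_q|\leq\beta_q\in\kappa$, and $q(\eta,\alpha)<\alpha$ for $0<\alpha<\beta_q$ by construction. We have $p\subseteq q$, and the only extra clause for $q\leq p$, namely $q_\eta(\alpha)\neq q_{\eta'}(\alpha)$ for distinct $\eta,\eta'\in X_p$ and $\alpha\in\beta_q\setminus\beta_p$, holds because the labels $i$ assigned to the $\eta_i$ on those levels are pairwise distinct. Finally $(\xi_0,\alpha_0)\in X_q\times\beta_q=\dom(q)$. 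There is no genuine obstacle in this argument; the one point requiring a moment's care is that every newly added level $\alpha\in[\beta_p,\beta_q)$ automatically satisfies $|X_p|\leq\alpha$, which is exactly what makes room for an injective assignment of old-index values and is forced by the definition of a condition.
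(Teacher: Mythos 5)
Your argument is correct and is essentially the paper's proof: the paper performs the same construction in two consecutive density steps (first enlarging $\beta_p$ by assigning to the $i$-th element of $X_p$ the value $i$ on all new levels, which is legal because $|X_p|\leq\beta_p\leq\alpha$, and then enlarging $X_p$ by filling the new columns with $0$), whereas you merge the two steps into one. The only nit is that your clause ``$q(\xi_0,\alpha)=0$ on the new levels'' should be understood as applying only when $\xi_0\notin X_p$, since otherwise it collides with the assignment $q(\eta_i,\alpha)=i$; in that case the $\xi_0$-column is already handled by the enumeration, so nothing is lost.
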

\begin{proof}
Let $p\in\AD$ and $\beta_p\leq \alpha\in\kappa$, then we will first find $q\leq p$ with $X_q=X_p$ and $\beta_q=\alpha+1$. Fix an enumeration $\ab{\xi_\eta\mid \eta<|X_p|}$ of $X_p$, then for any $\gamma\in [\beta_p,\alpha]$ note that $|X_p|\leq\beta_p\leq\gamma$, hence we can define $q_{\xi_\eta}(\gamma)=\eta<\gamma$, then $q\leq p$.

Next we show how to increase $X_p$. Let $p\in\AD$ and $\mu<\kappa$, then by the above there exists $q\leq p$ with $X_q=X_p$ and $|X_p|+\mu\leq\beta_q$. If $X\in[\lambda\setminus X_p]^\mu$, we can find $r\leq q$ with $X_r=X_p\cup X$ and $\beta_r=\beta_q$ simply by letting $r_\xi(\alpha)=0$ for all $\alpha\in\beta_q$ and $\xi\in X$.
\end{proof}
\begin{lmm}\label{AD: generic}
If $G$ is an $\AD$-generic filter over $\b V$, and we define $f_\xi=\Cup\st{p_\xi\mid p\in G\la \xi\in X_p)}$, then $f_\xi\in\gbs$ is a regressive function and $\st{f_\xi\mid \xi\in(\lambda)^{\b V}}$ is almost disjoint.
\end{lmm}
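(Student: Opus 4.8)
The plan is to verify the three claims — that $f_\xi$ is a well-defined function, that it is a regressive element of $\gbs$, and that distinct $f_\xi$'s form an almost disjoint family — each by a short density argument resting on \Cref{AD: extension}.

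First I would check well-definedness. If $p,q\in G$ and $\xi\in X_p\cap X_q$, then since $G$ is a filter there is $r\in G$ with $r\leq p$ and $r\leq q$; by the definition of the ordering this means $p\subset r$ and $q\subset r$, so $p_\xi\subset r_\xi$ and $q_\xi\subset r_\xi$, and hence $p_\xi$ and $q_\xi$ agree on $\beta_p\cap\beta_q$. Thus $f_\xi=\Cup\st{p_\xi\mid p\in G\la\xi\in X_p}$ is a single function. For totality, \Cref{AD: extension} gives that for each $\alpha\in\kappa$ the set $\st{p\in\AD\mid (\xi,\alpha)\in\dom(p)}$ is dense; genericity then yields $p\in G$ with $\xi\in X_p$ and $\alpha\in\beta_p$, so $\alpha\in\dom(f_\xi)$ and $f_\xi(\alpha)=p(\xi,\alpha)<\kappa$. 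Hence $\dom(f_\xi)=\kappa$ and $f_\xi\in\gbs$, and $f_\xi$ is regressive because every condition satisfies $p(\xi,\alpha)<\alpha$ for $0<\alpha\in\beta_p$.

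For almost disjointness, fix distinct $\xi,\xi'\in\lambda^{\b V}$. Applying \Cref{AD: extension} twice (to ensure first $\xi$ and then also $\xi'$ lie in $X_p$) shows that $\st{p\in\AD\mid \xi,\xi'\in X_p}$ is dense, so there is $p^*\in G$ with $\xi,\xi'\in X_{p^*}$; set $\beta^*=\beta_{p^*}$. I claim $f_\xi(\alpha)\neq f_{\xi'}(\alpha)$ for every $\alpha\geq\beta^*$. Indeed, for such $\alpha$ the set of conditions $q$ with $\alpha\in\beta_q$ is dense below $p^*$ (again by \Cref{AD: extension}), so genericity provides $q\in G$ with $q\leq p^*$ and $\alpha<\beta_q$; since $\xi,\xi'$ are distinct elements of $X_{p^*}$ and $\alpha\in\beta_q\setminus\beta_{p^*}$, the definition of the ordering forces $q_\xi(\alpha)\neq q_{\xi'}(\alpha)$, i.e.\ $f_\xi(\alpha)\neq f_{\xi'}(\alpha)$. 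So $f_\xi$ and $f_{\xi'}$ can agree only on the bounded set $\beta^*$, whence they are not cofinally equal. Since the only cofinally equal pairs in $\st{f_\xi\mid\xi\in\lambda^{\b V}}$ are therefore those with equal index (and hence equal as functions), the family is almost disjoint.

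There is no genuine obstacle here: the argument is a routine chain of density facts from \Cref{AD: extension}. The one point worth isolating is that the bound $\beta^*$ witnessing the eventual disagreement of $f_\xi$ and $f_{\xi'}$ is obtained from a \emph{single} condition in $G$ containing both indices — after that, the clause of the ordering demanding distinctness on freshly added coordinates does all the work.
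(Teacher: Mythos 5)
Your proof is correct and follows essentially the same route as the paper's: obtain a single condition $p^*\in G$ with $\xi,\xi'\in X_{p^*}$ via the density lemma, and then use the clause of the ordering that forces disagreement on all coordinates in $\beta_q\setminus\beta_{p^*}$. The only difference is that you spell out the well-definedness and totality of $f_\xi$, which the paper dismisses as clear.
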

\begin{proof}
It is clear from the definition of $\AD$ and the above lemma that $f_\xi\in\gbs$ and that $f_\xi$ is regressive. If $\xi,\xi'\in (\lambda)^{\b V}$ are distinct, then let $p\in G$ be such that $\xi,\xi'\in X_p$ and let $\alpha\geq \beta_p$. For any $q\leq p$ with $q\in G$ and $\alpha\in\beta_q$ we have $q_\xi(\alpha)\neq q_{\xi'}(\alpha)$. By the above lemma there exist such $q\leq p$ with $q\in G$, hence $f_\xi(\alpha)\neq f_{\xi'}(\alpha)$ for any $\alpha>\beta_p$. Therefore $f_\xi$ and $f_{\xi'}$ are almost disjoint.
\end{proof}

\begin{lmm}\label{AD: closure}
$\AD$ is ${<}\kappa$-closed and has the ${<}\kappa^+$-c.c..
\end{lmm}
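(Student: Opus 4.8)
For the ${<}\kappa$-closure: given a descending sequence $\langle p_\eta\mid\eta<\gamma\rangle$ with $\gamma<\kappa$, I would produce a lower bound. If $\gamma$ is finite the last term works, so assume $\gamma$ is infinite; and since a lower bound for a cofinal subsequence is a lower bound for the whole sequence (by transitivity together with $p_{\eta'}\le p_\eta$ for $\eta\le\eta'$), I may pass to a cofinal subsequence and assume $\gamma$ is a regular cardinal. Put $X=\bigcup_{\eta<\gamma}X_{p_\eta}$ and $\beta^*=\sup_{\eta<\gamma}\beta_{p_\eta}$, both ${<}\kappa$ by regularity of $\kappa$. As the $p_\eta$ are pairwise $\le$-comparable, $r_0:=\bigcup_{\eta<\gamma}p_\eta$ is a single (regressive) function, and by monotonicity of $\langle X_{p_\eta}\rangle$ and $\langle\beta_{p_\eta}\rangle$ its domain is exactly $X\times\beta^*$. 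Moreover $r_0$ already satisfies the ordering's distinctness clause: for $r_0\le p_\eta$ and a level $\alpha\in\beta^*\setminus\beta_{p_\eta}$ one has $\alpha<\beta_{p_{\eta'}}$ for some $\eta'$, necessarily with $\eta'\ge\eta$, and $p_{\eta'}\le p_\eta$ forces $(r_0)_\xi(\alpha)\ne(r_0)_{\xi'}(\alpha)$ for distinct $\xi,\xi'\in X_{p_\eta}$.

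The crux is the cardinal bound $|X|\le\beta^*$, and this is the step where the constraint $|X_p|<\beta_p$ is essential. One splits on $\gamma$ versus $\beta^*$. If $\gamma\le\beta^*$, then $|X|\le\sum_{\eta<\gamma}|X_{p_\eta}|\le\gamma\cdot\sup_\eta|X_{p_\eta}|\le\beta^*$. If $\gamma>\beta^*$, then $\langle\beta_{p_\eta}\rangle$ is non-decreasing with values among the ordinals ${\le}\beta^*$ (of which there are fewer than $\gamma$), so by regularity of $\gamma$ it is eventually constant, necessarily $=\beta^*$; hence $|X_{p_\eta}|<\beta^*$ for all large $\eta$, and the same argument applied to the non-decreasing $\langle|X_{p_\eta}|\rangle$ (now with values ${<}\beta^*<\gamma$) shows $\langle X_{p_\eta}\rangle$ is eventually constant, so $|X|<\beta^*$. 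Granted $|X|\le\beta^*$: if $|X|<\beta^*$ then $r_0\in\AD$ is the desired lower bound; if $|X|=\beta^*$, set $\beta'=|X|^+<\kappa$ and extend $r_0$ to $r$ on $X\times\beta'$ by picking, at each level $\alpha\in[\beta^*,\beta')$, pairwise distinct values $r(\xi,\alpha)<\alpha$ for $\xi\in X$ (possible since $|\alpha|\ge\beta^*=|X|$); then $r\in\AD$ and $r\le p_\eta$ for all $\eta$. I expect this cardinal bookkeeping---verifying that the union does not overflow its second coordinate---to be the main obstacle.

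For the ${<}\kappa^+$-c.c.\ I would run a $\Delta$-system reduction. Let $\langle p_i\mid i<\kappa^+\rangle$ be conditions; I claim two of them are compatible. Since $\kappa^+$ is regular and $\beta_{p_i}\in\kappa$, shrink to a subfamily of size $\kappa^+$ with $\beta_{p_i}=\beta^*$ constant. As $\kappa$ is inaccessible, $\kappa^{<\kappa}=\kappa$, so the $\Delta$-system lemma yields a further subfamily of size $\kappa^+$ for which $\langle X_{p_i}\rangle$ is a $\Delta$-system with root $R$; and since there are only $\le\kappa^{|R|\cdot\beta^*}\le\kappa^{<\kappa}=\kappa$ functions $R\times\beta^*\to\kappa$, a last shrinking makes all $p_i$ agree on $R\times\beta^*$. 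Now fix $i\ne j$. Because $X_{p_i}\cap X_{p_j}=R$ and $\beta_{p_i}=\beta_{p_j}=\beta^*$, the union $p_i\cup p_j$ is a single regressive function on $(X_{p_i}\cup X_{p_j})\times\beta^*$; it extends to a condition $r$ on $(X_{p_i}\cup X_{p_j})\times\beta'$ for a suitably large $\beta'<\kappa$ by choosing, at each level $\alpha\in[\beta^*,\beta')$, values $<\alpha$ that make the columns over $X_{p_i}$ injective and the columns over $X_{p_j}$ injective---feasible because $\alpha\ge\beta^*\ge|X_{p_i}|,|X_{p_j}|$ and the ordering only ever demands distinctness within a single $X_p$, never across $X_{p_i}$ and $X_{p_j}$. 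Then $r\le p_i$ and $r\le p_j$, so the $p_i$ were not an antichain; hence every antichain has size ${\le}\kappa$.
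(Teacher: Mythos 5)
Your treatment of the ${<}\kappa^+$-c.c.\ is correct and essentially the paper's argument (a $\Delta$-system reduction using $\kappa^{<\kappa}=\kappa$, then merging two conditions with a common root; the paper applies the $\Delta$-system lemma to $\dom(p)=X_p\times\beta_p$ rather than normalising $\beta_p$ first, but this is cosmetic). The problem is in the closure half, and it sits exactly where you say the crux is. First, a misreading: the definition requires $|X_p|\leq\beta_p$, not $|X_p|<\beta_p$, so in the case $\gamma>\beta^*$ you only get $|X_{p_\eta}|\leq\beta^*$. More seriously, the inference ``$\langle|X_{p_\eta}|\rangle$ is eventually constant, hence $\langle X_{p_\eta}\rangle$ is eventually constant'' is a non sequitur: a $\subseteq$-increasing sequence of sets of constant cardinality $\mu$ indexed by the regular cardinal $\mu^+$ need not stabilise, and its union can have cardinality $\mu^+$ (e.g.\ initial segments of $\mu^+$ of order type in $[\mu,\mu^+)$). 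So your proof of the bound $|X|\leq\beta^*$ breaks precisely when $\gamma=\mu^+>\beta^*$ with $\mu=\sup_\eta|X_{p_\eta}|$.

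This is not a patchable slip, because in that configuration $|X|\leq\beta^*$ is simply false and in fact no lower bound need exist. Take $\gamma=\omega_1<\kappa$, $\beta_{p_\eta}=\omega$ and $X_{p_\eta}=\omega+\eta\subset\lambda$ for $\eta<\omega_1$, with each $p_\eta$ identically $0$: every $p_\eta$ is a condition ($|X_{p_\eta}|=\omega\leq\beta_{p_\eta}$), and the chain is descending since the distinctness clause quantifies over $\beta_{p_{\eta'}}\setminus\beta_{p_\eta}=\emp$. Any common lower bound $r$ would have $|X_r|\geq\omega_1$, hence $\beta_r\geq\omega_1>\omega$, and at level $\omega\in\beta_r\setminus\beta_{p_\eta}$ the map $\xi\mapsto r_\xi(\omega)$ would have to be injective on every $X_{p_\eta}$, hence on their union of size $\omega_1$, while taking values below $\omega$ --- impossible. (The paper's own proof buries the same issue in ``it is clear that $|X|\leq\beta$''; you correctly isolated the critical step, but the argument you supply for it is invalid, and as the forcing is literally defined the step cannot be made to work.)
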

\begin{proof}
First, let us prove ${<}\kappa$-closure. Let $\gamma\in\kappa$ and let $\ab{p^\eta\in\AD\mid \eta<\gamma}$ be a descending chain of conditions. It is clear that $X=\Cup_{\eta\in\gamma}X_{p^\eta}\in[\lambda]^{<\kappa}$ and $|X|\leq\beta=\Cup_{\eta\in\gamma}\beta_{p^\eta}\in \kappa$, and that $p=\Cup_{\eta\in\gamma}p^\eta:X\times\beta\to\kappa$ and $p(\xi,\eta)<\eta$ for all $(\xi,\eta)\in X\times\beta$. If $\xi,\xi'\in X_{p^\eta}$ are distinct and $\alpha\in \beta\setminus \beta_{p^\eta}$, then there is $\zeta>\eta$ such that $\alpha\in \beta_{p^{\zeta}}$. Since the chain of conditions is descending we have $p^{\zeta}\leq p^\eta$, which implies that $p_\xi(\alpha)=p^{\zeta}_\xi(\alpha)\neq p^\zeta_{\xi'}(\alpha)=p_{\xi'}(\alpha)$, so $p\leq p_\eta$.

As for ${<}\kappa^+$-c.c., suppose $\c B\subset \AD$ is a subset with $|\c B|=\kappa^+$. We let $\c A=\st{\dom(p)\mid p\in \c B}$, then by $|X_p|<\kappa$ and $\beta_p<\kappa$ for each $p\in \c B$ we see that $\c A$ is a family of sets of cardinality ${<}\kappa$. Applying the $\Delta$-system lemma on $\c A$, we know that there exists $\c B_0\subset \c B$ such that $|\c B_0|=\kappa^+$ and some sets $X\in[\lambda]^{<\kappa}$ and $\beta\in\kappa$ such that $\dom(p)\cap \dom(p')=X\times \beta$ for all distinct $p,p'\in \c B_0$. 

If $X=\emp$, then any $p,p'\in\c B_0$ have some $q\leq p$ and $q'\leq p'$ such that $|X_p|+|X_{p'}|\leq \beta_q=\beta_{q'}$ using \Cref{AD: extension}. Then $q\cup q'\leq p$ and $q\cup q'\leq p'$, thus $\c B$ is not an antichain.

On the other hand, if $X\neq \emp$, then $\beta=\beta_p=\beta_{p'}$ for all $p,p'\in\c B_0$. Since $|X\times \beta|<\kappa$ and $\kappa^{<\kappa}=\kappa$, there exists $\c B_1\subset\c B_0$ with $|\c B_1|=\kappa^+$ such that $p\restriction (X\times \beta)=p'\restriction (X\times \beta)$ for all $p,p'\in\c B_1$. Once again, using \Cref{AD: extension} we can see that $p,p'$ are compatible, and thus $\c B$ is not an antichain.
\end{proof}
\begin{crl}\label{AD: preservation of cardinals}
$\AD$ preserves cardinals and cofinality and does not add any elements to $\gfbs$.
\end{crl}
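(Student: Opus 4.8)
The plan is to derive this purely formally from the two properties established in \Cref{AD: closure}, using the standard forcing preservation lemmas. There is no genuine obstacle here; the corollary is essentially a bookkeeping exercise, so the main task is just to spell out which half of the argument uses closure and which uses the chain condition.

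First I would use that $\AD$ is ${<}\kappa$-closed. A routine density/fusion argument shows that a ${<}\kappa$-closed forcing adds no new sequence of ordinals of length ${<}\kappa$: given a name $\dot f$ for a function with domain some $\gamma<\kappa$ and ordinal values, and a condition $p$, recursively build a descending chain $\ab{p_\eta\mid \eta<\gamma}$ with $p_0\le p$ and $p_\eta$ deciding $\dot f(\eta)$ (possible at limit stages by ${<}\kappa$-closure), then any lower bound forces $\dot f$ to equal a ground-model function. In particular $\AD$ adds no element to $\gfbs={}^{<\kappa}\kappa$. The same argument, applied to a hypothetical cofinal map of length ${<}\kappa$ into an ordinal or to a hypothetical surjection collapsing a cardinal $\le\kappa$, shows that $\AD$ preserves all cardinals $\le\kappa$ and all cofinalities $\le\kappa$; in particular $\kappa$ itself remains a regular cardinal (and inaccessibility of $\kappa$ is what makes preserving $\kappa$ meaningful, and already gives $\kappa^{<\kappa}=\kappa$).

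Second I would use that $\AD$ has the ${<}\kappa^+$-c.c., i.e.\ the $\kappa^+$-c.c. The usual antichain-counting argument then gives that for every function $g:\delta\to\r{Ord}$ in the generic extension there is a ground-model function $G$ on $\delta$ with $g(\xi)\in G(\xi)$ and $|G(\xi)|\le\kappa$ for all $\xi$; consequently no cardinal $\ge\kappa^+$ is collapsed and no regular cardinal $\ge\kappa^+$ is singularized, so $\AD$ preserves all cardinals and cofinalities $\ge\kappa^+$.

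Combining the two items: every cardinal and every cofinality, whether $\le\kappa$ or $\ge\kappa^+$, is preserved, and $\AD$ adds no new element of $\gfbs$. The only point worth a sentence in the write-up is the split at $\kappa$ between the two lemmas; everything else is a citation of standard facts (e.g.\ from the forcing chapters of any set theory text), so I would keep the proof to a couple of lines.
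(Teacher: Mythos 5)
Your proof is correct and is exactly the standard argument the paper implicitly relies on (the corollary is stated without proof precisely because it follows from \Cref{AD: closure} by the textbook facts you cite): ${<}\kappa$-closure gives no new ${<}\kappa$-sequences of ordinals, hence no new elements of $\gfbs$ and preservation of cardinals and cofinalities ${\leq}\kappa$, while the ${<}\kappa^{+}$-c.c.\ handles everything from $\kappa^{+}$ upward. Nothing further is needed.
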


\begin{crl}
It is consistent that there exists an almost disjoint family $\c A\subset\prod\id$ of cardinality $2^\kappa=\lambda$ for any $\lambda$ with $\cf(\lambda)>\kappa$.
\end{crl}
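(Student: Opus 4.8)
The plan is to obtain the almost disjoint family as the generic object of the forcing $\AD$ introduced above. First I would fix a ground model $\b V$ of $\s{GCH}$ and a cardinal $\lambda$ with $\cf(\lambda)>\kappa$ (so in particular $\lambda>\kappa$, since $\cf(\lambda)\leq\lambda$), and then force with $\AD$, taking the parameter appearing in its definition to be this $\lambda$. By \Cref{AD: preservation of cardinals} this forcing preserves cardinals and cofinalities and adds no elements to $\gfbs$; in particular $\kappa$ stays inaccessible and $\lambda$ stays a cardinal with $\cf(\lambda)>\kappa$ in the extension. By \Cref{AD: generic} the generic filter $G$ gives a family $\c A=\st{f_\xi\mid \xi\in(\lambda)^{\b V}}$ of pairwise almost disjoint --- hence in particular pairwise distinct --- regressive functions, i.e.\ elements of $\prod\id$, so $|\c A|=\lambda$ in $\b V[G]$. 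Since $\c A\subset\prod\id\subset\gbs$ and $|\gbs|=\kappa^\kappa=2^\kappa$, this already gives $\lambda\leq 2^\kappa$ in $\b V[G]$.

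The only remaining point is the reverse inequality $2^\kappa\leq\lambda$, which I would establish by the usual nice-name count. A condition $p\in\AD$ is determined by $X_p\in[\lambda]^{<\kappa}$, by $\beta_p\in\kappa$, and by the function $p:X_p\times\beta_p\to\kappa$, whose domain has size ${<}\kappa$; using $\kappa^{<\kappa}=\kappa$ (inaccessibility) and $\lambda^{<\kappa}=\lambda$ one gets $|\AD|\leq\lambda$. Here the equality $\lambda^{<\kappa}=\lambda$ follows from $\s{GCH}$ together with $\cf(\lambda)>\kappa$, since then $\lambda^\mu=\lambda$ for every $\mu<\kappa$. By \Cref{AD: closure}, $\AD$ has the ${<}\kappa^+$-c.c., so every subset of $\kappa$ in $\b V[G]$ has a nice name built from $\kappa$ many antichains of $\AD$, each of size ${\leq}\kappa$; there are at most $|\AD|^\kappa=\lambda^\kappa=\lambda$ such names (again by $\s{GCH}$ and $\cf(\lambda)>\kappa$). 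Hence $(2^\kappa)^{\b V[G]}\leq\lambda$, and combining with the previous paragraph, $2^\kappa=\lambda$ holds in $\b V[G]$ with $\c A$ an almost disjoint subfamily of $\prod\id$ of this size.

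I do not expect a real obstacle. The two substantive ingredients --- that $\AD$ adds a $\lambda$-sized almost disjoint family inside $\prod\id$, and that it is ${<}\kappa$-closed with the ${<}\kappa^+$-c.c. --- are exactly \Cref{AD: generic} and \Cref{AD: closure}, both already proved. What needs a little attention is only the cardinal arithmetic controlling $|\AD|$ and the number of nice names, and this is routine under $\s{GCH}$; the hypothesis $\cf(\lambda)>\kappa$ enters precisely at the points where one needs $\lambda^\mu=\lambda$ for all $\mu\leq\kappa$, which is also what prevents the extension from blowing $2^\kappa$ above $\lambda$.
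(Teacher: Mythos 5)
Your proof is correct and takes essentially the same route as the paper: extract the $\lambda$-sized almost disjoint family of regressive functions from the generic filter via \Cref{AD: generic}, and control $2^\kappa$ by a nice-name count using the ${<}\kappa^+$-c.c.\ from \Cref{AD: closure}. The only (harmless) difference is that the paper starts from a ground model already satisfying $2^\kappa=\lambda$ and checks that $2^\kappa$ is preserved, whereas you start from $\s{GCH}$ and let $\AD$ itself raise $2^\kappa$ to $\lambda$; both versions go through.
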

\begin{proof}
Starting with a model where $\b V\md\ap{2^\kappa=\lambda}$, let $G$ be $\AD$-generic over $\b V$, and let $\c F=\st{f_\xi\mid \xi\in(2^\kappa)^{\b V}}$ be the almost disjoint family described in \Cref{AD: generic}. An argument by counting names shows that $(2^\kappa)^\b V=(2^\kappa)^{\b V[G]}$.
\end{proof}

Although the ${<}\kappa$-closure implies that $\AD$ preserves the inaccessibility of $\kappa$, the same cannot be said for stronger large cardinal assumptions on $\kappa$. Indeed, it is inconsistent that an almost disjoint family $\c A\subset\prod\id$ of size larger than $\kappa$ exists for measurable cardinals, as was pointed out to me by Jing Zhang.
\begin{thm}
If $\kappa$ is measurable, $b\in\gbs$ is continuous and $\c A\subset\prod b$ is almost disjoint, then $|\c A|\leq \kappa$. 
\end{thm}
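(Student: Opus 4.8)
The plan is to use the elementary embedding coming from measurability to injectively assign to each element of $\c A$ an ordinal below $\Cup_{\alpha\in\kappa}b(\alpha)$, an ordinal of size at most $\kappa$; this immediately yields $|\c A|\leq\kappa$. So let $j\colon\b V\to M$ be a nontrivial elementary embedding with $M$ transitive and $\r{crit}(j)=\kappa$, which exists since $\kappa$ is measurable; recall $j(\beta)=\beta$ for all $\beta<\kappa$ and $\kappa<j(\kappa)$. The assignment will be $f\mapsto j(f)(\kappa)$.

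First I would check this assignment lands where claimed. By elementarity $j(f)$ is a function with domain $j(\kappa)\ni\kappa$, so $j(f)(\kappa)$ is defined, and since $f(\alpha)<b(\alpha)$ for all $\alpha<\kappa$, elementarity gives $j(f)(\alpha)<j(b)(\alpha)$ for all $\alpha<j(\kappa)$, hence in particular $j(f)(\kappa)<j(b)(\kappa)$. The only place the continuity hypothesis on $b$ is needed is to compute $j(b)(\kappa)$: as $b$ is increasing and continuous, so is $j(b)$, and $\kappa$ is a limit ordinal, so $j(b)(\kappa)=\Cup_{\alpha\in\kappa}j(b)(\alpha)$; but for $\alpha<\kappa$ we have $j(b)(\alpha)=j(b)(j(\alpha))=j(b(\alpha))=b(\alpha)$ since $b(\alpha)<\kappa=\r{crit}(j)$, so $j(b)(\kappa)=\Cup_{\alpha\in\kappa}b(\alpha)$, an ordinal which is $\leq\kappa$ because it is a supremum of $\kappa$ many ordinals below $\kappa$. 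Thus $j(f)(\kappa)\in\Cup_{\alpha\in\kappa}b(\alpha)$ for every $f\in\c A$.

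Next I would verify injectivity, which is where almost-disjointness enters. Given distinct $f,g\in\c A$, the failure of $f\eqi g$ means there is $\beta<\kappa$ with $f(\alpha)\neq g(\alpha)$ for all $\alpha\in[\beta,\kappa)$. Applying $j$ to this statement, and using $j(\beta)=\beta$, we get $j(f)(\alpha)\neq j(g)(\alpha)$ for all $\alpha\in[\beta,j(\kappa))$; taking $\alpha=\kappa$ gives $j(f)(\kappa)\neq j(g)(\kappa)$. Hence $f\mapsto j(f)(\kappa)$ is an injection of $\c A$ into $\Cup_{\alpha\in\kappa}b(\alpha)$ and $|\c A|\leq\kappa$.

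I do not expect a genuine obstacle here: the argument is short and the entire content lies in the identity $j(b)(\kappa)=\Cup_{\alpha\in\kappa}b(\alpha)$, which is exactly where continuity is used. Without continuity one would only get $|\c A|\leq|j(b)(\kappa)|$, which can be as large as $2^\kappa$ — consistent with the forcing $\AD$ of \Cref{AD: generic} adding an almost disjoint subfamily of $\prod\id$ of size $2^\kappa$ (so that such a model necessarily fails to be measurable, since $\id$ is continuous). For readers who prefer to avoid embeddings, the same proof can be given directly: fixing a $\kappa$-complete nonprincipal ultrafilter $U$ on $\kappa$, for distinct $f,g\in\c A$ the set $\st{\alpha\mid f(\alpha)=g(\alpha)}$ is bounded in $\kappa$, hence not in $U$, so $\c A$ embeds into the ultraproduct $\prod_{\alpha\in\kappa}b(\alpha)/U$, which by $\kappa$-completeness and continuity of $b$ is a well-order of order type $\Cup_{\alpha\in\kappa}b(\alpha)\leq\kappa$.
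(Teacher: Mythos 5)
Your main argument is correct, and it is essentially the paper's proof repackaged through the elementary embedding rather than stated directly with a normal ultrafilter. The paper fixes a normal $\c U$, uses continuity of $b$ to see that every $f\in\prod b$ is regressive on the club of fixed points of $b$, presses down (Fodor) to get a constant value $\gamma_f$ of $f$ on a $\c U$-large set, and then pigeonholes over $\c A$; your $j(f)(\kappa)$ is exactly that $\gamma_f$ (equivalently $[f]_{\c U}$ in the normal ultrapower), and your injectivity check is the paper's pigeonhole step. What your phrasing buys is a direct injection of $\c A$ into $\Cup_{\alpha\in\kappa}b(\alpha)$ with an explicit bound, instead of a contradiction from $|\c A|=\kappa^+$. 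One caveat on your closing aside: for a merely $\kappa$-complete nonprincipal $U$ the ultraproduct $\prod_{\alpha\in\kappa}b(\alpha)/U$ has order type $j_U(b)([\r{id}]_U)$, and $[\r{id}]_U=\kappa$ is precisely normality of $U$; without normality the order type can exceed $\kappa$ even for $b=\id$ (e.g.\ for a two-fold product measure), so that sentence should say \emph{normal} ultrafilter --- which is of course available, and is what both your embedding argument and the paper's proof actually use.
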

\begin{proof}
Let $\c U\subset\c P(\kappa)$ be a ${<}\kappa$-complete nonprincipal normal ultrafilter on $\kappa$. Assume towards contradiction that $\c A\subset\prod b$ is an almost disjoint family with $|\c A|=\kappa^+$. Since $b$ is continuous, the set of fixed points of $b$ contains a club set, thus every $f\in \c A$ is regressive on a club set. Therefore there exists $X_f\in\c U$ such that $f$ is regressive on $X_f$, and since $\c U$ is normal there exists $Y_f\in\c U\restriction X_f$ and $\gamma_f\in\kappa$ such that $\ran(f\restriction Y_f)=\st{\gamma_f}$. By the pigeonhole principle there exists $\c A'\subset \c A$ and $\gamma\in\kappa$ with $|\c A'|=|\c A|$ such that $\gamma_f=\gamma$ for all $f\in \c A'$. Then for any distinct $f,f'\in \c A'$ we have $Y_f\cap Y_{f'}\in \c U$, contradicting that $f$ and $f'$ are almost disjoint.
\end{proof}

In the end, we do not know whether the conditions of (ii) imply that $\dstar{b,h}$ is trivial, that is, the following question remains open:
\begin{qst}
Is $\dstar{b,h}<2^\kappa$ consistent with $b$ continuous on a club set?
\end{qst}

Finally, in case (iii) we can prove that each of $\kappa^+<\bstar{b,h}$ and $\dstar{b,h}<2^\kappa$ are consistent using a generalisation of localisation forcing. This mirrors what we saw for domination and unboundedness in \Cref{Hechler consistency proof}.

\begin{dfn}
Let $C\subset\kappa$. We define \emph{$(b,h,C)$-Localisation forcing} $\Locf^{b,h}_\kappa(C)$ as the forcing notion with conditions $p=(s,F)$, where $s\in\Loc_{<\kappa}^{b,h}$ is a function with domain $\gamma\in C$ and $F\subset\prod b$ with $|F|<h(\gamma)$. The ordering is given by $(t,G)\leq(s,F)$ iff $s\subset t$ and $F\subset G$ and for all $\alpha\in\dom(t)\setminus\dom(s)$ and $f\in F$ we have $f(\alpha)\in t(\alpha)$.
\end{dfn}
It is clear from the definition, under the assumptions of case (iii), that $\Locf^{b,h}_\kappa(C)$ adds a generic element of $\Loc_\kappa^{b,h}$ that localises all elements of $\prod b$ from the ground model.

\begin{lmm}
Let $C$ be a club set such that $h$ is discontinuous on $C$, then $\Locf_\kappa^{b,h}(C)$ is strategically ${<}\kappa$-closed and ${<}\kappa^+$-c.c..
\end{lmm}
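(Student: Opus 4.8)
I would establish the two properties separately, following the template of the lemma for $(b,C)$-Hechler forcing.

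For the ${<}\kappa^+$-c.c. the plan is to show that any two conditions with the same stem are compatible, so that the map sending a condition to its stem is injective on any antichain; since $\kappa$ is inaccessible we have $|\Loc_{<\kappa}^{b,h}|=\kappa$ (for each $\gamma<\kappa$ there are only $<\kappa$ many $s$ with $\dom(s)=\gamma$, namely a product of $|\gamma|<\kappa$ sets $[b(\beta)]^{<h(\beta)}$ of size $<\kappa$), and hence every antichain has size $\leq\kappa$. For the compatibility claim, note that since $h$ is increasing and discontinuous on the unbounded set $C$ it is not eventually constant, so $D_\lambda=\st{\gamma\mid h(\gamma)=\lambda}$ is bounded for every $\lambda<\kappa$, and — assuming as in \Cref{bounds on bstar} that $C$ has no successor ordinals — $h(\gamma)$ is an infinite cardinal for all $\gamma\in C$; so if $(s,F),(s,G)$ share the stem $s$ with $\dom(s)=\gamma$, then $|F\cup G|=\max(|F|,|G|)<h(\gamma)$ and $(s,F\cup G)$ is a condition extending both.

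For strategic ${<}\kappa$-closure I would describe a winning strategy for White in $\c G(\Locf_\kappa^{b,h}(C),p)$. White plays $p_0\leq p$ with stem ending in $C$; at a successor stage $\xi+1$, given Black's $p'_\xi=(s'_\xi,F'_\xi)$, White extends it to $p_{\xi+1}=(s_{\xi+1},F'_\xi)$ where $s_{\xi+1}$ has domain $\gamma_{\xi+1}\in C$ with $\gamma_{\xi+1}\geq\xi+1$ and $h(\gamma_{\xi+1})>|\xi+1|$ — possible because $\st{\gamma\mid h(\gamma)\leq|\xi+1|}$ is bounded and $C$ is unbounded — and sets $s_{\xi+1}(\beta)=\st{f(\beta)\mid f\in F'_\xi}$ on the new coordinates $\beta\in[\dom(s'_\xi),\gamma_{\xi+1})$, which is legal since $|F'_\xi|<h(\dom(s'_\xi))\leq h(\beta)$ there. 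At a limit stage $\alpha$ White plays the coordinatewise union $p_\alpha=\rb{\Cup_{\xi<\alpha}s'_\xi,\Cup_{\xi<\alpha}F'_\xi}$.

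The only real verification, which I expect to be the main obstacle, is that this limit move is legal. That $p_\alpha\leq p'_\xi$ for all $\xi<\alpha$ is immediate from transitivity of $\leq$ and the fact that $\Cup_\xi s'_\xi$ extends each $s'_\xi$. For membership, set $\delta=\sup_{\xi<\alpha}\dom(s'_\xi)$: the $\dom(s'_\xi)$ are cofinal in $\delta$ and lie in the closed set $C$, so $\delta\in C$, and White's successor moves force $\delta\geq\alpha$, so $\delta$ is a limit ordinal at which $h$ is discontinuous, i.e. $\lambda_\delta:=\Cup_{\beta<\delta}h(\beta)<h(\delta)$. White's care then gives $\lambda_\delta\geq\sup_{\xi<\alpha}h(\dom(s_{\xi+1}))\geq\sup_{\xi<\alpha}(|\xi+1|)^+\geq|\alpha|$, while $\dom(s'_\xi)<\delta$ yields $|F'_\xi|<h(\dom(s'_\xi))\leq\lambda_\delta$ for each $\xi<\alpha$; hence $|\Cup_{\xi<\alpha}F'_\xi|\leq|\alpha|\cdot\sup_{\xi<\alpha}|F'_\xi|\leq|\alpha|\cdot\lambda_\delta=\lambda_\delta<h(\delta)=h(\dom(s_\alpha))$, so $p_\alpha$ is indeed a condition and White can continue. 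This counting of $|\Cup_{\xi<\alpha}F'_\xi|$ has no analogue in the Hechler case, where the side datum is a single function whose pointwise suprema are bounded directly by $\cf(b)$; it is precisely what forces White to keep $h$ of the stems ahead of the stage number, and it is where the discontinuity of $h$ on $C$ is used.
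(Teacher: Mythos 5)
Your proof is correct and follows essentially the same route as the paper: the c.c.\ part via compatibility of conditions sharing a stem, and strategic closure via White extending stems into $C$ (absorbing $\st{f(\eta)\mid f\in F'_\xi}$ into the new coordinates) and taking unions at limits, with the discontinuity of $h$ on $C$ supplying the strict inequality $|\Cup_{\xi<\alpha}F'_\xi|<h(\dom(s_\alpha))$. Your explicit requirement $h(\gamma_{\xi+1})>|\xi+1|$ at successor stages, guaranteeing $|\alpha|\leq\lambda_\delta$ at limits, is a slightly more careful rendering of the same bookkeeping the paper performs by demanding $\gamma>h(\dom(s'_\beta))$.
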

\begin{proof}
For strategic ${<}\kappa$-closure, at stage $\alpha$ of $\c G(\Locf_\kappa^{b,h}(C),p)$, let $\sab{p_\xi=(s_\xi,F_\xi)\mid \xi\in\alpha}$ and $\sab{p'_\xi=(s_\xi',F_\xi')\mid \xi\in\alpha}$ be the sequences of previous moves by White and Black respectively. We will describe the winning strategy for White at stage $\alpha$.

If $\alpha=\beta+1$ is successor, White will decide some $\gamma\in C$ such that $h(\dom(s'_\beta))<\gamma$ and $\alpha<\gamma$. Since $|F_\beta'|<h(\dom(s_\beta'))$ and $h$ is increasing, we can define $s_\alpha\supset s_\beta'$ by letting $s_\alpha(\eta)=\sst{f(\eta)\mid f\in F_\beta'}$ for each $\eta\in[\dom(s_\beta'),\gamma)$ and let $F_\alpha=F_\beta'$. 

We have to show that White can make a move at limit $\alpha$ as well. We claim that $s_\alpha=\Cup_{\xi\in\alpha}s_\xi$ and $F_\alpha=\Cup_{\xi\in\alpha}F_\xi$ works. Clearly $\dom(s_\alpha)\in C$ by $C$ being club, and 
\begin{align*}
|F_\alpha|=|\alpha|\cdot\textstyle\sup_{\xi\in\alpha}|F_\xi|\leq\Cup_{\xi\in\alpha}h(\dom(s_\xi))<h(\Cup_{\xi\in\alpha}\dom(s_\xi))=h(\dom(s_\alpha))
\end{align*}
Here the strict inequality follows from $h$ being discontinuous on $C$. Therefore, $(s_\alpha,F_\alpha)$ is indeed a valid move for White, implying that White has a winning strategy.

For ${<}\kappa^+$-c.c., note that for any $(s,F),(s,G)\in\Locf_\kappa^{b,h}(C)$ with $\dom(s)=\gamma$ we have $|F|< h(\gamma)$ and $|G|< h(\gamma)$, hence since $h(\gamma)$ is infinite also $|F\cup G|<h(\gamma)$. Therefore $(s,F\cup G)$ is a condition below both $(s,F)$ and $(s,G)$. Thus if $\c A\subset\Locf_\kappa^{b,h}(C)$ is an antichain and $(s,F),(t,G)\in\c A$ are distinct, then we must have $s\neq t$, hence $|\c A|\leq\kappa$.
\end{proof}

\begin{crl}
$\Locf^{b,h}_\kappa(C)$ preserves cardinals and cofinality and does not any elements to $\gfbs$.
\end{crl}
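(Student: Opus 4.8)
The statement follows directly from the preceding lemma together with two standard forcing facts, so the plan is simply to assemble them. Since that lemma shows $\Locf_\kappa^{b,h}(C)$ is strategically ${<}\kappa$-closed and ${<}\kappa^+$-c.c., this corollary is the exact analogue of the one already recorded for $\bb D_\kappa^b(C)$.

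For the range $\le\kappa$, I would use that a strategically ${<}\kappa$-closed forcing adds no new ${<}\kappa$-sequence of ordinals. Concretely: given $p\in\Locf_\kappa^{b,h}(C)$ and a name $\dot f$ for a function with domain some $\gamma<\kappa$ and ordinal values, run White's winning strategy in $\c G(\Locf_\kappa^{b,h}(C),p)$ against a Black player who, at stage $\xi<\gamma$, passes from White's move $p_\xi$ to some $p'_\xi\le p_\xi$ deciding $\dot f(\xi)$; after $\gamma$ many stages White still has a legal move $q$, and $q$ forces $\dot f$ to coincide with a ground-model function. Hence $\gfbs$ is not enlarged. Since no short sequence of ordinals is added, $\kappa$ stays regular and no $\mu<\kappa$ is collapsed (a witnessing surjection would be such a short sequence), so all cardinals and cofinalities $\le\kappa$ are preserved; and because ${}^{<\kappa}\kappa$ is unchanged while remaining of size $\kappa$, one even gets that $\kappa$ stays a strong limit, hence inaccessible.

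For the range $\ge\kappa^+$ I would invoke the ${<}\kappa^+$-chain condition of the preceding lemma: by the usual nice-name argument, any function in the extension from an ordinal into the ordinals lies inside a ground-model multifunction with fibres of size $\le\kappa$, which yields preservation of all cardinals and cofinalities $\ge\kappa^+$. Splicing the two ranges together gives the corollary. There is no genuine obstacle here; the one point deserving a line is that a singular $\lambda>\kappa$ with $\cf^{\b V}\lambda=\kappa$ retains cofinality $\kappa$ in the extension, since its cofinal $\kappa$-sequence survives while no shorter cofinal sequence can appear (that would again be a ${<}\kappa$-sequence of ordinals, hence already in $\b V$).
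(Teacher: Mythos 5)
Your proposal is correct and follows exactly the route the paper intends: the corollary is stated without proof as an immediate consequence of the preceding lemma (strategic ${<}\kappa$-closure handling everything up to $\kappa$ and the new elements of $\gfbs$, the ${<}\kappa^+$-c.c.\ handling everything above), just as for $\bb D_\kappa^b(C)$ earlier. The details you supply, including the game argument for deciding a ${<}\kappa$-sequence and the remark about cofinality-$\kappa$ cardinals, are the standard ones and are all sound.
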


\begin{thm}\label{Localisation consistency proof}
Let $C$ be a club set such that $h$ is discontinuous on $C$, then it is consistent that $\bstar{ b,h}>\kappa^+$ and that $\dstar{b,h}<2^\kappa$.
\end{thm}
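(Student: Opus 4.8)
The plan is to transplant the proof of \Cref{Hechler consistency proof} verbatim, replacing the Hechler iterands by localisation iterands. Starting from a model $\b V\md\ap{\s{GCH}}$, I would form the ${<}\kappa$-support iteration $\bb P=\sab{\bb P_\alpha,\dot{\bb Q}_\alpha\mid \alpha\in\kappa^{++}+\kappa^+}$ with $\bb P_\alpha\fc\ap{\dot{\bb Q}_\alpha=\dot{\Locf}^{b,h}_\kappa(C)}$ for every $\alpha$. Since each iterand is strategically ${<}\kappa$-closed and ${<}\kappa^+$-c.c.\ (as just proved) and these properties are preserved by ${<}\kappa$-support iterations of any length, $\bb P$ and every $\bb P_\alpha$ preserve cardinals and cofinalities, add no new elements of $\gfbs$, and leave $\kappa$ inaccessible; a standard nice-name count under $\s{GCH}$ then gives $2^\kappa=\kappa^{++}$ both in $\b V^{\bb P_{\kappa^{++}}}$ and in $\b V^{\bb P}$, the bound $2^\kappa\geq\kappa^{++}$ being witnessed by the generic slaloms added along the iteration.

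The core of the argument is a \emph{reflection step} plus two density facts. For reflection: being ${<}\kappa^+$-c.c.\ with ${<}\kappa$-supports, $\bb P$ admits, for each element of $\prod b$ of a generic extension, a name depending on only ${\leq}\kappa$ coordinates of the iteration, so any $W\subset\prod b$ with $|W|\leq\kappa^+$ already lies in $\b V^{\bb P_\beta}$ whenever $\beta$ exceeds the union of the supports of names for the members of $W$, a set of size ${\leq}\kappa^+$. For the density facts, write $\phi_\alpha=\Cup\st{s\mid (s,F)\in G_\alpha}$ for the slalom read off a generic filter $G_\alpha$ of $\dot{\bb Q}_\alpha$ over $\b V^{\bb P_\alpha}$. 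First, $\st{(s,F)\mid \dom(s)>\zeta}$ is dense for every $\zeta\in\kappa$: one extends $s$ up to some $\gamma\in C$ above $\zeta$ by $s'(\eta)=\st{g(\eta)\mid g\in F}$ for $\eta\in[\dom(s),\gamma)$, which is legitimate since $|F|<h(\dom(s))\leq h(\eta)$. This makes $\dom(\phi_\alpha)=\kappa$, and as $|s(\eta)|<h(\eta)$ throughout, $\phi_\alpha\in\Loc_\kappa^{b,h}$. Second, for every $f\in(\prod b)^{\b V^{\bb P_\alpha}}$ the set $\st{(s,F)\mid f\in F}$ is dense below every condition: the one constraint to check when enlarging $s$ to domain $\gamma\in C$ and putting $f$ into $F$ is $|F\cup\st{f}|<h(\gamma)$, and this is exactly where the hypothesis enters, since discontinuity of $h$ on $C$ yields $|F|<\Cup_{\eta<\gamma}h(\eta)<h(\gamma)$ at limit points $\gamma\in C$. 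Below such a condition $f(\eta)\in s'(\eta)$ for all $\eta\geq\dom(s)$, so $f\in^*\phi_\alpha$; thus $\phi_\alpha$ localises every element of $(\prod b)^{\b V^{\bb P_\alpha}}$.

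From here the two statements drop out as in \Cref{Hechler consistency proof}. For $\bstar{b,h}>\kappa^+$ I would work in $\b V^{\bb P_{\kappa^{++}}}$: given $W\subset\prod b$ with $|W|\leq\kappa^+$, reflection together with $\cf(\kappa^{++})=\kappa^{++}>\kappa^+$ places $W$ in $\b V^{\bb P_\alpha}$ for some $\alpha<\kappa^{++}$, and the slalom $\phi_\alpha$ generic over that model localises every member of $W$; as $\in^*$ is absolute this still holds in $\b V^{\bb P_{\kappa^{++}}}$, so $W$ is not a witness for the avoidance number, and since $W$ was an arbitrary ${\leq}\kappa^+$-sized family we conclude $\bstar{b,h}>\kappa^+$. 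For $\dstar{b,h}<2^\kappa$ I would work in $\b V^{\bb P}$ and let $\phi_\gamma$ (for $\gamma\in\kappa^+$) be the slalom added by $\dot{\bb Q}_{\kappa^{++}+\gamma}$ over $\b V^{\bb P_{\kappa^{++}+\gamma}}$. Any $f\in(\prod b)^{\b V^{\bb P}}$ has a name supported on ${\leq}\kappa$ coordinates; since $\cf(\kappa^{++})>\kappa$ the part of the support below $\kappa^{++}$ is bounded there, and since $\cf(\kappa^+)>\kappa$ the part in $[\kappa^{++},\kappa^{++}+\kappa^+)$ is bounded below some $\kappa^{++}+\gamma$ with $\gamma<\kappa^+$, so $f\in\b V^{\bb P_{\kappa^{++}+\gamma}}$ and $\phi_\gamma$ localises $f$. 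Hence $\st{\phi_\gamma\mid \gamma\in\kappa^+}$ localises all of $\prod b$, giving $\dstar{b,h}\leq\kappa^+<\kappa^{++}=2^\kappa$.

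I do not expect a genuine obstacle: this is precisely the template of \Cref{Hechler consistency proof}, and the closure and chain condition for the single iterand are already in hand. The one place the hypothesis on $C$ is really used is the density of $\st{(s,F)\mid f\in F}$, where discontinuity of $h$ on $C$ supplies exactly the room $|F|<h(\gamma)$ needed after lengthening $s$ to a limit point $\gamma\in C$. The only point needing care is tracking the two cofinalities in the reflection step: the $\bstar$-side relies on $\cf(\kappa^{++})>\kappa^+$ to reflect a whole $\kappa^+$-sized family, whereas the $\dstar$-side only needs $\cf(\kappa^+)>\kappa$ to reflect each single function, which is enough because the second block of the iteration already contributes $\kappa^+$ localising slaloms.
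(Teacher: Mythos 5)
Your proposal is correct and follows exactly the route the paper intends: the paper's proof is literally "analogous to \Cref{Hechler consistency proof}, with $\Locf_\kappa^{b,h}(C)$ taking the role of $\bb D_\kappa^b(C)$", and you have filled in precisely that template (reflection via the chain condition, the density facts giving a generic localising slalom, and the two cofinality counts). The only quibble is that discontinuity of $h$ on $C$ is not really needed for the density of $\st{(s,F)\mid f\in F}$ — since $h(\gamma)$ is an infinite cardinal, $|F\cup\st f|<h(\gamma)$ follows from $|F|<h(\dom(s))\leq h(\gamma)$ alone — its essential use is in the strategic ${<}\kappa$-closure at limit stages of the game, which you correctly take as already established.
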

\begin{proof}
Analogous to \Cref{Hechler consistency proof}, with $\Locf_\kappa^{b,h}(C)$ taking the role of $\bb D_\kappa^b(C)$.
\end{proof}

\subsection{Antilocalisation \& Anti-avoidance}

We can give similar results for $\dinf{b,h}$ and $\binf{b,h}$ to what we discussed in the previous sections. Firstly, we make three observations, without proof, analogous to \Cref{totally trivial loc,loc cofinal subset,monotonicity loc}. As a consequence we will once again assume that $h\leq b$ is always the case.

\begin{lmm}\label{totally trivial aloc}\renewcommand{\qed}{\hfill$\square$}
$b<^\infty h$ if and only if $\dinf{b,h}=1$.\\
If $b<^\infty h$, then $\binf{b,h}$ is undefined.
\end{lmm}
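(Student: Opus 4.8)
The plan is to run the argument of \Cref{totally trivial loc} with ``cofinally many coordinates'' in place of ``almost all coordinates'', i.e.\ with $\nii$ in place of $\ins$.

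Suppose first that $b<^\infty h$, and put $B=\st{\alpha\in\kappa\mid b(\alpha)<h(\alpha)}$, which is cofinal in $\kappa$ by hypothesis. Define $\phi$ by $\phi(\alpha)=b(\alpha)$ for $\alpha\in B$ and $\phi(\alpha)=\emp$ otherwise; this is a $(b,h)$-slalom, since $\card{b(\alpha)}=b(\alpha)<h(\alpha)$ for $\alpha\in B$ and $0<h(\alpha)$ for every $\alpha$. The point is then immediate: for every $f\in\prod b$ and every $\alpha\in B$ we have $f(\alpha)\in b(\alpha)=\phi(\alpha)$, so $\phi\nii f$, equivalently $\phi$ antilocalises no element of $\prod b$. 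Hence a single slalom already does everything required on the covering side, so $\dinf{b,h}=1$; and since the challenge $\phi$ then admits no valid response in the dual system, $\binf{b,h}$ is undefined.

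For the converse I would argue by contraposition: assume $b\not<^\infty h$, i.e.\ $h(\alpha)\leq b(\alpha)$ for almost all $\alpha$, and fix an arbitrary $\phi\in\Loc_\kappa^{b,h}$. For almost all $\alpha$ we have $\card{\phi(\alpha)}<h(\alpha)\leq b(\alpha)$, so $b(\alpha)\setminus\phi(\alpha)\neq\emp$; pick $f\in\prod b$ with $f(\alpha)\in b(\alpha)\setminus\phi(\alpha)$ for all such $\alpha$ (and $f(\alpha)$ arbitrary on the bounded remainder). Then $f(\alpha)\notin\phi(\alpha)$ for almost all $\alpha$, so $\phi$ antilocalises $f$ and hence $\phi$ alone is not enough; as $\phi$ was arbitrary, $\dinf{b,h}\geq 2$, which completes the equivalence.

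I do not expect a genuine obstacle: this is a transcription of the proof of \Cref{totally trivial loc}. The one subtlety worth flagging is the asymmetry between the two directions --- in the forward direction cofinality of $B$ already forces $\phi$ to agree with \emph{every} $f\in\prod b$ on a cofinal set, exactly what $\nii$ demands, so one slalom suffices and simultaneously nothing escapes it; in the converse one only obtains $h\leq b$ on a cobounded set, but the bounded exceptional part is irrelevant both to the estimate $\card{\phi(\alpha)}<b(\alpha)$ a.e.\ and to the a.e.\ escape of $f$, so the construction still goes through.
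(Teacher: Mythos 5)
Your intermediate computations are right, but the conclusions you attach to them are swapped relative to the paper's own definitions, and this is a genuine error rather than a labelling quibble. Recall that $\AL_{b,h}=\ab{\Loc_\kappa^{b,h},\prod b,\nnii}$, so a witness for $\dinf{b,h}=\norm{\AL_{b,h}}$ is a set of \emph{functions} $W\subset\prod b$ such that every slalom $\phi$ satisfies $f\nini\phi$ for some $f\in W$, while a witness for $\binf{b,h}=\norm{\AL_{b,h}^\lc}$ is a set of \emph{slaloms} $\Phi$ such that every $f$ satisfies $f\ini\phi$ for some $\phi\in\Phi$. Your full slalom $\phi$ on the cofinal set $B$ satisfies $f\ini\phi$ for every $f\in\prod b$; hence $\st\phi$ is a size-one witness in the dual system, giving $\binf{b,h}=1$, whereas $\phi$ is a challenge in $\AL_{b,h}$ admitting no valid response at all, which makes $\dinf{b,h}$ (a minimum over an empty set of witnesses) \emph{undefined} --- exactly the opposite of what you write; note also that in the dual system $\phi$ is a response, not a challenge. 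The converse has the same flaw: from $h\leq^*b$ you produce, for each $\phi$, some $f$ with $f\nini\phi$; that shows that no singleton $\st\phi$ covers $\prod b$ in the sense of $\ini$, i.e.\ $\binf{b,h}\geq 2$, not $\dinf{b,h}\geq 2$. To rule out $\dinf{b,h}=1$ one would instead need that every single $f$ is caught cofinally by some slalom, which holds iff $h(\alpha)\geq 2$ for cofinally many $\alpha$ --- a different hypothesis.

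The root of the confusion is that the printed statement itself appears to have $\dinf{b,h}$ and $\binf{b,h}$ interchanged: comparing with \Cref{totally trivial loc}, the cardinal whose witnesses are slaloms ($\dstar{b,h}$ there, $\binf{b,h}$ here) is the one that collapses to $1$, and the cardinal whose witnesses are functions ($\bstar{b,h}$ there, $\dinf{b,h}$ here) is the one that becomes undefined. What your argument actually establishes, once the labels are corrected, is: $b<^\infty h$ if and only if $\binf{b,h}=1$, and in that case $\dinf{b,h}$ is undefined.
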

\begin{lmm}\label{aloc cofinal subset}\renewcommand{\qed}{\hfill$\square$}
If $\ab{\alpha_\xi\mid \xi\in\kappa}\in\gbs$ is a strictly increasing sequence and $b':\xi\mapsto b(\alpha_\xi)$ and $h':\xi\mapsto h(\alpha_\xi)$, then $\AL_{b',h'}\preceq\AL_{b,h}$.
\end{lmm}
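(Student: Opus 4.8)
The plan is to exhibit an explicit Tukey connection $\ab{\rho_-,\rho_+}$ witnessing $\AL_{b',h'}\preceq\AL_{b,h}$, in direct analogy with the proofs of \Cref{loc cofinal subset} and \Cref{subsequence domination}. Since $\AL_{b',h'}=\ab{\Loc_\kappa^{b',h'},\prod b',\nnii}$ and $\AL_{b,h}=\ab{\Loc_\kappa^{b,h},\prod b,\nnii}$, unravelling the definition of a Tukey connection in this direction, what is needed is a map $\rho_-:\Loc_\kappa^{b',h'}\to\Loc_\kappa^{b,h}$ on slaloms together with a map $\rho_+:\prod b\to\prod b'$ on functions such that for every $\phi'\in\Loc_\kappa^{b',h'}$ and every $f\in\prod b$, if $\rho_-(\phi')\nnii f$ then $\phi'\nnii\rho_+(f)$.

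The maps are the obvious reindexing maps. I would define $\rho_-$ by spreading a $(b',h')$-slalom out along the sequence $\ab{\alpha_\xi\mid\xi\in\kappa}$, setting $\rho_-(\phi')(\alpha_\xi)=\phi'(\xi)$ for each $\xi\in\kappa$ and $\rho_-(\phi')(\alpha)=\emp$ for every $\alpha$ not of the form $\alpha_\xi$. This is a legitimate $(b,h)$-slalom: at a coordinate $\alpha_\xi$ we have $\phi'(\xi)\subset b'(\xi)=b(\alpha_\xi)$ and $\card{\phi'(\xi)}<h'(\xi)=h(\alpha_\xi)$, while at every other coordinate the value $\emp$ has size $0<h(\alpha)$ because each $h(\alpha)$ is by convention a nonzero cardinal. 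Dually, I would let $\rho_+$ restrict a function to the chosen coordinates, $\rho_+(f)(\xi)=f(\alpha_\xi)$, which lies in $b(\alpha_\xi)=b'(\xi)$, so that $\rho_+(f)\in\prod b'$.

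It then remains to verify the Tukey condition, which is immediate. Assume $\rho_-(\phi')\nnii f$, so there is $\beta\in\kappa$ with $f(\alpha)\notin\rho_-(\phi')(\alpha)$ for all $\alpha>\beta$. Since $\ab{\alpha_\xi\mid\xi\in\kappa}$ is strictly increasing we have $\alpha_\xi\geq\xi$, hence $\alpha_\xi>\beta$ whenever $\xi>\beta$, and for each such $\xi$ we obtain $\rho_+(f)(\xi)=f(\alpha_\xi)\notin\rho_-(\phi')(\alpha_\xi)=\phi'(\xi)$; thus $\phi'\nnii\rho_+(f)$. Hence $\ab{\rho_-,\rho_+}$ is a Tukey connection, and by the norm lemma for Tukey connections one additionally reads off $\dinf{b',h'}\leq\dinf{b,h}$ and $\binf{b,h}\leq\binf{b',h'}$. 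There is no genuine obstacle here; the only place requiring a moment's attention is the width bound on $\rho_-(\phi')$ at the padding coordinates, which is exactly where the standing hypothesis that $h$ takes nonzero cardinal values is used, and everything else is bookkeeping under the reindexing $\xi\mapsto\alpha_\xi$.
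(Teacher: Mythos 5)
Your proof is correct and is exactly the elementary reindexing argument the paper has in mind (it states this lemma without proof, as an observation analogous to \Cref{loc cofinal subset} and \Cref{subsequence domination}). The direction of the maps, the padding of the slalom by $\emp$ (legitimate since $h(\alpha)$ is a nonzero cardinal), and the use of $\alpha_\xi\geq\xi$ in the verification are all in order.
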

\begin{lmm}\label{monotonicity aloc}\renewcommand{\qed}{\hfill$\square$}
Let $h\geq^* h'$ and $b\leq^*b'$, then there exist Tukey connections ${\AL_{b',h'}}\preceq {\AL_{b,h}}$.
\end{lmm}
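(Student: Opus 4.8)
The plan is to exhibit the Tukey connection $\ab{\rho_-,\rho_+}$ explicitly and verify its defining property directly; the argument parallels the (elementary) reasoning behind \Cref{totally trivial aloc} and \Cref{monotonicity loc}. Recall that a Tukey connection witnessing $\AL_{b',h'}\preceq\AL_{b,h}$ consists of a map $\rho_-\colon\Loc_\kappa^{b',h'}\to\Loc_\kappa^{b,h}$ on the slalom (challenge) side and a map $\rho_+\colon\prod b\to\prod b'$ on the function (response) side, such that for every $\phi'\in\Loc_\kappa^{b',h'}$ and $f\in\prod b$: if $\rho_-(\phi')\nnii f$, i.e.\ $\rho_-(\phi')$ antilocalises $f$ as a $(b,h)$-slalom, then $\phi'\nnii\rho_+(f)$, i.e.\ $\phi'$ antilocalises $\rho_+(f)$ as a $(b',h')$-slalom.

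First I would fix $\beta_0\in\kappa$ with $h'(\alpha)\leq h(\alpha)$ and $b(\alpha)\leq b'(\alpha)$ for all $\alpha\geq\beta_0$, which exists since $h\geq^* h'$ and $b\leq^* b'$. Define $\rho_+$ by sending $f\in\prod b$ to $f'\in\prod b'$ where $f'(\alpha)=f(\alpha)$ whenever $f(\alpha)<b'(\alpha)$ and $f'(\alpha)=0$ otherwise; then $f'(\alpha)=f(\alpha)$ for every $\alpha\geq\beta_0$, because there $f(\alpha)<b(\alpha)\leq b'(\alpha)$. Define $\rho_-$ by sending $\phi'\in\Loc_\kappa^{b',h'}$ to $\phi$ where $\phi(\alpha)=\phi'(\alpha)\cap b(\alpha)$ whenever $\card{\phi'(\alpha)\cap b(\alpha)}<h(\alpha)$, and $\phi(\alpha)=\emp$ otherwise. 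Since $\phi(\alpha)\subset b(\alpha)$ always and $h(\alpha)$ is nonzero, and since for $\alpha\geq\beta_0$ we have $\card{\phi'(\alpha)\cap b(\alpha)}\leq\card{\phi'(\alpha)}<h'(\alpha)\leq h(\alpha)$, it follows that $\phi\in\Loc_\kappa^{b,h}$ and moreover $\phi(\alpha)=\phi'(\alpha)\cap b(\alpha)$ for all $\alpha\geq\beta_0$.

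Then I would check the Tukey property. Suppose $\phi:=\rho_-(\phi')$ antilocalises $f$, so there is $\beta_1$ with $f(\alpha)\notin\phi(\alpha)$ for all $\alpha\geq\beta_1$. For every $\alpha\geq\max\st{\beta_0,\beta_1}$ we then have $f'(\alpha)=f(\alpha)\in b(\alpha)$ and $\phi(\alpha)=\phi'(\alpha)\cap b(\alpha)$, together with $f(\alpha)\notin\phi(\alpha)$; since $f(\alpha)\in b(\alpha)$, these last two facts force $f(\alpha)\notin\phi'(\alpha)$, hence $f'(\alpha)\notin\phi'(\alpha)$. Thus $\phi'$ antilocalises $f'=\rho_+(f)$, which is exactly the required implication.

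I do not anticipate a genuine obstacle here; the only point needing care is the orientation of the hypotheses. The map $\rho_-$ must \emph{restrict} a slalom cell-wise to the smaller alphabet $b(\alpha)$, and $h\geq^* h'$ is precisely what keeps the restricted cells below $h(\alpha)$, so that the output is a bona fide $(b,h)$-slalom; dually $b\leq^* b'$ is what makes $\rho_+$ eventually the identity, which is what lets $(b,h)$-antilocalisation of $f$ transfer to $(b',h')$-antilocalisation of $\rho_+(f)$. The handling of the boundedly many bad coordinates via the $0$ and $\emp$ defaults is routine.
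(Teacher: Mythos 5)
Your construction is correct and is exactly the elementary argument the paper has in mind: restrict the slalom cell-wise to $b(\alpha)$ (kept legal by $h\geq^* h'$) and let $\rho_+$ be eventually the identity (legal by $b\leq^* b'$), with the finitely-bad coordinates handled by defaults. The paper deliberately omits the proof as routine, so there is nothing to add.
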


A comprehensive overview of the trivial values for these cardinals in the classical context has been given by Cardona and Mej\'ia in section 3 of \cite{CM19}. However, the classical characterisation uses a substantial amount of finite arithmetic and thus appears quite different from the characterisation of the trivial values of $\binf{b,h}$, given below. We once again have three cases, making this similar to \Cref{bounds on bleq,bounds on bstar}. 

\begin{lmm}\label{bounds on binf}
For each $\lambda<\kappa$ define
\begin{align*}
D_\lambda=\st{\alpha\in\kappa\mid b(\alpha)\leq\lambda}\ \cup\ \st{\alpha\in\kappa\mid h(\alpha)=b(\alpha)\text{ and }\cf(b(\alpha))\leq \lambda}.
\end{align*}
\begin{enumerate}[label=(\roman*)]
\item If there exists a least cardinal $\lambda<\kappa$ such that $D_\lambda$ is cofinal in $\kappa$, then $\binf{b,h}=\lambda$.
\item If $D_\lambda$ is bounded for all $\lambda<\kappa$ and there is a stationary set $S$ such that 
\begin{enumerate}
\item $b$ is continuous on $S$, \emph{or} 
\item for each $\xi\in S$ there exists $\alpha_\xi\geq\xi$ with $h(\alpha_\xi)=b(\alpha_\xi)$ and $\cf(b(\alpha_\xi))\leq \xi$, 
\end{enumerate}
then $\binf{b,h}=\kappa$.
\item If $D_\lambda$ is bounded for all $\lambda<\kappa$ and there is a club set $C$ such that 
\begin{enumerate}
\item $b$ is discontinuous on C, \emph{and} 
\item for each $\xi\in C$ and $\alpha\geq \xi$, if $h(\alpha)=b(\alpha)$, then $\cf(b(\alpha))>\xi$, 
\end{enumerate}
then $\kappa^+\leq\binf{b,h}$.\qedhere
\end{enumerate}
\end{lmm}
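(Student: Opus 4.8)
The plan is to follow the template of \Cref{bounds on bleq} and \Cref{bounds on bstar}: a single uniform lower-bound observation handles the $\geq$-direction in all three items, the matching upper bounds in (i) and (ii) come from explicit witnesses built by pigeonhole/Fodor, and (iii) is a block-diagonalization against a $\kappa$-sized family of slaloms. Throughout I use that $h$ is increasing and cofinal (so $h(\alpha)\geq 2$ eventually and the ``singleton'' slaloms below are legitimate). \emph{Lower bounds.} First I would prove, for every $\mu<\kappa$: if $D_\mu$ is bounded then $\binf{b,h}>\mu$. Given $\Phi\subseteq\Loc_\kappa^{b,h}$ with $\card{\Phi}\leq\mu$, fix $\gamma$ with $D_\mu\cap[\gamma,\kappa)=\emp$; for $\alpha\geq\gamma$, $\alpha\notin D_\mu$ means $b(\alpha)>\mu$ and ($h(\alpha)<b(\alpha)$ or $\cf(b(\alpha))>\mu$), and in both cases $\card{\Cup_{\phi\in\Phi}\phi(\alpha)}<b(\alpha)$ (a union of $\leq\mu$ sets each of size $<h(\alpha)\leq b(\alpha)$, using $\mu<\cf(b(\alpha))$ in the second case). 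Picking $f(\alpha)$ outside $\Cup_{\phi\in\Phi}\phi(\alpha)$ for $\alpha\geq\gamma$ (and $0$ below) gives $f\in\prod b$ with $f\nins\phi$, hence $f\nini\phi$, for all $\phi\in\Phi$. By minimality of $\lambda$ this yields $\binf{b,h}\geq\lambda$ in (i), and applied for all $\mu<\kappa$ it yields $\binf{b,h}\geq\kappa$ in (ii) and (iii).

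\textbf{Upper bound in (i).} By \Cref{aloc cofinal subset} it suffices to bound the cardinal for the restriction to $D_\lambda$, so I may assume every coordinate $\alpha$ is of \emph{type A} ($b(\alpha)\leq\lambda$) or \emph{type B} ($h(\alpha)=b(\alpha)$ and $\cf(b(\alpha))\leq\lambda$; fix then an increasing $\ab{\beta^\alpha_\eta\mid\eta<\cf(b(\alpha))}$ cofinal in $b(\alpha)$). The witness of size $\lambda$ is $\st{\phi_\eta\mid\eta<\lambda}$, with $\phi_\eta(\alpha)=\st\eta$ at type-A coordinates and $\phi_\eta(\alpha)$ equal to the initial segment $\beta^\alpha_\eta$ at type-B coordinates (and $\st0$ where an index is out of range); these are slaloms since $\card{\beta^\alpha_\eta}<b(\alpha)=h(\alpha)$. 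For $f\in\prod b$, whichever type of coordinate is cofinal: on type-A coordinates $f$ takes $<\lambda$ values on a set of size $\kappa$, and on type-B coordinates $\alpha\mapsto\min\st{\eta<\cf(b(\alpha))\mid f(\alpha)<\beta^\alpha_\eta}$ takes $<\lambda$ values on a set of size $\kappa$; in either case ($\kappa$ regular) some $\eta<\lambda$ recurs cofinally often and $f\ini\phi_\eta$. Hence $\binf{b,h}=\lambda$.

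\textbf{Upper bound in (ii).} Here I would produce a $\kappa$-sized witness, displaying the expected duality with the $\f b$-side of \Cref{bounds on bleq} and \Cref{bounds on bstar}. In sub-case (a): for $\beta\in\kappa$ put $\phi_\beta(\gamma)=b(\beta)$ (an initial segment of $b(\gamma)$ — legitimate since $h(\gamma)>b(\beta)$ forces $b(\gamma)>b(\beta)$) whenever $\gamma$ is a limit in $S$ at which $b$ is continuous and $h(\gamma)>b(\beta)$, else $\st0$; given $f\in\prod b$, the map $\gamma\mapsto\min\st{\delta<\gamma\mid f(\gamma)<b(\delta)}$ is regressive on the stationary set of limits of $S$, so Fodor gives $\delta^*$ with $f(\gamma)<b(\delta^*)$ on a stationary set, and cofinality of $h$ then puts $h(\gamma)>b(\delta^*)$ cofinally often there, so $f\ini\phi_{\delta^*}$. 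In sub-case (b): thin $S$ so the $\alpha_\xi$ are distinct, fix a sequence $\ab{\beta^\xi_\eta\mid\eta<\xi}$ cofinal in $b(\alpha_\xi)$ (using $\cf(b(\alpha_\xi))\leq\xi$), put $\phi_\eta(\alpha_\xi)=\beta^\xi_\eta$ for $\xi\in S$ with $\eta<\xi$ ($\st0$ elsewhere); given $f$, Fodor on the regressive map $\xi\mapsto\min\st{\eta<\xi\mid f(\alpha_\xi)<\beta^\xi_\eta}$ over $S$ yields $\eta^*$ with $f(\alpha_\xi)\in\phi_{\eta^*}(\alpha_\xi)$ for stationarily many $\xi$, hence $f\ini\phi_{\eta^*}$ since $\st{\alpha_\xi\mid\xi\in S}$ is cofinal. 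Either way $\binf{b,h}\leq\kappa$, so $\binf{b,h}=\kappa$.

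\textbf{Item (iii) and the main obstacle.} It remains to refute every $\kappa$-sized family $\st{\phi_\eta\mid\eta\in\kappa}\subseteq\Loc_\kappa^{b,h}$, in the manner of \Cref{bounds on bleq}(iii). Assuming $C\subseteq\Lim$ (and $0\notin C$), enumerate it increasingly as $\ab{\alpha_\xi\mid\xi\in\kappa}$, set $\mu_\xi=\sup_{\beta<\alpha_\xi}b(\beta)$, and build $f\in\prod b$ by choosing, for $\alpha$ in the block $[\alpha_\xi,\alpha_{\xi+1})$, some $f(\alpha)\in b(\alpha)\setminus\Cup_{\eta<N_\xi}\phi_\eta(\alpha)$ (and $f(\alpha)=0$ for $\alpha<\alpha_0$), where $N_\xi$ is the ``budget'' of slaloms dodged throughout the block. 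Picking $N_\xi$ is the delicate point and the place where the hypotheses of (iii) are used: it must be (1) below $b(\alpha)$ for \emph{every} $\alpha\geq\alpha_\xi$ — for which I would use $N_\xi\leq\mu_\xi$ together with $\mu_\xi<b(\alpha_\xi)\leq b(\alpha)$, and $\mu_\xi<b(\alpha_\xi)$ is precisely the discontinuity clause (a); (2) below $\cf(b(\alpha))$ whenever $h(\alpha)=b(\alpha)$ — for which clause (b), applied to $\alpha_\xi\in C$ and $\alpha\geq\alpha_\xi$, gives $N_\xi\leq\alpha_\xi<\cf(b(\alpha))$; and (3) cofinal in $\kappa$ as $\xi\to\kappa$. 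Taking $N_\xi=\min(\mu_\xi,\alpha_\xi)$ reconciles all three, since $\mu_\xi\to\kappa$ ($b$ cofinal, as $D_\lambda$ bounded for all $\lambda$) and $\alpha_\xi\to\kappa$ ($C$ club). With this choice $\card{\Cup_{\eta<N_\xi}\phi_\eta(\alpha)}<b(\alpha)$ throughout the block (splitting on $h(\alpha)<b(\alpha)$ versus $h(\alpha)=b(\alpha)$ exactly as in the Lower bounds paragraph), so $f$ is well defined; and as $N_\xi\to\kappa$, each fixed $\phi_\eta$ is eventually dodged, i.e.\ $f\nini\phi_\eta$. Thus no $\phi_\eta$ catches $f$ cofinally often and $\binf{b,h}\geq\kappa^+$. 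The reconciliation of (1)--(3) is the only genuinely delicate step; note that it is exactly here that continuity of $b$ on a stationary set (case (ii)) versus discontinuity on a club (case (iii)) forces the dichotomy between $\kappa$ and $\kappa^+$ — if $b$ were continuous at $\alpha_\xi$ one would only get $N_\xi\leq\mu_\xi=b(\alpha_\xi)$, leaving no room to dodge.
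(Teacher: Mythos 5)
Your proof is correct and follows essentially the same route as the paper's: a pigeonhole argument on interval/initial-segment slaloms for (i), Fodor on regressive functions for (ii), a uniform ``avoid the union'' argument for all the lower bounds, and block-diagonalisation along $C$ for (iii), where your explicit budget $N_\xi=\min(\mu_\xi,\alpha_\xi)$ just repackages the paper's verification that $\xi<b(\alpha_\xi)\leq b(\alpha)$ and $\cf(b(\alpha))>\alpha_\xi$ on each block. One cosmetic slip: in the lower-bound paragraph the inference ``$f\nins\phi$, hence $f\nini\phi$'' is backwards as literally stated, but what you actually construct is $f(\alpha)\notin\phi(\alpha)$ for \emph{all} $\alpha\geq\gamma$, which yields $f\nini\phi$ directly, so nothing is lost.
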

\begin{proof}
(i)\quad Assume that $\lambda$ is minimal such that $D_\lambda$ is cofinal. Let $D_\lambda'=\st{\alpha\in \kappa\mid b(\alpha)=\lambda}$ and $D_\lambda''=\st{\alpha\in\kappa\mid h(\alpha)=b(\alpha)\text{ and }\cf(b(\alpha))=\lambda}$. Note that at least one of $D_\lambda'$ or $D_\lambda''$ is cofinal. We will assume without loss of generality that $D_\lambda=D'_\lambda\cup D''_\lambda$.

First, we  will show that $\binf{b,h}\leq \lambda$.

If $\alpha\in D_\lambda''$, then $\cf(b(\alpha))= \lambda$, thus we can find a continuous sequence $\langle\beta_\xi^\alpha\mid\xi<\lambda\rangle$ that is cofinal in $b(\alpha)$. In the other case where $\alpha\in D_\lambda'$, let $\beta_\xi^\alpha=\xi$ for all $\xi<\lambda=b(\alpha)$. 

Since $|[\beta_\xi^\alpha,\beta_{\xi+1}^\alpha)|<h(\alpha)\leq b(\alpha)$ for all $\alpha\in D_\lambda$, we can pick some $\phi_\xi\in\Loc_\kappa^{b,h}$ for each $\xi<\lambda$ such that $\phi_\xi(\alpha)=[\beta_\xi^\alpha,\beta_{\xi+1}^\alpha)$ for all $\alpha\in D_\lambda$. For $f\in \prod b$, let $F_f:D_\lambda\to \lambda$ map $\alpha\in D_\lambda$ to the $\xi\in\lambda$ such that $f(\alpha)\in\phi_\xi(\alpha)$, then by the pigeonhole principle $F_f^{-1}(\xi)$ is cofinal for some $\xi\in \lambda$, and thus for this $\xi$ we have $f\in^\infty \phi_\xi$. Therefore $\st{\phi_\xi\mid \xi<\lambda}$ witnesses that $\binf{b,h}\leq \lambda$.

Next we show that $\lambda\leq \binf{b,h}$. Let $\st{\phi_\xi\mid \xi\in \mu}\subset \Loc_\kappa^{b,h}$ for some $\mu<\lambda$.

If $D_\lambda'$ is cofinal, then we must have $b(\alpha)\geq \lambda$ for almost all $\alpha\in \kappa$ by minimality of $\lambda$. Suppose that $\lambda\leq b(\alpha)$ and $\Cup_{\xi<\mu}\phi_\xi(\alpha)=b(\alpha)$, then $\cf(b(\alpha))\leq\mu$, and furthermore for every $\nu<b(\alpha)$ there is some $\xi\in \mu$ such that $\nu\leq |\phi_\xi(\alpha)|$, implying that $h(\alpha)=b(\alpha)$. But then $\alpha\in D''_\mu$, which is bounded by minimality of $\lambda$. Therefore $\Cup_{\xi<\mu}\phi(\alpha)\neq b(\alpha)$ for almost all $\alpha\in\kappa$.

If $D_\lambda'$ is bounded and $D_\lambda''$ is cofinal, then for almost all $\alpha\notin D_\lambda''$ we must have $h(\alpha)\neq b(\alpha)$ or $\cf(b(\alpha))>\lambda$. Hence we have $\max\st{h(\alpha),\lambda}< b(\alpha)$ or $\cf(b(\alpha))\geq \lambda$ for almost all $\alpha\in\kappa$. If $\max\st{h(\alpha),\lambda}<b(\alpha)$, then also $\mu\cdot h(\alpha)<b(\alpha)$, thus $\Cup_{\xi<\mu}\phi_\xi(\alpha)\neq b(\alpha)$. On the other hand if $\cf(b(\alpha))\geq\lambda$, then clearly $\Cup_{\xi<\mu}\phi_\xi(\alpha)\neq b(\alpha)$ as well.

In either case we find $f\in\prod b$ such that $f\nini \phi_\xi$ for all $\xi<\mu$, and consequently $\mu<\binf{b,h}$.

(ii)\quad  We first prove that $\binf{b,h}\leq\kappa$ in case (a), then in case (b), and then we show the reverse direction, that $\kappa\leq\binf{b,h}$.

(a)\quad Assume $b$ is increasing and continuous on stationary $S\subset\kappa$. Then the set of fixed points $S'=\st{\alpha\in S\mid b(\alpha)=\alpha}$ is also stationary.

For each $\eta\in\kappa$ let $\phi_\eta\in\Loc_\kappa^{b,h}$ be defined as $\phi_\eta(\xi)=\st\eta$ if $\eta< b(\xi)$ and arbitrary otherwise. If $f\in\prod b$, then $f(\xi)<b(\xi)$ for every $\xi\in\kappa$, therefore $f$ is regressive on $S'$. By Fodor's lemma then there exists stationary $S''\subset S'$ such that $f\restriction S''$ is constant. Let $\eta$ be such that $f(\xi)=\eta$ for all $\eta\in S''$, then clearly $f\in^\infty \phi_\eta$. Hence $\st{\phi_\eta\mid \eta<\kappa}$ witnesses $\binf{b,h}\leq\kappa$.

(b)\quad Assume $S$ is stationary and for each $\xi\in S$ let $\alpha_\xi\geq \xi$ be such that $h(\alpha_\xi)=b(\alpha_\xi)$ and $\cf(b(\alpha_\xi))\leq\xi$. We define for each $\eta\in\kappa$ a slalom $\phi_\eta\in\Loc_\kappa^{b,h}$ in such a way that for each $\xi\in S$ we have $\Cup_{\eta<\xi}\phi_\eta(\alpha_\xi)=b(\alpha_\xi)$, which is possible since $\cf(b(\alpha_\xi))\leq\xi$ and $h(\alpha_\xi)=b(\alpha_\xi)$. 

If $f\in\prod b$, then for each $\xi\in S$ we define $f'(\xi)$ to be the least $\eta<\xi$ such that $f(\alpha_\xi)\in\phi_\eta(\alpha_\xi)$. Now $f'$ is regressive on $S$, so by Fodor's lemma there exists stationary $S'\subset S$ such that $f'\restriction S'$ is constant, say with value $\eta$. Then $f(\alpha_\xi)\in\phi_\eta(\alpha_\xi)$ for each $\xi\in S'$, thus $f\ini\phi_\eta$. Hence $\st{\phi_\eta\mid \eta<\kappa}$ witnesses $\binf{b,h}\leq\kappa$.

Finally for the reverse direction, let $\lambda<\kappa$ and $\st{\phi_\xi\mid \xi<\lambda}\subset\Loc_\kappa^{b,h}$. Since $D_\lambda$ is bounded, we can find $\alpha_0\in\kappa$ such that for every $\alpha>\alpha_0$ we have $\lambda<b(\alpha)$ and either $h(\alpha)<b(\alpha)$ or $\lambda<\cf(b(\alpha))$. If $h(\alpha)<b(\alpha)$, then $\lambda\cdot h(\alpha)<b(\alpha)$, meaning $\Cup_{\xi<\lambda}\phi_\xi(\alpha)\neq b(\alpha)$. On the other hand, if $\lambda<\cf(b(\alpha))$, then by $|\phi_\xi(\alpha)|<h(\alpha)\leq b(\alpha)$ we see that once again $\Cup_{\xi<\lambda}\phi_\xi(\alpha)\neq b(\alpha)$. Hence we can construct $f\in\prod b$ such that $f\nini\phi_\xi$ for all $\xi<\lambda$, which proves that $\kappa\leq\binf{b,h}$.

(iii)\quad Let $C$ be a club set with the properties mentioned in (iii) and let $\st{\phi_\eta\mid \eta\in\kappa}\subset\Loc_\kappa^{b,h}$. We can enumerate $C$ increasingly as $\ab{\alpha_\xi\mid \xi\in\kappa}$, then every $\alpha\in\kappa$ has some $\xi\in\kappa$ such that $\alpha\in[\alpha_\xi,\alpha_{\xi+1})$. Note that $|\Cup_{\eta\in\xi}\phi_\eta(\alpha)|\leq|\xi|\cdot\sup_{\eta\in\xi}|\phi_\eta(\alpha)|$. Since $b$ is increasing and discontinuous on $C$ and $\xi\leq \alpha_\xi\leq \alpha$, we see that $\xi<b(\alpha_\xi)\leq b(\alpha)$. If $h(\alpha)<b(\alpha)$, then it is clear that $\sup_{\eta\in\xi}|\phi_\eta(\alpha)|<b(\alpha)$ as well. Else, if $h(\alpha)=b(\alpha)$, then by the properties of $C$ we see that $\cf(b(\alpha))>\alpha_\xi\geq\xi$, thus it also follows that $\sup_{\eta\in\xi}|\phi_\eta(\alpha)|<b(\alpha)$. We can therefore conclude that $|\Cup_{\eta\in\xi}\phi_\eta(\alpha)|<b(\alpha)$ for all $\alpha\in\kappa$, and thus we can define $f(\alpha)$ to be some value in $b(\alpha)$ that lies outside of $\phi_\eta(\alpha)$ for each $\eta<\xi$. Then clearly $f\nini \phi_
\eta$ for all $\eta\in\kappa$.
\end{proof}

As with domination and localisation, it is not known whether cases (i) and (ii) of \Cref{bounds on binf} imply that $\dinf{b,h}=2^\kappa$. If we consider the constant function $\bar 2$, then we see that $\dinf{b,\bar 2}\leq\dinf{b,h}$ by \Cref{monotonicity aloc}. Since $\dinf{b,\bar 2}=\dneq{b}$ (see \Cref{evt difference and antiloc}), we will discuss the cases (i) and (ii) in the next section. Let us mention here, that if, by exception, we allow that $b(\alpha)$ is finite for cofinally many $\alpha$, then we may use a connection with localisation cardinals to prove that $\dinf{b,h}=2^\kappa$ (see \Cref{bounds on dneq}). We will also give a partial answer to case (i) in the form of the following lemma:
\begin{lmm}
If $\lambda$ is regular and  $D=\st{\alpha\in\kappa\mid b(\alpha)=h(\alpha)=\lambda}$ is cofinal, then $\sr D_{\bar \lambda}\preceq \AL_{b,h}$. 
\end{lmm}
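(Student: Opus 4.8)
The plan is to discard all coordinates on which $b$ and $h$ do not both equal $\lambda$, and then observe that on the surviving coordinates the anti-avoidance relation $\nnii$ is literally a re-encoding of eventual domination on ${}^\kappa\lambda$. Since $D$ is cofinal in the regular cardinal $\kappa$, it has order type $\kappa$; let $\ab{\delta_\xi\mid\xi\in\kappa}$ be its increasing enumeration, a strictly increasing element of $\gbs$. Because $b(\delta_\xi)=h(\delta_\xi)=\lambda$ for every $\xi\in\kappa$, applying \Cref{aloc cofinal subset} with $\alpha_\xi=\delta_\xi$ gives $\AL_{\bar\lambda,\bar\lambda}\preceq\AL_{b,h}$, where $\bar\lambda$ denotes the constant function with value $\lambda$. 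As $\preceq$ is transitive, it now suffices to produce a Tukey connection $\sr D_{\bar\lambda}\preceq\AL_{\bar\lambda,\bar\lambda}$.

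For this connection both maps will be (essentially) the identity. Identify every ordinal $\eta<\lambda$ with its set of predecessors $\st{\zeta\mid\zeta<\eta}\subset\lambda$; since $\lambda$ is a cardinal, $|\eta|<\lambda$, so for each $g\in{}^\kappa\lambda$ the function $\phi_g$ given by $\phi_g(\alpha)=g(\alpha)$ lies in $\Loc_\kappa^{\bar\lambda,\bar\lambda}$. Let $\rho_-$ send $g$ to $\phi_g$, and let $\rho_+:{}^\kappa\lambda\to{}^\kappa\lambda$ be the identity. If $g,f\in{}^\kappa\lambda$ satisfy $\rho_-(g)\nnii f$, then by the definition of $\nnii$ the set $\st{\alpha\in\kappa\mid f(\alpha)\in\phi_g(\alpha)}=\st{\alpha\in\kappa\mid f(\alpha)<g(\alpha)}$ is bounded in $\kappa$, hence $g(\alpha)\leq f(\alpha)$ for almost all $\alpha$, i.e. $g\leq^* f=\rho_+(f)$. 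Thus $\ab{\rho_-,\rho_+}$ witnesses $\sr D_{\bar\lambda}\preceq\AL_{\bar\lambda,\bar\lambda}$, and composing with the connection of \Cref{aloc cofinal subset} yields $\sr D_{\bar\lambda}\preceq\AL_{b,h}$.

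There is no serious difficulty here beyond one point of care: the identity argument cannot be run on $\AL_{b,h}$ directly, because at a coordinate $\alpha\notin D$ a slalom $\phi$ with $|\phi(\alpha)|<h(\alpha)\leq b(\alpha)$ cannot in general encode which element of $b(\alpha)$ a given $g$ takes there, and when $h(\alpha)<b(\alpha)$ the hypothesis $\phi\nnii f$ extracts no information about $g$ at such $\alpha$ at all — so passing to the cofinal set $D$ via \Cref{aloc cofinal subset} is exactly what makes the identity connection go through. (Regularity of $\lambda$ is not used for the Tukey connection itself; it makes the conclusion useful, since together with \Cref{bounds on dleq} applied to $\bar\lambda$ it gives $2^\kappa=\dleq{\bar\lambda}\leq\dinf{b,h}$, so in fact $\dinf{b,h}=2^\kappa$ under the hypotheses of this lemma.)
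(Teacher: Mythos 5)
Your proof is correct and is essentially the paper's own argument: the paper defines $\rho_-(g):\alpha_\xi\mapsto g(\xi)$ (reading the ordinal $g(\xi)<\lambda=h(\alpha_\xi)$ as the set of its predecessors) and $\rho_+(f):\xi\mapsto f(\alpha_\xi)$ directly on the enumeration of $D$, which is exactly your identity connection composed with the cofinal-subset reduction of \Cref{aloc cofinal subset}. Your closing remark about how regularity of $\lambda$ enters only through \Cref{bounds on dleq} also matches the paper's use of the lemma.
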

\begin{proof}
Enumerate $D$ as $\st{\alpha_\xi\mid \xi\in \kappa}$. We need $\rho_-:{}^\kappa\lambda\to \Loc_\kappa^{b,h}$ and $\rho_+:\prod b\to {}^\kappa\lambda$.

Given $g\in{}^\kappa \lambda$, let $\rho_-(g)\in\Loc_\kappa^{b,h}$ be such that $\rho_-(g):\alpha_\xi\mapsto g(\xi)$ for all $\xi\in\kappa$. Given $f\in\prod b$, let $\rho_+(f):\xi\mapsto f(\alpha_\xi)$ for each $\xi\in\kappa$. Since $\alpha_\xi\in D$ for each $\xi\in\kappa$, we have $h(\alpha_\xi)=b(\alpha_\xi)=\lambda$, thus $\rho_-$ and $\rho_+$ are well-defined. If $\phi=\rho_-(g)$ and $f'=\rho_+(f)$, and $f\nini \phi$, then it is easy to see that $g\leq^* f'$, so this is a Tukey connection.
\end{proof}

Since we already know that $\dleq{\bar\lambda}=2^\kappa$ from \Cref{bounds on dleq}, the assumptions of the above lemma imply that $\dinf{b,h}=2^\kappa$.

\begin{dfn}
Let $C\subset\kappa$. We define \emph{$(b,h,C)$-antilocalisation forcing} $\ALocforc$ as the forcing with conditions $p=(s,\Phi)$ where $s\in\prod_{<\kappa}b$ is a function with domain $\gamma\in C$ and $\Phi\subset\Loc_\kappa^{b,h}$ with $|\Phi|\leq\gamma$. The ordering is given by $(t,\Psi)\leq(s,\Phi)$ if $s\subset t$ and $\Phi\subset\Psi$ and for each $\alpha\in\dom(t)\setminus\dom(s)$ and $\phi\in\Phi$ we have $t(\alpha)\notin\phi(\alpha)$.
\end{dfn}
It is clear from the definition, under the assumptions of case (iii), that $\ALocforc$ adds a generic function in $\prod b$ that is antilocalised by every ground model $(b,h)$-slalom.

\begin{lmm}
Let $C$ be a club set with such that the assumptions of (iii) from \Cref{bounds on binf} are satisfied. Then $\ALocforc$ is strategically ${<}\kappa$-closed and ${<}\kappa^+$-c.c..
\end{lmm}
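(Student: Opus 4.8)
The plan is to follow, nearly verbatim, the pattern of the two preceding forcing lemmas (for $\bb D_\kappa^b(C)$ and $\Locf_\kappa^{b,h}(C)$): establish strategic ${<}\kappa$-closure by describing a winning strategy for White in $\c G(\ALocforc,p)$, and deduce ${<}\kappa^+$-c.c.\ from the fact that two conditions with a common stem are compatible. Fix $p\in\ALocforc$ and let $\sab{\alpha_\xi\mid\xi\in\kappa}$ be the increasing enumeration of $C$. White's strategy is the obvious one. At a successor stage $\beta+1$, after Black has played $p'_\beta=(s'_\beta,\Phi'_\beta)$, White chooses some $\gamma\in C$ with $\gamma>\dom(s'_\beta)$ and $\gamma>\beta$, sets $\Phi_{\beta+1}=\Phi'_\beta$, and extends $s'_\beta$ to a function $s_{\beta+1}$ with domain $\gamma$ by choosing, for each $\eta\in[\dom(s'_\beta),\gamma)$, a value $s_{\beta+1}(\eta)\in b(\eta)\setminus\Cup_{\phi\in\Phi'_\beta}\phi(\eta)$. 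At a limit stage $\alpha$, White plays $s_\alpha=\Cup_{\xi<\alpha}s_\xi$ and $\Phi_\alpha=\Cup_{\xi<\alpha}\Phi_\xi$.

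The only place the hypotheses of case (iii) of \Cref{bounds on binf} are needed is to see that the successor choice is always available, i.e.\ that $\card{\Cup_{\phi\in\Phi'_\beta}\phi(\eta)}<b(\eta)$ for every $\eta\geq\dom(s'_\beta)$; this is exactly the computation in the proof of \Cref{bounds on binf}. Writing $\dom(s'_\beta)=\alpha_{\xi_0}$, the definition of the forcing gives $\card{\Phi'_\beta}\leq\alpha_{\xi_0}$, while discontinuity of $b$ on $C$ (together with $b$ strictly increasing) yields $b(\eta)\geq b(\alpha_{\xi_0})>\alpha_{\xi_0}$. If $h(\eta)<b(\eta)$ then $\card{\Cup_{\phi\in\Phi'_\beta}\phi(\eta)}\leq\card{\Phi'_\beta}\cdot h(\eta)<b(\eta)$; if $h(\eta)=b(\eta)$ then clause (b) of case (iii) gives $\cf(b(\eta))>\alpha_{\xi_0}\geq\card{\Phi'_\beta}$, so $\Cup_{\phi\in\Phi'_\beta}\phi(\eta)$ is a union of fewer than $\cf(b(\eta))$ subsets of $b(\eta)$ each of size $<h(\eta)=b(\eta)$, hence has cardinality $<b(\eta)$. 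For the limit stages, $\dom(s_\alpha)=\sup_{\xi<\alpha}\dom(s_\xi)\in C$ since $C$ is club, and since White arranges $\dom(s_\xi)\geq\xi$ at each stage we get $\dom(s_\alpha)\geq\alpha$, whence $\card{\Phi_\alpha}\leq\card{\alpha}\cdot\sup_{\xi<\alpha}\card{\Phi_\xi}\leq\card{\dom(s_\alpha)}\leq\dom(s_\alpha)$; so $(s_\alpha,\Phi_\alpha)\in\ALocforc$. That $(s_\alpha,\Phi_\alpha)\leq p'_\xi$ for every $\xi<\alpha$ is the same routine verification as in the Hechler case: given $\eta\in\dom(s_\alpha)\setminus\dom(s'_\xi)$ and $\phi\in\Phi'_\xi$, take the stage $\zeta>\xi$ at which $\eta$ entered the stem; since $p_\zeta\leq p'_\xi$ and $s_\alpha(\eta)=s_\zeta(\eta)$ we get $s_\alpha(\eta)\notin\phi(\eta)$. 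Hence White can move at every stage, and the forcing is strategically ${<}\kappa$-closed.

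For ${<}\kappa^+$-c.c., if $(s,\Phi)$ and $(s,\Psi)$ share a stem $s$ with domain $\gamma$, then $\card{\Phi\cup\Psi}\leq\gamma$ (as $\gamma$ is infinite and $\card{\Phi},\card{\Psi}\leq\gamma$), so $(s,\Phi\cup\Psi)$ extends both; thus distinct members of an antichain have distinct stems, and since $\kappa$ is inaccessible $\card{\prod_{<\kappa}b}=\kappa$, so no antichain has size $>\kappa$. The main obstacle is the successor step of the closure argument: confirming that the at most $\dom(s'_\beta)$ many slaloms in $\Phi'_\beta$, each of width $<h$, never exhaust $b(\eta)$ for $\eta\geq\dom(s'_\beta)$ --- which is precisely where hypotheses (a) and (b) of case (iii) are used, in the same way as in \Cref{bounds on binf}; the remaining details are bookkeeping inherited from the lemmas on $\bb D_\kappa^b(C)$ and $\Locf_\kappa^{b,h}(C)$.
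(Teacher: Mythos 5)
Your proposal is correct and follows essentially the same route as the paper: the identical winning strategy for White (extend the stem to a point of $C$ at successor stages, avoiding the finitely-bounded union $\Cup_{\phi\in\Phi'_\beta}\phi(\eta)$, and take unions at limits), with the same cardinality computation from case (iii) of the cited lemma and the same common-stem argument for the ${<}\kappa^+$-c.c. Your write-up is in fact slightly more explicit than the paper's in splitting the successor-step estimate into the cases $h(\eta)<b(\eta)$ and $h(\eta)=b(\eta)$ and in verifying that the limit condition extends all earlier moves.
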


\begin{proof}
For strategic ${<}\kappa$-closure, at stage $\alpha$ of $\c G(\ALocforc,p)$, let $\sab{p_\xi=(s_\xi,\Phi_\xi)\mid \xi\in\alpha}$ and $\sab{p'_\xi=(s_\xi',\Phi_\xi')\mid \xi\in\alpha}$ be the sequences of previous moves by White and Black respectively. We will describe the winning strategy for White at stage $\alpha$.

If $\alpha=\beta+1$ is successor, White will decide some $\gamma\in C$ such that $\alpha\leq\gamma$. Let $\dom(s_\beta')=\gamma'\in C$, then if $\xi\in[\gamma',\gamma)$ we have $|\Cup_{\phi\in\Phi_\beta'}\phi(\xi)|\leq|\gamma'|\cdot\sup_{\phi\in\Phi_\beta'}|\phi(\xi)|<b(\xi)$ through the same reasoning as in the proof of \Cref{bounds on binf} (iii). Therefore, we can find some value $s_\alpha(\xi)\in b(\xi)\setminus\Cup_{\phi\in\Phi_\beta'}\phi(\xi)$ to define $s_\alpha$ with $\dom(s_\alpha)=\gamma$, and let $\Phi_\alpha=\Phi_\beta'$. 

We have to show that White can make a move at limit $\alpha$ as well. We claim that $s_\alpha=\Cup_{\xi\in\alpha}s_\xi$ and $\Phi_\alpha=\Cup_{\xi\in\alpha}\Phi_\xi$ works. Clearly $\dom(s_\alpha)\in C$ by $C$ being club, and 
\begin{align*}
|\Phi_\alpha|\leq|\alpha|\cdot\textstyle\sup_{\xi\in\alpha}|\Phi_\xi|=|\alpha|\cdot\Cup_{\xi\in\alpha}\dom(s_\xi)\leq |\alpha|\leq\dom(s_\alpha)
\end{align*}
Therefore, $(s_\alpha,F_\alpha)$ is indeed a valid move for White, so White has a winning strategy.

For ${<}\kappa^+$-c.c., note that for any $(s,\Phi),(s,\Psi)\in\ALocforc$ with $\dom(s)=\gamma$ we have $|\Phi|\leq \gamma$ and $|\Psi|\leq\gamma$. We may assume without loss of generality that $\gamma$ is infinite, then also $|\Phi\cup\Psi|\leq\gamma$, hence $(s,\Phi\cup\Psi)$ is a condition below both $(s,\Phi)$ and $(s,\Psi)$. Thus if $\c A\subset\ALocforc$ is an antichain and $(s,\Phi),(t,\Psi)\in \c A$ are distinct, then we must have $s\neq t$, hence $|\c A|\leq \kappa$.
\end{proof}

\begin{crl}
$\ALocforc$ preserves cardinals and cofinality and does not add any elements to $\gfbs$.
\end{crl}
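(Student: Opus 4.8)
The plan is to obtain this corollary from the preceding lemma in precisely the same manner as the analogous corollaries for $\bb D_\kappa^b(C)$ and $\Locf^{b,h}_\kappa(C)$ earlier in this subsection. The two ingredients are the strategic ${<}\kappa$-closure of $\ALocforc$, which handles the claim about $\gfbs$ and the preservation of all cardinals and cofinalities ${\leq}\kappa$, and the ${<}\kappa^+$-c.c., which handles all cardinals and cofinalities ${\geq}\kappa^+$; together they cover everything.

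First I would show that $\ALocforc$ adds no new element of $\gfbs$, equivalently no new ${<}\kappa$-sequence of ordinals. Suppose $\dot f$ is a name and $p\in\ALocforc$ forces $\dot f\colon\gamma\to\r{Ord}$ for some $\gamma\in\kappa$. Consider a play of the game $\c G(\ALocforc,p)$ in which White follows a winning strategy, while Black, at each stage $\beta<\gamma$, chooses $p'_\beta\leq p_\beta$ that decides the value $\dot f(\beta)$. Since White's strategy is winning, White can make a legal move at every stage of length ${<}\kappa$, in particular at stage $\gamma$, producing a condition $q=p_\gamma$ with $q\leq p'_\beta$ for all $\beta<\gamma$ and $q\leq p$. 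Then $q$ decides $\dot f\restriction\gamma$ entirely, so $q$ forces $\dot f\in\b V$; hence no condition can force $\dot f\notin\b V$. It follows that $\gfbs$ is unchanged, that no new bounded subsets of $\kappa$ appear (so $\kappa$ remains inaccessible), and that every cardinal and every cofinality ${\leq}\kappa$ is preserved.

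For the large cardinals and cofinalities, the ${<}\kappa^+$-c.c. means every antichain has size at most $\kappa$, and the standard nice-name/antichain argument then shows that $\ALocforc$ preserves all cardinals ${\geq}\kappa^+$ and all cofinalities ${\geq}\kappa^+$. Combining the two preservation statements gives that $\ALocforc$ preserves every cardinal and cofinality, which together with the first paragraph completes the proof. I do not expect any real obstacle here: both the strategic ${<}\kappa$-closure and the ${<}\kappa^+$-c.c. are already in hand from the previous lemma, and the deduction is verbatim the same as for the two earlier forcing notions; one could legitimately just write ``Analogous to the corollaries following the definitions of $\bb D_\kappa^b(C)$ and $\Locf^{b,h}_\kappa(C)$.'' The only mild point is that what is available is strategic closure rather than genuine ${<}\kappa$-closure, but since White's winning strategy supplies a legal move at every stage below $\kappa$, Black's enumeration of the $\gamma<\kappa$ values of $\dot f$ proceeds without change.
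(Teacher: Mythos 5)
Your proposal is correct and is exactly the intended deduction: the paper leaves this corollary unproved precisely because it follows from the preceding lemma by the standard arguments you give (strategic ${<}\kappa$-closure handles new ${<}\kappa$-sequences and hence cardinals and cofinalities ${\leq}\kappa$, while the ${<}\kappa^+$-c.c. handles the rest), just as for $\bb D_\kappa^b(C)$ and $\Locf_\kappa^{b,h}(C)$. Your game-theoretic argument showing that strategic closure suffices in place of genuine ${<}\kappa$-closure is the right way to fill in the one nontrivial detail.
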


\begin{thm}\label{Antilocalisation consistency proof}
Let $C$ be a club set and $b,h\in\gbs$ be such that the assumptions of (iii) from \Cref{bounds on binf} are satisfied, then it is consistent that $\binf{b,h}>\kappa^+$ and that $\dinf{b,h}<2^\kappa$.
\end{thm}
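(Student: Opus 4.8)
The plan is to transcribe, almost verbatim, the proof of \Cref{Hechler consistency proof}, using $(b,h,C)$-antilocalisation forcing $\ALocforc$ in place of $(b,C)$-Hechler forcing. Fix a ground model $\b V\md\ap{\s{GCH}}$ and let $\bb P=\sab{\bb P_\alpha,\dot{\bb Q}_\alpha\mid\alpha\in\kappa^{++}+\kappa^+}$ be the ${<}\kappa$-support iteration in which $\bb P_\alpha$ forces $\dot{\bb Q}_\alpha$ to be $\ALocforc$ at every stage. Using the preservation properties of $\ALocforc$ established above (strategic ${<}\kappa$-closure and the ${<}\kappa^+$-c.c.) together with the general fact that ${<}\kappa$-support iterations preserve these, $\bb P$ preserves cardinals and cofinalities, adds no new element of $\gfbs$, and keeps $\kappa$ inaccessible. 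A nice-name count using $\s{GCH}$, the ${<}\kappa^+$-c.c.\ and $|\ALocforc|\le2^\kappa$ then yields $2^\kappa=\kappa^{++}$ in $\b V^{\bb P_\alpha}$ for every $\alpha\in[\kappa^{++},\kappa^{++}+\kappa^+]$.

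To see $\binf{b,h}>\kappa^+$, I would work in $\b V^{\bb P_{\kappa^{++}}}$. A $(b,h)$-slalom is coded by a single $\kappa$-real, so by the ${<}\kappa^+$-c.c.\ a name for it has support of size ${\le}\kappa$; since $\cf(\kappa^{++})=\kappa^{++}>\kappa^+$, any family $\Phi\in[\Loc_\kappa^{b,h}]^{\le\kappa^+}$ therefore already lies in $\b V^{\bb P_\gamma}$ for a single $\gamma<\kappa^{++}$. Because hypothesis (iii) of \Cref{bounds on binf} holds, the $\ALocforc$-generic real $g$ added at stage $\gamma$ is a function in $\prod b$ with $g\nini\phi$ for every $(b,h)$-slalom $\phi\in\b V^{\bb P_\gamma}$, in particular for every $\phi\in\Phi$; hence no $\phi\in\Phi$ satisfies $g\ini\phi$, so $\Phi$ fails to witness $\binf{b,h}\le|\Phi|$. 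As $\Phi$ was arbitrary of size ${\le}\kappa^+$, this gives $\b V^{\bb P_{\kappa^{++}}}\md\ap{\binf{b,h}>\kappa^+}$.

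To see $\dinf{b,h}<2^\kappa$, I would work in $\b V^{\bb P}$ and let $W\subset\prod b$ collect the $\ALocforc$-generic reals added during the final $\kappa^+$ stages, so $|W|=\kappa^+$. Given $\phi\in\Loc_\kappa^{b,h}$ in $\b V^{\bb P}$, again $\phi$ is a single $\kappa$-real and $\cf(\kappa^{++}+\kappa^+)=\kappa^+>\kappa$, so $\phi\in\b V^{\bb P_\gamma}$ for some $\gamma<\kappa^{++}+\kappa^+$; choosing $\xi$ with $\max\{\gamma,\kappa^{++}\}\le\xi<\kappa^{++}+\kappa^+$, the generic $g_\xi$ added at stage $\xi$ satisfies $g_\xi\nini\phi$, i.e.\ $\phi\nnii g_\xi$. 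Thus $W$ witnesses $\dinf{b,h}\le\kappa^+$, and since $2^\kappa=\kappa^{++}$ in $\b V^{\bb P}$ we get $\b V^{\bb P}\md\ap{\dinf{b,h}<2^\kappa}$.

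I do not expect a genuine obstacle: the argument is structurally identical to \Cref{Hechler consistency proof} and the forcing-theoretic groundwork is already in place. The two spots that really invoke hypothesis (iii) of \Cref{bounds on binf} are (a) the density of the conditions $(s,\Phi)$ in which a prescribed slalom has been placed in $\Phi$ and $\dom(s)$ has been extended arbitrarily far --- this is what guarantees the generic real is antilocalised by every old slalom, and it rests on $|\Cup_{\phi\in\Phi}\phi(\xi)|<b(\xi)$ --- and (b) the corresponding limit step in the strategic-closure lemma for $\ALocforc$; both are exactly as already handled there. The only point worth keeping explicit is that ``escaping every old slalom'' sits on the $\f d$-side of $\AL_{b,h}$, so the generic real is what makes $\dinf{b,h}$ small, whereas the existence of such a real over every bounded stage is what keeps $\binf{b,h}$ large.
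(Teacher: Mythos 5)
Your proposal is correct and is exactly the paper's argument: the paper's proof of this theorem consists of the single line ``Analogous to \Cref{Hechler consistency proof}, with $\ALocforc$ taking the role of $\bb D_\kappa^b(C)$,'' and your transcription (length $\kappa^{++}+\kappa^+$ iteration, catching a $\kappa^+$-sized family of slaloms at a bounded stage for the $\binf{b,h}$ bound, and using the last $\kappa^+$ generics as a witness for $\dinf{b,h}\le\kappa^+$) fills in precisely the intended details, including the correct reading of which side of $\AL_{b,h}$ the generic real sits on.
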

\begin{proof}
Analogous to \Cref{Hechler consistency proof}, with $\ALocforc$ taking the role of $\bb D_\kappa^b(C)$.
\end{proof}

\subsection{Eventual Difference \& Cofinal Equality}

Eventual difference is a special case of antilocalisation, where the function $h$ that determines the size of the sets in the slaloms is as small as possible: 
\begin{lmm}\label{evt difference and antiloc}
$\ED_b\equiv \AL_{b,\bar 2}$.
\end{lmm}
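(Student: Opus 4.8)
The plan is to establish the two Tukey connections $\ED_b\preceq\AL_{b,\bar 2}$ and $\AL_{b,\bar 2}\preceq\ED_b$ separately, in each case taking the identity map $\id_{\prod b}$ for the ``response'' component $\rho_+$. The guiding observation is that a $(b,\bar 2)$-slalom $\phi$ is essentially a partial function bounded by $b$, since each $\phi(\alpha)$ is either $\emp$ or a singleton $\subset b(\alpha)$, and that ``$\phi$ antilocalises $f$'' unwinds to $\forall^\infty\alpha\,(f(\alpha)\notin\phi(\alpha))$, which for a single-valued $\phi$ is exactly eventual difference of $f$ from the function reading off the values of $\phi$.

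For $\ED_b\preceq\AL_{b,\bar 2}$ I would argue as follows. Since each $b(\alpha)$ is an infinite cardinal we have $\bar 2\leq b$, so for $f\in\prod b$ the map $\phi_f\colon\alpha\mapsto\st{f(\alpha)}$ is a $(b,\bar 2)$-slalom; put $\rho_-(f)=\phi_f$ and $\rho_+=\id_{\prod b}$. If $g\in\prod b$ satisfies $\phi_f\rl\nnii g$, i.e.\ $g\nini\phi_f$, then $\forall^\infty\alpha\,(g(\alpha)\notin\st{f(\alpha)})$, that is $\forall^\infty\alpha\,(f(\alpha)\neq g(\alpha))$, which is precisely $f\rl\neqi g$; hence $\ab{\rho_-,\rho_+}$ is a Tukey connection.

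For $\AL_{b,\bar 2}\preceq\ED_b$ I would, given $\phi\in\Loc_\kappa^{b,\bar 2}$, define $g_\phi\in\prod b$ by letting $g_\phi(\alpha)$ be the unique element of $\phi(\alpha)$ when $\phi(\alpha)\neq\emp$ and $g_\phi(\alpha)=0$ otherwise, and set $\rho_-(\phi)=g_\phi$, $\rho_+=\id_{\prod b}$. If $g\in\prod b$ satisfies $g_\phi\rl\neqi g$, i.e.\ $\forall^\infty\alpha\,(g_\phi(\alpha)\neq g(\alpha))$, then for every such $\alpha$ we have $g(\alpha)\notin\phi(\alpha)$: this is trivial when $\phi(\alpha)=\emp$, and when $\phi(\alpha)=\st{g_\phi(\alpha)}$ it follows from $g(\alpha)\neq g_\phi(\alpha)$. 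Thus $\forall^\infty\alpha\,(g(\alpha)\notin\phi(\alpha))$, i.e.\ $\phi\rl\nnii g$, so $\ab{\rho_-,\rho_+}$ is again a Tukey connection, and combining the two directions yields $\ED_b\equiv\AL_{b,\bar 2}$.

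There is no real obstacle here; the only point that needs a moment's care is that a $(b,\bar 2)$-slalom may take the value $\emp$ on some coordinates, which slightly breaks the exact dictionary between such slaloms and elements of $\prod b$. This is harmless, however, since an empty slalom value is vacuously avoided by every value of $g$, so it causes no trouble in either direction, and apart from this bookkeeping the verification is completely routine.
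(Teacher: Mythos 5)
Your proof is correct and follows essentially the same route as the paper: identify $(b,\bar 2)$-slaloms with (partial) functions in $\prod b$ and observe that $\nnii$ then becomes exactly $\neqi$, giving both Tukey connections with identity maps on the response side. Your explicit treatment of the case $\phi(\alpha)=\emp$ is in fact slightly more careful than the paper's one-line argument, which tacitly treats every $(b,\bar2)$-slalom as a sequence of singletons.
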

\begin{proof}
Any slalom $\phi\in\Loc_\kappa^{b,\bar 2}$ is a sequence of singletons, and thus we can define $f_\phi\in\prod b$ such that $\st{f_\phi(\alpha)}=\phi(\alpha)$ for all $\alpha\in\kappa$. It follows that $g\ini\phi$ if and only if $g\eqi f_\phi$.
\end{proof}

It follows that \Cref{bounds on binf} immediately gives us the following characterisation of the trivial values of $\bneq{b}$, since we can ignore the case where $h(\alpha)= b(\alpha)$ if we let $h=\bar 2$.
\begin{crl}[cf. \Cref{bounds on binf}]\label{bounds on bneq}\renewcommand{\qed}{\hfill$\square$}
For each $\lambda<\kappa$ define $D_\lambda=\st{\alpha\in\kappa\mid b(\alpha)\leq\lambda}$.
\begin{enumerate}[label=(\roman*)]
\item If there exists a least cardinal $\lambda<\kappa$ such that $D_\lambda$ is cofinal in $\kappa$, then $\bneq{b}=\lambda$.
\item If $D_\lambda$ is bounded for all $\lambda<\kappa$ and $b$ is continuous on a stationary set, then $\bneq{b}=\kappa$.
\item If $D_\lambda$ is bounded for all $\lambda<\kappa$ and $b$ is discontinuous on a club set, then $\kappa^+\leq\bneq{b}$.\qedhere
\end{enumerate}
\end{crl}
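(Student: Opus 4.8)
The plan is to obtain this corollary as an immediate specialisation of \Cref{bounds on binf}, taking $h$ to be the constant function $\bar 2$. First I would invoke \Cref{evt difference and antiloc}, which gives $\ED_b\equiv\AL_{b,\bar 2}$. By the part of the Tukey-connection lemma controlling the norm of the dual system, an equivalence $\ED_b\equiv\AL_{b,\bar 2}$ yields $\norm{\ED_b^\lc}=\norm{\AL_{b,\bar 2}^\lc}$, that is, $\bneq{b}=\binf{b,\bar 2}$. So it suffices to read off what \Cref{bounds on binf} asserts about $\binf{b,h}$ in the case $h=\bar 2$.

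Next I would observe that every ``degenerate'' conjunct in \Cref{bounds on binf} is governed by the hypothesis $h(\alpha)=b(\alpha)$, which for $h=\bar 2$ would force $b(\alpha)=2$; since we always assume $b(\alpha)$ is an infinite cardinal, this never occurs. Concretely, in the definition of $D_\lambda$ in \Cref{bounds on binf}, the second set $\st{\alpha\in\kappa\mid h(\alpha)=b(\alpha)\text{ and }\cf(b(\alpha))\leq\lambda}$ is empty, so $D_\lambda$ collapses to $\st{\alpha\in\kappa\mid b(\alpha)\leq\lambda}$, precisely the $D_\lambda$ of the corollary. In case (ii) of \Cref{bounds on binf}, sub-case (b) asks for $\alpha_\xi$ with $h(\alpha_\xi)=b(\alpha_\xi)$, which is impossible, so the disjunction ``(a) or (b)'' reduces to ``$b$ is continuous on a stationary set''. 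In case (iii), sub-case (b) has the ever-false hypothesis $h(\alpha)=b(\alpha)$ and is therefore vacuously true for any club $C$, so the conjunction ``(a) and (b)'' reduces to ``$b$ is discontinuous on a club set''.

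With these three simplifications, the items (i), (ii), (iii) of \Cref{bounds on binf} become verbatim the items (i), (ii), (iii) of this corollary; in particular they continue to form a trichotomy, inherited directly from that lemma, and the bounds $\bneq{b}=\lambda$, $\bneq{b}=\kappa$, $\kappa^+\leq\bneq{b}$ are exactly the ones transported through the identity $\bneq{b}=\binf{b,\bar 2}$. I do not anticipate any real difficulty here: the only point requiring care is confirming that, under the standing assumption that $b$ takes infinite-cardinal values, each of the sub-cases labelled (b) (and the second disjunct in $D_\lambda$) genuinely vanishes, so that no case of \Cref{bounds on binf} is lost and no spurious case is introduced.
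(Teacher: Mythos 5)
Your proposal is correct and is exactly the paper's route: the corollary is obtained by combining $\ED_b\equiv\AL_{b,\bar 2}$ (hence $\bneq{b}=\binf{b,\bar 2}$) with \Cref{bounds on binf} for $h=\bar 2$, noting that every sub-case involving $h(\alpha)=b(\alpha)$ is vacuous because $b(\alpha)$ is an infinite cardinal. The paper states this only as a one-line remark before the corollary; your write-up just makes the same case-by-case simplification explicit.
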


It is equally immediate that the forcing construction from \Cref{Antilocalisation consistency proof} gives us a consistency result in case (iii) of \Cref{bounds on bneq}.
\begin{crl}[Cf. \Cref{Antilocalisation consistency proof}]\renewcommand{\qed}{\hfill$\square$}
Let $b\in\gbs$ be such that the assumptions of (iii) from \Cref{bounds on bneq} are satisfied, then it is consistent that $\bneq{b}>\kappa^+$ and that $\dneq{b}<2^\kappa$.
\end{crl}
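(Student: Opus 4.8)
The plan is to reduce this immediately to \Cref{Antilocalisation consistency proof} by taking $h=\bar 2$, using the equivalence $\ED_b\equiv\AL_{b,\bar 2}$ established in \Cref{evt difference and antiloc}. That equivalence, fed through the lemma relating Tukey connections to norms, gives $\dneq{b}=\norm{\ED_b}=\norm{\AL_{b,\bar 2}}=\dinf{b,\bar 2}$ and, applying the same lemma to the dual systems, $\bneq{b}=\norm{\ED_b^\lc}=\norm{\AL_{b,\bar 2}^\lc}=\binf{b,\bar 2}$. So it suffices to exhibit a model in which $\binf{b,\bar 2}>\kappa^+$ and $\dinf{b,\bar 2}<2^\kappa$.

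The one thing to verify is that the pair $(b,\bar 2)$ satisfies the hypotheses of case (iii) of \Cref{bounds on binf}, so that \Cref{Antilocalisation consistency proof} applies. Here one checks that the auxiliary clauses in \Cref{bounds on binf} involving $h$ all trivialise when $h=\bar 2$: since $b(\alpha)$ is an infinite cardinal for every $\alpha$ by our standing convention, we never have $h(\alpha)=b(\alpha)$, so the set $\st{\alpha\mid h(\alpha)=b(\alpha)\text{ and }\cf(b(\alpha))\leq\lambda}$ is empty and the $D_\lambda$ of \Cref{bounds on binf} collapses to the $D_\lambda=\st{\alpha\mid b(\alpha)\leq\lambda}$ of \Cref{bounds on bneq}; likewise clause (b) of \Cref{bounds on binf}(iii), about $\cf(b(\alpha))$ when $h(\alpha)=b(\alpha)$, is vacuous. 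Thus the hypotheses of \Cref{bounds on bneq}(iii) — namely $D_\lambda$ bounded for all $\lambda<\kappa$ together with a club $C$ on which $b$ is discontinuous — are precisely the hypotheses of \Cref{bounds on binf}(iii) for the pair $(b,\bar 2)$.

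Applying \Cref{Antilocalisation consistency proof} with this $C$ and $h=\bar 2$ then yields a forcing extension — obtained by the ${<}\kappa$-support iteration used there, which preserves cardinals, cofinalities and the inaccessibility of $\kappa$ — in which $\binf{b,\bar 2}>\kappa^+$ and $\dinf{b,\bar 2}<2^\kappa$; by the first paragraph this is a model of $\bneq{b}>\kappa^+$ and $\dneq{b}<2^\kappa$. There is no substantive obstacle here: the whole content is the observation that the $h$-dependent side conditions in \Cref{bounds on binf} degenerate at $h=\bar 2$, after which the result is a direct citation.
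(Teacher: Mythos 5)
Your proof is correct and matches the paper's intended argument exactly: the corollary is stated there as an immediate consequence of \Cref{Antilocalisation consistency proof} via the identification $\ED_b\equiv\AL_{b,\bar 2}$ from \Cref{evt difference and antiloc}, with \Cref{bounds on bneq} being precisely \Cref{bounds on binf} specialised to $h=\bar 2$. Your explicit check that the $h$-dependent clauses of \Cref{bounds on binf}(iii) are vacuous for $h=\bar 2$ (because $b(\alpha)$ is always an infinite cardinal) is the only detail the paper leaves implicit, and you have handled it correctly.
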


We can use \Cref{bounds on dstar} and a special connection between localisation and antilocalisation for functions with finite values to prove the following lemma, giving a condition for which $\dneq{b}$ and $\dinf{b,h}$ are trivial. Note that this is significantly weaker than the assumption in case (i), since $b$ is not only bounded on a cofinal set, but even finite cofinally often.

\begin{lmm}\label{bounds on dneq}
If $D=\st{\alpha\in\kappa\mid b(\alpha)<\omega}$ is cofinal, then $\dneq{b}=\dinf{b,h}=2^\kappa$.
\end{lmm}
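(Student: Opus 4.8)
The plan is to collapse the whole statement to the single fact $\dstar{\bar m,\bar m}=2^\kappa$ for a suitable finite cardinal $m\ge 2$, which \Cref{bounds on dstar} provides; the bridge between the eventual-difference/antilocalisation side and the localisation side is the ``special connection'' alluded to in the paper: when the bounding function is finite-valued, ``$g$ is eventually different from $f$'' says precisely that ``$g$ is localised by the complementary slalom $\alpha\mapsto b(\alpha)\setminus\{f(\alpha)\}$''.

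First I would clear away the easy upper bounds and the purely formal reductions. If $b(\alpha)\le 1$ for cofinally many $\alpha$ then $\dneq b$ and $\dinf{b,h}$ are undefined and there is nothing to prove, so assume $b(\alpha)\ge 2$ for almost all $\alpha$. Then $\dneq b\le 2^\kappa$ and $\dinf{b,h}\le 2^\kappa$, since $\prod b$ (of size at most $\kappa^\kappa=2^\kappa$) is itself a witness in each case: given $g\in\prod b$ take $f(\alpha)\in b(\alpha)\setminus\{g(\alpha)\}$, and given $\phi\in\Loc_\kappa^{b,h}$ take $f(\alpha)\in b(\alpha)\setminus\phi(\alpha)$, using $\card{\phi(\alpha)}<h(\alpha)\le b(\alpha)$. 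Moreover $\dneq b=\dinf{b,\bar 2}$ by \Cref{evt difference and antiloc}, and since $\bar 2\le^* h$ (recall $h$ is increasing, so this fails only for $h\equiv\bar 1$, which we may disregard), \Cref{monotonicity aloc} gives $\AL_{b,\bar 2}\preceq\AL_{b,h}$ and hence $\dneq b\le\dinf{b,h}$. So everything reduces to proving $\dneq b\ge 2^\kappa$.

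For that, put $E=\st{\alpha\in\kappa\mid 2\le b(\alpha)<\omega}$, which is cofinal as it differs from $D$ by a bounded set, let $\ab{\alpha_\xi\mid\xi\in\kappa}$ enumerate $E$ increasingly, and set $c:\xi\mapsto b(\alpha_\xi)$, so $c\in\gbs$ is finite-valued with all values $\ge 2$. Using \Cref{aloc cofinal subset} together with \Cref{evt difference and antiloc} we get $\dneq c\le\dneq b$, so it suffices to show $\dneq c\ge 2^\kappa$. Now comes the key construction: for $f\in\prod c$ set $\psi_f(\xi)=c(\xi)\setminus\{f(\xi)\}$. Since $c$ is finite-valued, $\card{\psi_f(\xi)}=c(\xi)-1<c(\xi)$, so $\psi_f\in\Loc_\kappa^{c,c}$; the map $f\mapsto\psi_f$ is injective because $c(\xi)\ge 2$; and for $f,g\in\prod c$ one has $g\neqi f$ if and only if $g(\xi)\in\psi_f(\xi)$ for almost all $\xi$, i.e.\ $g\ins\psi_f$. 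Consequently any $W\subseteq\prod c$ such that every $g\in\prod c$ is eventually different from some member of $W$ yields a family $\st{\psi_f\mid f\in W}\subseteq\Loc_\kappa^{c,c}$ of the same cardinality localising all of $\prod c$; therefore $\dstar{c,c}\le\dneq c$.

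Finally, since $\kappa$ is regular and uncountable while $c$ is finite-valued, some finite $m$ has $c(\xi)=m$ for cofinally many $\xi$, so \Cref{bounds on dstar} (taking $\lambda=m$) gives $\dstar{c,c}=2^\kappa$. Chaining, $2^\kappa=\dstar{c,c}\le\dneq c\le\dneq b\le 2^\kappa$, whence $\dneq b=2^\kappa$ and then $\dinf{b,h}=2^\kappa$ as well. I do not expect a genuine obstacle here: the only real idea is the finite-$b$ complementation, after which everything is an application of lemmas already available. The points requiring (minor) care are: keeping the directions of \Cref{monotonicity aloc} and \Cref{aloc cofinal subset} straight; verifying that $\psi_f$ genuinely lies in $\Loc_\kappa^{c,c}$, which uses exactly the finiteness of $c$; and noting that \Cref{bounds on dstar} is being invoked with a finite value $\lambda=m$, a case its proof accommodates.
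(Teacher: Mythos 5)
Your proposal is correct and follows essentially the same route as the paper: reduce via monotonicity and the cofinal-subset lemma to a finite-valued bound, use the complement-of-a-singleton trick to turn eventual difference into localisation by $(c,c)$-slaloms, and conclude from $\dstar{\bar m,\bar m}=2^\kappa$. The only (cosmetic) difference is that the paper passes to a constant $\bar n$ at the outset and phrases the complementation as a Tukey equivalence $\sr L_{\bar n,\bar n}\equiv\AL_{\bar n,\bar 2}$, whereas you keep $c$ merely finite-valued, map witnesses directly, and invoke the pigeonhole/constant-value observation only when applying the localisation lemma at the end.
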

\begin{proof}
We want to give a lower bound to $\dinf{b,h}$, thus by \Cref{monotonicity aloc} we may assume that $h=\bar2$, and moreover by \Cref{aloc cofinal subset} and $D$ being cofinal, we may assume that $b=\bar n$ for some $n\in\omega$.

Note that $\sr L_{\bar n,\bar{n}}\equiv \AL_{\bar n,\bar 2}$. Namely, if $f\in\prod b$, let $\phi_f:\alpha\mapsto n\setminus\st{f(\alpha)}$, then $\phi_f\in\Loc_\kappa^{\bar n,\bar n}$, and on the other hand, if $\phi\in\Loc_\kappa^{\bar n,\bar2}$, then let $f_\phi$ be the unique function such that $\phi(\alpha)=\st{f_\phi(\alpha)}$ for all $\alpha\in\kappa$. It is clear that $f\nini\phi$ if and only if $f_\phi\ins\phi_f$.

It follows that $2^\kappa=\dstar{\bar n,\bar n}\leq \dneq{\bar n}\leq\dneq{b}=\dinf{b,\bar 2}\leq \dinf{b,h}$ if $D$ is cofinal. Here the first equality is given by \Cref{bounds on dstar}.
\end{proof}

The above argument is able to relate eventual difference to localisation by removing a single element from a finite set, which changes its cardinality. Naturally, removing a single element from an infinite set does not change the cardinality of the infinite set, thus the technique from this proof does not give us insight on how to prove $\dneq{b}=2^\kappa$ if $b$ is only bounded on a cofinal set by an infinite value.

It is equally unclear whether $\dneq b=2^\kappa$ in case (ii) of \Cref{bounds on bneq}.

\begin{qst}
Is $\dneq{b}<2^\kappa$ consistent in case (i) or (ii) of \Cref{bounds on bneq}?
\end{qst}

\section{More Relations Between Cardinals}\label{section: relations}

In this section we will give an overview of some additional relations between the cardinals we have mentioned so far.

\subsection{Relations for Fixed Parameters}

We do this by defining Tukey connections. The following theorem gives an overview of relations between the cardinals we have discussed so far, for a fixed pair of parameters $b,h$ such that the relevant cardinals are nontrivial.

\begin{thm}\label{cardinal relations fixed b h}
Let $b,h\in\gbs$ be such that the conditions of cases (iii) of \Cref{bounds on binf,bounds on bleq,bounds on bstar} are satisfied, then the following Tukey connections exist, where the relations marked by ${}^\dagger$ require the additional assumption that $h\leq^*\cf(b)$.

\hfill\begin{tikzpicture}[baseline=(ED.base),xscale=0.95, yscale=0.65]
\node[baseline=(D.base)] (AL) at (0,2) {$\strut\AL_{b,h}$};
\node[baseline=(D.base)] (D) at (2,2) {$\strut\sr D_{b}$};
\node[baseline=(D.base)] (L) at (4,2) {$\strut\sr L_{b,h}$};
\node[baseline=(D.base)] (ED) at (0,0) {$\strut\ED_{b}$};
\node[baseline=(D.base)] (B) at (2,0) {$\strut\sr D^\lc_{b}$};
\node[baseline=(D.base)] (AA) at (4,0) {$\strut\AL^\lc_{b,h}$};

\node[baseline=(D.base)] at (1.1,2) {$\strut\preceq^\dagger$};
\node[baseline=(D.base)] at (2.9,2) {$\strut\preceq^\dagger$};
\node[baseline=(D.base)] at (2.9,0) {$\strut{\preceq}^\dagger$};
\node[baseline=(D.base),rotate=90] at (0,1) {$\preceq$};
\node[baseline=(D.base),rotate=30] at (1,1) {$\preceq$};
\node[baseline=(D.base),rotate=90] at (2,1) {$\preceq$};
\node[baseline=(D.base),rotate=90] at (4,1) {$\preceq$};
\end{tikzpicture}\qedhere
\end{thm}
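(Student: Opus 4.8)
The statement bundles seven Tukey connections, but several are essentially free. The connection $\sr D^\lc_b\preceq^\dagger\AL^\lc_{b,h}$ is just the dual of $\AL_{b,h}\preceq^\dagger\sr D_b$, obtained by swapping the two maps (using the remark that if $\ab{\rho_-,\rho_+}$ witnesses $\sr R\preceq\sr R'$ then $\ab{\rho_+,\rho_-}$ witnesses $\sr R'^\lc\preceq\sr R^\lc$). The connection $\ED_b\preceq\AL_{b,h}$ is immediate from \Cref{evt difference and antiloc}, which gives $\ED_b\equiv\AL_{b,\bar 2}$, together with \Cref{monotonicity aloc} applied with $h\geq^*\bar 2$ and $b\leqs b$. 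And $\ED_b\preceq\sr D_b$ is witnessed directly by $\rho_-=\id$, $\rho_+\colon g\mapsto g+1$ (which lies in $\prod b$ since each $b(\alpha)$ is a limit ordinal): if $f\leqs g$ then $f(\alpha)<g(\alpha)+1$ for almost all $\alpha$, so $f\neqi g+1$. That leaves the two ``backbone'' connections $\AL_{b,h}\preceq^\dagger\sr D_b$ and $\sr D_b\preceq^\dagger\sr L_{b,h}$, and the two vertical ones $\sr D^\lc_b\preceq\sr D_b$ and $\AL^\lc_{b,h}\preceq\sr L_{b,h}$.

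The two backbone connections both rest on one elementary observation: when $h\leqs\cf(b)$, for almost all $\alpha$ every $\phi\in\Loc_\kappa^{b,h}$ has $\card{\phi(\alpha)}<h(\alpha)\leq\cf(b(\alpha))$, so $\phi(\alpha)$ is a bounded subset of the limit ordinal $b(\alpha)$, whence $\sup\phi(\alpha)<b(\alpha)$ (and hence also $\sup\phi(\alpha)+1<b(\alpha)$). For $\AL_{b,h}\preceq^\dagger\sr D_b$ I would put $\rho_-(\phi)(\alpha)=\sup\phi(\alpha)+1$ at those $\alpha$ and $\rho_-(\phi)(\alpha)=0$ at the boundedly many other coordinates, with $\rho_+=\id$; if $\rho_-(\phi)\leqs g$ then $g(\alpha)>\sup\phi(\alpha)$, hence $g(\alpha)\notin\phi(\alpha)$, for almost all $\alpha$, i.e.\ $\phi\rl\nnii g$, the relation of $\AL_{b,h}$. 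For $\sr D_b\preceq^\dagger\sr L_{b,h}$ I would take $\rho_-=\id$ and $\rho_+(\phi)(\alpha)=\sup\phi(\alpha)$ on the almost-all coordinates where $\sup\phi(\alpha)<b(\alpha)$, and $\rho_+(\phi)(\alpha)=0$ on the boundedly many others; if $f\in^*\phi$ then $f(\alpha)\leq\sup\phi(\alpha)=\rho_+(\phi)(\alpha)$ for almost all $\alpha$, so $f\leqs\rho_+(\phi)$. The two vertical connections are then one-liners with identity or shift maps: for $\AL^\lc_{b,h}\preceq\sr L_{b,h}$, $f\in^*\phi$ implies $f\ini\phi$ (there are cofinally many witnessing coordinates), which is the relation of $\AL^\lc_{b,h}$; for $\sr D^\lc_b\preceq\sr D_b$ use $\rho_-\colon g\mapsto g+1$, $\rho_+=\id$, since $g+1\leqs f$ forces $g<^* f$, hence $f\nleqs g$, the relation of $\sr D^\lc_b$.

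There is no real obstacle here: the whole argument is a matter of writing down identity maps, the shift $g\mapsto g+1$, and pointwise-supremum maps, and checking compatibility with the relevant relations. The only point deserving attention is pinpointing where the hypothesis $h\leqs\cf(b)$ enters — it is used solely to guarantee $\sup\phi(\alpha)<b(\alpha)$ for almost every $\alpha$ — together with the observation that on the boundedly many exceptional coordinates (where this could fail) the maps $\rho_\pm$ may be defined arbitrarily within $\prod b$, since all relations in play ($\leqs$, $\in^*$, $\nnii$, $\neqi$, $\nleqs$, and their duals) ignore bounded sets of coordinates.
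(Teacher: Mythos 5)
Your proposal is correct and follows essentially the same route as the paper: the shift $f\mapsto f+1$ for the $\ED_b\preceq\sr D_b$ and $\sr D_b^\lc\preceq\sr D_b$ connections, the trivial implications $\in^*\Rightarrow\in^\infty$ and the duality remark for the vertical/dual arrows, and the pointwise-supremum map $\phi\mapsto\sup\phi(\cdot)$ (made well-defined by $h\leq^*\cf(b)$) for the two dagger backbone connections. The only cosmetic difference is that you place the $+1$ shift on $\rho_-$ instead of $\rho_+$ in $\AL_{b,h}\preceq\sr D_b$, which is equally valid.
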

\begin{proof}
The relation between eventual difference and antilocalisation was already established with \Cref{evt difference and antiloc}, and combined with \Cref{monotonicity aloc} we get $\ED_b\preceq\AL_{b,h}$.

It is also easy to see for $f,g\in\prod b$, that if $g\leq^* f$, then $f+1\mathbin{\cancel{\leq^*}}g$ and $g\neqi f+1$. This implies that there are Tukey connections $\sr D^\lc_b\preceq \sr D_b$ and $\ED_b\preceq \sr D_b$.

Next, for $f\in\prod b$ and $\phi\in\Loc_\kappa^{b,h}$, we have that $f\in^*\phi$ implies $f\in^\infty\phi$, and thus we get a Tukey connection $\AL^\lc_{b,h}\preceq\sr L_{b,h}$.

Finally we will show that $\AL_{b,h}\preceq \sr D_b\preceq\sr L_{b,h}$ as long as we assume that $h\leq^*\cf(b)$. Note that this also implies $\AL^\lc_{b,h}\succeq\sr D_b^\lc$ by duality.

We define $\rho_-^0:\Loc_\kappa^{b,h}\to \prod b$ and $\rho_+^0:\prod b\to\prod b$ as a Tukey connection for $\AL_{b,h}\preceq\sr D_b$, and $\rho_-^1:\prod b\to\prod b$ and $\rho_+^1:\Loc_\kappa^{b,h}\to\prod b$ as a Tukey connection for $\sr D_b\preceq\sr L_{b,h}$. 

We will have $\rho_-^0=\rho_+^1$ sending $\phi\in\Loc_\kappa^{b,h}$ to $g\in\prod b$ where $g(\alpha)=\sup(\phi(\alpha))$ if $\sup(\phi(\alpha))<b(\alpha)$ and arbitrary otherwise. Since $h\leq^* \cf(b)$, we see that $\phi(\alpha)$ is not cofinal in $b(\alpha)$ for almost all $\alpha$, so $g(\alpha)=\sup(\phi(\alpha))$ for almost all $\alpha\in\kappa$. We let $\rho_-^1$ be the identity function, and $\rho_+^0:f\mapsto f+1$.

If $f\in\prod b$ and $\phi\in\Loc_\kappa^{b,h}$, let $g=\rho_-^0(\phi)=\rho_+^1(\phi)$ and $f+1=\rho_+^0(f)$. Then $f\in^*\phi$ implies $f(\alpha)\leq\sup(\phi(\alpha))=g(\alpha)$ for almost all $\alpha\in\kappa$, hence $f\leq^* g$. On the other hand, if $g\leq^* f$, then $g<^*f+1$, hence $\sup(\phi(\alpha))<f(\alpha)+1$ for almost all $\alpha\in\kappa$, implying $f+1\nini \phi$.
\end{proof}

In summary, we can draw the cardinal characteristics related to these relational systems in the diagram below, where the dashed lines require that $h\leq^*\cf(b)$. Note that $\bneq{b}\leq\non(\c M_\kappa)$ and $\cov(\c M_\kappa)\leq\dneq{b}$ follow from \Cref{evt difference and antiloc,monotonicity aloc} and \Cref{evt dif and meagre}.

\begin{figure}[h]
\centering
\begin{tikzpicture}[baseline,xscale=2.25,yscale=1.25]

\node (a1) at (-2,-2) {$\kappa^+$};
\node (bleq) at (-1,0) {$\bleq{b}$};
\node (bneq) at (0,2) {$\bneq{b}$};
\node (binf) at (0,0.67) {$\binf{b,h}$};
\node (bstar) at (-2,-.67) {$\bstar{b,h}$};
\node (dleq) at (1,0) {$\dleq{b}$};
\node (dneq) at (0,-2) {$\dneq{b}$};
\node (dinf) at (0,-0.67) {$\dinf{b,h}$};
\node (dstar) at (2,.67) {$\dstar{b,h}$};
\node (cM) at (-1,-2) {$\r{cov}(\c M_\kappa)$};
\node (nM) at (1,2) {$\r{non}(\c M_\kappa)$};
\node (c) at (2,2) {$2^{\kappa}$};

\draw (dinf) edge[->,dashed] (dleq); 
\draw (dleq) edge[->,dashed] (dstar);
\draw (bleq) edge[->,dashed] (binf); 
\draw (bstar) edge[->,dashed] (bleq);


\draw (dneq) edge[->] (dinf); 
\draw (binf) edge[->] (bneq);

\draw (binf) edge[->] (dstar); 
\draw (bstar) edge[->] (dinf);

\draw (bleq) edge[->] (bneq); 
\draw (dneq) edge[->] (dleq);

\draw (cM) edge[->] (dneq); 
\draw (bneq) edge[->] (nM);

\draw (bleq) edge[->] (dleq); 
\draw (a1) edge[->] (cM); 
\draw (a1) edge[->] (bstar); 
\draw (nM) edge[->] (c); 
\draw (dstar) edge[->] (c); 

\end{tikzpicture}
\caption{Diagram of the relations between cardinals on bounded spaces}\label{figure: diagram}
\end{figure}
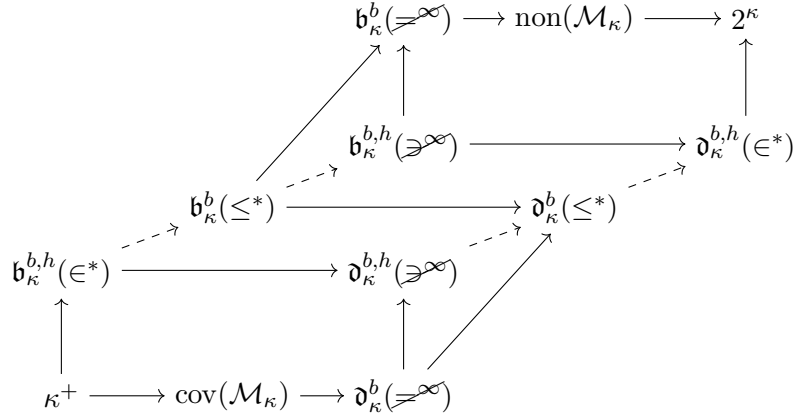

We saw that $\AL_{b,h}\preceq \sr D_b\preceq\sr L_{b,h}$ if $h\leq^*\cf(b)$, and in \Cref{relation star and leq 1} we showed that reversely $\sr L_{b,h}\preceq \sr D_b$ if $h=^*b$. Under the same assumption, we can also prove that $\sr D_b\preceq\AL_{b,h}$. Therefore, the dashed arrows in the above diagram collapse to become equalities in the case where $b=^*\cf(b)=^*h$.

\begin{thm}\label{h eq b makes domination}
If $h=^*b$, then $\sr D_b\preceq \AL_{b,h}$. 
\end{thm}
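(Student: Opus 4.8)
The plan is to exhibit an explicit Tukey connection $\langle\rho_-,\rho_+\rangle$ witnessing $\sr D_b\preceq\AL_{b,h}$, where $\sr D_b=\ab{\prod b,\prod b,\leq^*}$ and $\AL_{b,h}=\ab{\Loc_\kappa^{b,h},\prod b,\nnii}$. The key observation is that, since $h=^*b$, on a cofinite set of coordinates a $(b,h)$-slalom is allowed to take as value \emph{any} proper subset of $b(\alpha)$: for any $f\in\prod b$, the ordinal $f(\alpha)$ regarded as a set of ordinals has $|f(\alpha)|\leq f(\alpha)<b(\alpha)$, so it is an admissible slalom value precisely where $h(\alpha)=b(\alpha)$.

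Concretely, I would let $\rho_-:\prod b\to\Loc_\kappa^{b,h}$ send $f$ to the slalom $\phi_f$ given by $\phi_f(\alpha)=f(\alpha)$ (as a set of ordinals) whenever $h(\alpha)=b(\alpha)$, and $\phi_f(\alpha)=\emp$ otherwise; the observation above, together with each $h(\alpha)$ being a nonzero cardinal, shows $\phi_f\in\Loc_\kappa^{b,h}$. For $\rho_+:\prod b\to\prod b$ I expect the identity map to suffice.

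The verification is then short. Suppose $f,g\in\prod b$ and $\phi_f\nnii g$, i.e.\ $g$ is antilocalised by $\phi_f$, so $g(\alpha)\notin\phi_f(\alpha)$ for all $\alpha$ beyond some bound $\beta_0$; fix also $\beta_1$ with $h(\alpha)=b(\alpha)$ for all $\alpha>\beta_1$. Then for every $\alpha>\max\st{\beta_0,\beta_1}$ we have $g(\alpha)\notin\phi_f(\alpha)=f(\alpha)$, which for ordinals means exactly $f(\alpha)\leq g(\alpha)$. Hence $f\leq^*g=\rho_+(g)$, which is precisely the defining property of a Tukey connection from $\sr D_b$ to $\AL_{b,h}$. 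I do not foresee a genuine obstacle here: the only points requiring care are invoking $h=^*b$ to see that $f(\alpha)$ is an admissible slalom value, and correctly unwinding $\phi_f\nnii g$ as the assertion that $g$ is antilocalised by $\phi_f$, so that the hypothesis delivers eventual domination of $f$ by $g$ rather than the reverse.
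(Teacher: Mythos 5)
Your proposal is correct and is essentially the paper's own proof: the same map $\rho_-$ sending a function to the slalom whose $\alpha$-th value is that function's value viewed as a set of ordinals (on the cofinite set where $h(\alpha)=b(\alpha)$), the identity for $\rho_+$, and the same unwinding of $\nnii$ into eventual domination via $g(\alpha)\notin f(\alpha)\Leftrightarrow f(\alpha)\leq g(\alpha)$. The only cosmetic difference is that you set the slalom to $\emp$ where $h(\alpha)<b(\alpha)$ while the paper leaves it arbitrary; both are fine.
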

\begin{proof}
We define $\rho_-:\prod b\to\Loc_\kappa^{b,h}$ and $\rho_+:\prod b\to\prod b$ as a Tukey connection for $\sr D_b\preceq\AL_{b,h}$. 

We will have $\rho_-$ sending $g\in\prod b$ to $\phi\in\Loc_\kappa^{b,h}$ where $\phi(\alpha)=g(\alpha)$ whenever $h(\alpha)=b(\alpha)$ and arbitrary otherwise. This is well-defined, since $|g(\alpha)|<b(\alpha)=h(\alpha)$ for almost all $\alpha\in\kappa$. Let $\rho_+$ be the identity function.

If $f,g\in\prod b$, let $\phi=\rho_-(g)$. Then $f\nini \phi$ implies $f(\alpha)\notin \phi(\alpha)=g(\alpha)$ for almost all $\alpha\in\kappa$, hence $g\leq^* f$.
\end{proof}

The relation $\ED_b\preceq\AL_{b,h}$ can also be reversed for certain choices of $h$ and $b$, by the following theorem.

\begin{thm}\label{antiloc below event dif}
Let $b$ be increasing (not strictly) and $\ab{I_\alpha\mid \alpha<\kappa}$ be an interval partition of $\kappa$ with $|I_\alpha|=h(\alpha)$ for each $\alpha\in\kappa$ such that $b(\alpha)=b(\xi)=b(\alpha)^{h(\alpha)}$ for all $\xi\in I_\alpha$ and $\alpha\in\kappa$, then $\ED_b\equiv\AL_{b,h}$.
\end{thm}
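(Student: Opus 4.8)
The direction $\ED_b\preceq\AL_{b,h}$ is already available: by \Cref{evt difference and antiloc} we have $\ED_b\equiv\AL_{b,\bar 2}$, and since $\bar 2\leq^* h$ and $b\leq^* b$, \Cref{monotonicity aloc} gives $\AL_{b,\bar 2}\preceq\AL_{b,h}$. So the real work is the reverse Tukey connection $\AL_{b,h}\preceq\ED_b$, i.e. building $\rho_-:\Loc_\kappa^{b,h}\to\prod b$ and $\rho_+:\prod b\to\prod b$ such that whenever $\rho_-(\phi)\neqi g$ (the ``challenger on the $\ED_b$ side wins''), then $\phi\nnii\rho_+(g)$. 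The key structural fact we exploit is that $b$ is constant on each block $I_\alpha$ with value $b(\alpha)=b(\alpha)^{h(\alpha)}$, so a single element of $b(\alpha)$ can code an entire function from $I_\alpha$ into the common value $b(\alpha)$; equivalently, via a fixed bijection $e_\alpha:\prod_{\xi\in I_\alpha}b(\xi)\bij b(\alpha)$ (using $b(\xi)=b(\alpha)$ for $\xi\in I_\alpha$ and $|I_\alpha|=h(\alpha)$), an element of $\prod b$ restricted to the block $I_\alpha$ is the same data as a single ordinal $<b(\alpha)$.

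Here is the construction I would set up. Fix the bijections $e_\alpha$ as above, and let $\pi:\kappa\to\kappa$ send $\xi$ to the unique $\alpha$ with $\xi\in I_\alpha$. Given a slalom $\phi\in\Loc_\kappa^{b,h}$, for each $\alpha$ we have $|\phi(\xi)|<h(\xi)$ for all $\xi\in I_\alpha$; since $|I_\alpha|=h(\alpha)$ (and $h$ is increasing so $h(\xi)\le$ something controlled on the block — I will need $h$ essentially constant on $I_\alpha$ too, which should follow from $b(\xi)=b(\alpha)^{h(\alpha)}$ being constant and the slalom width being tied to $b$; if not I restrict to the minimum $h$-value on the block), I can pick for each $\xi\in I_\alpha$ a value $r_\phi(\xi)\in b(\xi)\setminus\phi(\xi)$, and then set $\rho_-(\phi)(\alpha)=e_\alpha\big(\langle r_\phi(\xi)\mid \xi\in I_\alpha\rangle\big)\in b(\alpha)$, and extend $\rho_-(\phi)$ arbitrarily off $\bigcup_\alpha\{\min I_\alpha\}$ — actually cleaner: let $\rho_-(\phi)\in\prod b$ be the function on all of $\kappa$ whose block-codes are these $e_\alpha$-values (it only needs to be defined where the coding lives; pad elsewhere). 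Dually, given $g\in\prod b$, decode: on block $I_\alpha$ let $\rho_+(g)\restriction I_\alpha = e_\alpha^{-1}(g(\min I_\alpha))$, i.e. $\rho_+(g)(\xi)$ is the $\xi$-th coordinate of the block-function coded by $g$ at the head of the block. Then the verification is: if $\rho_-(\phi)\neqi g$, there are cofinally many $\alpha$ with the head-codes agreeing, hence cofinally many blocks on which $\rho_+(g)\restriction I_\alpha = r_\phi\restriction I_\alpha$; for each such block and each $\xi\in I_\alpha$ we have $\rho_+(g)(\xi)=r_\phi(\xi)\notin\phi(\xi)$, so $\rho_+(g)(\xi)\notin\phi(\xi)$ for cofinally many $\xi\in\kappa$, i.e. $\phi\nnii\rho_+(g)$. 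That is exactly $\AL_{b,h}\preceq\ED_b$.

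The main obstacle I anticipate is bookkeeping around the indices: making sure the ``agreement at $\neqi$-many coordinates'' on the $\prod b$ side translates to the right cofinality statement after the block decoding, and in particular that I feed the $\neqi$ information at the \emph{heads} of blocks (so I should really let the $\ED_b$-side functions be supported on $\{\min I_\alpha\mid\alpha<\kappa\}$, or post-compose with the order isomorphism $\kappa\cong\{\min I_\alpha\}$, to avoid the coded information being ``diluted'' across a block where only one coordinate matters). A secondary subtlety is the slalom-width condition: I need $\phi(\xi)\subsetneq b(\xi)$ with room to choose $r_\phi(\xi)$, which holds since $|\phi(\xi)|<h(\xi)\le b(\xi)$; and I should double-check that $h$ restricted to $I_\alpha$ doesn't exceed $b(\alpha)$ in a way that breaks $e_\alpha$ being a bijection onto $b(\alpha)$ — but $b(\alpha)=b(\alpha)^{h(\alpha)}$ is precisely the cardinal arithmetic that makes $|\prod_{\xi\in I_\alpha}b(\xi)|=b(\alpha)^{h(\alpha)}=b(\alpha)$, so $e_\alpha$ exists. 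Once these index conventions are pinned down, both $\rho_-,\rho_+$ and the implication are routine, and combined with the easy direction we get $\ED_b\equiv\AL_{b,h}$.
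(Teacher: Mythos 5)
Your easy direction is fine. The problem is the Tukey connection $\AL_{b,h}\preceq\ED_b$: both the construction and its verification are wrong, and the errors are not just bookkeeping. On the verification: $\rho_-(\phi)\neqi g$ means the two functions agree at only \emph{boundedly} many coordinates, so it gives you cofinally many blocks where the head-codes \emph{disagree}, not agree; and $\phi\nnii\rho_+(g)$ requires $\rho_+(g)(\xi)\notin\phi(\xi)$ for \emph{almost all} $\xi$, not merely cofinally many. On the construction: even with the relations read correctly, coding the avoiding function $r_\phi$ cannot work. From disagreement of the codes at block $\alpha$ you only learn that the decoded block function differs from $r_\phi\restriction I_\alpha$ somewhere, which says nothing about whether it meets $\phi$ on $I_\alpha$. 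Indeed, take any $\phi$ with $\phi(\xi)\neq\emp$ for all $\xi$ and any $g$ coding on each block a function lying pointwise inside $\phi$: then $\rho_+(g)\ini\phi$ (in fact everywhere), while $g$ disagrees with your $\rho_-(\phi)$ on every block, so the required implication $\rho_-(\phi)\neqi g\Rightarrow\phi\nnii\rho_+(g)$ fails outright. No choice of $\rho_-$ repairs this as long as $\rho_+$ decodes: membership of one decoded coordinate in $\phi(\xi)$ constrains $g(\alpha)$ only to a set of full size $b(\alpha)$, so it can never force equality with a single prescribed value.

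The missing idea is to run the coding in the other direction and to diagonalise over the slalom's \emph{elements}. Let $\rho_+(f):\alpha\mapsto\pi_\alpha^{-1}(f\restriction I_\alpha)$ \emph{encode} each block restriction as a single ordinal below $b(\alpha)$. For $\rho_-$, enumerate $\phi(\alpha)=\sst{x^\alpha_\xi\mid\xi<\lambda_\alpha}$ with $\lambda_\alpha=|\phi(\alpha)|<h(\alpha)=|I_\alpha|$, decode each $x^\alpha_\xi$ to a function $g^\alpha_\xi=\pi_\alpha(x^\alpha_\xi)$ on $I_\alpha$, and let $g=\rho_-(\phi)$ satisfy $g(\iota_\alpha+\xi)=g^\alpha_\xi(\iota_\alpha+\xi)$; the bound $\lambda_\alpha<|I_\alpha|$ is exactly what lets this diagonal fit inside the block. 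Now $\rho_+(f)(\alpha)\in\phi(\alpha)$ forces $f\restriction I_\alpha$ to \emph{be} one of the $<h(\alpha)$ many functions $g^\alpha_\xi$, whence $f(\iota_\alpha+\xi)=g(\iota_\alpha+\xi)$. Thus $\rho_+(f)\ini\phi$ implies $f\eqi g$, which is the contrapositive of what you need. Your worry about where in the block to place the code is a red herring; the real issue is what gets coded and in which direction.
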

\begin{proof}
We already know $\ED_b\preceq\AL_{b,h}$ from \Cref{cardinal relations fixed b h}, thus we show that $\AL_{b,h}\preceq \ED_b$. We do this by giving a Tukey connection $\rho_-:\Loc_\kappa^{b,h}\to\prod b$ and $\rho_+:\prod b\to\prod b$. For each $\alpha\in\kappa$ let $\pi_\alpha:b(\alpha)\to {}^{I_\alpha}b(\alpha)$ be a bijection, which exist by $b(\alpha)^{h(\alpha)}=b(\alpha)$.

Given $\phi\in\Loc_\kappa^{b,h}$, let $\lambda_\alpha=|\phi(\alpha)|$ and enumerate each $\phi(\alpha)=\sst{x^\alpha_\xi\in b(\alpha)\mid \xi\in\lambda_\alpha}$. We define $g_\xi^\alpha=\pi_\alpha(x^\alpha_\xi)\in {}^{I_\alpha}b(\alpha)$ for all $\alpha\in \kappa$ and $\xi\in\lambda_\alpha$. Fix $\iota_\alpha=\min(I_\alpha)$, then for every $\xi\in\lambda_\alpha$ we see that $\iota_\alpha+\xi\in I_\alpha$, since $\lambda_\alpha<h(\alpha)=|I_\alpha|$. Let $g\in\prod b$ be a function such that $g(\iota_\alpha+\xi)=g^\alpha_\xi(\iota_\alpha+\xi)$ for each $\alpha\in\kappa$ and $\xi\in\lambda_\alpha$. This is well defined, because $b(\alpha)=b(\xi)$ for all $\xi\in I_\alpha$. We let $\rho_-(\phi)=g$.

 We define $\rho_+(f)\in\prod b$ sending $\alpha\mapsto \pi_\alpha^{-1}(f\restriction I_\alpha)$.

Now suppose that $\phi\in\Loc_\kappa^{b,h}$ and $f\in\prod b$ and let $g=\rho_-(\phi)$ and $f'=\rho_+(f)$. Suppose that $f'\in^\infty \phi$ and $\alpha_0\in\kappa$, then there is $\alpha>\alpha_0$ such that $f'(\alpha)\in \phi(\alpha)$, so pick $\xi\in\lambda_\alpha$ such that $x^\alpha_\xi=f'(\alpha)$. Then we see 
\begin{align*}
g^\alpha_\xi=\pi_\alpha(x_\xi^\alpha)=\pi_\alpha(f'(\alpha))=\pi_\alpha(\pi^{-1}_\alpha(f\restriction I_\alpha))=f\restriction I_\alpha.
\end{align*}
In particular $f(\iota_\alpha+\xi)=g_\xi^\alpha(\iota_\alpha+\xi)=g(\iota_\alpha+\xi)$, and since $\iota_\alpha+\xi>\alpha_0$ and $\alpha_0$ was arbitrary, it follows that $f=^\infty g$.
\end{proof}

Note that the proof does not require the assumption that $b(\alpha)<\kappa$ for $\alpha\in\kappa$. Indeed, the above theorem holds even if we let $b:\kappa\to\st\kappa$, since $\kappa$ is inaccessible. It follows that the cardinal characteristics $\dinf{h}$ and $\binf{h}$ from the (unbounded) generalised Baire space do not depend on the choice of $h\in\gbs$, and can be related to the $\kappa$-meagre ideal as a consequence of \Cref{evt dif and meagre}.

\begin{crl}\label{uninteresting antiloc}
$\dinf{h}=\cov(\c M_\kappa)$ and $\binf{h}=\non(\c M_\kappa)$ for any choice of $h\in\gbs$.
\end{crl}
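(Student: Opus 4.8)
The plan is to read this off from \Cref{antiloc below event dif} applied with the constant bound $b=\bar\kappa$, together with \Cref{evt dif and meagre}.

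First I would verify that $b=\bar\kappa$ and the given $h\in\gbs$ satisfy the hypotheses of \Cref{antiloc below event dif}. Since $h$ is increasing with $h(\alpha)<\kappa$ for all $\alpha$ and $\kappa$ is regular, the partial sums $\sigma_\alpha=\sum_{\beta<\alpha}h(\beta)$ stay below $\kappa$ and are cofinal in $\kappa$, so $\ab{I_\alpha\mid \alpha<\kappa}$ with $I_\alpha=[\sigma_\alpha,\sigma_{\alpha+1})$ is an interval partition of $\kappa$ with $|I_\alpha|=h(\alpha)$. The remaining requirement $b(\alpha)=b(\xi)=b(\alpha)^{h(\alpha)}$ for $\xi\in I_\alpha$ becomes the identity $\kappa=\kappa^{h(\alpha)}$, which holds because $\kappa$ is inaccessible (so $\kappa^{<\kappa}=\kappa$, hence $\kappa^\mu=\kappa$ for every $\mu<\kappa$). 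As noted immediately after \Cref{antiloc below event dif}, its proof never uses $b(\alpha)<\kappa$, so it applies with $b=\bar\kappa$ and yields $\ED_{\bar\kappa}\equiv\AL_{\bar\kappa,h}$.

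Taking norms, and using that Tukey equivalence preserves the norm of a relational system and of its dual, this gives $\dinf{h}=\norm{\AL_{\bar\kappa,h}}=\norm{\ED_{\bar\kappa}}=\dneq{}$ and $\binf{h}=\norm{\AL_{\bar\kappa,h}^\lc}=\norm{\ED_{\bar\kappa}^\lc}=\bneq{}$, recalling the convention of suppressing the parameter $\bar\kappa$ in the unbounded setting. Finally \Cref{evt dif and meagre} identifies $\dneq{}=\cov(\c M_\kappa)$ and $\bneq{}=\non(\c M_\kappa)$, which is the claim, and the choice of $h$ was arbitrary.

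There is essentially no obstacle here; the single point worth stating explicitly is the arithmetic identity $\kappa^{h(\alpha)}=\kappa$, which is precisely where inaccessibility of $\kappa$ enters and which ensures that the bijections used in the proof of \Cref{antiloc below event dif} still exist when $b=\bar\kappa$.
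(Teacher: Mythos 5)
Your proposal is correct and is exactly the paper's argument: the remark immediately preceding \Cref{uninteresting antiloc} notes that the proof of \Cref{antiloc below event dif} never uses $b(\alpha)<\kappa$ and applies with $b:\kappa\to\st\kappa$ since $\kappa$ is inaccessible, after which \Cref{evt dif and meagre} finishes the identification. Your explicit verification of the interval partition and of $\kappa^{h(\alpha)}=\kappa$ is the right (and only) point to check.
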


We will conclude this subsection with a relationship between antilocalisation and localisation for different parameters.
\begin{thm}\label{aloc loc intraction}
Let $h\cdot h'<^*b$ and $b^{g}\leq^* b'$ for all $g<h$, then there exists a Tukey connection ${\AL_{b,h}}\preceq {\sr L_{b',h'}}$.
\end{thm}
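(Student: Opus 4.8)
The plan is to build the Tukey connection so that $\rho_-$ codes, at each coordinate $\alpha$, the whole column $\phi(\alpha)$ of a $(b,h)$-slalom $\phi$ as a single point of $b'(\alpha)$, while $\rho_+$ decodes a $(b',h')$-slalom $\psi$ into a function in $\prod b$ that dodges every $(b,h)$-column that happens to be coded inside $\psi$.

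First I would record the cardinal arithmetic fact that $\card{[b(\alpha)]^{<h(\alpha)}}\leq b'(\alpha)$ for almost all $\alpha\in\kappa$. This is where the hypothesis that $b^g\leq^* b'$ for every $g<h$ is used: writing $\card{[b(\alpha)]^{<h(\alpha)}}=b(\alpha)^{<h(\alpha)}=\sup\st{b(\alpha)^\mu \mid \mu<h(\alpha),\ \mu\text{ a cardinal}}$, if this failed for cofinally many $\alpha$ then for each such $\alpha$ one could pick a cardinal $\mu_\alpha<h(\alpha)$ with $b(\alpha)^{\mu_\alpha}>b'(\alpha)$, and the function $g$ sending these $\alpha$ to $\mu_\alpha$ and every other $\alpha$ to $1$ would satisfy $g<h$ and $b^g\nleqs b'$, a contradiction. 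Together with $h\cdot h'<^* b$ this yields some $\alpha_0\in\kappa$ past which both $\card{[b(\alpha)]^{<h(\alpha)}}\leq b'(\alpha)$ and $h(\alpha)\cdot h'(\alpha)<b(\alpha)$ hold; for each $\alpha\geq\alpha_0$ fix an injection $c_\alpha:[b(\alpha)]^{<h(\alpha)}\inj b'(\alpha)$.

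Then define $\rho_-:\Loc_\kappa^{b,h}\to\prod b'$ by $\rho_-(\phi)(\alpha)=c_\alpha(\phi(\alpha))$ for $\alpha\geq\alpha_0$ and arbitrarily otherwise; this is well defined because $\phi(\alpha)\in[b(\alpha)]^{<h(\alpha)}$. For $\psi\in\Loc_\kappa^{b',h'}$ and $\alpha\geq\alpha_0$ let $S_\psi(\alpha)=\Cup\st{c_\alpha^{-1}(y)\mid y\in\psi(\alpha)\cap\ran(c_\alpha)}$, a union of fewer than $h'(\alpha)$ sets each of size $<h(\alpha)$, so $\card{S_\psi(\alpha)}\leq h(\alpha)\cdot h'(\alpha)<b(\alpha)$ and hence $b(\alpha)\setminus S_\psi(\alpha)\neq\emp$; set $\rho_+(\psi)(\alpha)=\min\rb{b(\alpha)\setminus S_\psi(\alpha)}$ for $\alpha\geq\alpha_0$ and arbitrarily otherwise, so that $\rho_+(\psi)\in\prod b$. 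To check the Tukey property, suppose $\rho_-(\phi)\in^*\psi$. For almost every $\alpha$ we then have $c_\alpha(\phi(\alpha))=\rho_-(\phi)(\alpha)\in\psi(\alpha)$, so $\phi(\alpha)=c_\alpha^{-1}\rb{c_\alpha(\phi(\alpha))}\subset S_\psi(\alpha)$, and therefore $\rho_+(\psi)(\alpha)\notin\phi(\alpha)$; thus $\rho_+(\psi)\nini\phi$, that is, $\phi\nnii\rho_+(\psi)$, which is exactly what the Tukey connection demands.

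The verifications that $\rho_-(\phi)\in\prod b'$, that $\rho_+(\psi)\in\prod b$, and the final implication are routine. The one genuinely delicate point is the cardinal arithmetic claim in the second paragraph, especially when $h(\alpha)$ is a limit cardinal and $b(\alpha)^{<h(\alpha)}$ is a supremum attained by no single exponent $<h(\alpha)$; the key is that the hypothesis quantifies over \emph{all} $g<h$, so a coordinatewise best choice of $g$ can be pieced together on any hypothetical cofinal set of bad coordinates. A secondary thing to be careful about is the estimate $\card{S_\psi(\alpha)}\leq h(\alpha)\cdot h'(\alpha)$, which only gives $<b(\alpha)$ because we assumed $h\cdot h'<^* b$ and not merely $h\cdot h'\leq^* b$.
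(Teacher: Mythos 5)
Your proof is correct and follows essentially the same route as the paper's: fix injections $[b(\alpha)]^{<h(\alpha)}\inj b'(\alpha)$ on a tail, let $\rho_-$ code each column $\phi(\alpha)$ as a point of $b'(\alpha)$, and let $\rho_+$ diagonalise out of the union $\Cup_{\xi\in\psi(\alpha)\cap\ran(\iota_\alpha)}\iota_\alpha^{-1}(\xi)$, which has size at most $h(\alpha)\cdot h'(\alpha)<b(\alpha)$. Your explicit contrapositive argument that $b^g\leq^*b'$ for all $g<h$ yields $\card{[b(\alpha)]^{<h(\alpha)}}\leq b'(\alpha)$ on a single tail is a detail the paper leaves implicit, and it is a worthwhile addition.
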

\begin{proof}
Let $\alpha_0\in\kappa$ be large enough such that $h(\alpha)\cdot h'(\alpha)<b(\alpha)$ and $b(\alpha)^{<h(\alpha)}\leq b'(\alpha)$ for all $\alpha\geq \alpha_0$. For each $\alpha\geq\alpha_0$, we fix an injection $\iota_\alpha:[b(\alpha)]^{< h(\alpha)}\inj b'(\alpha)$.

Given $\phi\in\Loc_\kappa^{b,h}$, let $\rho_-(\phi)=f'\in\prod b'$ map $\alpha\mapsto \iota_\alpha(\phi(\alpha))$ for any $\alpha\geq\alpha_0$, and arbitrary otherwise. Given $\phi'\in\Loc_\kappa^{b',h'}$, let $\rho_+(\phi)=f\in\prod b$ have $f(\alpha)\in b(\alpha)\setminus \Cup_{\xi\in\phi'(\alpha)\cap \ran(\iota_\alpha)}\iota_\alpha^{-1}(\xi)$ for any $\alpha\geq\alpha_0$, and arbitrary otherwise. Note that this is well-defined, since $|\phi'(\alpha)|< h'(\alpha)$ and $|\iota_\alpha^{-1}(\xi)|< h(\alpha)$, therefore $|\Cup_{\xi\in\phi'(\alpha)\cap \ran(\iota_\alpha)}\iota_\alpha^{-1}(\xi)|\leq h(\alpha)\cdot h'(\alpha)<b(\alpha)$.

If $\alpha\geq\alpha_0$ and $f'(\alpha)\in\phi'(\alpha)$, then $\phi(\alpha)=\iota_\alpha^{-1}(f'(\alpha))\subset \Cup_{\xi\in\phi'(\alpha)\cap\ran(\iota_\alpha)}\iota_\alpha^{-1}(\xi)$, so $f(\alpha)\notin \phi(\alpha)$. Therefore $f'\in^*\phi'$ implies $f\nini\phi$.
\end{proof}

\subsection{Infima and Suprema}

Remember that we have the following monotonicity results from \Cref{monotonicity loc,monotonicity aloc}. Let $b\leq b'$. Then:
\begin{align*}
\bstar{b',h}&\leq\bstar{b,h}&\dstar{b,h}&\leq\dstar{b',h}\\
\binf{b,h}&\leq\binf{b',h}&\dinf{b',h}&\leq\dinf{b,h}\qedhere
\end{align*}
This motivates the definition of the following cardinal characteristics.
\begin{dfn}
Given $h\in\gbs$, let:
\begin{align*}
\f{inf}_\kappa^h(\ins)&=\inf\sst{\bstar{b,h}\mid b\in\gbs}&
\f{sup}_\kappa^h(\ins)&=\sup\sst{\dstar{b,h}\mid b\in\gbs}\\
\f{inf}_\kappa^h(\nnii)&=\inf\sst{\dinf{b,h}\mid b\in\gbs}&
\f{sup}_\kappa^h(\nnii)&=\sup\sst{\binf{b,h}\mid b\in\gbs}\\
\f{inf}_\kappa(\neqi)&=\inf\sst{\dneq{b}\mid b\in\gbs}&
\f{sup}_\kappa(\neqi)&=\sup\sst{\bneq{b}\mid b\in\gbs}\qedhere
\end{align*}\vspace{-1.5\baselineskip}
\end{dfn}
Note that for $\ins$ we take infima over the $\f b$-side cardinals, while for $\nnii$ and $\neqi$ we take infima over the $\f d$-side cardinals, and vice versa for the suprema. Note also that $\f{inf}_\kappa(\neqi)=\f{inf}_\kappa^{\bar 2}(\nnii)$ and $\f{sup}_\kappa(\neqi)=\f{sup}_\kappa^{\bar 2}(\nnii)$ by \Cref{evt difference and antiloc}.

In [CM19] it is proved that the parameter $h$ does not matter if we work in $\bs$. This is based on two claims which we repeat below.

\begin{fct}\label{classical loc claim}\renewcommand{\qed}{\hfill$\square$}
For any $b,h,h'\in\bs$ such that $h\leq h'$ there exists $b'\in\bs$ such that $\f d_\omega^{b,h}(\ins)\leq\f d_\omega^{b',h'}(\ins)$ and $\f b_\omega^{b',h'}(\ins)\leq\f b_\omega^{b,h}(\ins)$.
\end{fct}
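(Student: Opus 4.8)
The plan is to produce a suitable $b'\in\bs$ together with a Tukey connection $\sr L_{b,h}\preceq\sr L_{b',h'}$ between the two localisation relational systems; by the lemma relating Tukey connections to norms this at once yields both $\f d_\omega^{b,h}(\ins)=\norm{\sr L_{b,h}}\leq\norm{\sr L_{b',h'}}=\f d_\omega^{b',h'}(\ins)$ and, on the dual side, $\f b_\omega^{b',h'}(\ins)=\norm{\sr L_{b',h'}^\lc}\leq\norm{\sr L_{b,h}^\lc}=\f b_\omega^{b,h}(\ins)$. I work under the usual conventions: $h$ is increasing and cofinal (hence so is $h'\geq h$) and $h\leq b$, the remaining degenerate cases being handled trivially via \Cref{totally trivial loc,loc cofinal subset}.

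The idea is to let each coordinate of $b'$ encode a whole \emph{block} of coordinates of $b$, so that the unavoidable loss of width in passing from $h'$ to $h$ is compensated by the growth of $h$ along the block. First I would fix an interval partition $\langle[a_k,a_{k+1})\mid k\in\omega\rangle$ of $\omega$, with $a_0=0$, such that $h'(k)\leq h(a_k)$ for all $k\geq1$; this is possible precisely because $h$ is cofinal (e.g. $a_{k+1}=\max\{a_k+1,\min\{a:h(a)\geq h'(k+1)\}\}$), and $a_k\to\infty$. Put $b'(k)=\max\{\prod_{j\in[a_k,a_{k+1})}b(j),\,h'(k)\}$ and fix injections $\iota_k:\prod_{j\in[a_k,a_{k+1})}b(j)\inj b'(k)$. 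Define $\rho_-:\prod b\to\prod b'$ by $\rho_-(f)(k)=\iota_k(f\restriction[a_k,a_{k+1}))$, and $\rho_+:\Loc_\omega^{b',h'}\to\Loc_\omega^{b,h}$ by decoding: for $n\in[a_k,a_{k+1})$ with $k\geq1$ set $\rho_+(\phi')(n)=\{\iota_k^{-1}(x)(n):x\in\phi'(k)\cap\ran(\iota_k)\}$ (and anything of size ${<}h(n)$ on the first block). Then $|\rho_+(\phi')(n)|\leq|\phi'(k)|<h'(k)\leq h(a_k)\leq h(n)$, so $\rho_+(\phi')$ really is a $(b,h)$-slalom; and if $\rho_-(f)\in^*\phi'$ then for all large $k$ the value $\iota_k(f\restriction[a_k,a_{k+1}))$ lies in $\phi'(k)\cap\ran(\iota_k)$, whence $f(n)\in\rho_+(\phi')(n)$ for all large $n$, i.e. $f\in^*\rho_+(\phi')$. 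This is the desired Tukey connection.

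The only loose end is that this $b'$ need not be increasing; to repair this I would replace it by any increasing $\tilde b'\geq b'$ (still $\geq h'$) and compose with $\sr L_{b',h'}\preceq\sr L_{\tilde b',h'}$, which holds by monotonicity of localisation in the $b$-parameter (the classical analogue of \Cref{monotonicity loc}). The one genuinely load-bearing step is the choice of the interval partition, and this is exactly where cofinality of $h$ is used: it is indispensable, since if $h$ were bounded on a cofinal set while $h'$ were not, one would have $\f d_\omega^{b,h}(\ins)=2^{\aleph_0}$ (classical analogue of \Cref{bounds on dstar}) yet $\f d_\omega^{b',h'}(\ins)\leq\cof(\c N)$ for every $b'$. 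So the main obstacle is conceptual rather than computational — recognising that one should code blocks of $b$ into single coordinates of $b'$ (not the reverse, which forces the width down and fails), after which everything is routine verification.
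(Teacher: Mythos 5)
Your proof is correct and matches the paper's own approach: the paper imports this statement from [CM19] without proof, but it proves the $\kappa$-generalisation (the lemma yielding $\sr L_{b,h}\preceq\sr L_{b',h'}$ from an interval partition on whose $\alpha$-th block $h$ dominates $h'(\alpha)$) by exactly your block-coding Tukey connection, with bijections $\pi_\alpha$ onto $b'(\alpha)=\card{\prod_{\xi\in I_\alpha}b(\xi)}$ where you use injections $\iota_k$. Your closing observation that cofinality of $h$ is a genuine standing hypothesis rather than a trivially dispatched degenerate case is also accurate.
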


\begin{fct}\label{classical aloc claim}\renewcommand{\qed}{\hfill$\square$}
For any $b',h\in\bs$ such that $\bar 2\leq h$ there exists $b\in\bs$ such that $\f d_\omega^{b,h}(\nnii)\leq\f d_\omega^{b'}(\neqi)$ and $\f b_\omega^{b'}(\neqi)\leq\f b_\omega^{b,h}(\nnii)$.
\end{fct}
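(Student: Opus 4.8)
The plan is to reduce the statement to constructing, for the given $b',h$, a single $b\in\bs$ admitting a Tukey connection $\AL_{b,h}\preceq\ED_{b'}$ of the classical relational systems over $\omega$; the norm-versus-Tukey lemma then yields $\f d_\omega^{b,h}(\nnii)=\norm{\AL_{b,h}}\leq\norm{\ED_{b'}}=\f d_\omega^{b'}(\neqi)$ and dually $\f b_\omega^{b'}(\neqi)=\norm{\ED_{b'}^\lc}\leq\norm{\AL_{b,h}^\lc}=\f b_\omega^{b,h}(\nnii)$, i.e.\ exactly the two asserted inequalities. The construction is the classical, finite-value counterpart of \Cref{antiloc below event dif}: there one \emph{compresses} an interval-indexed product into a single coordinate by means of $b(\alpha)^{h(\alpha)}=b(\alpha)$, which is unavailable for finite values, so here one instead \emph{expands}, letting each coordinate of $\prod b$ stand for a whole block of coordinates of $\prod b'$.

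Concretely, first I would fix an interval partition $\ab{I_n\mid n\in\omega}$ of $\omega$ with $|I_n|\geq h(n)$ for every $n$, and set $b(n)=\max\st{h(n),\ \prod_{k\in I_n}b'(k)}$, where an element of $\prod_{k\in I_n}b'(k)\subseteq b(n)$ is identified with a function $t\colon I_n\to\omega$ satisfying $t(k)<b'(k)$; note $h\leq b$, so $\AL_{b,h}$ is non-degenerate. The Tukey connection $\ab{\rho_-,\rho_+}$ is then given by: $\rho_+\colon\prod b'\to\prod b$ sends $f'$ to the function $n\mapsto f'\restriction I_n$ (which lies in $b(n)$ under the identification), and $\rho_-\colon\Loc_\omega^{b,h}\to\prod b'$ sends a slalom $\phi$ to the function $g'$ built block-by-block as follows. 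Fix the increasing enumeration $I_n=\st{k^n_0,k^n_1,\dots}$ and list the relevant part of the slalom as $\phi(n)\cap\prod_{k\in I_n}b'(k)=\st{t^n_j\mid j<\lambda_n}$ with $\lambda_n<h(n)\leq|I_n|$; put $g'(k^n_j)=t^n_j(k^n_j)$ for each $j<\lambda_n$, and $g'(k)=0$ for the remaining $k\in I_n$. The verification goes through the contrapositive of the Tukey condition: if $\rho_+(f')\ini\phi$, then for infinitely many $n$ we have $f'\restriction I_n=t^n_j$ for some $j<\lambda_n$, and then $f'(k^n_j)=t^n_j(k^n_j)=g'(k^n_j)$; since the blocks $I_n$ are pairwise disjoint, $g'=\rho_-(\phi)$ agrees with $f'$ at infinitely many points, i.e.\ $\rho_-(\phi)\eqi f'$. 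Equivalently $\rho_-(\phi)\neqi f'$ implies $\rho_+(f')\nini\phi$, which is precisely the required Tukey condition.

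The only place with real content is the diagonalisation in $\rho_-$: at each $n$ the slalom contributes fewer than $h(n)$ relevant tuples while the block $I_n$ carries at least $h(n)$ coordinates, so one can reserve a private coordinate of $I_n$ for each listed tuple and force an agreement there. Everything else is bookkeeping — chiefly keeping the identification of $b(n)$ with the block product $\prod_{k\in I_n}b'(k)$ straight, so that ``$\rho_+(f')(n)\in\phi(n)$'' really does say ``$f'\restriction I_n$ equals one of the $t^n_j$''. The one genuinely separate case is when $b'(k)\leq 1$ for all but finitely many $k$: then $\ED_{b'}$ is degenerate (any two elements are infinitely equal, so $\norm{\ED_{b'}}$ is undefined and $\norm{\ED_{b'}^\lc}=1$) and any $b$ with $h\leq b$ works trivially. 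No preservation theorems or forcing are needed; this is a direct combinatorial reduction.
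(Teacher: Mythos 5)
Your construction is correct and is essentially the same argument the paper uses for the generalised version (\Cref{aloc claim}): an interval partition with blocks of size $h$, coding $\prod_{k\in I_n}b'(k)$ into $b(n)$ via $\rho_+(f')(n)=f'\restriction I_n$, and diagonalising the slalom across its block by reserving one coordinate per listed tuple (the paper phrases this with a surjection $I_\alpha\twoheadrightarrow\phi(\alpha)$ rather than an injective enumeration padded by $0$, which is the same device). Your extra care with $b(n)=\max\{h(n),\prod_{k\in I_n}b'(k)\}$ and the degenerate case $b'\leq^*\bar 1$ is a sensible finite-value adjustment but does not change the substance.
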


In the generalised context, \Cref{classical loc claim} can only be partially generalised, and indeed we will see that differences in the parameter $h$ can lead to consistently different cardinals. Meanwhile \Cref{classical aloc claim} is completely generalisable and we will look at this first. 
One could compare the next lemma to \Cref{antiloc below event dif}.

\begin{lmm}\label{aloc claim}
For any $b',h\in\gbs$ such that $\bar 2\leq h$ there exists $b\in\gbs$ such that $\AL_{b,h}\preceq\ED_{b'}$.
\end{lmm}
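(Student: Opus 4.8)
The plan is to replay the encoding used in the proof of \Cref{antiloc below event dif}, but in the form where $b$ is produced from $b'$ and $h$ rather than constrained by a self-referential arithmetic condition. First I would fix an interval partition $\ab{I_\alpha\mid\alpha\in\kappa}$ of $\kappa$ with $\ot(I_\alpha)=h(\alpha)$ for every $\alpha$ (possible since $\kappa$ is regular and each $h(\alpha)<\kappa$), write $\iota_\alpha=\min I_\alpha$, so that $\iota_\alpha+\zeta\in I_\alpha$ exactly for $\zeta<h(\alpha)$, and put $P_\alpha=\prod_{\xi\in I_\alpha}b'(\xi)$ for the set of functions $s$ with domain $I_\alpha$ such that $s(\xi)\in b'(\xi)$ for all $\xi\in I_\alpha$. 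Then I would define $b\in\gbs$ by $b(\alpha)=|P_\alpha|$; since $\kappa$ is inaccessible this is an infinite cardinal below $\kappa$, and $h(\alpha)<2^{h(\alpha)}\leq b(\alpha)$, so $h\leq b$ and $\AL_{b,h}$ is meaningful (the standing hypothesis $\bar 2\leq h$ keeps us in the nontrivial regime). Finally fix bijections $\pi_\alpha\colon b(\alpha)\bij P_\alpha$.

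Next I would give the Tukey pair $\rho_-\colon\Loc_\kappa^{b,h}\to\prod b'$ and $\rho_+\colon\prod b'\to\prod b$ witnessing $\AL_{b,h}\preceq\ED_{b'}$. Given $\phi\in\Loc_\kappa^{b,h}$, enumerate $\phi(\alpha)=\st{x_\zeta^\alpha\mid\zeta<\lambda_\alpha}$ with $\lambda_\alpha=|\phi(\alpha)|<h(\alpha)$, set $s_\zeta^\alpha=\pi_\alpha(x_\zeta^\alpha)\in P_\alpha$, and let $\rho_-(\phi)=g$ be the function with $g(\iota_\alpha+\zeta)=s_\zeta^\alpha(\iota_\alpha+\zeta)$ for $\zeta<\lambda_\alpha$ and $g(\iota_\alpha+\zeta)=0$ for $\lambda_\alpha\leq\zeta<h(\alpha)$; this lies in $\prod b'$ because $s_\zeta^\alpha(\xi)\in b'(\xi)$ for $\xi\in I_\alpha$. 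Given $f\in\prod b'$, let $\rho_+(f)(\alpha)=\pi_\alpha^{-1}(f\restriction I_\alpha)\in b(\alpha)$, which is well-defined since $f\restriction I_\alpha\in P_\alpha$.

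It then remains to verify the Tukey condition, namely that $\rho_-(\phi)\neqi f$ implies $\rho_+(f)\nini\phi$, or equivalently, in contrapositive form, that $\rho_+(f)\ini\phi$ implies $\rho_-(\phi)\eqi f$. So suppose $\rho_+(f)(\alpha)\in\phi(\alpha)$ for cofinally many $\alpha\in\kappa$. For each such $\alpha$ we have $\pi_\alpha^{-1}(f\restriction I_\alpha)=x_\zeta^\alpha$ for some $\zeta<\lambda_\alpha$, hence $f\restriction I_\alpha=\pi_\alpha(x_\zeta^\alpha)=s_\zeta^\alpha$, and therefore $g(\iota_\alpha+\zeta)=s_\zeta^\alpha(\iota_\alpha+\zeta)=f(\iota_\alpha+\zeta)$, where $\iota_\alpha+\zeta\geq\iota_\alpha\geq\alpha$. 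Since there are cofinally many such $\alpha$, the ordinals $\iota_\alpha+\zeta$ arising this way are cofinal in $\kappa$ and $g$ agrees with $f$ at all of them, so $\rho_-(\phi)=g\eqi f$, as required.

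I do not expect a genuine obstacle here: this is essentially the version of \Cref{antiloc below event dif} in which $b$ is manufactured from $b'$ and $h$. The only delicate point is the bookkeeping in the last paragraph — checking that one agreement coordinate inside each of cofinally many blocks $I_\alpha$ really yields a cofinal set of agreement coordinates (which works because $\iota_\alpha\geq\alpha$), and keeping straight that the contrapositive of ``eventually different $\Rightarrow$ antilocalised'' is ``localised cofinally often $\Rightarrow$ cofinally equal''.
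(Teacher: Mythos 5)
Your proposal is correct and is essentially the paper's own proof: the same interval partition with blocks of size $h(\alpha)$, the same definition $b(\alpha)=\card{\prod_{\xi\in I_\alpha}b'(\xi)}$ with bijections $\pi_\alpha$, and the same verification via the contrapositive. The only cosmetic difference is that you index $\phi(\alpha)$ by an enumeration and pad the unused coordinates of the block with $0$, whereas the paper spreads $\phi(\alpha)$ over the whole block via a surjection $I_\alpha\srj\phi(\alpha)$; both yield the same Tukey connection.
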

\begin{proof}
Let $\ab{I_\alpha\mid \alpha\in\kappa}$ be the (unique) interval partition of $\kappa$ such that  $I_\alpha\subset\min(I_\beta)$ for $\alpha<\beta$ and $|I_\alpha|=h(\alpha)$ for all $\alpha\in\kappa$. We define $b(\alpha)=\card{\prod_{\xi\in I_\alpha}b'(\xi)}$ and let $\pi_\alpha:b(\alpha)\bij \prod_{\xi\in I_\alpha}b'(\xi)$ be a bijection for each $\alpha\in\kappa$.

For each $\phi\in\Loc_\kappa^{b,h}$, let $\rho_-(\phi)=g_\phi\in\prod b'$ be defined as follows. For each $\alpha\in\kappa$ and we take some surjection $\iota_\alpha^\phi:I_\alpha\srj\phi(\alpha)$. Given $\xi\in I_\alpha$, let $\beta=\iota_\alpha^\phi(\xi)\in\phi(\alpha)$, then we define  $g_\phi(\xi)=\pi_\alpha(\beta)(\xi)$.

For $f'\in\prod b'$, we let $\rho_+(f')=f\in\prod b$ be given by $f(\alpha)=\pi_\alpha^{-1}(f'\restriction I_\alpha)$. 

Now $(\rho_-,\rho_+)$ forms a Tukey connection. If $\rho_+(f')=f\ini\phi$, let $\alpha\in\kappa$ be arbitrarily large such that $f(\alpha)\in\phi(\alpha)$. Take $\xi\in I_\alpha$ such that $f(\alpha)=\iota_\alpha^\phi(\xi)$, then $g_\phi(\xi)=\pi_\alpha(f(\alpha))(\xi)=f'(\xi)$, thus since $\alpha$ is arbitrarily large, we see that $f'\eqi g_\phi$.
\end{proof}
\begin{crl}
$\f{inf}_\kappa^h(\nnii)=\f{inf}_\kappa(\neqi)$ and $\f{sup}_\kappa^h(\nnii)=\f{sup}_\kappa(\neqi)$ for each $h\in\gbs$.
\end{crl}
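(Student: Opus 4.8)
The plan is to obtain this formally from \Cref{aloc claim} together with the monotonicity already in hand, so that no new construction is needed. Recall from the discussion following \Cref{evt difference and antiloc} that $\f{inf}_\kappa(\neqi)=\f{inf}_\kappa^{\bar 2}(\nnii)$ and $\f{sup}_\kappa(\neqi)=\f{sup}_\kappa^{\bar 2}(\nnii)$; hence it suffices to show, for an arbitrary $h\in\gbs$ (which, as usual, we take to satisfy $\bar 2\leq h$), that $\f{inf}_\kappa^h(\nnii)=\f{inf}_\kappa^{\bar 2}(\nnii)$ and $\f{sup}_\kappa^h(\nnii)=\f{sup}_\kappa^{\bar 2}(\nnii)$. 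I would prove each of these as two opposite inequalities, one obtained from \Cref{monotonicity aloc} at a fixed bound and the other from \Cref{aloc claim}, which replaces a wide parameter $h$ by a suitably large bound.

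First I would record the two Tukey inputs. For any $b\in\gbs$, \Cref{monotonicity aloc} applied with $h'=\bar 2\leq^* h$ and $b'=b$ gives $\AL_{b,\bar 2}\preceq\AL_{b,h}$; taking norms and dual norms (both monotone under Tukey connections) yields $\dinf{b,\bar 2}\leq\dinf{b,h}$ and $\binf{b,h}\leq\binf{b,\bar 2}$ for every $b$. On the other hand, given any $b'\in\gbs$, \Cref{aloc claim} produces some $b\in\gbs$ with $\AL_{b,h}\preceq\ED_{b'}$, and since $\ED_{b'}\equiv\AL_{b',\bar 2}$ by \Cref{evt difference and antiloc} this gives $\dinf{b,h}\leq\dneq{b'}=\dinf{b',\bar 2}$ and $\bneq{b'}=\binf{b',\bar 2}\leq\binf{b,h}$.

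Then I would pass to infima and suprema over the correct side. From $\dinf{b,\bar 2}\leq\dinf{b,h}$ for all $b$ we get $\f{inf}_\kappa^{\bar 2}(\nnii)\leq\f{inf}_\kappa^h(\nnii)$, and from $\f{inf}_\kappa^h(\nnii)\leq\dinf{b,h}\leq\dinf{b',\bar 2}$ (for the $b$ produced from a given $b'$) we get $\f{inf}_\kappa^h(\nnii)\leq\dinf{b',\bar 2}$ for every $b'$, hence $\f{inf}_\kappa^h(\nnii)\leq\f{inf}_\kappa^{\bar 2}(\nnii)$; combining yields the first equality. Dually, $\binf{b,h}\leq\binf{b,\bar 2}$ for all $b$ gives $\f{sup}_\kappa^h(\nnii)\leq\f{sup}_\kappa^{\bar 2}(\nnii)$, and $\binf{b',\bar 2}\leq\binf{b,h}\leq\f{sup}_\kappa^h(\nnii)$ for every $b'$ gives $\f{sup}_\kappa^{\bar 2}(\nnii)\leq\f{sup}_\kappa^h(\nnii)$. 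I do not expect a genuine obstacle here: all the content lies in \Cref{aloc claim}, and the remainder is bookkeeping. The one point requiring care is that in \Cref{aloc claim} the bound $b$ is produced as a function of the input $b'$, so on the $\neqi$-side one must first fix $b'$, extract the matching $b$, and only afterwards take the infimum (respectively supremum) over $b'$.
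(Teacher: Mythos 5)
Your proof is correct and is essentially the paper's own argument: one inequality from \Cref{monotonicity aloc} (comparing $h$ with $\bar 2$ at a fixed bound $b$), the other from \Cref{aloc claim} combined with $\ED_{b'}\equiv\AL_{b',\bar 2}$, then passing to infima/suprema with the quantifier order you note. The paper's proof is just a compressed version of the same bookkeeping.
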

\begin{proof}
By \Cref{monotonicity aloc} we see that $\f{inf}_\kappa(\neqi)\leq\f{inf}_\kappa^h(\nnii)$ and $\f{sup}_\kappa(\neqi)\geq\f{sup}_\kappa^h(\nnii)$.

By \Cref{aloc claim}, for any $b'\in\gbs$ we can find some $b\in\gbs$ such that $\dinf{b,h}\leq\dinf{b',\bar 2}$, thus $\f{inf}_\kappa(\neqi)\geq \f{inf}_\kappa^h(\nnii)$. Similar for $\f{sup}_\kappa(\neqi)\leq\f{sup}_\kappa^h(\nnii)$.
\end{proof}

An equality between $\f{inf}_\omega(\neqi)$ and $\non(\SN_\kappa)$ was first found by Rothberger \cite{R41}. Miller \cite{M81} used this result to remark that $\add(\c M)=\min\st{\f b_\omega(\leq^*),\f{inf}_\omega(\neqi)}$. One could also show that $\cof(\c M)=\max\st{\f d_\omega(\leq^*),\f{sup}_\omega(\neqi)}$. We show that each of these three claims generalises.

Instead of proving $\non(\SN_\kappa)=\f{inf}_\kappa(\neqi)$ directly, we show one direction by use of a Tukey connection. Let $\c X_0=\st{(g,b)\in\gbs\times\gbs\mid g<b}$ and $\c X_1=\st{\tau:\gbs\to\gbs\mid 
\forall b\in\gbs(\tau(b)<b)}$, and define a relation $J_\infty\subset\c X_0\times \c X_1$ by $(f,b)\rl J_\infty\tau$ iff $f\eqi \tau(b)$. We use this to define the relational system $\sr J_\infty$, which is equivalent to the categorical product $\bigotimes_{b\in\gbs}\ED_b^\lc$. We also define a relational system whose norms are the covering and uniformity numbers of the $\kappa$-strong measure zero ideal.
\begin{align*}
\sr J_{\infty}&=\ab{\c X_0,\c X_1,J_\infty}&\norm{\sr J_{\infty}^{}}&=\f{sup}_\kappa(\neqi)&\norm{\sr J_{\infty}^\lc}&=\f{inf}_\kappa(\neqi)\\
\sSN&=\ab{\gcs,\SN_\kappa,\in}&
\norm{\sSN}&=\cov(\SN_\kappa)&
\norm{\sSN^\lc}&=\non(\SN_\kappa)
\end{align*}
The following Tukey connection proves $\f{inf}_\kappa(\neqi)\geq\non(\SN_\kappa)$ and $\f{sup}_\kappa(\neqi)\leq\cov(\SN_\kappa)$.
\begin{thm}
$\sSN\succeq\sr J_{\infty}$.
\end{thm}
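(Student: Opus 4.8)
The plan is to build the Tukey connection witnessing $\sr J_\infty\preceq\sSN$ directly, i.e. to produce $\rho_-\colon\c X_0\to\gcs$ and $\rho_+\colon\SN_\kappa\to\c X_1$ such that whenever $(f,b)\in\c X_0$, $S\in\SN_\kappa$ and $\rho_-(f,b)\in S$, we have $(f,b)\rl J_\infty\rho_+(S)$, that is, $f\eqi\rho_+(S)(b)$. The map $\rho_-$ will encode a bounded function $f<b$ as a point of $\gcs$ using a block code adapted to $b$, and $\rho_+$ will read off, from a strong-measure-zero cover of $S$ whose ``lengths'' are matched to the block structure, a function that must agree with $f$ cofinally often.

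Concretely, for each $b\in\gbs$ I would first fix a continuous strictly increasing sequence $\ab{l^b_\alpha\mid\alpha\leq\kappa}$ with $l^b_0=0$ and $l^b_{\alpha+1}=l^b_\alpha+b(\alpha)$ (ordinal sum); since $b(\alpha)<\kappa$ for all $\alpha$ and $\kappa$ is regular, the partial sums stay below $\kappa$, $l^b_\kappa=\kappa$, and the blocks $B^b_\alpha=[l^b_\alpha,l^b_{\alpha+1})$ (each of order type $b(\alpha)$) partition $\kappa$. As $2^{b(\alpha)}\geq b(\alpha)$, fix an injection $\phi^b_\alpha\colon b(\alpha)\inj{}^{B^b_\alpha}2$ for each $\alpha$, and set $\rho_-(f,b)=\Cup_{\alpha\in\kappa}\phi^b_\alpha(f(\alpha))\in\gcs$. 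For $\rho_+$, given $S\in\SN_\kappa$ and $b\in\gbs$, let $g_b\in\gbs$ be $g_b\colon\alpha\mapsto l^b_{\alpha+1}$ and apply clause $(3)$ of the earlier characterisation of $\SN_\kappa$ to $g_b$ to fix a sequence $\ab{t^{S,b}_\beta\mid\beta\in\kappa}$ with $t^{S,b}_\beta\in{}^{l^b_{\beta+1}}2$ that cofinally covers $S$. I then let $\rho_+(S)(b)(\beta)$ be $(\phi^b_\beta)^{-1}\rb{t^{S,b}_\beta\restriction B^b_\beta}$ if $t^{S,b}_\beta\restriction B^b_\beta\in\ran(\phi^b_\beta)$ and $0$ otherwise; in either case this value lies in $b(\beta)$, so $\rho_+(S)(b)<b$ and hence $\rho_+(S)\in\c X_1$.

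To check the connection, suppose $x=\rho_-(f,b)\in S$. Since $\ab{t^{S,b}_\beta\mid\beta\in\kappa}$ cofinally covers $S$, there are cofinally many $\beta$ with $x\in[t^{S,b}_\beta]$, i.e. $x\restriction l^b_{\beta+1}=t^{S,b}_\beta$; restricting to block $\beta$ gives $t^{S,b}_\beta\restriction B^b_\beta=x\restriction B^b_\beta=\phi^b_\beta(f(\beta))$, which is in $\ran(\phi^b_\beta)$, so $\rho_+(S)(b)(\beta)=f(\beta)$ for each such $\beta$. Thus $f$ and $\rho_+(S)(b)$ agree cofinally often, i.e. $f\eqi\rho_+(S)(b)$, as required; by the norm lemma this yields $\f{sup}_\kappa(\neqi)\leq\cov(\SN_\kappa)$ and $\non(\SN_\kappa)\leq\f{inf}_\kappa(\neqi)$. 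The one point that needs care is the alignment between the block lengths of the code and the length function $g_b$ fed into the strong-measure-zero property, arranged so that knowing the coded point lies in $[t^{S,b}_\beta]$ recovers exactly the value $f(\beta)$; the remaining items (partial block sums staying below $\kappa$, well-definedness of $\rho_+(S)$ in $\c X_1$, and the fixed global choices of coding data and of covers) are routine bookkeeping, the last using only set-sized choice.
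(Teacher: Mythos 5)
Your proposal is correct and follows essentially the same route as the paper: encode $f$ blockwise into $\gcs$ via injections $b(\alpha)\inj{}^{B}2$ on an interval partition determined by $b$, feed the block endpoints into the cofinal-covering form of $\kappa$-strong measure zero, and decode the cover to recover $f(\beta)$ cofinally often. The only (immaterial) difference is that the paper takes minimal-length coding blocks $\beta^b_\alpha$ rather than blocks of order type $b(\alpha)$.
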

\begin{proof}
We describe $\rho_-:\c X_0\to\gcs$ and $\rho_+:\SN_\kappa\to\c X_1$.

For each $b\in\gbs$ let $\beta^b_\alpha\in\kappa$ be minimal such that there exists an injection $\iota^b_\alpha:b(\alpha)\inj{}^{\beta^b_\alpha}2$ and let $\gamma^b_\alpha$ be the ordinal sum $\sum_{\xi<\alpha}\beta^b_\xi$.

For any $(f,b)\in\c X_0$, we define $\rho_-(f,b)=f'\in\gcs$ piecewise by:
\begin{align*}
f'\restriction[\gamma^b_\alpha,\gamma^b_{\alpha+1}):\gamma^b_\alpha+\xi\mapsto\iota^b_\alpha(f(\alpha))(\xi).
\end{align*}
Given $X\in\SN_\kappa$ and $b\in\gbs$, we can find $\bar s^b=\sab{s^b_\alpha\in{}^{\gamma^b_{\alpha+1}}2\mid\alpha\in\kappa}$ such that $X$ is cofinally covered by $\bar s^b$. Define $t^b_\alpha\in{}^{\beta_\alpha^b}2$ by $t^b_\alpha:\xi\mapsto s^b_\alpha(\gamma_\alpha^b+\xi)$. Let $\rho_+(X)=\tau$, where $\tau(b):\alpha\mapsto (\iota_\alpha^b)^{-1}(t^b_\alpha)$ if this is defined and arbitrary otherwise.

Given $(f,b)\in\c X_0$ with $\rho_-(f,b)=f'$ and $X\in\SN_\kappa$ with $\rho_+(X)=\tau$, suppose that $f'\in X$, then since $X$ is cofinally covered by $\bar s^b$, there are cofinally many $\alpha$ such that $f'\in[s_\alpha^b]$, hence for such $\alpha$ we have  $f'\restriction[\gamma_\alpha^b,\gamma_{\alpha+1}^{b})=s^b_\alpha\restriction[\gamma_\alpha^b,\gamma_{\alpha+1}^b)$. But then $t_\alpha^b=\iota_\alpha^b(f(\alpha))$, and thus $\tau(b)(\alpha)=f(\alpha)$. Thus $f\eqi\tau(b)$, or equivalently $(f,b)\rl J_\infty\tau$.
\end{proof}

As said, $\non(\SN_\kappa)$ is actually equal to $\f{inf}_\kappa(\neqi)$. We prove the remaining direction below.

\begin{thm}
$\non(\SN_\kappa)\geq\f{inf}_\kappa(\neqi)$.
\end{thm}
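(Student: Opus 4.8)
The plan is to prove the statement directly: I will show that every $X\subseteq\gcs$ with $|X|<\f{inf}_\kappa(\neqi)$ lies in $\SN_\kappa$ (this, together with the previous theorem, yields $\non(\SN_\kappa)=\f{inf}_\kappa(\neqi)$). So fix such an $X$ and fix $f\in\gbs$; by the characterisation of $\SN_\kappa$ via cofinal covers (clause~(3) of the equivalence established in Section~\ref{section: notation}) it suffices to find a sequence $\bar s=\ab{s_\alpha\mid\alpha\in\kappa}$ with $s_\alpha\in{}^{f(\alpha)}2$ that cofinally covers $X$, i.e. such that every $x\in X$ satisfies $x\restriction f(\beta)=s_\beta$ for cofinally many $\beta$. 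I may assume $f$ is increasing and cofinal: enlarging $f$ pointwise only makes the task harder, since $[s]\subseteq[s\restriction f(\alpha)]$ whenever $\ot(s)\ge f(\alpha)$, so a cofinal cover by longer pieces restricts to one by pieces of length $f(\alpha)$.

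The idea is to convert the covering problem into an eventual-difference problem over a suitable bounded space. Put $b(\alpha)=\max\{\aleph_0,2^{|f(\alpha)|}\}$; since $\kappa$ is (strongly) inaccessible and $|f(\alpha)|<\kappa$ we have $b(\alpha)<\kappa$, each $b(\alpha)$ is an infinite cardinal, and $b$ is increasing because $f$ is, so $b\in\gbs$. Fix injections $\pi_\alpha:{}^{f(\alpha)}2\inj b(\alpha)$ for $\alpha\in\kappa$ and, for each $x\in\gcs$, define $\tilde x\in\prod b$ by $\tilde x(\alpha)=\pi_\alpha(x\restriction f(\alpha))$. Now $\{\tilde x\mid x\in X\}\subseteq\prod b$ has size $\le|X|<\f{inf}_\kappa(\neqi)\le\dneq{b}=\norm{\ED_b}$, so this family cannot be a witness for $\norm{\ED_b}$; hence there is $h\in\prod b$ that is not eventually different from any $\tilde x$, i.e. $h\eqi\tilde x$, i.e. $h(\alpha)=\tilde x(\alpha)$ for cofinally many $\alpha$, for every $x\in X$. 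Set $s_\alpha=\pi_\alpha^{-1}(h(\alpha))$ if $h(\alpha)\in\ran(\pi_\alpha)$, and $s_\alpha$ arbitrary otherwise. Then for $x\in X$ and $\alpha_0\in\kappa$, choosing $\beta\ge\alpha_0$ with $h(\beta)=\tilde x(\beta)=\pi_\beta(x\restriction f(\beta))$ forces $h(\beta)\in\ran(\pi_\beta)$ and $s_\beta=x\restriction f(\beta)$, so $x\in[s_\beta]$; thus $\bar s$ cofinally covers $X$, as needed.

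I do not expect a genuine obstacle: the whole argument is the observation that ``$\{\tilde x\}_{x\in X}$ is too small to be an $\ED_b$-witness'' decodes, via the $\pi_\alpha$, to exactly ``the $s_\alpha$ cofinally cover $X$''. The points requiring care are (i) checking $b\in\gbs$, where inaccessibility is precisely what keeps $2^{|f(\alpha)|}<\kappa$ (one can also bypass the need for $b$ to be increasing by instead invoking \Cref{monotonicity aloc} together with \Cref{evt difference and antiloc} to conclude $\f{inf}_\kappa(\neqi)\le\dneq{b}$ for an arbitrary $b$); (ii) the harmless reduction to an increasing cofinal $f$; and (iii) reading the failure of $\neqi$ as ``cofinally equal'', which is exactly why clause~(3), rather than clause~(2), is the natural target of the construction.
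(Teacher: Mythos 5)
Your proof is correct and is essentially the paper's own argument in contrapositive form: the paper shows that any $X\notin\SN_\kappa$ encodes, via a bijection $\pi:\gfcs\bij\kappa$, a witness for $\dneq{b}$ for a suitable $b$, whereas you show that any $X$ of size $<\f{inf}_\kappa(\neqi)$ decodes a cofinally-equal function into a cofinal cover; the encoding of $x\restriction f(\alpha)$ as an ordinal below $b(\alpha)$ and the duality between ``cofinally covered'' and the failure of $\neqi$ are identical in both. No gaps.
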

\begin{proof}
Let $\pi:\gfcs\bij\kappa$ be some fixed bijection and $X\notin\SN_\kappa$, then there exists $f\in\gbs$ such that $X$ is not cofinally covered for any $\bar s=\ab{s_\alpha\mid \alpha\in\kappa}$ with $s_\alpha\in{}^{f(\alpha)}2$. For each $x\in X$ let $g_x:\alpha\mapsto \pi(x\restriction f(\alpha))$ and define $b:\alpha\mapsto \sup\st{\pi(s)+1\mid s\in{}^{f(\alpha)}2}$, then $g_x\in\prod b$. We set $D=\st{g_x\mid x\in X}$.

Given $h\in\prod b$, for each $\alpha$  define $s_\alpha=\pi^{-1}(h(\alpha))$ if $\dom(\pi^{-1}(h(\alpha))=f(\alpha)$ and $s_\alpha\in{}^{f(\alpha)}2$ arbitrary otherwise. If $h(\alpha)=g_x(\alpha)$, then $s_\alpha=x\restriction f(\alpha)$, hence $x\in[s_\alpha]$. Since $X$ is not cofinally covered by $\bar s=\ab{s_\alpha\mid \alpha\in\kappa}$, there exists $x\in X$ such that $h\neqi g_x$. Hence $D$ forms a witness for $\dneq{b}$.
\end{proof}

We cannot prove the dual claim that $\cov(\SN_\kappa)\leq\f{sup}_\kappa(\neqi)$, since $\f{sup}_\kappa(\neqi)\leq\non(\c M_\kappa)$ and it is consistent that $\non(\c M_\kappa)\leq\cof(\c M_\kappa)<\cov(\SN_\kappa)$. The model for this is a generalisation of the Sacks model. We give a sketch of the argument below. We will use the generalised Sacks forcing from \cite{Kan80}, which is equivalent to the forcing notion $\bb S_\kappa^{b,h}$ from \Cref{Sacks-like forcing def} with $b=h=\bar 2$. We refer to that section for definitions.

\begin{thm}
Let $\bb S_\kappa$ be the generalised Sacks forcing and let $\bar{\bb S}$ be the ${\leq}\kappa$-support iteration of $\bb S_\kappa$ of length $\kappa^{++}$. If $\b V\md\ap{2^\kappa=\kappa^+}$, then 
\begin{align*}
\b V^{\bar{\bb S}}\md\ap{\cof(\c M_\kappa)=\kappa^+<\cov(\SN_\kappa)=\kappa^{++}}.&\qedhere
\end{align*}
\end{thm}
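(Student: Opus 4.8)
I will verify the three equalities $2^\kappa=\kappa^{++}$, $\cof(\c M_\kappa)=\kappa^+$ and $\cov(\SN_\kappa)=\kappa^{++}$ in turn, leaning on the forcing-theoretic properties of $\bb S_\kappa=\bb S_\kappa^{\bar 2,\bar 2}$ and of its ${\leq}\kappa$-support iterations that I will establish in the analysis of $\bb S_\kappa^{b,h}$ in \Cref{Sacks-like forcing def} and the surrounding material: $\bb S_\kappa$ is strategically ${<}\kappa$-closed with a fusion operation, has the $\kappa$-Sacks property (every new element of $\prod b$, for $b\in\b V$, is localised by a ground-model $(b,h_0)$-slalom, for a fixed increasing cofinal $h_0\in\gbs$), hence is $\kappa^\kappa$-bounding; and the iteration $\bar{\bb S}$ satisfies the $\kappa^{++}$-c.c., preserves all cardinals, cofinalities and the inaccessibility of $\kappa$, adds no new bounded subsets of $\kappa$, and preserves the $\kappa$-Sacks property (hence also $\kappa^\kappa$-boundedness). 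Granting all this, $\b V^{\bar{\bb S}}\md\ap{2^\kappa=\kappa^{++}}$ is routine: each iterand adds a new element of $\gcs$, so $2^\kappa\geq\kappa^{++}$, while the $\kappa^{++}$-c.c.\ together with $\b V\md\ap{2^\kappa=\kappa^+}$ and $\kappa^{<\kappa}=\kappa$ bounds the number of nice $\bar{\bb S}$-names for subsets of $\kappa$ by $\kappa^{++}$.

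For $\cof(\c M_\kappa)=\kappa^+$: since $\bar{\bb S}$ preserves the $\kappa$-Sacks property, the $(\bar\kappa,h_0)$-slaloms of $\b V$ — of which there are $(2^\kappa)^{\b V}=\kappa^+$ — localise every $f\in\gbs$ in $\b V^{\bar{\bb S}}$, so $\dstar{h_0}\leq\kappa^+$. Combined with the ZFC-provable $\cof(\c M_\kappa)\leq\dstar{h_0}$ from the higher Cicho\'n diagram (valid as $h_0$ is increasing and cofinal) and the ZFC-provable $\cof(\c M_\kappa)>\kappa$, this yields $\cof(\c M_\kappa)=\kappa^+<2^\kappa$.

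For $\cov(\SN_\kappa)=\kappa^{++}$: the inequality $\cov(\SN_\kappa)\leq 2^\kappa=\kappa^{++}$ is immediate since singletons lie in $\SN_\kappa$. The reverse inequality will follow from the key lemma (proved by fusion): \emph{for every $\bar{\bb S}$-name $\dot X$ for a set in $\SN_\kappa$ there is $\gamma<\kappa^{++}$ with $\bar{\bb S}\fc\ap{\dot s_\delta\notin\dot X\text{ for all }\delta\geq\gamma}$}, where $\dot s_\delta$ is the $\delta$-th generic $\kappa$-real. To prove the lemma I would fix a name $\dot W$ for an $\SN_\kappa$-witness for $\dot X$; by the $\kappa^{++}$-c.c.\ and $|(\gbs)^{\b V}|=\kappa^+$ there is $\gamma<\kappa^{++}$ such that $\dot W(f)$ is a $\bar{\bb S}_\gamma$-name for every $f\in(\gbs)^{\b V}$. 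If some $q\in\bar{\bb S}$ forced $\dot s_\delta\in\dot X$ with $\delta\geq\gamma$, then, forcing with $\bar{\bb S}_\delta$ below $q\restriction\delta$, the $\delta$-th iterand is $\bb S_\kappa$ over $\b V^{\bar{\bb S}_\delta}$ and $q(\delta)$ evaluates to a Sacks tree $T$; as $\bar{\bb S}_\delta$ is $\kappa^\kappa$-bounding over $\b V$, the splitting-level function of $T$ is dominated by some $g\in(\gbs)^{\b V}$, and I would choose $f\in(\gbs)^{\b V}$ growing fast enough relative to $g$ so that a fusion of $T$ — at stage $\beta$ pruning away the node of the reflected thin cover $\dot W(f)=\sab{s_\beta\mid\beta\in\kappa}$ lying at level $f(\beta)$ — produces $T'\leq T$ with $[T']\cap\Cup_{\beta\in\kappa}[s_\beta]=\emptyset$; the corresponding extension $q'\leq q$ then forces both $\dot s_\delta\in\dot X\subset\Cup_{\beta\in\kappa}[s_\beta]$ and $\dot s_\delta\in[T']$, a contradiction. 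Granting the lemma, suppose some $p\in\bar{\bb S}$ forced $\ab{\dot X_\alpha\mid\alpha\in\kappa^+}$ to be members of $\SN_\kappa$ covering $\gcs$; applying the lemma to each $\dot X_\alpha$ gives $\gamma_\alpha<\kappa^{++}$ with $p\fc\ap{\dot s_\delta\notin\dot X_\alpha\text{ for }\delta\geq\gamma_\alpha}$, so with $\gamma^*=\sup_{\alpha\in\kappa^+}\gamma_\alpha<\kappa^{++}$ we get $p\fc\ap{\dot s_{\gamma^*}\notin\Cup_{\alpha\in\kappa^+}\dot X_\alpha}$, contradicting $p\fc\ap{\dot s_{\gamma^*}\in\gcs=\Cup_{\alpha\in\kappa^+}\dot X_\alpha}$. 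Hence $\cov(\SN_\kappa)\geq\kappa^{++}$.

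The main obstacle is the fusion lemma of the previous paragraph. The delicate points are that the fusion must race ahead of a prescribed thin cover of an $\SN_\kappa$-set while retaining enough splitting structure to close up at limit stages, and that it must be performed coordinatewise inside the iteration, with the relevant cover assumed — via the $\kappa^{++}$-c.c.\ and $\kappa^\kappa$-boundedness — to be coded at a bounded stage. All the forcing-theoretic inputs invoked above (the $\kappa^{++}$-c.c., preservation of the $\kappa$-Sacks property and of $\kappa^\kappa$-boundedness, strategic ${<}\kappa$-closure, and this fusion argument) will be supplied by the treatment of $\bb S_\kappa^{b,h}$ in \Cref{Sacks-like forcing def} and the results following it; the argument here is otherwise a direct generalisation of the corresponding classical fact about the iterated Sacks model.
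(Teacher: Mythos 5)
Your proposal follows essentially the same route as the paper: both use the $\gbs$-bounding of $\bb S_\kappa$ to reduce $\SN_\kappa$ to covers indexed by a ground-model dominating family, then show the Sacks generic escapes any such cover by pruning the tree along a sufficiently fast-growing $f\in(\gbs)^{\b V}$ dominating the heights of the splitting levels, and finish with the chain-condition argument that any $\kappa^+$-sized covering family is caught at a bounded stage of the iteration and destroyed by the next generic. The one technical point you flag but leave unresolved --- keeping the pruned tree a condition at limit splitting levels --- is handled in the paper by pruning only at successor splitting levels (taking $f_0:\xi\mapsto\beta_{\gamma_\xi+1}+1$ and $f\in D$ with $f_0\leq^* f$).
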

\begin{proof}
It is important to note that $\bb S_\kappa$ is $\gbs$-bounding, see \cite{Kan80}. Because of this, there exists a $\subset$-cofinal subset of $\SN_\kappa$ whose elements can be coded terms of a family of sequences indexed by some fixed dominating family. Since the dominating family remains dominating, we can essentially characterise the elements of $\SN_\kappa$ from the ground model. Subsequently, we show that $\bb S_\kappa$ adds a generic $\kappa$-real that is not an element of any $X\in(\SN_\kappa)^\b V$. This concludes the proof, since any $\kappa^+$-sized witness of $\cov(\SN_\kappa)$ obtained in an intermediate step of the iteration is destroyed by the subsequent step, and it is known that ${\leq}\kappa$-support iteration of $\bb S_\kappa$ does not increase $\cof(\c M_\kappa)$ (see e.g. \cite{BBFM}).

Since $\b V\md\ap{2^\kappa=\kappa^+}$, we may fix some dominating family $D\subset\gbs$ of size $\kappa^+$. Let $\sigma=\st{\bar s^f\mid f\in D}$ be a family of sequences, where $\bar s^f=\sab{s_\xi^f\mid \xi\in\kappa}$ is such that $s_\xi^f\in {}^{f(\xi)}2$. We use $\sigma$ to define an element of $\SN_\kappa$:
\begin{align*}
\dsb{\sigma}=\ts\Cap_{f\in D}\Cap_{\alpha_0\in\kappa}\Cup_{\xi\in[\alpha_0,\kappa)}[s_\xi^f]\in\SN_\kappa
\end{align*}
Reversely it's easy to see that for every $X\in\SN_\kappa$ there exists some choice of $\sigma$ such that $X\subset\dsb\sigma$. We show that the set of conditions $S\in \bb S_\kappa$ with $[S]\cap\dsb{\sigma}=\emp$ is dense. It suffices to show that for each $S\in\bb S_\kappa$ there exists $f\in D$ and $T\leq S$ such that $[T]\cap \Cup_{\xi\in\kappa}[s_\xi^f]=\emp$.

Let $S\in\bb S_\kappa$ be arbitrary. We may assume without loss of generality that for each $\alpha$ there is $\beta_\alpha$ such that $\Split_\alpha(S)\subset{}^{\beta_\alpha}2$, and thus that $\ab{\beta_\alpha\mid \alpha\in\kappa}$ enumerates the club set of the heights of splitting levels of $S$. Now let $\gamma_\xi$ denote the $\xi$-th limit ordinal below $\kappa$ and consider the nonstationary set $\st{\beta_{\gamma_\xi+1}\mid \xi\in\kappa}$. Let $f_0:\xi\mapsto \beta_{\gamma_\xi+1}+1$ and pick $f\in D$ such that $f_0\leq^* f$. 

For each $\xi\in\kappa$ such that $f_0(\xi)\leq f(\xi)$ and $s^f_\xi\restriction \beta_{\gamma_\xi+1}\in S$, we prune $S$ by removing the part of the tree generated by the successor $s^f_\xi\restriction (\beta_{\gamma_\xi+1}+1)$ to get a tree $T\leq S$. Since $f_0\leq^* f$, it follows that $[T]\cap\dsb{\sigma}=\emp$. Finally, we only prune $S$ at successor splitting levels, thus $T\in\bb S_\kappa$.
\end{proof}

We will prove the connection between $\add(\c M_\kappa)$ and $\f{inf}_\kappa(\neqi)$, and between $\cof(\c M_\kappa)$ and $\f{sup}_\kappa(\neqi)$ directly, as it will be more similar to the proof of \Cref{infloc suploc}.
\begin{lmm}\label{inf sup meagre result}
$\add(\c M_\kappa)=\min\st{\bleq{},\f{inf}_\kappa(\neqi)}$ and 
$\cof(\c M_\kappa)=\max\st{\dleq{},\f{sup}_\kappa(\neqi)}$.
\end{lmm}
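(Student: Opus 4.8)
The plan is to leverage \Cref{add cof meagre,evt dif and meagre} together with the monotonicity of (anti)localisation, and to do the genuinely new work only on one inequality each side. By \Cref{add cof meagre} we have $\add(\c M_\kappa)=\min\st{\bleq{},\cov(\c M_\kappa)}$ and $\cof(\c M_\kappa)=\max\st{\dleq{},\non(\c M_\kappa)}$; by \Cref{evt dif and meagre} we have $\cov(\c M_\kappa)=\dneq{\bar\kappa}$ and $\non(\c M_\kappa)=\bneq{\bar\kappa}$; and by \Cref{monotonicity aloc} (using $\ED_b\equiv\AL_{b,\bar2}$ from \Cref{evt difference and antiloc} and $b\leqs\bar\kappa$ for every $b\in\gbs$) we get $\cov(\c M_\kappa)\leq\dneq{b}$ and $\bneq{b}\leq\non(\c M_\kappa)$ for all $b$, hence $\cov(\c M_\kappa)\leq\f{inf}_\kappa(\neqi)$ and $\f{sup}_\kappa(\neqi)\leq\non(\c M_\kappa)$. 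Combining this with the Cichoń diagram inequalities $\add(\c M_\kappa)\leq\bleq{}$, $\add(\c M_\kappa)\leq\cov(\c M_\kappa)$ and dually $\cof(\c M_\kappa)\geq\dleq{}$, $\cof(\c M_\kappa)\geq\non(\c M_\kappa)$, we get $\add(\c M_\kappa)\leq\min\st{\bleq{},\f{inf}_\kappa(\neqi)}$ and $\cof(\c M_\kappa)\geq\max\st{\dleq{},\f{sup}_\kappa(\neqi)}$ immediately.

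For the reverse inequality $\add(\c M_\kappa)\geq\min\st{\bleq{},\f{inf}_\kappa(\neqi)}$ it suffices to treat the case $\cov(\c M_\kappa)<\bleq{}$, since otherwise $\bleq{}\leq\cov(\c M_\kappa)$ and $\add(\c M_\kappa)=\bleq{}\geq\min\st{\bleq{},\f{inf}_\kappa(\neqi)}$. In that case $\add(\c M_\kappa)=\cov(\c M_\kappa)$, and it is enough to produce $b\in\gbs$ with $\dneq{b}\leq\cov(\c M_\kappa)$. Take a witness $W\subseteq\gbs$ for $\dneq{\bar\kappa}=\cov(\c M_\kappa)$ with $|W|=\cov(\c M_\kappa)<\bleq{}$; then $W$ is $\leqs$-bounded, so fix an increasing $b\in\gbs$ with $b(\alpha)$ an infinite cardinal for every $\alpha$ and $g<^*b$ for all $g\in W$. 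For $g\in W$ let $\hat g\in\prod b$ agree with $g$ wherever $g(\alpha)<b(\alpha)$ and be $0$ elsewhere, so $\hat g=^*g$. Because the relation $\neqi$ is unaffected by modifying a function on a bounded set, $\hat W=\st{\hat g\mid g\in W}$ witnesses $\dneq{b}$: given $f\in\prod b\subseteq\gbs$ pick $g\in W$ with $f\neqi g$, whence $f\neqi\hat g$. Thus $\f{inf}_\kappa(\neqi)\leq\dneq{b}\leq\cov(\c M_\kappa)=\add(\c M_\kappa)$, which is the desired inequality.

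The remaining inequality $\cof(\c M_\kappa)\leq\max\st{\dleq{},\f{sup}_\kappa(\neqi)}$ is the main obstacle: it does not dualise the capping argument above, because a witness for $\bneq{\bar\kappa}$ is necessarily $\leqs$-unbounded, so there is no way to cap it into some $\prod b$. Instead I would argue directly with the cofinal structure of $\c M_\kappa$, mirroring the proof of \Cref{infloc suploc}. Using the characterisation of $\c M_\kappa$ as cofinally generated by the sets $M_{\bar I,y}=\st{x\in\gcs\mid\forall^\infty\alpha\,(x\restriction I_\alpha\neq y\restriction I_\alpha)}$ for interval partitions $\bar I=\sab{I_\alpha\mid\alpha\in\kappa}$ of $\kappa$ and $y\in\gcs$, fix a dominating family $\sab{d_\xi\mid\xi\in\dleq{}}$, inducing interval partitions $\bar I^\xi$ whose block lengths are governed by $d_\xi$ and which is cofinal for the coarsening order controlling inclusions between these generators; for each $\xi$ put $b_\xi(\alpha)=2^{|I^\xi_\alpha|}$ and choose a witness $Y_\xi\subseteq\prod b_\xi$ for $\bneq{b_\xi}$ of size at most $\f{sup}_\kappa(\neqi)$, after identifying a block $x\restriction I^\xi_\alpha$ with an element of $b_\xi(\alpha)$. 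The $\kappa$-sized unions of generators read off from $\st{Y_\xi\mid\xi\in\dleq{}}$ then form a cofinal subfamily of $\c M_\kappa$ of size at most $\dleq{}\cdot\sup_\xi|Y_\xi|=\max\st{\dleq{},\f{sup}_\kappa(\neqi)}$. The delicate points are: that every nowhere dense set sits inside some $M_{\bar I,y}$ with $\bar I$ dominated by the fixed family; that the inclusion of a generator $M_{\bar I^\xi,y}$ into such a $\kappa$-union corresponds, after block-coding, exactly to the $\neqi$-relation in $\prod b_\xi$; and the bookkeeping reassembling $\kappa$ many generators into one member of the cofinal family — here one must be careful that bounding the number of meagre sets does not bound the number of nowhere dense pieces, which is why the coding is spread along the whole dominating family rather than carried out on a single fixed partition. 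As a byproduct this recovers $\f{sup}_\kappa(\neqi)=\non(\c M_\kappa)$ whenever $\dleq{}<\non(\c M_\kappa)$, in accordance with \Cref{add cof meagre}.
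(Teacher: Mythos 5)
Your argument for $\add(\c M_\kappa)=\min\st{\bleq{},\f{inf}_\kappa(\neqi)}$ is correct and is essentially the paper's own: reduce to the case $\dneq{}<\bleq{}$, bound a $\dneq{}$-witness by some $b$, and cap it into $\prod b$.

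The $\cof$ half is where there is a genuine gap. You are right that one cannot cap a single witness for $\bneq{}$ into some $\prod b$, but the conclusion you draw from this — that one must instead redevelop a Bartoszy\'nski-style combinatorial characterisation of $\cof(\c M_\kappa)$ via interval partitions — is both unnecessary and, as written, not a proof: you explicitly flag the three ``delicate points'' without resolving them, and the counting is suspect (the family of \emph{all} $\kappa$-sized unions of generators drawn from a set of size $\mu$ has size $\mu^\kappa$, not $\mu$; the arithmetic $\dleq{}\cdot\sup_\xi|Y_\xi|$ only works if a \emph{single} generator $M_{\bar I^\xi,y}$ already absorbs each meagre set, which is precisely the hard content of the characterisation you are assuming). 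The correct dualisation of the capping argument goes the other way around: assume $\dleq{}<\bneq{}$ and, towards a contradiction, $\f{sup}_\kappa(\neqi)<\bneq{}$. Fix a dominating family $D$ with $|D|=\dleq{}$ and, for each $b\in D$, a witness $F_b\subset\prod b$ for $\bneq{b}\leq\f{sup}_\kappa(\neqi)$. Then $F=\Cup_{b\in D}F_b$ has size $\dleq{}\cdot\f{sup}_\kappa(\neqi)<\bneq{}=\non(\c M_\kappa)$, so there is $g\in\gbs$ with $g\neqi f$ for all $f\in F$. Now cap \emph{this single $g$}: pick $b\in D$ with $g<^*b$ and let $g'\in\prod b$ agree with $g$ almost everywhere; then $g'\neqi f$ for all $f\in F_b$, contradicting that $F_b$ witnesses $\bneq{b}$. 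Hence $\f{sup}_\kappa(\neqi)=\bneq{}$ whenever $\dleq{}<\bneq{}$, and the identity $\cof(\c M_\kappa)=\max\st{\dleq{},\bneq{}}=\max\st{\dleq{},\f{sup}_\kappa(\neqi)}$ follows by the same case split you used on the $\add$ side. In short: the object to be capped is not the unbounded witness for $\bneq{}$ but the single function $g$ produced by its failure, after spreading the candidate witnesses over a dominating family.
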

\begin{proof}
Remember that by \Cref{evt dif and meagre} we have $\dneq{}=\cov(\c M_\kappa)$ and $\bneq{}=\non(\c M_\kappa)$, hence by \Cref{add cof meagre} $\add(\c M_\kappa)=\min\st{\bleq{},\dneq{}}$ and $\cof(\c M_\kappa)=\max\st{\dleq{},\bneq{}}$. Moreover, it is clear from \Cref{monotonicity aloc} that $\dneq{}\leq\f{inf}_\kappa(\neqi)$ and $\f{sup}_\kappa(\neqi)\leq\bneq{}$.

Secondly, we prove that $\dneq{}<\bleq{}$ implies $\dneq{}=\f{inf}_\kappa(\neqi)$. Let $F\subset\gbs$ be a witness for $|F|=\dneq{}$ and assume $\dneq{}<\bleq{}$. Since $|F|<\bleq{}$, there exists $b\in\gbs$ such that $f<^* b$ for all $f\in F$. Let $f':\alpha\mapsto f(\alpha)$ if $f(\alpha)\in b(\alpha)$ and $f':\alpha\mapsto 0$ otherwise. Clearly $f'\in\prod b$ and for any $g\in\gbs$ we have $f\eqi g$ iff $f'\eqi g$. Therefore $F'=\st{f'\mid f\in F}\subset\prod b$ is a witness for $\dneq{b}$, thus we see:
\begin{align*}
	\dneq{}=|F|\geq\dneq{b}\geq\f{inf}_\kappa(\neqi)\geq\dneq{}.
\end{align*}
For the dual result, assume $\dleq{}<\bneq{}$ and towards contradiction let $\f{sup}_\kappa(\neqi)<\bneq{}$ as well. Let $D\subset\gbs$ witness $|D|=\dleq{}$ and for each $b\in D$ choose a witness $F_b\subset\prod b$ for $|F_b|=\bneq{b}\leq\f{sup}_\kappa(\neqi)$. Define $F=\Cup_{b\in D}F_b$, then $|F|=\dleq{}\cdot\f{sup}_\kappa(\neqi)<\bneq{}$, thus there exists $g\in\gbs$ such that $g\neqi f$ for all $f\in F$. Let $b\in D$ be such that $g<^* b$ and let $g':\alpha\mapsto g(\alpha)$ if $g(\alpha)< b(\alpha)$ and $g':\alpha\mapsto 0$ otherwise, then $g'\in\prod b$ and $g'\neqi f$ for all $f\in F_b$. But this contradicts that $F_b$ witnesses $|F_b|=\bneq{b}$.

Putting everything together, we have showed that:
\begin{align*}
	\add(\c M_\kappa)=\min\st{\bleq{},\dneq{}}&=\min\st{\bleq{},\f{inf}_\kappa(\neqi)},\\ 
	\cof(\c M_\kappa)=\max\st{\dleq{},\bneq{}}&=\max\st{\dleq{},\f{sup}_\kappa(\neqi)}.&&\qedhere
\end{align*}
\end{proof}
We will now show a generalisation of \Cref{classical loc claim}. It is not possible to prove the generalisation for every $b,h,h'\in\gbs$ with $h\leq h'$, since we will see in the next section that it is consistent that $\dstar{b,2^h}=\dstar{b',2^h}<\dstar{b,h}=\dstar{b',h}$ for all $b'\geq b$.

\begin{lmm}
For any $b,h,h'\in\gbs$ such that there exists a continuous strictly increasing sequence $\ab{\beta_\alpha\mid \alpha\in\kappa}$ with $h'(\alpha)\leq h(\xi)$ for all $\xi\geq\beta_\alpha$, there exists $b'\in\gbs$ such that $\sr L_{b,h}\preceq\sr L_{b',h'}$. 
\end{lmm}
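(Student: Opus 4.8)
The plan is to obtain $b'$ by ``packing'' blocks of coordinates of $\prod b$ into single coordinates of $\prod b'$, in the spirit of the proofs of \Cref{aloc claim} and \Cref{antiloc below event dif}, but with the block boundaries read off from the given sequence $\ab{\beta_\alpha\mid\alpha\in\kappa}$ rather than from $h$. First I would record that, being strictly increasing, $\ab{\beta_\alpha\mid\alpha\in\kappa}$ is cofinal in $\kappa$, so the intervals $I_\alpha=[\beta_\alpha,\beta_{\alpha+1})$ partition $[\beta_0,\kappa)$. I then set $b'(\alpha)=\card{\prod_{\xi\in\beta_{\alpha+1}}b(\xi)}$; inaccessibility of $\kappa$ makes this an infinite cardinal below $\kappa$, and $b'$ is increasing since $\beta$ is. Applying the hypothesis at $\xi=\beta_\alpha$ gives $h'(\alpha)\leq h(\beta_\alpha)\leq b(\beta_\alpha)\leq b'(\alpha)$, so $h'\leq b'$ as our conventions demand. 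Finally I fix, for each $\alpha$, a bijection $e_\alpha:\prod_{\xi\in\beta_{\alpha+1}}b(\xi)\bij b'(\alpha)$.

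Next I would define the Tukey connection $\sr L_{b,h}\preceq\sr L_{b',h'}$ by
\begin{align*}
\rho_-(f):\alpha\mapsto e_\alpha(f\restriction\beta_{\alpha+1}),\qquad\qquad
\rho_+(\phi'):\xi\mapsto\st{e_{\alpha(\xi)}^{-1}(\eta)(\xi)\mid\eta\in\phi'(\alpha(\xi))},
\end{align*}
where for $\xi\geq\beta_0$ I write $\alpha(\xi)$ for the unique $\alpha$ with $\xi\in I_\alpha$, and I put $\rho_+(\phi')(\xi)=\emp$ for $\xi<\beta_0$. Here $\rho_-$ visibly maps into $\prod b'$. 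The point at which the hypothesis is really used is the verification that $\rho_+(\phi')\in\Loc_\kappa^{b,h}$: each $\rho_+(\phi')(\xi)$ consists of $\xi$-th coordinates of members of $\prod_{\zeta\in\beta_{\alpha(\xi)+1}}b(\zeta)$, hence is a subset of $b(\xi)$, and $\card{\rho_+(\phi')(\xi)}\leq\card{\phi'(\alpha(\xi))}<h'(\alpha(\xi))\leq h(\xi)$ since $\xi\geq\beta_{\alpha(\xi)}$.

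It then remains to check the Tukey inequality: if $\rho_-(f)\in^*\phi'$, fix $\alpha_0$ with $\rho_-(f)(\alpha)\in\phi'(\alpha)$ for all $\alpha\geq\alpha_0$; for any $\xi\geq\beta_{\alpha_0}$, putting $\alpha=\alpha(\xi)\geq\alpha_0$, we have $e_\alpha(f\restriction\beta_{\alpha+1})=\rho_-(f)(\alpha)\in\phi'(\alpha)$, so $f\restriction\beta_{\alpha+1}$ is one of the functions $e_\alpha^{-1}(\eta)$ with $\eta\in\phi'(\alpha)$, and therefore $f(\xi)\in\rho_+(\phi')(\xi)$. Hence $f\in^*\rho_+(\phi')$, as needed. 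I do not anticipate a real obstacle; the one step that is more than bookkeeping is the passage from ``$\forall^\infty\alpha$'' in the block index to ``$\forall^\infty\xi$'' in $\kappa$, which goes through because the blocks $I_\alpha$ with $\alpha\geq\alpha_0$ together cover the tail $[\beta_{\alpha_0},\kappa)$ — exactly as in the analogous reindexing arguments already used in this section. (Only the strict monotonicity of $\ab{\beta_\alpha\mid\alpha\in\kappa}$, not its continuity, appears to be needed.)
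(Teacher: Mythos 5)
Your proof is correct and follows essentially the same route as the paper's: pack blocks of coordinates determined by the $\beta_\alpha$'s into single coordinates of $b'$, and transfer a $(b',h')$-slalom back blockwise, with the hypothesis used exactly where you use it, namely to get $|\rho_+(\phi')(\xi)|<h'(\alpha(\xi))\leq h(\xi)$ from $\xi\geq\beta_{\alpha(\xi)}$. (The paper codes only $f\restriction I_\alpha$ rather than the full initial segment $f\restriction\beta_{\alpha+1}$; this is immaterial.) One caveat: your closing parenthetical is wrong. Continuity of $\ab{\beta_\alpha\mid\alpha\in\kappa}$ is precisely what makes the intervals $I_\alpha=[\beta_\alpha,\beta_{\alpha+1})$ cover $[\beta_0,\kappa)$, so that $\alpha(\xi)$ is defined for every $\xi$; for a merely strictly increasing sequence there are gaps $[\sup_{\gamma<\alpha}\beta_\gamma,\beta_\alpha)$ at limit $\alpha$, on which $\rho_+(\phi')$ has no natural definition and the conclusion $f(\xi)\in\rho_+(\phi')(\xi)$ need not hold, which can destroy $f\in^*\rho_+(\phi')$ if such gaps occur cofinally. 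Since continuity is part of the hypothesis, this does not affect the correctness of your proof of the lemma as stated.
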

\begin{proof}
Let $I_\alpha=[\beta_\alpha,\beta_{\alpha+1})$ for each $\alpha\in\kappa$, where we assume without loss of generality that $\beta_0=0$. Since $\ab{\beta_\xi\mid \xi\in\kappa}$ is continuous, we see that $\ab{I_\alpha\mid \alpha\in\kappa}$ is an interval partition of $\kappa$. We define $b'(\alpha)=\card{\prod_{\xi\in I_\alpha}b(\xi)}$ and a bijection $\pi_\alpha:b'(\alpha)\bij\prod_{\xi\in I_\alpha}b(\xi)$ for each $\alpha\in\kappa$.

$\rho_-:\prod b\to \prod b'$ is defined by $\rho_-(f)=f':\alpha\mapsto \pi_\alpha^{-1}(f\restriction I_\alpha)$. We define $\rho_+:\Loc_\kappa^{b',h'}\to\Loc_\kappa^{b,h}$ by $\rho_+(\phi')=\phi:\xi\mapsto \st{\pi_\alpha(x)(\xi)\mid x\in \phi'(\alpha)}$, where $\alpha$ is such that $\xi\in I_\alpha$. Note that this is well-defined, since $\xi\in I_\alpha$ implies $\xi\geq\beta_\alpha$, and thus $|\phi'(\alpha)|<h'(\alpha)\leq h(\xi)$.

Suppose that $f\in\prod b$, $\phi'\in\Loc_\kappa^{b',h'}$ and $\rho_-(f)=f'$, $\rho_+(\phi')=\phi$. Let $\alpha\in\kappa$ be such that $f'(\alpha)\in \phi'(\alpha)$. For any $\xi\in I_\alpha$, we then see that $\pi_\alpha(f'(\alpha))(\xi)=f(\xi)$, thus $f(\xi)\in \phi(\xi)$ for all $\xi\in I_\alpha$. Hence, if $f'\in^*\phi'$, it follows that $f\in^*\phi$.
\end{proof}

Classically, it can be shown analogously to \Cref{inf sup meagre result} that $\add(\c N)=\min\st{\f b_\omega(\leq^*),\f{inf}_\omega(\in^*)}$ and $\cof(\c N)=\max\st{\f d_\omega(\leq^*),\f{sup}_\omega(\in^*)}$. There is a generalisation of this result for $\gbs$ as well, although we have to replace $\add(\c N)$ and $\cof(\c N)$ by their combinatorial counterparts $\bstar{h}$ and $\dstar{h}$ (nota bene: these are the cardinals from the (un)bounded generalised Baire space $\gbs$, or equivalently with the bound $b=\bar \kappa$). Moreover, since the parameter $h$ is important, in the sense that differing $h$ leads to different cardinals, we can only prove a parametrised version of this result.

\begin{lmm}[cf. \cite{CM19} Lemma 3.12]\label{infloc suploc}
$\bstar{h}=\min\st{\bleq{},\f{inf}_\kappa^{h}(\ins)}$ and 
$\dstar{h}=\max\st{\dleq{},\f{sup}_\kappa^{h}(\ins)}$ for any $h\in\gbs$.
\end{lmm}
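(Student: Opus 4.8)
The plan is to obtain the ``easy'' inequalities $\bstar{h}\leq\min\st{\bleq{},\f{inf}_\kappa^h(\ins)}$ and $\dstar{h}\geq\max\st{\dleq{},\f{sup}_\kappa^h(\ins)}$ directly from Tukey connections that are already available, and then to prove the reverse inequalities by a truncation argument. For the first part: every $b\in\gbs$ satisfies $b\leq\bar\kappa$ pointwise, so \Cref{monotonicity loc} (with $h'=h$ and with the larger bound taken to be $\bar\kappa$) gives $\sr L_{b,h}\preceq\sr L_{\bar\kappa,h}$, hence $\dstar{b,h}\leq\dstar{h}$ and $\bstar{h}\leq\bstar{b,h}$ for all $b\in\gbs$; taking infima and suprema over $b$ yields $\bstar{h}\leq\f{inf}_\kappa^h(\ins)$ and $\f{sup}_\kappa^h(\ins)\leq\dstar{h}$. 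Moreover, since $\kappa$ is regular we have $\cf(\bar\kappa)=\bar\kappa$, so $h\leq^*\cf(\bar\kappa)$, and therefore the Tukey connection $\sr D_b\preceq\sr L_{b,h}$ constructed in the proof of \Cref{cardinal relations fixed b h} (which uses only $h\leq^*\cf(b)$, not the full case-(iii) hypotheses of that theorem) applies with $b=\bar\kappa$; it gives $\dleq{}\leq\dstar{h}$ and $\bstar{h}\leq\bleq{}$. Combining these, $\bstar{h}\leq\min\st{\bleq{},\f{inf}_\kappa^h(\ins)}$ and $\dstar{h}\geq\max\st{\dleq{},\f{sup}_\kappa^h(\ins)}$.

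For the reverse inequality $\min\st{\bleq{},\f{inf}_\kappa^h(\ins)}\leq\bstar{h}$ I would split into cases. If $\bleq{}\leq\bstar{h}$ there is nothing to prove, so assume $\bstar{h}<\bleq{}$ and fix a witness $\c F\subset\gbs$ with $|\c F|=\bstar{h}$. Since $|\c F|<\bleq{}$ the family $\c F$ is $\leq^*$-bounded; replacing the bound by a pointwise larger member of $\gbs$ I may fix $b\in\gbs$ with $h\leq b$ (so $\bstar{b,h}$ is defined by \Cref{totally trivial loc}) and $f<^*b$ for every $f\in\c F$. For $f\in\c F$ let $f'\in\prod b$ agree with $f$ wherever $f(\alpha)<b(\alpha)$ and be $0$ otherwise, so that $f=^*f'$, and put $\c F'=\st{f'\mid f\in\c F}$. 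Then $\c F'$ witnesses $\bstar{b,h}$: any $\phi\in\Loc_\kappa^{b,h}$ localising all of $\c F'$ is in particular a $(\bar\kappa,h)$-slalom, and by $f=^*f'$ it would localise all of $\c F$, contradicting the choice of $\c F$. Hence $\f{inf}_\kappa^h(\ins)\leq\bstar{b,h}\leq|\c F'|\leq|\c F|=\bstar{h}$, which closes this side.

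Dually, for $\dstar{h}\leq\max\st{\dleq{},\f{sup}_\kappa^h(\ins)}$ I would fix a dominating family $D\subset\gbs$ with $|D|=\dleq{}$, chosen (by passing to pointwise larger functions, and replacing each $b$ by $\max\st{b,h}$) so that $h\leq b$ for each $b\in D$ and every $g\in\gbs$ has some $b\in D$ with $g<^*b$. For each $b\in D$ fix a witness $\Phi_b\subset\Loc_\kappa^{b,h}$ for $\dstar{b,h}$; then $|\Phi_b|=\dstar{b,h}\leq\f{sup}_\kappa^h(\ins)$, and $\Phi=\Cup_{b\in D}\Phi_b\subset\Loc_\kappa^{\bar\kappa,h}$ has size at most $\dleq{}\cdot\f{sup}_\kappa^h(\ins)=\max\st{\dleq{},\f{sup}_\kappa^h(\ins)}$, using that $\dleq{}$ is infinite. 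Given $f\in\gbs$, pick $b\in D$ with $f<^*b$, truncate to $f'\in\prod b$ as above, and find $\phi\in\Phi_b$ with $f'\in^*\phi$; then $f\in^*\phi$ since $f=^*f'$. Thus $\Phi$ witnesses $\dstar{h}$, and together with the first paragraph both equalities follow.

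I expect the argument to be essentially bookkeeping: the genuine content is imported from \Cref{monotonicity loc} and from the $\sr D\preceq\sr L$ connection of \Cref{cardinal relations fixed b h}, together with the two elementary observations that truncation below a strict bound preserves $=^*$ and that every $(b,h)$-slalom is a $(\bar\kappa,h)$-slalom. The one spot that needs care is the case split in the reverse directions --- recognising that $\bstar{h}<\bleq{}$ is exactly what lets one bound the witness $\c F$, and that $\dleq{}$ counts how many bounds the final union must range over --- plus the minor technical point of keeping the bounding functions inside $\gbs$ (infinite-cardinal valued and $\geq h$, so that the relevant $\bstar{b,h}$ is defined) when a strict bound is wanted; this is handled exactly as in the proof of \Cref{inf sup meagre result}.
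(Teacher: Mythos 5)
Your proof is correct and follows essentially the same route as the paper: the easy inequalities come from monotonicity and the $\sr D\preceq\sr L$ connection, and the reverse inequalities come from the bound-then-truncate argument (your case split on $\bstar{h}<\bleq{}$ is just the contrapositive of the paper's "take any $F$ with $|F|<\min\{\cdots\}$ and localise it"). The extra care you take about keeping $h\leq b$ so that $\bstar{b,h}$ is defined is a minor refinement the paper leaves implicit.
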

\begin{proof}
That $\bstar{h}\leq\min\st{\bleq{},\f{inf}_\kappa^h(\ins)}$ and $\dstar{h}\geq\max\st{\dleq{},\f{sup}_\kappa^{h}(\ins)}$ are clear.

Let $F\subset\gbs$ with $|F|<\min\st{\bleq{},\f{inf}_\kappa^{h}(\ins)}$, then there exists $b\in\gbs$ such that $f<^*b$ for all $f\in F$. Let $F'=\st{f'\mid f\in F}$ where $f':\alpha\mapsto f(\alpha)$ if $f(\alpha)< b(\alpha)$ and $f':\alpha\mapsto 0$ otherwise, then $f=^*f'\in\prod b$. Finally, there exists $\phi\in\Loc_\kappa^{b,h}$ such that $f'\in^*\phi$ for all $f'\in F'$ by $|F'|<\f{inf}_\kappa^h(\ins)\leq\bstar{b,h}$. Then also $f\in^*\phi$ for all $f\in F$, thus $|F|<\bstar{h}$.

Let $D\subset\gbs$ be a witness for $\dleq{}$ with $|D|=\dleq{}$. For each $b\in D$ choose a witness $\Phi_b\subset\Loc_\kappa^{b,h}$ for $\dstar{b,h}$ with $|\Phi_b|=\dstar{b,h}$. Let $\Phi=\Cup_{b\in D}\Phi_b$, then $|\Phi|\leq\max\st{\dleq{},\f{sup}_\kappa^h(\ins)}$. If $f\in\gbs$, then there is $b\in D$ such that $f<^*b$. Again, let $f':\alpha\mapsto f(\alpha)$ if $f(\alpha)< b(\alpha)$ and $f':\alpha\mapsto 0$ otherwise, then $f'\in\prod b$. Therefore, there exists $\phi\in\Phi_b$ such that $f'\in^*\phi$, and thus such that $f\in^*\phi$. This shows that $\Phi$ is a witness for $\dstar{h}$.
\end{proof}

\section{Consistency of Strict Inequalities}\label{section: consistency}

In this section we will discuss consistency of strict relations between cardinals we have discussed so far. We will first discuss the $\kappa$-Cohen model and the $\kappa$-Hechler model. Afterwards we show how we can separate cardinals of the form $\dstar{b,h}$ and of the form $\dinf{b,h}$ for different parameters $b,h$. In the first case, we can find $\kappa^+$ many parameters $\ab{b_\xi,h_\xi\mid\xi\in\kappa^+}$ and a forcing extension in which the cardinals $\dstar{b_\xi,h_\xi}$ are mutually distinct. In the second case, we can describe $\kappa$ many parameters $\ab{b_\xi,h_\xi\mid \xi\in\kappa}$ such that for any finite $A\subset \kappa$ there exists a forcing extension in which the cardinals $\dinf{b_\xi,h_\xi}$ with $\xi\in A$ are mutually distinct.

Before we continue, let us establish some notation regarding trees and product forcing.

Let $\c F$ be a set of functions such that $\dom(f)=\kappa$ for each $f\in\c F$. Let $\c F_{<\kappa}=\st{f\restriction\alpha\mid f\in\c F\la \alpha\in\kappa}$ be the set of initial segments of functions in $\c F$. Notably, if $\c F=\prod b$, then $\c F_{<\kappa}=\prod_{<\kappa}b$ and if $\c F=\Loc_\kappa^{b,h}$, then $\c F_{<\kappa}=\Loc_{<\kappa}^{b,h}$.

A subset $T\subset\c F_{<\kappa}$ is called a \emph{tree on $\c F$} if for every $u\in T$ and $\beta\in\dom(u)$ we have $u\restriction\beta\in T$. A subset $C\subset T$ is a \emph{chain} if for any $u,v\in C$ we have $u\subset v$ or $v\subset u$, and $C$ is called \emph{maximal} if there exists no chain $C'\subset T$ with $C\subsetneq C'$. A function $b:\alpha\to\kappa$ where $\alpha\leq \kappa$ is called a \emph{branch} of $T$ if there exists a maximal chain $C\subset T$ such that $b=\Cup C$. The set of branches of $T$ is denoted by $[T]$. We define the \emph{subtree} of $T$ generated by $u\in T$ as:
\begin{align*}
(T)_u=\st{v\in T\mid u\subset v\lo v\subset u}
\end{align*}
Given $u\in \c F_{<\kappa}$ and $f\in\c F$ such that $u\subset f$, let $x=f(\dom(u))$, then we write $u^\frown \ab x$ for $f\restriction (\dom(u)+1)$. If $u\in T$, let $v\in T$ be a \emph{successor} of $u$ if there exists $x$ such that $v=u^\frown \ab x$. We denote the set of successors of $u$ in $T$ by $\suc(u,T)$.

We call $u$ a {\bf $\lambda$-splitting} node (of $T$), if $\lambda\leq\card{\suc(u,T)}$. We say $u$ is a {\bf splitting} node if it is a 2-splitting node, and a {\bf non-splitting} node otherwise. If $u$ is a $\lambda$-splitting node, but not a $\mu$-splitting node for any cardinal $\mu$ with $\lambda<\mu$, then we say that $u$ is a {\bf sharp} $\lambda$-splitting node.  We let $\Split_\alpha(T)$ be the set of all $u\in T$ such that $u$ is splitting and 
\begin{align*}
\ot(\st{\beta\in\dom(u)\mid u\restriction\beta\text{ is splitting}})=\alpha
\end{align*}
where $\ot(X)$ denotes the order-type of $(X,\in)$. We let $\Lev_\xi(T)=\st{u\in T\mid \dom(u)=\xi}$ denote the $\xi$-th level of the tree $T$.

Given a forcing notion $\bb P$, then a sequence $\ab{\leq_\alpha\mid \alpha\in\kappa}$ on $\bb P$ is called a \emph{fusion ordering} when:
\begin{itemize}
	\item $q\leq_0 p$ iff $q\leq p$, and
	\item $q\leq_\beta p$ implies $q\leq_\alpha p$ for all $\alpha<\beta$.
\end{itemize}
A \emph{fusion sequence} is a sequence of conditions $\ab{p_\alpha\mid \alpha\in\kappa}$ such that $p_{\beta}\leq_\alpha p_\alpha$ for all $\alpha\leq\beta\in \kappa$. We say that $\bb P$ is \emph{closed under fusion} if every fusion sequence $\ab{p_\alpha\mid \alpha\in\kappa}$ has some $p$ with $p\leq_\alpha p_\alpha$ for all $\alpha\in\kappa$.

Let ${\c A}$ be a set of ordinals and $\bb P_\xi$ be a forcing notion for each $\xi\in \c A$. We will denote $\leq_{\bb P_\xi}$ as $\leq^\xi$, or more commonly as $\leq$ when there is no possibility for confusion, and we denote $\ft_{\bb P_\xi}$ as $\ft_\xi$. Consider the set $\c C$ of (choice) functions $p:{\c A}\to\bb \Cup_{\xi\in {\c A}}\bb P_\xi$ such that $p(\xi)\in \bb P_\xi$ for each $\xi\in {\c A}$. For any $p\in \c C$, we define the {\bf support} of $p$ as $\supp(p)=\{\xi\in {\c A}\mid p(\xi)\neq\ft_{\xi}\}$. We define the ${\leq}\kappa$-support product as follows: 
\begin{align*}
\bar{\bb P}=\textstyle\prod^{\leq\kappa}_{\xi\in {\c A}}\bb P_\xi=\st{p\in\c C\smid \card{\supp(p)}\leq \kappa}.
\end{align*}
If $p,q\in\bar{\bb P}$, then $q\leq_{\bar{\bb P}} p$ iff $q(\xi)\leq^\xi p(\xi)$ for all $\xi\in {\c A}$. We will again write $q\leq p$ instead of $q\leq_{\bar{\bb P}} p$ if the context is clear, and we will write $\bar{\ft}$ for $\ft_{\bar {\bb P}}$.

If $\bar{\bb P}$ is a product of forcing notions $\bb P_\xi$ such that each $\bb P_\xi$ has a fusion ordering $\leq_\alpha^\xi$, then we define for each $p,q\in\bar{\bb P}$, $\alpha\in\kappa$, and $Z\subset \c A$ with $|Z|<\kappa$ a \emph{generalised fusion relation} $q\leq_{Z,\alpha}p$ iff $q\leq_{\bar{\bb P}} p$ and for each $\xi\in Z$ we have $q(\xi)\leq_\alpha^\xi p(\xi)$.

A {\bf generalised fusion sequence} is a sequence $\ab{(p_\alpha,Z_\alpha)\mid \alpha\in\kappa}$ such that:

\begin{enumerate}
\item $p_\alpha\in \bar {\bb P}$ and $Z_\alpha\in [\c A]^{<\kappa}$ for each $\alpha\in\kappa$,
\item $p_{\beta}\leq_{Z_\alpha,\alpha}p_\alpha$ and $Z_\alpha\subset Z_{\beta}$ for all $\alpha\leq\beta\in\kappa$,
\item for limit $\delta$ we have $Z_\delta=\Cup_{\alpha\in\delta} Z_\alpha$,
\item $\Cup_{\alpha\in\kappa} Z_\alpha=\Cup_{\alpha\in\kappa}\supp(p_\alpha)$.\qedhere
\end{enumerate}
We say that $\bar{\bb P}$ is \emph{closed under generalised fusion} if every generalised fusion sequence $\ab{(p_\alpha,Z_\alpha)\mid \alpha\in\kappa}$ has some $p\in\bar{\bb P}$ with $\supp(p)=\Cup_{\alpha\in\kappa}\supp(p)$ and $p\leq_{\alpha,Z_\alpha} p_\alpha$ for all $\alpha\in\kappa$. Note that point 4.\ implies that for each $\xi\in\c A$ we have $p(\xi)\leq_\alpha p_\alpha(\xi)$ for almost all $\alpha\in\kappa$.

Suppose $X\subset\bar{\bb P}$ and $\c B\subset \c A$, we define 
\begin{align*}
X\restriction \c B&=\st{p\restriction \c B\mid p\in X}
\end{align*}
Let $\c B^c=\c A\setminus \c B$ and $G\subset\bar {\bb P}$ be $\bar{\bb P}$-generic over $\b V$, then clearly $\bar{\bb P}$ and $(\bar{\bb P}\restriction \c B)\times (\bar{\bb P}\restriction \c B^c)$ are forcing equivalent, $(G\restriction \c B)\times (G\restriction \c B^c)$ is $(\bar{\bb P}\restriction \c B)\times (\bar{\bb P}\restriction \c B^c)$-generic and 
\begin{align*}
\b V[G]=\b V[(G\restriction \c B)\times (G\restriction \c B^c)]=\b V[G\restriction \c B][G\restriction \c B^c].
\end{align*}

\subsection{The $\kappa$-Cohen model}

We will first define $\kappa$-Cohen forcing.

\begin{dfn}
Let $\lambda\geq\kappa$ be a cardinal. We define \emph{$\kappa$-Cohen forcing} adding $\lambda$ many $\kappa$-Cohen reals as the set $\bb C_\kappa^\lambda$ with as conditions functions $p$ with $\dom(p)\in[\lambda\times\kappa]^{<\kappa}$ and values $\ran(p)\subset \kappa$ ordered by $q\leq p$ iff $q\supset p$.
\end{dfn}

The effect of $\kappa$-Cohen forcing is analogous to the effect of Cohen forcing in the context of $\bs$. Namely, $\kappa$-Cohen reals lie outside of any $\kappa$-meagre set coded in the ground model, and any $\kappa^+$ many $\kappa$-Cohen reals form a $\kappa$-nonmeagre set. Consequently, if $\cf(\lambda)>\kappa$, then after forcing with $\bb C_\kappa^\lambda$ we have $\cov(\c M_\kappa)=\lambda=2^\kappa$, while $\non(\c M_\kappa)=\kappa^+$. Particularly, $\kappa^+=\non(\c M_\kappa)<\cov(\c M_\kappa)=\kappa^{++}$ holds in the \emph{$\kappa$-Cohen model}, which we will define as the result of forcing with $\bb C_\kappa^{\kappa^{++}}$ over $\b V\md\ap{\s{GCH}}$. We will not give a proof of these facts here, as they are well-known and could be found, for example, as Proposition 47 in \cite{BBFM}.

The effect this has on the bounded cardinals we have discussed so far, is that the $\f d$-side cardinals are all large in the $\kappa$-Cohen model, since $\cov(\c M_\kappa)$ is a lower bound for these cardinals, whereas the $\f b$-side cardinals are all small, since $\non(\c M_\kappa)$ is an upper bound. That is, the diagram from \Cref{figure: diagram} is divided as follows:

\begin{figure}[h]
\centering
\begin{tikzpicture}[baseline,xscale=2,yscale=1]

\node (a1) at (-2,-2) {$\kappa^+$};
\node (bleq) at (-1,0) {$\bleq{b}$};
\node (bneq) at (0,2) {$\bneq{b}$};
\node (binf) at (0,0.67) {$\binf{b,h}$};
\node (bstar) at (-2,-.67) {$\bstar{b,h}$};
\node (dleq) at (1,0) {$\dleq{b}$};
\node (dneq) at (0,-2) {$\dneq{b}$};
\node (dinf) at (0,-0.67) {$\dinf{b,h}$};
\node (dstar) at (2,.67) {$\dstar{b,h}$};
\node (cM) at (-1,-2) {$\r{cov}(\c M_\kappa)$};
\node (nM) at (1,2) {$\r{non}(\c M_\kappa)$};
\node (c) at (2,2) {$\kappa^{++}$};

\draw (dinf) edge[] (dleq); 
\draw (dleq) edge[] (dstar);
\draw (bleq) edge[] (binf); 
\draw (bstar) edge[] (bleq);


\draw (dneq) edge[] (dinf); 
\draw (binf) edge[] (bneq);

\draw (binf) edge[dotted] (dstar); 
\draw (bstar) edge[dotted] (dinf);

\draw (bleq) edge[] (bneq); 
\draw (dneq) edge[] (dleq);

\draw (cM) edge[] (dneq); 
\draw (bneq) edge[] (nM);

\draw (bleq) edge[dotted] (dleq); 
\draw (a1) edge[dotted] (cM); 
\draw (a1) edge[] (bstar); 
\draw (nM) edge[dotted] (c); 
\draw (dstar) edge[] (c); 

\draw[dashed, thicker,rounded corners=10] (-1.65,-2.5) -- (-1.65,-1) -- (1.65,1) -- (1.65,2.5);

\end{tikzpicture}
\caption{The $\kappa$-Cohen model}
\end{figure}
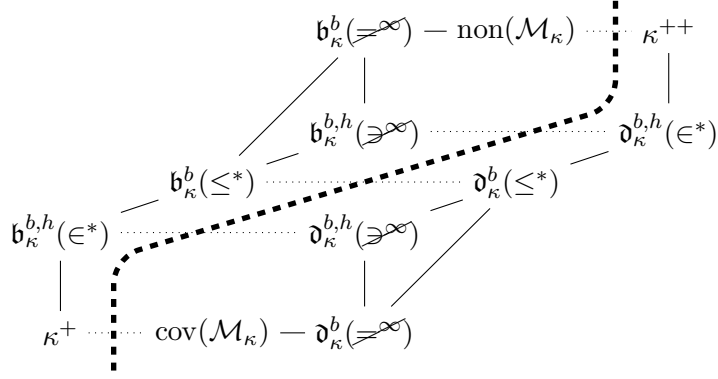

\subsection{The $\kappa$-Hechler models}

We have already encountered generalised Hechler forcing in \Cref{Hechler general definition} as the forcing notion $\bb D_\kappa^b(C)$, where $b$ is a function with domain $\kappa$ and $C\subset \kappa$ is club. The most natural form of this forcing notion is $\bb D_\kappa\equiv\bb D_\kappa^{\bar\kappa}(\kappa)$. We will restate the definition for $\bb D_\kappa$ individually below.

\begin{dfn}
We define \emph{$\kappa$-Hechler forcing} as the forcing notion $\bb D_\kappa$ with conditions $(s,f)$ such that $s\in\gfbs$ and $f\in\gbs$. The ordering is defined by $(t,g)\leq(s,f)$ iff $s\subset t$ and $f\leq g$ and for every $\alpha\in\dom(t)\setminus\dom(s)$ we have $f(\alpha)\leq t(\alpha)$. 
\end{dfn}

As with $\kappa$-Cohen forcing, the effect of $\kappa$-Hechler forcing is in many aspects similar to the situation on $\bs$. For instance, $\kappa$-Hechler forcing adds a dominating $\kappa$-real and a $\kappa$-Cohen real, is ${<}\kappa$-closed, $\kappa$-centred (which we define below), and thus has the ${<}\kappa^+$-c.c.. As a consequence of this, a ${<}\kappa$-support iteration of $\kappa$-Hechler forcing of length $\kappa^{++}$ over $\b V\md\ap{2^\kappa=\kappa^+}$ will increase both $\f b_\kappa(\leq^*)$ and $\cov(\c M_\kappa)$, and thus by \Cref{add cof meagre} it will result in a model where $\add(\c M_\kappa)=\kappa^{++}$. We will call this model the \emph{$\kappa$-Hechler model}. Classically, Hechler forcing does not add random reals, and therefore $\cov(\c N)$ and $\add(\c N)$ stay small in the Hechler model. A similar thing happens in the generalised case, where the analogue for $\add(\c N)$, namely $\bstar{h}$, remains of size $\kappa^+$ in the $\kappa$-Hechler model. We will show in this section that all of the cardinal characteristics on bounded spaces on the $\f b$-side will remain small as well. Specifically $\f{sup}_\kappa(\neqi)=\kappa^+$ in the $\kappa$-Hechler model.

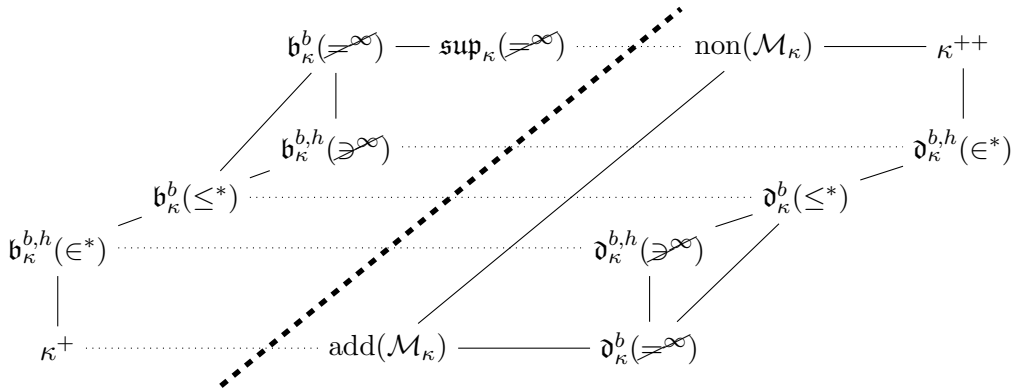
\begin{figure}[H]
\centering
\begin{tikzpicture}[baseline,xscale=2.75,yscale=1]

\node (a1) at (-2.33,-2) {$\kappa^+$};
\node (bleq) at (-1.67,0) {$\bleq{b}$};
\node (bneq) at (-1,2) {$\bneq{b}$};
\node (sup) at (-0.2,2) {$\f{sup}_\kappa(\neqi)$};
\node (binf) at (-1,0.67) {$\binf{b,h}$};
\node (bstar) at (-2.33,-.67) {$\bstar{b,h}$};
\node (dleq) at (1.25,0) {$\dleq{b}$};
\node (dneq) at (0.5,-2) {$\dneq{b}$};
\node (dinf) at (0.5,-0.67) {$\dinf{b,h}$};
\node (dstar) at (2,.67) {$\dstar{b,h}$};
\node (cM) at (-0.75,-2) {$\r{add}(\c M_\kappa)$};
\node (nM) at (1,2) {$\r{non}(\c M_\kappa)$};
\node (c) at (2,2) {$\kappa^{++}$};

\draw (dinf) edge[] (dleq); 
\draw (dleq) edge[] (dstar);
\draw (bleq) edge[] (binf); 
\draw (bstar) edge[] (bleq);


\draw (dneq) edge[] (dinf); 
\draw (binf) edge[] (bneq);

\draw (binf) edge[dotted] (dstar); 
\draw (bstar) edge[dotted] (dinf);

\draw (bleq) edge[] (bneq); 
\draw (dneq) edge[] (dleq);

\draw (cM) edge[] (dneq); 
\draw (bneq) edge[] (sup);
\draw (sup) edge[dotted] (nM);
\draw (cM) edge[] (nM);

\draw (bleq) edge[dotted] (dleq); 
\draw (a1) edge[dotted] (cM); 
\draw (a1) edge[] (bstar); 
\draw (nM) edge[] (c); 
\draw (dstar) edge[] (c); 

\draw[dashed, thicker,rounded corners=10] (-1.55,-2.5) -- (0.65,2.5);

\end{tikzpicture}
\caption{The $\kappa$-Hechler model}
\end{figure}
The above diagram summarises the situation in the $\kappa$-Hechler model, where the diagram is adapted from \Cref{figure: diagram} with the addition of $\f{sup}_\kappa(\neqi)$, and with $\cov(\c M_\kappa)$ replaced by the smaller $\add(\c M_\kappa)$.

A closely related model to the $\kappa$-Hechler model, is the \emph{dual $\kappa$-Hechler model}, which we will define as the result of a ${<}\kappa$-support iteration of $\kappa$-Hechler forcing of length (the ordinal) $\kappa^{++}\cdot\kappa^+$ over $\b V\md\ap{\s{GCH}}$. In this case, a cofinal set of $\kappa$-Hechler reals of size $\kappa^+$ will form a dominating family, and since they are $\kappa$-Cohen generic as well, they form a witness for a $\kappa$-nonmeagre set of size $\kappa^+$. Therefore, in the dual $\kappa$-Hechler model we have $\f d_\kappa(\leq^*)=\non(\c M_\kappa)=\kappa^+$, and by \Cref{add cof meagre} also $\cof(\c M_\kappa)=\kappa^+$. For similar reasons as before, the $\kappa$-Hechler forcing does not affect the size of the cardinal characteristics on bounded spaces, and thus we will show that the cardinals on the $\f d$-side remain large.

We can summarise the dual $\kappa$-Hechler model as follows:

\begin{figure}[H]
\centering
\begin{tikzpicture}[baseline,xscale=2.75,yscale=1]

\node (a1) at (-2,-2) {$\kappa^+$};
\node (bleq) at (-1.25,0) {$\bleq{b}$};
\node (bneq) at (-0.5,2) {$\bneq{b}$};
\node (binf) at (-0.5,0.67) {$\binf{b,h}$};
\node (bstar) at (-2,-.67) {$\bstar{b,h}$};
\node (dleq) at (1.67,0) {$\dleq{b}$};
\node (dneq) at (1,-2) {$\dneq{b}$};
\node (dinf) at (1,-0.67) {$\dinf{b,h}$};
\node (dstar) at (2.33,.67) {$\dstar{b,h}$};
\node (inf) at (0.2,-2) {$\f{inf}_\kappa(\neqi)$};
\node (cM) at (-1,-2) {$\r{cov}(\c M_\kappa)$};
\node (nM) at (0.75,2) {$\r{cof}(\c M_\kappa)$};
\node (c) at (2.33,2) {$\kappa^{++}$};

\draw (dinf) edge[] (dleq); 
\draw (dleq) edge[] (dstar);
\draw (bleq) edge[] (binf); 
\draw (bstar) edge[] (bleq);


\draw (dneq) edge[] (dinf); 
\draw (binf) edge[] (bneq);

\draw (binf) edge[dotted] (dstar); 
\draw (bstar) edge[dotted] (dinf);

\draw (bleq) edge[] (bneq); 
\draw (dneq) edge[] (dleq);

\draw (cM) edge[dotted] (inf); 
\draw (inf) edge[] (dneq);
\draw (bneq) edge[] (nM);
\draw (cM) edge[] (nM);

\draw (bleq) edge[dotted] (dleq); 
\draw (a1) edge[] (cM); 
\draw (a1) edge[] (bstar); 
\draw (nM) edge[dotted] (c); 
\draw (dstar) edge[] (c); 

\draw[dashed, thicker,rounded corners=10] (-0.65,-2.5) -- (1.55,2.5);

\end{tikzpicture}
\caption{The dual $\kappa$-Hechler model}
\end{figure}

We refer to section 4 of \cite{BBFM} for proofs of the sizes of $\add(\c M_\kappa)$ and $\cof(\c M_\kappa)$ and of $\bstar{h}$ and $\dstar{h}$ in the two models and focus here on the cardinalities of the cardinals defined on bounded spaces.

It will be sufficient to prove that $\bneq{b}=\kappa^+$ in the $\kappa$-Hechler model and $\dneq{b}=\kappa^{++}$ in the dual $\kappa$-Hechler model and thus we focus on the $\neqi$ relation in particular. The main lemma we need concerns $\kappa$-centred forcing notions, which is a generalisation of $\sigma$-centredness to the context of $\gbs$.

\begin{dfn}
Let $\bb P$ be a forcing notion. In the context of $\gbs$, we will define a subset $P\subset\bb P$ to be \emph{${<}\kappa$-linked} if for every subset $Q\in[P]^{<\kappa}$ there exists $r\in\bb P$ such that $r\leq q$ for all $q\in Q$. We say that $\bb P$ is \emph{$\kappa$-centred} if $\bb P=\Cup_{\gamma\in\kappa}\bb P_\gamma$ where each $\bb P_\gamma$ is ${<}\kappa$-linked.
\end{dfn}

\begin{lmm}\label{approximation cof eq}
Let $\bb P$ be a $\kappa$-centred forcing notion and $\dot f$ be a $\bb P$-name such that $\fc_{\bb P}\ap{\dot f\in\prod b}$, then there is a family $\st{g_\gamma\mid \gamma\in\kappa}\subset\prod b$ in the ground model such that for any $h\in\prod b$ with $h\eqi g_\gamma$ for all $\gamma\in\kappa$ we have $\fc_{\bb P}\ap{h\eqi\dot f}$. Consequently, there exists $g\in\prod b$ such that $\fc_{\bb P}\ap{g\eqi \dot f}$.
\end{lmm}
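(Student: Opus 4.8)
The plan is to exploit the $\kappa$-centredness of $\bb P$ to extract, for each ``coordinate'' $\alpha\in\kappa$, enough ground-model information about $\dot f(\alpha)$ to build the functions $g_\gamma$. Write $\bb P=\Cup_{\gamma\in\kappa}\bb P_\gamma$ with each $\bb P_\gamma$ being ${<}\kappa$-linked. For each $\gamma\in\kappa$ and each $\alpha\in\kappa$, consider the set $A_{\gamma,\alpha}=\st{\beta\in b(\alpha)\mid \exists p\in\bb P_\gamma\ (p\fc\ap{\dot f(\alpha)=\beta})}$. The first key observation is that $|A_{\gamma,\alpha}|<\kappa$: indeed, if $A_{\gamma,\alpha}$ had size $\kappa$ we could pick for each $\beta\in A_{\gamma,\alpha}$ a witness $p_\beta\in\bb P_\gamma$, and since $b(\alpha)<\kappa$ this is impossible — more carefully, we only need that $A_{\gamma,\alpha}\subset b(\alpha)$ and hence $|A_{\gamma,\alpha}|\leq|b(\alpha)|<\kappa$, so in fact this is automatic. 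The genuinely useful point is the following: if $Q\in[\bb P_\gamma]^{<\kappa}$ witnesses more than one value, then taking a common lower bound $r$ of $Q$ (which exists by ${<}\kappa$-linkedness) forces $\dot f(\alpha)$ to equal each of those values, a contradiction. Hence \emph{for each fixed $\gamma$ and $\alpha$ there is at most one $\beta$ such that some $p\in\bb P_\gamma$ forces $\dot f(\alpha)=\beta$} — provided such a $p$ exists at all. So I would set $g_\gamma(\alpha)$ to be this unique value when it exists, and $g_\gamma(\alpha)=0$ otherwise; then $g_\gamma\in\prod b$.

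Next I would verify the stated property: suppose $h\in\prod b$ satisfies $h\eqi g_\gamma$ for every $\gamma\in\kappa$, and suppose toward a contradiction that some $p\in\bb P$ forces $\ap{h\neqi\dot f}$, i.e.\ $p\fc\ap{\forall^\infty\alpha\ (h(\alpha)\neq\dot f(\alpha))}$. Strengthening $p$, we may assume there is $\alpha_0\in\kappa$ and a condition $p$ forcing $h(\alpha)\neq\dot f(\alpha)$ for all $\alpha\geq\alpha_0$. Pick $\gamma$ with $p\in\bb P_\gamma$. By hypothesis $h(\alpha)=g_\gamma(\alpha)$ for cofinally many $\alpha$, in particular for some $\alpha\geq\alpha_0$. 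For that $\alpha$, strengthen $p$ to some $q\leq p$ deciding $\dot f(\alpha)$, say $q\fc\ap{\dot f(\alpha)=\beta}$; then $q\in\bb P_{\gamma'}$ for some $\gamma'$, and since $q$ witnesses the value $\beta$ we get $g_{\gamma'}(\alpha)=\beta$ — but that does not immediately contradict anything, since $\gamma'\neq\gamma$ in general. The fix is to decide $\dot f(\alpha)$ \emph{without leaving $\bb P_\gamma$}: since $\bb P_\gamma$ is ${<}\kappa$-linked and $p\in\bb P_\gamma$, and since the values forced by conditions in $\bb P_\gamma$ below $p$ that decide $\dot f(\alpha)$ must all coincide (again by taking a common lower bound), and since \emph{some} extension of $p$ decides $\dot f(\alpha)$, density together with ${<}\kappa$-linkedness forces that value to be exactly $g_\gamma(\alpha)$; more precisely, $p$ itself already forces $\dot f(\alpha)=g_\gamma(\alpha)$, because any $q\leq p$ deciding $\dot f(\alpha)$ lies in some $\bb P_{\gamma''}$, but $q$ is also compatible-within-$\bb P_\gamma$ reasoning...

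Here lies the subtlety I expect to be \textbf{the main obstacle}: connecting a condition $q\leq p$ with $p\in\bb P_\gamma$ back to $\bb P_\gamma$ itself. The clean way is to note that we do not need $q\in\bb P_\gamma$; instead, observe that the set $A_\gamma=\st{(\alpha,\beta)\mid \exists p'\in\bb P_\gamma\ (p'\leq p,\ p'\fc\ap{\dot f(\alpha)=\beta})}$ is, for each $\alpha$, single-valued in $\beta$ by the common-lower-bound argument applied to $\bb P_\gamma$; and if $p$ does not decide $\dot f(\alpha)$ to this value, then $p$ has an extension in $\bb P_\gamma$ forcing a \emph{different} value (because $p$ has \emph{some} extension deciding $\dot f(\alpha)$, and by genericity/density we may take it in $\bb P_\gamma$ since $\bb P=\Cup\bb P_\gamma$ — this requires that the $\bb P_\gamma$ be \emph{dense enough}, which follows from $\bb P=\Cup_\gamma\bb P_\gamma$ being a covering, as any $q$ lies in some $\bb P_{\gamma''}$, and we re-index). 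Concretely: take $q\leq p$ with $q\fc\ap{\dot f(\alpha)=\beta}$ and $q\in\bb P_{\gamma''}$; then $\beta=g_{\gamma''}(\alpha)$ by definition of $g_{\gamma''}$, and since $h\eqi g_{\gamma''}$ too, we again find cofinally many $\alpha\geq\alpha_0$ with $h(\alpha)=g_{\gamma''}(\alpha)=\beta$; choosing $\alpha$ to be one such coordinate for which $q$ decides $\dot f(\alpha)$, we obtain $q\fc\ap{\dot f(\alpha)=h(\alpha)}$, contradicting $q\leq p\fc\ap{h(\alpha)\neq\dot f(\alpha)}$. To make this argument go through cleanly I would instead organise it as: for each $q\in\bb P$ there is $\gamma(q)\in\kappa$ with $q\in\bb P_{\gamma(q)}$, so the family $\st{g_\gamma\mid\gamma\in\kappa}$ ``sees'' the value decided by every condition; then $h\eqi g_\gamma$ for all $\gamma$ means $h$ agrees cofinally with every such decided value, which is incompatible with any condition forcing $h$ to eventually differ from $\dot f$. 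Finally, for the ``consequently'' clause: since there are only $\kappa$ many $g_\gamma$ and we assume (implicitly, as throughout) $\kappa^{<\kappa}=\kappa$ or at least that $\bneq{b}>\kappa$ in the relevant cases — actually the cleanest route is to apply \Cref{bounds on bneq}: in the setting where $\bneq{b}\geq\kappa^+$, the family $\st{g_\gamma\mid\gamma\in\kappa}$ of size $\kappa$ cannot witness $\bneq{b}$, so there is $g\in\prod b$ with $g\eqi g_\gamma$ for all $\gamma$, and by the preceding paragraph $\fc_{\bb P}\ap{g\eqi\dot f}$. (If $\bneq{b}=\kappa$, one instead notes directly that a single $g$ diagonalising $\kappa$ many functions under $\eqi$ exists whenever $\cf(b(\alpha))>\kappa$ cofinally, which is exactly case (iii); the boundary cases are handled by the triviality analysis and are not needed for the $\kappa$-Hechler applications, where $b$ satisfies (iii).)
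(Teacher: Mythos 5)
There is a genuine gap, and it sits exactly where you flagged ``the main obstacle''. Your $g_\gamma(\alpha)$ is defined as \emph{the value that some condition in $\bb P_\gamma$ forces $\dot f(\alpha)$ to take}, with a default of $0$ when no condition in $\bb P_\gamma$ decides $\dot f(\alpha)$ at all. That default case is fatal: the set $A_{\gamma,\alpha}$ can perfectly well be empty (for instance, $\bb P_\gamma$ might contain only conditions that decide nothing about $\dot f$), and then $h\eqi g_\gamma$ may be witnessed exclusively at coordinates where $g_\gamma(\alpha)=0$ by fiat, from which nothing about $\dot f$ follows. Your attempted repairs do not close this: the assertion that ``$p$ itself already forces $\dot f(\alpha)=g_\gamma(\alpha)$'' is false (different extensions of $p$ lying in different pieces $\bb P_{\gamma''}$ can force different values), and the reorganised argument via $q\in\bb P_{\gamma''}$ founders because the coordinates at which $h$ agrees with $g_{\gamma''}$ need not be coordinates at which $q$ decides $\dot f$; extending $q$ further to decide $\dot f$ there moves you to yet another piece $\bb P_{\gamma'''}$, and the regress never terminates.

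The missing idea is to define $g_\gamma(\alpha)$ not as a value some condition in $\bb P_\gamma$ \emph{forces}, but as a value $\beta_\alpha^\gamma$ that no condition in $\bb P_\gamma$ can \emph{exclude}, i.e.\ $p\nfc\ap{\dot f(\alpha)\neq\beta_\alpha^\gamma}$ for every $p\in\bb P_\gamma$. Such a value always exists: otherwise, for each $\beta\in b(\alpha)$ pick $p_\beta\in\bb P_\gamma$ with $p_\beta\fc\ap{\dot f(\alpha)\neq\beta}$; since $|b(\alpha)|<\kappa$, the family $\st{p_\beta\mid\beta\in b(\alpha)}$ has a common extension by ${<}\kappa$-linkedness, and that extension forces $\dot f(\alpha)\notin b(\alpha)$, contradicting $\fc_{\bb P}\ap{\dot f\in\prod b}$. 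Note that this uses ${<}\kappa$-linkedness for $|b(\alpha)|$-many conditions simultaneously, whereas your argument only ever invokes pairwise compatibility --- that is precisely why your construction cannot see the right value. With this definition the rest is immediate: given $p\in\bb P_\gamma$ and $\alpha\geq\alpha_0$ with $h(\alpha)=\beta_\alpha^\gamma$, some $p'\leq p$ forces $\dot f(\alpha)=h(\alpha)$. Finally, the ``consequently'' clause needs no appeal to $\bneq{b}$ or to case (iii) of the triviality analysis: fixing a surjection $\pi:\kappa\srj\kappa$ with all fibres of size $\kappa$ and setting $g:\alpha\mapsto\beta_\alpha^{\pi(\alpha)}$ gives $g\eqi g_\gamma$ for every $\gamma$ unconditionally.
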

\begin{proof}
We let $\bb P=\Cup_{\gamma\in\kappa}\bb P_\gamma$ such that each $\bb P_\gamma$ is ${<}\kappa$-linked and we let $\dot f$ be a $\bb P$-name such that $\fc_{\bb P}\ap{\dot f\in\prod b}$. Let $\alpha,\gamma\in\kappa$, then there exists $\beta_\alpha^\gamma\in b(\alpha)$ such that for all $p\in\bb P_\gamma$ we have $p\nfc\ap{\dot f(\alpha)\neq\beta_\alpha^\gamma}$: if not, let $p_\beta\in\bb P_\gamma$ be such that $p_\beta\fc\ap{\dot f(\alpha)\neq\beta}$, then $\st{p_\beta\mid \beta\in b(\alpha)}$ has no common extension, contradicting that $\bb P_\gamma$ is ${<}\kappa$-linked. Let $g_\gamma:\alpha\mapsto \beta^\gamma_\alpha$.

Suppose that $h\in\prod b$ and $h\eqi g_\gamma$ for all $\gamma\in\kappa$ and let $\alpha_0\in\kappa$ and $p\in\bb P$ be arbitrary. There exists $\gamma\in\kappa$ such that $p\in \bb P_\gamma$, and since $h\eqi g_\gamma$ we can find $\alpha\geq \alpha_0$ such that $h(\alpha)=g_\gamma(\alpha)=\beta_\alpha^\gamma$. But, by construction of $\beta_\alpha^\gamma$ we know that $p\nfc\ap{\dot f(\alpha)\neq \beta_\alpha^\gamma}$, thus there exists $p'\leq p$ such that $p'\fc\ap{\dot f(\alpha)=\beta_\alpha^\gamma=h(\alpha)}$. Since $\alpha_0$ and $p$ were arbitrary, we see that $\fc_{\bb P}\ap{\dot f\eqi h}$.

To find $g\in\prod b$ such that $\fc_{\bb P}\ap{g\eqi \dot f}$, fix some surjection $\pi:\kappa\srj\kappa$ such that $|\pi^{-1}(\gamma)|=\kappa$ for all $\gamma\in\kappa$ and let $g:\alpha\mapsto \beta_{\alpha}^{\pi(\alpha)}$. 
\end{proof}

It is not difficult to see that $\bb D_\kappa$ is $\kappa$-centred, and thus that the above lemma applies to $\bb D_\kappa$. However, we will use an iteration of $\bb D_\kappa$ and it is not the case that every iteration of $\kappa$-centred forcing notions is itself $\kappa$-centred. However, for short iterations (of length ${<}(2^\kappa)^+$) of forcing notions where we can give a canonical bound in the ground model, we can show that $\kappa$-centredness is preserved. We will make this precise with the following definition and lemma, although we will refer to Lemma 55 of \cite{BBFM} for a proof of the lemma.

\begin{dfn}
A forcing notion $\bb P$ is \emph{$\kappa$-centred with canonical bounds} if $\bb P=\Cup_{\gamma\in\kappa}\bb P_\gamma$ where each $\bb P_\gamma$ is ${<}\kappa$-linked, $\bb P$ is ${<}\kappa$-closed and there exists $f^{\bb P}:\gfbs\to\kappa$ such that for every $\lambda<\kappa$ and decreasing sequence $\ab{p_\alpha\mid \alpha<\lambda}$ with $p_\alpha\in\bb P_{\gamma_\alpha}$ there exists $p\in\bb P_{\gamma}$ with $\gamma=f^{\bb P}(\ab{\gamma_\alpha\mid \alpha\in\lambda})$ such that $p\leq p_\alpha$ for all $\alpha\in\lambda$.
\end{dfn}
\begin{lmm}[{[BBTFM18] Lemma 55}]\label{preservation centred}
(Let $\kappa$ be inaccessible.) Let $\mu<(2^\kappa)^+$ and $\sab{\bb P_\alpha,\dot{\bb Q}_\alpha\mid \alpha<\mu}$ be a ${<}\kappa$-support iteration, where each $\dot{\bb Q}_\alpha$ is the name of a forcing notion that is $\kappa$-centred with canonical bounds given by a function $f^{\dot{\bb Q}_\alpha}\in\b V$. Then $\bb P_\mu$ is $\kappa$-centred.
\end{lmm}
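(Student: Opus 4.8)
The plan is to prove, by induction on $\mu\leq 2^\kappa$, the slightly stronger statement that $\bb P_\mu$ is $\kappa$-centred \emph{with canonical bounds}; the strengthening is what lets the inductive hypothesis survive the successor step $\bb P_{\nu+1}\cong\bb P_\nu*\dot{\bb Q}_\nu$ and the limit steps. The device that keeps the number of ${<}\kappa$-linked pieces down to $\kappa$ is an injection of the index set into $\gcs$: since $\mu<(2^\kappa)^+$ we have $|\mu|\leq 2^\kappa=|\gcs|$, so we may fix $e\colon\mu\inj\gcs$ and treat the branch $e(\alpha)\in\gcs$ as a ``name'' for the coordinate $\alpha$.

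First I would pass to a convenient dense set. Since each $\dot{\bb Q}_\alpha$ is forced to be ${<}\kappa$-closed and the iteration has ${<}\kappa$-support, $\bb P_\mu$ is itself ${<}\kappa$-closed; combining this with $\fc\ap{\dot{\bb Q}_\alpha=\Cup_{\gamma\in\kappa}\dot{\bb Q}_\alpha^\gamma}$, a recursion of length ${<}\kappa$ along the support shows that the set $D$ of conditions $p$ for which there is a function $\gamma^p\colon\supp(p)\to\kappa$ with $p\restriction\alpha\fc\ap{p(\alpha)\in\dot{\bb Q}_\alpha^{\gamma^p(\alpha)}}$ for every $\alpha\in\supp(p)$ is dense in $\bb P_\mu$; at the limit stages of this recursion it is precisely the canonical-bound functions $f^{\dot{\bb Q}_\alpha}$ that guarantee the lower bound one takes again lies in a named piece. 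It then suffices to colour $D$ and extend the colouring to $\bb P_\mu$ by sending each $p\notin D$ to the colour of some fixed $q_p\leq p$ in $D$, since a common lower bound of witnesses $q_{p_i}\in D$ is a common lower bound of the $p_i$.

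Next comes the colouring. Given $p\in D$ with support $s=\supp(p)$, the $|s|<\kappa$ branches $\st{e(\alpha)\mid\alpha\in s}$ are distinct, so by regularity of $\kappa$ there is a least $\eta_p<\kappa$ for which the truncations $\st{e(\alpha)\restriction\eta_p\mid\alpha\in s}$ are pairwise distinct; assign to $p$ the colour consisting of $\eta_p$ together with the sequence obtained by listing the pairs $(e(\alpha)\restriction\eta_p,\gamma^p(\alpha))$, $\alpha\in s$, in the lexicographic order of their first coordinate. Since $\kappa$ is inaccessible, $2^{<\kappa}=\kappa$ and $\kappa^{<\kappa}=\kappa$, so there are at most $\kappa$ colours; write $D=\Cup_c D_c$ accordingly. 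To see $D_c$ is ${<}\kappa$-linked, take $\st{p_i\mid i\in I}\subset D_c$ with $|I|<\kappa$: the common colour fixes the same $\eta$ and the same pair-sequence, and because within each $\supp(p_i)$ the truncated branches are distinct, whenever $\alpha\in\supp(p_i)\cap\supp(p_j)$ the pair with first coordinate $e(\alpha)\restriction\eta$ is unique in that sequence, whence $\gamma^{p_i}(\alpha)=\gamma^{p_j}(\alpha)=:\gamma_\alpha$. One now builds a common lower bound $r$ by recursion on $\beta\leq\mu$, with $\supp(r)=\Cup_i\supp(p_i)$ (of size ${<}\kappa$): off the overlaps $r$ copies the unique relevant $p_i$, while at an overlap coordinate $\beta$ one has that $r\restriction\beta$ forces the $p_i(\beta)$ to all lie in the single ${<}\kappa$-linked piece $\dot{\bb Q}_\beta^{\gamma_\beta}$, so a common lower bound exists there, and $f^{\dot{\bb Q}_\beta}$ places $r(\beta)$ in a piece depending only on $\gamma_\beta$ — which keeps $r$ inside $D$ and makes the recursion close at limit coordinates (where $r$ is just the union of what has been built). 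The successor and limit stages of the outer induction go the same way, now invoking the inductively established canonical bounds of $\bb P_\nu$ for $\nu<\mu$.

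The step I expect to be the main obstacle is exactly this uniform bookkeeping at limit stages — limit coordinates inside the amalgamation recursion, and limit $\mu$ in the outer induction, where supports may be unbounded in $\mu$ when $\cf(\mu)<\kappa$: one must verify that the canonical-bound datum of $\bb P_\mu$ (a function on $\gfbs$ returning the piece of a lower bound of a decreasing ${<}\kappa$-sequence from the pieces of its members) can genuinely be read off from the colours alone, while checking that the colour space has not grown past $\kappa$. This is where both hypotheses are indispensable: $\kappa$ inaccessible for the cardinal arithmetic $2^{<\kappa}=\kappa=\kappa^{<\kappa}$, and $\mu<(2^\kappa)^+$ for the existence of the injection $e$. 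A full treatment is carried out as Lemma~55 of \cite{BBFM}, which the paper cites rather than reproving.
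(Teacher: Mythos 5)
The paper does not actually prove this lemma; it explicitly defers to Lemma~55 of \cite{BBFM}, so there is no in-paper argument to compare against. Your reconstruction is the standard one (the $\kappa$-analogue of Tall's theorem that finite-support iterations of $\sigma$-centred posets of length ${<}(2^{\aleph_0})^+$ are $\sigma$-centred), and its skeleton --- injecting $\mu$ into $\gcs$, passing to the dense set $D$ of conditions with ground-model-decided pieces, colouring by $\eta_p$ together with the truncated-branch/piece pairs, and amalgamating ${<}\kappa$ many same-coloured conditions coordinatewise using the forced ${<}\kappa$-linkedness of each $\dot{\bb Q}_\beta^{\gamma_\beta}$ --- is correct; the cardinal arithmetic bounding the number of colours by $\kappa$ is exactly where inaccessibility enters.

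The one place your sketch is thinner than it should be is the density of $D$, and the issue is not only at limit stages: in the naive recursion along $\supp(p)$, when you strengthen $p_\xi\restriction\alpha_\xi$ to decide the piece of $p_\xi(\alpha_\xi)$, you also strengthen the coordinates $\alpha_{\xi'}$ for $\xi'<\xi$, so their previously decided pieces are no longer decided for the new condition. One has to re-decide all earlier coordinates at every step, so that each coordinate $\beta$ accumulates a decreasing ${<}\kappa$-sequence of conditions with a known sequence of piece-indices; it is then the ground-model function $f^{\dot{\bb Q}_\beta}$ that names the piece containing the lower bound of that sequence, which is what makes the final condition land in $D$. This is exactly the role of the ``canonical bounds'' hypothesis, and you do gesture at it, but be aware it is needed throughout the recursion and not only at its limit stages. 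Also note that for the lemma as stated only $\kappa$-centredness of $\bb P_\mu$ is required, so the outer induction on $\mu$ and the preservation of canonical bounds for $\bb P_\mu$ itself are dispensable: the colouring argument applies to $\bb P_\mu$ directly.
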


Naturally, $\bb D_\kappa$ is $\kappa$-centred with canonical bounds.

\begin{lmm}
$\bb D_\kappa$ is $\kappa$-centred with canonical bounds.
\end{lmm}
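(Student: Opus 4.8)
The goal is to show that $\bb D_\kappa$ is $\kappa$-centred with canonical bounds. Recall $\bb D_\kappa$ has conditions $(s,f)$ with $s\in\gfbs$ and $f\in\gbs$, ordered by $(t,g)\leq(s,f)$ iff $s\subset t$, $f\leq g$ (pointwise), and $f(\alpha)\leq t(\alpha)$ for all $\alpha\in\dom(t)\setminus\dom(s)$. The natural partition is \emph{by the first coordinate}: for each stem $s\in\gfbs$, let $\bb D_\kappa^{(s)}=\st{(s,f)\mid f\in\gbs}$. Since $|\gfbs|=\kappa^{<\kappa}=\kappa$ (as $\kappa$ is inaccessible), we can index these as $\ab{\bb D_\kappa^{(s_\gamma)}\mid\gamma\in\kappa}$ for some enumeration $\ab{s_\gamma\mid\gamma\in\kappa}$ of $\gfbs$, giving $\bb D_\kappa=\Cup_{\gamma\in\kappa}\bb D_\kappa^{(s_\gamma)}$.

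First I would verify ${<}\kappa$-linkedness of each piece. Given $Q\in[\bb D_\kappa^{(s)}]^{<\kappa}$, write $Q=\st{(s,f_i)\mid i\in I}$ with $|I|<\kappa$. Define $g(\alpha)=\sup_{i\in I}f_i(\alpha)$ for each $\alpha\in\kappa$; since $|I|<\kappa$ and each $f_i(\alpha)<\kappa$, regularity of $\kappa$ gives $g(\alpha)<\kappa$, so $g\in\gbs$. Then $(s,g)\leq(s,f_i)$ for every $i\in I$: the stems agree (so the condition $f_i(\alpha)\leq s(\alpha)$ on $\dom(s)\setminus\dom(s)=\emp$ is vacuous) and $f_i\leq g$ pointwise by construction. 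Hence $\bb D_\kappa^{(s)}$ is ${<}\kappa$-linked. That $\bb D_\kappa$ is ${<}\kappa$-closed is the content of the earlier corollary that $\bb D_\kappa^b(C)$ does not add elements to $\gfbs$ (or can be checked directly: given a decreasing sequence $\ab{(s_\xi,f_\xi)\mid\xi<\lambda}$ with $\lambda<\kappa$, the stems increase, their union $s$ has domain a limit ordinal $<\kappa$, and one takes $(s,g)$ with $g=\sup_\xi f_\xi$ below all of them — using that $\bb D_\kappa=\bb D_\kappa^{\bar\kappa}(\kappa)$ is $\kappa$-closed, not merely strategically so).

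Next I would exhibit the canonical bound function $f^{\bb D_\kappa}:\gfbs\to\kappa$. Given a decreasing sequence $\ab{p_\alpha=(t_\alpha,g_\alpha)\mid\alpha<\lambda}$ with $p_\alpha\in\bb D_\kappa^{(s_{\gamma_\alpha})}$, i.e. $t_\alpha=s_{\gamma_\alpha}$, the common lower bound produced by the closure argument is $(s,g)$ where $s=\Cup_{\alpha<\lambda}s_{\gamma_\alpha}$ and $g=\sup_{\alpha<\lambda}g_\alpha$; this condition lies in the piece $\bb D_\kappa^{(s)}$, whose index is $\gamma$ with $s_\gamma=s$. The point is that $s$ — hence $\gamma$ — depends \emph{only} on the sequence of stems $\ab{s_{\gamma_\alpha}\mid\alpha<\lambda}$, which is recoverable from $\ab{\gamma_\alpha\mid\alpha<\lambda}$, and not on the functions $g_\alpha$. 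So we may define $f^{\bb D_\kappa}(\ab{\gamma_\alpha\mid\alpha\in\lambda})$ to be the unique $\gamma$ with $s_\gamma=\Cup_{\alpha<\lambda}s_{\gamma_\alpha}$ (and, say, $0$ if the $s_{\gamma_\alpha}$ are not $\subset$-increasing or their union fails to lie in $\gfbs$, which won't happen for genuinely decreasing sequences). This $f^{\bb D_\kappa}$ is a function in $\b V$ from $\gfbs$ (equivalently, from ${}^{<\kappa}\kappa$, coding sequences of indices) to $\kappa$, and by construction $p=(s,g)\in\bb D_\kappa^{\gamma}$ is below every $p_\alpha$.

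There is no real obstacle here — the only thing to be slightly careful about is making the domain of $f^{\bb D_\kappa}$ literally $\gfbs$ as the definition demands: a sequence $\ab{\gamma_\alpha\mid\alpha<\lambda}$ of indices below $\kappa$ is itself an element of ${}^{<\kappa}\kappa=\gfbs$, so this matches the signature. One should also note that the verification uses $\kappa^{<\kappa}=\kappa$ (for the enumeration of stems) and regularity of $\kappa$ (for the suprema), both of which follow from inaccessibility, the standing assumption. I would write the proof in three short paragraphs corresponding to: the partition and ${<}\kappa$-linkedness; ${<}\kappa$-closure (citing the earlier corollary); and the canonical bound function.

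\begin{proof}
Since $\kappa$ is inaccessible we have $\kappa^{<\kappa}=\kappa$, so we may fix an enumeration $\ab{s_\gamma\mid\gamma\in\kappa}$ of $\gfbs$. For each $\gamma\in\kappa$ let $\bb D_\kappa^{\gamma}=\st{(s_\gamma,f)\mid f\in\gbs}$, so that $\bb D_\kappa=\Cup_{\gamma\in\kappa}\bb D_\kappa^{\gamma}$. To see that each $\bb D_\kappa^{\gamma}$ is ${<}\kappa$-linked, let $Q=\st{(s_\gamma,f_i)\mid i\in I}\in[\bb D_\kappa^{\gamma}]^{<\kappa}$. Define $g:\alpha\mapsto\sup_{i\in I}f_i(\alpha)$; since $|I|<\kappa$ and $\kappa$ is regular, $g(\alpha)<\kappa$ for every $\alpha$, so $g\in\gbs$. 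As all members of $Q$ share the stem $s_\gamma$, the condition $f_i(\alpha)\leq g(\alpha)$ for $\alpha\in\dom(s_\gamma)\setminus\dom(s_\gamma)=\emp$ is vacuous, and $f_i\leq g$ pointwise, hence $(s_\gamma,g)\leq(s_\gamma,f_i)$ for all $i\in I$.

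Moreover $\bb D_\kappa$ is ${<}\kappa$-closed: this is the content of the corollary to the closure lemma applied to $\bb D_\kappa\equiv\bb D_\kappa^{\bar\kappa}(\kappa)$, noting that for the club $C=\kappa$ the strategy described there simply produces, for a decreasing sequence $\ab{(t_\alpha,g_\alpha)\mid\alpha<\lambda}$ with $\lambda<\kappa$, the lower bound $(s,g)$ with $s=\Cup_{\alpha<\lambda}t_\alpha\in\gfbs$ and $g:\beta\mapsto\sup_{\alpha<\lambda}g_\alpha(\beta)$.

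Finally we define the canonical bound function $f^{\bb D_\kappa}:\gfbs\to\kappa$. A sequence $\ab{\gamma_\alpha\mid\alpha<\lambda}$ of ordinals below $\kappa$, for $\lambda<\kappa$, is an element of ${}^{<\kappa}\kappa=\gfbs$. If the stems $\ab{s_{\gamma_\alpha}\mid\alpha<\lambda}$ are $\subset$-increasing, set $s=\Cup_{\alpha<\lambda}s_{\gamma_\alpha}$; then $\dom(s)$ is an ordinal below $\kappa$, so $s\in\gfbs$, and we let $f^{\bb D_\kappa}(\ab{\gamma_\alpha\mid\alpha<\lambda})$ be the unique $\gamma$ with $s_\gamma=s$. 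Otherwise set $f^{\bb D_\kappa}(\ab{\gamma_\alpha\mid\alpha<\lambda})=0$. Now if $\ab{p_\alpha\mid\alpha<\lambda}$ is decreasing with $p_\alpha\in\bb D_\kappa^{\gamma_\alpha}$, then $p_\alpha=(s_{\gamma_\alpha},g_\alpha)$ and the stems are $\subset$-increasing, so with $\gamma=f^{\bb D_\kappa}(\ab{\gamma_\alpha\mid\alpha<\lambda})$ the condition $(s_\gamma,g)$, where $g:\beta\mapsto\sup_{\alpha<\lambda}g_\alpha(\beta)$, lies in $\bb D_\kappa^{\gamma}$ and satisfies $(s_\gamma,g)\leq p_\alpha$ for all $\alpha<\lambda$. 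Hence $\bb D_\kappa$ is $\kappa$-centred with canonical bounds.
\end{proof}
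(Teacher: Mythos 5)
Your proof is correct and follows essentially the same route as the paper: partition by stems, take pointwise suprema for $<\kappa$-linkedness and closure, and let the canonical bound be the union of the stems. The only difference is cosmetic — you code stems by ordinal indices so that $f^{\bb D_\kappa}$ literally has signature $\gfbs\to\kappa$, whereas the paper simply writes $f^{\bb D_\kappa}:{}^{<\kappa}(\gfbs)\to\gfbs$ without fussing over the exact domain.
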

\begin{proof}
Let $\bb D_\kappa^s=\st s\times \gbs\subset\bb D_\kappa$, then $\bb D_\kappa^s$ is clearly ${<}\kappa$-linked and $\bb D_\kappa=\Cup_{s\in\gfbs}\bb D_\kappa^s$ is therefore $\kappa$-centred. It is also easy to see that $\bb D_\kappa$ is ${<}\kappa$-closed, since for any decreasing sequence $\ab{(s_\alpha,g_\alpha)\mid \alpha\in\lambda}$ with $\lambda<\kappa$ we can find a function $g$ such that $g\geq g_\alpha$ for each $\alpha\in\lambda$ and $(s,g)\leq (s_\alpha,g_\alpha)$, where $s=\Cup_{\alpha\in\lambda}s_\alpha$. Let $f^{\bb D_\kappa}:{}^{<\kappa}(\gfbs)\to\gfbs$ send $\ab{s_\alpha\mid \alpha\in\lambda}\mapsto \Cup_{\alpha\in\lambda}s_\alpha$ (assuming this is well-defined, and arbitrary otherwise), then $f^{\bb D_\kappa}$ witnesses that $\bb D_\kappa$ is $\kappa$-centred with canonical bounds.
\end{proof}
We are now ready to give proofs for the values of the $\f b$-side cardinals in the $\kappa$-Hechler model and the $\f d$-side cardinals in the dual $\kappa$-Hechler model.
\begin{thm}
Let $\bb P=\sab{\bb P_\alpha,\dot{\bb Q}_\alpha\mid \alpha\in\kappa^{++}}$ be a ${<}\kappa$-support iteration,  where $\dot{\bb Q}_\alpha$ is a $\bb P_\alpha$-name for the $\kappa$-Hechler forcing $\bb D_\kappa$, let $\b V\md\ap{2^\kappa=\kappa^+}$ and let $G$ be $\bb P$-generic over $\b V$. For any $b,h\in\gbs$ (including new functions) such that $h\leq^*\cf(b)$ we have:
\begin{align*}
\b V[G]&\md\ap{\bstar{b,h}=\bleq{b}=\binf{b,h}=\bneq{b}=\kappa^{+}}\qedhere
\end{align*}
\end{thm}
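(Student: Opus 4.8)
The plan is to reduce the whole statement to the single inequality $\b V[G]\md\ap{\bneq b\le\kappa^+}$, and then establish that via \Cref{approximation cof eq} applied to a tail of the iteration. For the reduction I would first dispose of the degenerate possibilities, keeping only the case where $b,h$ fall under case (iii) of \Cref{bounds on bstar} (otherwise $\bstar{b,h}$ is pinned down there, independently of $G$), so that $\kappa^+\le\bstar{b,h}$. Because $h\le^*\cf(b)$, \Cref{cardinal relations fixed b h} supplies the Tukey connections $\sr D_b\preceq\sr L_{b,h}$, $\AL_{b,h}\preceq\sr D_b$ and $\ED_b\preceq\AL_{b,h}$, which dualise to $\bstar{b,h}\le\bleq b\le\binf{b,h}\le\bneq b$. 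Hence $\kappa^+\le\bstar{b,h}\le\bleq b\le\binf{b,h}\le\bneq b$, and it remains to produce a family $F\subset(\prod b)^{\b V[G]}$ with $|F|=\kappa^+$ such that every $g\in(\prod b)^{\b V[G]}$ is cofinally equal to a member of $F$.

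First I would place $b$ at a bounded stage. Since $\bb D_\kappa$ is $\kappa$-centred with canonical bounds and ${<}\kappa$-closed, the whole iteration $\bb P=\bb P_{\kappa^{++}}$ is strategically ${<}\kappa$-closed and ${<}\kappa^+$-c.c., so it adds no elements of $\gfbs$, and any name for a $\kappa$-real can be taken to mention only ${\le}\kappa$ coordinates; as $\cf(\kappa^{++})>\kappa$, this shows every element of $\gbs$, in particular $b$, lies in $\b V[G_\beta]$ for some $\beta<\kappa^{++}$. A routine count of nice names (using $\b V\md\ap{2^\kappa=\kappa^+}$, the ${<}\kappa$-support, and the chain condition) gives $|\bb P_\beta|\le\kappa^+$, hence $(2^\kappa)^{\b V[G_\beta]}=\kappa^+$, for every $\beta<\kappa^{++}$. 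I then set $F=(\prod b)^{\b V[G_\beta]}$, so $F\in\b V[G_\beta]$ and $|F|=\kappa^+$.

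To see this $F$ works, fix $g\in(\prod b)^{\b V[G]}$; by the above it lives in $\b V[G_\alpha]$ for some $\alpha\in[\beta,\kappa^{++})$. I would factor $\bb P_\alpha\cong\bb P_\beta*\dot{\bb R}$, where over $\b V_1:=\b V[G_\beta]$ the quotient $\bb R$ is a ${<}\kappa$-support iteration of copies of $\kappa$-Hechler forcing of length $<\kappa^{++}$, i.e.\ below the $\b V_1$-value of $(2^\kappa)^+$; its iterands are $\kappa$-centred with canonical bounds given by the map $\sab{s_\xi\mid\xi<\lambda}\mapsto\Cup_{\xi<\lambda}s_\xi$, which lies in $\b V\subset\b V_1$ since no new bounded sequences are ever added. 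Therefore \Cref{preservation centred}, applied over $\b V_1$, shows $\bb R$ is $\kappa$-centred there. Working in $\b V_1$, let $\dot g$ be an $\bb R$-name for $g$ with $\fc_{\bb R}\ap{\dot g\in\prod b}$ (legitimate as $b\in\b V_1$); \Cref{approximation cof eq} then gives $g^*\in(\prod b)^{\b V_1}=F$ with $\fc_{\bb R}\ap{g^*\eqi\dot g}$. Passing to the generic, $\b V[G_\alpha]\md\ap{g^*\eqi g}$, and since "$\st{\alpha<\kappa\mid g^*(\alpha)=g(\alpha)}$ is cofinal in $\kappa$" is absolute, also $\b V[G]\md\ap{g^*\eqi g}$. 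Thus $F$ witnesses $\bneq b\le\kappa^+$, and combined with the reduction all four cardinals equal $\kappa^+$ in $\b V[G]$.

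The hard part will be the bookkeeping around the fact that $\bb P_{\kappa^{++}}$ itself is \emph{not} known to be $\kappa$-centred — \Cref{preservation centred} only reaches iterations of length below $(2^\kappa)^+=\kappa^{++}$ — so \Cref{approximation cof eq} cannot be applied to the full iteration but only to a tail $\bb R$ over an intermediate model $\b V[G_\beta]$. The delicate points are: that $b$ genuinely appears at a bounded stage; that $\b V[G_\beta]$ still satisfies $2^\kappa=\kappa^+$ (so the tail has admissible length and $|F|=\kappa^+$); and that the canonical-bound functions of the iterands really reside in $\b V[G_\beta]$ — which is exactly the reason for having introduced "$\kappa$-centred with canonical bounds" rather than plain $\kappa$-centredness.
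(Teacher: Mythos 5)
Your proof is correct and follows essentially the same route as the paper: reduce everything to $\bneq{b}\leq\kappa^+$ via the Tukey chain from \Cref{cardinal relations fixed b h}, locate $b$ and any given $f\in\prod b$ at bounded stages of the iteration, and apply \Cref{preservation centred} together with \Cref{approximation cof eq} to a piece of the iteration of length below $(2^\kappa)^+$. The only difference is cosmetic: the paper assumes $b\in\b V$ ``for notational convenience'' and applies the centredness argument to $\bb P_\mu$ over $\b V$, whereas you factor through the quotient over $\b V[G_\beta]$, which is a slightly more careful treatment of the case where $b$ is a new function.
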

\begin{proof}
It suffices to prove that $\b V[G]\md\ap{\bneq{b}=\kappa^{+}}$. Let $b\in(\gbs)^{\b V[G]}$, then there exists some $\alpha\in\kappa^{++}$ such that $b\in(\gbs)^{\b V[G_\alpha]}$. Note that $\b V[G_\alpha]\md\ap{2^\kappa=\kappa^+}$ holds for any $\alpha<\kappa^{++}$, and thus $\b V[G]\md\ap{|(\prod b)^{\b V[G_\alpha]}|=\kappa^+}$. We will show that $(\prod b)^{\b V[G_\alpha]}$ is a witness for $\b V[G]\md\ap{\bneq{b}=\kappa^+}$. We will assume for notational convenience that in fact $b\in\b V$.

Let $\dot f$ be a $\bb P$-name such that $\fc_{\bb P}\ap{\dot f\in\prod b}$, and let $f\in\b V[G]$ be the function named by $\dot f$. Then there is some $\mu<\kappa^{++}$ such that $f\in\b V[G_\mu]$, and thus we can assume without loss of generality that $\dot f$ is a $\bb P_\mu$-name. Since $\b V\md\ap{\mu<(2^\kappa)^+=\kappa^{++}}$, we see that $\bb P_\mu$ is $\kappa$-centred by \Cref{preservation centred}. Therefore there exists $h\in(\prod b)^\b V$ such that $\fc_{\bb P_\mu}\ap{\dot f\eqi h}$, as was needed.
\end{proof}
\begin{thm}
Let $\bb P=\sab{\bb P_\alpha,\dot{\bb Q}_\alpha\mid \alpha\in\kappa^{++}\cdot\kappa^+}$ be a ${<}\kappa$-support iteration,  where $\dot{\bb Q}_\alpha$ is a $\bb P_\alpha$-name for the $\kappa$-Hechler forcing $\bb D_\kappa$, let $\b V\md\ap{2^\kappa=\kappa^+}$ and let $G$ be $\bb P$-generic over $\b V$. For any $b,h\in\gbs$ (including new functions) such that $h\leq^*\cf(b)$ we have:
\begin{align*}
\b V[G]&\md\ap{\dstar{b,h}=\dleq{b}=\dinf{b,h}=\dneq{b}=\kappa^{++}}\qedhere
\end{align*}
\end{thm}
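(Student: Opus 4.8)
The plan is to dualise the proof of the preceding theorem and reduce everything to the single inequality $\b V[G]\md\ap{\dneq{b}\geq\kappa^{++}}$. The upper bounds come for free: since $h\leq^*\cf(b)$, \Cref{cardinal relations fixed b h} provides Tukey connections $\ED_b\preceq\AL_{b,h}\preceq\sr D_b\preceq\sr L_{b,h}$, whence $\dneq{b}\leq\dinf{b,h}\leq\dleq{b}\leq\dstar{b,h}$, and $\dstar{b,h}\leq|\Loc_\kappa^{b,h}|\leq 2^\kappa=\kappa^{++}$ in $\b V[G]$. So proving $\dneq{b}\geq\kappa^{++}$ forces all four cardinals to equal $\kappa^{++}$.

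For the lower bound the engine is a density fact about a single copy of $\bb D_\kappa$. The plan is to show: if $N$ is an inner model with $b\in N$ and $d\in\gbs$ is $\bb D_\kappa$-generic over $N$, then the function $d^*\in\prod b$ with $d^*(\alpha)$ the residue of $d(\alpha)$ modulo $b(\alpha)$ satisfies $d^*\eqi g$ for every $g\in(\prod b)^N$. This is because, for such $g$ and any $\beta<\kappa$, every condition $(s,f)\in\bb D_\kappa$ has an extension assigning, at some $\alpha>\max(\beta,\dom(s))$, a value of the form $b(\alpha)\cdot\zeta+g(\alpha)$ with $\zeta$ large enough that this exceeds $f(\alpha)$ (possible as $b(\alpha)$ is infinite), which forces $d^*(\alpha)=g(\alpha)$. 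Thus the generic of any iterand, read modulo $b$, is cofinally equal to every $\prod b$-function of the model over which that iterand is generic.

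Now let $W\subseteq\prod b$ with $|W|\leq\kappa^+$ in $\b V[G]$; the task is to find $f\in\prod b$ cofinally equal to every member of $W$, so that $W$ does not witness $\dneq{b}$. By the ${<}\kappa^+$-c.c.\ and ${<}\kappa$-support of $\bb P$, each member of $W$ (and $b$ itself, which may be a new function) has a name of support of size $\leq\kappa$, hence bounded in $\kappa^{++}\cdot\kappa^+$ since $\cf(\kappa^{++}\cdot\kappa^+)=\kappa^+>\kappa$. If $W$ as a whole lies in some intermediate extension $\b V[G_\mu]$, then $f:=d^*$ for $d$ the $\bb D_\kappa$-generic added at stage $\mu$ works by the density fact, since $d$ is then generic over a model containing both $b$ and $W$.

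The hard part will be that a family of size $\kappa^+$ need not lie in any single $\b V[G_\mu]$: the union of $\kappa^+$ bounded subsets of $\kappa^{++}\cdot\kappa^+$ can be cofinal. To handle this I would argue by reflection. Supposing towards a contradiction that $W$ witnesses $\dneq{b}\leq\kappa^+$, assign to each $f'\in(\prod b)^{\b V[G]}$ the least $\mu$ such that $f'\in\b V[G_\mu]$ and some $g\in W\cap\b V[G_\mu]$ is eventually different from $f'$; the ordinals $\mu<\kappa^{++}\cdot\kappa^+$ of cofinality $\kappa^+$ closed under this assignment form a club, and at every such $\mu$ the set $W\cap\b V[G_\mu]$ already witnesses $\dneq{b}\leq\kappa^+$ inside $\b V[G_\mu]$. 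By the density fact, the $\bb D_\kappa$-generic $d^{(\mu)}$ added at such a stage $\mu$, read modulo $b$, is cofinally equal to all of $W\cap\b V[G_\mu]$. One then has $\kappa^+$ many such generics to combine into a single $f\in\prod b$ that meets every member of $W$ cofinally often --- e.g.\ by prescribing $f$ piecewise along an almost disjoint family of $\kappa^+$ cofinal subsets of $\kappa$, using the $\kappa$-Cohen-ness of each $d^{(\mu)}$ over $\b V[G_\mu]$ to arrange that $f$ can simultaneously follow each member of the corresponding $W\cap\b V[G_\mu]$ --- which contradicts $W$ being a witness. Apart from this synthesis, the proof is a routine dualisation of the previous one.
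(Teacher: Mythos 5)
Your reduction to $\b V[G]\md\ap{\dneq{b}\geq\kappa^{++}}$ via the Tukey connections of \Cref{cardinal relations fixed b h} matches the paper, and your density fact about a single copy of $\bb D_\kappa$ (the Hechler generic read modulo $b$ is cofinally equal to every member of $\prod b$ from the model it is generic over) is correct. You also correctly identify the obstruction: the $\kappa^+$ many names for members of $W$ have supports that may be cofinal in $\kappa^{++}\cdot\kappa^+$, whose cofinality is only $\kappa^+$. But your resolution of that obstruction has a genuine gap, in two places. First, the claimed club of closure points need not exist: your assignment $f'\mapsto\mu(f')$ is applied to $(\prod b)^{\b V[G_\nu]}$, a set of size up to $\kappa^{++}$, and a supremum of $\kappa^{++}$ many ordinals below an ordinal of cofinality $\kappa^+$ can equal that ordinal, so the closure iteration can run off the top. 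Second, even granting stages $\mu_\xi$ whose generics $d^{(\mu_\xi)}$ handle $W\cap\b V[G_{\mu_\xi}]$, the proposed synthesis fails: $\kappa^+$ many cofinal subsets of $\kappa$ cannot be pairwise disjoint, so a piecewise definition of $f$ along an almost disjoint family must choose one index per coordinate; the resulting partition of $\kappa$ has at most $\kappa$ nonempty (let alone cofinal) pieces, so all but a bounded set of indices $\xi<\kappa^+$ are never followed by $f$, and any $g\in W$ first appearing beyond that bound is not met cofinally. The combination step is, in effect, as hard as the statement being proved.

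The paper closes this gap with the $\kappa$-centredness machinery rather than stage-by-stage genericity. It fixes $\beta=\alpha+\kappa^{++}$ so that $b\in\b V[G_\beta]$ and $\b V[G_\beta]\md\ap{\add(\c M_\kappa)=2^\kappa=\kappa^{++}}$, whence $\dneq{b}=\kappa^{++}$ there (via $\add(\c M_\kappa)\leq\cov(\c M_\kappa)=\dneq{}\leq\dneq{b}$). The entire tail $\bb P'=\bb P/G_\beta$ has length below $((2^\kappa)^{\b V[G_\beta]})^+$, so by \Cref{preservation centred} it is $\kappa$-centred over $\b V[G_\beta]$ as a single forcing notion; this is precisely what handles names whose supports are spread through the whole remainder of the iteration. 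Then \Cref{approximation cof eq} traps each name $\dot f_\alpha$ by $\kappa$ many functions $g_\alpha^\gamma\in(\prod b)^{\b V[G_\beta]}$ such that any $h$ cofinally equal to all of them is forced cofinally equal to $\dot f_\alpha$; since $\kappa^+\cdot\kappa<\dneq{b}^{\b V[G_\beta]}$, one $h\in(\prod b)^{\b V[G_\beta]}$ defeats all $\kappa^+$ names simultaneously. To repair your argument you would need to import this uniformization over the whole tail (or an equivalent substitute); the single-iterand density fact alone does not suffice.
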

\begin{proof}
It suffices to prove that $\b V[G]\md\ap{\dneq{b}=\kappa^{++}}$. Let $b\in(\gbs)^{\b V[G]}$, then there exists some $\alpha\in\kappa^{++}\cdot\kappa^+$ such that $b\in(\gbs)^{\b V[G_\alpha]}$. Let $\beta=\alpha+\kappa^{++}$, then $\beta<\kappa^{++}\cdot\kappa^+$ and $\b V[G_\beta]\md\ap{\add(\c M_\kappa)=\kappa^{++}=2^\kappa}$ by similar arguments as those given for the $\kappa$-Hechler model. This implies particularly that $\b V[G_\beta]\md\ap{\dneq{b}=\kappa^{++}=2^\kappa}$ as well. We will work in $\b V[G_\beta]$ as our new ground model. Let $\bb P'=\bb P/G_\beta\in\b V[G_\beta]$ be the quotient forcing and $H$ be $\bb P'$-generic such that $\b V[G]=\b V[G_\beta][H]$. Note that $\bb P'$ is itself (equivalent to) a ${<}\kappa$-support iteration of $\bb D_\kappa$ of length $\kappa^{++}\cdot\kappa^+$.

Since $\b V[G_\beta]\md\ap{2^\kappa=\kappa^{++}}$, we see that $\bb P'$ is $\kappa$-centred by \Cref{preservation centred}. Let $\sst{\dot f_\alpha\mid \alpha\in\kappa^{+}}$ be a set of names such that $\fc_{\bb P'}\ap{\dot f_\alpha\in \prod b}$. Using \Cref{approximation cof eq}, we can find $\st{g_\alpha^\gamma\mid \gamma\in\kappa}\subset\prod b$ in $\b V[G_\beta]$ such that for any $h\in\prod b$ with $h\eqi g_\alpha^\gamma$ for all $\gamma\in\kappa$, we also have $\fc_{\bb P'}\ap{h\eqi \dot f_\alpha}$. Finally, since $\b V[G_\beta]\md\ap{\dneq{b}=\kappa^{++}}$, we see that $|\st{g_\alpha^\gamma\mid \alpha\in\kappa^+\la \gamma\in\kappa}|<\dneq{b}$, and therefore there exists $h\in\prod b$ such that $h\eqi g_\alpha^\gamma$ for all $\alpha\in\kappa^+$ and $\gamma\in\kappa$. Hence $\fc_{\bb P'}\ap{h\eqi \dot f_\alpha}$ for each $\alpha\in\kappa^+$, showing that $\b V[G]\md\ap{\kappa^+=|\sst{\dot f_\alpha\mid \alpha\in\kappa^+}|<\dneq{b}=\kappa^{++}}$.
\end{proof}

\subsection{Separating Cardinals of the Form $\dstar{b,h}$}

It was first shown in \cite{BBFM} that $\dstar{\pow}<\dstar{\id}$ is consistent, by forcing with a generalisation of Sacks forcing due to Kanamori \cite{Kan80}. In \cite{vdV}, we generalised this construction using a perfect-tree forcing where the number of successors of splitting nodes is governed by a function $h$. We used this to prove that there can in fact consistently be $\kappa^+$ many functions $h_\xi$ such that all cardinals $\dstar{h_\xi}$ are mutually different.

It turns out that the same approach could also be used to produce models where cardinals of the form $\dstar{b,h}$ have different values from each other. Since our forcing construction is in practice nearly identical to the construction from \cite{vdV}, we will state most results without proof, and instead refer to the relevant proof in the aforementioned article. 

\begin{rmk}\label{discrepancy}
Note, that we used a different definition of $h$-slalom in this article, where the bound $\phi(\alpha)<h(\alpha)$ is strict, whereas in our previous article $h$-slaloms $\phi$ are defined such that $\phi(\alpha)\leq h(\alpha)$. The effect of this minor change, is that we will often have to use a successor cardinality whenever $h$ is used, or replace $<$ by $\leq$, compared to the analogous result in \cite{vdV}. 
\end{rmk} 

The main difference between our current forcing notion and the forcing notion from \cite{vdV}, is that we work with trees on $\prod b$ instead of trees on $\gbs$. For many of the properties we need, this difference is irrelevant.

\begin{dfn}[Cf. \cite{vdV} Definition 1.1]\label{Sacks-like forcing def}
Let $\bb S_\kappa^{b,h}$ be the forcing notion with conditions $T$ being trees on $\prod b$ such that
\begin{enumerate}[label=(\roman*)]
\item for any $u\in T$ there exists $v\in T$ such that $u\subseteq v$ and $v$ is splitting,
\item if $u\in\Split_\alpha(T)$, then $u$ is an $h(\alpha)$-splitting node in $T$,
\item if $C\subset T$ is a chain of splitting nodes with $|C|<\kappa$, then $\Cup C$ is a splitting node in $T$.
\end{enumerate}
We order $\bb S_\kappa^{b,h}$ by $T\leq S$ iff 
\begin{itemize}
\item $T\subset S$ and
\item for every $u\in T$, if $\suc(u,T)\neq \suc(u,S)$ then $|\suc(u,T)|<|\suc(u,S)|$.\qedhere
\end{itemize} 
\end{dfn}

\begin{dfn}
Define an ordering by $T\leq_\alpha S$ iff $T\leq S$ and $\Split_\alpha(T)=\Split_\alpha(S)$. 
\end{dfn}
It is easy to see that this is a fusion ordering.

\begin{thm}[Cf. \cite{vdV} Lemmas 1.3 and 1.7]\renewcommand{\qed}{\hfill$\square$}
$\frcS{b,h}$ is ${<}\kappa$-closed, closed under fusion, and $\frcS{b,h}$ has the ${<}(2^\kappa)^{+}$-c.c.. Therefore, $\frcS{b,h}$ preserves all cardinals ${\leq}\kappa$ and ${>}2^\kappa$ and does not destroy the inaccessibility of $\kappa$.
\end{thm}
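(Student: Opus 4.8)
The plan is to obtain all three properties exactly as the corresponding statements for the $\gbs$-version of this forcing are obtained in \cite{vdV} (Lemmas 1.3 and 1.7): the only change is that our conditions are trees on $\prod b$ rather than on $\gbs$, and this is immaterial to every step below, since $b(\alpha)$ is an infinite cardinal, so in particular $b(\alpha)\geq 2$ for every $\alpha$. Once the two closure statements and the chain condition are in place, the claims about cardinal preservation and about the inaccessibility of $\kappa$ follow by entirely standard forcing theory.

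For ${<}\kappa$-closure, given a descending sequence $\ab{T_\alpha\mid\alpha<\gamma}$ with $\gamma<\kappa$, the candidate lower bound is the intersection $T^{*}=\Cap_{\alpha<\gamma}T_\alpha$. The useful feature of the order on $\frcS{b,h}$ is that $T\leq S$ forces, for every node $u$, either $\suc(u,T)=\suc(u,S)$ or $\card{\suc(u,T)}<\card{\suc(u,S)}$; hence along a sequence of length ${<}\kappa$ the set $\suc(u,T_\alpha)$ can strictly shrink only finitely often and is therefore eventually constant, so $\suc(u,T^{*})$ equals this stable, nonempty value. From this, $T^{*}$ is a tree in which every node has a successor and which is closed under chains of length ${<}\kappa$, so $[T^{*}]\neq\emp$. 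The one point that needs care is condition (i) of \Cref{Sacks-like forcing def}: if some $u\in T^{*}$ had no splitting extension in $T^{*}$, then $(T^{*})_u$ would contain no splitting node, hence would be a single branch $x$ of length $\kappa$; but in each $T_\alpha$ the branch $x$ passes through splitting nodes of $T_\alpha$ at a club set $C_\alpha$ of levels (unboundedness is (i), closure is (iii)), and $\Cap_{\alpha<\gamma}C_\alpha$ is again club since $\kappa$ is regular and $\gamma<\kappa$; picking $\beta\in\Cap_{\alpha<\gamma}C_\alpha$, the node $x\restriction\beta$ is splitting in every $T_\alpha$ and its successor sets stabilise to a set of size at least $2$, so $x\restriction\beta$ is splitting in $T^{*}$ — a contradiction. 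Conditions (ii) and (iii) for $T^{*}$ then transfer routinely (a splitting node of $T^{*}$ is a splitting node of every $T_\alpha$, hence lies in the same $\Split_\delta$ and inherits $h(\delta)$-splitting), and $T^{*}\leq T_\alpha$ is immediate.

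For closure under fusion one uses the same intersection $T^{*}=\Cap_{\alpha<\kappa}T_\alpha$ of a fusion sequence $\ab{T_\alpha\mid\alpha<\kappa}$, where $T_\beta\leq_\alpha T_\alpha$ for all $\alpha\leq\beta$. Now the extra clause $\Split_\alpha(T_\beta)=\Split_\alpha(T_\alpha)$ for $\beta\geq\alpha$ freezes the splitting structure: for each $\alpha$ one gets $\Split_\alpha(T^{*})=\Split_\alpha(T_\alpha)$, together with the successors of those nodes, so $T^{*}$ is the condition whose splitting nodes at stage $\alpha$ are exactly those of $T_\alpha$; (i) and (ii) are read off directly, (iii) is as before, and $T^{*}\leq_\alpha T_\alpha$ for every $\alpha$, so $\frcS{b,h}$ is closed under fusion. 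The chain condition is a triviality of size: since $\kappa$ is inaccessible and every $b(\alpha)<\kappa$, each $\prod_{\alpha<\gamma}b(\alpha)$ has cardinality ${<}\kappa$ (a product of fewer than $\kappa$ cardinals below the strong limit regular $\kappa$), so $\card{\prod_{<\kappa}b}\leq\kappa$ and $\card{\frcS{b,h}}\leq 2^{\kappa}$; hence every antichain has size at most $2^{\kappa}$, i.e.\ $\frcS{b,h}$ is ${<}(2^\kappa)^{+}$-c.c.

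Finally, the ``therefore''. A ${<}\kappa$-closed forcing adds no new ${<}\kappa$-sequences of ordinals, hence preserves all cofinalities and cardinals ${\leq}\kappa$, keeps $\kappa$ regular, and adds no new subsets of any $\lambda<\kappa$, so $2^{\lambda}$ is unchanged and stays ${<}\kappa$; thus $\kappa$ remains a strong limit and, being still regular, strongly inaccessible. A ${<}(2^\kappa)^{+}$-c.c.\ forcing preserves all cardinals ${>}2^\kappa$. (Closure under fusion is not used for this conclusion; it is recorded here because it is needed for the consistency results of \Cref{section: consistency}.) The main obstacle in the whole argument is the verification of condition (i) for the intersection in the ${<}\kappa$-closure step — ensuring that no branch of $T^{*}$ is left without a splitting node — which is exactly where the club-intersection argument and the regularity of $\kappa$ are essential; everything else is bookkeeping or an appeal to standard facts.
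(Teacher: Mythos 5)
Your proposal is correct and takes the route the paper intends: the theorem is stated here without proof, deferring to \cite{vdV} (Lemmas 1.3 and 1.7), and your three ingredients --- stabilisation of the descending sequence of successor sets along a ${<}\kappa$-sequence of conditions, the club of splitting levels along a branch to recover condition (i) of \Cref{Sacks-like forcing def} for the intersection, and the bound $\card{\frcS{b,h}}\leq 2^\kappa$ for the chain condition --- together with the standard preservation facts are exactly what that argument uses, the passage from $\gbs$ to $\prod b$ being immaterial as you say. The same stabilisation argument appears verbatim in the paper's own proof of the corresponding closure lemma for $\frcQ{b,h}$ (\Cref{dinf closure}), so your write-up is consistent with the paper's treatment of the sibling forcing; the only cosmetic slip is that a splitting node of $T^{*}$ need not lie in the \emph{same} $\Split_\delta$ as in each $T_\alpha$ (its splitting index can only increase), but since $h$ is increasing this only strengthens condition (ii).
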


We will use our forcing notion in ground models where $2^\kappa=\kappa^+$, thus the above gives us the preservation of all cardinals with the exception of $\kappa^+$. The preservation of $\kappa^+$ will follow from \Cref{having bF Sacks} below. We need two main ingredients if we want to separate two localisation cardinals $\dstar{b,h}$ and $\dstar{b',h'}$, using a preservation property that we will call the $(b,h)$-Sacks property.

\begin{dfn}\label{Sacks property}
$\bb P$ has the \emph{$(b,h)$-Sacks property} if for every $\bb P$-name $\dot f$ and $p\in \bb P$ such that $p\fc\ap{\dot f\in\prod  b}$ there exists a $\phi\in\Loc_\kappa^{b,h}$ and $q\leq p$ such that $q\fc\ap{\dot f( \xi)\in \phi(\xi)}$ for all $\xi<\kappa$.
\end{dfn}

Hence, if $\Phi\subset\Loc_\kappa^{b,h}$ is a witness for $\dstar{b,h}$ in the ground model, then $\Phi$ will still witness $\dstar{ b,h}$ in extensions by forcing notions with the $(b,h)$-Sacks property. 

Our strategy is the following. Assuming that $b$ is sufficiently larger than $h$, there is some  $F\in\gbs$ such that $\frcS{b,h}$ has the $(b,F)$-Sacks property, and reversely that for any $F$ there exists some $h$ such that $\frcS{b,h}$ does not have the $F$-Sacks property. In other words, assuming $b$ is large enough, for any $F_0$ we may find $h$ and $F_1$ such that $\frcS{b,h}$ does not have the $(b,F_0)$-Sacks property, but does have the $(b,F_1)$-Sacks property. This implies that we can increase $\dstar{b,F_0}$ while the ground model forms a witness for $\dstar{b,F_1}$.

As for what it means that $b$ is sufficiently larger than $h$, it will be sufficient that $(2^h)^+\leq b$. In this section, we will assume without mention that $b$ is always sufficiently large.

We call $T\in\frcS{b,h}$ a \emph{sharp} tree (below $\alpha$) if each $u\in \Split_\xi(T)$ is a sharp $h(\xi)$-splitting node (for all $\xi<\alpha$). The set $(\frcS{b,h})^*=\sst{T\in\frcS{b,h}\mid T\text{ is sharp}}$ embeds densely into $\frcS{b,h}$, thus for each $T\in\frcS{b,h}$ we can choose some $T^*\leq T$ such that $T^*$ is sharp.

\begin{thm}[Cf. \cite{vdV} Theorem 1.8]\label{having bF Sacks}
Let $F:\xi\mapsto (h(\xi)^{|\xi|})^+$, then $\frcS{b,h}$ has the $(b,F)$-Sacks property.
\end{thm}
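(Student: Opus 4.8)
The plan is to argue by a fusion construction, mirroring the proof of Theorem 1.8 in \cite{vdV} but tracking the fact that at the $\xi$-th splitting level a sharp node has exactly $h(\xi)$ successors, of which there are only $|\xi|$ many splitting levels below, so the relevant bookkeeping size is $h(\xi)^{|\xi|}$ (hence $F(\xi)=(h(\xi)^{|\xi|})^+$ appears, with the successor coming from the strict inequality in our slalom convention, cf.\ \Cref{discrepancy}). First I would pass to a sharp condition $T^*\leq p$, so that below any $u\in\Split_\xi(T^*)$ there are exactly $h(\xi)$ immediate successors and $\Lev$-structure is under control.

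Next I would build a generalised fusion sequence $\ab{T_\alpha\mid\alpha\in\kappa}$ with $T_{\alpha+1}\leq_\alpha T_\alpha$ together with a slalom $\phi$ defined level by level. At stage $\alpha$, having fixed $\Split_{\alpha}(T_\alpha)$, for each node $u$ on or above the $\alpha$-th splitting level I would refine below $u$ to decide $\dot f(\alpha)$ (using $<\kappa$-closure to thin an appropriate antichain of size $<\kappa$ below each relevant node and $T_\alpha\in\frcS{b,h}$ to keep the result a condition). The point is: the set of nodes $u\in T_\alpha$ that ``matter'' at coordinate $\alpha$ — i.e.\ the nodes lying at the first splitting level past $\alpha$, or more precisely the branches through $\Split_{<\alpha}(T_\alpha)$ — has size at most $\prod_{\xi<\alpha} h(\xi)\leq h(\alpha)^{|\alpha|}$ because $h$ is increasing. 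Collecting the (at most that many) forced values of $\dot f(\alpha)$ from the incompatible extensions gives a set $\phi(\alpha)\subset b(\alpha)$ with $|\phi(\alpha)|\leq h(\alpha)^{|\alpha|}<F(\alpha)$; so $\phi\in\Loc_\kappa^{b,F}$. At limit stages take $T_\delta=\Cap_{\alpha<\delta}T_\alpha$, which is in $\frcS{b,h}$ by clause (iii) of \Cref{Sacks-like forcing def}, and the fusion ordering guarantees $\Split_\alpha$ stabilises. Finally let $q$ be a fusion of $\ab{T_\alpha\mid\alpha\in\kappa}$ (using closure under fusion); by construction $q\fc\ap{\dot f(\xi)\in\phi(\xi)}$ for all $\xi\in\kappa$.

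The main obstacle, as usual in these generalised perfect-tree arguments, is the bookkeeping at a single coordinate $\alpha$: one must be careful that the relevant ``front'' of nodes at which $\dot f(\alpha)$ needs to be decided is genuinely of size $\leq h(\alpha)^{|\alpha|}$ and not larger, and that decisions made at coordinate $\alpha$ do not undo the stabilisation of splitting levels $<\alpha$ — i.e.\ that the refinements used really give $T_{\alpha+1}\leq_\alpha T_\alpha$ rather than merely $\leq T_\alpha$. This is exactly where sharpness is used (each splitting node has the \emph{minimal} allowed number of successors, so refining below a node without creating new splitting below level $\alpha$ is possible, using $b$ large enough — $(2^h)^+\leq b$ — so that there is always room to pick fresh non-splitting successors). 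Since the combinatorics are essentially identical to \cite{vdV} Theorem 1.8 with the single substitution $h(\xi)^{|\xi|}$ in place of whatever bound appeared there, I would state the fusion construction in outline and cite \cite{vdV} for the routine verifications, flagging only the change of base set from $\gbs$ to $\prod b$ (which is harmless as noted after \Cref{Sacks-like forcing def}) and the $\cdot^+$ correction from \Cref{discrepancy}.
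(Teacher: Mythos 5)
Your proposal is correct and follows essentially the same route as the paper: pass to (or maintain) sharpness below the current splitting level so that the front of nodes at the $\xi$-th splitting level has size at most $\prod_{\alpha<\xi}h(\alpha)\leq h(\xi)^{|\xi|}$, prune above that level to decide $\dot f(\xi)$ below each successor, collect the decided values into $B_\xi=\phi(\xi)$ with $|B_\xi|\leq h(\xi)^{|\xi|}<F(\xi)$, and take a fusion limit. The only cosmetic differences are that you invoke ``generalised fusion'' where an ordinary fusion sequence suffices for the single forcing, and the paper counts the successors $V_\xi$ of $\Split_\xi(T_\xi^*)$ rather than the splitting level itself, which does not change the bound.
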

\begin{proof}
We will follow the proof of Theorem 1.8 in \cite{vdV}, summarised below. 

We create a fusion sequence $\ab{T_\xi\mid \xi\in\kappa}$ and a family of sets $\st{B_\xi\mid \xi\in\kappa}$ such that for every $\xi\in\kappa$ we have $|B_\xi|\leq h(\xi)^{|\xi|}<F(\xi)$ and $T_{\xi+1}\fc\ap{\dot f( \xi)\in  B_\xi}$.

To ensure we can choose $B_\xi$ small enough, we let each $T_\xi$ be sharp below $\xi$. That is, we choose $T_\xi$ such that $|\suc(u,T_\xi)|=h(\alpha)$ for each $u\in\Split_\alpha(T_\xi)$ with $\alpha<\xi$. 

We define a sharp tree $T_\xi^*\leq T_\xi$ (which does not remove any nodes from the $\xi$-th splitting level by the above property, since $T_\xi$ is already sharp below $\xi$) and the set of successors of the $\xi$-th splitting level of $T_\xi^*$:
\begin{align*}
V_\xi=\Cup\sst{\suc(u,T_\xi^*)\mid u\in \Split_\xi(T_\xi^*)}
\end{align*}
Subsequently we find $T_{\xi+1}$ by pruning above the $\xi$-th splitting level such that for each $v\in V_\xi$ the subtree $(T_{\xi+1})_v\fc\ap{\dot f(\xi)= \beta^v_\xi}$ for some ordinal $\beta_\xi^v$. Finally we define $B_\xi=\sst{\beta_\xi^v\mid v\in V_\xi}$ and note that $|B_\xi|\leq |V_\xi|\leq h(\xi)^{|\xi|}<F(\xi)$. The slalom $\phi\in\Loc_\kappa^{b,F}$ witnessing the $(b,F)$-Sacks property is defined as $\phi:\xi\mapsto B_\xi$.
\end{proof}
\begin{crl}[cf. \cite{vdV} Corollary 1.9]
$\frcS{b,h}$ preserves $\kappa^+$. If $\b V\md\ap{2^\kappa=\kappa^+}$, then $\frcS{b,h}$ preserves all cardinalities.
\end{crl}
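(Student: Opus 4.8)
The plan is to derive the preservation of $\kappa^+$ from \Cref{having bF Sacks}, and then combine this with the closure and chain condition of $\frcS{b,h}$ recalled above. Suppose towards a contradiction that some $p\in\frcS{b,h}$ forces that $\kappa^+$ is not a cardinal. As $\frcS{b,h}$ is ${<}\kappa$-closed it preserves $\kappa$, so $p$ forces that there is a surjection from some ordinal $\delta<\kappa^+$ onto $\kappa^+$; precomposing with a ground-model surjection $\kappa\srj\delta$ we obtain a name $\dot f$ with $p\fc\ap{\dot f:\kappa\srj\kappa^+}$.

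The main step is to note that the fusion construction in the proof of \Cref{having bF Sacks} never uses that $\dot f$ is forced into $\prod b$, only that $\dot f$ names a function on $\kappa$ with ordinal values: at stage $\xi$ one passes to a sharp tree, lists the ${<}\kappa$ many successors $V_\xi$ of the $\xi$-th splitting level, extends the subtree below each $v\in V_\xi$ to decide $\dot f(\xi)$ (by ${<}\kappa$-closure), and amalgamates these ${<}\kappa$ subtrees into a single condition $\leq_\xi$ the previous one. Running this for our $\dot f$ produces $q\leq p$ and a sequence $\ab{B_\xi\mid\xi\in\kappa}\in\b V$ with $|B_\xi|\leq h(\xi)^{|\xi|}$ and $q\fc\ap{\dot f(\xi)\in B_\xi}$ for every $\xi$, the only loss being that $\xi\mapsto B_\xi$ need not be a $(b,F)$-slalom. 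I expect this to be the delicate point: one must check that the amalgamation of the pruned subtrees at each splitting level genuinely satisfies clauses (i)--(iii) of \Cref{Sacks-like forcing def} and lies below the previous condition --- but this is precisely the verification already carried out in \cite{vdV}.

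Since $\kappa$ is inaccessible, $h(\xi)^{|\xi|}<\kappa$ for each $\xi$, so $B=\Cup_{\xi\in\kappa}B_\xi\in\b V$ has $|B|\leq\kappa$. Now $q\fc\ap{\ran(\dot f)\subset B}$ and $q\fc\ap{\ran(\dot f)=\kappa^+}$, hence $q\fc\ap{\kappa^+\subset B}$; by absoluteness of $\Delta_0$ statements with parameters in $\b V$ this already holds in $\b V$, contradicting $|B|\leq\kappa$. So $\frcS{b,h}$ preserves $\kappa^+$. For the last sentence, assume $\b V\md\ap{2^\kappa=\kappa^+}$; then ${<}\kappa$-closure preserves all cardinals ${\leq}\kappa$, the ${<}(2^\kappa)^+$-c.c.\ preserves all cardinals ${\geq}\kappa^{++}$, and $\kappa^+$ is preserved by what we just proved, so every $\b V$-cardinal survives.
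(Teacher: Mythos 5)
Your proof is correct and follows essentially the route the paper intends: the paper states this corollary as an immediate consequence of \Cref{having bF Sacks} (announcing beforehand that "the preservation of $\kappa^+$ will follow from" that theorem), and your argument is exactly the standard extraction — observe that the fusion construction bounds any ordinal-valued name $\dot f(\xi)$ in a ground-model set $B_\xi$ of size $h(\xi)^{|\xi|}<\kappa$, so no name can be a surjection of $\kappa$ onto $\kappa^+$. Your explicit remark that the Sacks property as literally stated only covers names in $\prod b$, and that one must rerun the fusion for arbitrary ordinal-valued names, is a correct and worthwhile clarification of what the paper leaves implicit.
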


\begin{thm}[cf. \cite{vdV} Theorem 1.11]\label{stationary sacks property}
If $S=\st{\alpha\in\kappa\mid F(\alpha)\leq h(\alpha)}$ is stationary, then $\frcS{b,h}$ does not have the $(b,F)$-Sacks property.
\end{thm}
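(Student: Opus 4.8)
The statement to prove is: if $S = \{\alpha \in \kappa \mid F(\alpha) \leq h(\alpha)\}$ is stationary, then $\frcS{b,h}$ does not have the $(b,F)$-Sacks property. Here $F: \xi \mapsto (h(\xi)^{|\xi|})^+$. The plan is to exhibit a single name $\dot f$ for an element of $\prod b$ — namely the generic branch, suitably reinterpreted — that cannot be captured by any $(b,F)$-slalom.

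First I would define the generic real. For a $\frcS{b,h}$-generic filter $G$, let $x_G = \bigcup\{u \in T \mid T \in G \text{ and } u \text{ is on every } T \in G\}$ be the generic branch of $\prod b$; this is forced to lie in $\prod b$. The slalom condition we must defeat is $\dot f(\xi) \in \phi(\xi)$ with $|\phi(\xi)| < F(\xi) = (h(\xi)^{|\xi|})^+$, i.e. $|\phi(\xi)| \leq h(\xi)^{|\xi|}$. The obstruction is that along a stationary set of splitting levels $\alpha \in S$, the tree is an $h(\alpha)$-splitting node, but $h(\alpha) \geq F(\alpha) > h(\alpha)^{|\alpha|}$, so at such a level the branch ranges over $h(\alpha)$ many values at a single coordinate while a slalom is only allowed $h(\alpha)^{|\alpha|} < h(\alpha)$ of them there. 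The delicate point is bookkeeping: the splitting node at $\Split_\alpha(T)$ sits at some \emph{domain} $\gamma < \kappa$, not at $\alpha$ itself, so I would let $\dot f$ read off $x_G$ at the domains of splitting nodes. Concretely, following the style of \cite{vdV} Theorem 1.11, I would define $\dot f$ so that $\dot f(\alpha)$ equals $x_G(\gamma)$ where $\gamma$ is the (forced, by genericity and a density argument) domain of the $\alpha$-th splitting node along $x_G$; one argues this is a legitimate $\prod b$-name by a fusion/density argument showing the $\alpha$-th splitting level is eventually decided, and the value lies in $b(\gamma)$, hence it can be coerced into $b(\alpha)$ if necessary via a fixed injection (or one simply works with the space $\prod b'$ for the pushed-forward $b'$, as in the cited proof).

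Now suppose toward a contradiction that some $q \in \frcS{b,h}$ and $\phi \in \Loc_\kappa^{b,F}$ satisfy $q \fc \ap{\dot f(\xi) \in \phi(\xi) \text{ for all } \xi < \kappa}$. Fix any $\alpha \in S$. I would argue: below $q$ there is, by a density argument, a condition $T \leq q$ whose $\alpha$-th splitting level consists of sharp $h(\alpha)$-splitting nodes (using that sharp trees are dense, stated just before \Cref{having bF Sacks}); pick one such node $u \in \Split_\alpha(T)$ at domain $\gamma$. Its $h(\alpha)$ successors $u^\frown\ab{\eta}$ ($\eta$ ranging over a set of size $h(\alpha)$) each force $\dot f(\alpha) = \eta$ (after possibly refining to decide $\dot f(\alpha)$, which only splits each successor further but keeps at least $h(\alpha)$ distinct forced values since we may push each decision down along non-splitting extensions or into the next splitting level without collapsing the count). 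Hence $q$ forces $\dot f(\alpha)$ to take at least $h(\alpha)$ many distinct possible values. But $|\phi(\alpha)| < F(\alpha) = (h(\alpha)^{|\alpha|})^+ \leq h(\alpha)$ (the last inequality because $h$ is increasing and $h(\alpha)^{|\alpha|} < h(\alpha)$ when $h(\alpha)$ is a sufficiently large cardinal — here one uses that $\alpha \in S$ means $h(\alpha) \geq F(\alpha)$, and that $|\alpha| < \kappa$ together with $h(\alpha)$ being a cardinal $\geq F(\alpha) > h(\alpha)^{|\alpha|}$, a contradiction-free consequence once we note $h(\alpha)$ is a limit cardinal of cofinality $> |\alpha|$ or argue directly). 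So some successor $u^\frown\ab{\eta}$ with $\eta \notin \phi(\alpha)$ is below $q$ and forces $\dot f(\alpha) = \eta \notin \phi(\alpha)$, contradicting the choice of $q$ and $\phi$. Since $\alpha \in S$ was arbitrary this already gives the contradiction — one stationary point suffices (indeed one point suffices); stationarity is what guarantees such $\alpha$ exist and, more relevantly in the cited framework, is needed to align the $\alpha$-th splitting level (indexed by order-type) with the coordinate structure in the Fodor-style argument used to build $\dot f$.

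The main obstacle I anticipate is not the combinatorial core above — which is essentially a counting argument at a single level — but making the name $\dot f$ precise: one must verify that "$\dot f(\alpha) := x_G$ evaluated at the domain of the $\alpha$-th splitting node" is genuinely a name for an element of $\prod b$ (this needs the $\alpha$-th splitting node to be reached along the generic, which holds by property (i) of \Cref{Sacks-like forcing def} and a density argument, and needs $<\kappa$-closure plus closure under fusion to pass limits), and that the forced values at coordinate $\alpha$ are not artificially collapsed when we refine to \emph{decide} $\dot f(\alpha)$ rather than merely constrain it. Both points are handled exactly as in \cite{vdV} Theorem 1.11, modulo the discrepancy noted in \Cref{discrepancy} (strict versus non-strict slalom bounds, which is why the exponent appears as $(h(\xi)^{|\xi|})^+$ rather than $h(\xi)^{|\xi|}$), so I would present the argument by following that proof and pointing out where the switch from trees on $\gbs$ to trees on $\prod b$ changes nothing.
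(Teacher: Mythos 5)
Your combinatorial core --- at a splitting level indexed by some $\alpha\in S$ the tree offers at least $h(\alpha)$ distinct continuations while the slalom may only hold $|\phi(\alpha)|<F(\alpha)\leq h(\alpha)$ of them --- is the right idea and is exactly what the paper exploits. But your implementation has a genuine gap. You re-index the generic branch, setting $\dot f(\alpha)=x_G(\gamma)$ where $\gamma$ is the domain of the $\alpha$-th splitting node. That value lives in $b(\gamma)$ with $\gamma\geq\alpha$ in general, and since $b$ is increasing there is no injection of $b(\gamma)$ into $b(\alpha)$; moreover $\gamma$ is not decided in the ground model. So your $\dot f$ is not a name for an element of $\prod b$, and passing to a ``pushed-forward'' bound $b'$ proves a statement about the $(b',F)$-Sacks property, not the $(b,F)$-Sacks property that is claimed. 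The paper avoids re-indexing entirely: $\dot f$ names the generic branch itself (an element of $\prod b$), and given any condition, any $(b,F)$-slalom $\phi$ and any $\alpha_0$, one finds $T$ below the condition and $\alpha\in S$ with $\alpha\geq\alpha_0$ such that $\Split_\alpha(T)=\Lev_\alpha(T)$; a node $u$ there has at least $h(\alpha)$ successors, each determining a distinct value $v(\alpha)\in b(\alpha)$, so some successor $v$ has $v(\alpha)\notin\phi(\alpha)$ and $(T)_v\fc\ap{\dot f(\alpha)\notin\phi(\alpha)}$.

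This also corrects your remark that ``one point suffices.'' The alignment $\Split_\alpha(T)=\Lev_\alpha(T)$ holds only on a club of $\alpha$'s that depends on $T$ (the fixed points of the continuous increasing map sending $\alpha$ to the height of the $\alpha$-th splitting level, after pruning so that splitting levels are uniform); a single prescribed $\alpha\in S$ will in general not be aligned for the condition $q$ you are trying to defeat. Stationarity of $S$ is needed precisely so that $S$ meets this club for \emph{every} condition --- not merely to guarantee that $S$ is nonempty, nor to build the name. A smaller point: in this theorem $F$ is an arbitrary function, not necessarily $\xi\mapsto(h(\xi)^{|\xi|})^+$; the only property used is $F(\alpha)\leq h(\alpha)$ for $\alpha\in S$, which is the definition of $S$, so your attempted derivation of that inequality from the specific form of $F$ is unnecessary and somewhat circular.
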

\begin{proof}
Let $\dot f$ name the generic $\frcS{b,h}$-real in $(\prod b)^{\b V^{\frcS{b,h}}}$ and $\phi\in\b V$ be a $(b,F)$-slalom. For any $\alpha_0\in\kappa$ can find $T\in \frcS{b,h}$ and $\alpha\geq\alpha_0$ such that $\Split_\alpha(T)=\Lev_\alpha(T)$, then each $u\in \Split_\alpha(T)$ has $h(\alpha)$ successors, thus we can find a successor $v\in\suc(u,T)$ for which $v(\alpha)\notin \phi(\alpha)$, because $|\phi(\alpha)|<F(\alpha)\leq h(\alpha)$. Then $(T)_v\fc\ap{\dot f(\alpha)\notin \phi( \alpha)}$.
\end{proof}

The properties above are preserved under ${\leq}\kappa$-supported products of forcing notions $\frcS{b,h}$, in the sense of the following two lemmas. In comparing our notation, we again have to take care with the discrepancy mentioned in \Cref{discrepancy}.

\begin{lmm}[cf. \cite{vdV} Lemma 2.5]\label{sacks property product}
Let $B\subset A$ be sets of ordinals and $\ab{h_\xi\mid \xi\in A}$ a sequence of functions in with $2^{h_\xi}\in\prod b$ for each $\xi\in A$. Define $\bar{\bb S}=\prod^{\leq\kappa}_{\xi\in A}\bb S^{h_\xi}_\kappa$ and let $G$ be an $\bar{\bb S}$-generic filter. If $F:\alpha\mapsto ((\sup_{\xi\in A\setminus B}h_\xi(\alpha))^{|\alpha|})^+$ has the property that $F\leq^* b$, then for each $f\in(\prod b)^{\b V[G]}$ there exists some  $\phi\in(\Loc_\kappa^{b,F})^{\b V[G\restriction B]}$ such that $f\in^*\phi$.
\end{lmm}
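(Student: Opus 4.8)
The statement is a product version of the $(b,F)$-Sacks property from \Cref{having bF Sacks}, so the plan is to run a generalised fusion argument on the product $\bar{\bb S}=\prod^{\leq\kappa}_{\xi\in A}\bb S^{h_\xi}_\kappa$, being careful to decide the values of $\dot f$ using only the coordinates in $B$ while \emph{controlling} the (potentially large) branching of the coordinates in $A\setminus B$ by enumerating their successor sets. First I would fix a $\bar{\bb S}$-name $\dot f$ and a condition $\bar p$ forcing $\dot f\in\prod b$; by passing to a dense subset I may assume each component $\bar p(\xi)$ is a sharp tree (using the density of $(\frcS{h_\xi})^*$ mentioned before \Cref{having bF Sacks}). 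Then I would build a generalised fusion sequence $\ab{(\bar p_\alpha,Z_\alpha)\mid \alpha\in\kappa}$ together with sets $B_\alpha$, arranging that at stage $\alpha$: (1) $Z_\alpha\in[A]^{<\kappa}$ with the usual bookkeeping ensuring $\Cup_\alpha Z_\alpha=\Cup_\alpha\supp(\bar p_\alpha)$; (2) each $\bar p_\alpha(\xi)$ for $\xi\in Z_\alpha$ is sharp below $\alpha$, so that the $\alpha$-th splitting level of the $\xi$-th coordinate has exactly $h_\xi(\alpha)$ successors; and (3) $\bar p_{\alpha+1}\fc\ap{\dot f(\alpha)\in B_\alpha}$ for a set $B_\alpha$ of size ${<}F(\alpha)$.

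The key combinatorial step is the successor stage, which mirrors the proof of \Cref{having bF Sacks} but now with a product. At stage $\alpha$, let $Z_\alpha=\{\xi_0,\dots\}$ (of size ${<}\kappa$) be the finitely-or-boundedly-many active coordinates. For each $\xi\in Z_\alpha$, let $V_\xi^\alpha=\Cup\{\suc(u,\bar p_\alpha^*(\xi))\mid u\in\Split_\alpha(\bar p_\alpha^*(\xi))\}$ be the set of successors of the $\alpha$-th splitting level of the sharpened tree $\bar p_\alpha^*(\xi)$, so $|V_\xi^\alpha|\leq h_\xi(\alpha)^{|\alpha|}$ (there are at most $|\alpha|$-many splitting nodes below level $\alpha$, each with $h_\xi(\alpha)$ successors, and at most $|\Lev_\alpha|\leq h_\xi(\alpha)^{|\alpha|}$ nodes at the relevant level). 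I would then do a fusion inside $\bar{\bb S}$ over all $|\alpha|$-many (really ${<}\kappa$-many) coordinates in $Z_\alpha$ simultaneously, and over the product $\prod_{\xi\in Z_\alpha}V_\xi^\alpha$ of "choices of a branch through the $\alpha$-th level" for each coordinate, to find $\bar p_{\alpha+1}\leq\bar p_\alpha$ such that for each tuple $\bar v\in\prod_{\xi\in Z_\alpha}V_\xi^\alpha$ the condition obtained by restricting each coordinate $\xi$ to $(\bar p_{\alpha+1}(\xi))_{v_\xi}$ decides $\dot f(\alpha)=\beta^{\bar v}_\alpha$; this uses that $\bar{\bb S}$ is ${<}\kappa$-closed so these ${<}\kappa$-many successive refinements converge. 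Set $B_\alpha=\{\beta_\alpha^{\bar v}\mid \bar v\in\prod_{\xi\in Z_\alpha}V_\xi^\alpha\}$; then
\begin{align*}
|B_\alpha|\ \leq\ \textstyle\prod_{\xi\in Z_\alpha}|V_\xi^\alpha|\ \leq\ \big(\textstyle\sup_{\xi\in A\setminus B}h_\xi(\alpha)\big)^{|\alpha|\cdot|\alpha|}\ =\ \big(\textstyle\sup_{\xi\in A\setminus B}h_\xi(\alpha)\big)^{|\alpha|}\ <\ F(\alpha),
\end{align*}
provided only coordinates outside $B$ contribute large branching — which I would ensure by making the bookkeeping reveal $B$-coordinates only \emph{after} they have been pruned to be sharp below the current stage, so that the residual branching from $B$-coordinates at level $\alpha$ is already absorbed into a fixed ground-model-in-$\b V[G\restriction B]$ choice. (Here is where the split between $B$ and $A\setminus B$ enters: the branching of $B$-coordinates is eventually observed in $\b V[G\restriction B]$, so it does not need to be covered by $\phi$.)

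At the end, let $\bar p$ be a generalised-fusion lower bound of the sequence (existence by closure under generalised fusion for $\bar{\bb S}$), and note $\bar p\fc\ap{\dot f(\alpha)\in B_\alpha}$ for all $\alpha$. The set $B_\alpha$ depends on the part of the fusion living in coordinates $B$ together with a fixed choice of the $A\setminus B$-successor-tuples; so working in $\b V[G\restriction B]$, where the $B$-generic has been added, one reads off the slalom $\phi:\alpha\mapsto B_\alpha\in[b(\alpha)]^{<F(\alpha)}$, giving $\phi\in(\Loc_\kappa^{b,F})^{\b V[G\restriction B]}$ with $f\in^*\phi$ (in fact $f\in\phi$ everywhere on the fusion trunk, hence $f\in^*\phi$ after accounting for the initial segment below the first active stage). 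I expect the main obstacle to be exactly this bookkeeping: arranging the enumeration of coordinates and of the successor-tuples so that (a) every coordinate eventually becomes active and sharp below its activation stage, (b) the $B$-coordinates' branching is genuinely captured in $\b V[G\restriction B]$ and not charged to $F$, and (c) the cardinal arithmetic $|B_\alpha|<F(\alpha)$ survives the product over ${<}\kappa$-many coordinates — which works because $\kappa$ is inaccessible, so $\mu^{|\alpha|}$ for $\mu=\sup_{\xi}h_\xi(\alpha)<\kappa$ and $|\alpha|<\kappa$ stays below $\kappa$, and because the discrepancy noted in \Cref{discrepancy} forces the ${}^+$ in the definition of $F$. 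The rest (closure, fusion, c.c.) is quoted from the earlier theorems on $\frcS{b,h}$ and its products.
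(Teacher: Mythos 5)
Your overall architecture is the same as the paper's: a generalised fusion sequence on the product, at each stage $\alpha$ deciding $\dot f(\alpha)$ once for every tuple of successors of the $\alpha$-th splitting levels of the active coordinates, and collecting the decided values into a set of size ${<}F(\alpha)$. The closure/fusion machinery and the bound $|V^\alpha_\xi|\leq h_\xi(\alpha)^{|\alpha|}$ for sharp trees are all fine. The gap is in the one step that actually uses the hypothesis that $F$ is computed from $\sup_{\xi\in A\setminus B}h_\xi$: your displayed estimate bounds $\prod_{\xi\in Z_\alpha}|V^\alpha_\xi|$ by a power of $\sup_{\xi\in A\setminus B}h_\xi(\alpha)$, but the product runs over \emph{all} of $Z_\alpha$, including coordinates in $B$, and in the intended application the $B$-coordinates are exactly the ones whose branching functions can be much larger than $\sup_{\xi\in A\setminus B}h_\xi(\alpha)$. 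So the inequality as written is false, and the proposed repair — revealing $B$-coordinates late and "pruning" them so their branching is "absorbed into a fixed ground-model choice" — does not work: every coordinate in the support must eventually enter $Z_\alpha$, sharpness does not reduce branching, and the value $\beta^{\bar v}_\alpha$ genuinely depends on which successor the $B$-coordinates take, which is decided by the generic, not by any fixed choice available when the fusion is built.

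The correct mechanism (which your parenthetical remark gestures at but your construction does not implement) is to make $D_\alpha$ a \emph{name} for the restricted forcing $\bar{\bb S}\restriction B$. Concretely: for each tuple $g$ over all of $Z_\alpha$ you decide $\dot f(\alpha)=\beta^{g}_\alpha$ as you do, but then you define $\dot D_\alpha$ so that the condition determined by $g\restriction(Z_\alpha\cap B)$ forces $\dot D_\alpha$ to contain only the values $\beta^{g}_\alpha$ for the various extensions of that fixed $B$-part by tuples $g'$ over $Z_\alpha\setminus B$. Once $G\restriction B$ is given, the $B$-part of the tuple is pinned down, and the evaluated set has size at most $\card{\c V'_\alpha}\leq\bigl(\sup_{\xi\in A\setminus B}h_\xi(\alpha)\bigr)^{|\alpha|}<F(\alpha)$; the $B$-coordinates contribute only to \emph{which} small set $D_\alpha$ is, not to its cardinality. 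Without this name-theoretic step you either get a slalom that is too wide or one that does not live in $\b V[G\restriction B]$, so the lemma as stated is not reached.
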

\begin{proof}
This proof is a product version of \Cref{having bF Sacks} and follows the construction made in the proof of Lemma 2.5 of \cite{vdV}. We give a summary of the construction below.

We create a generalised fusion sequence $\ab{(p_\xi,Z_\xi)\mid \xi\in\kappa}$ with $p_\xi\in\bar{\bb S}$ and $\dot D_\xi$ naming sets of ordinals $D_\xi\in\b V[G\restriction B]$ with $|D_\xi|<F(\xi)$ such that $p_{\xi+1}\fc\ap{\dot f(\xi)\in\dot D_\xi}$. 

We assume that $|Z_\xi|\leq |\xi|$ and for each $\beta\in Z_\xi$ we ensure $p_\xi(\beta)$ is sharp below $\xi$. We can define the set of successor nodes of the $\xi$-th splitting level for each $\beta\in Z_\xi$ as:
\begin{align*}
V_\xi^\beta=\Cup\st{\suc(u,p_\xi(\beta))\mid u\in\Split_\xi(p_\xi(\beta))}
\end{align*}
For each function $g:Z_\xi\to \Cup_{\beta\in Z_\xi}V_\xi^\beta$  picking an element $g(\beta)\in V_\xi^\beta$ for each $\beta\in Z_\xi$, we will decide (a name for) a value of $\dot f(\xi)$. Since our sets of ordinals $D_\xi$ are elements of $\b V[G\restriction B]$, we only care about the part of the name decided by the support outside of $B$, that is, in $Z_\xi\setminus B$:
\begin{align*}
\c V_\xi&=\st{g:Z_\xi\to \textstyle\Cup_{\beta\in Z_\xi}V_\xi^\beta\mid g(\beta)\in V_\xi^\beta\text{ for all }\beta\in Z_\xi}\\
\c V_\xi'&=\st{g\restriction (Z_\xi\setminus B)\mid g\in\c V_\xi}
\end{align*}
Because $|\c V_\xi'|\leq F(\xi)$, we can describe a name $\dot D_\xi$ that only depends on the support in $B$ (and thus names an element of $\b V[G\restriction B]$) such that \begin{align*}
p_{\xi+1}\fc\ap{\dot f(\xi)\in \dot D_\xi\text{ and }|\dot D_\xi|\leq F(\xi)}
\end{align*}
We refer to \cite{vdV} for a detailed construction of such names. If $p$ is the limit of the generalised fusion sequence $\ab{(p_\xi,Z_\xi)\mid \xi\in \kappa}$, then we finally define the name $\dot \phi$ such that $p\fc\ap{\dot\phi(\xi)=\dot D_\xi}$. Then $\dot \phi$ is a name for a $(b,F)$-slalom in $\b V[G\restriction B]$ localising the function $f\in\b V[G]$.
\end{proof}

\begin{lmm}[cf. \cite{vdV} Lemma 2.7]\label{nonsacks property product}
Let $B\subset A$ be sets of ordinals and $\ab{h_\xi\mid \xi\in A}$ a sequence of functions in with $2^{h_\xi}\in\prod b$ for each $\xi\in A$. Define $\bar{\bb S}=\prod^{\leq\kappa}_{\xi\in A}\bb S^{b,h_\xi}_\kappa$ and let $G$ be an $\bar{\bb S}$-generic filter. Let $S_\xi=\st{\alpha\in\kappa\mid F(\alpha)\leq h_\xi(\alpha)}$ be stationary for each $\xi\in B$, then $\b V[G]\md\ap{|B|\leq\dstar{b,F}}$.
\end{lmm}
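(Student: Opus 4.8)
The plan is to argue by contradiction. Suppose that in $\b V[G]$ there is a family $\Phi\subset\Loc_\kappa^{b,F}$ witnessing $\dstar{b,F}$ with $\mu:=|\Phi|<|B|$; the goal is to produce a single $f\in(\prod b)^{\b V[G]}$ that is localised by no $\phi\in\Phi$. Since $\bar{\bb S}=\prod^{\leq\kappa}_{\xi\in A}\frcS{b,h_\xi}$ is ${<}\kappa$-closed with the ${<}(2^\kappa)^+$-c.c., it preserves cardinals, so $|B|$ is computed the same in $\b V$ and in $\b V[G]$ and there is no harm in working with $\mu<|B|$ in the extension. We may also assume $|B|>\kappa$, as otherwise the conclusion is immediate from $\dstar{b,F}\geq\kappa^+$ in the nontrivial case.

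First I would localise the slaloms to few coordinates. For any $C\subset A$ there is a factorisation $\bar{\bb S}\cong(\bar{\bb S}\restriction C)\times(\bar{\bb S}\restriction(A\setminus C))$, and correspondingly $\b V[G]=\b V[G\restriction C][G\restriction(A\setminus C)]$, with $\b V[G\restriction C]$ depending only on $G\restriction C$. Using that each coordinate forcing $\frcS{b,h_\xi}$ is closed under fusion, a $\bar{\bb S}$-name for a $\kappa$-sequence over $\b V$ can be read continuously from ${\leq}\kappa$ many coordinates; this is the product form of the continuous reading of names, established for trees on $\gbs$ in \cite{vdV} and carrying over verbatim to trees on $\prod b$. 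Hence for each $\phi\in\Phi$ there is $C_\phi\in[A]^{\leq\kappa}$ with $\phi\in\b V[G\restriction C_\phi]$. Then $\card{\Cup_{\phi\in\Phi}C_\phi}\leq\mu\cdot\kappa<|B|$, so I may fix some $\xi\in B$ with $\xi\notin C_\phi$ for every $\phi\in\Phi$; writing $\b V':=\b V[G\restriction(A\setminus\{\xi\})]$, this means every $\phi\in\Phi$ already lies in $\b V'$.

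Next I would exploit the failure of the Sacks property at the coordinate $\xi$. The forcing $\bar{\bb S}\restriction(A\setminus\{\xi\})$ is ${<}\kappa$-closed, hence preserves stationary subsets of $\kappa$, so $S_\xi=\st{\alpha\in\kappa\mid F(\alpha)\leq h_\xi(\alpha)}$ is still stationary in $\b V'$. Now $\b V[G]$ arises from $\b V'$ by forcing with $\frcS{b,h_\xi}$; let $f=c_\xi\in(\prod b)^{\b V[G]}$ be the corresponding generic real. By \Cref{stationary sacks property}, applied over the ground model $\b V'$ (its proof is local: it uses only that the candidate slalom and the stationary set $S_\xi$ lie in the ground model), for every $(b,F)$-slalom $\phi\in\b V'$ the set of $T\in\frcS{b,h_\xi}$ forcing $c_\xi(\alpha)\notin\phi(\alpha)$ for some arbitrarily large $\alpha\in S_\xi$ is dense, so $\b V[G]\md\ap{c_\xi\nins\phi}$. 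Since every $\phi\in\Phi$ lies in $\b V'$, this yields $c_\xi\nins\phi$ for all $\phi\in\Phi$, contradicting that $\Phi$ witnesses $\dstar{b,F}$. Therefore $\b V[G]\md\ap{|B|\leq\dstar{b,F}}$.

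The hard part will be the coordinate-localisation step of the second paragraph: one must verify that the ${\leq}\kappa$-support product of the $\frcS{b,h_\xi}$ genuinely admits continuous reading of $\kappa$-names from ${\leq}\kappa$ coordinates, which is precisely where closure under (generalised) fusion is needed and is the only technically substantial point --- the relevant argument is the one behind \Cref{sacks property product} and its counterpart in \cite{vdV}. A cheaper alternative, sufficient for the intended application where $\b V\md\ap{2^\kappa=\kappa^+}$ (so the values separated are at least $\kappa^{++}$, which exceeds the ground-model value of $(2^\kappa)^+$), is to bound $|C_\phi|\leq 2^\kappa$ by a nice-name count from the ${<}(2^\kappa)^+$-c.c.; then $\card{\Cup_{\phi\in\Phi}C_\phi}\leq\mu\cdot 2^\kappa<|B|$ still lets us choose the required $\xi$. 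Everything else --- cardinal preservation, the product factorisation, preservation of stationarity under ${<}\kappa$-closed forcing, and the locality of the proof of \Cref{stationary sacks property} --- is routine.
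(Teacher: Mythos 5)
Your proposal is correct and follows essentially the same route as the paper's (summarised) proof: localise the $\mu<|B|$ slaloms of a putative witness to fewer than $|B|$ coordinates of the product, pick $\beta\in B$ outside that set, and use the generic real at coordinate $\beta$ together with the density argument of \Cref{stationary sacks property} (run over the intermediate extension, where $S_\beta$ remains stationary by ${<}\kappa$-closure) to defeat every slalom. The only difference is that you spell out two ways to bound the support needed to capture each slalom (continuous reading vs.\ a nice-name count under the ${<}(2^\kappa)^+$-c.c.), whereas the paper simply quotes the $\kappa^+\cdot\mu$ bound from \cite{vdV}; both suffice for the intended application.
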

\begin{proof}
We give a summary of the proof of Lemma 2.7 in \cite{vdV}.

Given a family $\Phi=\st{\phi_\xi\mid \xi\in\mu}\subset\Loc_\kappa^{b,F}$ in $\b V[G]$ with $\mu<|B|$, we can find $A'\subset A$ with $|A'|=\kappa^+\cdot \mu$ such that $\Phi\in\b V[G\restriction A']$. Let $\beta\in B\setminus A'$ and let $f=\Cap_{p\in G}p(\beta)$ be the generic $\kappa$-real added by the $\beta$-th term of the product $\bar{\bb S}$.

Using the method of \Cref{stationary sacks property} on $\bb S_\kappa^{b,h_\beta}$ we can show that $f\nins\phi_\xi$ for each $\xi\in\mu$.
\end{proof}
\begin{thm}[cf. \cite{vdV} Theorem 2.9]
If there exists $F\in \prod b$ such that $2^F\leq b$ and $2^{F(\alpha)}\leq F(\beta)$ and $F(\alpha)=F(\alpha)^{|\alpha|}$ for all $\alpha<\beta\in\kappa$, then there exist functions $h_\xi\in\prod b$ for each $\xi<\kappa$ such that for any cardinals $\lambda_\xi>\kappa$ it is consistent that simultaneously $\dstar{b,h_\xi}=\lambda_\xi$ for all $\xi<\kappa$.
\end{thm}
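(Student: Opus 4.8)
The plan is to adapt the proof of \cite{vdV} Theorem 2.9 to trees on $\prod b$, using the forcing notions $\frcS{b,h}$ and their ${\leq}\kappa$-support products, over a ground model $\b V\md\ap{\s{GCH}}$. First I would construct the parameters. Fix a partition $\ab{S_\eta\mid \eta\in\kappa}$ of $\kappa$ into stationary sets (available since $\kappa$ is inaccessible, hence regular) and, using the hypothesised $F$, define each $h_\xi\in\prod b$ so that $h_\xi$ is increasing, $h_\xi\leq F$, and $2^{h_\xi}\leq 2^{F}\leq b$ — the last point being exactly why one needs $2^F\leq b$, and the conditions $2^{F(\alpha)}\leq F(\beta)$ and $F(\alpha)=F(\alpha)^{|\alpha|}$ being what keeps the relevant successor and exponential bounds inside $\prod b$, so that $\frcS{b,h_\xi}$ is a legitimate instance of \Cref{Sacks-like forcing def} and both \Cref{sacks property product} and \Cref{nonsacks property product} apply. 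The $h_\xi$ are moreover arranged into a club-domination ``ladder'' built from the partition: for $\xi\neq\xi'$, exactly one of them dominates the other on a club, with a gap large enough that $((h_{\xi'}(\alpha))^{|\alpha|})^+\leq h_\xi(\alpha)$ club-often whenever $h_{\xi'}$ is the dominated one, while $\st{\alpha\mid h_\xi(\alpha)\leq h_{\xi'}(\alpha)}$ stays stationary in the other direction; this is the analogue of the construction in \cite{vdV}, with the bookkeeping change flagged in \Cref{discrepancy}.

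Next, given the prescribed $\lambda_\xi>\kappa$, and after indexing the $h_\xi$ so the ladder order is compatible with the sizes of the $\lambda_\xi$, I would pass to the ${\leq}\kappa$-support product
\begin{align*}
\bar{\bb S}=\textstyle\prod^{\leq\kappa}_{\xi\in\kappa}\ \prod^{\leq\kappa}_{\gamma\in\lambda_\xi}\frcS{b,h_\xi},
\end{align*}
i.e.\ $\lambda_\xi$ independent copies of $\frcS{b,h_\xi}$ for each $\xi$. By the quoted preservation results (${<}\kappa$-closure, closure under generalised fusion, the ${<}(2^\kappa)^+$-c.c., and the $(b,F)$-Sacks property of \Cref{having bF Sacks} together with its corollary), $\bar{\bb S}$ preserves all cardinals over $\b V$, leaves $\kappa$ inaccessible, and $\b V^{\bar{\bb S}}\md\ap{2^\kappa\geq\sup_\xi\lambda_\xi}$. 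Then in $\b V[G]$ I would compute $\dstar{b,h_\xi}$ for each $\xi$. For the lower bound, \Cref{nonsacks property product} applied with $B$ the set of the $\lambda_\xi$ coordinates carrying $\frcS{b,h_\xi}$ gives $\b V[G]\md\ap{\lambda_\xi\leq\dstar{b,h_\xi}}$, since $\st{\alpha\mid h_\xi(\alpha)\leq h_\xi(\alpha)}=\kappa$ is stationary. For the upper bound, let $B$ collect the coordinates carrying $\frcS{b,h_{\xi'}}$ for those $\xi'$ with $\lambda_{\xi'}\leq\lambda_\xi$ (in particular all the $\frcS{b,h_\xi}$ copies) and $A\setminus B$ the rest; the ladder arrangement makes $F^{A\setminus B}:\alpha\mapsto((\sup_{\xi'\in A\setminus B}h_{\xi'}(\alpha))^{|\alpha|})^+\leq^* h_\xi\leq^* b$, so \Cref{sacks property product} provides, for every $f\in(\prod b)^{\b V[G]}$, some $\phi\in(\Loc_\kappa^{b,F^{A\setminus B}})^{\b V[G\restriction B]}\subseteq(\Loc_\kappa^{b,h_\xi})^{\b V[G\restriction B]}$ with $f\in^*\phi$; since $|B|=\lambda_\xi$ and $\b V[G\restriction B]$ is a ${<}(2^\kappa)^+$-c.c.\ extension of a $\s{GCH}$ model by a poset of that size, $(2^\kappa)^{\b V[G\restriction B]}=\lambda_\xi$, so $(\Loc_\kappa^{b,h_\xi})^{\b V[G\restriction B]}$ is a witness of size $\lambda_\xi$ for $\dstar{b,h_\xi}$. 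Combining both bounds for all $\xi$ at once yields $\dstar{b,h_\xi}=\lambda_\xi$ simultaneously.

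The hard part will be the combinatorial design of the $h_\xi$: they must be packed into $\prod b$ with $2^{h_\xi}\leq b$ so the forcing and the product lemmas are available, be increasing, and have exactly the pairwise club-domination pattern that, for each target $\xi$, makes the coordinates failing to carry an $h_\xi$-narrow width be precisely the ones collected into the ``small'' factor $B$ — this is where the ladder structure from \cite{vdV} is essential and cannot be loosened to an arbitrary family. A secondary obstacle is the cardinal arithmetic: verifying that each $\b V[G\restriction B]$ genuinely computes $2^\kappa$ as the intended $\lambda_\xi$ (which is where one uses the ${<}(2^\kappa)^+$-c.c.\ with fusion, and where a closure hypothesis such as $\cf(\lambda_\xi)>\kappa$ enters), and that the global continuum function does not collapse any of the prescribed values. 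Both are carried out explicitly in \cite{vdV} and transfer to trees on $\prod b$ with only notational adjustments, which is why the detailed construction may be cited rather than rewritten.
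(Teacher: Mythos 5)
Your overall architecture — a ${\leq}\kappa$-support product of Sacks-type forcings, lower bounds from \Cref{nonsacks property product}, upper bounds from \Cref{sacks property product} plus counting slaloms in the intermediate model $\b V[G\restriction\c B]$ — is the paper's architecture, and your cardinal arithmetic for the upper bound is fine. The gap is in the design of the parameters, and it is not a detail that can be delegated to \cite{vdV}: as written, your family cannot exist. You force with $\lambda_\xi$ copies of $\frcS{b,h_\xi}$, i.e.\ you use the \emph{same} function $h_\xi$ as splitting width of the forcing and as width of the slaloms whose cardinal you compute. For the upper bound on $\dstar{b,h_\xi}$ you then need the factors outside $\c B$ — the copies of $\frcS{b,h_{\xi'}}$ with $\lambda_{\xi'}>\lambda_\xi$ — to jointly have the $(b,h_\xi)$-Sacks property, which via \Cref{sacks property product} and \Cref{monotonicity loc} forces $((\sup_{\xi'}h_{\xi'}(\alpha))^{|\alpha|})^+\leq h_\xi(\alpha)$ for almost all $\alpha$, hence $h_{\xi+1}<^*h_\xi$ for every $\xi$. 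But a strictly $<^*$-decreasing $\omega$-sequence of cardinal-valued functions on $\kappa$ does not exist: since $\kappa$ is regular uncountable, the $\omega$ many bounds witnessing $h_{n+1}<^*h_n$ have supremum $\beta<\kappa$, and for $\alpha>\beta$ the values $h_0(\alpha)>h_1(\alpha)>\cdots$ form an infinite descending sequence of ordinals. Your description of the ladder is also internally contradictory for a single pair: you ask that $((h_{\xi'}(\alpha))^{|\alpha|})^+\leq h_\xi(\alpha)$ on a club while $\st{\alpha\mid h_\xi(\alpha)\leq h_{\xi'}(\alpha)}$ is stationary, but a club meets every stationary set.

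The paper escapes this with a two-family design that your sketch collapses into one. Fix pairwise disjoint stationary sets $S_\xi$ and set $h_\xi(\alpha)=(F(\alpha))^+$ for $\alpha\in S_\xi$ and $h_\xi(\alpha)=(2^{F(\alpha)})^+$ otherwise, so the $h_\xi$ are pairwise $\leq^*$-\emph{incomparable}, each being small exactly on its own stationary set (in particular they are not $\leq F$, contrary to your sketch). The forcing uses \emph{different} widths $H_\xi$, with $H_\xi(\alpha)=F(\alpha)$ on $\Cup_{\eta\in\xi}S_\eta$ and $2^{F(\alpha)}$ elsewhere; the product consists of $\lambda_\xi$ copies of $\frcS{b,H_\xi}$, not of $\frcS{b,h_\xi}$. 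The hypothesis $F(\alpha)=F(\alpha)^{|\alpha|}$ makes $((\sup_{\eta>\xi}H_\eta(\alpha))^{|\alpha|})^+$ equal to $h_\xi(\alpha)$ exactly, which gives the upper bound, while $h_\eta(\alpha)\leq H_\xi(\alpha)$ on the stationary set $S_\eta$ for $\eta\geq\xi$ gives the lower bound via \Cref{stationary sacks property} and \Cref{nonsacks property product}. The $H_\xi$ decrease only weakly (pointwise they take just two values), which is what makes a $\kappa$-length ``ladder'' possible at all.
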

\begin{proof}
Let $\ab{S_\xi\mid \xi\in\kappa}$ be a disjoint family of stationary sets. We will assume without loss of generality that $\ab{\lambda_\xi\mid \xi\in\kappa}$ is an increasing sequence. We define the following functions:
\begin{align*}
h_\xi(\alpha)&=\begin{cases}
(F(\alpha))^+&\text{ if }\alpha\in S_\xi,\\
(2^{F(\alpha)})^+&\text{ otherwise}.
\end{cases}&
H_\xi(\alpha)&=\begin{cases}
F(\alpha)&\text{ if }\alpha\in \Cup_{\eta\in\xi}S_\eta,\\
2^{F(\alpha)}&\text{ otherwise}.
\end{cases}
\end{align*}
By our assumptions on $F$, it is clear that both $h_\xi$ and $H_\xi$ are increasing.
Let $A=\Cup_{\xi\in\kappa}A_\xi$ be the union of disjoint sets of ordinals such that $|A_\xi|=\lambda_\xi$, and for each $\beta\in A_\xi$ define $h_\beta'=H_\xi$. We consider $\bar{\bb S}=\prod^{\leq\kappa}_{\beta\in A}\bb S_\kappa^{b,h'_\beta}$.

Note that $\frcS{b,H_\xi}$ has the $(b,h_\eta)$-Sacks property for all $\eta<\xi$. On the other hand, if $\eta\geq\xi$, then $h_\eta(\alpha)\leq H_\xi(\alpha)$ for all $\alpha\in S_\eta$. It follows from \Cref{sacks property product,nonsacks property product} that:
\begin{align*}
\b V[G]\md\ap{\lambda_\xi=|A_\xi|\leq\dstar{b,h_\xi}\leq (\Loc_\kappa^{b,h_\xi})^{\b V[G\restriction \Cup_{\eta\leq\xi}A_\eta]}=\kappa^+\cdot\textstyle\sup_{\eta\leq\xi}|A_\eta|=\lambda_\xi}.&\qedhere
\end{align*}
\end{proof}
We may even do better than this, if we replace our $\kappa$-sized disjoint family of stationary sets by an almost disjoint family of stationary sets. The existence of such a family  of size $\kappa^+$ follows from $\lozenge_\kappa$. With a preparatory forcing, we can show the existence of  a family of functions $\ab{h_\xi\mid \xi\in\kappa^+}$ and show that there is a forcing extension in which all cardinals $\dstar{b,h_\xi}$ are mutually different for each $\xi\in\kappa^+$. We refer to the third section of \cite{vdV} for a detailed construction.

\subsection{Separating Cardinals of the Form $\dinf{b,h}$}

The forcing notion we consider dn this section can be seen as a $\gbs$-analogue of the forcing notion used by Klausner \& Mej\'ia in \cite{KM21}, and we will generally give a reference to the analogous definitions and lemmas as given in that article. In comparing our results to \cite{KM21}, we once more have to be careful of the discrepancy mentioned in \Cref{discrepancy}.

In \cite{KM21} it is shown that there exists a model in which there are uncountably many mutually different cardinals of the form $\f d^{b,h}_\omega(\nnii)$. In fact, with a little bit more work, Cardona, Klausner \& Mej\'ia showed in \cite{CKM21} that there exists a model in which there are $2^{\aleph_0}$ many mutually different cardinals of each of the forms $\f d^{b,h}_\omega(\nins)$, $\f d^{b,h}_\omega(\nnii)$, $\f b^{b,h}_\omega(\nins)$ and $\f b^{b,h}_\omega(\nnii)$. We will not prove such a strong result, and so far have only managed to give a family of $\kappa$ many functions $b_\alpha,h_\alpha$ such that for any finite $A\subset\kappa$ it is consistent that $\dinf{b_\alpha,h_\alpha}$ are mutually distinct for all $\alpha\in A$.

We will increase a cardinal of the form $\dinf{b,h}$ by generically adding a slalom $\phi\in\Loc_\kappa^{b,h}$ such that $f\ini\phi$ for all $f\in\prod b$ from the ground model. At the same time, we will make sure that the forcing notion we use preserves $\dinf{b',h'}$ for some other parameters $b',h'$. As with our construction from the previous section, we will consider a forcing notion consisting of generalised perfect trees. In this case we use trees on $\Loc_{<\kappa}^{b,h}$ instead of trees on $\prod_{<\kappa}b$, and our generic object will be a $(b,h)$-slalom.

One main difference between our forcing notion and the forcing notion described in \cite{KM21}, is that we will not work with uniform trees. That is, the forcing notion of \cite{KM21} is comparable to Silver forcing, and has partial functions as conditions. Our forcing notion is comparable to Sacks forcing. We made the choice to use a Sacks-like forcing notion to form a better comparison with the Sacks-like forcing notion from the previous section. Both Silver-like and Sacks-like forcing notions will have the same effect on $\dinf{b,h}$, thus this change is not relevant for the results we will prove.

We will first define a norm on subsets of $[b(\alpha)]^{<h(\alpha)}$.

\begin{dfn}\label{norm}
Given $M\subset [b(\alpha)]^{< h(\alpha)}$ let $\norm {M}_{b,\alpha}$ be the least cardinal $\lambda\in\kappa$ for which there exists $y\in[b(\alpha)]^\lambda$ such that for all $x\in M$ we have $y\not\subset x$, i.e., the least size of a subset $y$ of $b(\alpha)$ such that no superset of $y$ is contained in $M$.
\end{dfn}

\begin{dfn}[cf. \cite{KM21} 3.1]\label{dinf forcing definition}
We define a forcing notion $\frcQ{b,h}$ where conditions $T\in\frcQ{b,h}$ are trees $T\subset\Loc_{<\kappa}^{b,h}$ such that
\begin{enumerate}[label=(\roman*)]
\item for any $u\in T$ there exists $v\in T$ such that $u\subset v$ and $v$ is splitting,
\item\label{dinf forcing def: number of splitting} if $u\in\Split_\alpha(T)$, then $\norm{\suc(u,T)}_{b,\dom(u)}\geq|\alpha|$,
\item\label{dinf forcing def: minimal} for any $u\in T$, if $\norm{\suc(u,T)}_{b,\dom(u)}<2$, then $u$ is non-splitting
\item\label{dinf forcing def: club increasing} if $C\subset T$ is a chain of splitting nodes with $|C|<\kappa$, then $\Cup C$ is a splitting node in $T$.
\end{enumerate}
ordered by $T\leq S$ iff
\begin{itemize}
\item $T\subset S$ and
\item for every $u\in T$, if $\suc(u,T)\neq\suc(u,S)$, then $\norm{\suc(u,T)}_{b,\dom(u)}<\norm{\suc(u,S)}_{b,\dom(u)}$.\qedhere
\end{itemize}
\end{dfn}

Here \ref{dinf forcing def: minimal} is necessary to ensure that the intersection of all trees in a generic filter forms a branch. If we allow $\norm{p(\alpha)}_{b,\alpha}=1$ without $p(\alpha)$ being a singleton, then we have no way of decreasing the norm of $p(\alpha)$ any further. 

For $A$ a set of ordinals and $T\in\frcQ{b,h}$, we define a \emph{collapse} of $T\in\frcQ{b,h}$ on $A$ as a condition $T'\leq T$ such that $u$ is non-splitting in $T'$ for all $u\in\Lev_\alpha(T')$ with $\alpha\in A$ and $\suc(u,T')=\suc(u,T)$ for all $u\in \Lev_\alpha(T')$ with $\alpha\in\kappa\setminus A$. It is clear from the definition of the forcing notion that such a collapse exists for any set $A$ that is the complement of a club set.

It is clear that $\frcQ{b,h}$ adds a generic element to $\Loc_\kappa^{b,h}$, in the sense that if $G\subset\frcQ{b,h}$ is a generic filter over $\b V$, then $\phi_G=\Cap G\in\Loc_\kappa^{b,h}$ and $\b V[G]=\b V[\phi_G]$.

\begin{dfn}[cf. \cite{KM21} 3.2]
Define $(\frcQ{b,h})^*\subset\frcQ{b,h}$ as the set of all $T\in\frcQ{b,h}$ such that for each $\alpha\in\kappa$ there exists $s_\alpha(T)\in\kappa$ such that $\Split_\alpha(T)=\Lev_{s_\alpha(T)}(T)$ and $\norm{\suc(u,T)}_{b,s_\alpha(T)}\geq|s_\alpha(T)|$ for all $u\in \Split_\alpha(T)$.
\end{dfn}

\begin{lmm}
$(\frcQ{b,h})^*$ densely embeds into $\frcQ{b,h}$.
\end{lmm}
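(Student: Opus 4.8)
The plan is to show that $(\frcQ{b,h})^*$ is dense in $\frcQ{b,h}$ and that the inclusion is incompatibility-preserving (indeed a full embedding), so that it is a dense embedding. Since both forcings have the same order relation restricted to $(\frcQ{b,h})^*$, the incompatibility-preservation part is automatic: if $T_0, T_1 \in (\frcQ{b,h})^*$ have a common extension in $\frcQ{b,h}$, that extension witnesses compatibility in $\frcQ{b,h}$, and the trees $(\frcQ{b,h})^*$ are cofinal below it (by density), so they are compatible in $(\frcQ{b,h})^*$. Thus the only real content is density: given an arbitrary $T \in \frcQ{b,h}$, I must construct $T' \leq T$ with $T' \in (\frcQ{b,h})^*$, i.e.\ a tree in which each $\alpha$-th splitting level is a full level $\Lev_{s_\alpha(T')}(T')$ and the successor-norm at that level is at least $|s_\alpha(T')|$.

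The construction is by recursion on $\alpha < \kappa$, building an increasing (under $\supset$, i.e.\ decreasing in the forcing order) sequence of approximations together with the ordinals $s_\alpha$. First I would handle the successor step: suppose I have pruned $T$ so that all splitting levels below $\alpha$ have been "straightened out" to lie at uniform heights $s_\xi$ for $\xi < \alpha$, and let $W$ be the set of nodes at the current frontier (the successors of the $(\alpha{-}1)$-st straightened splitting level, or the root if $\alpha = 0$). For each $w \in W$, by condition (i) of \Cref{dinf forcing definition} every node has a splitting node above it, and by (ii) I can push past a splitting node whose successor-norm is $\geq |\beta|$ for $\beta = \dom$ of that node; by going high enough above $w$ I can find, for each $w$, a splitting node $v_w \supset w$ with $\norm{\suc(v_w,T)}_{b,\dom(v_w)}$ as large as I like — in particular $\geq$ some common target. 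The subtlety is that the $v_w$ for different $w \in W$ sit at different heights, so I cannot directly take them as a single level. The standard trick (as in \cite{KM21}) is to first extend each $w$ to the common maximum height $\gamma = \sup_w \dom(v_w)$ without splitting (using (i) to keep going but (iii)/(ii) only forcing splits when norms are large — one can always refrain from splitting by passing to a subtree along a single branch), then to split at a uniform level at or above $\gamma$; condition (ii) guarantees a splitting node at bounded height above any given node, and taking the norm bound $\geq |\gamma|$ (hence $\geq |\alpha|$, which is all (ii) of the $*$-definition requires — but actually we want $\geq |s_\alpha|$, which we arrange by choosing $s_\alpha$ after fixing $\gamma$ and the available norms) we set $s_\alpha$ to be that uniform height. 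We then prune each splitting node at level $s_\alpha$ to retain exactly a set of successors realising the norm bound $\geq |s_\alpha|$, discarding the rest — this is a legitimate strengthening because removing successors only decreases the successor-norm, consistent with the order on $\frcQ{b,h}$.

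For the limit step at limit $\delta < \kappa$: set $T'_\delta = \bigcap_{\alpha < \delta} T'_\alpha$ (equivalently the union as trees of the approximations), and use condition \ref{dinf forcing def: club increasing} to see that chains of splitting nodes of length $<\kappa$ have splitting unions, so $T'_\delta$ is still a valid condition; because $\delta < \kappa$ and we arranged $s_\alpha$ strictly increasing and continuous at limits (taking $s_\delta = \sup_{\alpha<\delta} s_\alpha$), the $\delta$-th splitting level of $T'_\delta$ is exactly $\Lev_{s_\delta}(T'_\delta)$. After the full recursion of length $\kappa$ we let $T' = \bigcap_{\alpha<\kappa} T'_\alpha$; the fusion-type argument (the $s_\alpha$ form a club, levels stabilise below each $s_\alpha$ eventually) shows $T' \in \frcQ{b,h}$ and $T' \leq T$, and by construction $T' \in (\frcQ{b,h})^*$.

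The main obstacle I anticipate is the bookkeeping to make the splitting levels genuinely uniform at limit stages while respecting condition \ref{dinf forcing def: minimal} (norm $<2$ forces non-splitting) and the norm lower bound \ref{dinf forcing def: number of splitting}: one has to be sure that when extending nodes "without splitting" up to a common height, no node is forced by (ii) to split before reaching $\gamma$ with too small a norm — this is fine because (ii) is a \emph{lower} bound on norms of splitting nodes, not a requirement that splitting occur, so along any single branch one can avoid splitting entirely until the chosen height. The one genuinely delicate point is ensuring $\norm{\suc(u,T')}_{b,s_\alpha} \geq |s_\alpha|$ rather than merely $\geq |\alpha|$; this requires choosing the uniform splitting height $s_\alpha$ \emph{before} committing to which successors to keep, and observing that the norm available from $T$ at the relevant nodes can be made $\geq |s_\alpha|$ because in $T$ these are splitting nodes of index $\geq$ (something large) — so one picks $s_\alpha$ minimal such that all frontier nodes admit a splitting node at a height with norm $\geq |s_\alpha|$ below it, which is possible since $\kappa$ is regular and each frontier node individually has such splitting nodes cofinally by (i) and (ii). This mirrors exactly the argument of \cite[3.x]{KM21}, and I would cite that for the detailed verification.
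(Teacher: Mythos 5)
Your overall strategy (a length-$\kappa$ recursion that straightens out one splitting level at a time and takes intersections at limits) runs into a genuine obstruction at the successor step, and the paper's proof is structured quite differently precisely to avoid it. The problem is your instruction to ``split at a uniform level at or above $\gamma$'': passing to a stronger condition in $\frcQ{b,h}$ can only \emph{remove} splitting (a node that splits in $T'\subseteq T$ already splits in $T$), so you cannot choose where splitting happens --- you must find a single height at which \emph{every} relevant branch of $T$ already carries a splitting node. After one round of taking $\gamma=\sup_w\dom(v_w)$ this is not yet the case: continuing each branch past $\gamma$ without splitting, the next available splitting nodes again sit at scattered heights, and the process only stabilises at closure points of the operation sending a splitting index to the supremum of the heights of that splitting level. (Your appeal to condition (ii) for ``a splitting node at bounded height above any given node'' is also off: (ii) is a norm condition and says nothing about the heights at which splitting occurs.) The synchronisation you need is exactly what condition (iv) of \Cref{dinf forcing definition} delivers at such closure points: if $\xi=\sup_n\alpha_n$ where $\alpha_{n+1}=\sup\st{\dom(u)\mid u\in\Split_{\alpha_n}(T)}$, then the union of the chain of splitting predecessors of any branch up to index $\xi$ is itself splitting and has domain exactly $\xi$, whence $\Split_\xi(T)=\Lev_\xi(T)$; moreover condition (ii) then gives $\norm{\suc(u,T)}_{b,\xi}\geq|\xi|$ precisely because the splitting index coincides with the height --- which is essentially the only way to meet the norm requirement of $(\frcQ{b,h})^*$, a point your ``pick $s_\alpha$ minimal such that\dots'' clause glosses over.

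The paper's proof therefore needs no fusion at all: it forms the club $C$ of these closure points in one step and takes a collapse of $T$ on $\kappa\setminus C$ (killing all splitting outside $C$ and keeping all successors at levels in $C$). Your recursion could be repaired by inserting an $\omega$-fold supremum at each successor stage, but at that point you would simply be rebuilding $C$ level by level. Separately, your final pruning step (``retain exactly a set of successors realising the norm bound'') is both unnecessary and delicate, since the order on $\frcQ{b,h}$ demands a \emph{strict} drop in norm whenever the successor set changes; keeping all successors at levels in $C$ sidesteps this entirely.
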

\begin{proof}
Let $T\in\frcQ{b,h}$. Given $\alpha=\alpha_0\in \kappa$, let
\begin{align*}
\alpha_{n+1}=\sup\st{\dom(u)\smid u\in\Split_{\alpha_n}(T)}.
\end{align*}
Note that $\ab{\alpha_n\mid n\in\omega}$ is increasing. Let $C=\st{\sup_{n\in\omega}\alpha_n\mid\alpha\in\kappa}$, then $C$ is easily seen to be club. Note that if $\xi\in C$ and $u\in\Split_{\xi}(T)$, then $\dom(u)=\xi$. By \ref{dinf forcing def: number of splitting} of \Cref{dinf forcing definition} we see that $|\dom(u)|\leq\norm{\suc(u,T)}_{b,\dom(u)}$ for all $u\in\Split_{\xi}(T)$ with $\xi\in C$.

Finally, let $T'\leq T$ be a collapse of $T$ on $\kappa\setminus C$, then $T'\in(\frcQ{b,h})^*$.
\end{proof}

\begin{lmm}\label{dinf closure}
$\frcQ{b,h}$ is ${<}\kappa$-closed and ${<}(2^\kappa)^+$-c.c..
\end{lmm}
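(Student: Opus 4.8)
The plan is to treat the two properties separately, the chain condition by a crude cardinality count and closure by a careful analysis of intersections.

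\textbf{The chain condition.} Every condition is a subtree of $\Loc_{<\kappa}^{b,h}$. Since $\kappa$ is inaccessible (hence a strong limit), $|[b(\beta)]^{<h(\beta)}|<\kappa$ for each $\beta$, so $\prod_{\beta<\alpha}|[b(\beta)]^{<h(\beta)}|<\kappa$ for every $\alpha<\kappa$, and therefore $|\Loc_{<\kappa}^{b,h}|=\sum_{\alpha<\kappa}\prod_{\beta<\alpha}|[b(\beta)]^{<h(\beta)}|=\kappa$. Consequently $|\frcQ{b,h}|\le 2^{\kappa}$, so any antichain has size at most $2^{\kappa}<(2^{\kappa})^{+}$; no further work is needed.

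\textbf{${<}\kappa$-closure.} Let $\gamma<\kappa$ and let $\langle T_\eta\mid\eta<\gamma\rangle$ be a descending chain; I claim $T:=\bigcap_{\eta<\gamma}T_\eta$ is a lower bound. The engine of the argument is the following consequence of the ordering: for a fixed node $u$, whenever $\suc(u,T_{\eta'})\neq\suc(u,T_\eta)$ with $\eta<\eta'$, we have $\norm{\suc(u,T_{\eta'})}_{b,\dom(u)}<\norm{\suc(u,T_\eta)}_{b,\dom(u)}$; as a non-increasing sequence of ordinals stabilises, there is $\eta_u<\gamma$ with $\suc(u,T_\eta)=\suc(u,T_{\eta_u})$ for all $\eta\ge\eta_u$, hence $\suc(u,T)=\suc(u,T_{\eta_u})$. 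From this, $T\le T_\eta$ is immediate, and conditions \ref{dinf forcing def: number of splitting}, \ref{dinf forcing def: minimal} and \ref{dinf forcing def: club increasing} of \Cref{dinf forcing definition} for $T$ follow from the corresponding conditions for $T_{\eta_u}$: one uses that a node splitting in $T$ is splitting in every $T_\eta$, and that passing to a subtree can only raise the splitting-rank of a node, never lower it below the rank it has in $T$.

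\textbf{The delicate point — condition (i).} The step I expect to be the main obstacle is showing every $u\in T$ extends to a splitting node of $T$. For each $\eta$ let $\sigma_\eta(u)\supseteq u$ be the first splitting node of $T_\eta$ above $u$: following the (forced, by non-splitting plus condition (i)) unique successors of non-splitting nodes upward from $u$ must reach a splitting node before the branch leaves $T_\eta$, again by condition (i) for $T_\eta$, so $\sigma_\eta(u)$ is well-defined; and it is $\subseteq$-increasing in $\eta$, since a smaller tree can only push the first split higher. Thus $w^{*}:=\bigcup_{\eta<\gamma}\sigma_\eta(u)$ is a genuine node with $u\subseteq w^{*}$, its domain being $<\kappa$ by regularity of $\kappa$. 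For any fixed $\eta'<\gamma$ the set $\{\sigma_\eta(u)\mid\eta\ge\eta'\}$ is a chain of splitting nodes of $T_{\eta'}$ of size $<\kappa$ with union $w^{*}$, so condition \ref{dinf forcing def: club increasing} for $T_{\eta'}$ makes $w^{*}$ splitting in $T_{\eta'}$; as this holds for every $\eta'$ we get $w^{*}\in T$, and applying the stabilisation observation to $w^{*}$ shows $\suc(w^{*},T)=\suc(w^{*},T_{\eta_{w^{*}}})$ has at least two elements, so $w^{*}$ is splitting in $T$. This completes the verification that $T\in\frcQ{b,h}$ is a lower bound.
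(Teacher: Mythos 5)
Your proof is correct and takes essentially the same route as the paper's: the chain condition follows from $|\frcQ{b,h}|\leq 2^\kappa$, and closure from taking $T=\bigcap_{\eta<\gamma}T_\eta$ together with the key observation that the non-increasing sequence of cardinals $\norm{\suc(u,T_\eta)}_{b,\dom(u)}$ stabilises, which by the definition of the ordering forces the successor sets themselves to stabilise. The paper states this stabilisation claim and leaves the verification of the clauses of \Cref{dinf forcing definition} to the reader; your careful treatment of condition (i) via the first-splitting-node maps $\sigma_\eta(u)$ and condition (iv) supplies exactly the detail that is omitted there.
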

\begin{proof}
Let $\lambda<\kappa$ and $\sab{T_\xi\in\frcQ{b,h}\mid \xi\in\lambda}$ be a descending sequence of conditions, then $\Cap_{\xi\in\lambda}T_\xi$ is a condition below all $T_\xi$. The key observation in proving  ${<}\kappa$-closure is the following claim: if $u\in T=\Cap_{\xi\in\lambda}T_\xi$, then there is $\eta\in\lambda$ such that $\suc(u,T)=\suc(u,T_\eta)$. Checking that $T$ satisfies \Cref{dinf forcing definition} is easy with this claim in mind.

Suppose that $u\in T$ and let $\lambda_\xi=\norm{\suc(u,T_\xi)}_{b,\dom(u)}$, then the ordering on $\frcQ{b,h}$ dictates that $\ab{\lambda_\xi\mid \xi\in\lambda}$ is a descending sequence of cardinals, hence there is $\eta\in\lambda$ such that $\lambda_\xi=\lambda_\eta$ for all $\xi\in[\eta,\lambda)$. But then $\suc(u,T_\xi)=\suc(u,T_\eta)$ for all $\xi\in[\eta,\lambda)$ by the ordering on $\frcQ{b,h}$.

That $\frcQ{b,h}$ is ${<}(2^\kappa)^+$-c.c.\ follows quickly from $|\frcQ{b,h}|=2^\kappa$.
\end{proof}

As a corollary to the above lemma, we see that $\frcQ{b,h}$ preserves all cardinalities ${\leq}\kappa$ and ${>}2^\kappa$. We will prove the preservation of $\kappa^+$ later, after \Cref{early reading}, thus actually we see that all cardinalities are preserved if we assume $2^\kappa=\kappa^+$ in the ground model.

Define an ordering $T\leq_\alpha S$ iff $T\leq S$ and $\Split_\alpha(T)=\Split_\alpha(S)$. It is easy to see that this is a fusion ordering.
\begin{lmm}[cf. \cite{KM21} 3.5(b)]\label{dinf fusion}
$\frcQ{b,h}$ is closed under fusion, that is, if $\ab{T_\alpha\mid \alpha\in\kappa}$ is a sequence in $\frcQ{b,h}$ such that $T_\beta\leq_\alpha T_\alpha$ for any $\beta>\alpha$, then $T=\Cap_{\alpha\in\kappa}T_\alpha\in\frcQ{b,h}$ and $T\leq_\alpha T_\alpha$ for all $\alpha\in\kappa$.
\end{lmm}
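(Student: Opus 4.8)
The plan is to mimic the standard fusion argument for Sacks-like forcings, using the closure properties established in \Cref{dinf closure} together with the fact that $\leq_\alpha$ forms a fusion ordering. First I would observe that the nontrivial content is that $T=\Cap_{\alpha\in\kappa}T_\alpha$ is again a condition in $\frcQ{b,h}$; once this is known, the relation $T\leq_\alpha T_\alpha$ is almost immediate, since $T\subset T_\alpha$ and, because $T_\beta\leq_\alpha T_\alpha$ for all $\beta>\alpha$, each splitting node of $T_\alpha$ at level ${<}\alpha$ (in the $\Split$-sense) keeps all of its successors throughout the sequence, so $\Split_\alpha(T)=\Split_\alpha(T_\alpha)$ and hence $T\leq_\alpha T_\alpha$; the norm condition in the definition of $\leq$ is inherited from $\leq_0$.

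The key step is to prove $T\in\frcQ{b,h}$, and here I would reuse the observation from the proof of \Cref{dinf closure}: for any $u\in T$ there is some $\beta\in\kappa$ with $\suc(u,T)=\suc(u,T_\beta)$. To see this, note that for $\alpha<\beta$ the ordering $T_\beta\leq T_\alpha$ forces $\norm{\suc(u,T_\beta)}_{b,\dom(u)}\leq\norm{\suc(u,T_\alpha)}_{b,\dom(u)}$, so $\ab{\norm{\suc(u,T_\alpha)}_{b,\dom(u)}\mid\alpha\in\kappa}$ is a weakly descending sequence of cardinals and therefore eventually constant, say from some $\beta_0$ on; and once the norm stabilises, the ordering clause ``$\suc(u,T)\neq\suc(u,S)$ implies strictly smaller norm'' forces $\suc(u,T_\alpha)=\suc(u,T_{\beta_0})$ for all $\alpha\geq\beta_0$, whence $\suc(u,T)=\suc(u,T_{\beta_0})$. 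Using this, I check the four clauses of \Cref{dinf forcing definition} for $T$: clause (i) follows because any $u\in T$ lies in all $T_\alpha$, and if $\beta$ witnesses $\suc(u,T)=\suc(u,T_\beta)$ then one can extend $u$ inside $T$ to a splitting node by working in $T_\beta$ and then passing to a later $T_{\beta'}$ (one has to be slightly careful and iterate, using that $T$ agrees with some $T_\gamma$ above each node encountered, but the fusion hypothesis keeps splitting levels fixed below the relevant index). Clause (ii): if $u\in\Split_\alpha(T)$ then, choosing $\beta>\alpha$ with $\suc(u,T)=\suc(u,T_\beta)$ and noting that the splitting nodes of $T$ below $u$ are exactly those of $T_\beta$ (again by the fusion relation keeping $\Split_\alpha$ fixed), we get $u\in\Split_{\alpha}(T_\beta)$, so $\norm{\suc(u,T)}_{b,\dom(u)}=\norm{\suc(u,T_\beta)}_{b,\dom(u)}\geq|\alpha|$. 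Clause (iii) is inherited pointwise from whichever $T_\beta$ agrees with $T$ at $u$. Clause (iv), the club-closure of splitting nodes, is the place where one uses that for a chain $C$ of splitting nodes of $T$ with $|C|<\kappa$, there is by ${<}\kappa$-closure a single $\beta$ (the sup of the stabilising indices along $C$, which has size ${<}\kappa$) such that each node of $C$ is splitting in $T_\beta$ with the same successor set, hence $\Cup C$ is splitting in $T_\beta$, and then one argues $\Cup C\in T$ and that its successor set in $T$ coincides with that in $T_\beta$.

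The main obstacle I anticipate is clause (iv) together with the bookkeeping needed to show that ``agreement with some $T_\beta$'' can be arranged simultaneously along a ${<}\kappa$-sized set of nodes; this is exactly where inaccessibility of $\kappa$ (so that a ${<}\kappa$-sized sup of ordinals ${<}\kappa$ is again ${<}\kappa$) and the weakly-descending-cardinal-sequence argument combine. Everything else is routine, and the write-up can legitimately be short: I would state the stabilisation claim as the one lemma to prove carefully, then dispatch (i)--(iv) in a sentence each, and finish with the one-line verification that $T\leq_\alpha T_\alpha$. Since the analogous statement is \cite{KM21} 3.5(b), I would also remark that the proof is the natural $\gbs$-adaptation, with ${<}\kappa$-closure replacing countable closure and club-many splitting levels replacing the $\omega$-indexed ones.
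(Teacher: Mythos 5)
Your proposal is correct and follows the same route as the paper, whose entire proof consists of the observation that $\Split_\alpha(T_\beta)=\Split_\alpha(T)$ for $\alpha<\beta$ plus the remark that verifying $T\in\frcQ{b,h}$ is then easy; your stabilisation claim for the successor sets (descending cardinal norms must be eventually constant) is exactly the mechanism the paper already used for ${<}\kappa$-closure and implicitly relies on here. The only quibble is your phrasing that nodes below the $\alpha$-th splitting level "keep all of their successors" — the fusion relation only fixes the splitting levels as sets of nodes, not the successor sets, but the stabilisation argument you give covers what is actually needed.
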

\begin{proof}
Note that $\Split_\alpha(T_{\beta})=\Split_\alpha(T)$ for $\alpha<\beta\in\kappa$. Showing that $T\in\frcQ{b,h}$ is easy.
\end{proof}
In the above lemma, we will note that if $T_\alpha\in(\frcQ{b,h})^*$ for all $\alpha$, then $T\in(\frcQ{b,h})^*$ as well.

\begin{lmm}[cf. \cite{KM21} 3.4]\label{lmm3.4}
If $\alpha\in\kappa$, $T\in\frcQ{b,h}$ and $\c D\subset \frcQ{b,h}$ is open dense, then there exists $T'\leq_\alpha T$ such that for any $v\in\Split_{\alpha+1}(T')$ we have $(T')_v\in \c D$. Furthermore, if $T\in(\frcQ{b,h})^*$, then we can also find $T'\in(\frcQ{b,h})^*$ satisfying the above.
\end{lmm}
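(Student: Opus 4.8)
The plan is to build $T'$ by a fusion argument of length $\alpha+1$, at each splitting level killing off incomparability with $\c D$ along every branch that passes through it. First I would set up notation: by \Cref{dinf fusion} it suffices to produce a fusion sequence $\ab{T_\beta\mid \beta\leq\alpha}$ with $T_0=T$, $T_{\beta'}\leq_\beta T_\beta$ for $\beta<\beta'$, and at the final stage take $T'=T_\alpha$ (or rather $\Cap_{\beta\leq\alpha}T_\beta$, but since the sequence stabilises above $\alpha$ this is just $T_\alpha$), which then satisfies $T'\leq_\alpha T$. The key engine is a one-step extension lemma: given a node $v$ of the relevant splitting level and any condition $(T)_v\in\frcQ{b,h}$, by open density of $\c D$ there is some $S\leq (T)_v$ with $S\in\c D$, and since $\c D$ is open we may replace the subtree $(T_\beta)_v$ by $S$ (after, if necessary, passing to a collapse to restore condition \ref{dinf forcing def: club increasing} and the splitting-norm requirement \ref{dinf forcing def: number of splitting}). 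The only subtlety is that there are $h(\dom(v))$-many, hence possibly ${<}\kappa$-many, successors to handle at each level; but $\kappa$ is inaccessible and each splitting level has ${<}\kappa$ nodes with ${<}\kappa$ successors, so we can amalgamate all these finitely-or-less-than-$\kappa$-many replacements into a single condition $T_{\beta+1}\leq_\beta T_\beta$ because $\frcQ{b,h}$ is ${<}\kappa$-closed (\Cref{dinf closure}) — more precisely, we process the successors of $\Split_{\beta}(T_\beta)$ one at a time in a descending ${<}\kappa$-length chain and take the intersection.

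Concretely, at successor stage $\beta+1\leq\alpha$: we are given $T_\beta$ with $\Split_\beta(T_\beta)$ fixed. Enumerate $\Split_{\beta+1}(T_\beta)$ as $\ab{v_i\mid i\in\mu}$ for some $\mu<\kappa$ (using that each node has ${<}\kappa$ successors, iterated $\beta+1$ times, stays ${<}\kappa$ by inaccessibility). Build a descending chain $\ab{S_i\mid i\leq\mu}$ with $S_0=T_\beta$, $S_{i+1}\leq S_i$ obtained by shrinking the subtree above $v_i$ into $\c D$ (possible by open density, and the modification happens strictly above level $\dom(v_i)$ so it does not disturb earlier $v_j$), and $S_i=\Cap_{j<i}S_j$ at limits by \Cref{dinf closure}. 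We must take care that shrinking above $v_i$ does not touch $\Split_\beta(T_\beta)$ — it doesn't, since $v_i$ is strictly above the $\beta$-th splitting level — so $S_\mu\leq_\beta T_\beta$, and we set $T_{\beta+1}=S_\mu$. At limit stages $\delta\leq\alpha$ we just set $T_\delta=\Cap_{\beta<\delta}T_\beta$, which lies in $\frcQ{b,h}$ and satisfies $T_\delta\leq_\beta T_\beta$ for all $\beta<\delta$ by \Cref{dinf fusion}. Finally $T'=T_\alpha$ works: every $v\in\Split_{\alpha+1}(T')$ appears (with the same subtree) already in $T_{\alpha}=S_\mu$ at the stage where it was pushed into $\c D$, and since $\c D$ is \emph{open}, any further shrinking below keeps $(T')_v\in\c D$. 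Wait — I should be slightly careful: $T'=T_\alpha$ only processes splitting level $\alpha+1$ at stage $\alpha$, giving $T_{\alpha+1}$, so in fact we should run the fusion to stage $\alpha+1$, i.e. produce $\ab{T_\beta\mid \beta\leq\alpha+1}$ and output $T_{\alpha+1}\leq_\alpha T$; the extra step beyond $\alpha$ is still $\leq_\alpha$-below $T$ because the construction at stage $\alpha$ only alters the tree above level $\alpha+1$, hence does not change $\Split_\alpha$.

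For the ``furthermore'' clause: if $T\in(\frcQ{b,h})^*$, we additionally demand at each stage that the replacement subtrees be chosen inside $(\frcQ{b,h})^*$, which is possible because $(\frcQ{b,h})^*$ is dense in $\frcQ{b,h}$ (the preceding lemma) and, by the remark following \Cref{dinf fusion}, the intersection of a fusion sequence of conditions in $(\frcQ{b,h})^*$ is again in $(\frcQ{b,h})^*$; so $T'\in(\frcQ{b,h})^*$. The main obstacle — and the only place real care is needed — is the bookkeeping in the successor step: ensuring that after shrinking the subtree above $v_i$ into $\c D$ we can still view the result as a legitimate condition of $\frcQ{b,h}$ (conditions \ref{dinf forcing def: number of splitting}–\ref{dinf forcing def: club increasing} of \Cref{dinf forcing definition} must be checked, possibly after taking a collapse along a club) and that it is $\leq_\beta$-below the previous condition, i.e.\ that we never inadvertently modify levels at or below the $\beta$-th splitting level. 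Both are handled by the observation that all modifications take place strictly above $\dom(v_i)\geq\dom(u)$ for $u\in\Split_\beta$, together with ${<}\kappa$-closure to glue the ${<}\kappa$-many modifications.
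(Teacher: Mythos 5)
There is a genuine gap: your final condition need not satisfy $T'\leq_\alpha T$. The fusion-sequence property you arrange, namely $T_{\beta'}\leq_\beta T_\beta$ for $\beta<\beta'$, only yields $T_{\alpha+1}\leq_0 T_0$, i.e.\ $T_{\alpha+1}\leq T$; it does not yield $T_{\alpha+1}\leq_\alpha T$. For the latter you need $\Split_\alpha(T_{\alpha+1})=\Split_\alpha(T)$, and your earlier stages destroy this as soon as $\alpha\geq 1$: at stage $\beta+1\leq\alpha$ you replace the subtrees $(T_\beta)_{v}$ rooted at the nodes $v\in\Split_{\beta+1}(T_\beta)$ by arbitrary extensions lying in $\c D$, which rewrites everything at and above splitting level $\beta+1$ --- in particular the $\alpha$-th splitting level of $T$ is already gone after the first stage. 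Your remark that the construction at stage $\alpha+1$ ``only alters the tree above level $\alpha+1$, hence does not change $\Split_\alpha$'' is correct for that last stage in isolation, but by then $\Split_\alpha(T)$ has long since been replaced, so the conclusion $T_{\alpha+1}\leq_\alpha T$ does not follow.

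The repair is to delete all the intermediate stages: they are not needed for the statement (which only concerns splitting level $\alpha+1$) and they are precisely what breaks $\leq_\alpha$. What remains is your one-step extension at level $\alpha+1$ applied directly to $T$, and that is the paper's entire proof: enumerate $\Split_{\alpha+1}(T)$ as $\ab{v_\xi\mid\xi\in\mu}$, choose $T_\xi\leq (T)_{v_\xi}$ with $T_\xi\in\c D$ by density, and set $T'=\Cup_{\xi\in\mu}T_\xi$. All modifications then occur at or above nodes of $\Split_{\alpha+1}(T)$, so $\Split_\alpha(T)$ and the successor sets of its elements are untouched, giving $T'\leq_\alpha T$; openness of $\c D$ covers the nodes $v_\xi$ that cease to be splitting, exactly as you observe. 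Note also that forming the union of the pairwise ``parallel'' replacements handles all $\mu$ many nodes simultaneously, so the descending ${<}\kappa$-chain and the appeal to ${<}\kappa$-closure in your successor step are superfluous. Your treatment of the ``furthermore'' clause is correct once the rest is fixed: choose each $T_\xi$ in $(\frcQ{b,h})^*$.
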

\begin{proof}
Enumerate $\Split_{\alpha+1}(T)$ as $\ab{v_\xi\mid \xi\in\mu}$ and choose some $T_\xi\leq (T)_{v_\xi}\cap \c D$ for each $\xi\in\mu$, then $T'=\Cup_{\xi\in\mu}T_\xi$ satisfies the above. Note that by construction $\Split_{\alpha+1}(T)\subset T'$. Finally note that if $T\in(\frcQ{b,h})^*$ and we choose each $T_\xi\in(\frcQ{b,h})^*$, then $T'\in(\frcQ{b,h})^*$ holds as well.
\end{proof}

The following lemma shows that $\frcQ{b,h}$ satisfies a generalisation of Baumgartner's Axiom $\s A$ to the context of $\gbs$.

\begin{lmm}[cf. \cite{KM21} 3.5(c)]\label{axiom A}
If $A\subset\frcQ{b,h}$ is an antichain, $T\in\frcQ{b,h}$ and $\alpha\in\kappa$, then there is a condition $T'\leq_\alpha T$ in $\frcQ{b,h}$ such that $T'$ is compatible with less than $\kappa$ elements of $A$.
\end{lmm}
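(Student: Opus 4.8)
The plan is to run the standard Axiom~$\s A$ argument, extracting everything from \Cref{lmm3.4} via one suitably chosen open dense set. Given the antichain $A$, put
\begin{align*}
\c D=\st{S\in\frcQ{b,h}\mid S\text{ is incompatible with every }a\in A}\cup\st{S\in\frcQ{b,h}\mid \exists a\in A\ (S\leq a)}.
\end{align*}
First I would check that $\c D$ is open dense. It is downward closed: incompatibility with all of $A$ is inherited by strengthenings, and if $S\leq a$ then $S'\leq a$ for every $S'\leq S$. It is dense: an arbitrary $S_0\in\frcQ{b,h}$ is either incompatible with every member of $A$, so $S_0\in\c D$, or it has a common extension $S\leq S_0,a$ with some $a\in A$, and then $S\leq a$ puts $S$ into $\c D$.

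Next, apply \Cref{lmm3.4} to $\c D$, $T$, and $\alpha$ to get $T'\leq_\alpha T$ with $(T')_v\in\c D$ for every $v\in\Split_{\alpha+1}(T')$. The key claim is that each $a\in A$ compatible with $T'$ can be assigned, injectively, a node of $\Split_{\alpha+1}(T')$. Given a common extension $R\leq T',a$, pick any branch $f\in[R]$ and let $v\subseteq f$ be the unique node of $\Split_{\alpha+1}(T')$ with $v\subseteq f$; such $v$ exists since along any branch of $T'$ the splitting nodes form a chain of order type $\kappa$ (by the tree conditions of \Cref{dinf forcing definition}), and $v\in R$ because $v$ is an initial segment of $f\in[R]$. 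Then $(R)_v\leq (T')_v$, and also $(R)_v\leq R\leq a$, where the strict drop of the norm needed to see $(R)_v\leq R$ is supplied by clause~\ref{dinf forcing def: minimal} of \Cref{dinf forcing definition}. Hence $(T')_v$ is compatible with $a$, so $(T')_v$ lies in the second part of $\c D$, i.e.\ $(T')_v\leq a'$ for some $a'\in A$; then $(R)_v$ extends both $a$ and $a'$, forcing $a'=a$ since $A$ is an antichain, so $(T')_v\leq a$. Setting $g(a)=v$ for one such $v$, the map $g$ is injective, for $g(a)=g(b)$ yields $(T')_{g(a)}\leq a$ and $(T')_{g(a)}\leq b$, whence $a=b$.

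Finally it remains to bound $|\Split_{\alpha+1}(T')|$. A routine induction on $\beta\leq\alpha+1$ gives $|\Split_\beta(T')|<\kappa$: $\Split_0(T')$ is a singleton because $T'$ has root $\emptyset$; each node of $\Split_{\beta+1}(T')$ is the minimal splitting node above some successor of a node of $\Split_\beta(T')$, and a splitting node of height $\gamma$ has at most $|[b(\gamma)]^{<h(\gamma)}|<\kappa$ successors; and for limit $\delta$ the set $\Split_\delta(T')$ injects into the collection of $\delta$-indexed chains of splitting nodes of $T'$. Since $\kappa$ is inaccessible, every sum or product of fewer than $\kappa$ cardinals below $\kappa$ appearing here remains below $\kappa$, so the induction goes through. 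Combining this with the injection $g$ shows that $T'\leq_\alpha T$ is compatible with at most $|\Split_{\alpha+1}(T')|<\kappa$ members of $A$, as required. The only mildly technical point is this cardinality estimate, and it follows directly from inaccessibility; the substance of the argument is packaged in \Cref{lmm3.4}, which is already available.
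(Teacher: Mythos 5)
Your proof is correct and follows essentially the same route as the paper: you define the same open dense set $\c D$ (extensions of members of $A$ together with conditions incompatible with all of $A$), apply \Cref{lmm3.4} to thin $T$ to $T'\leq_\alpha T$ with $(T')_v\in\c D$ for all $v\in\Split_{\alpha+1}(T')$, and then count via $|\Split_{\alpha+1}(T')|<\kappa$. The paper leaves the injective assignment $a\mapsto v$ and the cardinality bound as implicit remarks, whereas you spell them out; no substantive difference.
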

\begin{proof}
Let $\c D$ be the set of $S\in\frcQ{b,h}$ such that $S\leq R$ for some $R\in A$ or such that $S$ is incompatible with all elements of $A$, then $\c D$ is open dense. By \Cref{lmm3.4} there is $T'\in\frcQ{b,h}$ with $T'\leq_\alpha T$ such that $(T')_v\in\c D$ for all $v\in\Split_{\alpha+1}(T')$. Note that $|\Split_{\alpha+1}(T')|<\kappa$ and that $R\in A$ is compatible with $T'$ iff there exists $v\in\Split_{\alpha+1}(T')$ such that $(T')_v\leq R$. It follows that less than $\kappa$ many elements of $A$ are compatible with $T'$.
\end{proof}

We will also prove that $\frcQ{b,h}$ has continuous reading of names. In fact, we will prove a different property that implies continuous reading, which is referred to as \emph{early reading of names} in \cite{KM21}. The preservation of $\kappa^+$ is a straightforward consequence of this property.

\begin{dfn}[cf. \cite{KM21} 3.6]
Let $T\in\frcQ{b,h}$ and $\dot\tau$ be a $\frcQ{b,h}$-name such that $T\fc\ap{\dot\tau:\kappa\to\b V}$. Then we say that $T$ \emph{reads $\dot\tau$ early} if $(T)_u$ decides $\dot\tau\restriction \alpha$ for every $\alpha\in\kappa$ and $u\in\Lev_\alpha( T)$.
\end{dfn}

\begin{lmm}[cf. \cite{KM21} 3.7]\label{early reading}
Let $T\in\frcQ{b,h}$ and $\dot\tau$ a $\frcQ{b,h}$-name such that $T\fc\ap{\dot\tau:\kappa\to\b V}$. Then there exists $T'\leq T$ with $T'\in(\frcQ{b,h})^*$ such that $T'$ reads $\dot \tau$ early.
\end{lmm}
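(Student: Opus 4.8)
The plan is to build $T'$ by a fusion argument, interleaving two bookkeeping tasks at each splitting level: making the tree sharp (so that it lands in $(\frcQ{b,h})^*$) and ensuring that every node at the relevant level already decides the corresponding initial segment of $\dot\tau$. First I would pass to a condition in $(\frcQ{b,h})^*$ below $T$ using the density of $(\frcQ{b,h})^*$, so without loss of generality $T\in(\frcQ{b,h})^*$ to begin with. Then I would construct a fusion sequence $\ab{T_\alpha\mid\alpha\in\kappa}$ with each $T_\alpha\in(\frcQ{b,h})^*$ and $T_\beta\leq_\alpha T_\alpha$ for $\beta>\alpha$, such that the following holds at each stage: for every $u\in\Split_{\alpha+1}(T_{\alpha+1})$ the condition $(T_{\alpha+1})_u$ decides $\dot\tau\restriction s_{\alpha+1}(T_{\alpha+1})$ (equivalently, decides $\dot\tau$ on the ordinal block up to the next splitting level). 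The existence of the successor step is exactly the content of \Cref{lmm3.4}, applied with $\c D$ the open dense set of conditions deciding $\dot\tau\restriction\gamma$ for the appropriate $\gamma$ --- one uses it once for each of the ${<}\kappa$ many nodes in $\Split_{\alpha+1}$, or more cleanly applies the "furthermore" clause of \Cref{lmm3.4} directly to get a single $T_{\alpha+1}\in(\frcQ{b,h})^*$ with $T_{\alpha+1}\leq_\alpha T_\alpha$ doing the job. At limit stages $\delta$ I set $T_\delta=\Cap_{\alpha\in\delta}T_\alpha$, which lies in $(\frcQ{b,h})^*$ by \Cref{dinf fusion} and the remark following it.

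The one subtlety is the ordinal bookkeeping: $\dot\tau$ has domain $\kappa$, and at stage $\alpha$ the tree $T_{\alpha+1}$ only controls $\dot\tau$ up to $s_{\alpha+1}(T_{\alpha+1})$, an ordinal that itself depends on the construction. I would handle this by arranging, along the fusion, that the heights $s_\alpha(T_\alpha)$ of the $\alpha$-th splitting levels form an increasing, continuous sequence cofinal in $\kappa$ --- this is automatic for conditions in $(\frcQ{b,h})^*$ together with clause \ref{dinf forcing def: club increasing} of \Cref{dinf forcing definition}, since a chain of splitting nodes of length ${<}\kappa$ has a splitting node as its union, so the set of splitting-level heights is club. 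Call $C=\st{s_\alpha(T)\mid\alpha\in\kappa}$ this club, where $T=\Cap_{\alpha\in\kappa}T_\alpha$ is the fusion limit. For $\gamma\in C$, say $\gamma=s_\alpha(T)$, every $u\in\Lev_\gamma(T)=\Split_\alpha(T)=\Split_\alpha(T_{\alpha+1})$ has $(T)_u\leq(T_{\alpha+1})_u$, and the latter decides $\dot\tau\restriction\gamma$, hence so does $(T)_u$. For a general $\alpha\in\kappa$, pick $\gamma\in C$ with $\gamma\geq\alpha$; every $u\in\Lev_\alpha(T)$ extends to some $u'\in\Lev_\gamma(T)$, and $(T)_{u'}\leq(T)_u$ decides $\dot\tau\restriction\gamma\supset\dot\tau\restriction\alpha$, but I need $(T)_u$ itself to decide $\dot\tau\restriction\alpha$. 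This forces me to be slightly more careful: below a non-splitting node the tree is linear, so if $\alpha$ lies strictly between two consecutive splitting-level heights $s_\alpha(T)$ and $s_{\alpha+1}(T)$, then $\Lev_\alpha(T)$ is in bijection with $\Split_\alpha(T)=\Lev_{s_\alpha(T)}(T)$ via restriction, and $(T)_u=(T)_{u\restriction s_\alpha(T)}$ for $u\in\Lev_\alpha(T)$; so $(T)_u$ decides $\dot\tau\restriction s_\alpha(T)$, and I only need to also decide $\dot\tau$ on the short block $[s_\alpha(T),\alpha)$. The fix is to strengthen the inductive hypothesis: at the successor step, demand that $(T_{\alpha+1})_u$ decides $\dot\tau\restriction s_{\alpha+1}(T_{\alpha+1})$ for $u\in\Split_{\alpha+1}(T_{\alpha+1})$, which already covers every ordinal below the next splitting-level height, so every $u\in\Lev_\beta(T)$ with $\beta<s_{\alpha+1}(T)$ inherits this. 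Iterating over $\alpha$ then gives early reading of $\dot\tau$ by $T=T'$.

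The main obstacle is thus not any single deep step --- \Cref{lmm3.4}, \Cref{dinf fusion}, and the density of $(\frcQ{b,h})^*$ do all the real work --- but rather setting up the inductive hypothesis so that the claim "every node at every level decides the matching initial segment" is genuinely maintained through the limits, i.e. making sure the continuity of the splitting-level heights is used correctly and that the linear (non-splitting) stretches between splitting levels are handled. I would write the hypothesis as: $T_{\alpha+1}\in(\frcQ{b,h})^*$, $T_{\alpha+1}\leq_\alpha T_\alpha$, and for all $u\in T_{\alpha+1}$ with $\dom(u)\leq s_{\alpha+1}(T_{\alpha+1})$ the condition $(T_{\alpha+1})_u$ decides $\dot\tau\restriction\dom(u)$; at limits this is preserved because $\Split_\beta(T_\delta)=\Split_\beta(T_{\beta+1})$ for $\beta<\delta$ and the decisions are upward absolute under $\leq$. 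Then $T'=\Cap_{\alpha\in\kappa}T_\alpha$ is the desired condition: it is in $(\frcQ{b,h})^*$ by \Cref{dinf fusion}, it is below $T$, and it reads $\dot\tau$ early.
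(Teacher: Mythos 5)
Your overall strategy is the paper's: a fusion sequence through $(\frcQ{b,h})^*$ driven by \Cref{lmm3.4} applied to the open dense sets $\c D_\gamma$ of conditions deciding $\dot\tau\restriction\gamma$, with \Cref{dinf fusion} handling limits. The divergence — and the gap — is in how you align the amount of $\dot\tau$ decided with the heights of the levels. You strengthen the successor hypothesis to: for all $u\in\Split_{\alpha+1}(T_{\alpha+1})$, the condition $(T_{\alpha+1})_u$ decides $\dot\tau\restriction s_{\alpha+1}(T_{\alpha+1})$. But a single application of \Cref{lmm3.4} cannot deliver this, because the open dense set must be fixed in advance as some $\c D_\gamma$, while the target $s_{\alpha+1}(T_{\alpha+1})$ is only determined by the output of that very application. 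Concretely, if you take $\gamma=s_{\alpha+1}(T_\alpha)$, the shrinking performed by \Cref{lmm3.4} can turn nodes of $\Split_{\alpha+1}(T_\alpha)$ into non-splitting nodes (the norm of their successor set may drop below $2$, forcing clause \ref{dinf forcing def: minimal} of \Cref{dinf forcing definition} to apply), so $s_{\alpha+1}(T_{\alpha+1})>\gamma$ and the new $(\alpha+1)$-th splitting nodes only decide $\dot\tau\restriction\gamma$, strictly less than what your hypothesis demands. There is no single ``appropriate $\gamma$''; repairing this within your framework requires an iterated closing-off argument at each successor step (decide up to $\gamma_0$, recompute the splitting height $\gamma_1$, decide up to $\gamma_1$, and so on, with a limit argument showing the splitting height does not escape past the supremum), which you neither state nor prove.

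The paper sidesteps this moving-target problem entirely. It runs the naive fusion, obtaining only that $(T_\kappa)_v$ decides $\dot\tau\restriction\alpha$ for $v\in\Split_{\alpha+1}(T_\kappa)$ — note $\alpha$, not $s_{\alpha+1}(T_\kappa)$ — and then, at the very end, collapses $T_\kappa$ on the complement of the club $C=\st{\alpha\in\kappa\mid\alpha=s_\alpha(T_\kappa)}$ of \emph{fixed points} of $\alpha\mapsto s_\alpha(T_\kappa)$ (not the set of splitting heights you call $C$). After the collapse, splitting occurs only at heights $\alpha$ that equal their own splitting index, which is exactly what reconciles ``decides $\dot\tau\restriction\alpha$'' with ``sits at level $\alpha$''; the non-splitting stretches are then handled as in your last paragraph. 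I would either adopt that collapse or supply the missing closing-off argument. A minor further slip: for $u\in\Lev_\alpha(T)$ with $\alpha$ strictly between consecutive splitting heights, you write $(T)_u=(T)_{u\restriction s_\alpha(T)}$; restricting to a splitting level itself yields a strictly larger subtree, so this should be restriction to one level past the last splitting level below $\alpha$.
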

\begin{proof}
Let $\c D_\alpha=\sst{T\in \frcQ{b,h}\mid T\text{ decides }\dot\tau\restriction \alpha}$ and note that $\c D_\alpha$ is open dense for each $\alpha\in\kappa$. We will construct a fusion sequence.

Let $T_0\leq T$ be such that $T_0\in(\frcQ{b,h})^*$. Given $T_\alpha$, use \Cref{lmm3.4} to define $T_{\alpha+1}\in(\frcQ{b,h})^*$ such that $T_{\alpha+1}\leq_\alpha T_\alpha$ and for any $v\in\Split_{\alpha+1}(T_{\alpha+1})$ we have $(T_{\alpha+1})_v\in\c D_{\alpha}$. For limit $\gamma\in\kappa$ we already constructed the descending chain of conditions $\ab{T_\xi\mid \xi\in\gamma}$, so we let $T_\gamma=\Cap_{\xi<\gamma} T_\xi$.

Let $T_\kappa$ be the fusion limit of $\ab{T_\alpha\mid \alpha\in\kappa}$, then by construction we see that if $v\in\Split_{\alpha+1}(T_\kappa)$, then $(T_\kappa)_v$ decides $\dot \tau\restriction \alpha$.

Finally, note that $\st{s_\alpha(T_\kappa)\mid \alpha\in\kappa}$ is club, so $C=\st{\alpha\in\kappa\mid \alpha=s_\alpha(T_\kappa)}$ is club as well. We define $T'$ to be a collapse of $T_\kappa$ on $\kappa\setminus C$. Let $u\in T'$ with $\dom(u)=\alpha\in C$, then for any $\beta<\alpha$, we see that $s_\beta(T_\kappa)<s_\alpha(T_\kappa)=\alpha$, and therefore $(T_\kappa)_{u\restriction (s_\beta(T_\kappa)+1)}$ decides $\dot\tau\restriction\beta$. But this implies that $(T_\kappa)_u$ decides $\dot\tau\restriction\alpha$, and hence also $(T')_u$ decides $\dot\tau\restriction\alpha$. On the other hand, if $u\in T'$ with $\dom(u)=\beta\notin C$, let $\alpha\in C$ be minimal with $\beta<\alpha$, then $s_\beta(T_\kappa)<\alpha$. Let $v\in T'$ be such that $u\subset v$ and $\dom(v)=s_\beta(T_\kappa)+1$, then $(T_\kappa)_v$ decides $\dot \tau\restriction\beta$. But, since $\Cup_{\beta\leq \xi<\alpha}\Lev_\xi(T')$ contains no splitting nodes, we see that $(T')_u=(T')_v$, hence $(T')_u$ decides $\dot\tau\restriction\beta$.
\end{proof}
\begin{crl}
$\frcQ{b,h}$ preserves $\kappa^+$.
\end{crl}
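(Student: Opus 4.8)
The plan is to read off the corollary from the early reading lemma just proved, via the standard criterion for preservation of $\kappa^+$: it suffices to show that every new $\kappa$-sequence of ordinals is pointwise covered by a ground model function with fibres of size ${<}\kappa$. Concretely, I would first establish the reduction: for every $\frcQ{b,h}$-name $\dot\tau$ with $\fc\ap{\dot\tau:\kappa\to\r{Ord}}$ and every $T\in\frcQ{b,h}$, there are $T'\leq T$ and a function $g\in\b V$ with $\dom(g)=\kappa$ and $|g(\alpha)|<\kappa$ for all $\alpha\in\kappa$ such that $T'\fc\ap{\dot\tau(\alpha)\in g(\alpha)\text{ for all }\alpha\in\kappa}$. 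Granting this, if $\frcQ{b,h}$ collapsed $\kappa^+$ then (since cardinals ${\leq}\kappa$ are preserved by \Cref{dinf closure}) some condition $T_0$ would force a name $\dot\tau$ to enumerate cofinally into $(\kappa^+)^{\b V}$; applying the reduction to $\dot\tau$ and $T_0$ gives $T'\leq T_0$ and $g$, and then $A=\Cup_{\alpha\in\kappa}g(\alpha)\in\b V$ has $\b V$-cardinality $\leq\kappa<(\kappa^+)^{\b V}$, hence is bounded in $(\kappa^+)^{\b V}$ by regularity, contradicting that $T'$ forces $\ran(\dot\tau)\subseteq A$ to be cofinal.

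To prove the reduction I would apply the early reading lemma to obtain $T'\leq T$ with $T'\in(\frcQ{b,h})^*$ reading $\dot\tau$ early. For each $\alpha\in\kappa$ and each $u\in\Lev_{\alpha+1}(T')$ the condition $(T')_u$ decides $\dot\tau\restriction(\alpha+1)$, in particular it fixes a value for $\dot\tau(\alpha)$; set $g(\alpha)=\st{\beta_u\mid u\in\Lev_{\alpha+1}(T')}$, where $\beta_u$ is that value. Two points remain. For $T'\fc\ap{\dot\tau(\alpha)\in g(\alpha)}$ it is enough that $\st{(T')_u\mid u\in\Lev_{\alpha+1}(T')}$ is predense below $T'$: any $S\leq T'$ has branches of length $\kappa$, hence a node $u\in\Lev_{\alpha+1}(T')\cap S$, and $(S)_u$ is a common extension of $S$ and $(T')_u$ (here one checks, routinely, that $(S)_u\in\frcQ{b,h}$ and that $(S)_u\leq S$, $(S)_u\leq(T')_u$, using that nodes below $u$ become non-splitting with norm $1$ and that \ref{dinf forcing def: minimal} of \Cref{dinf forcing definition} already forces splitting nodes of $S$ to have norm $\geq 2$). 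For $|g(\alpha)|<\kappa$ I would use that $T'\in(\frcQ{b,h})^*$: below level $\alpha+1$ the tree $T'$ branches only at its splitting levels $s_\beta(T')$, of which there are at most $|\alpha+1|<\kappa$ many since $\beta\mapsto s_\beta(T')$ is strictly increasing, and each splitting node at level $s_\beta(T')$ has at most $|[b(s_\beta(T'))]^{<h(s_\beta(T'))}|<\kappa$ successors; as $\kappa$ is inaccessible, a product of ${<}\kappa$ many cardinals each ${<}\kappa$ is ${<}\kappa$, so $|\Lev_{\alpha+1}(T')|<\kappa$ and hence $|g(\alpha)|<\kappa$.

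The only step requiring genuine care is the counting in the last sentence together with the verification that $(S)_u$ is a legitimate condition below both $S$ and $(T')_u$; both are bookkeeping rather than real obstacles, since all the conceptual work — continuous/early reading of names — is already contained in the preceding lemma. I would therefore expect the write-up to be short, essentially the two displayed-free arguments above plus the collapsing-argument wrapper.
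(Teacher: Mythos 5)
Your proposal is correct and follows the same route as the paper: apply the early reading lemma, observe that $|\Lev_\alpha(T')|<\kappa$ by inaccessibility so each value $\dot\tau(\alpha)$ is confined to a ground-model set of size ${<}\kappa$, and conclude that the range of $\dot\tau$ cannot be cofinal (equivalently, surjective) onto $(\kappa^+)^{\b V}$. The paper's write-up is just a terser version of the same argument, leaving the predensity of $\st{(T')_u\mid u\in\Lev_\alpha(T')}$ and the level-counting implicit.
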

\begin{proof}
Let $\dot\tau$ be a name and $T\in\frcQ{b,h}$ be such that $T\fc\ap{\dot\tau:\kappa\to\kappa^+}$ and $T$ reads $\dot\tau$ early. For each $\alpha\in\kappa$ there is a set $B_\alpha\subset\kappa^+$ with $|B_\alpha|\leq |\Lev_\alpha(T)|<\kappa$ such that $T\fc\ap{\dot\tau(\alpha)\in B_\alpha}$, thus $T\fc\ap{\dot\tau[\kappa]\subset \Cup_{\alpha\in\kappa} B_\alpha}$. Therefore $T\fc\ap{\dot \tau\text{ is not surjective} }$.
\end{proof}

We will now look at the effect that $\frcQ{b,h}$ has on the cardinality of anti-avoidance numbers. We first note that the generic slalom added by $\frcQ{b,h}$ does not antilocalise any $f\in\prod b$ from the ground model. By adding many such slaloms we can increase $\dinf{b,h}$.

\begin{lmm}[cf. \cite{KM21} 3.3]\label{generic increase}
Let $\phi_G\in\Loc_\kappa^{b,h}$ be $\frcQ{b,h}$-generic over $\b V$, let $h'\leq^* b'\in\gbs$ be cofinal increasing cardinal functions and let $S\subset\kappa$ be stationary such that $h(\alpha)\leq h'(\alpha)\leq b'(\alpha)\leq b(\alpha)$ for all $\alpha\in S$. If $\phi'_G\in\Loc_{h'}^{b'}$ satisfies $\phi'_G(\alpha)=\phi_G(\alpha)\cap b'(\alpha)$ for all $\alpha\in S$, then for any $f\in(\prod b')^{\b V}$ we have $f\in^\infty \phi_G'$.
\end{lmm}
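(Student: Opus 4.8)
The plan is to prove the statement by a density argument in $\frcQ{b,h}$. First I would reduce it to showing that for every $f\in(\prod b')^{\b V}$ and every $\alpha_0\in\kappa$ the set
\[
D_{f,\alpha_0}=\st{T\in\frcQ{b,h}\smid \exists\,\delta\in S\cap[\alpha_0,\kappa)\ \text{ with }\ T\fc \check f(\delta)\in\dot\phi_G(\delta)}
\]
is dense in $\frcQ{b,h}$. Granting this, a generic $G$ meets each $D_{f,\alpha_0}$, so in $\b V[G]$ for every ground model $f\in\prod b'$ and every $\alpha_0$ there is $\delta\geq\alpha_0$ with $\delta\in S$ and $f(\delta)\in\phi_G(\delta)$; since $\delta\in S$ gives $f(\delta)<b'(\delta)\leq b(\delta)$, this yields $f(\delta)\in\phi_G(\delta)\cap b'(\delta)=\phi'_G(\delta)$, and as $\alpha_0$ was arbitrary, $f\ini\phi'_G$.

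For density, fix $T\in\frcQ{b,h}$, $f\in(\prod b')^{\b V}$ and $\alpha_0$. Using that $(\frcQ{b,h})^*$ is dense in $\frcQ{b,h}$, pass to $T_0\leq T$ with $T_0\in(\frcQ{b,h})^*$. From clauses (i) and (iv) of \Cref{dinf forcing definition} one checks that the heights $\st{s_\gamma(T_0)\mid\gamma\in\kappa}$ of the splitting levels of $T_0$ form a club subset of $\kappa$, so — $S$ being stationary — I can fix $\gamma$ with $\delta:=s_\gamma(T_0)\in S$ and $\delta\geq\max\st{\alpha_0,\omega}$. Then $\Split_\gamma(T_0)=\Lev_\delta(T_0)$, and every $u\in\Lev_\delta(T_0)$ satisfies $\norm{\suc(u,T_0)}_{b,\delta}\geq|\delta|\geq 2$; by \Cref{norm} this means that every singleton subset of $b(\delta)$ is contained in $w(\delta)$ for some $w\in\suc(u,T_0)$. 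Applying this to $\st{f(\delta)}$ — legitimate since $f(\delta)<b'(\delta)\leq b(\delta)$ as $\delta\in S$ — I obtain, for each $u\in\Lev_\delta(T_0)$, a successor $w_u\in\suc(u,T_0)$ with $f(\delta)\in w_u(\delta)$.

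Finally I would set $T'=\Cup\st{(T_0)_{w_u}\mid u\in\Lev_\delta(T_0)}$. This is precisely the collapse of $T_0$ on $\st\delta$ (the complement of the club $\kappa\setminus\st\delta$) in which the surviving successor at each node of height $\delta$ is chosen to be $w_u$; hence $T'$ is a condition and $T'\leq T_0\leq T$, by the collapse construction introduced after \Cref{dinf forcing definition}. Concretely, $T'$ agrees with $T_0$ below height $\delta$; at height $\delta$ each $u$ becomes non‑splitting with $\norm{\suc(u,T')}_{b,\delta}=1<|\delta|\leq\norm{\suc(u,T_0)}_{b,\delta}$ (using $w_u(\delta)\subsetneq b(\delta)$, which holds because $|w_u(\delta)|<h(\delta)\leq b(\delta)$); and above height $\delta$ nothing changes except that each splitting‑level index loses one splitting ancestor, which cannot increase its cardinality, so clause (ii) is preserved. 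Every node of $\Lev_{\delta+1}(T')$ equals some $w_u$ with $f(\delta)\in w_u(\delta)$, so $T'\fc\check f(\delta)\in\dot\phi_G(\delta)$, i.e.\ $T'\in D_{f,\alpha_0}$, completing the density argument and the proof. The only place where a little care is required is verifying that this pruned $T'$ is a genuine condition below $T_0$; recognising $T'$ as an instance of the collapse operation with a prescribed surviving successor makes this immediate, and everything else ($G$ meeting dense sets, the arithmetic $f(\delta)<b'(\delta)\le b(\delta)$ and $|w_u(\delta)|<h(\delta)\le b(\delta)$, the norm of a singleton) is bookkeeping.
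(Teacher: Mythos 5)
Your overall strategy is the same as the paper's: a density argument in which, given $f$, $\alpha_0$ and a condition in $(\frcQ{b,h})^*$, you use stationarity of $S$ to hit the club of splitting levels above $\alpha_0$ and then use $\norm{\suc(u,T_0)}_{b,\delta}\geq 2$ to find a successor whose $\delta$-th entry contains $f(\delta)$. All of that is correct. The gap is in the final step, where you claim that $T'=\Cup\st{(T_0)_{w_u}\mid u\in\Lev_\delta(T_0)}$ is a condition. It need not be: clause (iv) of \Cref{dinf forcing definition} requires that the union of any ${<}\kappa$-chain of splitting nodes is again splitting, and since $S\cap C$ is only stationary you will in general be forced to take $\delta=s_\gamma(T_0)$ with $\gamma$ a limit ordinal, so that $\delta=\sup_{\eta<\gamma}s_\eta(T_0)$ and every $u\in\Lev_\delta(T_0)$ is the union of the chain $\st{u\restriction s_\eta(T_0)\mid\eta<\gamma}$ of splitting nodes of $T'$ --- yet $u$ is non-splitting in $T'$. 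Your appeal to the collapse operation does not repair this: the paper only guarantees that a collapse on $A$ exists when $A$ is the complement of a club, and $\kappa\setminus\st\delta$ is not closed when $\delta$ is a limit ordinal (precisely the problematic case). You cannot in general arrange that $\delta$ is a successor point of the club of splitting levels, since the set of such points is not club and may miss $S$ entirely above $\alpha_0$.

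The repair is exactly the move the paper makes: instead of pruning the whole level $\delta$ while keeping the tree wide below it, fix a single $u\in\Lev_\delta(T_0)$ and a single $w_u\in\suc(u,T_0)$ with $f(\delta)\in w_u(\delta)$, and pass to the subtree $(T_0)_{w_u}$ generated by that one node. Below $w_u$ every node then has a unique successor (norm $1$, non-splitting, consistent with clauses (ii)--(iv) since there are no splitting nodes at or below level $\delta$ at all), the ordering requirement is met because the norm strictly drops at every node whose successor set changed, and $(T_0)_{w_u}$ still forces $f(\delta)\in\dot\phi_G(\delta)$. With that substitution your density argument goes through verbatim.
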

Note that this holds specifically for $b=b'$ and $h=h'$, in which case $\phi_G'=\phi_G$.
\begin{proof}
Working in $\b V$, fix some $f\in\prod b'$ and $T\in(\frcQ{b,h})^*$ and $\alpha_0\in\kappa$. Since $C=\st{s_\xi(T)\mid \xi\in\kappa}$ is a club set, we can choose $\alpha>\alpha_0$ such that $\alpha\in S\cap C$. We see that $\norm{\suc(u,T)}_{b,\alpha}> 1$ for any $u\in\Lev_\alpha(t)$, since $u$ is splitting. \Cref{norm} implies that there is some $v\in \suc(u,T)$ such that $\st{f(\alpha)}\subset v(\alpha)$. Then clearly $(T)_v\fc\ap{f(\alpha)\in\dot \phi_G(\alpha)}$, and because $f(\alpha)\in b'(\alpha)$ it follows that $(T)_v\fc\ap{f(\alpha)\in \dot \phi_G'(\alpha)}$. Since $\alpha_0$ was arbitrary, $\b V[G]\md\ap{f\in^\infty\phi_G'}$.
\end{proof}

On the other hand, we can give a condition for parameters $b',h'$ such that the cardinal $\dinf{b',h'}$ is preserved by $\frcQ{b,h}$. We first prove a condition such that $\frcQ{b,h}$ has the $(b',h')$-Sacks property (see \Cref{Sacks property}) and then we use \Cref{aloc loc intraction} to give a similar property to preserve antilocalisation.

\begin{lmm}[cf. \cite{KM21} 3.13]\label{localisation property}
Let $b,h,b',h'\in\gbs$ be increasing cofinal cardinal functions such that for almost all $\alpha\in\kappa$ we have  $\card{\prod_{\xi\leq\alpha}[b(\xi)]^{< h(\xi)}}< h'(\alpha)\leq b'(\alpha)$.
If $\dot f$ is a $\frcQ{b,h}$-name and $T\in\frcQ{b,h}$ is such that $T\fc\ap{\dot f\in\prod b'}$, then there is $\phi\in\Loc_\kappa^{b',h'}$ and $T'\leq T$ such that $T'\fc\ap{\dot f(\xi)\in \phi(\xi)}$ for all $\xi<\kappa$.
\end{lmm}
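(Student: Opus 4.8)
\emph{Proof proposal.} The plan is to reduce everything to one application of early reading of names (\Cref{early reading}) and then read $\phi$ off the branching of the resulting tree.

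First I would fix $\alpha_0<\kappa$ such that $\card{\prod_{\xi\leq\alpha}[b(\xi)]^{<h(\xi)}}<h'(\alpha)\leq b'(\alpha)$ for all $\alpha\geq\alpha_0$; such $\alpha_0$ exists by hypothesis. Choose any $u\in\Lev_{\alpha_0}(T)$ and put $T_0=(T)_u$, which is still a condition below $T$, so that $\Lev_\gamma(T_0)=\st{u\restriction\gamma}$ is a singleton for every $\gamma\leq\alpha_0$. Since $T\fc\ap{\dot f\in\prod b'}$, in particular $T_0\fc\ap{\dot f\colon\kappa\to\b V}$, so by \Cref{early reading} I may fix $T'\leq T_0$ with $T'\in(\frcQ{b,h})^*$ that reads $\dot f$ early; as $T'\subset T_0$ is a nonempty tree it still satisfies $\Lev_\gamma(T')=\st{u\restriction\gamma}$ for all $\gamma\leq\alpha_0$.

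For each $\gamma<\kappa$ and each $v\in\Lev_{\gamma+1}(T')$, early reading yields that $(T')_v$ decides $\dot f\restriction(\gamma+1)$, so I fix $\beta_v\in b'(\gamma)$ with $(T')_v\fc\ap{\dot f(\gamma)=\beta_v}$ and set $\phi(\gamma)=\st{\beta_v\mid v\in\Lev_{\gamma+1}(T')}\subset b'(\gamma)$. To check $\phi\in\Loc_\kappa^{b',h'}$ I estimate $\card{\phi(\gamma)}\leq\card{\Lev_{\gamma+1}(T')}$: for $\gamma<\alpha_0$ this is $1$, hence below $h'(\gamma)$ (recall $h'\geq\bar 2$); for $\gamma\geq\alpha_0$, since $T'$ is a subtree of $\Loc_{<\kappa}^{b,h}$ we have $\Lev_{\gamma+1}(T')\subset\prod_{\xi\leq\gamma}[b(\xi)]^{<h(\xi)}$, so $\card{\phi(\gamma)}\leq\card{\prod_{\xi\leq\gamma}[b(\xi)]^{<h(\xi)}}<h'(\gamma)$ by the choice of $\alpha_0$.

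Finally I would check $T'\fc\ap{\dot f(\gamma)\in\phi(\gamma)}$ for every $\gamma<\kappa$. Given a generic filter $G\ni T'$ with associated branch $\phi_G=\Cap G$, the node $v=\phi_G\restriction(\gamma+1)$ lies in $\Lev_{\gamma+1}(T')$, and a routine density argument (the set of $S\leq T'$ with $v\notin S$ or $S\leq(T')_v$ is dense below $T'$) gives $(T')_v\in G$, so that $\dot f(\gamma)$ is evaluated as $\beta_v\in\phi(\gamma)$ in $\b V[G]$. The one point requiring a bit of care is the initial truncation to $T_0$: without it, $\phi$ need not be a genuine $(b',h')$-slalom at the small coordinates $\gamma<\alpha_0$, where $\prod_{\xi\leq\gamma}[b(\xi)]^{<h(\xi)}$ may already have size $\geq h'(\gamma)$. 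Otherwise the argument is the straightforward $\gbs$-translation of \cite{KM21} Lemma 3.13, and I do not foresee a genuine obstacle.
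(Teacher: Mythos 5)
Your proposal is correct and follows essentially the same route as the paper: apply early reading of names to get $T'\leq T$ in $(\frcQ{b,h})^*$, read off the decided values $\beta_v$ level by level, and bound $\card{\phi(\gamma)}$ by $\card{\Lev_{\gamma+1}(T')}\leq\card{\prod_{\xi\leq\gamma}[b(\xi)]^{<h(\xi)}}<h'(\gamma)$. Your initial truncation to $(T)_u$ below $\alpha_0$ is a sensible extra precaution for the small coordinates where the cardinality hypothesis may fail (a point the paper's proof passes over silently), but otherwise the argument is the same.
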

\begin{proof}
Using \Cref{early reading}, let $T'\in(\frcQ{b,h})^*$ be such that $T'\leq T$ and  $T'$ reads $\dot f$ early, then we can define a unique $y_v\in b'(\alpha)$ for each $v\in\Lev_{\alpha+1}(T')$ with $(T')_v\fc\ap{\dot f(\alpha)= y_v}$. Note that $|\Lev_{\alpha+1}(T')|\leq \card{\prod_{\xi\leq\alpha}[b(\xi)]^{< h(\xi)}}< h'(\alpha)$. Therefore if $\phi:\alpha\mapsto\st{y_v\mid v\in\Lev_{\alpha+1}(T')}$, then $\phi\in\Loc_\kappa^{b',h'}$ and by construction we see that $T'\fc\ap{\dot f\in^*  \phi}$.
\end{proof}

\begin{lmm}[cf. \cite{KM21} 3.15]\label{localisation property corollary}
Let $b,h,b',h',\tilde b,\tilde h\in\gbs$ be increasing cofinal cardinal functions such that for almost all $\alpha\in\kappa$
\begin{itemize}
\item $\card{\prod_{\xi\leq\alpha}[b(\xi)]^{< h(\xi)}}< h'(\alpha)\leq \tilde b(\alpha)^{<\tilde h(\alpha)}\leq b'(\alpha)$, and 
\item $\tilde h(\alpha)\cdot h'(\alpha)<\tilde b(\alpha)$.
\end{itemize}
If $\dot \psi$ is a $\frcQ{b,h}$-name and $T\in\frcQ{b,h}$ is such that $T\fc\ap{\dot \psi\in\Loc_\kappa^{\tilde b,\tilde h}}$, then there is $g\in\prod \tilde b$ and $T'\leq T$ such that $T'\fc\ap{g\nini\dot \psi}$.
\end{lmm}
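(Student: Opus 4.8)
The statement combines two ingredients that have already been prepared: the localisation property of \Cref{localisation property} and the Tukey connection between antilocalisation and localisation of \Cref{aloc loc intraction}. The plan is to first apply \Cref{localisation property} to the name $\dot\psi$ (viewed as an element of $\prod b'$ via an injective coding of $(\tilde b,\tilde h)$-slaloms) to obtain a ground-model $(b',h')$-slalom $\phi$ and a condition $T'\leq T$ forcing $\dot\psi\in^*\phi$, and then to feed $\phi$ into the Tukey connection $\AL_{\tilde b,\tilde h}\preceq\sr L_{\tilde b,\tilde h'}$... more precisely the explicit construction in the proof of \Cref{aloc loc intraction}... to extract a single function $g\in\prod\tilde b$ that is antilocalised by every $(\tilde b,\tilde h)$-slalom that $\phi$ localises, in particular by $\dot\psi$.

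First I would make the coding precise. For almost all $\alpha$ the hypothesis gives $\card{\prod_{\xi\leq\alpha}[b(\xi)]^{<h(\xi)}}<h'(\alpha)\leq\tilde b(\alpha)^{<\tilde h(\alpha)}\leq b'(\alpha)$, so a $(\tilde b,\tilde h)$-slalom is essentially a function $\alpha\mapsto[\tilde b(\alpha)]^{<\tilde h(\alpha)}$ and hence can be viewed as an element of $\prod b'$ by fixing, for each (large) $\alpha$, an injection $\iota_\alpha:[\tilde b(\alpha)]^{<\tilde h(\alpha)}\inj b'(\alpha)$ (these exist since $\tilde b(\alpha)^{<\tilde h(\alpha)}\leq b'(\alpha)$). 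Passing to the name $\dot f\in\prod b'$ defined from $\dot\psi$ by $\dot f(\alpha)=\iota_\alpha(\dot\psi(\alpha))$ (and arbitrary below the threshold), \Cref{localisation property} — whose hypothesis $\card{\prod_{\xi\leq\alpha}[b(\xi)]^{<h(\xi)}}<h'(\alpha)\leq b'(\alpha)$ is exactly the first clause here — yields $\phi\in\Loc_\kappa^{b',h'}$ and $T'\leq T$ with $T'\fc\ap{\dot f\in^*\phi}$, i.e. $T'\fc\ap{\dot\psi(\alpha)\in\iota_\alpha^{-1}[\phi(\alpha)]\text{ for almost all }\alpha}$.

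Next I would run the argument of \Cref{aloc loc intraction} in the ground model with the roles $h\rightsquigarrow\tilde h$, $h'\rightsquigarrow h'$, $b\rightsquigarrow\tilde b$, $b'\rightsquigarrow b'$; the required inequalities $\tilde h\cdot h'<^*\tilde b$ and $\tilde b^{g}\leq^* b'$ for all $g<\tilde h$ (equivalently $\tilde b^{<\tilde h}\leq^* b'$) are precisely the two bullet points of the hypothesis, together with the middle inequality of the first bullet. That construction takes the slalom $\phi\in\Loc_\kappa^{b',h'}$ to a function $g=\rho_+(\phi)\in\prod\tilde b$ chosen so that $g(\alpha)\in\tilde b(\alpha)\setminus\bigcup_{\xi\in\phi(\alpha)\cap\ran(\iota_\alpha)}\iota_\alpha^{-1}(\xi)$ for almost all $\alpha$ — well-defined because $\card{\bigcup_{\xi\in\phi(\alpha)\cap\ran(\iota_\alpha)}\iota_\alpha^{-1}(\xi)}\leq h'(\alpha)\cdot\tilde h(\alpha)<\tilde b(\alpha)$. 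Since $g\in\b V$, the condition $T'$ then forces: for almost all $\alpha$, $\dot\psi(\alpha)$ is a $(\tilde b,\tilde h)$-set with $\iota_\alpha(\dot\psi(\alpha))\in\phi(\alpha)\cap\ran(\iota_\alpha)$, hence $g(\alpha)\notin\dot\psi(\alpha)$; that is, $T'\fc\ap{g\nini\dot\psi}$, which is the claim (with $T'$ in the role of the promised $T'\leq T$).

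\textbf{Main obstacle.} The genuinely delicate point is the bookkeeping of the ``almost all $\alpha$'' thresholds: the coding injections $\iota_\alpha$, the localisation-property hypothesis, and the two size inequalities feeding \Cref{aloc loc intraction} each hold only for $\alpha$ beyond some bound, and one must check these bounds are jointly consistent and that on the exceptional initial segment $\dot f$, $\phi$ and $g$ can be defined arbitrarily without harm — this is routine but must be stated carefully, exactly as the discrepancy in \Cref{discrepancy} warns. A secondary subtlety is that \Cref{localisation property} gives $\dot f\in^*\phi$ rather than $\dot\psi(\alpha)\in\iota_\alpha^{-1}[\phi(\alpha)]$ everywhere, but since antilocalisation $g\nini\dot\psi$ only asks that $g(\alpha)\notin\dot\psi(\alpha)$ for \emph{almost all} $\alpha$, the ``$\in^*$'' is already the right strength and no fusion refinement beyond what \Cref{localisation property} provides is needed.
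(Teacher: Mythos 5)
Your proposal is correct and takes essentially the same route as the paper: code $\dot\psi$ as a name for an element of $\prod b'$ via the injections $\iota_\alpha$ from the proof of \Cref{aloc loc intraction}, apply \Cref{localisation property} to that name to obtain $\phi\in\Loc_\kappa^{b',h'}$ and $T'\leq T$, and pull $\phi$ back through $\rho_+$ to get the ground-model $g\in\prod\tilde b$. (Only a cosmetic slip: the Tukey connection you invoke is $\AL_{\tilde b,\tilde h}\preceq\sr L_{b',h'}$, as your later role-assignment correctly indicates.)
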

\begin{proof}
Working in the extension, we define the functions $\rho_-,\rho_+$ witnessing that $\AL_{\tilde b,\tilde h}\preceq\sr L_{b',h'}$ and the injections $\iota_\alpha$ for $\alpha\in\kappa$ as in the proof of \Cref{aloc loc intraction}. Note that if we assume that $\iota_\alpha\in\b V$ for each $\alpha$, then due to the constructive definition of $\rho_-$ and $\rho_+$, it follows that $\rho_-\restriction (\Loc_\kappa^{\tilde b,\tilde h})^{\b V}$ and $\rho_+\restriction (\Loc_\kappa^{b',h'})^{\b V}$ are in the ground model. 

Let $\dot f$ be a name for $\rho_-(\dot \psi)$ and by \Cref{localisation property}, let $\phi\in\Loc_\kappa^{b',h'}$ and $T'\leq T$ be such that $T'\fc\ap{\dot f\in^*\phi}$. Let $g=\phi_+(\phi)\in\prod\tilde b$, then \Cref{aloc loc intraction} shows that $T'\fc\ap{ g\nini \dot \psi}$.
\end{proof}

If we wish to increase $\dinf{b,h}$, we will have to add many new $\kappa$-reals. We will do so using a ${\leq}\kappa$-support product of forcing notions of the form $\frcQ{b,h}$. We could use a ${\leq}\kappa$-support iteration as well, but this has the drawback that we cannot increase $2^\kappa$ past $\kappa^{++}$. We will show that the product behaves nicely, especially that the property of \Cref{localisation property corollary} is preserved under products, and that adding many $\frcQ{b,h}$-generic elements will indeed increase the size of $\dinf{b,h}$.

For the remainder of this section, we will fix some set of ordinals $\c A$ and $b_\zeta,h_\zeta\in\gbs$ for each $\zeta\in\c A$ and we fix the abbreviations $\bb Q_\zeta=\frcQ{b_\zeta,h_\zeta}$  and $\bb Q_\zeta^*=(\frcQ{b_\zeta,h_\zeta})^*$. We define the ${\leq}\kappa$-support products $\bar{\bb Q}=\prod^{\leq\kappa}_{\zeta\in\c A}\bb Q_\zeta$ and $\bar{\bb Q}^*=\prod^{\leq\kappa}_{\zeta\in\c A}\bb Q_\zeta^*$. Since each $\bb Q^*_\zeta$ densely embeds in $\bb Q_\zeta$, it is easy to see that $\bar{\bb Q}^*$ densely embeds in $\bar{\bb Q}$. We will often implicitly assume without mention that all conditions are in $\bar{\bb Q}^*$.

\begin{lmm}
If $\kappa^+=2^\kappa$, then $\bar{\bb Q}$ has the ${<}\kappa^{++}$-cc.
\end{lmm}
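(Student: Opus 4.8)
The plan is to show that $\bar{\bb Q}$ has the ${<}\kappa^{++}$-c.c.\ by a $\Delta$-system argument on the supports, combined with the fact that each individual condition carries only $\kappa$ bits of information when $2^\kappa=\kappa^+$. First I would recall that $\bar{\bb Q}^*$ densely embeds in $\bar{\bb Q}$, so it suffices to consider an antichain $\c A'\subset\bar{\bb Q}^*$ with $|\c A'|=\kappa^{++}$ and derive a contradiction. For each $p\in\c A'$ the support $\supp(p)$ is an element of $[\c A]^{\leq\kappa}$. Assuming $2^\kappa=\kappa^+$, the number of subsets of $\c A$ of size $\leq\kappa$ is at most $|\c A|^\kappa$; if $|\c A|\leq\kappa^+$ this is $\kappa^{+}$, but in general we only need that any $\kappa^{++}$-sized family of sets of size $\leq\kappa$ admits a $\Delta$-system of size $\kappa^{++}$. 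This is the generalised $\Delta$-system lemma, which holds here because $\kappa^{++}$ is regular and $(\kappa^+)^{<\kappa^{++}}\cdot\ldots$ — more precisely, since $\kappa$ is inaccessible and $2^\kappa=\kappa^+$, we have $\lambda^{\leq\kappa}<\kappa^{++}$ for every $\lambda<\kappa^{++}$, which is exactly the hypothesis needed for the $\Delta$-system lemma applied to families of $\leq\kappa$-sized sets indexed by $\kappa^{++}$.

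So the first key step: apply the $\Delta$-system lemma to $\st{\supp(p)\mid p\in\c A'}$ to obtain $\c A''\subset\c A'$ with $|\c A''|=\kappa^{++}$ and a root $R\in[\c A]^{\leq\kappa}$ such that $\supp(p)\cap\supp(q)=R$ for all distinct $p,q\in\c A''$. The second step: restrict attention to the coordinates in $R$. Each condition $p\in\bar{\bb Q}^*$, when restricted to a single coordinate $\zeta$, is a tree $T\subset\Loc_{<\kappa}^{b_\zeta,h_\zeta}$, and there are at most $2^\kappa=\kappa^+$ such trees (as noted in the proof of \Cref{dinf closure}, $|\frcQ{b,h}|=2^\kappa$). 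Hence $\bar{\bb Q}^*\restriction R$ has cardinality at most $(\kappa^+)^{|R|}\leq(\kappa^+)^\kappa=2^\kappa=\kappa^+$. Since $|\c A''|=\kappa^{++}>\kappa^+$, by the pigeonhole principle there exist distinct $p,q\in\c A''$ with $p\restriction R=q\restriction R$.

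The final step is to check that such $p$ and $q$ are compatible, contradicting that $\c A'$ is an antichain. Define $r$ coordinatewise: $r(\zeta)=p(\zeta)$ for $\zeta\in\supp(p)$, $r(\zeta)=q(\zeta)$ for $\zeta\in\supp(q)\setminus R$, and $r(\zeta)=\bar{\ft}(\zeta)$ otherwise. This is well-defined because on $R=\supp(p)\cap\supp(q)$ we have $p\restriction R=q\restriction R$, and on $\supp(p)\setminus R$ only $p$ is nontrivial while on $\supp(q)\setminus R$ only $q$ is nontrivial (by the $\Delta$-system property, these sets are disjoint from each other and from $R$). Clearly $\supp(r)=\supp(p)\cup\supp(q)$ has size $\leq\kappa$, so $r\in\bar{\bb Q}$, and $r(\zeta)\leq^\zeta p(\zeta)$ and $r(\zeta)\leq^\zeta q(\zeta)$ for all $\zeta$, hence $r\leq p$ and $r\leq q$. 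This contradicts incompatibility of $p$ and $q$, so no antichain of size $\kappa^{++}$ exists.

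The main obstacle is making sure the $\Delta$-system lemma applies at the correct cardinal: one needs that for every $\lambda<\kappa^{++}$, $\lambda^{\leq\kappa}<\kappa^{++}$, which under $2^\kappa=\kappa^+$ follows since $\lambda^{\leq\kappa}\leq(\kappa^+)^\kappa=2^\kappa=\kappa^+<\kappa^{++}$ (using $\kappa$ regular for $\lambda^{\leq\kappa}=\lambda^\kappa$ when $\kappa^+\leq\lambda$, and $\lambda^\kappa\leq\kappa^+$ throughout). The remaining verifications — that the amalgamated condition $r$ genuinely lies in $\bar{\bb Q}$ and sits below both $p$ and $q$ — are routine, since compatibility in a product is checked coordinatewise and the nontrivial coordinates of $p$ and $q$ outside the root are disjoint.
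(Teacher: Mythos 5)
Your proof is correct and is exactly the "standard application of the $\Delta$-system lemma" that the paper's one-line proof invokes: thin the antichain to a $\Delta$-system of supports, use $|\bb Q_\zeta|\leq 2^\kappa=\kappa^+$ together with $(\kappa^+)^\kappa=\kappa^+$ to pigeonhole two conditions agreeing on the root, and amalgamate coordinatewise. The verification of the $\Delta$-system hypothesis $\lambda^{\kappa}<\kappa^{++}$ for $\lambda<\kappa^{++}$ is also right, so there is nothing to add.
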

\begin{proof}
Since each $\bb Q_\zeta$ has the ${<}\kappa^{++}$-cc, we can use a standard application of the  $\Delta$-system lemma to prove that $\bar{\bb Q}$ has the ${<}\kappa^{++}$-cc as well.
\end{proof}
\begin{lmm}\label{product closure}
$\bar{\bb Q}$ is ${<}\kappa$-closed.
\end{lmm}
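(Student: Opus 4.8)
The plan is to reduce ${<}\kappa$-closure of the product to the ${<}\kappa$-closure of each factor $\bb Q_\zeta$, which is the content of \Cref{dinf closure}. So fix a cardinal $\lambda<\kappa$ and a descending sequence $\sab{p_\xi\mid \xi<\lambda}$ in $\bar{\bb Q}$, and define $p\in\c C$ componentwise by $p(\zeta)=\Cap_{\xi<\lambda}p_\xi(\zeta)$ for each $\zeta\in\c A$. For each fixed $\zeta$, the sequence $\sab{p_\xi(\zeta)\mid \xi<\lambda}$ is a descending chain in $\bb Q_\zeta$, so by the proof of \Cref{dinf closure} we obtain $p(\zeta)\in\bb Q_\zeta$ together with $p(\zeta)\leq^\zeta p_\xi(\zeta)$ for every $\xi<\lambda$.

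The remaining point is to verify $p\in\bar{\bb Q}$, that is, $\card{\supp(p)}\leq\kappa$. If $\zeta\notin\Cup_{\xi<\lambda}\supp(p_\xi)$, then $p_\xi(\zeta)=\ft_\zeta$ for all $\xi<\lambda$, hence $p(\zeta)=\ft_\zeta$ and $\zeta\notin\supp(p)$; thus $\supp(p)\subset\Cup_{\xi<\lambda}\supp(p_\xi)$. Since each $\supp(p_\xi)$ has cardinality at most $\kappa$ and $\lambda<\kappa$, this union has cardinality at most $\lambda\cdot\kappa=\kappa$, so $p\in\bar{\bb Q}$. Finally, $p\leq p_\xi$ for all $\xi<\lambda$ follows immediately from the componentwise inequalities above and the definition of the product ordering on $\bar{\bb Q}$.

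There is essentially no obstacle here: all the combinatorial work is already carried out in \Cref{dinf closure}, and the only care needed is the standard bookkeeping bound on the support of the lower bound $p$ for a product with ${\leq}\kappa$-support.
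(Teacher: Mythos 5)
Your proof is correct and follows exactly the paper's argument: take the componentwise intersection, invoke \Cref{dinf closure} for each factor, and bound the support of the resulting condition by $\kappa$. Nothing further is needed.
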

\begin{proof}
If $\ab{p_\alpha\mid \alpha<\gamma}$ is a sequence of conditions in $\bar{\bb Q}$ such that $p_{\alpha'}\leq_{\bar{\bb Q}} p_\alpha$ for all $\alpha\leq \alpha'$, then we define $\La_{\alpha<\gamma}p_\alpha:{\c A}\to \Cup_{\zeta\in {\c A}}\bb Q_\zeta$ to be the function $\zeta\mapsto\Cap_{\alpha\in\gamma}p_\alpha(\zeta)$, which is a condition in $\bb Q_\zeta$ by \Cref{dinf closure}. If we assume that $\gamma<\kappa$, then it is easy to see that $|\supp(\La_{\alpha<\gamma} p_\alpha)|\leq \kappa$ as well, so $\bar{\bb Q}$ is ${<}\kappa$-closed.
\end{proof}
We will be using the notation $\La_{\alpha<\gamma}p_\alpha$ from the above proof in the rest of this section as well.

\begin{lmm}[cf. \cite{KM21} 5.5]\label{product fusion}
$\bar{\bb Q}^*$ is closed under generalised fusion, that is, for any generalised fusion sequence $\ab{(p_\alpha,Z_\alpha)\mid \alpha\in\kappa}$ there exists $p\leq_{\alpha,Z_\alpha}p_\alpha$ with $\supp(p)=\Cup_{\alpha\in\kappa}\supp(p_\alpha)$.
\end{lmm}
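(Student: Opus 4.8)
The plan is to take $p$ to be the coordinatewise intersection of the given conditions and to check directly that it does the job. Concretely, for $\zeta\in\c A$ I would set $p(\zeta)=\Cap_{\alpha\in\kappa}p_\alpha(\zeta)$, observing that on each coordinate this is the limit of a (suitably reindexed) fusion sequence, so that \Cref{dinf fusion} applies. If $\zeta\notin\Cup_{\alpha\in\kappa}Z_\alpha$, then by clause~4 of the definition of a generalised fusion sequence $\zeta\notin\supp(p_\alpha)$ for every $\alpha$, hence $p_\alpha(\zeta)=\ft_\zeta$ for all $\alpha$ and $p(\zeta)=\ft_\zeta\in\bb Q_\zeta^*$. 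Otherwise let $\alpha_\zeta$ be least with $\zeta\in Z_{\alpha_\zeta}$; since $Z_\alpha\subset Z_\beta$ for $\alpha\leq\beta$ we get $\zeta\in Z_\alpha$ for all $\alpha\geq\alpha_\zeta$, and therefore $p_\beta(\zeta)\leq^\zeta_\alpha p_\alpha(\zeta)$ whenever $\beta\geq\alpha\geq\alpha_\zeta$.

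I would then form the reindexed sequence $T_\alpha:=p_{\max(\alpha,\alpha_\zeta)}(\zeta)$ for $\alpha\in\kappa$. This is a genuine fusion sequence in $\bb Q_\zeta^*$ — for stages $\alpha<\alpha_\zeta$ one uses that $\leq^\zeta_{\alpha_\zeta}$ refines $\leq^\zeta_\alpha$ — and since $p_{\alpha_\zeta}(\zeta)\leq^\zeta p_\alpha(\zeta)$ for $\alpha<\alpha_\zeta$ one has $\Cap_{\alpha\in\kappa}T_\alpha=\Cap_{\alpha\in\kappa}p_\alpha(\zeta)=p(\zeta)$. By \Cref{dinf fusion}, together with the remark following it that closure under fusion holds inside $\bb Q_\zeta^*$, this gives $p(\zeta)\in\bb Q_\zeta^*$ with $p(\zeta)\leq^\zeta_\alpha T_\alpha$ for every $\alpha$; in particular $p(\zeta)\leq^\zeta_\alpha p_\alpha(\zeta)$ for all $\alpha\geq\alpha_\zeta$, and $p(\zeta)\leq^\zeta p_\alpha(\zeta)$ for $\alpha<\alpha_\zeta$ (because $p(\zeta)\leq^\zeta p_{\alpha_\zeta}(\zeta)\leq^\zeta p_\alpha(\zeta)$).

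Next I would verify that $p=\ab{p(\zeta)\mid\zeta\in\c A}$ is the required lower bound. For the support: if $\zeta\in\supp(p_\gamma)$ for some $\gamma$, then $p(\zeta)\subseteq p_\gamma(\zeta)\subsetneq\ft_\zeta$, so $p(\zeta)\neq\ft_\zeta$; combined with the first case this yields $\supp(p)=\Cup_{\alpha\in\kappa}\supp(p_\alpha)=\Cup_{\alpha\in\kappa}Z_\alpha$ (using clause~4 once more), which is a $\kappa$-indexed union of sets in $[\c A]^{<\kappa}$ and hence of size $\leq\kappa$, so $p\in\bar{\bb Q}$, and in fact $p\in\bar{\bb Q}^*$ since every coordinate lies in $\bb Q_\zeta^*$. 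Finally $p\leq_{\bar{\bb Q}}p_\alpha$ holds coordinatewise by the previous paragraph (with both sides equal to $\ft_\zeta$ when $\zeta\notin\Cup_\beta Z_\beta$), and for $\zeta\in Z_\alpha$ we have $\alpha\geq\alpha_\zeta$ and thus $p(\zeta)\leq^\zeta_\alpha p_\alpha(\zeta)$; together this is exactly $p\leq_{\alpha,Z_\alpha}p_\alpha$, for every $\alpha\in\kappa$.

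I do not expect a real obstacle here: the whole argument is bookkeeping with the doubly-indexed fusion orderings $\leq^\zeta_\alpha$, where $\alpha$ ranges over $\kappa$ and the coordinate $\zeta$ over $\c A$. The one point needing care is that, after fixing a coordinate $\zeta$, the given sequence behaves like a fusion sequence only from stage $\alpha_\zeta$ onward; this is precisely what the reindexing $T_\alpha=p_{\max(\alpha,\alpha_\zeta)}(\zeta)$ handles before \Cref{dinf fusion} is invoked, and the rest follows directly from the definition of a generalised fusion sequence.
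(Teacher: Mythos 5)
Your proposal is correct and follows essentially the same route as the paper: fix for each coordinate $\zeta$ the stage $\alpha_\zeta$ at which it enters the $Z_\alpha$'s, reindex to obtain a genuine fusion sequence in $\bb Q_\zeta^*$ (the paper uses $p_{\alpha_\zeta+\alpha}(\zeta)$ where you use $p_{\max(\alpha,\alpha_\zeta)}(\zeta)$, a cosmetic difference), and apply the single-coordinate fusion lemma to get $p(\zeta)\in\bb Q_\zeta^*$. Your additional verifications of the support identity and of $p\leq_{\alpha,Z_\alpha}p_\alpha$ are details the paper leaves implicit, and they check out.
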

\begin{proof}
Let $S=\Cup_{\alpha\in\kappa}\supp(p_\alpha)$, then $\Cup_{\alpha\in\kappa}Z_\alpha=S$, so for each $\zeta\in S$ we can fix $\alpha_\zeta\in\kappa$ such that $\zeta\in Z_{\alpha_\zeta}$. Then also $\zeta\in Z_\alpha$ for any $\alpha\geq \alpha_\zeta$, since $Z_\alpha\supset Z_{\alpha_\zeta}$. If $\alpha_\zeta\leq \alpha\leq \beta<\kappa$, then $p_\beta\leq_{\alpha,Z_\alpha}p_\alpha$, and thus by $\zeta\in Z_\alpha$ we see that $p_\beta(\zeta)\leq_\alpha p_\alpha(\zeta)$. Therefore $\ab{p_{\alpha_\zeta+\alpha}(\zeta)\mid \alpha\in\kappa}$ is a fusion sequence in $\bb Q_\zeta^*$ and we can define $p(\zeta)=\Cap_{\alpha\in\kappa}p_{\alpha_\zeta+\alpha}(\zeta)$, then $p(\zeta)\in\bb Q_\zeta^*$ by \Cref{dinf fusion}.
\end{proof}

For $p\in\bar{\bb Q}$ and $\alpha\in\kappa$, we define the set of \emph{possibilities} $\poss{p}{\alpha}$ to be the set of functions $\eta$ with domain $\supp(p)$ such that $\eta(\zeta)\in\Lev_\alpha(p(\zeta))$ for all $\zeta\in\supp(p)$. We similarly define $\possq p{\alpha}=\poss{p}{\alpha+1}$. Remember that $( p(\zeta))_{u}$ is the subtree of $p(\zeta)$ generated by $u$. If $\eta\in\poss p{\alpha}$, we define $\eta\la p$ to be the condition with $(\eta\la p)(\zeta)=(p(\zeta))_{\eta(\zeta)}$ for $\zeta\in\supp(p)$ and $(\eta\la p)(\zeta)=\ft_\zeta$ otherwise. We sometimes abuse this notation also to define $\eta\la q$ for $q\leq p$ with larger support, where we let $(\eta\la q)(\zeta)=q(\zeta)$ for all $\zeta\in\supp(q)\setminus\supp(p)$.

For $p\in\bar{\bb Q}^*$, we define $\Split(p)=\Cup_{\zeta\in\supp(p)}\st{s_\alpha(p(\zeta))\mid \alpha\in\kappa}$ and let $\ab{\bar s_\alpha(p)\mid \alpha\in\kappa}$ be the strictly increasing enumeration of $\Split(p)$. Let $\c Z^p(\alpha)=\st{\zeta\in\supp(p)\mid \exists\xi(s_\xi(p(\zeta))=\alpha)}$. 

We call $p\in\bar{\bb Q}^*$ \emph{modest} if for any $\alpha\in\kappa$ we have $|\poss p\alpha|<\kappa$ and $|\c Z^p(\alpha)|\leq \alpha$, and moreover $|\c Z^p(\bar s_\alpha(p))|=1$ in case $\alpha$ is successor.

\begin{lmm}
The set of modest conditions is dense in $\bar{\bb Q}^*$ (hence in $\bar{\bb Q}$ as well).
\end{lmm}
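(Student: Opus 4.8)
The plan is to show that from any $p\in\bar{\bb Q}^*$ we can descend to a modest condition $q\leq p$ by a generalised fusion argument, using \Cref{product fusion} to collect the limit. The construction builds a generalised fusion sequence $\ab{(p_\alpha,Z_\alpha)\mid\alpha\in\kappa}$ starting from $p_0=p$, where at each successor stage we shrink the conditions just enough to control the number of possibilities and the number of trees that split at any given level, without removing the splitting nodes we have already fixed.

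First I would set up the bookkeeping. Fix an enumeration $\ab{\zeta_\alpha\mid \alpha<\ot(\supp(p))}$ of $\supp(p)$ (of length $\leq\kappa$ by the support bound), and let $Z_\alpha$ be an increasing sequence of subsets of $\supp(p)$ with $|Z_\alpha|\leq|\alpha|$, $Z_\delta=\Cup_{\xi<\delta}Z_\xi$ at limits, and $\Cup_\alpha Z_\alpha=\supp(p)$; this takes care of requirement (2)--(4) of a generalised fusion sequence and automatically gives $|\c Z^q(\alpha)|\leq\alpha$ for the fusion limit, since only coordinates in $Z_\alpha$ are allowed to have split below the $\alpha$-th splitting level. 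At stage $\alpha+1$, given $p_\alpha$, I would first thin each coordinate $p_\alpha(\zeta)$ with $\zeta\in Z_{\alpha+1}$ so that its $\alpha$-th splitting level is spread out: since conditions in $\bar{\bb Q}^*$ have $\Split_\alpha(p_\alpha(\zeta))=\Lev_{s_\alpha(p_\alpha(\zeta))}(p_\alpha(\zeta))$, the coordinate trees are ``uniform'' at splitting levels, so $|\poss{p_{\alpha+1}}{\beta}|$ for $\beta\leq$ the $(\alpha{+}1)$-th splitting height is bounded by a product of $|Z_{\alpha+1}|$ many cardinals each $<\kappa$, hence $<\kappa$ by inaccessibility of $\kappa$. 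To arrange $|\c Z^q(\bar s_\alpha(p))|=1$ at successor $\alpha$, I would interleave the splitting heights: when thinning, push the $\alpha$-th splitting level of one designated coordinate $\zeta_\alpha$ strictly below those of all other coordinates in $Z_{\alpha+1}$, which is possible because each coordinate satisfies clause (i) of \Cref{dinf forcing definition} (above every node there is a splitting node) and because we may always pass to $(\bb Q_\zeta)^*\leq$ versions. At limit $\gamma$ set $p_\gamma=\La_{\alpha<\gamma}p_\alpha$, which lies in $\bar{\bb Q}^*$ by the remark following \Cref{product fusion} (and \Cref{product closure}).

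Then I would apply \Cref{product fusion} to get $q\leq_{\alpha,Z_\alpha}p_\alpha$ with $\supp(q)=\Cup_\alpha\supp(p_\alpha)=\supp(p)$, and verify modesty: for any $\beta\in\kappa$, $\beta$ lies below the $(\alpha{+}1)$-th interleaved splitting height for some $\alpha$, and since $q(\zeta)\leq_\alpha p_\alpha(\zeta)$ does not add splitting below level $\alpha$ for $\zeta\in Z_\alpha$ while coordinates outside $Z_\alpha$ contribute no splitting below $\beta$ at all, we get $|\poss{q}{\beta}|\leq|\poss{p_{\alpha+1}}{\beta}|<\kappa$; the bound $|\c Z^q(\beta)|\leq\beta$ comes from the $|Z_\alpha|\leq|\alpha|$ bookkeeping, and $|\c Z^q(\bar s_\alpha(q))|=1$ from the interleaving. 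The main obstacle is the simultaneous control of $|\poss{q}{\beta}|<\kappa$ \emph{and} the interleaving condition $|\c Z^q(\bar s_\alpha(q))|=1$: these two requirements interact, because spreading out the $\alpha$-th splitting levels of different coordinates to be pairwise distinct heights means later possibilities accumulate contributions from more and more coordinates, so one must check carefully that at every fixed $\beta<\kappa$ only $<\kappa$ many coordinates have \emph{any} splitting below $\beta$ (which follows from $|Z_\beta|<\kappa$ plus the fact that coordinates enter the support in order), keeping the product defining $|\poss q\beta|$ a product of $<\kappa$ many terms each $<\kappa$, hence $<\kappa$ by inaccessibility.
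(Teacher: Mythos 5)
Your proposal is workable but takes a noticeably heavier route than the paper. The paper's proof needs no fusion at all: enumerate $\supp(p)$ as $\ab{\zeta_\alpha\mid\alpha\in\kappa}$, pick an arbitrary $v_\alpha\in\Lev_{\alpha+1}(p(\zeta_\alpha))$, and pass in one step to $q$ with $q(\zeta_\alpha)=(p(\zeta_\alpha))_{v_\alpha}$. This single restriction already kills all splitting below level $\alpha+1$ in the $\alpha$-th coordinate, so for every height $\beta$ only the coordinates $\zeta_\alpha$ with $\alpha<\beta$ can contribute more than one node to $\Lev_\beta$, which immediately gives $|\poss q\beta|<\kappa$ and $|\c Z^q(\beta)|\leq\beta$ (and these bounds are inherited by any further strengthening with the same support). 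The remaining clause, $|\c Z^r(\bar s_\alpha(r))|=1$ at successor $\alpha$, is then obtained by a single global collapse: on each successor splitting height one keeps the splitting only in the $\min$ of $\c Z^q(\bar s_\alpha(q))$ and collapses every other coordinate there. Your generalised fusion achieves the same thing stage by stage, paying for it with all the bookkeeping of \Cref{product fusion}; what it buys you is nothing extra here, since no dense sets need to be met along the way — fusion is the right tool for \Cref{product early reading}, but for pure modesty the one-shot restriction suffices.

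Two points in your write-up deserve more care. First, you repeatedly slide between the \emph{index} of a splitting level and its \emph{height}: $\c Z^q(\alpha)$ is defined via heights ($s_\xi(q(\zeta))=\alpha$), so the bound $|\c Z^q(\alpha)|\leq\alpha$ needs the first splitting height of the coordinate entering at stage $\alpha$ to exceed $\alpha$, not merely that only $|\alpha|$ many coordinates have been "activated". Your interleaving does arrange this, but the justification as written conflates the two. Second, the condition $|\c Z^q(\bar s_\alpha(q))|=1$ must hold for \emph{every} successor index $\alpha$ in the enumeration of the full set $\Split(q)=\Cup_\zeta\st{s_\xi(q(\zeta))\mid\xi\in\kappa}$; designating one coordinate per fusion stage separates only one splitting level per stage, and you would need explicit bookkeeping guaranteeing that every pair (coordinate, splitting index) eventually gets its height separated from all others at successor positions. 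The paper's terminal collapse handles this uniformly; if you keep the fusion, you should either append that same collapse at the end or spell out the bookkeeping.
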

\begin{proof}
Let $p\in\bar{\bb Q}^*$. We will assume for convenience (and without loss of generality) that $|\supp(p)|=\kappa$. Enumerate $\supp(p)$ as  $\ab{\zeta_\alpha\mid \alpha\in\kappa}$ and let $v_\alpha\in\Lev_{\alpha+1}(p(\zeta_\alpha))$ be arbitrary. We define $q$ as 
\begin{align*}
q(\zeta_\alpha)&=( p(\zeta_\alpha))_{v_\alpha}&&\text{ for all }\alpha\in\kappa&&&\\
q(\zeta)&=\ft_\zeta&&\text{ if }\zeta\notin\supp(p)&&&
\end{align*}
Then $q\leq p$ and $|\poss q\alpha|<\kappa$ and $|\c Z^q(\alpha)|\leq \alpha$ for all $\alpha\in\kappa$. Note that if $r\leq q$ is such that $\supp(r)=\supp(q)$, then $|\poss r\alpha|< \kappa$ and $|\c Z^r(\alpha)|\leq \alpha$ still hold for all $\alpha\in\kappa$.

We will define $r\leq q$ such that $r$ is modest. Note that $C=\st{\bar s_\alpha(q)\mid \alpha\in \kappa\text{ is limit}}$ contains a club set. Let $A_\zeta=\st{\bar s_\alpha(q)\mid \alpha\in \kappa\text{ is successor}\la \zeta\neq\min(\c Z^q(\bar s_\alpha(q)))}$. For any $\zeta\in\supp(q)$, we define $r(\zeta)$ to be a collapse of $q(\zeta)$ on $A_\zeta$, and for any $\zeta\notin\supp(q)$ we let $r(\zeta)=\ft_\zeta$. It is clear from this construction that $\Split(r)=\Split(q)$, and that $\zeta\in \c Z^r(\bar s_\alpha(r))$ implies that $\alpha$ is limit or that $\zeta=\min(\c Z^q(\bar s_\alpha(q)))$. In other words, $r$ is modest.

Finally $r(\zeta)\in \bb Q_{\zeta}^*$ is implied by $p(\zeta)\in\bb Q_{\zeta}^*$, because for each $\alpha\in\kappa$ there is $\beta\geq \alpha$ such that $s_\alpha(r(\zeta))=s_{\beta}(p(\zeta))$, and for any $\alpha\in\Split(r)$ and $\zeta\in \c Z^r(\alpha)$ we have $\suc(u,r(\zeta))=\suc(u,p(\zeta))$ for all $u\in\Lev_\alpha(r)$. Notably, for the norm we see that
\begin{align*}
\norm{\suc(u,r(\zeta))}_{b,\dom(u)}=\norm{\suc(u,p(\zeta))}_{b,\dom(u)}\geq\beta\geq\alpha.&\qedhere
\end{align*}
\end{proof}
The reason we are interested in modest conditions, is that modest conditions behave similarly to conditions of the single forcing notion $\frcQ{b,h}$. By using modest conditions, we get a bound below $\kappa$ to the number of possibilities up to a certain height $\alpha$, which will be crucial in preservation of the Sacks property (\Cref{product localisation property}). 
We will first use modest conditions to generalise \Cref{lmm3.4} to products.

In order to state the lemma, we define one more ordering on $\bar{\bb Q}^*$ as follows: $q\leq_\alpha^* p$ if 
\begin{itemize}
\item $q\leq p$ and 
\item $\Lev_{\bar s_\alpha(p)}(q(\zeta))=\Lev_{\bar s_\alpha(p)}(p(\zeta))$ for all $\zeta\in\supp(p)$ and 
\item $s_0(q(\zeta))>\bar s_\alpha(p)$ for any $\zeta\in\supp(q)\setminus\supp(p)$. 
\end{itemize}

\begin{lmm}[cf. \cite{KM21} 5.4]\label{product lmm3.4}
If $\alpha\in\kappa$, $p\in\bar{\bb Q}^*$ is modest and $\c D\subset\bar{\bb Q}$ is open dense, then there exists $q\in\bar{\bb Q}^*$ with $q\leq_\alpha^* p$ such that for any $\eta\in\possq q{\bar s_\alpha(q)}$ we have $\eta\la q\in\c D$.
\end{lmm}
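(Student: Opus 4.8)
The plan is to mimic the single-forcing argument of \Cref{lmm3.4}, but carefully tracking supports so that the resulting condition lies in $\bar{\bb Q}^*$ and we only extend finitely-often-per-level. Fix $\alpha\in\kappa$ and set $\gamma=\bar s_\alpha(p)$. First I would observe that, since $p$ is modest, the set of possibilities $\possq p\gamma$ has size ${<}\kappa$: this is exactly where modesty is used, as $|\c Z^p(\beta)|\leq\beta$ for all $\beta\leq\gamma$ bounds the branching below height $\gamma+1$ and hence $|\possq p{\gamma}|<\kappa$ by inaccessibility of $\kappa$. Enumerate $\possq p\gamma$ as $\ab{\eta_\xi\mid \xi\in\mu}$ with $\mu<\kappa$.

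Next I would recursively shrink $p$ along this enumeration. Start with $p^0=p$, and given $p^\xi$ with $p^\xi\leq_\alpha^* p$ and $\Lev_{\gamma}(p^\xi(\zeta))=\Lev_{\gamma}(p(\zeta))$ for all $\zeta\in\supp(p)$, consider the condition $\eta_\xi\la p^\xi$. By density of $\c D$, pick $r_\xi\leq \eta_\xi\la p^\xi$ with $r_\xi\in\c D$, and by passing to $\bar{\bb Q}^*$ (dense) we may take $r_\xi\in\bar{\bb Q}^*$; moreover by extending once more we can assume $s_0(r_\xi(\zeta))>\gamma$ for every $\zeta\in\supp(r_\xi)\setminus\supp(p)$, i.e. $r_\xi\leq_\alpha^* \eta_\xi\la p^\xi$. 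Then define $p^{\xi+1}$ by gluing $r_\xi$ back above the node $\eta_\xi$: for $\zeta\in\supp(p)$ let $p^{\xi+1}(\zeta)$ be the tree obtained from $p^\xi(\zeta)$ by replacing the subtree $(p^\xi(\zeta))_{\eta_\xi(\zeta)}$ with $r_\xi(\zeta)$ (leaving all other parts, including all of $\Lev_{\leq\gamma}$, unchanged), and for $\zeta\in\supp(r_\xi)\setminus\supp(p)$ let $p^{\xi+1}(\zeta)=r_\xi(\zeta)$, and $\ft_\zeta$ elsewhere. Since the $\eta_\xi(\zeta)$ for fixed $\zeta$ are pairwise incomparable nodes at level $\gamma$ (they come from distinct possibilities), these replacements for different $\xi$ affect disjoint parts of each $p^\xi(\zeta)$, so $p^{\xi+1}\leq_\alpha^* p$ is again a valid condition of $\bar{\bb Q}^*$. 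At limit stages $\delta\leq\mu$ take $p^\delta=\La_{\xi<\delta}p^\xi$, which lies in $\bar{\bb Q}^*$ by \Cref{product closure} (and the remark after \Cref{dinf fusion} that the intersection of a descending chain in $\bb Q_\zeta^*$ stays in $\bb Q_\zeta^*$, since below $\gamma$ nothing changes and above $\gamma$ we only intersect); note $\mu<\kappa$, so this limit is taken. Set $q=p^\mu$.

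Finally I would verify $q$ has the required properties. By construction $q\leq_\alpha^* p$, so $\Lev_\gamma(q(\zeta))=\Lev_\gamma(p(\zeta))$, and consequently $\Split_\alpha(q)$ and the heights $\bar s_\beta(q)$ for $\beta\leq\alpha$ are unchanged; in particular $\bar s_\alpha(q)=\gamma$ and $\possq q{\bar s_\alpha(q)}=\possq q\gamma=\possq p\gamma=\ab{\eta_\xi\mid \xi\in\mu}$. For each such $\eta_\xi$, we have $\eta_\xi\la q=\eta_\xi\la p^{\xi+1}$ (later stages only shrink subtrees above other possibilities and extend the support outside $\supp(p)$ in ways compatible with $\eta_\xi\la q$, using the convention that $(\eta\la q)(\zeta)=q(\zeta)$ for $\zeta\in\supp(q)\setminus\supp(p)$), and $\eta_\xi\la p^{\xi+1}\leq r_\xi\in\c D$, so $\eta_\xi\la q\in\c D$ by openness of $\c D$. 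This completes the argument.

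The main obstacle I anticipate is the bookkeeping of supports: one must make sure the newly introduced coordinates in $\supp(r_\xi)\setminus\supp(p)$ across the $\mu$-many steps are handled so that $q$ still has support of size ${\leq}\kappa$ (true since $\mu\cdot\kappa=\kappa$) and that $\eta\la q$ for $\eta\in\possq q{\bar s_\alpha(q)}$ genuinely refines $r_\xi$ on the common coordinates while leaving the abusive extension $(\eta\la q)(\zeta)=q(\zeta)$ on the new coordinates — this is where the condition $s_0(r_\xi(\zeta))>\gamma$ for new $\zeta$ (the third clause of $\leq_\alpha^*$) is essential, as it guarantees these new coordinates do not split before height $\gamma$ and hence do not interfere with the level-$\gamma$ possibilities. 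Everything else is a routine adaptation of \Cref{lmm3.4}.
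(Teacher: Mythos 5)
Your proof is correct and follows essentially the same route as the paper's: enumerate the $<\kappa$ many (by modesty) possibilities at level $\bar s_\alpha(p)$, recursively extend each $\eta_\xi\la p^\xi$ into $\c D$ and glue the result back above $\eta_\xi$, take limits along the recursion using ${<}\kappa$-closure, and conclude via openness of $\c D$. One small inaccuracy worth noting: for a fixed coordinate $\zeta$, distinct possibilities $\eta_\xi\neq\eta_{\xi'}$ need not yield incomparable nodes $\eta_\xi(\zeta)$ and $\eta_{\xi'}(\zeta)$ (they only differ at \emph{some} coordinate), so the successive replacements are not necessarily disjoint — but this is harmless, since each replacement still shrinks the current subtree and your final step correctly needs only $\eta_\xi\la q\leq r_\xi$ together with openness of $\c D$.
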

\begin{proof}
By assumption $p$ is modest, thus if $\ab{\eta_\xi\mid \xi\in\mu}$ enumerates $\possq p{\bar s_\alpha(p)}$, then $\mu<\kappa$. We create a descending sequence of conditions $\ab{p_\xi\mid\xi\in \mu}$. Let $p_0=p$. If $\gamma< \mu$ is limit, let $p_\gamma=\La_{\xi<\gamma}p_\xi$ using  \Cref{product closure}. If $p_\xi$ has been defined, let $p_{\xi+1}'\leq \eta_\xi\la p_\xi$ be such that $p_{\xi+1}'\in \c D$. We define $p_{\xi+1}$ from $p_{\xi+1}'$ in a pointwise manner. If $\zeta\in\supp(p)$, we let 
\begin{align*}
p_{\xi+1}(\zeta)=p_{\xi+1}'(\zeta)\cup\textstyle\Cup\st{(p_\xi(\zeta))_v\mid v\in\Lev_{\bar s_\alpha(p)+1}(p_\xi(\zeta))\text{ and }v\neq\eta_\xi(\zeta)}.
\end{align*}
In plain words, we keep $p_{\xi+1}(\zeta)$ almost equal to $p_\xi(\zeta)$, except that we replace the part of $p_\xi(\zeta)$ that extends $\eta_\xi(\zeta)$ with the tree $p_{\xi+1}'(\zeta)$. On the other hand, if $\zeta\in\supp(p_{\xi+1}')\setminus\supp(p)$, we let $p_{\xi+1}(\zeta)=(p_{\xi+1}'(\zeta))_u$ for some $u\in \Lev_{\bar s_\alpha(p)+1}(p_{\xi+1}'(\zeta))$. Finally if $\zeta\notin\supp(p'_{\xi+1})$ we let $p_{\xi+1}(\zeta)=\ft_\zeta$. 

Note that $p_{\xi+1}'(\zeta)=(p_{\xi+1}'(\zeta))_{\eta_\xi(\zeta)}=(p_{\xi+1}(\zeta))_{\eta_\xi(\zeta)}$ for all $\zeta\in\supp(p)$, and thus $\eta_\xi\la p_{\xi+1}=p_{\xi+1}'$.

Finally define $q=\La_{\xi\in\mu}p_\xi$, then we see that $q\leq_\alpha^* p$ and $\eta\la q\leq \eta\la p_{\xi+1}=p_{\xi+1}'\in\c D$ for each $\eta\in\possq q{\bar s_\alpha(q)}=\possq p{\bar s_\alpha(p)}$.
\end{proof}

We can also generalise the notion of early reading in the most apparent sense.
\begin{dfn}
Let $p\in\bar{\bb Q}$ and $\dot\tau$ be a $\bar{\bb Q}$-name such that $p\fc\ap{\dot\tau:\kappa\to\b V}$, then we say that $p$ reads $\dot\tau$ \emph{early} iff $\eta\la p$ decides $\dot\tau\restriction \alpha$ for every $\alpha\in\kappa$ and $\eta\in\poss p\alpha$.
\end{dfn}

\begin{lmm}[cf. \cite{KM21} 5.6]\label{product early reading}
Let $p\in\bar{\bb Q}$ and $\dot\tau$ a $\bar{\bb Q}$-name such that $p\fc\ap{\dot\tau:\kappa\to\b V}$. Then there exists $q\leq p$ with $q\in\bar{\bb Q}^*$ such that $q$ reads $\dot\tau$ early.
\end{lmm}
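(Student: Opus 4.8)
The plan is to build $q$ by a generalised fusion argument, exactly mirroring the proof of \Cref{early reading} but working in the product $\bar{\bb Q}^*$ and using \Cref{product lmm3.4} in place of \Cref{lmm3.4}. First I would fix, for each $\alpha\in\kappa$, the open dense set $\c D_\alpha=\st{r\in\bar{\bb Q}\mid r\text{ decides }\dot\tau\restriction\alpha}$. We may assume without loss of generality (by passing to a stronger condition using the previous lemma) that $p$ is modest, so that all the possibility sets $\poss{p}{\alpha}$ have size ${<}\kappa$ and the machinery of \Cref{product lmm3.4} applies at every stage.

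Next I would construct a generalised fusion sequence $\ab{(p_\alpha,Z_\alpha)\mid\alpha\in\kappa}$. Start with $p_0\leq p$ modest in $\bar{\bb Q}^*$, and let $\ab{Z_\alpha\mid\alpha\in\kappa}$ be an increasing continuous sequence of subsets of $\supp(p_0)$ with $|Z_\alpha|\leq|\alpha|$ and $\Cup_\alpha Z_\alpha=\Cup_\alpha\supp(p_\alpha)$ (bookkeeping so that every coordinate is eventually captured). Given $p_\alpha$, apply \Cref{product lmm3.4} with the open dense set $\c D_\alpha$ to obtain $p_{\alpha+1}\leq^*_\alpha p_\alpha$ (one checks $\leq^*_\alpha$ refines the relevant fusion ordering $\leq_{Z_\alpha,\alpha}$, after keeping $p_{\alpha+1}$ modest and its new support disjoint from $Z_\alpha$'s requirements) such that for every $\eta\in\possq{p_{\alpha+1}}{\bar s_\alpha(p_{\alpha+1})}$ we have $\eta\la p_{\alpha+1}\in\c D_\alpha$, i.e.\ $\eta\la p_{\alpha+1}$ decides $\dot\tau\restriction\alpha$. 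At limit $\gamma$, take $p_\gamma=\La_{\xi<\gamma}p_\xi$ via \Cref{product closure}. By \Cref{product fusion} the sequence has a lower bound $p_\kappa\in\bar{\bb Q}^*$ with $\supp(p_\kappa)=\Cup_\alpha\supp(p_\alpha)$ and $p_\kappa\leq_{Z_\alpha,\alpha}p_\alpha$ for all $\alpha$; in particular for each coordinate $\zeta$, $p_\kappa(\zeta)$ eventually agrees with $p_\alpha(\zeta)$ below the relevant splitting levels, so $p_\kappa$ inherits the decision property: every $\eta\in\possq{p_\kappa}{\bar s_\alpha(p_\kappa)}$ decides $\dot\tau\restriction\alpha$.

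Finally, just as in \Cref{early reading}, I would pass from $p_\kappa$ to the desired $q$ by collapsing on the complement of a club, so that the levels at which $\dot\tau\restriction\alpha$ gets decided line up with the levels $\poss{q}{\alpha}$. Concretely, $\st{\bar s_\alpha(p_\kappa)\mid\alpha\in\kappa}$ is club, so $C=\st{\alpha\mid\alpha=\bar s_\alpha(p_\kappa)}$ is club; let $q$ be a collapse of $p_\kappa$ on $\kappa\setminus C$ (coordinatewise, using that each $\bb Q^*_\zeta$ admits collapses on complements of clubs, staying in $\bar{\bb Q}^*$). Then for $\eta\in\poss{q}{\alpha}$ with $\alpha\in C$ one argues exactly as before that $\eta\la q$ decides $\dot\tau\restriction\alpha$ (extending $\eta$ to the next possibility level of $p_\kappa$ and using that no splitting occurs strictly between consecutive points of $C$ inside $q$), and for $\alpha\notin C$ one steps down to the next element of $C$. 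Hence $q$ reads $\dot\tau$ early.

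The main obstacle I anticipate is bookkeeping the supports and the interaction between the three orderings $\leq_\alpha$, $\leq^*_\alpha$ and the generalised fusion relation $\leq_{Z_\alpha,\alpha}$: one must verify that the condition produced by \Cref{product lmm3.4} really is $\leq_{Z_\alpha,\alpha}$-below $p_\alpha$ (so that the $(p_\alpha,Z_\alpha)$ genuinely form a generalised fusion sequence satisfying conditions 1--4), and that modesty is preserved at each successor step so the possibility counts stay ${<}\kappa$. Once these bookkeeping points are arranged, the club-collapse argument for early reading is identical to the single-forcing case and carries over verbatim.
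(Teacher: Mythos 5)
Your proposal is correct and follows essentially the same route as the paper: a generalised fusion sequence hitting the dense sets $\c D_\alpha$ via \Cref{product lmm3.4}, followed by a collapse off a club to convert continuous reading into early reading. The ``main obstacle'' you flag is exactly where the paper spends its effort — it maintains an invariant $(\star_\alpha)$ (all splitting of coordinates in $Z_\alpha$ up to level $\alpha$ happens strictly below the first splitting of coordinates outside $Z_\alpha$) so that $\leq^*_{\beta_\alpha}$ implies $\leq_{Z_\alpha,\alpha}$, and the successor step is not a single application of \Cref{product lmm3.4} but a recursion of length $|Z_\alpha|+\alpha+1$ that advances each coordinate of $Z_\alpha$ one splitting level, catches the newly added coordinate up to level $\alpha$, and preserves modesty and $(\star_{\alpha+1})$.
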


\begin{proof}
Let $\c D_\alpha=\st{q\in\bar{\bb Q}\mid q\text{ decides }\dot \tau\restriction\alpha}$, and note that $\c D_\alpha$ is open dense for each $\alpha\in\kappa$. We prove the lemma by constructing $q'\leq p$ such that $\eta\la q'\in\c D_\alpha$ for all $\eta\in\possq {q'}\alpha$. We claim that this is sufficient: define $q$ such that $q(\zeta)$ is a collapse of $q'(\zeta)$ on $\st{\bar s_\alpha(q')\mid \alpha\text{ is successor}}$ for each $\zeta\in\supp(q')$ and $q(\zeta)=\ft_\zeta$ otherwise, then $q$ reads $\dot \tau$ early. 

The condition $q'$ will be the limit of a generalised fusion sequence $\ab{(p_\alpha,Z_\alpha)\mid \alpha\in\kappa}$. Each $p_\alpha$ will be modest and have the following property:
\begin{align*}
(\star_\alpha)\qquad s_\alpha(p_\alpha(\zeta))<s_0(p_\alpha(\zeta')\text{ for all }\zeta\in Z_\alpha\text{ and }\zeta'\in\supp(p_\alpha)\setminus Z_\alpha
\end{align*}
Given $p_\alpha$ satisfying $(\star_\alpha)$, let $\beta_\alpha=\sup\st{s_\alpha(p_\alpha(\zeta))\mid\zeta\in Z_\alpha}$ and suppose $p_{\alpha+1}\leq_{\beta_\alpha}^* p_\alpha$, then it follows from $(\star_\alpha)$ that $p_{\alpha+1}\leq_{\alpha,Z_\alpha} p_\alpha$.

Firstly, we let $p_0\leq p$ be modest such that $p_0\in\c D_{\bar s_0(p_0)}$. This can be easily achieved by letting $p_0^0\leq p$ be modest, finding modest $p_0^{n+1}\leq p_0^n$ such that $p_0^{n+1}\in\c D_{\bar s_0(p_0^n)}$ and letting $p_0=\La_{n\in\omega}p_0^n$. We set $Z_0=\c Z^{p_0}(\bar s_0(p_0))$, then $|Z_0|<\kappa$ by modesty and $p_0$ satisfies $(\star_0)$. We may also assume that $|Z_0|$ is infinite and that $|\supp(p_0)|=\kappa$.

Next, for limit $\gamma$, we have $Z_\gamma=\Cup_{\alpha\in\gamma}Z_\alpha$ and $\hat p_\gamma=\La_{\alpha<\gamma}p_\gamma$. We let $p_\gamma\leq_{\gamma,Z_\gamma}\hat p_\gamma$ be such that it has $(\star_\gamma)$. This is possible, since we may keep $p_\gamma(\zeta)=\hat p_\gamma(\zeta)$ for all $\zeta\in Z_\gamma$ and thus trivially have $p_\gamma\leq_{\gamma,Z_\gamma}\hat p_\gamma$. The construction of the successor step will show that $\beta_\gamma=s_\gamma(p_\gamma(\zeta))=s_\gamma(p_\gamma(\zeta'))$ for any $\zeta,\zeta'\in Z_\gamma$. If $\eta\in\poss {p_\gamma}{\beta_\gamma}$ and $\delta<\beta_\gamma$, then there is $\alpha<\gamma$ such that $\eta\la p_\alpha\in \c D_\delta$, therefore $\eta\la p_\gamma\in\c D_{\beta_\gamma}$.

Finally we construct $p_{\alpha+1}$ from $p_\alpha$. Let $\lambda=|Z_\alpha|$ and enumerate $Z_\alpha$ as $\ab{\zeta_\xi\mid \xi<\lambda}$. We use bookkeeping to fulfil the promise that $\Cup_{\alpha\in\kappa}Z_\alpha=\Cup_{\alpha\in\kappa}\supp(p_\alpha)$, thereby  setting $Z_{\alpha+1}=Z_\alpha\cup\st{\zeta_\lambda}$ for some appropriate $\zeta_{\lambda}\in\supp(p_\alpha)\setminus Z_\alpha$. We construct a descending sequence of conditions $\sab{p_\alpha^\xi\mid \xi\leq \lambda+\alpha+1}$ by recursion over a strictly increasing sequence of ordinals $\sab{\delta_\alpha^\xi\mid \xi\leq \lambda+\alpha+1}$, to obtain the following properties: 
\begin{enumerate}[label=(\roman*)]
\item $p_\alpha^0\leq^*_{\beta_\alpha}p_\alpha$, where $\beta_\alpha=\sup\st{s_\alpha(p_\alpha(\zeta))\mid \zeta\in Z_\alpha}$
\item $p_\alpha^{\xi'}\leq_{\delta_\alpha^{\xi}}^*p_\alpha^{\xi}$ for all $\xi<\xi'\leq\lambda+\alpha+1$
\item $\delta_\alpha^\xi=s_{\alpha+1}(p_\alpha^\xi(\zeta_\xi))$ for all $\xi<\lambda$
\item $\delta_\alpha^\xi<s_{\alpha+1}(p_\alpha^\xi(\zeta_{\xi'}))$ for all $\xi<\xi'<\lambda$
\item $\delta_\alpha^\xi<s_0(p_\alpha^\xi(\zeta))$ for all $\xi<\lambda$ and $\zeta\in \supp(p_\alpha^\xi)\setminus Z_\alpha$
\item $\delta_\alpha^{\lambda+\epsilon}=s_{\epsilon}(p_\alpha^{\lambda+\epsilon}(\zeta_\lambda))$ for all $\epsilon<\alpha$
\item $\delta_\alpha^{\lambda+\epsilon}<s_{\alpha+2}(p_\alpha^{\lambda+\epsilon}(\zeta_\xi))$ for all $\epsilon<\alpha$ and $\xi<\lambda$
\item $\delta_\alpha^{\lambda+\epsilon}<s_0(p_\alpha^{\lambda+\epsilon}(\zeta))$ for all $\epsilon<\alpha$ and $\zeta\in\supp(p_\alpha^{\lambda+\epsilon})\setminus Z_{\alpha+1}$
\item\label{lmm3.4 point} For all $\xi\leq \lambda+\alpha+1$ and any $\eta\in\possq{p_\alpha^\xi}{\delta_\alpha^\xi}$ we have $\eta\la p_\alpha^\xi\in \c D_{\delta_\alpha^\xi}$.
\end{enumerate}

We will set $p_{\alpha+1}=p_\alpha^{\lambda+\alpha+1}$. By construction $p_{\alpha+1}$ satisfies $(\star_{\alpha+1})$ and $p_{\alpha+1}\leq_{\alpha,Z_\alpha}p_\alpha$.

\begin{figure}[t]\centering{\footnotesize
		\begin{tikzpicture}[xscale=1.25,yscale=1]

			\draw[thin, gray, dotted] (0,0) -- (0,13);
			\draw[thin, gray, dotted] (1,0) -- (1,13);
			\draw[thin, gray, dotted] (2,0) -- (2,13);
			\draw[thin, gray, dotted] (3,0) -- (3,13);
			\draw[thin, gray, dotted] (4,0) -- (4,13);
			\draw[thin, gray, dotted] (5,0) -- (5,13);
			\draw[thin, gray, dotted] (6,0) -- (6,13);
			
			\draw[thin, gray] (0,0) -- (0,1.2);
			\draw[thin, gray] (1,0) -- (1,1.2);
			\draw[thin, gray] (2,0) -- (2,1.2);
			\draw[thin, gray] (3,0) -- (3,1.2);
			\draw[thin, gray] (4,0) -- (4,1.2);
			\draw[thin, gray] (5,0) -- (5,1.2);
			\draw[thin, gray] (6,0) -- (6,1.2);
			
			\draw[thin, gray] (0,1.8) -- (0,5.2);
			\draw[thin, gray] (1,1.8) -- (1,5.2);
			\draw[thin, gray] (2,1.8) -- (2,5.2);
			\draw[thin, gray] (3,1.8) -- (3,5.2);
			\draw[thin, gray] (4,1.8) -- (4,5.2);
			\draw[thin, gray] (5,1.8) -- (5,5.2);
			\draw[thin, gray] (6,1.8) -- (6,5.2);
			
			\draw[thin, gray] (0,6.8) -- (0,10.2);
			\draw[thin, gray] (1,6.8) -- (1,10.2);
			\draw[thin, gray] (2,6.8) -- (2,10.2);
			\draw[thin, gray] (3,6.8) -- (3,10.2);
			\draw[thin, gray] (4,6.8) -- (4,10.2);
			\draw[thin, gray] (5,6.8) -- (5,10.2);
			\draw[thin, gray] (6,6.8) -- (6,10.2);
			
			\draw[thin, gray] (0,5.8) -- (0,6.2);
			\draw[thin, gray] (1,5.8) -- (1,6.2);
			\draw[thin, gray] (2,5.8) -- (2,6.2);
			\draw[thin, gray] (3,5.8) -- (3,6.2);
			\draw[thin, gray] (4,5.8) -- (4,6.2);
			\draw[thin, gray] (5,5.8) -- (5,6.2);
			\draw[thin, gray] (6,5.8) -- (6,6.2);
			
			\draw[thin, gray] (0,10.8) -- (0,11.2);
			\draw[thin, gray] (1,10.8) -- (1,11.2);
			\draw[thin, gray] (2,10.8) -- (2,11.2);
			\draw[thin, gray] (3,10.8) -- (3,11.2);
			\draw[thin, gray] (4,10.8) -- (4,11.2);
			\draw[thin, gray] (5,10.8) -- (5,11.2);
			\draw[thin, gray] (6,10.8) -- (6,11.2);
			
			\draw[thin, gray] (0,11.8) -- (0,12);
			\draw[thin, gray] (1,11.8) -- (1,12);
			\draw[thin, gray] (2,11.8) -- (2,12);
			\draw[thin, gray] (3,11.8) -- (3,12);
			\draw[thin, gray] (4,11.8) -- (4,12);
			\draw[thin, gray] (5,11.8) -- (5,12);
			\draw[thin, gray] (6,11.8) -- (6,12);

			\draw[thin, gray] (-.5,1) -- (3.2,1);
			\draw[thin, gray] (-.5,2) -- (3.2,2);
			\draw[thin, gray] (-.5,3) -- (3.2,3);
			\draw[thin, gray] (-.5,4) -- (3.2,4);
			\draw[thin, gray] (-.5,5) -- (3.2,5);
			\draw[thin, gray] (-.5,6) -- (3.2,6);
			\draw[thin, gray] (-.5,7) -- (3.2,7);
			\draw[thin, gray] (-.5,8) -- (3.2,8);
			\draw[thin, gray] (-.5,9) -- (3.2,9);
			\draw[thin, gray] (-.5,10) -- (3.2,10);
			\draw[thin, gray] (-.5,11) -- (3.2,11);
			\draw[thin, gray] (-.5,12) -- (3.2,12);
			
			\draw[thin, gray] (3.8,1) -- (4.2,1);
			\draw[thin, gray] (3.8,2) -- (4.2,2);
			\draw[thin, gray] (3.8,3) -- (4.2,3);
			\draw[thin, gray] (3.8,4) -- (4.2,4);
			\draw[thin, gray] (3.8,5) -- (4.2,5);
			\draw[thin, gray] (3.8,6) -- (4.2,6);
			\draw[thin, gray] (3.8,7) -- (4.2,7);
			\draw[thin, gray] (3.8,8) -- (4.2,8);
			\draw[thin, gray] (3.8,9) -- (4.2,9);
			\draw[thin, gray] (3.8,10) -- (4.2,10);
			\draw[thin, gray] (3.8,11) -- (4.2,11);
			\draw[thin, gray] (3.8,12) -- (4.2,12);
			
			\draw[thin, gray] (4.8,1) -- (5.2,1);
			\draw[thin, gray] (4.8,2) -- (5.2,2);
			\draw[thin, gray] (4.8,3) -- (5.2,3);
			\draw[thin, gray] (4.8,4) -- (5.2,4);
			\draw[thin, gray] (4.8,5) -- (5.2,5);
			\draw[thin, gray] (4.8,6) -- (5.2,6);
			\draw[thin, gray] (4.8,7) -- (5.2,7);
			\draw[thin, gray] (4.8,8) -- (5.2,8);
			\draw[thin, gray] (4.8,9) -- (5.2,9);
			\draw[thin, gray] (4.8,10) -- (5.2,10);
			\draw[thin, gray] (4.8,11) -- (5.2,11);
			\draw[thin, gray] (4.8,12) -- (5.2,12);
			
			\draw[thin, gray] (5.8,1) -- (6.2,1);
			\draw[thin, gray] (5.8,2) -- (6.2,2);
			\draw[thin, gray] (5.8,3) -- (6.2,3);
			\draw[thin, gray] (5.8,4) -- (6.2,4);
			\draw[thin, gray] (5.8,5) -- (6.2,5);
			\draw[thin, gray] (5.8,6) -- (6.2,6);
			\draw[thin, gray] (5.8,7) -- (6.2,7);
			\draw[thin, gray] (5.8,8) -- (6.2,8);
			\draw[thin, gray] (5.8,9) -- (6.2,9);
			\draw[thin, gray] (5.8,10) -- (6.2,10);
			\draw[thin, gray] (5.8,11) -- (6.2,11);
			\draw[thin, gray] (5.8,12) -- (6.2,12);
			
			\draw[thin, gray,dotted] (3,1) -- (6,1);
			\draw[thin, gray,dotted] (3,2) -- (6,2);
			\draw[thin, gray,dotted] (3,3) -- (6,3);
			\draw[thin, gray,dotted] (3,4) -- (6,4);
			\draw[thin, gray,dotted] (3,5) -- (6,5);
			\draw[thin, gray,dotted] (3,6) -- (6,6);
			\draw[thin, gray,dotted] (3,7) -- (6,7);
			\draw[thin, gray,dotted] (3,8) -- (6,8);
			\draw[thin, gray,dotted] (3,9) -- (6,9);
			\draw[thin, gray,dotted] (3,10) -- (6,10);
			\draw[thin, gray,dotted] (3,11) -- (6,11);
			\draw[thin, gray,dotted] (3,12) -- (6,12);
			
			\node[dot,southeast={$s_{\alpha'}(p_{\alpha'}(\zeta_0))$}] () at (0,2) {}; 
			\node[dot,southeast={$s_{\alpha'}(p_{\alpha'}(\zeta_1))$}] () at (1,3) {}; 
			\node[dot,southeast={$s_{\alpha'}(p_{\alpha'}(\zeta_2))$}] () at (2,4) {}; 
			\node[dot,southeast={$s_{\alpha'}(p_{\alpha'}(\zeta_3))$}] () at (3,5) {}; 
			\node[dot,southeast={$s_{\alpha'}(p_{\alpha'}(\zeta_\xi))$}] () at (4,6) {}; 
			\node[dot,southeast={$s_{0}(p_{\alpha'}(\zeta_\lambda))$}] () at (5,7) {}; 
			\node[dot,southeast={$s_{1}(p_{\alpha'}(\zeta_\lambda))$}] () at (5,8) {}; 
			\node[dot,southeast={$s_{2}(p_{\alpha'}(\zeta_\lambda))$}] () at (5,9) {}; 
			\node[dot,southeast={$s_{3}(p_{\alpha'}(\zeta_\lambda))$}] () at (5,10) {}; 
			\node[dot,southeast={$s_{\alpha'}(p_{\alpha'}(\zeta_\lambda))$}] () at (5,11) {}; 
			\node[northeast={$s_{0}(p_{\alpha'}(\zeta))$}] () at (6,12) {}; 
			
			\node[west=$\beta_\alpha$] () at (-.5,1) {}; 
			\node[west=$\delta_\alpha^0$] () at (-.5,2) {}; 
			\node[west=$\delta_\alpha^1$] () at (-.5,3) {}; 
			\node[west=$\delta_\alpha^2$] () at (-.5,4) {}; 
			\node[west=$\delta_\alpha^3$] () at (-.5,5) {}; 
			\node[west=$\delta_\alpha^\xi$] () at (-.5,6) {}; 
			\node[west=$\delta_\alpha^\lambda$] () at (-.5,7) {}; 
			\node[west=$\delta_\alpha^{\lambda+1}$] () at (-.5,8) {}; 
			\node[west=$\delta_\alpha^{\lambda+2}$] () at (-.5,9) {}; 
			\node[west=$\delta_\alpha^{\lambda+3}$] () at (-.5,10) {}; 
			\node[west=$\delta_\alpha^{\lambda+\alpha'}$] () at (-.5,11) {}; 
			
			\node[south=$\zeta_0$] () at (0,0) {};
			\node[south=$\zeta_1$] () at (1,0) {};
			\node[south=$\zeta_2$] () at (2,0) {};
			\node[south=$\zeta_3$] () at (3,0) {};
			\node[south=$\zeta_\xi$] () at (4,0) {};
			\node[south={($\xi<\lambda$)}] () at (4,-0.35) {};
			\node[south=$\zeta_\lambda$] () at (5,0) {};
			\node[south=$\zeta$] () at (6,0) {};
			\node[south={($\zeta\notin Z_{\alpha'}$)}] () at (6,-0.35) {};
			
			\draw[-),ultra thick] (0,0) -- (0,1);
			\draw[-),ultra thick] (1,0) -- (1,1);
			\draw[-),ultra thick] (2,0) -- (2,1);
			\draw[-),ultra thick] (3,0) -- (3,1);
			\draw[-),ultra thick] (4,0) -- (4,1);
			
			\draw[[-,ultra thick] (0,12) -- (0,13);
			\draw[[-,ultra thick] (1,12) -- (1,13);
			\draw[[-,ultra thick] (2,12) -- (2,13);
			\draw[[-,ultra thick] (3,12) -- (3,13);
			\draw[[-,ultra thick] (4,12) -- (4,13);
			\draw[[-,ultra thick] (5,12) -- (5,13);
			\draw[[-,ultra thick] (6,12) -- (6,13);

	\end{tikzpicture}}
	\caption{The structure of $\Split(p_{\alpha+1})$. We write $\alpha+1$ as $\alpha'$ for brevity.}
	\label{figure}
\end{figure}
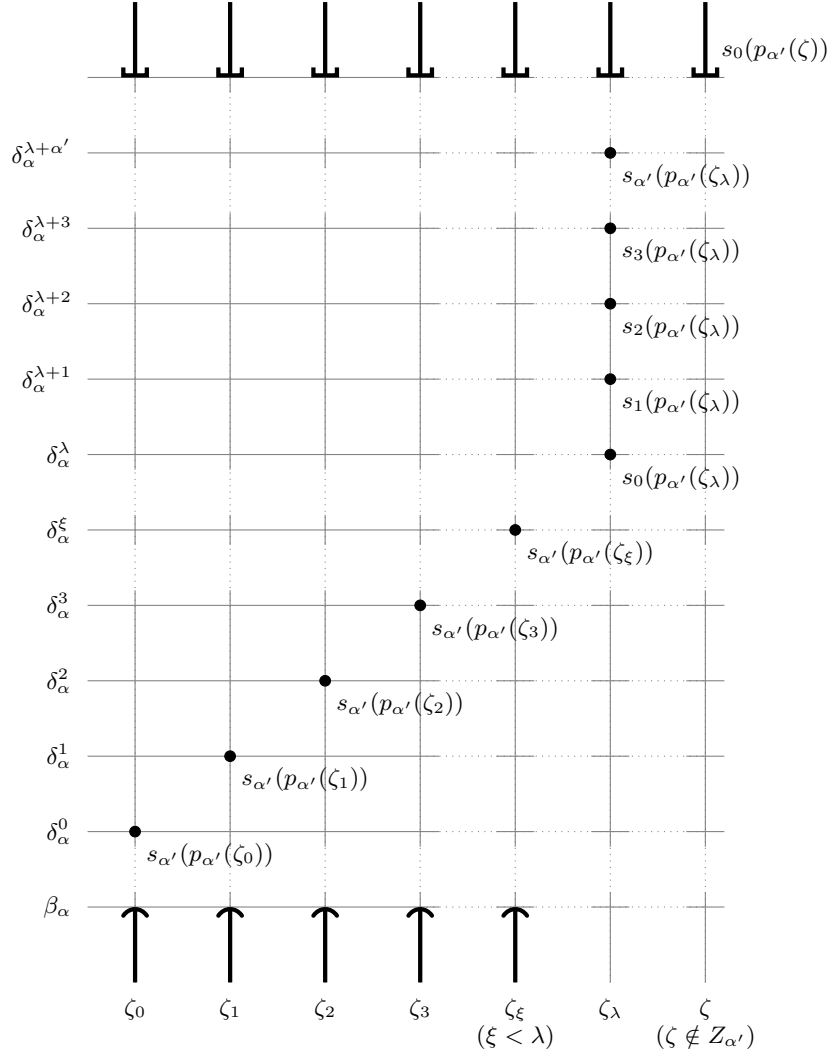

The result of this construction is summarised in \Cref{figure}. Let us clarify this diagram and the recursive construction construction. The initial splitting levels $s_\alpha(p_\alpha(\zeta_\xi))$ with $\xi<\lambda$ occur below $\beta_\alpha$ and are left unmodified during the entire construction. The ordinals $\delta_\alpha^\xi$ give us the height of $s_{\alpha+1}(p_{\alpha+1}(\zeta_\xi))$ with $\xi<\lambda$,  and the ordinals $\delta_\alpha^{\lambda+\epsilon}$ give the height of $s_\epsilon(p_{\alpha+1}(\zeta_\lambda))$. For any other $\zeta\in\supp(p_{\alpha+1})\setminus Z_{\alpha+1}$ the splitting starts strictly above $\beta_{\alpha+1}$, that is, $s_0(p_{\alpha+1}(\zeta))>s_{\alpha+1}(p_{\alpha+1}(\zeta_\lambda))$. At step $\xi$ of the recursive construction, we decide on $\delta_\alpha^\xi$ and hence on the splitting levels up to $\delta_\alpha^\xi$, making sure that the splitting levels we have not considered yet occur at a strictly higher level. We use \Cref{product lmm3.4} to make sure we satisfy \ref{lmm3.4 point} without disturbing the splitting levels up to $\delta_\alpha^\xi$. This automatically gives us modesty as well.
\end{proof}

We are now ready to give the last two lemmas that give us the effect of $\bar{\bb Q}$ on $\dinf{b,h}$.  \Cref{generic increase product} is a generalisation of \Cref{generic increase} and shows that we can increase $\dinf{b,h}$ with $\bb{\bar Q}$, and \Cref{product localisation property} gives us the preservation of the Sacks property, which is a generalisation of \Cref{localisation property}.

\begin{lmm}\label{generic increase product}
Let $\c B\subset \c A$ be sets of ordinals, with $\c B^c=\c A\setminus \c B$, and let $\ab{h_\zeta,b_\zeta\mid \zeta\in\c A}$ be a sequence of cofinal increasing cardinal functions such that $h_\zeta\leq^*b_\zeta$ for all $\zeta\in\c A$. Let $\bar{\bb Q}=\prod_{\zeta\in\c A}^{\leq\kappa}\bb Q_\zeta$, let $G$ be $\bar{\bb Q}$-generic over $\b V$ and $\b V\md\ap{2^\kappa=\kappa^+}$. If $h'\leq^*b'\in\gbs$ are cofinal increasing cardinal functions such that for each $\zeta\in\c B$ there exists a stationary set $S_\zeta\subset \kappa$ such that $h_\zeta(\alpha)\leq h'(\alpha)\leq b'(\alpha)\leq b_\zeta(\alpha)$ for all $\alpha\in S_\zeta$. Then $\b V[G]\md\ap{|\c B|\leq \dinf{b',h'}}$.
\end{lmm}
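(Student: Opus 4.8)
The plan is to prove, by a density/genericity argument at a single well-chosen coordinate, that no $W\subseteq(\prod b')^{\b V[G]}$ with $|W|<|\c B|$ can be a witness for $\dinf{b',h'}$. Since $2^\kappa=\kappa^+$ and $\bar{\bb Q}$ preserves all cardinals (by \Cref{product closure}, \Cref{dinf closure}, and the preservation of $\kappa^+$ obtained from \Cref{product early reading}), this immediately gives $\b V[G]\md\ap{\dinf{b',h'}\geq|\c B|}$. So fix such a $W$. Each $f\in W$ has a $\bar{\bb Q}$-name $\dot f$; because every value $\dot f(\alpha)$ ranges over the ordinal $b'(\alpha)<\kappa$ and $\bar{\bb Q}$ has the ${<}\kappa^{++}$-c.c., the name $\dot f$ may be chosen to mention at most $\kappa^{+}$ conditions, hence to be (equivalent to) a $\bar{\bb Q}\restriction\c A_f$-name for some $\c A_f\subseteq\c A$ with $|\c A_f|\leq\kappa^{+}$. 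Setting $\c A'=\Cup_{f\in W}\c A_f$ we get $W\in\b V[G\restriction\c A']$ with $|\c A'|\leq|W|\cdot\kappa^{+}<|\c B|$ (this inequality holds whenever $|\c B|>\kappa^{+}$; the case $|\c B|\leq\kappa^{+}$ is discussed below), so we may fix $\zeta\in\c B\setminus\c A'$.

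By the product decomposition $\bar{\bb Q}\cong(\bar{\bb Q}\restriction(\c A\setminus\{\zeta\}))\times\bb Q_\zeta$, the generic slalom $\phi_\zeta=\Cap G(\zeta)\in\Loc_\kappa^{b_\zeta,h_\zeta}$ added at coordinate $\zeta$ is $(\bb Q_\zeta)^{\b V}$-generic over $\b V[G\restriction(\c A\setminus\{\zeta\})]$, hence over the smaller model $\b V[G\restriction\c A']\ni W$. Define $\phi'_\zeta\in\Loc_\kappa^{b',h'}$ by $\phi'_\zeta(\alpha)=\phi_\zeta(\alpha)\cap b'(\alpha)$ for $\alpha\in S_\zeta$ and $\phi'_\zeta(\alpha)=\emp$ otherwise; since $|\phi_\zeta(\alpha)|<h_\zeta(\alpha)\leq h'(\alpha)$ for $\alpha\in S_\zeta$, this is a genuine $(b',h')$-slalom, and it lives in $\b V[G]$. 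I claim $f\ini\phi'_\zeta$ for every $f\in W$; this exhibits $\phi'_\zeta$ as a $(b',h')$-slalom catching every member of $W$ cofinally, so $W$ is not a witness for $\dinf{b',h'}$, and the lemma follows. The claim is proved exactly as in \Cref{generic increase}, but carried out over $\b V[G\restriction\c A']$ with the poset $(\bb Q_\zeta)^{\b V}$ and its dense subset $\bb Q_\zeta^{*}$: given $f\in W$ and $\alpha_0<\kappa$, for any $T\in\bb Q_\zeta^{*}$ the set $C=\st{s_\xi(T)\mid\xi\in\kappa}$ is club and $S_\zeta$ is still stationary (as $\bar{\bb Q}\restriction\c A'$ is ${<}\kappa$-closed, it preserves stationary subsets of $\kappa$), so there is $\alpha\in S_\zeta\cap C$ with $\alpha>\alpha_0$; every $u\in\Lev_\alpha(T)$ is then a splitting node, so $\norm{\suc(u,T)}_{b_\zeta,\alpha}\geq 2$, and since $f(\alpha)\in b'(\alpha)\subseteq b_\zeta(\alpha)$, \Cref{norm} gives $v\in\suc(u,T)$ with $f(\alpha)\in v(\alpha)$; then $(T)_v\fc\ap{f(\alpha)\in\dot\phi_\zeta(\alpha)}$, and as $f(\alpha)\in b'(\alpha)$ and $\alpha\in S_\zeta$ also $(T)_v\fc\ap{f(\alpha)\in\dot\phi'_\zeta(\alpha)}$. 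Hence for each $\alpha_0$ the conditions forcing $f(\eta)\in\dot\phi'_\zeta(\eta)$ for some $\eta>\alpha_0$ form a dense subset of $(\bb Q_\zeta)^{\b V}$ lying in $\b V[G\restriction\c A']$, so genericity of $G(\zeta)$ yields $f\ini\phi'_\zeta$ in $\b V[G]$.

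The main obstacle is the cardinal bookkeeping in the first paragraph rather than the genericity argument, which is a routine transcription of \Cref{generic increase} to the ${\leq}\kappa$-support product (the only care needed being to argue directly with the ground-model poset $(\bb Q_\zeta)^{\b V}$, exploiting absoluteness of density and preservation of stationarity, instead of invoking \Cref{generic increase} verbatim). Concretely: a name for an element of $\prod b'$ genuinely requires support of size $\kappa^{+}$ because $\bar{\bb Q}$ is not ${<}\kappa^{+}$-c.c., so the choice of $\zeta\in\c B\setminus\c A'$ succeeds cleanly only when $|\c B|>\kappa^{+}$. For $|\c B|\leq\kappa^{+}$ one must supplement the argument, e.g.\ by noting that for the parameters actually of interest (those satisfying case (iii) of \Cref{bounds on binf}) one has $\dinf{b',h'}\geq\kappa^{+}$ outright, so the conclusion is automatic there.
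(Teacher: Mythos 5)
Your argument is correct and is essentially the paper's own proof: both use the ${<}\kappa^{++}$-c.c.\ (with $2^\kappa=\kappa^+$) to confine the names of the given ${<}|\c B|$ functions to a subproduct indexed by some $\c A'$ of size ${<}|\c B|$, pick a fresh $\zeta\in\c B\setminus\c A'$, and run the density argument of \Cref{generic increase} at that coordinate; whether one factors the product and quotes genericity of $G(\zeta)$ over the intermediate model, or (as the paper does) builds the extending condition by hand inside $\bar{\bb Q}$ while leaving $p(\beta)$ untouched, is only a presentational difference. The one point to tidy is the residual case $|\c B|\leq\kappa^+$: case (iii) of \Cref{bounds on binf} gives $\kappa^+\leq\binf{b',h'}$, which does not formally yield $\kappa^+\leq\dinf{b',h'}$ (there is no arrow from $\binf{b,h}$ to $\dinf{b,h}$); instead observe that $\dinf{b',h'}\geq\kappa^+$ holds outright for any cofinal increasing $h'$, since given $\kappa$ many functions $f_\xi\in\prod b'$ one may partition $\kappa$ into cofinal sets $E_\xi$ and set $\phi(\alpha)=\st{f_\xi(\alpha)}$ for $\alpha\in E_\xi$, producing a single $(b',h')$-slalom meeting every $f_\xi$ cofinally --- which is the sense in which the paper calls this case trivial.
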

\begin{proof}
The lemma is trivial if $|\c B|\leq \kappa^+$, thus we assume $\kappa^{++}\leq|\c B|$.

We work in $\b V[G]$. Let $\mu<|\c B|$ and let $\st{f_\xi\mid \xi<\mu}\subset \prod b'$, then we want to describe some $\phi\in\Loc_\kappa^{b',h'}$ such that $f_\xi\in^\infty\phi$ for each $\xi<\mu$. Since $\bar{\bb Q}$ is ${<}\kappa^{++}$-cc, we could find $A_\xi\subset\c A$ with $|A_\xi|\leq \kappa^+$ for each $\xi<\mu$ such that $f_\xi\in \b V[G\restriction A_\xi]$. Since $|\c B|>\mu\cdot \kappa^+$, we may fix some $\beta\in \c B\setminus\Cup_{\xi<\mu}A_\xi$ for the remainder of this proof. Let $\phi_\beta=\Cap_{p\in G}p(\beta)$ be the $\bb Q_\beta$-generic $\kappa$-real added by the $\beta$-th term of the product $\bar{\bb Q}$, and let $\phi'\in\Loc_\kappa^{b',h'}$ be such that $\phi'(\alpha)= \phi_\beta(\alpha)\cap b'(\alpha)$ for each $\alpha\in S_\beta$.

Continuing the proof in the ground model, let $\dot \phi'$ be a $\bar {\bb Q}$-name for $\phi'$ and $\dot f_\xi$ be a $\bar{\bb Q}$-name for $f_\xi$, let $p\in\bar{\bb Q}^*$ and $\alpha_0<\kappa$. We want to find some $\alpha\geq\alpha_0$ and $q\leq p$ such that $q\fc\ap{\dot f_\xi(\alpha)\in \dot \phi'(\alpha)}$.

We now reason as in \Cref{generic increase}. Let $C=\st{s_\xi(p(\beta))\mid \xi\in\kappa}$, then $C$ is club. Therefore, there exists $\alpha\in S_\beta\cap C$ with $\alpha\geq \alpha_0$. Choose some $p_0\leq p$ such that $p_0(\beta)=p(\beta)$ and such that there is a $\gamma\in b'(\alpha)$ for which $p_0\fc\ap{\dot f_\xi(\alpha)=\gamma}$. This is possible, since $f_\xi\in\b V[G\restriction A_\xi]$ and $\beta\notin A_\xi$, therefore we could find $p_0'\in\bar{\bb Q}\restriction A_\xi$ with $p_0'\leq p\restriction A_\xi$ and $\gamma$ with the aforementioned property, and then let $p_0(\eta)=p'_0(\eta)$ if $\eta\in A_\xi$ and $p_0(\eta)=p(\eta)$ otherwise.

Note that $\alpha\in C$ implies that $\norm{\suc(u,p_0(\beta))}_{b_\beta,\alpha}=\norm{\suc(u,p(\beta))}_{b_\beta,\alpha}>1$ for all $u\in\Lev_\alpha(p_0(\beta))$, and that $\alpha\in S_\beta$ implies that $\gamma\in b'(\alpha)\subset b_\beta(\alpha)$. Consequently, there exists $v\in\suc(u,p_0(\beta))$ with $\gamma\in v(\alpha)$. Note that $v(\alpha)\in[b_\beta(\alpha)]^{<h_\beta(\alpha)}$, and $h_\beta(\alpha)\leq h'(\alpha)$ in virtue of $\alpha\in S_\beta$. It follows that $(p_0(\beta))_v\fc\ap{\dot\phi_\beta(\alpha)=v(\alpha)}$, where $\dot\phi_\beta$ names the generic $(b_\beta,h_\beta)$-slalom. Define $q\leq p_0$ by $q(\zeta)=p_0(\zeta)$ if $\zeta\in \c A\setminus \st\beta$ and $q(\beta)=(p_0(\beta))_v$, then we see that $q\fc\ap{\gamma\in v(\alpha)=\dot \phi_\beta(\alpha)\la \gamma\in b'(\alpha)}$, and thus $q\fc\ap{\dot f_\xi(\alpha)=\gamma\in\dot\phi_\beta(\alpha)\cap b'(\alpha)= \dot \phi'(\alpha)}$ 

Since $\alpha_0$ was arbitrary, it follows that $\b V[G]\md\ap{f_\xi\in^\infty \phi'}$ for each $\xi<\mu$.
\end{proof}

\begin{lmm}\label{product localisation property}
Let $\c B\subset\c A$ be sets of ordinals, with $\c B^c=\c A\setminus \c B$, and let $\ab{h_\zeta,b_\zeta\mid \zeta\in \c A}$ be a sequence of cofinal increasing cardinal functions such that $h_\zeta\leq^*b_\zeta$ for all $\zeta\in\c A$. Let $\bar{\bb Q}=\prod^{\leq\kappa}_{\zeta\in \c A}\bb Q_\zeta$ and let $G$ be $\bar{\bb Q}$-generic over $\b V\md\ap{2^\kappa=\kappa^+}$. If $h'\leq^*b'\in\gbs$ are cofinal increasing cardinal functions such that $\rb{\sup_{\zeta\in\c B^c}\card{\Loc_{\leq\alpha}^{b_\zeta,h_\zeta}}}^{|\alpha|}< h'(\alpha)$ for almost all $\alpha\in\kappa$ , then for each $f\in(\prod b')^{\b V[G]}$ there exists $\phi\in(\Loc_\kappa^{b',h'})^{\b V[G\restriction \c B]}$ and such that $f\in^* \phi$.
\end{lmm}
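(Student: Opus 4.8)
Here is the plan for proving \Cref{product localisation property}. The idea is to transfer the single-forcing argument of \Cref{localisation property} to the product $\bar{\bb Q}=\prod^{\leq\kappa}_{\zeta\in\c A}\bb Q_\zeta$, using early reading of names together with modesty to keep the number of ``possibilities'' below $h'(\alpha)$. Fix a $\bar{\bb Q}$-name $\dot f$ and $p\in\bar{\bb Q}$ with $p\fc\ap{\dot f\in\prod b'}$. It suffices to produce $q\leq p$ and a $(\bar{\bb Q}\restriction\c B)$-name $\dot\phi$ with $q\fc\ap{\dot f\in^*\dot\phi}$ and $q\restriction\c B\fc\ap{\dot\phi\in\Loc_\kappa^{b',h'}}$, since then in $\b V[G]$ some such $q$ lies in $G$ and $\phi=\dot\phi[G\restriction\c B]\in\b V[G\restriction\c B]$ is the slalom we want. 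First I would use \Cref{product early reading} together with density of modest conditions to replace $p$ by $q\leq p$ in $\bar{\bb Q}^*$ that is modest and reads $\dot f$ early; the two properties coexist because the generalised fusion built in \Cref{product early reading} is already modest, and collapsing splitting levels further only sharpens modesty while preserving early reading.

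Since $q$ reads $\dot f$ early, for each $\alpha\in\kappa$ and each $\eta\in\poss{q}{\alpha+1}$ the condition $\eta\la q$ decides $\dot f\restriction(\alpha+1)$, in particular it decides $\dot f(\alpha)$ to be some $y_\eta\in b'(\alpha)$. Split $\eta=\eta_0\cup\eta_1$ with $\eta_0\in\poss{q\restriction\c B}{\alpha+1}$ and $\eta_1\in\poss{q\restriction\c B^c}{\alpha+1}$. I would then define the $(\bar{\bb Q}\restriction\c B)$-name $\dot\phi$ so that $q\restriction\c B$ forces $\dot\phi(\alpha)=\st{y_{\dot\eta_0\cup\eta_1}\mid\eta_1\in\poss{q\restriction\c B^c}{\alpha+1}}$, where $\dot\eta_0$ names the length-$(\alpha+1)$ node of the generic branch through $q(\zeta)$, $\zeta\in\c B$; for the $\alpha$ below the exceptional bound coming from the hypothesis I would simply put $\dot\phi(\alpha)=\emp$. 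Because $y_\eta$ depends only on the single parameter $\eta$ and not on the rest of the generic, freezing $\eta_0$ and ranging over all $\eta_1$ produces a set genuinely computed from $G\restriction\c B$, so $\dot\phi$ really is a $(\bar{\bb Q}\restriction\c B)$-name.

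The slalom bound is the place where modesty is used. Modesty gives $\card{\c Z^q(\beta)}\leq\beta$ for all $\beta$, so among the coordinates $\zeta\in\supp(q)$ at most $\card{\Cup_{\beta\leq\alpha}\c Z^q(\beta)}\leq|\alpha|$ have split below level $\alpha+1$; each such $\zeta\in\c B^c$ contributes at most $\card{\Lev_{\alpha+1}(q(\zeta))}\leq\card{\Loc_{\leq\alpha}^{b_\zeta,h_\zeta}}$ (these nodes are $(b_\zeta,h_\zeta)$-slalom segments), and every other coordinate contributes a singleton. Hence $\card{\poss{q\restriction\c B^c}{\alpha+1}}\leq\rb{\sup_{\zeta\in\c B^c}\card{\Loc_{\leq\alpha}^{b_\zeta,h_\zeta}}}^{|\alpha|}<h'(\alpha)$ for almost all $\alpha$, so $\card{\dot\phi(\alpha)}<h'(\alpha)$ there, while $\dot\phi(\alpha)\subset b'(\alpha)$ since each $y_\eta\in b'(\alpha)$; thus $q\restriction\c B\fc\ap{\dot\phi\in\Loc_\kappa^{b',h'}}$. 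Finally, in $\b V[G]=\b V[G\restriction\c B][G\restriction\c B^c]$ the generic branches determine for each $\alpha$ a possibility $\eta_0\cup\eta_1\in\poss q{\alpha+1}$, and $f(\alpha)=y_{\eta_0\cup\eta_1}\in\phi(\alpha)$ whenever $\alpha$ exceeds the exceptional bound, so $f\in^*\phi$.

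The main obstacle is exactly the tension between the two demands on $\dot\phi$: it must be narrow (width $<h'$) yet still capture $\dot f(\alpha)$ while depending on $G\restriction\c B$ alone. Early reading dissolves the second difficulty by localising the dependence of $\dot f(\alpha)$ to the finite datum $\eta$, and modesty dissolves the first by ensuring that only $\leq|\alpha|$ of the $\c B^c$-coordinates are active below level $\alpha+1$, so that the remaining $\c B^c$-ambiguity in $\dot f(\alpha)$ is bounded by the hypothesised quantity. The one genuinely technical point to double-check is that the fusion construction of \Cref{product early reading} can be carried out so that its output is simultaneously modest and reads $\dot f$ early (modesty is maintained at every stage of that construction, and the concluding collapse does not disturb early reading); everything else is bookkeeping analogous to the single-coordinate case.
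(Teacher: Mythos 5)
Your proposal is correct and follows essentially the same route as the paper's proof: pass to a modest condition in $\bar{\bb Q}^*$ reading $\dot f$ early, use modesty to bound the number of active $\c B^c$-coordinates by $|\alpha|$ and hence $\card{\poss{q\restriction\c B^c}{\alpha+1}}$ by $\rb{\sup_{\zeta\in\c B^c}\card{\Loc_{\leq\alpha}^{b_\zeta,h_\zeta}}}^{|\alpha|}<h'(\alpha)$, and define the slalom as a $(\bar{\bb Q}\restriction\c B)$-name collecting the decided values of $\dot f(\alpha)$ over all $\c B^c$-possibilities. Your set $\st{y_{\dot\eta_0\cup\eta_1}\mid\eta_1\in\poss{q\restriction\c B^c}{\alpha+1}}$ is exactly the paper's $Y(\eta_{\c B})$ packaged as the name $\dot B_\alpha$, and your remark that a modest extension preserves early reading is the "without loss of generality" the paper invokes.
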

\begin{proof}
The proof is essentially that of \Cref{localisation property}. Let us assume that $f\in\b V[G]\setminus \b V[G\restriction \c B]$, since the lemma would be trivially true otherwise. We also assume $\alpha$ is large enough to satisfy the condition on the size of $h'(\alpha)$.

Let $\dot f$ be a $\bar{\bb Q}$-name for $f$ and let $q\in\bar{\bb Q}^*$ read $\dot f$ early and assume without loss of generality that $q$ is modest. For the sake of brevity, let us write $\c Z_{\dwa\alpha}^q=\Cup_{\xi\leq \alpha}\c Z^q(\xi)$. Note that: 
\begin{align*}
|\possq q\alpha|&=\textstyle\prod_{\zeta\in\c Z_{\dwa\alpha}^q}\card{\Lev_{\alpha+1}(q(\zeta))}\\
&\leq\rb{\textstyle\sup_{\zeta\in\c Z_{\dwa\alpha}^q}(|\Lev_{\alpha+1}(q(\zeta))|)}^{\card{\c Z_{\dwa\alpha}^q}}\\
&\leq\rb{\textstyle\sup_{\zeta\in\c Z_{\dwa\alpha}^q}(|\Lev_{\alpha+1}(q(\zeta))|)}^{\card{\alpha}}
\end{align*}
Here the last inequality follows from $q$ being modest, which implies that $|\c Z_{\dwa\alpha}^q|\leq\alpha$. Since we wish to construct a name $\dot \phi$ for some $\phi\in\b V[G\restriction \c B]$, we see that $\phi$ is completely decided by the part of the support in $\c B$, and thus we may use the part of the support in $\c B^c$ freely to restrict the range of possible values for $f\in\b V[G]$ in order to make sure that $q\fc\ap{\dot f\in^*\phi}$, as we did in \Cref{localisation property}. If we restrict our attention to $\c B^c$, then we see that
\begin{align*}
\rb{\textstyle\sup_{\zeta\in \c B^c\cap\c Z_{\dwa\alpha}^q}(|\Lev_{\alpha+1}(q(\zeta))|)}^{\card{\alpha}}&\leq 
\rb{\textstyle\sup_{\zeta\in \c B^c}(|\Lev_{\alpha+1}(q(\zeta))|)}^{\card{\alpha}}\\
&\leq 
\rb{\textstyle\sup_{\zeta\in \c B^c}\card{\Loc_{\leq\alpha}^{b_\zeta,h_\zeta}}}^{\card{\alpha}}< h'(\alpha)
\end{align*}
This set of possibilities is small enough to define $\phi\in(\Loc_\kappa^{b',h'})^{\b V[G\restriction \c B]}$. To be precise, we construct a sequence of names $\sab{\dot B_\alpha\mid \alpha\in \kappa}$ for sets $B_\alpha\in\b V[G\restriction\c B]$ such that $\b V[G\restriction\c B]\md\ap{|B_\alpha|< h'(\alpha)}$ and such that $q\fc\ap{\dot f(\alpha)\in \dot B_\alpha}$.

Since $q$ reads $\dot f$ early, if $\eta\in\possq q\alpha$, then let $\gamma_\eta\in\kappa$ be such that $\eta\la q\fc\ap{\dot f(\alpha)=\gamma_\eta}$. 

Given $\eta_{\c B}\in\possq{q\restriction \c B}\alpha$ let $Y(\eta_\c B)=\st{\gamma_\eta\mid \eta\in \possq q\alpha\text{ and }\eta\restriction \c B=\eta_\c B}$. Now we define the name $\dot B_\alpha=\st{\ab{ Y(\eta_\c B),\eta_\c B\la q}\mid \eta_\c B\in\possq {q\restriction \c B}\alpha}$. Since the elements of $Y(\eta_\c B)$ are only dependent on the domain $\c B^c$, it follows by the arithmetic from above that $q\fc\ap{|\dot B_\alpha|< h'(\alpha)}$, and thus if $\dot \phi$ names a slalom such that $q\fc\ap{\dot \phi(\alpha)=\dot B_\alpha}$, then $q\fc\ap{\dot f\ins\dot \phi\in \Loc_\kappa^{b',h'}}$.
\end{proof}

\begin{crl}\label{product localisation corollary}
Let $\c B\subset\c A$ be sets of ordinals, with $\c B^c=\c A\setminus \c B$, and let $\ab{h_\zeta,b_\zeta\mid \zeta\in \c A}$ be a sequence of cofinal increasing cardinal functions such that $h_\zeta\leq^*b_\zeta$ for all $\zeta\in\c A$. Let $\bar{\bb Q}=\prod^{\leq\kappa}_{\zeta\in \c A}\bb Q_\zeta$ and let $G$ be $\bar{\bb Q}$-generic over $\b V$. If $h',b',\tilde h,\tilde b\in\gbs$ are cofinal increasing cardinal functions such that $\rb{\sup_{\zeta\in\c B^c}\card{\Loc_{\leq\alpha}^{b_\zeta,h_\zeta}}}^{|\alpha|}< h'(\alpha)\leq\tilde b(\alpha)^{\tilde h(\alpha)}\leq b'(\alpha)$ and $h'(\alpha)\cdot \tilde h(\alpha)<\tilde b(\alpha)$ for almost all $\alpha\in\kappa$, then if $\psi\in(\Loc_\kappa^{\tilde b,\tilde h})^{\b V[G]}$ there exists $g\in(\prod \tilde b)^{\b V[G\restriction \c B]}$ such that $g\nini \psi$.
\end{crl}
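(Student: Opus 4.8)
The plan is to obtain this corollary from \Cref{product localisation property} in exactly the way \Cref{localisation property corollary} is obtained from \Cref{localisation property}: use the Tukey connection of \Cref{aloc loc intraction} as the bridge between antilocalisation with parameters $\tilde b,\tilde h$ and localisation with parameters $b',h'$. As in \Cref{localisation property corollary}, I would tacitly assume the parameter functions $b',h',\tilde b,\tilde h$ lie in $\b V$ (all the cardinal computations among them are absolute between $\b V$ and $\b V[G]$ since $\bar{\bb Q}$ is ${<}\kappa$-closed).

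First I would check that \Cref{aloc loc intraction} applies with its parameters $(b,h,b',h')$ instantiated as $(\tilde b,\tilde h,b',h')$. The hypothesis $h'(\alpha)\cdot\tilde h(\alpha)<\tilde b(\alpha)$ for almost all $\alpha$ is exactly $\tilde h\cdot h'<^*\tilde b$; for any $g<\tilde h$ and almost all $\alpha$ we have $\tilde b(\alpha)^{g(\alpha)}\leq\tilde b(\alpha)^{\tilde h(\alpha)}\leq b'(\alpha)$, so $\tilde b^g\leq^*b'$; and $|[\tilde b(\alpha)]^{<\tilde h(\alpha)}|\leq\tilde b(\alpha)^{\tilde h(\alpha)}\leq b'(\alpha)$, so the injections $\iota_\alpha:[\tilde b(\alpha)]^{<\tilde h(\alpha)}\inj b'(\alpha)$ used in that proof exist for almost all $\alpha$. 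This yields a Tukey connection $(\rho_-,\rho_+)$ witnessing $\AL_{\tilde b,\tilde h}\preceq\sr L_{b',h'}$, with $\rho_-:\Loc_\kappa^{\tilde b,\tilde h}\to\prod b'$ and $\rho_+:\Loc_\kappa^{b',h'}\to\prod\tilde b$. Since $\rho_-$ and $\rho_+$ are defined pointwise from the $\iota_\alpha$ and their inverses, choosing each $\iota_\alpha\in\b V$ guarantees that $\rho_-\restriction(\Loc_\kappa^{\tilde b,\tilde h})^{\b V[G\restriction\c B]}$ and $\rho_+\restriction(\Loc_\kappa^{b',h'})^{\b V[G\restriction\c B]}$ are definable in $\b V[G\restriction\c B]$; in particular $\rho_+$ maps $\b V[G\restriction\c B]$-slaloms into $(\prod\tilde b)^{\b V[G\restriction\c B]}$.

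Next, working in $\b V[G]$, take $\psi\in(\Loc_\kappa^{\tilde b,\tilde h})^{\b V[G]}$ and put $f=\rho_-(\psi)\in(\prod b')^{\b V[G]}$. The remaining hypothesis, $\big(\sup_{\zeta\in\c B^c}|\Loc_{\leq\alpha}^{b_\zeta,h_\zeta}|\big)^{|\alpha|}<h'(\alpha)$ for almost all $\alpha$, is precisely what \Cref{product localisation property} requires, so it produces $\phi\in(\Loc_\kappa^{b',h'})^{\b V[G\restriction\c B]}$ with $f\in^*\phi$. Set $g=\rho_+(\phi)$, which by the previous paragraph lies in $(\prod\tilde b)^{\b V[G\restriction\c B]}$. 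Finally, since $(\rho_-,\rho_+)$ is a Tukey connection from $\AL_{\tilde b,\tilde h}$ to $\sr L_{b',h'}$, the relation $f=\rho_-(\psi)\in^*\phi$ yields $g=\rho_+(\phi)\nini\psi$, which is the desired conclusion.

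I do not anticipate a genuine obstacle: the whole argument is bookkeeping layered on \Cref{product localisation property,aloc loc intraction}. The one point needing care is the model-theoretic one, namely ensuring the injections $\iota_\alpha$ (hence the relevant restrictions of $\rho_-$ and $\rho_+$) can be taken inside $\b V$, equivalently inside $\b V[G\restriction\c B]$, so that feeding the slalom produced by \Cref{product localisation property} into $\rho_+$ returns an object of $\b V[G\restriction\c B]$ rather than merely of $\b V[G]$; the chain of cardinal inequalities $\big(\sup_{\zeta\in\c B^c}|\Loc_{\leq\alpha}^{b_\zeta,h_\zeta}|\big)^{|\alpha|}<h'(\alpha)\leq\tilde b(\alpha)^{\tilde h(\alpha)}\leq b'(\alpha)$ together with $h'(\alpha)\cdot\tilde h(\alpha)<\tilde b(\alpha)$ then supplies at once everything the two invoked results need.
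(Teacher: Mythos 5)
Your proposal is correct and follows exactly the route the paper intends: the paper's proof of this corollary is a one-line reduction to \Cref{product localisation property} via the Tukey connection of \Cref{aloc loc intraction}, mirroring how \Cref{localisation property corollary} is deduced from \Cref{localisation property}, and your verification of the hypotheses of \Cref{aloc loc intraction} and of the definability of $\rho_+$ over $\b V[G\restriction\c B]$ is precisely the bookkeeping the paper leaves implicit.
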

\begin{proof}
This follows from \Cref{product localisation property}, similar to how \Cref{localisation property corollary} follows from \Cref{localisation property}.
\end{proof}

\begin{thm}
There exists a family of parameters $\ab{h_\gamma,b_\gamma\mid \gamma\in\kappa}$ such that for any finite $\gamma_n<\dots<\gamma_0<\kappa$ and cardinals $\kappa^+=\lambda_0<\dots<\lambda_n$ with $\cf(\lambda_i)>\kappa$ for each $i\in[0,n]$, there exists a forcing extension where $\dinf{b_{\gamma_i},h_{\gamma_i}}=\lambda_i$ for each $i\in[0,n]$.
\end{thm}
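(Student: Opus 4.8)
The plan is to combine the product forcing $\bar{\bb Q}$ with a careful choice of parameters so that each coordinate $\gamma_i$ we wish to separate uses a function $h_{\gamma_i}$ whose ``small'' and ``large'' behaviour is governed by stationary sets, exactly as in the definition of $\dstar{}$-separation via $\bb S_\kappa^{b,h}$ earlier. First I would fix a disjoint family $\ab{S_\gamma\mid \gamma\in\kappa}$ of stationary subsets of $\kappa$ (these exist since $\kappa$ is inaccessible, hence regular). Next, mimicking the construction of the functions $F,h_\xi,H_\xi$ in the $\dstar{}$-section, I would choose a sufficiently fast-growing ``scale'' function and define $b_\gamma,h_\gamma\in\gbs$ so that on $S_\gamma$ the pair $(b_\gamma,h_\gamma)$ is ``wide'' (the slaloms at those coordinates can have large width) while off $S_\gamma$ it is ``narrow'', and so that the arithmetic hypotheses of \Cref{generic increase product} and \Cref{product localisation property,product localisation corollary} are met: concretely, we want for $\eta<\xi$ (in the ordering of the chosen coordinates) that $\frcQ{b_\gamma,h_\gamma}$ at the ``large'' coordinates preserves $\dinf{b_{\gamma_\eta},h_{\gamma_\eta}}$, i.e.\ $(\sup\card{\Loc_{\leq\alpha}^{b_\zeta,h_\zeta}})^{|\alpha|}<h_{\gamma_\eta}(\alpha)$ on the relevant stationary set, while on $S_{\gamma_\xi}$ we have $h_{\gamma_\eta}(\alpha)\leq h_{\gamma_\xi}(\alpha)\leq b_{\gamma_\xi}(\alpha)\leq b_{\gamma_\eta}(\alpha)$ so that \Cref{generic increase product} applies.

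Then, given finitely many indices $\gamma_n<\dots<\gamma_0$ and cardinals $\kappa^+=\lambda_0<\dots<\lambda_n$ with $\cf(\lambda_i)>\kappa$, I would let $\c A=\Cup_{i\leq n}\c A_i$ be a disjoint union with $|\c A_i|=\lambda_i$, set $b_\zeta=b_{\gamma_i}$, $h_\zeta=h_{\gamma_i}$ for $\zeta\in\c A_i$, and force with $\bar{\bb Q}=\prod^{\leq\kappa}_{\zeta\in\c A}\bb Q_\zeta$ over a model of $2^\kappa=\kappa^+$. Preservation of cardinals and cofinalities follows from \Cref{dinf closure}, \Cref{product closure}, the $\kappa^{++}$-c.c.\ (under $2^\kappa=\kappa^+$) and closure under generalised fusion (\Cref{product fusion}, \Cref{product early reading}); since $\cf(\lambda_i)>\kappa$, the value $2^\kappa$ in the extension is $\lambda_n$ and each $\lambda_i$ remains a cardinal. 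For the lower bound, \Cref{generic increase product} applied with $\c B=\Cup_{j\leq i}\c A_j$, $\c B^c=\Cup_{j>i}\c A_j$, $(b',h')=(b_{\gamma_i},h_{\gamma_i})$, and $S_\zeta=S_{\gamma_i}$ for $\zeta\in\c A_i$ gives $\b V[G]\md\ap{\lambda_i=|\c B|\leq\dinf{b_{\gamma_i},h_{\gamma_i}}}$ — here one must check that for $\zeta\in\c A_j$ with $j\le i$ the inclusions $h_{\gamma_i}(\alpha)\le h_{\gamma_i}(\alpha)\le b_{\gamma_i}(\alpha)\le b_{\gamma_j}(\alpha)$ hold on $S_{\gamma_i}$, which is built into the choice of parameters. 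For the upper bound, \Cref{product localisation property} (together with \Cref{product localisation corollary} and the relational-system translation $\AL_{b,h}\preceq\sr L_{b',h'}$ from \Cref{aloc loc intraction}) applied with $\c B=\Cup_{j\le i}\c A_j$ shows that a $\dinf{}$-witness of size $\le|\Cup_{j\le i}\c A_j|=\kappa^+\cdot\sup_{j\le i}\lambda_j=\lambda_i$ already lives in $\b V[G\restriction\c B]$, where $\dinf{b_{\gamma_i},h_{\gamma_i}}=\lambda_i$ holds; combining, $\lambda_i\leq\dinf{b_{\gamma_i},h_{\gamma_i}}\leq\lambda_i$. Doing this simultaneously for all $i\le n$ yields $\dinf{b_{\gamma_i},h_{\gamma_i}}=\lambda_i$.

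The main obstacle I anticipate is pinning down the arithmetic of the parameter functions so that \emph{all} the side conditions hold simultaneously for every pair $i<j$ of chosen indices, while still keeping each $h_\gamma\leq b_\gamma$, each $b_\gamma$ below $\kappa$, and $b_\gamma$ increasing and cofinal — in particular ensuring the gap between $\card{\Loc_{\leq\alpha}^{b_\zeta,h_\zeta}}^{|\alpha|}$ (for $\zeta$ a ``large'' coordinate) and $h_{\gamma_\eta}(\alpha)$ (for a ``smaller'' coordinate) stays open on a stationary set while at the same time the reverse inequality $h_{\gamma_\eta}\le h_{\gamma_\xi}\le b_{\gamma_\xi}\le b_{\gamma_\eta}$ holds on the complementary stationary piece. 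This is the same balancing act as in the $\dstar{}$-case (the functions $h_\xi$ vs.\ $H_\xi$), so I would define a base function $F$ with $F(\alpha)=F(\alpha)^{|\alpha|}$ and $2^{F(\alpha)}\le F(\beta)$ for $\alpha<\beta$, and then interleave the stationary sets $S_\gamma$ to toggle between $F$-width and $(2^F)$-width — but verifying that this actually realizes all the inequalities of \Cref{generic increase product} and \Cref{product localisation property} simultaneously across an arbitrary finite subset requires some care, and is where the bulk of the (routine but fiddly) work lies. A secondary point to be careful about is that \Cref{product localisation property} produces a \emph{localisation} witness in $\b V[G\restriction\c B]$, so one genuinely needs the Tukey reduction $\AL\preceq\sr L$ of \Cref{aloc loc intraction} (hence the ``$\tilde b,\tilde h$'' bookkeeping of \Cref{product localisation corollary}) to convert this into an anti-avoidance, i.e.\ $\dinf{}$, bound; the parameters must also satisfy $h'(\alpha)\cdot\tilde h(\alpha)<\tilde b(\alpha)$ and $\tilde b(\alpha)^{\tilde h(\alpha)}\le b'(\alpha)$ there, which adds two more constraints to juggle.
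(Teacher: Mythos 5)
Your high-level skeleton is the same as the paper's: a ${\leq}\kappa$-support product of forcing notions $\frcQ{b_{\gamma_i},h_{\gamma_i}}$ over blocks $A_i$ of size $\lambda_i$, the lower bound $\lambda_i\leq\dinf{b_{\gamma_i},h_{\gamma_i}}$ from \Cref{generic increase product}, and the upper bound by showing that $(\prod b_{\gamma_i})^{\b V[G\restriction \Cup_{j\leq i}A_j]}$ (a set of size $\lambda_i$) remains a witness, via \Cref{product localisation property} composed with the Tukey reduction $\AL\preceq\sr L$ of \Cref{aloc loc intraction} as packaged in \Cref{product localisation corollary}. You are also right that the $\AL\preceq\sr L$ translation is genuinely needed. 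However, your proposed construction of the family $\ab{b_\gamma,h_\gamma}$ — toggling between an $F$-width and a $2^F$-width on disjoint stationary sets $S_\gamma$, in analogy with the $h_\xi$ versus $H_\xi$ construction for $\dstar{}$ — does not work, and this is not merely ``fiddly bookkeeping''.

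The reason is a mismatch of quantifiers that is specific to $\nnii$. In the $\in^*$ separation the \emph{negative} ingredient (\Cref{stationary sacks property}) only needs the bad inequality $F(\alpha)\leq h(\alpha)$ on a stationary set, while the Sacks property needs the good inequality everywhere; that is why stationary toggling creates a gap there. For $\nnii$ the roles flip: the \emph{increase} lemma (\Cref{generic increase product}) is the one that tolerates a stationary set, whereas the \emph{preservation} hypothesis of \Cref{product localisation property} — namely $\bigl(\sup_{\zeta\in\c B^c}\card{\Loc_{\leq\alpha}^{b_\zeta,h_\zeta}}\bigr)^{|\alpha|}<h'(\alpha)$ — must hold for \emph{almost all} $\alpha$, and you state it ``on the relevant stationary set''. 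Worse, $\card{\Loc_{\leq\alpha}^{b_\zeta,h_\zeta}}$ aggregates the widths over \emph{all} $\xi\leq\alpha$: once $(b_\zeta,h_\zeta)$ is ``wide'' (of order $2^{F}$) on a cofinal, e.g.\ stationary, set, this cardinality is at least $2^{F(\xi)}$ for cofinally many $\xi\leq\alpha$ and hence at least the wide scale for almost all $\alpha$; in particular at any $\alpha\in S_{\gamma_j}$ one gets $\card{\Loc_{\leq\alpha}^{b_{\gamma_j},h_{\gamma_j}}}\geq b_{\gamma_j}(\alpha)=2^{F(\alpha)}\geq h_{\gamma_i}(\alpha)$, so the required strict inequality fails cofinally often no matter how the stationary sets are interleaved. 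The paper's fix is to abandon stationary sets entirely for this theorem: it builds a tower of parameters that is eventually everywhere strictly increasing in $\gamma$ (recursively $h_{\gamma+1}=b_\gamma^{h_\gamma}$, $b_\gamma=2^{h_\gamma}$, suprema at limits), assigns the \emph{larger} target cardinals $\lambda_i$ to the \emph{smaller} indices $\gamma_i$ (hence narrower parameters), and applies \Cref{generic increase product} with $\c B=A_i$ where all coordinates carry the identical pair $(b_{\gamma_i},h_{\gamma_i})$, so its stationary hypothesis is satisfied trivially with $S_\zeta=\kappa$. You should replace your stationary-set construction by such a globally graded tower; the rest of your argument then goes through as in the paper.
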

\begin{proof}
Let $h_0\in\gbs$ be a cofinal increasing cardinal functions such that $h_0(\alpha)\geq |\alpha|$ for all $\alpha\in\kappa$. For each $\gamma\in\kappa$ define $h_\gamma,b_\gamma$ recursively as follows: let $h_{\gamma+1}(\alpha)=b_\gamma(\alpha)^{h_\gamma(\alpha)}$ and if $\gamma$ is limit, let $h_\gamma(\alpha)=\sup_{\xi\in\gamma}h_\xi(\alpha)$, and finally let $b_{\gamma}(\alpha)=2^{h_{\gamma}(\alpha)}$ for all $\gamma\in\kappa$. 

Now let $\kappa^+=\lambda_0<\lambda_1<\dots<\lambda_n$ be a finite sequence of regular cardinals and let $\gamma_0>\dots>\gamma_n$ be a decreasing sequence of ordinals. Let $A_1,\dots,A_n$ be disjoint sets of ordinals such that $|A_i|=\lambda_i$ for each $i\in[1,n]$, and let $\c A=\Cup_{i\in[1,n]}A_i$. Now for each $\zeta\in\c A$ let $h_\zeta'=h_{\gamma_i}$ and $b_\zeta'=b_{\gamma_i}$ iff $\zeta\in A_i$, and let $\bar{\bb Q}=\prod^{\leq\kappa}_{\zeta\in\c A}\bb Q_{h_\zeta',b_\zeta'}$. Let $G$ be $\bar{\bb Q}$-generic over $\b V$ and assume that $\b V\md\ap{2^\kappa=\kappa^+}$. 

Fix $1\leq i\leq n$. From \Cref{product localisation property} it follows that $\b V[G\restriction A_i]\md\ap{\lambda_i\leq \dinf{b_{\gamma_i},h_{\gamma_i}}}$. If we let $\c B=\Cup_{i\in[1,i]}A_i$, then also $|\c B|=\lambda_i$ and consequently $\b V[G\restriction \c B]\md\ap{2^\kappa=\lambda_i}$. If $i<j\leq n$, then $h_{\gamma_j},b_{\gamma_j}$ are much smaller than $h_{\gamma_i},b_{\gamma_i}$, and thus by \Cref{product localisation corollary} we see that $(\prod b_{\gamma_i})^{\b V[G\restriction \c B]}$ forms a witness to prove that $\b V[G]\md\ap{\dinf{b_{\gamma_i},h_{\gamma_i}}\leq \lambda_i}$.
\end{proof}

\section{Discussion}\label{section: discussion}

Throughout this article we have mentioned several open questions. In this section we will collect these open questions.

Firstly, we have several open questions regarding the parameters for which the cardinal characteristics on the $\f d$-side become trivial, in the sense that they will be equal to $2^\kappa$. We do have full characterisations of the trivial parameters for the cardinals on the $\f b$-side with \Cref{bounds on binf,bounds on bleq,bounds on bstar}, and by duality it is a reasonable conjecture that the cardinals on the $\f d$-side will be trivial under the same conditions.
\begin{qst}
Assume case (ii) of \Cref{bounds on bleq}. Can we prove that $\dleq{b}=2^\kappa$?
\end{qst}
\begin{qst}
Assume case (ii) of \Cref{bounds on bstar} and that there does not exist an almost disjoint family $\c A\subset\prod b$ of functions with $|\c A|=2^\kappa$. Can we prove that $\dstar{b,h}=2^\kappa$? Specifically, if $b$ is continuous on a club set, is $\dstar{b,h}<2^\kappa$ consistent?
\end{qst}
\begin{qst}
Assume case (i) or (ii) of \Cref{bounds on binf}. Can we prove that $\dinf{b,h}=2^\kappa$?
\end{qst}
\begin{qst}
Assume case (i) or (ii) of \Cref{bounds on bneq}. Can we prove that $\dneq{b}=2^\kappa$?
\end{qst}

Next there are several open questions regarding consistency of strict relations. As we saw, the $\kappa$-Hechler models show that it is consistent that $\add(\c M_\kappa)$ is strictly larger than $\f{sup}_\kappa(\neqi)$, and that $\cof(\c M_\kappa)$ can be strictly smaller than $\f{inf}_\kappa(\neqi)$. 

It is, however, possible (relative to some large cardinal assumptions) to add $b$-dominating $\kappa$-reals, that is, functions $f\in\prod b$ such that $f\geq^* g$ for all $g\in\prod b$ from the ground model, without adding dominating $\kappa$-reals. In fact, the bounded $\kappa$-Hechler forcing $\bb D_\kappa^b(\kappa)$ from \Cref{Hechler general definition} will not add any new $\kappa$-real $f\in\gbs$ such that $f$ dominates all ground model functions in $\gbs$ when $\kappa$ is weakly compact. See for example \cite{ShelahDom}, where Shelah constructs a model where $\cov(\c M_\kappa)<\dleq{}$, under the assumption of a supercompact cardinal.

This makes the following question natural:
\begin{qst}
Is it consistent that $\dleq{b}<\dleq{}$ for all $b\in\gbs$? Is it consistent that $\bleq{}<\bleq{b}$ for all $b\in\gbs$?
\end{qst}

We can ask the same question for the relation between bounded and unbounded localisation cardinals. That is, for a suitable function $h\in\gbs$, can we separate cardinals of the form $\dstar{b,h}$ from  the cardinal $\dstar{h}$ as defined on the unbounded generalised Baire space. Shelah's model from \cite{ShelahDom} may be a promising candidate to consider for the $\f d$-side.

\begin{qst}
Is it consistent that $\f{sup}_\kappa^h(\in^*)<\dstar{h}$?
Is it consistent that $\bstar{h}<\f{inf}_\kappa^h(\in^*)$?
\end{qst}

Finally, we have shown that it is consistent that there is a family of $\kappa^+$ many parameters $h_\xi,b_\xi$, such that the corresponding cardinals of the form $\dstar{b_\xi,h_\xi}$ are mutually distinct, and that there exists a family of $\kappa$ many parameters $h_\xi,b_\xi$ such that for any finite set of cardinals $\dinf{b_\xi,h_\xi}$ there is a model in which they are mutually distinct. Classically we know that there exists a model in which there are $2^{\aleph_0}$ many distinct cardinals of each of the forms $\f d_\omega^{b,h}(\in^*)$, $\f b_\omega^{b,h}(\in^*)$, $\f d_\omega^{b,h}(\nnii)$ and $\f b_\omega^{b,h}(\nnii)$, as described in \cite{CKM21}.

\begin{qst}
Can there consistently be $2^\kappa$ many different cardinalities of the form $\dstar{b,h}$?
\end{qst}
\begin{qst}
Can there consistently be infinitely many different cardinalities of the form $\dinf{b,h}$?
\end{qst}

Moreover, the type of forcing notions we used to separate (anti)localisation cardinals, was a ${\leq}\kappa$-support product of perfect tree forcing notions. This method cannot be used to obtain dual models, such as we did with $\kappa$-Hechler forcing, and hence we do not have any separation of cardinals of the form $\bstar{b,h}$ and $\binf{b,h}$.

Separating such cardinals could be done in the classical context with a Laver-type forcing notion. However, in the generalised context, Laver-type forcing notions behave quite differently. For example, most reasonable generalisations of Laver forcing add $\kappa$-Cohen reals, see \cite{KKLW22}. Another method would be to use an appropriate version of localisation forcing, but we would need a preservation theorem to show that $\bstar{b,h}$ remains small for certain parameters. The preservation of properties of forcing notions under finite or countable support forcing notions is usually unproblematic in the classical case, if proper forcing notions are considered, but generalisations of such preservation theorems are not known for ${<}\kappa$- or ${\leq}\kappa$-support iterations of forcing notions that add $\kappa$-reals. 

In conclusion, we currently do not know how to separate even two cardinals of the form $\bstar{b,h}$. For the same reason, this question is also open for cardinals of the form $\bstar{h}$, see for example Question 71 and the discussion of section 5.2 in \cite{BBFM}.

\begin{qst}
Do there exist nontrivial parameters $b,h,b',h'\in\gbs$ such that $\bstar{b,h}<\bstar{b',h'}$ is consistent? Or such that $\binf{b,h}<\binf{b',h'}$ is consistent?
\end{qst}

\nocite{*}

\bibliographystyle{alpha}
\bibliography{bib}

\end{document}